%% file: main.tex
\documentclass{amsart}
\pdfoutput=1
\usepackage[margin=1in]{geometry}
\usepackage{amssymb, latexsym, amsmath, verbatim, amsthm, amscd}
\usepackage{stmaryrd}
\usepackage{xfrac}
\usepackage{enumitem}
\usepackage{mathtools}
\usepackage{xcolor}
\usepackage{tabularx}
\usepackage{booktabs}
\usepackage{pinlabel}
\usepackage{graphicx}
\usepackage{tikz}
\usetikzlibrary{calc,positioning}
\usepackage{tikz-3dplot}
\usetikzlibrary{3d}
\usetikzlibrary{decorations.pathreplacing}
\usepackage{microtype}
\usepackage{hyperref}
\allowdisplaybreaks

\title{Hyperbolic models and stability recognition for Coxeter groups}
 
\author{Christopher H. Cashen} 
\address{
  TU Wien\\ Institute of Discrete Mathematics and Geometry\\ Wiedener
  Hauptstrasse~8-10\\ 1040 Vienna \\ Austria\\ \href{https://orcid.org/0000-0002-6340-469X}{\includegraphics[scale=.75]{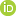}
  0000-0002-6340-469X}}
\email{christopher.cashen@tuwien.ac.at}
\author{Michelle Chu}
\address{University of Minnesota \\ School of Mathematics \\ 206 Church St. SE \\
Minneapolis, MN 55455}
\email{mchu@umn.edu}
\author{Jing Tao} 
\address{University of Oklahoma \\
    Department of Mathematics\\
    601 Elm Avenue Room 423\\Norman, OK 73019}
\email{jing@ou.edu}

\keywords{Coxeter group, Davis complex, Morse quasigeodesic, Morse
  recognition, acylindrically hyperbolic group}
\subjclass[2020]{Primary 20F65; Secondary 20F55, 20F67}
\date{\today}
\thanks{This research was supported in part by the Austrian Science
Fund (FWF) \href{https://doi.org/10.55776/PAT7799924}{10.55776/PAT7799924}, NSF DMS-2304920, and NSF DMS-2441034.}

\hypersetup{
    pdftitle={Hyperbolic models and stability recognition for Coxeter groups},    
    pdfauthor={Christopher H. Cashen, Michelle Chu, Jing Tao},     
    pdfkeywords={Coxeter group, Davis complex, Morse quasigeodesic, Morse
  recognition, acylindrically hyperbolic group}, 
    colorlinks=true,       
    linkcolor=black,          
    citecolor=black,        
    filecolor=black,      
    urlcolor=black           
}

\input{preamble}

\begin{document}
\begin{abstract}
\input{abstract}

\end{abstract}

\maketitle

\tableofcontents



\section{Introduction}\label{sec:intro}
    \input{introduction}

\section{Preliminaries}\label{sec:prelim}
    \input{preliminaries}

\section{Admissible paths}\label{sec:admissible}
    \input{admissible}

    \section{Bridges and divergence}\label{sec:bridge}
    \fullref{sec:cat_zero_bridges} recalls the CAT(0) bridge
    machinery of Bestvina, Kleiner, and Sageev \cite{BesKleSag08}, and specializes the general results to walls in Davis complexes.
    \fullref{sec:wall_morse} states and proves the main result of this
    section, \fullref{thm:wall_morse}, which characterizes the
    Morse property for admissible quasigeodesics in terms of dense,
    transverse chains. 
    \subsection{CAT(0) bridges}\label{sec:cat_zero_bridges}
        \input{bridge}

    \subsection{Wall characterization of the Morse property}\label{sec:wall_morse}
        \input{wall_morse}

\section{Grids and wide parabolics}\label{sec:grid}
\fullref{sec:grid_tech} introduces grids and uses them to produce wide
parabolics.
\fullref{sec:flats} states and proves the main result of this section,
\fullref{thm:wide_parallel}, which relates an admissible
quasigeodesic's interactions with wide parabolic subcomplexes and
dense, transverse chains. 
\subsection{Grids}\label{sec:grid_tech}

\input{grid}

\subsection{Avoiding wide parabolic subcomplexes}\label{sec:flats}
\input{flats}

\section{Characterization of the Morse property for admissible
  quasigeodesics}\label{sec:proof}
    \input{proof_main_theorem}

    \section{Hyperbolic models}\label{sec:hyperbolic}
    \input{hyperbolic_intro}

    \subsection{The fine chain metric}\label{sec:fine_chain_metric}
    \input{hyperbolic}

    \subsection{The fine chain metric collapses wide parabolics}\label{sec:collapse}
    \input{collapse}

    \subsection{Recognizing Morse quasigeodesics and parabolics}\label{sec:morse_recognition}
    \input{morse_recognition}

    \subsection{The coned-off space}\label{sec:coneoff}\mbox{}
        \input{coneoff}

\section{Acylindricity}\label{sec:acylindrical}
\input{acylindricity}

\section*{AI use}
The authors used several versions of the AI tool ChatGPT as interactive sounding
boards to explore possible proof strategies and test their arguments.
The conception of the arguments and the writing of this paper were done by the authors. 





\bibliographystyle{hypersshort}
\bibliography{hyperbolicCoxeter}


\end{document}

%% file: preamble.tex
\theoremstyle{plain}
\newtheorem{theorem}{Theorem}[section]

\newtheorem{lemma}{Lemma}[section]
\newtheorem{proposition}{Proposition}[section]
\newtheorem{corollary}{Corollary}[section]

\theoremstyle{remark}
\newtheorem{claim}{Claim}[theorem]

\newtheorem*{remark}{Remark}

\theoremstyle{definition}
\newtheorem{definition}{Definition}[section]
\newtheorem{example}{Example}[section]

\def\makeautorefname#1#2{\expandafter\def\csname#1autorefname\endcsname{#2}}
\let\fullref\autoref

\makeautorefname{theorem}{Theorem} 
\makeautorefname{lemma}{Lemma} 
\makeautorefname{proposition}{Proposition} 
\makeautorefname{corollary}{Corollary} 
\makeautorefname{definition}{Definition}
\makeautorefname{example}{Example}
\makeautorefname{section}{Section}
\makeautorefname{subsection}{Section}
\makeautorefname{subsubsection}{Section}
\makeautorefname{claim}{Claim}
\makeautorefname{question}{Question}
\makeautorefname{conjecture}{Conjecture}

\makeatletter 
\let\c@lemma=\c@theorem 
\makeatother
\makeatletter 
\let\c@proposition=\c@theorem 
\makeatother
\makeatletter 
\let\c@corollary=\c@theorem 
\makeatother
\makeatletter 
\let\c@definition=\c@theorem 
\makeatother
\makeatletter 
\let\c@example=\c@theorem 
\makeatother
\makeatletter 
\let\c@conjecture=\c@theorem 
\makeatother
\makeatletter
\@addtoreset{claim}{proposition}
\makeatother
\makeatletter
\@addtoreset{claim}{lemma}
\makeatother
\mathtoolsset{centercolon} 

\newenvironment{claimproof}[1][\unskip]{\vspace{1ex}\noindent{\it
Proof of Claim #1:}\hspace{0.5em}}{\hfill$\lozenge$\vspace{1ex}}

\newcommand{\bdry}{\partial} 
\newcommand{\interior}[1]{\mathring{#1}} 
\newcommand{\act}{\curvearrowright} 
\newcommand{\from}{\colon\thinspace} 
\newcommand{\into}{\hookrightarrow} 

\renewcommand{\setminus}{\smallsetminus}

\newcommand{\one}{\mathbf{1}}

\newcommand{\refl}[1]{\mathfrak{r}_{#1}}

\DeclareFontFamily{U}{mathx}{}
\DeclareFontShape{U}{mathx}{m}{n}{<-> mathx10}{}
\DeclareSymbolFont{mathx}{U}{mathx}{m}{n}
\DeclareMathAccent{\widehat}{0}{mathx}{"70}
\DeclareMathAccent{\widecheck}{0}{mathx}{"71}

\newcommand{\Davis}{\Sigma}
\newcommand{\NR}{X_{\mathrm{N\! R}}}
\DeclareMathOperator{\stab}{Stab} 
\DeclareMathOperator{\pc}{Pc} 
\DeclareMathOperator{\rank}{rank}
\newcommand{\transverse}{\pitchfork}
\DeclareMathOperator{\bridge}{Bridge} 
\DeclareMathOperator{\diam}{diam} 
\DeclareMathOperator{\Isom}{Isom} 
\newcommand{\nbhd}{\mathcal{N}}
\newcommand{\dihedral}{\mathcal{D}} 

\newcommand{\coneoff}{\widehat{\Davis}}
\newcommand{\dcone}{\hat{d}}

\newcommand{\dcomb}{d_1}
\newcommand{\dcat}{d_2}

\newcommand{\dwall}{d_{w}}
\newcommand{\dchain}{d_{c}}
\newcommand{\dfine}{d_{\mkern-2.75mu f}}

\newcommand{\cleq}{\preceq}
\newcommand{\cgeq}{\succeq}
\newcommand{\ceq}{\asymp}
\newcommand{\cequiv}{\stackrel{+}{\ceq}}
\newcommand{\centralizer}{Z}
\newcommand{\centrefpart}{Z^{\mathrm{ref}}}
\newcommand{\centnonrefpart}{Z^{\mathrm{nref}}}
\newcommand{\normalizer}{N}
\newcommand{\normrefpart}{N^{\mathrm{ref}}}
\newcommand{\normnonrefpart}{N^{\mathrm{nref}}}
\newcommand{\carrier}{\bar\nbhd_{\sfrac{1}{2}}}

\newcommand{\radius}{\mathfrak{R}}
\newcommand{\caprace}{\ell}
\newcommand{\length}{\mathfrak{L}}
\newcommand{\density}{\rho_0}
\newcommand{\Epsilon}{E}

\newcommand{\gate}{\mathfrak{p}}

%% file: abstract.tex
Given a finite rank Coxeter system, we construct a hyperbolic space by
coning off the wide parabolic subcomplexes of its Davis complex.
The resulting space is `stability recognizing', in the sense that stable
subspaces of the Davis complex are exactly the quasigeodesically
connected subspaces whose images in the coned-off space are
quasiisometrically embedded.
Equivalently, the coned-off space is `Morse recognizing': a
quasigeodesic in the Davis complex is Morse if and only if it is
quasigeodesic in the coned-off space.
Furthermore, the action of the Coxeter group on this space gives its largest acylindrically hyperbolic structure.

%% file: introduction.tex
This paper is about the coarse geometry of finitely generated Coxeter
groups, as revealed by their wall structure and parabolic subgroup
structure. 
The CAT(0) geometry of a finite rank Coxeter system $(W,S)$ is described
by its Davis complex $\Davis$, which admits a piecewise Euclidean,
CAT(0) metric $\dcat$.
Our main result is the construction of a hyperbolic space that encodes
the hyperbolic aspects of $\Davis$:
\begin{theorem}[{\fullref{cor:coneoff_is_morse_recognizing} and \fullref{prop:recognizing_is_recognizing}}]\label{coneoff_recognizes}
  Let $\Davis=\Davis(W,S)$ be the Davis complex of a finite rank
  Coxeter system.
  There is a hyperbolic, geodesic metric space
  $(\coneoff,\dcone)$ obtained by coning off the wide parabolic
  subcomplexes of $\Davis$.
  Moreover, $(\coneoff,\dcone)$ is ``stability recognizing'', in the
  sense that a quasigeodesically connected subspace of $\Davis$ is stable if and
  only if it quasiisometrically embeds into $\coneoff$.
  Equivalently, $(\coneoff,\dcone)$ is ``Morse recognizing'', in the sense
  that a quasigeodesic $\gamma\from I\to (\Davis,\dcat)$ is Morse if
  and only if $\mathrm{Id}_\Davis\circ\gamma\from I \to
  (\coneoff,\dcone)$ is a quasigeodesic.  
\end{theorem}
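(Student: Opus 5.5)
The plan is to assemble the theorem from the sectional results named in the table of contents, in the order they appear. It splits into two parts: hyperbolicity of the cone-off, and recognition of Morse quasigeodesics.

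\textbf{Hyperbolicity.} I would not prove that $\coneoff$ is hyperbolic directly. Instead I would compare it with the fine chain metric $\dfine$ of \fullref{sec:fine_chain_metric}, which is built from dense, transverse chains of walls. Hyperbolicity of $(\Davis,\dfine)$ should follow from a guessing-geodesics (Bowditch/Masur--Schleimer) criterion. The candidate paths are admissible paths from \fullref{sec:admissible}. Thin triangles come from the bridge machinery: a wall crossed by one side of a triangle must be crossed by another side. Next, \fullref{sec:collapse} shows that $\dfine$ has uniformly bounded diameter on each wide parabolic subcomplex. It also shows that any two points far apart in $\dfine$ are separated by a dense, transverse chain of walls; by \fullref{thm:wide_parallel}, such a chain cannot be absorbed by a wide parabolic. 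Together these give a quasiisometry $(\Davis,\dfine)\to(\coneoff,\dcone)$ that is the identity on vertices, so $\coneoff$ is hyperbolic.

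\textbf{Morse recognition.} Because every quasigeodesic in $(\Davis,\dcat)$ is at bounded Hausdorff distance from an admissible quasigeodesic, I would reduce to admissible $\gamma$. The main theorem of \fullref{sec:proof} says that $\gamma$ is Morse if and only if it crosses dense, transverse chains of walls at a linear rate. Equivalently, by \fullref{thm:wall_morse} and \fullref{thm:wide_parallel}, this holds if and only if $\gamma$ spends uniformly bounded time near any wide parabolic subcomplex.

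For the forward direction, a linear number of separating chains gives a linear lower bound on $\dfine$, hence on $\dcone$. The upper bound is automatic, since the map $\Davis\to\coneoff$ is $1$-Lipschitz.

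For the converse, suppose $\gamma$ is not Morse. Then for every constant there are subsegments of $\gamma$ that fellow travel wide parabolic subcomplexes for arbitrarily long, and these subsegments have bounded $\dcone$-diameter. Hence $\mathrm{Id}\circ\gamma$ fails to be a quasigeodesic with any fixed constants. The uniformity of constants here is the main obstacle: the Morse gauge and the quasigeodesic constants in $\coneoff$ must control each other. I would handle this by tracking the explicit dependence of the constants in \fullref{thm:wall_morse}.

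\textbf{Stability recognition.} Let $Y\subset\Davis$ be quasigeodesically connected. $Y$ is stable exactly when the quasigeodesics joining its points are uniformly Morse. By the previous part, this holds exactly when those quasigeodesics are uniform quasigeodesics in $\coneoff$. That in turn is equivalent to $Y\to\coneoff$ being a quasiisometric embedding. For the reverse implication I would use the hyperbolicity of $\coneoff$ together with the $1$-Lipschitz inclusion. The remaining point is that if $Y$ quasiisometrically embeds, then the connecting $\Davis$-quasigeodesics map to quasigeodesics. This follows because their endpoints satisfy $\dcone\succeq\dcat$ and the map is Lipschitz: a Lipschitz path whose image has length comparable to the distance between its endpoints, at every scale, is a quasigeodesic. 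I would check this claim at every subsegment scale using the quasigeodesic connectivity constants.
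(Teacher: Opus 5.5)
Your proposal has two genuine gaps.

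\textbf{The quasiisometry behind hyperbolicity.} Your proof that $\coneoff$ is hyperbolic rests on a quasiisometry $(\Davis^{(0)},\dfine)\to(\coneoff,\dcone)$, but your ingredients give only the easy inequality $\dfine\le\dcone$, via \fullref{lem:wide_parabolics_have_diameter_one}. That $\dfine$--far points are separated by a long fine chain is just the definition of $\dfine$. \fullref{thm:wide_parallel} is a qualitative statement about a single quasigeodesic. Neither yields the hard bound $\dcone\le C\dfine+C$.

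For that bound you must show that a combinatorial geodesic along which $\dfine$ grows slowly is covered by few wide parabolic subcomplexes. The paper does this with the shortcut graph and its greedy path.
\begin{itemize}
\item Suppose $\dfine(x,y)\le 4\caprace-1$ but the greedy path is long. Dilworth gives a long chain of greedy walls. A sparse subsample of $4\caprace$ of these walls cannot be fine, which yields a grid with a long vertical chain.
\item \fullref{prop:induction_argument} converts this grid into a long subsegment within $\radius$ of a wide parabolic subcomplex.
\item Wall counting shows that subsegment is covered by boundedly many wide parabolic subcomplexes. This contradicts the greedy path being a shortest path (\fullref{small_dfine}).
\item The quasiruler property then bootstraps this to all scales (\fullref{lem:step_up}, \fullref{lem:bounded_dfine_gaps}, \fullref{dcone_cleq_dfine}).
\end{itemize}

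\textbf{Hyperbolicity of $\dfine$.} Your argument here is also unsupported. That a wall crossing one side of a triangle crosses another holds for any wall system and gives no control on $\dfine$--thinness. The bridge lemma concerns pairs of disjoint walls and is used for divergence. The paper instead uses medians in $\NR$; triangles in $\Davis^{(1)}$ need not have medians. On a face of the median cube of four points, walls along adjacent sides are pairwise transverse, so fineness forces one side to be short. This gives the four-point condition (\fullref{lem:four_point_hyperbolicity}).

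\textbf{The reduction to admissible paths is false.} Take a wide parabolic subcomplex isometric to the square tiling of $\mathbb{E}^2$. Inside it, let $\gamma$ be two rays from a common point, one horizontal and one at a fixed acute angle $\theta$. This is a quasigeodesic. The vertical walls meeting the horizontal ray within distance $T\cos\theta$ of the corner have $\gamma(-T)$ and $\gamma(T)$ on the same side. So an admissible path between these points never meets them and stays roughly $T\cos\theta$ from the corner.

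Consequently your contrapositive proof of ``$\dcone$--quasigeodesic $\Rightarrow$ Morse'', which invokes \fullref{main_theorem}, only covers admissible $\gamma$. The paper's \fullref{thm:Morse_recognizing_general} does not need admissibility in this direction. A $\dfine$--quasigeodesic has dense fine chains separating the endpoints of its subsegments, and every continuous path crosses them. The divergence argument of \fullref{separated_chain_implies_divergent} then gives quadratic divergence. Passing from $\dcone$ to $\dfine$ there is exactly where the hard inequality is used.

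\textbf{What is fine.} Your forward direction and the formal Morse/stability equivalence match the paper. For admissible $\gamma$, your converse via item~(4) of \fullref{main_theorem} is a valid shortcut that avoids the hard inequality.
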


The action of the Coxeter group on this coned-off space satisfies an
acylindricity condition, see \fullref{sec:acylindrical}.
\begin{theorem}[{\fullref{thm:acylindricity} and \fullref{thm:largest}}]
  The action of $W$ on $\coneoff$ is acylindrical.
  In fact, this action gives the largest acylindrically hyperbolic
  structure on $W$. 
\end{theorem}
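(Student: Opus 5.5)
The theorem has two parts, and I would prove them in that order: first that the action of $W$ on $\coneoff$ is acylindrical, then that it is the largest acylindrically hyperbolic structure.

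\textbf{Acylindricity.} I would verify the definition directly. Fix $\epsilon>0$. Let $x,y\in\Davis$ with $\dcone(x,y)\ge R$, where $R$ is large and chosen later. We must bound the number of $g\in W$ with $\dcone(x,gx)\le\epsilon$ and $\dcone(y,gy)\le\epsilon$.

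\begin{enumerate}[label=(\arabic*)]
\item Join $x$ and $y$ by a $\dcat$-geodesic in $\Davis$, and pass to its image in $\coneoff$. Using the fine chain metric description of $\dcone$ from \fullref{sec:fine_chain_metric}, the large $\dcone$-distance means the geodesic crosses a long chain of pairwise transverse-free (strongly separated, dense) walls. Each such wall crosses the geodesic essentially outside every wide parabolic subcomplex.
\item Suppose $g$ moves both $x$ and $y$ by at most $\epsilon$ in $\dcone$. Then, by the Morse recognition results of \fullref{sec:morse_recognition}, $g$ must map most walls in the middle of this chain to walls that are uniformly close in the chain order. Bridges between strongly separated walls are bounded, by the Bestvina--Kleiner--Sageev bridge machinery recalled in \fullref{sec:cat_zero_bridges}. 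Hence $g$ moves some point of the middle of the geodesic a bounded $\dcat$-distance $D=D(\epsilon)$.
\item Proper discontinuity of $W$ on $\Davis$ then bounds the number of such $g$.
\end{enumerate}

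\textbf{The main obstacle} is step (2): upgrading small displacement in the coned metric to a bounded displacement in $\dcat$ at an interior point. I would try the following. First take two walls $H_1,H_2$ from the chain, with $H_1$ near distance $\epsilon+C$ from $x$ and $H_2$ near distance $\epsilon+C$ from $y$. Because $\dcone$ is coarsely the count of separating chain walls, $gH_i$ must still separate $x$ from $y$, or at least be near the chain. Strong separation then forces $gH_1$ to lie within a bounded bridge of $H_1$'s neighbours. This pins down $g$ up to finitely many choices.

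\textbf{Largest structure.} Here I would use Abbott's criterion. Let $W\curvearrowright Y$ be any acylindrical action on a hyperbolic space, coming from a generating set $T$. I would show that every element loxodromic on $Y$ is loxodromic on $\coneoff$, and that every subgroup elliptic in $\coneoff$ is elliptic in $Y$.

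\begin{enumerate}[label=(\arabic*)]
\item The subgroups elliptic in $\coneoff$ are essentially the wide parabolic subgroups, together with their normalisers, which are bounded by construction and by \fullref{sec:collapse}.
\item A wide parabolic subgroup either is virtually a product of infinite groups, or has infinite centraliser-type structure coming from the grids of \fullref{sec:grid}. An acylindrical action restricted to such a subgroup must be elliptic, since a loxodromic element there would have a virtually cyclic centraliser. Hence each wide parabolic subgroup acts with bounded orbits on $Y$.
\item The generating set for $\coneoff$ is the word metric with respect to $S$ together with the wide parabolic subgroups. Every element of this generating set therefore has bounded displacement on $Y$, which gives a coarsely Lipschitz orbit map $\coneoff\to Y$.
\item Consequently $Y$ is dominated by $\coneoff$, so the structure on $\coneoff$ is the largest.
\end{enumerate}

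The delicate point in this part is step (2) for wide parabolics that are not direct products. There I would invoke the grid machinery to exhibit commuting infinite-order elements, or an infinite normal subgroup, inside each wide parabolic.
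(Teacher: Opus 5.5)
\textbf{Acylindricity: there is a genuine gap.} Your step (2) assumes essentially the whole theorem.

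It is true that a large $\dcone(x,y)$ forces a long fine chain separating $x$ and $y$, by \fullref{dcone_cleq_dfine}. But the rest of step (1) and step (2) does not follow.
\begin{itemize}
\item Nothing makes that chain dense along the $\dcat$--geodesic, and fineness is not strong separation. Between two consecutive fine walls the geodesic can run for an arbitrarily long time inside a wide parabolic subcomplex. Then neither \fullref{lem:bounded_bridge} (which needs the walls $\dcat$--close) nor \fullref{cor:qgeo_wall_crossers} (which needs the intervening stretch short) localizes anything.
\item More seriously, small $\dcone$--displacement of $x$ gives no $\dcat$--control on $gx$. If $x$ is a vertex of a wide parabolic subcomplex $\Upsilon$, every $g\in\stab(\Upsilon)$ has $\dcone(x,gx)\leq 1$, while $\dcat(x,gx)$ is unbounded. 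So walls separating $gx$ from $gy$ need not separate $x$ from $y$.
\item There is also no canonical ``chain order'' on the walls separating $x$ and $y$ against which to compare $gV$.
\item \fullref{thm:Morse_recognizing_general} is a statement about quasigeodesics in $\Davis$. It says nothing about isometries that move two points a small $\dcone$--distance.
\item Knowing $gH_1$ up to finitely many walls does not determine $g$ up to finitely many choices, since wall stabilizers are infinite.
\end{itemize}

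The missing idea is control of \emph{large links}. These are wide parabolic subcomplexes whose cone vertices lie near the middle of a $\dcone$--geodesic and onto which $x,y$ have huge gate projection $d_\Upsilon(x,y)$. The metric $\dcone$ cannot see what $g$ does inside them.

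The paper handles this in the style of the Behrstock--Hagen--Sisto HHS argument. It introduces projections $d_\Upsilon$ and $\delta_P$ (the latter into the coned-off graphs $C(P)$). It then proves:
\begin{itemize}
\item intersection large links, \fullref{prop:LargeLink};
\item bounded geodesic image, \fullref{thm:BGI};
\item a witness result, \fullref{prop:witness};
\item a one-sided distance formula, \fullref{lem:no_large_projection}.
\end{itemize}

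It then splits into two cases at the trisection vertices $m_\pm$.
\begin{itemize}
\item If some $L_1(m)$ is empty, all relevant projections near $m$ are small. The one-sided distance formula then gives a vertex near $m$ with bounded $\dcomb$--displacement. Only here does your step (3) enter.
\item Otherwise, $g\mapsto(gP_-g^{-1},gP_+g^{-1})$ maps $A_\epsilon(x,y)$ into the uniformly finite set $L_2(m_-)\times L_2(m_+)$ (\fullref{prop:stability}, \fullref{prop:L2finite}). Its fibers lie in cosets of $\normalizer_W(P_-)\cap\normalizer_W(P_+)$, which is spherical by \fullref{cor:close-cone} because $\rho(P_\pm)$ are far apart.
\end{itemize}
In the second case no point of bounded $\dcat$--displacement is ever produced. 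So your plan of always finding one is not established by the paper, and would itself require this projection machinery.

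\textbf{Largest structure.} This half is correct and is the paper's argument. The space $\coneoff$ is equivariantly quasiisometric to the Cayley graph for $S\cup\bigcup_{T\text{ wide}}W_T$. In an acylindrical action, loxodromics have virtually cyclic centralizers. Every infinite-order element of a wide special subgroup has a power with non-virtually-cyclic centralizer. So by Osin's trichotomy each of the finitely many wide special subgroups has bounded orbits, which gives the Lipschitz comparison.

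Your ``delicate point'' is not one. By \fullref{def:wide}, a wide parabolic that is not a product of two nonspherical parabolics is irreducible higher rank affine times spherical, hence virtually $\mathbb{Z}^n$ with $n\geq 2$. That supplies the commuting elements directly, so no grid machinery is needed.
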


See \fullref{def:stable_subspace} and
\fullref{def:stable_subspace_recognizing} for definitions of stable
and stability recognizing, and  \fullref{def:morse_property} and \fullref{def:morse_recognizing}
for definitions of Morse and Morse recognizing.
Being quasigeodesic and being Morse are invariant under quasiisometry,
so the theorem is also true for quasigeodesics in the 1--skeleton
$\Davis^{(1)}$ of
$\Davis$ with respect to its combinatorial metric $\dcomb$. 

The ``wide'' condition is discussed in \fullref{sec:wide}; in the
context of Coxeter groups it means a
parabolic subgroup of $(W,S)$ that splits as a product of parabolic
subgroups such that either both factors are infinite or one of them is irreducible higher rank affine. 

The wide parabolics are the ``obvious'' sources of non-hyperbolicity.
Indeed, Moussong's characterization of hyperbolicity in Coxeter groups \cite{Mou88,Mol25} can be rephrased:
$W$ is hyperbolic if and only if it has no wide parabolic subgroups.
The main theorem says that if we collapse these known
non-hyperbolic regions, the result is hyperbolic.
Moreover, the fact that the coned-off space is Morse recognizing certifies that
we collapsed the ``correct'' amount: quasigeodesics that are
hyperbolic-like, in the sense of being Morse, survive as
quasigeodesics in the coned-off space, while quasigeodesics that are
not hyperbolic-like do not.

There are important antecedents for our main theorem for other
classes of groups.
The prototype is for the mapping class group of a hyperbolic surface.
Minsky \cite{min96} showed that Teichm\"uller geodesics are Morse if
and only if they avoid the ``thin parts'' of Teichm\"uller space,
which have approximate product structures \cite{Min96b}, making them the obvious
sources of non-hyperbolicity.
Work of Masur and Minsky \cite{MasMin99} shows that the curve graph for the surface is a hyperbolic graph that is
quasiisometric to the space obtained by coning off the thin parts of
Teichm\"uller space and to the space obtained from the mapping class
group by coning off cosets of finitely many curve stabilizers, one for
each orbit of essential simple closed curves.
Kent and Leininger \cite{KenLei08} proved a result that, in an
equivalent formulation established by Durham and Taylor \cite{DurTay15}, see \fullref{sec:recognition_equivalence}, says that the curve graph is a
stable subgroup recognizing space for the mapping class group.
Bowditch \cite{Bow08} proved the action of the mapping class group on
the curve graph is acylindrical. The fact that this action gives the largest
acylindrically hyperbolic structure is one of the key motivating
examples of this phenomenon, as introduced by Abbott, Balasubramanya, and Osin \cite{AbbBalOsi19}.

Other settings in which stable subgroup recognition theorems exist include groups
hyperbolic relative to peripherals with linear divergence
\cite{AouDurTay17}, right-angled Artin groups \cite{KobManTay17},
some hierarchically hyperbolic groups \cite{AbbBehDur21}, and
free-by-cyclic groups \cite{KudPet25}.

Our main theorem overlaps with Abbott, Behrstock, and
Durham's result for hierarchically hyperbolic groups
\cite{AbbBehDur21} in the special case of right-angled Coxeter
groups.
In fact, the coned-off space $\coneoff$ agrees with their construction of a
universal acylindrical hyperbolic space for right-angled Coxeter
groups. 
General Coxeter groups are not known to be
hierarchically hyperbolic \cite{HHSProblemList}, and this paper is not limited to the
cocompactly cubulated setting. 
We allow irreducible higher rank affine parabolics.
These are exactly the obstructions to $W$ acting
cocompactly on its Niblo-Reeves cube complex \cite{CapMuh05}, and
certain configurations\footnote{Specifically, it follows from
  \cite[Corollary~7.11]{AzuHag} that if $(W,S)$ is a Coxeter system
  such that there is a $T\subset S$ with $T^\perp$ spherical and
  $T$ is affine of type $\tilde{A}_2$, $\tilde{A}_n$ for $n\geq 4$,
  $\widetilde{E}_6$, $\widetilde{E}_7$, $\widetilde{E}_8$, $\widetilde{F}_4$, or
  $\widetilde{G}_2$, then $W$ is not a hierarchically hyperbolic
  group.
It turns out, applying work of Hagen \cite{Hag14crystal}, that the
irreducible higher rank affine Coxeter groups of types
$\tilde{A}_3$, $\tilde{B}_n$, $\tilde{C}_n$, and
$\tilde{D}_n$ actually are cocompactly cubulated, even though their
actions on their Niblo-Reeves cube complexes are not cocompact.} of affine parabolics obstruct the existence
of equivariant hierarchical structures \cite{PetSpr23,AzuHag}. 
We use Coxeter-specific techniques, in particular the parabolic
subgroup structure, the combinatorics of the wall system, and the Tits
form on the canonical root system of $(W,S)$ to
recover, for arbitrary finitely generated Coxeter groups, conclusions that in
other settings are typically obtained from cocompact cubical geometry
and its close relatives.
This combination is part of what makes the coarse
geometry of Coxeter groups interesting: wall systems and parabolic
subgroup structures provide strong tools, while the class is broad
enough to include examples beyond the reach of existing cocompact
cubical and hierarchical methods.  

\smallskip

The route to \fullref{coneoff_recognizes} includes
several technical steps.
First, we characterize Morse geodesics in $(\Davis,\dcat)$, or,
equivalently, in the combinatorial metric $(\Davis^{(1)},\dcomb)$, in terms
of wall crossings and interactions with wide parabolic subcomplexes.
This will be detailed in \fullref{main_theorem}.
In \fullref{sec:hyperbolic} we construct a non-geodesic hyperbolic
metric $\dfine$ on the Niblo-Reeves cube complex $\NR$ in terms of \emph{fine chains}.
We pull this metric back to $\Davis$ via the canonical inclusion $\Davis^{(0)}\into\NR$ and show that the
resulting hyperbolic metric is Morse recognizing,
\fullref{thm:Morse_recognizing_general}, and that it is
$W$--equivariantly quasiisometric to $(\coneoff,\dcone)$, in
\fullref{thm:coneoff_qi_to_dfine}.

A byproduct of the construction is that the metric $\dfine$ collapses the difference between the
Davis complex and the Niblo-Reeves cube complex,
\fullref{prop:Davis_vertices_coarsely_dense_in_NR_dfine}, 
so $(\NR,\dfine)$, $(\Davis,\dfine)$ and $(\coneoff,\dcone)$ are all
quasiisometric to each other. 
This is an analogue of the Masur-Minsky result
that the curve graph is quasiisometric to both the coned-off Teichm\"uller
space and the coned-off mapping class group. 

\smallskip

To state the Morse characterization theorem, we need some
terminology. 
The Davis complex $\Davis$ of a finite rank Coxeter system $(W,S)$
comes equipped with a system of walls.
Each wall is the fixed set of a \emph{reflection}, which is a
conjugate of one of the generators $S$.
Each wall is a convex subspace that cuts $\Davis$ in two.

An \emph{admissible path} is a path in $\Davis$ that is
transverse to the wall structure, see \fullref{def:admissible}.
These include both the CAT(0)--geodesics and combinatorial geodesics in
$\Davis$, \fullref{lem:geodesics_are_admissible}.

A \emph{chain} is a set of disjoint walls
$\mathcal{V}=\{V_i\}_{i\in I\subset\mathbb{Z}}$ that are
linearly ordered such that $i<j<k$ in $I$ implies $V_j$ separates
$V_i$ and $V_k$ in $\Davis$, see \fullref{def:chain}.
A chain $\mathcal{V}$ and a path $\gamma$ are \emph{transverse} if
$\gamma$ is transverse to all the walls of
$\mathcal{V}$.
Chains $\mathcal{V}$ and $\mathcal{H}$ are transverse if every wall of
$\mathcal{V}$ crosses every wall of $\mathcal{H}$.
A \emph{grid} is a pair $(\mathcal{V},\mathcal{H})$ of transverse chains,
each of length at least 2.

\begin{definition}\label{def:fine_chain}
  A chain is \emph{fine} if for every pair of consecutive walls $V$
  and $V'$ in the chain, every chain $\mathcal{H}$ transverse to both
  $V$ and $V'$ has $|\mathcal{H}|< 2\caprace$, where $\caprace$ is
  the constant defined in \fullref{constants}.
\end{definition}

\begin{theorem}[Characterization of the Morse property for admissible quasigeodesics]\label{main_theorem}
  Let $\Davis=\Davis(W,S)$ be the Davis complex of a finite rank
  Coxeter system. There are constants $\radius$ and $\caprace$ and a
  function $\density$, depending only on $(W,S)$, such that for all
  $\lambda$ and $\epsilon$, we have quantitative equivalences between
  the following statements that apply uniformly to every admissible
  $(\lambda,\epsilon)$--quasigeodesic 
  $\gamma\from I\to \Davis$. That is to say, the parameters of any one
  item effectively bound the parameters of the others, independent of the
  particular choice of $\gamma$. 
  \begin{enumerate}[label=(\arabic*)]
  \item $\gamma$ is $\mu$--Morse.\label{main:morse}
    \item There exist $C$ and $0<\rho\leq\density(\lambda,\epsilon)$ such that there
       exists $L$ such that for every subinterval $I'\subset I$ of length at least $L$ there exists a chain
       $\mathcal{V}\transverse\gamma(I')$ with
       $|\mathcal{V}|/|I'|\geq \rho$ such
      that for every pair $V$, $V'$ of consecutive walls of
      $\mathcal{V}$, every chain transverse to
      $\{V,V'\}$ has length at most $C$.\label{main:bounded_width_chain}
    \item There exist $0<\rho\leq\density(\lambda,\epsilon)$ such that there
       exists $L$ such that for every subinterval $I'\subset I$ of
       length at least $L$ there exists a
       fine chain
       $\mathcal{V}\transverse\gamma(I')$ with
       $|\mathcal{V}|/|I'|\geq \rho$. \label{main:fine_chain}
    \item There is an upper bound on the lengths of subintervals
      $I'\subset I$ for which there exists a wide parabolic subcomplex
      $\Upsilon$ with $\gamma(I')\subset\bar\nbhd_\radius(\Upsilon)$. \label{main:segment_in_wide_parabolic_nbhd}
    \item There is an upper bound on the lengths of chains
      $\mathcal{V}$ transverse to $\gamma$ for which there exists a grid $(\mathcal{V},\mathcal{H})$ with $|\mathcal{H}|=2\caprace$.\label{main:long_transverse_grids}
    \end{enumerate}
  \end{theorem}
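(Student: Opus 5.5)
The plan is to prove the cycle of implications \ref{main:morse}$\Rightarrow$\ref{main:segment_in_wide_parabolic_nbhd}$\Rightarrow$\ref{main:long_transverse_grids}$\Rightarrow$\ref{main:fine_chain}$\Rightarrow$\ref{main:bounded_width_chain}$\Rightarrow$\ref{main:morse}, tracking at each step how the output constants depend only on the input constants, $(\lambda,\epsilon)$, and $(W,S)$.

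\ref{main:fine_chain}$\Rightarrow$\ref{main:bounded_width_chain} is immediate: a fine chain has bounded width with $C=2\caprace-1$. For \ref{main:bounded_width_chain}$\Rightarrow$\ref{main:morse} I will invoke \fullref{thm:wall_morse} from \fullref{sec:wall_morse}, which characterizes the Morse property of admissible quasigeodesics via dense transverse chains whose consecutive walls have bounded bridges. The underlying mechanism is the Bestvina--Kleiner--Sageev bridge machinery of \fullref{sec:cat_zero_bridges}. A bound $C$ on the lengths of chains transverse to $\{V,V'\}$ bounds the bridge between $V$ and $V'$, because a long bridge contains a flat strip crossed by many disjoint walls. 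Density $\rho$ then forces $\gamma$ to pass through uniformly many bounded bridges per unit length, which gives superlinear divergence and hence a Morse gauge depending on $C,\rho,L,\lambda,\epsilon$.

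\ref{main:morse}$\Rightarrow$\ref{main:segment_in_wide_parabolic_nbhd} is standard. A wide parabolic subcomplex $\Upsilon$ is quasiisometric to a product of two unbounded factors, or contains a higher rank Euclidean factor, so it has linear divergence. A $\mu$--Morse quasigeodesic can fellow travel such a subspace only for a length bounded in terms of $\mu$, $\radius$, $\lambda$, $\epsilon$: inside $\bar\nbhd_\radius(\Upsilon)$ one builds a quasigeodesic detour around a long segment, using the product structure, which leaves any fixed neighborhood of $\gamma$.

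The heart of the proof is \ref{main:segment_in_wide_parabolic_nbhd}$\Rightarrow$\ref{main:long_transverse_grids}$\Rightarrow$\ref{main:fine_chain}, which is the content of \fullref{thm:wide_parallel} together with the grid results of \fullref{sec:grid_tech}. For the first arrow, suppose there is a grid $(\mathcal{V},\mathcal{H})$ with $\mathcal{V}\transverse\gamma$ long and $|\mathcal{H}|=2\caprace$. The grid lemmas, where $\caprace$ is chosen via Caprace's bound in \fullref{constants}, should produce a wide parabolic subgroup whose subcomplex $\Upsilon$ has $\bar\nbhd_\radius(\Upsilon)$ containing the portion of $\gamma$ crossing $\mathcal{V}$. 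That portion has length comparable to $|\mathcal{V}|$, contradicting \ref{main:segment_in_wide_parabolic_nbhd}. For the second arrow, suppose $\mathcal{V}$ has bounded length whenever it lies in such a grid. Because $\gamma$ is admissible, every segment $\gamma(I')$ crosses at least linearly many walls in $|I'|$. Using the density bound $\density(\lambda,\epsilon)$ from the chain lemmas in \fullref{sec:admissible}, we extract a chain of density about $\density$. We then thin this chain greedily: between chosen consecutive walls $V,V'$, the fact that no chain transverse to both has length $2\caprace$ is forced by the bound on grids, after passing to every $N$th wall. This yields a fine chain of density at least $\density/N$.

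I expect this last step to be the main obstacle: making a transverse chain of length $2\caprace$ across a pair of consecutive walls yield a grid with long $\mathcal{V}$, without losing the transversality to $\gamma$.
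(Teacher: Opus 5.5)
Your cycle has a genuine gap at (4)$\Rightarrow$(5). This implication is where the real work of the theorem lies (the paper's \fullref{lem:induction_step} and \fullref{prop:induction_argument}), not the step you flagged. \fullref{lem:grid_wide} only tells you that $\pc(\mathcal{V})\times\pc(\mathcal{V})^\perp$ is wide. It gives no control on where $\gamma$ runs relative to the corresponding parabolic subcomplex $\Upsilon$. Between two walls of $\mathcal{V}$, $\gamma$ can leave $\Upsilon$, crossing many walls that miss $\Upsilon$ but are transverse to all later walls of $\mathcal{V}$, and then cross those later walls arbitrarily far from $\Upsilon$.

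Concretely, take the right-angled Coxeter group on $a,b,c,f_1,f_2,g,h$ whose only commuting pairs are $\{g,h\}\times\{a,b,c\}$ and $\{f_1,f_2\}\times\{a,b\}$. Take the combinatorial geodesic labelled $(abc)^k(f_1f_2)^p(ab)^q$. The $3k+2q$ walls crossed by its $a,b,c$--edges form a chain $\mathcal{V}$ that makes a grid with $2\caprace$ walls of $\Davis_{\{g,h\}}$. Here $\pc(\mathcal{V})=W_{\{a,b,c\}}$, $\Upsilon=\Davis_{\{a,b,c,g,h\}}$, and the last $2q$ walls of $\mathcal{V}$ are crossed at combinatorial distance at least $p$ from $\Upsilon$. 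For $k,p,q\gg\radius$, no wide parabolic subcomplex has $\radius$--neighborhood containing the whole crossing segment. By the 4--gon trick its stabilizer would contain conjugates of each of $a,b,c,f_1,f_2$, and no wide parabolic subgroup of this group does. So the assertion your contradiction rests on is false.

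The missing idea is descent on rank. Choose $x$ on the part of $\gamma$ crossing the middle third of $\mathcal{V}$, and suppose $x$ is farther than $\radius$ from $\Upsilon$. The choice of $\radius$ then gives a chain of at least $4\caprace-1$ walls separating $x$ from $\Upsilon$. At most $2\caprace-1$ of these can miss $\gamma$. Otherwise \fullref{lem:grid_wide}, applied to $\mathcal{V}$ and those walls, would put reflections of some of them in $\pc(\mathcal{V})\times\pc(\mathcal{V})^\perp$, although they do not cut $\Upsilon$. The remaining walls cross $\gamma$ on one side of $x$, so they are transverse to the subchain of $\mathcal{V}$ on the other side. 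This gives a new grid on a subchain containing at least a third of $\mathcal{V}$, with strictly smaller parabolic closure. When $W(\mathcal{V})\cong\dihedral_\infty$ one gets a contradiction instead, via \fullref{Caprace_11} and \fullref{lem:easy_grid}. Iterating at most $|S|-4$ times yields a subchain of at least $|\mathcal{V}|/3^{|S|-3}$ walls whose $\gamma$--segment lies in $\bar\nbhd_\radius$ of a wide parabolic subcomplex. That linear loss is exactly what (4)$\Rightarrow$(5) needs.

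By contrast, the step you worried about is immediate (the paper's \fullref{lem:chaintogrid}). A wall crossing $V_i$ and $V_{i+N}$ crosses every wall between them, since each of those separates $V_i$ from $V_{i+N}$, and those walls already lie in a chain transverse to $\gamma$.

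Your remaining steps are fine. The direct proof of (1)$\Rightarrow$(4), using detours through the product or Euclidean structure of wide parabolic subcomplexes, is a legitimate alternative to the paper's route through (2) and \fullref{prop:near_flat_implies_long_grids}.
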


  The 1--skeleton of the Davis complex is the Cayley graph of $W$ with
  respect to $S$, and edges in $\Davis$ are labelled by elements of
  $S$.
  Edge paths then have a label consisting of the product of labels of
  their successive edges, and this label gives a word in $W$.
  \fullref{main_theorem} specialized to combinatorial geodesics has an
  additional equivalent condition in terms of labels:
 \begin{corollary}[Characterization of the Morse property for combinatorial geodesics]\label{main_for_combinatorial}
    Let $\Davis=\Davis(W,S)$ be the Davis complex of a finite rank
    Coxeter system.
    There are effective quantitative equivalences between the
    following items, uniformly over all combinatorial geodesics $\gamma$ in $\Davis$.
  \begin{enumerate}[label=(\arabic*)]
  \item $\gamma$ is $\mu$--Morse.\label{comb:morse}
    \item There is an upper bound on the lengths of subsegments
      $\gamma'$ of $\gamma$ for which there exists a wide parabolic
      subcomplex $\Upsilon$ with
      $\gamma'\subset\Upsilon$.\label{comb:parabolic_wide}
      \item There is an upper bound on the lengths of subsegments
        $\gamma'$ of $\gamma$ for which there exists 
        $T\subset S$ such that $W_T$ is a wide special
        subgroup and all edges of $\gamma'$ have label
        in $T$.\label{comb:special_wide}
  \end{enumerate}
  \end{corollary}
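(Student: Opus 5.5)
We derive the corollary from \fullref{main_theorem}, using that combinatorial geodesics are admissible $(\lambda,\epsilon)$--quasigeodesics with $\lambda,\epsilon$ depending only on $(W,S)$ (\fullref{lem:geodesics_are_admissible}). Hence item \ref{comb:morse} is quantitatively equivalent to item \ref{main:segment_in_wide_parabolic_nbhd} of \fullref{main_theorem}. The plan is to prove the cycle \ref{comb:morse}$\Rightarrow$\ref{comb:parabolic_wide}$\Rightarrow$\ref{comb:special_wide}$\Rightarrow$\ref{comb:morse}.

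\emph{\ref{comb:morse}$\Rightarrow$\ref{comb:parabolic_wide}.} A subsegment contained in $\Upsilon$ lies in $\bar\nbhd_\radius(\Upsilon)$. Item \ref{main:segment_in_wide_parabolic_nbhd} of \fullref{main_theorem} therefore bounds the lengths of such subsegments directly.

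\emph{\ref{comb:parabolic_wide}$\Rightarrow$\ref{comb:special_wide}.} Suppose $\gamma'$ has all labels in $T$ with $W_T$ wide. Then $\gamma'$ lies in the single coset $gW_T$, where $g$ is its initial vertex. The subcomplex spanned by $gW_T$ is a wide parabolic subcomplex $\Upsilon$ containing $\gamma'$, so the bound from \ref{comb:parabolic_wide} applies to $\gamma'$.

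\emph{\ref{comb:special_wide}$\Rightarrow$\ref{comb:morse}.} This is the substantive direction. We prove the contrapositive of item \ref{main:segment_in_wide_parabolic_nbhd}. Suppose a long subsegment $\gamma'$ of $\gamma$ lies in $\bar\nbhd_\radius(\Upsilon)$ with $\Upsilon=gW_P$ and $W_P$ wide, where $P$ is conjugate to a special subgroup $W_T$. We must produce a long subsegment with all labels in some wide $T'\subset S$.

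\begin{enumerate}[label=(\roman*)]
\item Project $\gamma'$ to $\Upsilon$ using the gate/projection onto the parabolic subcomplex. Since $\gamma'$ stays $\radius$--close to $\Upsilon$, all but boundedly many walls crossed by $\gamma'$ cross $\Upsilon$, i.e.\ are walls of reflections in $gW_Pg^{-1}$.
\item Use the fact that consecutive vertices of a geodesic crossing a wall of $\Upsilon$ determine a letter. Along a geodesic, the letters of edges crossing walls of $\Upsilon$, read in the local frame, lie in a set $T'$ conjugate into $P$. The cleanest route is to write $\gamma'=g_0\,w$ with $g_0$ the initial vertex. Since $g_0$ is within $\radius$ of $gW_P$, the coset $g_0 W_{P'}$ with $P'=g_0^{-1}gW_Pg^{-1}g_0$ differs from $\Upsilon$ boundedly.
\item Apply the standard parabolic-closure result: a parabolic subgroup conjugated by an element of bounded length $h$ becomes $hW_Th^{-1}$. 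Decompose the geodesic word $w$ for an element $u\,x\,v$, with $x\in W_T$ and $u,v$ of bounded length (by a Deodhar/Kilmoyer-type coset decomposition).
\item Conclude: $\gamma'$ fellow-travels a geodesic of the form $u\,x\,v$. By a bounded exchange-condition argument, a long middle subsegment of $\gamma'$ itself carries labels in a conjugate-of-$T$ special subset $T'$. Any special subset conjugate to a wide one is wide, since conjugate parabolics are isomorphic as Coxeter systems up to the relevant product structure.
\end{enumerate}

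I expect the main obstacle to be step (iv): passing from ``close to a coset'' to ``labels literally in $T'$'' on a subsegment of proportional length. I would first try to show that the number of letters outside $T'$ along $\gamma'$ is bounded in terms of $\radius$ and $(W,S)$. The argument would be that each such letter crosses a wall not meeting $\Upsilon$, while the walls separating $\gamma'$ from $\Upsilon$ are boundedly many. Pigeonholing over these boundedly many bad letters then yields a subsegment of length at least $|\gamma'|/(\text{const})$ with all labels in one special subset. Finally I would handle changes of $T'$ along the subsegment by noting that only finitely many special subsets exist.
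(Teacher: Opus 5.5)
Your easy directions are fine. Step (i), together with pigeonholing over the boundedly many edges dual to walls that miss $\Upsilon$, does produce arbitrarily long runs of consecutive edges each dual to a wall cutting $\Upsilon$, exactly as in the paper. The gap is the passage from this to ``labels in a wide subset of $S$''. Steps (ii)--(iv) do not accomplish it, for three reasons. First, fellow-travelling a geodesic spelled $uxv$ gives no control on which letters occur on $\gamma'$ itself. Second, your mechanism ``a letter outside $T'$ crosses a wall missing $\Upsilon$'' is false in non-right-angled groups. Third, the target ``labels in a special subset conjugate to $T$'' is not true either.

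For example, let $S=\{t_1,t_2,t_3,t_4,s\}$ with $m(t_1,t_2)=m(t_3,t_4)=\infty$, $m(s,t_3)=3$, and all other pairs commuting. Let $T=\{t_1,t_2,t_3,t_4\}$; it is wide and is the only subset of $S$ conjugate to $T$. (Adjoining a generator joined to every other by an $\infty$-edge makes $W$ non-wide and changes nothing below.) Then $a=st_3$ is minimal in $W_Ta$, so $\gate_{\Davis_T}(a)=\one$. The geodesic from $a$ with labels $s,t_1,t_2,t_1,t_2,\dots$ stays within distance $2$ of $\Davis_T$, and every edge is dual to a wall cutting $\Davis_T$; the first has reflection $asa^{-1}=t_3$. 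Yet $s\notin T$.

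Two ideas are missing, and both are in the paper. First, an exact label computation. Translate so that the gate of the initial vertex $a=x_0$ of the run is $\one$; then $\Upsilon^{(0)}=W_T$ and $a$ is minimal in $W_Ta$. Each edge's reflection $r_j$ lies in $W_T$, so induction gives $x_j=r_j\cdots r_1x_0\in W_Ta$, hence $w_j:=as_ja^{-1}\in W_T$. Then $|w_j|+|a|=|w_ja|=|as_j|\le|a|+1$ forces $w_j\in T$. This shows only $aUa^{-1}\subset T$ for the label set $U$. So $U$ is conjugate \emph{into} $T$, not onto it, and since a subset of a wide set need not be wide, ``conjugate to wide is wide'' cannot be invoked.

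Second, you need the rigidity statement \fullref{lem:nonspherical_normalizer}~\eqref{item:irreducible_rigidity}. Once the run is long, $U$ is nonspherical. If $U$ is not wide, its unique nonspherical irreducible component $U_0$ is not higher rank affine and satisfies $U_0=aU_0a^{-1}\subset T$. The component $T_0\supseteq U_0$ of $T$ is then not higher rank affine; otherwise $U_0=T_0$, since irreducible affine groups have no proper nonspherical special subgroups. So wideness of $T$ makes $T\setminus T_0\subset U_0^\perp$ nonspherical, and $U\subset U_0\sqcup U_0^\perp$, which is wide. Alternatively, rigidity places every nonspherical component of $U$ inside $T$, and letters from the spherical part of $U$ occur boundedly often in a reduced word. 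A further pigeonhole then gives a long subsegment with labels in $T$ itself; in the example this discards the $s$-edge.
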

  The proofs of \fullref{main_theorem} and
  \fullref{main_for_combinatorial} occupy \fullref{sec:proof}. 

\smallskip

Caprace and Fujiwara \cite[Proposition~4.5]{CapFuj10}, building on
work of Caprace and Haglund \cite{CapHag09}, characterized
``rank-one'' elements of $W$.
Their result is essentially the case of
\fullref{main_theorem} in which $w\in W$ acts loxodromically on $\Davis$
with axis $\gamma$ that is Morse.
It is not clear how to prove \fullref{main_theorem} directly from that
result.
A rank-one geodesic in a CAT(0) space is one that does not bound a half-plane \cite{BalBri95}. 
For a periodic geodesic in a proper CAT(0) space this is equivalent to
being strongly contracting \cite[Theorem~5.4]{BesFuj09}, but this fact relies on periodicity;
arbitrary geodesics, or more generally in our context, admissible
paths, may encounter an irregular variety of non-hyperbolic regions,
not a single half-plane. 

Behrstock and Charney \cite{BehCha12} classified rank-one elements in
right-angled Artin groups.
Using similar techniques, 
Charney and Sultan \cite{ChaSul15} gave a characterization of Morse
geodesic rays in uniformly locally finite CAT(0) cube complexes that is very similar
to \ref{main:morse}$\iff$\ref{main:bounded_width_chain} of \fullref{main_theorem}, except that
they require the chain to be boundedly spaced along $\gamma$, not just
bounded density.
This would apply to $\Davis$ in the right-angled case, but not in
general.
An arbitrary CAT(0) cube complex does not come with a notion of parabolic subcomplexes.
\fullref{sec:wall_morse} is largely an adaptation of Charney and
Sultan's construction to Davis complexes in a way that relates the
Morse property to wide parabolics. 

Cordes and Levcovitz proved the special case of \fullref{main_for_combinatorial}
for Coxeter groups that are ``affine-free'', meaning they have no irreducible higher rank affine parabolics
\cite[Theorem~D]{CorLev25}.
They conjectured the result should be true
without the affine-free hypothesis.
We confirm that conjecture.
We reemphasize, however, that the affine-free hypothesis is very
strong: it rules out exactly the phenomenon that prevents $W$ from 
acting geometrically on its Niblo-Reeves cube complex, so affine-free
Coxeter groups are still fundamentally cubical.
It is precisely the affine parabolics that are usually responsible for
difficulties generalizing results from the right-angled case to the
general case.

Cordes and Levcovitz have essentially the same divergence construction
that we use in \fullref{sec:wall_morse}, including the use of chain density instead of bounded spacing.
We need some additional characterizations of Morseness in terms of the
wall structure to interface with the grid constructions in \fullref{sec:grid}.
\fullref{sec:bridge} does not really use the Coxeter structure in an essential
way; it is a geometric argument about discrete wall systems in a
CAT(0) space.

In \fullref{sec:grid} we use grids, as in \cite{CapMar13}, to make the translation between
wide parabolics and chain conditions.
Grids have been used before in CAT(0) cube complexes to detect non-hyperbolicity,
see \cite{Hag14,Gen19}, so, again, the outline of the argument
will look familiar to cube complex aficionados, but the difficulty is
that the tool is not as powerful in $\Davis$ as it is in CAT(0)
cube complexes.
The rough idea is that in a cube complex crossing walls only meet at right
angles, so grids naturally correspond to product subcomplexes. 
That is not true for Coxeter groups.
We use results of Caprace \cite{Cap06}, recalled in
\fullref{caprace}, that are proved using the Tits symmetric bilinear form
on the canonical root system for $(W,S)$, to show that large enough
grids do correspond to wide parabolics, either affine or product
type.
This is a key point at which Coxeter-specific technology
compensates for the lack of cocompact cubical tools.

The proof of \fullref{main_theorem} in \fullref{sec:proof} is a
formal combination of the results of the two preceding sections.
The reinterpretation of this result in terms of edge labels in
\fullref{main_for_combinatorial} requires additional work using
Coxeter-theoretic facts. 

In \fullref{sec:hyperbolic} we use a construction of Genevois
\cite{Gen19} to define a non-geodesic hyperbolic \emph{fine chain
  metric} on the Niblo-Reeves cube complex $\NR$ of $(W,S)$, which we
then pull back to $\Davis$. 
Genevois actually defines a parameterized family of hyperbolic metrics
on a cube complex, such that quasigeodesics with a common Morse gauge
are eventually recognized in a hyperbolic space with large enough
parameter. 
Notably, our result says that for a Coxeter group there is a single critical parameter for which we
can already recognize all Morse quasigeodesics, so we only need one
concrete hyperbolic metric, not a parameterized family or a limit of such a family.

In \fullref{sec:morse_recognition} we prove that $(\Davis,\dfine)$ is
Morse and stability recognizing,
\fullref{thm:Morse_recognizing_general}.
A consequence is the abstract characterization that a subgroup of $W$ is stable if and only if it is
quasiisometrically embedded in $(\Davis,\dfine)$,
\fullref{prop:stable_subgroup_qi_embeds}. For special subgroups  we
can give more concrete criteria:
\begin{theorem}[{\fullref{thm:characterize_morse_stable_for_parabolics}}]
  For $T\subset S$, let $\Gamma_T$ be the subdiagram of the Coxeter
  diagram of the finite rank Coxeter system $(W,S)$ spanned by the vertices $T$. 
  Call a diagram `spherical' if it defines a spherical Coxeter group.
  
The special subgroup $W_T$ is stable if and only if for every wide
special subgroup $W_U$ the diagram $\Gamma_{T\cap U}$ is spherical. 
  
The special subgroup $W_T$ is Morse if and only if for every wide
special subgroup $W_U$, one of the following is true:
  \begin{enumerate}
  \item The intersection of every connected component of $\Gamma_U$ with
    $\Gamma_T$ is spherical. 
    \item Every nonspherical connected component of $\Gamma_U$ is
      contained in $\Gamma_T$.
  \end{enumerate}
\end{theorem}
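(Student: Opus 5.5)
We will derive both statements from the Morse and stability recognition results for $(\Davis,\dfine)$ and from \fullref{main_for_combinatorial}. The first step is to translate the two properties into statements about combinatorial geodesics. The special subgroup $W_T$ is convex in $\Davis^{(1)}$: every combinatorial geodesic between points of $W_T$ has all labels in $T$. Hence, up to translation, the geodesics we must control are exactly the combinatorial geodesics with labels in $T$.

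\textbf{Morse.} By definition $W_T$ is Morse if all such geodesics are uniformly Morse. By \fullref{main_for_combinatorial}, this holds if and only if there is a uniform bound on the length of subsegments whose labels lie in $T\cap U$ for some wide $W_U$. Such a subsegment is a geodesic word in $W_{T\cap U}$. The Morse criterion therefore reduces to the following Coxeter-theoretic claim: \emph{$W_{T\cap U}$ contains arbitrarily long geodesic words (equivalently, is infinite) for some wide $W_U$ if and only if, for some wide $W_U$, both alternatives (1) and (2) fail.}

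We treat the two directions of the claim separately.

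If both alternatives fail for $W_U$, pick a component $\Gamma_A$ of $\Gamma_U$ with $\Gamma_A\cap\Gamma_T$ nonspherical. Pick also a nonspherical component $\Gamma_B$ of $\Gamma_U$ not contained in $\Gamma_T$.
\begin{itemize}
\item If $B\neq A$, then $W_{A\cup B}$ is a product of two infinite factors, hence wide, and $T\cap(A\cup B)\supset T\cap A$ is infinite. This already gives infinite intersections, but we need the converse bookkeeping to match the definition.
\end{itemize}

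Here I must be careful. The statement quantifies over \emph{every} wide $W_U$, and the Morse condition should fail exactly when some $W_{T\cap U}$ is itself wide, not merely infinite. So I will use the stronger form of \ref{comb:special_wide} of \fullref{main_for_combinatorial}: the obstruction is a wide special subgroup $W_V$ with $V\subset T$. This is because a geodesic with labels in $T\cap U$ is Morse-obstructing only if it lingers in a wide parabolic, and by Caprace-type facts a long geodesic in $W_{T\cap U}$ lies near a wide parabolic iff $W_{T\cap U}$ has a wide special subgroup.

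The refined claim is then: \emph{there is a wide $W_V$ with $V\subset T$ if and only if some wide $W_U$ violates both (1) and (2).} Its two directions go as follows.
\begin{itemize}
\item Suppose $V\subset T$ is wide and take $U=V$. Then (2) holds trivially, so this $U$ does not violate both alternatives. To get a violating $U$, enlarge $V$ instead: adjoin to $V$ a nonspherical component not in $T$ if one exists. If none exists, the direct argument shows the geodesic is non-Morse anyway.
\end{itemize}
This suggests the criterion is really about lingering, and I would recheck it against the exact definition of a Morse subgroup used in the paper (Morse means Morse quasigeodesics within bounded distance of the subgroup, not the subgroup's own geodesics). With that definition, a wide $W_U$ with a nonspherical component $\Gamma_B\not\subset\Gamma_T$ lets a geodesic of $W_T$ inside $W_{T\cap A}$ be shadowed by non-Morse quasigeodesics that leave $W_T$ through the $B$ direction. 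This is exactly what (1) and (2) rule out.

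\textbf{Stability.} Stability means quasiconvex with uniform Morse gauge, equivalently a quasiisometric embedding into $\coneoff$ (\fullref{prop:stable_subgroup_qi_embeds}). The plan is to show this is equivalent to $W_T$ meeting every wide parabolic coset in a bounded set, via \ref{main:segment_in_wide_parabolic_nbhd} of \fullref{main_theorem}.
\begin{itemize}
\item For cosets $gW_U$, the intersection $W_T\cap gW_U$ is a coset of a parabolic $W_{T\cap U'}$, by Kilmoyer-type intersection of parabolics. It is bounded iff some conjugate special intersection is spherical.
\item Since wideness is conjugation invariant, it suffices to check special $U$.
\end{itemize}

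\textbf{Main obstacle.} The hard step is the converse direction in the Morse case: showing that when every wide $W_U$ satisfies (1) or (2), no Morse-failing quasigeodesic stays near $W_T$. For this I would use the gate projection onto $W_T$ together with the decomposition $W_U=\prod W_{U_i}$. The aim is to show that near-$W_T$ lingering in $gW_U$ projects into factors fully inside $T$ (case (2)) or into spherical pieces (case (1)).
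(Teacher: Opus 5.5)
\textbf{Stability.} This half is essentially the paper's argument and is fine. A subsegment of a geodesic in $\Davis_T$ whose labels lie in a wide $U$ is a geodesic in a coset of $W_{T\cap U}$. Indeed, $W_T\cap gW_U$, when nonempty, is simply $hW_{T\cap U}$ for any $h$ in it, so no Kilmoyer-type argument is needed. Apply \fullref{main_for_combinatorial} directly in both directions rather than item~(4) of \fullref{main_theorem}. The latter's $\radius$--neighbourhoods would force you to redo the passage from ``near'' to ``inside'' a wide parabolic.

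\textbf{Morse: the reduction is false.} You start from the wrong definition. $W_T$ is Morse when quasigeodesics \emph{with endpoints in} $W_T$ stay uniformly close to $W_T$. Uniform Morseness of geodesics \emph{inside} $W_T$ is stability. As a result, both of your reformulated claims are false. Take $S=\{a,b,c,d,e\}$ with $m(a,b)=m(c,d)=\infty$, $a,b$ commuting with $c,d$, $m(e,s)=\infty$ for all $s\neq e$, and $T=\{a,b,c,d\}$. Then $W=W_T*\langle e\rangle$, so $W_T$ is Morse. The only wide subset of $S$ is $T$, which satisfies (2). Yet $W_T$ is infinite and wide. Your fallback (``the direct argument shows the geodesic is non-Morse anyway'') does not rescue this, since non-Morse geodesics inside $W_T$ do not obstruct Morseness of $W_T$.

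\textbf{Morse: ``both fail $\Rightarrow$ not Morse''.} Your shadowing heuristic is the paper's construction: the uniform quasigeodesics $[\one,b_n]+[b_n,a_nb_n]+[a_nb_n,a_n]$ with $a_n\in W_{T\cap A}$, $b_n\in W_B$, $|a_n|_S\geq|b_n|_S$. Two pieces are missing.
\begin{itemize}
\item It requires $A\neq B$, the case you left open. If $U$ has at least two nonspherical components, distinct choices always exist. If it has only one, that component is irreducible higher rank affine, so its proper subdiagrams are spherical. Then $T\cap A$ nonspherical forces $A\subset T$, contradicting the failure of (2).
\item You need $\dcomb(b_n,\Davis_T)\to\infty$, not only distance from $W_{T\cap B}$. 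This follows from \fullref{lem:gate_projection_image_is_residue}.
\end{itemize}

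\textbf{Morse: the real gap is the converse} ((1) or (2) for all wide $U$ $\Rightarrow$ Morse). Your sketch gives no mechanism by which a failure of Morseness produces a wide coset to which (1)/(2) can be applied. It also frames the problem as quasigeodesics ``staying near'' $W_T$, when the issue is quasigeodesics wandering far from it.

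The missing idea is the grid argument.
\begin{itemize}
\item By \fullref{morse_divergent_contracting_in_cat_zero} and \fullref{eqivalence_of_strong_contraction_conditions}, non-Morseness gives $\dcat$--geodesics $[x_n,y_n]$ far from $\Davis_T$ whose gates are far apart.
\item Walls separating the gates cut both $\Davis_T$ and $[x_n,y_n]$, giving a chain $\mathcal{V}$ of length at least $2\caprace$.
\item Walls separating a suitable point $p$ of $[x_n,y_n]$ from $\Davis_T$ give a chain $\mathcal{H}$ of length at least $2\caprace$ cutting, say, $[x_n,\gate_{\Davis_T}(x_n)]$. These are forced to cross every wall of $\mathcal{V}$, so $(\mathcal{V},\mathcal{H})$ is a grid.
\item \fullref{lem:grid_wide} and its proof give that $\normalizer_W(\pc(\mathcal{V}))$ is wide, and that either $\pc(\mathcal{V})=\pc(\mathcal{H})$ is affine or $\pc(\interior{\mathcal{H}})\leq\pc(\mathcal{V})^\perp$.
\item The affine case is impossible because $\refl{V}\in W_T$ but $\refl{H}\notin W_T$.
\item In the other case, translate so that $\pc(\mathcal{V})=W_U$ with $U\subset T$. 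The wide set $U\sqcup U^\perp$ violates (1), so (2) puts every nonspherical component of $U^\perp$ inside $T$, including the one containing $\pc(\interior{\mathcal{H}})$. This is a contradiction.
\end{itemize}
Without this step, or an equivalent one, the Morse criterion is only half proved.
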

In the right-angled case, similar characterizations of Morse/stable
special subgroups are due to Tran
\cite[Theorem~1.11]{Tra19}, in the 2--dimensional
case and Genevois \cite[Proposition~4.9]{Gen19}  (see also
\cite[Theorem 7.5]{RusSprTra23}) in arbitrary
dimensions.

In \fullref{sec:coneoff} we introduce the coned-off space and
show it is equivariantly quasiisometric to $\Davis$ with the fine
chain metric, thus concluding that it is hyperbolic and Morse
recognizing.
The payoff for this last step, passing to the coned-off space, is
twofold.
First, $(\coneoff,\dcone)$ is a geodesic space, whereas
$(\Davis,\dfine)$ is not. 
Second, the coned-off space is Coxeter-native: it is built directly
from the Coxeter structure. It interfaces with arguments
based on walls and parabolic structures. We lean heavily on this
interface in the acylindricity argument of \fullref{sec:acylindrical},
which takes place entirely in $(\coneoff,\dcone)$.
This perspective is complementary to that of several recent results
\cite{PetSprZal24,PetZal24,Zbi24} constructing, in a similar spirit to
\cite{Gen19}, and in great generality, abstract hyperbolic spaces that
detect strongly contracting/rank-one/Morse behavior.

Finally, acylindricity of $W\act\coneoff$ is proven in \fullref{sec:acylindrical}.
The proof is guided by the corresponding argument, \cite[Theorem~14.3]{BehHagSis17}, for hierarchically
hyperbolic spaces (HHS).
We do not have the full suite of HHS tools; for example, we do not
have hierarchy paths or a two-sided distance formula.
Nevertheless, using the geometry of walls and parabolic subgroups in
Coxeter groups, we establish analogues of the particular ingredients
needed to prove acylindricity, such as subsurface projection, large
link condition, bounded geodesic image theorem, one-sided distance
formula, and descent by
induction on complexity.

%% file: preliminaries.tex
Let $(W,S)$ be a finite rank Coxeter system, meaning $W$ is a Coxeter
group with a finite set $S$ of fundamental generators.
A Coxeter system $(W,S)$ determines a \emph{Coxeter diagram} that
encodes a group presentation for $W$.
The vertices of the Coxeter diagram are in bijection with $S$, whose
elements are the generators of $W$, each of order 2.
When $s$ and $t$ are distinct elements of $S$ and $st$ has order $m\in\{2,3,\dots\}\cup\{\infty\}$ in $W$, then the Coxeter diagram
has no edge between $s$ and $t$ when $m=2$ and has an edge labelled
$m$ between $s$ and $t$ otherwise. (By convention, the label 3 is
usually not written.)

A subset $T\subset S$ determines a \emph{special subgroup} $W_T$ of $W$
generated by $T$ that is isomorphic to the Coxeter group defined by
the sub-Coxeter system involving only the generators in $T$.
The special subgroups and their conjugates are called \emph{parabolic
  subgroups}.
If $P$ is a parabolic subgroup conjugate to a special subgroup $W_T$
then $|T|$ is the \emph{rank} of $P$, and $P$ is called
\emph{reducible} if there exists a decomposition $T=T_0\sqcup T_1$
such that there are no edges in the Coxeter diagram between vertices
of $T_0$ and vertices of $T_1$. This implies $W_T=W_{T_0}\times
W_{T_1}$.
The parabolic is \emph{irreducible} if it is not reducible.
Conceivably, $P$ could be conjugate to two different special subgroups
$W_T$ and $W_U$, but when this happens there is a
conjugation in $W$ taking the set $T$ bijectively to the set $U$
\cite[Proposition~4.5.10]{davisbook}.
Since the edge labels of the Coxeter diagram are determined by the
orders of products of generators, this further implies that the
subdiagram spanned by vertices of $T$ is isomorphic as a Coxeter
diagram to the subdiagram spanned by vertices of $U$. 
Thus, the rank and reducibility of $P$ are well defined, independent
of which special subgroup is taken as a representative of its
conjugacy class.

The intersection of two parabolic subgroups is a parabolic subgroup \cite[Lemma~5.3.6]{davisbook}.

A parabolic subgroup is \emph{spherical} if it is a finite group. 

The  \emph{Davis complex} $\Davis=\Davis(W,S)$ of the Coxeter system
is a cell complex whose cells correspond to cosets of spherical
special subgroups.
It admits a piecewise Euclidean, locally finite, CAT(0) polyhedral
structure,  constructed as in
\cite[Section~7.3 and Section~12.1]{davisbook}.
We normalize so that
edges have length 1 (by taking $\mathbf{d}\equiv 1/2$ in
the construction of  \cite[Section~12.1]{davisbook}).
With this normalization, the geometry of $\Davis$ is completely
determined by the Coxeter system $(W,S)$.

The reader is referred to Davis's book \cite{davisbook} for more background
on Coxeter groups, and to Bridson and Haefliger's book \cite{BriHae99}
for the geometry of CAT(0) metric spaces.

\medskip

For the remainder of the paper we regard $(W,S)$ and $\Davis$ as
fixed, so terms referred to as `constants', e.g. in
\fullref{constants}, are constants with respect to this fixed
Coxeter system and choice of metric on $\Davis$.
Our results are true, but trivial, when $W$ is finite,  virtually
cyclic, or wide, so assume not. 

\subsection{Background on Coxeter groups}\label{sec:coxeter}
\input{Coxeter_background}

\subsection{Parabolic centralizers and normalizers}
\input{parabolicnormalizer}

\subsection{Coarse geometry}
Let $(X,d)$ be a geodesic metric space.
For $x\in X$ and  $Y\subset X$, $d(x,Y):=\inf_{y\in Y}d(x,y)$.
For $R\in\mathbb{R}$, the \emph{$R$--neighborhood} of $Y$ is
$\nbhd_R(Y):=\{x\in X\mid d(x,Y)<R\}$, and the \emph{closed
  $R$--neighborhood} is  $\bar\nbhd_R(Y):=\{x\in
X\mid d(x,Y)\leq R\}$.

Subsets $Y$ and $Z$ of $X$ are \emph{coarsely equivalent} if they have
finite Hausdorff distance, i.e. $\inf\{R\mid Y\subset\bar\nbhd_R(Z)\text{
  and }Z\subset\bar\nbhd_R(Y)\}$ is finite.

For functions $\phi$ and $\psi$ from  $X^n$ to $\mathbb{R}$, write $\phi\cleq\psi$ if
there exist $\lambda$ and $\epsilon$ such that for all
$\underline{x}\in X^n$ we have $\phi(\underline{x})\leq
\lambda\psi(\underline{x})+\epsilon$.
Write $\phi\stackrel{+}{\cleq}\psi$ if $\phi\cleq\psi$ with $\lambda=1$.
Write $\phi\ceq\psi$ if $\phi\cleq\psi$ and $\psi\cleq \phi$, and write
$\phi\cequiv\psi$ if $\phi\stackrel{+}{\cleq}\psi$ and
$\psi\stackrel{+}{\cleq}\phi$.

A \emph{quasiisometric embedding} is a map $\phi\from X \to Y$ such
that for $\phi^*(d_Y)(x,x'):=d_Y(\phi(x),\phi(x'))$ we have $d_X\ceq
\phi^*(d_Y)$ as maps on $X^2$.
It is a \emph{quasiisometry} if it is a quasiisometric embedding and
$\phi(X)$ is coarsely equivalent to $Y$.
It is a \emph{biLipschitz embedding} if it is a quasiisometric
embedding with 0 additive error.

A \emph{geodesic} is an isometric embedding of an interval of
$\mathbb{R}$, which is a segment, ray, etc according to the
corresponding property of the domain.
Similarly, a \emph{quasigeodesic} is a quasiisometric embedding of
an interval of $\mathbb{R}$, and a 
 \emph{biLipschitz} path is a path that is a biLipschitz embedding of
 an interval of $\mathbb{R}$.

 \subsection{Distances}
 \input{distance}

\subsection{Caprace toolbox}\label{caprace}
We state some results of Caprace that we will use.
\begin{theorem}[{\cite[Theorem~8]{Cap06}}]\label{Caprace_8}
  There is a constant $N$ such that the following holds.
  Suppose  $\mathcal{K}=\{K_i\}_{0\leq
    i\leq n}$ is a chain and $M$ and $M'$ are walls that are each
  transverse to $\mathcal{K}$ and such that 
  $\emptyset\neq M\cap M'\subset K_0$.
  If $n>N$ then $W(\mathcal{K}\cup\{M,M'\})$ is a Euclidean triangle
  group that is contained in a higher rank irreducible affine
  parabolic.
\end{theorem}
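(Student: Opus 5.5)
The statement is quoted from Caprace \cite[Theorem~8]{Cap06}. If I were to prove it, I would work in the canonical root system $\Phi$ of $(W,S)$ with the Tits bilinear form $B$, translating wall configurations into inequalities on $B$. Each wall $H$ corresponds to a pair $\pm\alpha_H$ of real roots. Two facts drive everything. First, walls $H,H'$ cross if and only if $|B(\alpha_H,\alpha_{H'})|<1$, and then $B(\alpha_H,\alpha_{H'})=-\cos(\pi k/m)$, where $m$ is the order of the product of the two reflections. Second, disjoint walls, with roots oriented so that the half-spaces are nested, satisfy $B(\alpha_H,\alpha_{H'})\leq -1$. Pairs of crossing walls fall into finitely many $W$--orbits, one for each spherical rank-$2$ parabolic type. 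Hence the values $B(\alpha_M,\beta_i)$ and $B(\alpha_{M'},\beta_i)$ lie in a finite set $F$ depending only on $(W,S)$, where $\beta_i=\alpha_{K_i}$.

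The plan is as follows. Since $M\cap M'\neq\emptyset$, the plane $P=\mathrm{span}(\alpha_M,\alpha_{M'})$ is positive definite, so each root decomposes as $\beta_i=u_i+v_i$ with $u_i\in P$ and $v_i\in P^{\perp}$. The component $u_i$ is determined by the pair $(B(\alpha_M,\beta_i),B(\alpha_{M'},\beta_i))\in F\times F$, so it takes at most $|F|^2$ values. Take $N$ larger than a constant, fixed below, that is a multiple of $|F|^2$. By pigeonhole there are then many indices with the same $u_i=u$. For two such indices $i<j$ we get $B(v_i,v_j)=B(\beta_i,\beta_j)-B(u,u)\leq -1-B(u,u)$, while $B(v_i,v_i)=B(v_j,v_j)=1-B(u,u)$.

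Next I use the hypothesis $M\cap M'\subset K_0$. The fixed point set of the finite dihedral group $\langle r_M,r_{M'}\rangle$ lies in $K_0$, and the $K_i$ with $i>0$ miss it. So for $i>0$ the three walls $M,M',K_i$ pairwise cross but have no common point. The reflection group $\langle r_M,r_{M'},r_{K_i}\rangle$ is therefore infinite, and, being a reflection subgroup, it is a rank-$3$ Coxeter group by Dyer's theorem. Its Gram matrix has entries in $F$, so it is either a Euclidean or a hyperbolic triangle group.

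I expect the main obstacle to be ruling out the hyperbolic case once $n$ is large. My approach is to exploit the chain. If the triangle groups were hyperbolic, the $v_i$ with common $u$ would be vectors in $P^\perp$ with $B(v_i,v_i)=c<1$ and pairwise products $\leq -1-B(u,u)$ that grow along the chain. I would use the nesting $K_1\subset\dots\subset K_n$ of half-spaces, via a three-term argument in the spirit of Caprace's ``nested roots'' lemma, to show that $B(\beta_i,\beta_j)\to-\infty$ linearly in $j-i$. That would force the restriction of $B$ to $\mathrm{span}(u,v_i,v_j)$ to have signature incompatible with the bounded angle data, unless $v_i$ is isotropic, i.e.\ unless the Gram determinant vanishes. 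Concretely, I would check that a nondegenerate $3\times3$ Gram matrix of the triple $(M,M',K_i)$ with entries in $F$ admits only boundedly many nested walls crossing both $M$ and $M'$. In a hyperbolic triangle group the walls crossing two fixed intersecting walls are crossed by a bounded-length geodesic segment. This bound is what fixes $N$.

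Finally, if the form is degenerate on $\mathrm{span}(\alpha_M,\alpha_{M'},\beta_i)$, then $\langle r_M,r_{M'},r_{K_i}\rangle$ is a Euclidean triangle group, and the same holds for $W(\mathcal{K}\cup\{M,M'\})$. The reason is that all the $\beta_j$ share the isotropic direction, so they are translates $\beta_j=\beta_i+t\delta$ in an affine root subsystem. To place this group inside a higher rank irreducible affine parabolic, I would take its parabolic closure. By Krammer's results, the parabolic closure of an infinite irreducible reflection subgroup whose Tits form is degenerate is an irreducible affine parabolic, and it has rank at least $3$ because it contains a Euclidean triangle group.
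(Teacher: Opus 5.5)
The paper does not prove this statement. It quotes it from Caprace \cite{Cap06} and uses it as a black box, so your sketch has to stand on its own, and it breaks at the step you flag.

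The setup is fine: finitely many values of $B$ on crossing pairs, $P$ positive definite, pigeonhole on $u_i$, infiniteness of $\langle r_M,r_{M'},r_{K_i}\rangle$ for $i\geq 1$, and the sign of the Schur complement $B(v_i,v_i)=1-B(u,u)$ separating the Euclidean from the hyperbolic case.

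The chain inequality, however, has the wrong sign. Consistently nested roots satisfy $B(\beta_i,\beta_j)\geq 1$, and no orientation of a chain of three walls makes all pairwise values $\leq -1$. The correct estimate is therefore $B(v_i,v_j)\geq B(v_i,v_i)$, not $B(v_i,v_j)\leq -1-B(u,u)$. Your version would give $B(v_i,v_j)\leq -2$ for isotropic $v_i,v_j$. That is false in the very configuration the theorem produces: in $\tilde A_2$, take $M,M'$ to be the walls of $\alpha_1,\alpha_2$ and $\beta_k=k\delta-(\alpha_1+\alpha_2)$. Then $v_k=k\delta$ and $B(v_k,v_l)=0$.

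The same example also shows that $B(\beta_i,\beta_j)$ need not grow along a chain; it is identically $1$ there. Nested walls through a common cusp of a noncompact hyperbolic triangle group behave the same way. Nor is there a signature obstruction to invoke, since the Tits form of a general $(W,S)$ can have several negative directions.

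The missing idea is how to tie different $K_j$ together. You use the fact that, in a hyperbolic triangle group, the walls crossing two intersecting walls meet a bounded segment. That fact only controls walls of the one group $\langle r_M,r_{M'},r_{K_i}\rangle$, and nothing says $K_j$ is a wall of it for $j\neq i$. Each $K_j$ spans its own triangle group with $M,M'$. Showing that they all lie in one rank-$3$ reflection subgroup is essentially the content of the theorem.

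The same gap recurs in your last paragraph. Pigeonhole controls only one class of indices. The claim that ``all the $\beta_j$ share the isotropic direction'' is asserted, not derived: isotropic $v_i,v_j$ with $B(v_i,v_j)\geq 0$ need not be proportional. So you have not shown that every $r_{K_j}$, including $r_{K_0}$ and the indices in other classes, lies in a single Euclidean triangle group. Once that is known, the passage to a higher rank irreducible affine parabolic is \fullref{Caprace_16}.

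A minor point: Dyer's theorem does not by itself make $\{r_M,r_{M'},r_{K_i}\}$ a rank-$3$ Coxeter generating set, since the angles can be $k\pi/m$ with $k>1$. Your trichotomy only needs the signature of the $3\times 3$ Gram matrix, though.
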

\begin{lemma}[{\cite[Lemma~11]{Cap06}}]\label{Caprace_11}
  Let $\mathcal{M}$ be a set of walls with
  $W(\mathcal{M})\cong\dihedral_\infty$.
  If a wall $M$ is transverse to at least 8 walls of $\mathcal{M}$
  then it is transverse to all of them and either $\refl{M}$
  centralizes $W(\mathcal{M})$ or $W(\mathcal{M}\cup \{M\})$ is a
  Euclidean triangle group.
\end{lemma}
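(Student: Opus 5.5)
The plan is to work in the canonical root system with the Tits bilinear form $B$, since transversality of walls becomes a numerical condition there. Two walls with roots $\gamma,\delta$ cross exactly when the reflection subgroup they generate is finite dihedral, which happens exactly when $|B(\gamma,\delta)|<1$. In that case $B(\gamma,\delta)=\pm\cos(\pi k/m)$, and $m$ ranges over the orders of the finitely many conjugacy classes of finite dihedral reflection subgroups. So the set $F$ of possible values of $B$ on crossing pairs is finite, and every wall $H_\gamma$ is orthogonal to $H_\delta$ exactly when $B(\gamma,\delta)=0$.

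\textbf{Step 1: parametrize the walls of $\mathcal{M}$.} Write $W(\mathcal{M})=\langle r_\alpha,r_\beta\rangle\cong\dihedral_\infty$ with $B(\alpha,\beta)=-c$ and $c\geq 1$. Up to sign, the roots of the walls of $\mathcal{M}$ form a sequence $\gamma_k$, $k\in\mathbb{Z}$, in $\mathrm{span}(\alpha,\beta)$, obtained by iterating the translation $r_\alpha r_\beta$.
\begin{itemize}
\item If $c=1$ (the affine case), $\gamma_k=\alpha+k(\alpha+\beta)$ up to indexing, so $\gamma_k$ is affine in $k$.
\item If $c>1$, $\gamma_k=a\lambda^k+b\lambda^{-k}$ for eigenvectors $a,b$ of $r_\alpha r_\beta$ with eigenvalues $\lambda^{\pm1}$, where $\lambda>1$ solves $\lambda+\lambda^{-1}=4c^2-2$.
\end{itemize}
For the wall $M$ with root $\mu$, set $f(k)=B(\mu,\gamma_k)$. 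Then $f$ is affine in $k$ in the first case, and of the form $A\lambda^k+B'\lambda^{-k}$ in the second. The hypothesis says $f(k)\in F$ for at least $8$ integers $k$.

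\textbf{Step 2: rigidity.} In the affine case, a nonconstant affine function takes each value at most once. Since $f$ lands in the finite set $F$ at $8$ points, we need to exclude nonconstant $f$. Any two of the eight points differ by at most the spread of $F$, which lies in $(-2,2)$, while the slope is a value of $B(\mu,\alpha+\beta)$. I expect to show that this slope lies in a discrete set bounded away from $0$ (again by finiteness of $W$-orbits of pairs), so eight points force slope $0$. Then $f\equiv f(k_0)$ with $|f|<1$, so $M$ is transverse to all walls of $\mathcal{M}$.

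In the case $c>1$, the function $A\lambda^k+B'\lambda^{-k}$ is strictly convex or monotone unless $A=B'=0$. So it takes each value at most twice, and it leaves $(-1,1)$ after a number of steps controlled by $\lambda$, which is bounded below since $c$ lies in a finite set. This is where the constant $8$ should come from. It forces $A=B'=0$, so $f\equiv0$: $M$ is orthogonal, hence transverse, to every wall of $\mathcal{M}$.

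\textbf{Step 3: the dichotomy.} If $f\equiv0$, then $r_\mu$ commutes with $r_\alpha$ and $r_\beta$ and so centralizes $W(\mathcal{M})$. This covers the whole case $c>1$, and the affine case with constant $0$. The remaining case is $c=1$ with $f\equiv f_0\neq0$ and $|f_0|<1$. Then the Gram matrix of $B$ on $\{\alpha,\beta,\mu\}$, with signs adjusted so the off-diagonal entries are nonpositive, is
\[
\begin{pmatrix}1&-1&-|f_0|\\-1&1&-|f_0|\\-|f_0|&-|f_0|&1\end{pmatrix}.
\]
Relative to $\alpha+\beta$, which is isotropic and $B$-orthogonal to $\mu$, this form is positive semidefinite with a one-dimensional radical. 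Using $|f_0|=\cos(\pi/m)$ with $m\in\{3,4,6\}$ forced by the degenerate positive semidefinite condition, $W(\mathcal{M}\cup\{M\})$ is a Euclidean triangle group.

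The main obstacle is the quantitative part of Step 2: turning ``$f$ takes values in a finite set at $8$ integers'' into rigidity. This requires uniform discreteness and lower bounds, both for $\lambda$ and for the possible nonzero slopes $B(\mu,\alpha+\beta)$. I would get these from the finiteness of $W$-orbits of roots modulo pairs of simple roots, and from the fact that $B$ takes values in a finitely generated ring with the relevant values lying in $\{\cos(\pi k/m)\}\cup(-\infty,-1]$.
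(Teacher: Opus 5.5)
There is a genuine gap in Step 2, exactly at the point you flag. It is structural rather than computational: every input you propose depends on $(W,S)$. That includes finiteness of $F$, a lower bound on nonzero slopes, and a lower bound on $\lambda$. So as it stands the method yields at best \emph{some} threshold $N(W,S)$, not the universal constant $8$ of the statement. (The paper does not prove this lemma; it quotes it from \cite{Cap06}.)

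\emph{Affine case.} The slope $s=f(k+1)-f(k)$ does lie in the finite set $F-F$. But excluding eight values $f_0+ks\in(-1,1)$ needs $7|s|\geq 2$. Meanwhile $F-F$ contains $\cos(\pi/m)-\cos(2\pi/m)=O(m^{-2})$ as soon as $W$ has a dihedral parabolic of order $2m$.

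\emph{Case $c>1$.} The claim that $c$ lies in a finite set is false in general. For two disjoint walls of a cocompact hyperbolic reflection group, $c=\cosh d$ with $d$ their distance, which is unbounded. Even for fixed $\lambda$, the number of consecutive $k$ with $|A\lambda^k+B'\lambda^{-k}|<1$ is unbounded as $A,B'\to 0$. So ``leaves $(-1,1)$ after a number of steps controlled by $\lambda$'' is not true as stated.

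\emph{What can be salvaged.} A $W$--dependent version survives. Index all the walls by the half-step isometry $\alpha\mapsto\beta$, $\beta\mapsto-\alpha-2c\beta$ of $\mathrm{span}(\alpha,\beta)$; iterating $r_\alpha r_\beta$ only reaches every other wall. Then $f(k+1)+f(k-1)=2cf(k)$. Four consecutive crossings with $f\not\equiv 0$ force $2c$ into a finite set determined by $F$. The lower bounds $\min|F\setminus\{0\}|$ and $\min|(F-F)\setminus\{0\}|$ then bound the length of any crossing interval on which $f$ is nonconstant. This gives the lemma with $8$ replaced by some $N(W,S)$. That weaker form would in fact suffice for this paper, since $\caprace$ could absorb it. But the stated constant needs a uniform argument, and your proposal does not contain one.

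Step 3 also needs repair. With $B(\alpha,\beta)=-1$, the vector $\alpha+\beta$ lies in the radical, so $B(\mu,\beta)=-B(\mu,\alpha)=-f_0$. Hence the product of the three off-diagonal Gram entries is $f_0^2>0$, and no choice of signs makes all of them nonpositive. The matrix you display has determinant $-4f_0^2$, so it is indefinite. The correct matrix, with off-diagonal entries $-1$, $f_0$, $-f_0$, is positive semidefinite of corank one. But that holds for \emph{every} $f_0=\cos(\pi k/m)$, so degeneracy does not force $m\in\{3,4,6\}$.

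The missing input is that $R=\langle r_\alpha,r_\beta,r_\mu\rangle$ is a reflection subgroup. By Deodhar and Dyer it is therefore a Coxeter group whose canonical simple roots lie in $\mathrm{span}(\alpha,\beta,\mu)$, with Gram matrix given by $B$. That matrix is positive semidefinite of rank at most $2$. Moreover $R$ is infinite and irreducible, since its three generators pairwise fail to commute. The classification then makes $R$ irreducible affine of rank at most $3$. Rank $2$ is excluded because $R$ contains the finite dihedral group $\langle r_\alpha,r_\mu\rangle$ of order at least $6$. So $R$ is a Euclidean triangle group.
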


\begin{proposition}[{\cite[Proposition~16]{Cap06}}]\label{Caprace_16}
  If $R$ is a higher rank irreducible affine reflection subgroup then
  so is its parabolic closure.
\end{proposition}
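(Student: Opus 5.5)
The plan is to work in the Tits representation of $W$ on the root space $V$ with its symmetric bilinear form $B$, and to prove three things about $P:=\pc(R)$: it is irreducible, its Tits form is positive semidefinite and degenerate (so $P$ is affine), and it has rank at least $3$. We may conjugate so that $P=W_T$ for some $T\subset S$. By Deodhar/Dyer, $R$ is itself a Coxeter group generated by reflections $\refl{\beta}$ for a set $\Pi_R$ of positive roots. Since $R$ is irreducible affine, $B$ restricted to $\mathrm{span}\,\Pi_R$ is positive semidefinite with one-dimensional radical spanned by a vector $\delta=\sum_{\beta\in\Pi_R}c_\beta\beta$ with all $c_\beta>0$. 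Because $R\subset W_T$, every $\beta\in\Pi_R$ is a root of $W_T$, so $\delta$ lies in the nonnegative cone spanned by $\{\alpha_t\}_{t\in T}$.

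\emph{Step 1: irreducibility.} Suppose $W_T=W_{T_0}\times W_{T_1}$ with both factors nontrivial. Every reflection of $W_T$ lies in $W_{T_0}$ or in $W_{T_1}$, since its root is supported in $T_0$ or in $T_1$. Reflections from different factors commute. So $\Pi_R$ splits into two mutually orthogonal subsets, and irreducibility of $R$ puts all of $\Pi_R$ in one of them, say the one for $T_0$. Then $R\subset W_{T_0}$, a proper parabolic subgroup, contradicting minimality of the parabolic closure. Hence $W_T$ is irreducible.

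\emph{Step 2: $W_T$ is affine.} I would try to show that $\delta$ lies in the radical of $B|_{\mathrm{span}\{\alpha_t\}_{t\in T}}$. Since $\delta$ is a nonzero isotropic vector with nonnegative coefficients in the simple roots, the standard classification via the Perron--Frobenius argument on the Cartan matrix of an irreducible $W_T$ then forces the form to be positive semidefinite and degenerate, i.e.\ $W_T$ affine. Getting $\delta$ into the radical is the main obstacle.

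My first attempt would be to consider $Q:=\{w\in W_T : w\delta=\delta\}$. It contains $R$, because $B(\delta,\beta)=0$ for every $\beta\in\Pi_R$. I would show $Q$ is a parabolic subgroup of $W_T$, using that $\delta$ lies in the nonnegative simple-root cone, by adapting the standard fact that stabilizers of points in the (dual) Tits cone are parabolic. Minimality of $\pc(R)$ then gives $Q=W_T$. So every $t\in T$ fixes $\delta$, i.e.\ $B(\delta,\alpha_t)=0$ for all $t\in T$, which is exactly the radical condition.

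If the Tits-cone argument does not apply directly, since $\delta$ lives in the root space rather than the dual, the fallback is Krammer's theorem on parabolic closures of infinite-order elements. Take a translation $\tau\in R$ whose essential action is along $\delta$. Its parabolic closure is contained in $W_T$ and, by the same minimality argument applied to the finitely many $R$-conjugates, it must equal $W_T$. Krammer's description of the $\tau$-invariant isotropic directions then gives the radical condition.

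\emph{Step 3: rank.} $R$ contains $\mathbb{Z}^{k}$ with $k=\rank R-1\geq 2$, and an affine group of rank $m$ has virtually $\mathbb{Z}^{m-1}$ as maximal abelian subgroups. Hence $|T|-1\geq 2$, so $W_T$ is higher rank irreducible affine.
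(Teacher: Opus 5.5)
The paper gives no proof of this statement; it is quoted from Caprace, so your argument has to stand on its own. Steps 1 and 3 and the Perron--Frobenius finish of Step 2 are fine. Your setup silently uses Dyer's fact that distinct canonical simple roots whose reflections have product of finite order $m$ satisfy $B(\beta,\beta')=-\cos(\pi/m)$. This is where rank $\geq 3$ enters, since it excludes infinite labels, for which $B(\beta,\beta')<-1$ is possible.

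The gap is the step that carries all the content: getting $\delta$ into the radical of $B$ on $\mathrm{span}\{\alpha_t\}_{t\in T}$. Your first route rests on the principle that the $W_T$--stabilizer of a vector in the nonnegative cone of simple roots is parabolic, and that principle is false. In $W(\tilde{A}_2)$ the simple root $\alpha_1$ is fixed by the lattice translations parallel to the wall of $s_1$, but not by $s_1$. So its stabilizer is infinite and proper, hence not parabolic, because proper parabolics of an irreducible affine group are finite. The Tits-cone fact transfers to a vector $v$ only when $-B(v,\cdot)$ lies in the Tits cone of the contragredient action. Equivalently, some translate $wv$ ($w\in W_T$) lies in $\{u \text{ in the nonnegative cone on } \{\alpha_t\}_{t\in T} : B(u,\alpha_t)\leq 0 \text{ for all } t\in T\}$; equivalently again, only finitely many positive roots $\alpha$ of $W_T$ satisfy $B(v,\alpha)>0$. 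Proving this for $\delta$ is the real work. Once it is done nothing else is needed: write $w\delta=\sum_t d_t\alpha_t$ with $d_t\geq 0$. The identity $0=B(w\delta,w\delta)=\sum_t d_tB(w\delta,\alpha_t)$, with every term $\leq 0$, forces $B(w\delta,\alpha_t)=0$ on the support of $w\delta$, and that support is all of $T$ by minimality of $\pc(R)$. So your first route relocates the difficulty rather than resolving it.

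The fallback does not supply the missing step either. Minimality of $\pc(R)$ says nothing about $\pc(\tau)$. The $R$--conjugates of $\tau$ have parabolic closures $r\pc(\tau)r^{-1}$, which are in general different subgroups, so no single proper parabolic containing $R$ is produced. It is also unclear which statement of Krammer about ``$\tau$--invariant isotropic directions'' you mean, or how it would give the radical condition.

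The route can be repaired as follows.
\begin{enumerate}
\item For any parabolic $P\leq W$, $P\cap R$ is a parabolic subgroup of the Coxeter system $(R,\{\refl{\beta}\}_{\beta\in\Pi_R})$. Indeed, $P$ is the stabilizer of a point of the Tits cone of $W$. That point is negative on only finitely many positive roots of $W$, hence on only finitely many positive roots of $R$, so its $R$--stabilizer is parabolic in $R$.
\item Apply this to $P=\pc(\tau)$ for a nontrivial translation $\tau\in R$. The intersection is an infinite parabolic subgroup of the irreducible affine group $R$, so it is all of $R$, and therefore $\pc(\tau)=W_T$.
\item The Krammer input you need is his centralizer theorem: an essential element of an irreducible, nonspherical, non-affine Coxeter group generates a finite-index subgroup of its centralizer.
\item That conclusion is incompatible with the translation lattice $\mathbb{Z}^{\rank R-1}$, of rank at least $2$, lying in $\centralizer_{W_T}(\tau)$. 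Hence $W_T$ is affine.
\end{enumerate}
Combined with your Steps 1 and 3, this gives a complete proof.
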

\begin{lemma}[{\cite[Lemma~17]{Cap06}}]\label{Caprace_17}
  Let $\mathcal{K}$ be a chain containing at least 2 walls.
  Let $M$ be a wall. If one of the following is true then
  $\refl{M}\in\pc(\mathcal{K})$.
  \begin{itemize}
  \item $M$ separates two walls of $\mathcal{K}$.
  \item $W(\mathcal{K}\cup M)\cong\dihedral_\infty$
    \item $W(\mathcal{K}\cup M)$ is a Euclidean triangle subgroup.
  \end{itemize}
\end{lemma}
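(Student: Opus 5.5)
The statement lists three sufficient conditions, and I would treat them one at a time. The plan is to use the residue description of parabolic subgroups. Write $\pc(\mathcal{K})=wW_Tw^{-1}$; this group is the stabilizer of the residue $R=wW_T$, a convex set of chambers. A reflection $\refl{H}$ lies in $\stab(R)$ exactly when the wall $H$ separates two chambers of $R$. Since $\refl{K}\in\pc(\mathcal{K})$ for every $K\in\mathcal{K}$, each wall of $\mathcal{K}$ separates chambers of $R$. The task in every case is therefore to show that $M$ also separates two chambers of $R$, or to reach $\refl{M}\in\pc(\mathcal{K})$ by a group-theoretic shortcut.

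\emph{First condition: $M$ separates $K_i$ and $K_j$.} Then $M$ is disjoint from both walls, so $K_i$ lies in an open halfspace $M^+$ and $K_j$ lies in the opposite open halfspace $M^-$. The wall $K_i$ separates chambers of $R$, so some chamber $c\in R$ has a panel on $K_i$. A chamber lies on one side of every wall, and this panel lies in $M^+$, so $c\subset M^+$. In the same way there is a chamber $c'\in R$ with a panel on $K_j$, and $c'\subset M^-$. Thus $M$ separates $c$ from $c'$. By convexity of $R$, a minimal gallery from $c$ to $c'$ stays in $R$. This gallery must cross $M$, which gives $\refl{M}\in\stab(R)=\pc(\mathcal{K})$.

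\emph{Second condition: $W(\mathcal{K}\cup M)\cong\dihedral_\infty$.} Any two disjoint walls of $\mathcal{K}$ already generate an infinite dihedral subgroup $W(\mathcal{K})\subset\pc(\mathcal{K})$. Its reflection walls form a bi-infinite chain $\mathcal{K}'$: the translates of $K_0$ and $K_1$ under the translation $\refl{K_0}\refl{K_1}$. This chain satisfies $\pc(\mathcal{K}')=\pc(\mathcal{K})$, and its walls are all orthogonal to the common axis of $W(\mathcal{K}\cup M)$. The wall $M$ is also orthogonal to that axis. So either $M\in\mathcal{K}'$, and we are done, or $M$ separates two walls of $\mathcal{K}'$, and the first case applies.

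\emph{Third condition: $W(\mathcal{K}\cup M)$ is a Euclidean triangle group.} Here I would argue purely algebraically. By \fullref{Caprace_16}, $Q:=\pc(\mathcal{K}\cup M)$ is a higher rank irreducible affine parabolic. The intersection $\pc(\mathcal{K})\cap Q$ is parabolic and contains $W(\mathcal{K})$, so by minimality $\pc(\mathcal{K})\subset Q$. A parabolic subgroup of $W$ contained in the parabolic $Q$ is a parabolic subgroup of the Coxeter group $Q$ with its induced Coxeter structure. Every proper parabolic of an irreducible affine Coxeter group is spherical. On the other hand, $\pc(\mathcal{K})$ is infinite, because it contains $\dihedral_\infty$. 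Hence $\pc(\mathcal{K})=Q\ni\refl{M}$.

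The main obstacle is the standard-but-fiddly fact used in the third case: that a parabolic subgroup of $W$ lying inside a parabolic subgroup $Q=vW_Uv^{-1}$ is a parabolic subgroup of the Coxeter system $(W_U,U)$ after conjugating by $v$. I would prove this via residues: a residue stabilized inside the residue $vW_U$ is a residue of $vW_U$. The facts about residues in the first case, that stabilizers are convex and that a reflection lies in the stabilizer iff its wall cuts the residue, are also standard, and I would cite \cite{davisbook} for them.
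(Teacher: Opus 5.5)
Your proposal is correct. The paper gives no proof of this lemma beyond the citation \cite[Lemma~17]{Cap06}. Its only hint is the remark in the proof of \fullref{prop:grid}: the triangle-group case rests on the finiteness of proper parabolic subgroups of an irreducible affine parabolic. That is exactly your third case, so your write-up in effect gives a self-contained proof where the paper defers to Caprace. Your residue-convexity argument for the first case is sound, and so is your reduction of the dihedral case to the first case.

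A few points should be tightened.

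\emph{Dihedral case, notation.} The group $\langle\refl{K_0},\refl{K_1}\rangle$ can be a proper subgroup of $W(\mathcal{K})$, so do not call it $W(\mathcal{K})$. You also do not need $\pc(\mathcal{K}')=\pc(\mathcal{K})$: the inclusion $\pc(\mathcal{K}')\leq\pc(\mathcal{K})$ suffices, and it is immediate from $W(\mathcal{K}')\leq W(\mathcal{K})$ and minimality of parabolic closures.

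\emph{Dihedral case, the ``common axis''.} This needs a justification. Let $t$ generate the index-two infinite cyclic subgroup of $G:=W(\mathcal{K}\cup M)$.
\begin{itemize}
\item $t$ is hyperbolic, and $\mathrm{Min}(t)\cong Y\times\mathbb{R}$ is $G$--invariant with its splitting preserved, because each reflection of $G$ conjugates $t$ to $t^{-1}$.
\item $G$ acts on $Y$ through a group of order at most $2$, which fixes some $y_0$. Then $\{y_0\}\times\mathbb{R}$ is a $G$--invariant line.
\item Each reflection of $G$ acts on this line as a point reflection, so its wall crosses the line exactly once, transversally.
\item Pairwise disjoint walls of this kind are ordered by separation according to their crossing points.
\item If $M$ crosses at the same point as a wall of $\mathcal{K}'$, it equals that wall, since distinct reflections in $G\cong\dihedral_\infty$ have disjoint walls.
\end{itemize}

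\emph{Third case, the residue fact.} As phrased, it is ambiguous. A parabolic subcomplex whose stabilizer lies in $Q=\stab(v\Davis_U)$ need not itself lie in $v\Davis_U$; it may be a parallel copy. The correct statement is this. If $\stab(\Xi)=\pc(\mathcal{K})\leq Q$, then $\gate_{v\Davis_U}(\Xi)$ spans a parabolic subcomplex of $v\Davis_U$, by \fullref{lem:gate_projection_image_is_residue}. Its stabilizer is again $\pc(\mathcal{K})$: the inclusion $\leq$ follows from \fullref{cor:cross_projections_have_same_stabilizer} and \fullref{lem:nested_residues_have_nested_stabilizers}, and the inclusion $\geq$ follows from $W$--equivariance of gate maps. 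This is what makes $\pc(\mathcal{K})$ a parabolic subgroup of the Coxeter system underlying $Q$.
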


\subsection{Constants}\label{constants}
We fix some constants of the Coxeter system
$(W,S)$ and the choice of $\Davis$.

Let $\caprace:=\max\{N+2,8\}$, where $N$ is the constant of
\fullref{Caprace_8} (and 8 is the constant of \fullref{Caprace_11}).

Take $\radius$ large enough that for all $x\in\Davis$ and $\Upsilon$ a parabolic
subcomplex, \fullref{lem:distance_to_standard} implies:
\[\dcat(x,\Upsilon)\geq \radius\implies \dchain(x,\Upsilon)\geq 4\caprace-1\]

Suppose that \fullref{all_metrics_are_equivalent} gives $\Lambda$ and $\Epsilon$
such that for all $x,y\in\Davis$:
\[\dcat(x,y)/\Lambda-\Epsilon\leq\dchain(x,y)\leq \Lambda\dcat(x,y)+\Epsilon\]

Given $\lambda\geq 1$ and $\epsilon\geq 0$, define $\density(\lambda,\epsilon):=\frac{1}{2\lambda\Lambda}$ and 
$\length(\lambda,\epsilon):=2\lambda(\Lambda\Epsilon+\epsilon)$.

If $\gamma\from I \to\Davis$ is a
$(\lambda,\epsilon)$--quasigeodesic and $[x,y]\subset I$ then:
\[|y-x|\geq\length(\lambda,\epsilon)\implies \frac{\dchain(\gamma(x),\gamma(y))}{|y-x|}\geq\density(\lambda,\epsilon)\]

So if $|y-x|$ is large enough then a chain realizing
$\dchain(\gamma(x),\gamma(y))$ has length at least
$|y-x|\density(\lambda,\epsilon)$.
In particular, there do exist chains separating $\gamma(x)$ and
$\gamma(y)$ whose \emph{density}, in terms of number of walls per unit
length in the parameter space, is at least $\density(\lambda,\epsilon)$.

\begin{remark}
  All of these constants can be effectively bounded.
  The point is that there are only finitely many conjugacy classes of
  spherical subgroups in $W$, and they can be enumerated in terms of
  the known types of irreducible spherical Coxeter diagrams appearing as
  full subgraphs in the Coxeter diagram for $(W,S)$.
  Caprace's constant from \fullref{Caprace_8} and a bound on
  $\dim(\NR)$ \cite[Lemma~3]{NibRee03} are expressed in terms of the values of
  the Tits form on subsystems of the canonical root system
  corresponding to spherical subgroups.
  Since the
  Tits form has an explicit closed form, these quantities are
  computable.
  Likewise, the cells of the Davis complex are determined by the
  orthogonal representations of the spherical subgroups, which can be
  explicitly computed, so there are only finitely many isometry types
  of cells in $\Davis$ and they are isometric to convex Euclidean
  polytopes that are described by linear algebra. 
  Effective bounds on the distance relations in \fullref{all_metrics_are_equivalent}
and \fullref{lem:distance_to_standard} can be extracted from the
geometry of cells of $\Davis$ and the bound on $\dim(\NR)$.
\end{remark}

\subsection{Wide groups}\label{sec:wide}
A group is \emph{constricted} if all of its asymptotic cones have cut
points or \emph{unconstricted} otherwise \cite{DruSap05}.
A group is \emph{wide} if none of its asymptotic cones have cut
points \cite{DruMozSap10}, which is a strictly stronger condition than
being unconstricted.
For finitely generated groups, the existence of a biinfinite Morse
geodesic implies that the group is constricted.

For Coxeter groups there is a nice wide/constricted dichotomy visible
from the Coxeter diagram: 
\begin{proposition}\label{wide_characterization}
  Let $W=P_1\times\cdots P_n\times K$ be the canonical product
  decomposition of $(W,S)$ where the $P_i$ are special subgroups corresponding to
nonspherical connected components of the Coxeter diagram, and $K$ is the
maximal spherical special subgroup.
Suppose $W\neq K$.
Then $W$ is wide if and only if either $n\geq 2$ or $n=1$ and $P_1$ is higher
rank affine.  
\end{proposition}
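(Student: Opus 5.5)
We prove the two implications separately. Throughout, $W=P_1\times\cdots\times P_n\times K$ is quasiisometric to $P_1\times\cdots\times P_n$, since $K$ is finite. Being wide is a property of asymptotic cones, so it is a quasiisometry invariant, and we may discard $K$.

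\emph{If $n\geq 2$ or $P_1$ is higher rank affine, then $W$ is wide.} Suppose first that $n\geq 2$. Each $P_i$ is an infinite finitely generated group, so its Cayley graph is unbounded and every asymptotic cone of $P_i$ contains a geodesic line. An asymptotic cone of a product is the product of the asymptotic cones of the factors (with the $\ell^2$ or $\ell^1$ metric, which are biLipschitz equivalent). A product $A\times B$ of two geodesic spaces with more than one point has no cut points. Indeed, given $(a,b)$ and two other points, we can connect them by paths that first move in one factor and then in the other. Since both factors are unbounded geodesic spaces, we can choose the detour to avoid the single point $(a,b)$. Now suppose $n=1$ and $P_1$ is irreducible higher rank affine, i.e.\ of rank at least $3$. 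Then $P_1$ acts geometrically on $\mathbb{E}^m$ with $m=|S_1|-1\geq 2$, so every asymptotic cone is $\mathbb{E}^m$, which has no cut points.

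\emph{If $n=0$, or $n=1$ and $P_1$ is not higher rank affine, then $W$ is not wide.} The case $n=0$ is excluded by the hypothesis $W\neq K$, so $n=1$. If $P_1$ is affine of rank $2$, then $P_1\cong\dihedral_\infty$ is virtually $\mathbb{Z}$, and its asymptotic cones are $\mathbb{R}$, which has cut points. Otherwise $P_1$ is irreducible, nonspherical and nonaffine. By Caprace--Fujiwara \cite[Proposition~4.5]{CapFuj10}, such a group contains a rank-one element $w$ acting on $\Davis(P_1)$. This uses the fact that irreducible nonspherical nonaffine Coxeter groups contain a rank-one isometry of the Davis complex, which follows from Caprace--Fujiwara together with the Caprace--Haglund results. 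The axis of $w$ is a periodic rank-one geodesic, hence strongly contracting \cite[Theorem~5.4]{BesFuj09}, hence Morse. A finitely generated group with a biinfinite Morse geodesic is constricted, as noted above: the Morse geodesic gives cut points in every asymptotic cone whose scaling sequence sees the line, by Druţu--Mozes--Sapir. So $W$ is not wide. Alternatively, one can quote directly the result of \cite{DruMozSap10} that groups with a Morse element have cut points in all asymptotic cones.

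The main obstacle is the last step: certifying a rank-one element in every irreducible nonaffine nonspherical Coxeter group. We delegate it to Caprace--Fujiwara. Their paper states that such a $W$ is either affine, spherical, or contains a rank-one element, which is precisely the trichotomy needed here. The remaining steps are standard asymptotic-cone facts.
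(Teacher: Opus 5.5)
Your proof is correct and follows the same route as the paper. In the wide direction, you use that products of infinite groups (including virtually $\mathbb{Z}^m$ with $m\geq 2$ in the higher rank affine case) have no cut points in their asymptotic cones. Otherwise, you use that Caprace--Fujiwara \cite[Proposition~4.5]{CapFuj10} supplies a rank-one, hence Morse, element, which forces cut points.

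Your separate treatment of the $\dihedral_\infty$ case is harmless; the paper absorbs it into the rank-one case. Also, for ``not wide'' you only need a single asymptotic cone with a cut point, so establishing constrictedness is more than required.
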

\begin{proof}
  If $n\geq 2$ or $n=1$ and $P_1$ is higher rank affine then $W$ is
  virtually a product of infinite groups, in which case it is easy to
  argue that it is wide.
  Otherwise, Caprace and Fujiwara \cite[Proposition~4.5]{CapFuj10}
  say that $W$ contains a rank-one element (equivalently, a Morse element), so it is constricted. 
\end{proof}
\begin{definition}[{cf \cite[Definition~3.1]{CorLev25} and the
definition $\mathbb{T}_0$ of \cite[Section~A.1]{BehHagSis17cox}}]\label{def:wide}
  A \emph{wide parabolic subgroup} $P$ of $(W,S)$ is a nonspherical parabolic
  that is wide as a Coxeter group, in the sense of  
\fullref{wide_characterization}, which occurs exactly if
  either $P=B\times C$, where $B$ and $C$ are nonspherical
  parabolics of $(W,S)$, or $P=A\times K$, where $A$ is an irreducible
  higher rank affine parabolic of $(W,S)$ and $K$ is a spherical
  parabolic of $(W,S)$.

  A \emph{wide parabolic subcomplex} is a parabolic subcomplex whose
  stabilizer is a wide parabolic subgroup.
\end{definition}

\subsection{Variations of the Morse property}\label{sec:morse_variation}
Let $Z$ be a closed subset of a proper geodesic metric space $X$.
We define several properties that describe quantitatively how $Z$ sits
inside $X$ in a manner similar to a quasiconvex subspace of a
hyperbolic space.
These properties and equivalences between them were first stated for
the case of $Z$ being a geodesic or quasigeodesic
\cite{BesFuj09,Alg11, Sul14, Sis18}, see also the survey \cite{Cas19Hab}.

\begin{definition}\label{def:morse_property}
A subset $Z$ of a geodesic metric space $X$ is \emph{Morse} or has the
\emph{Morse property} if it is quasigeodesically quasiconvex, in the
sense there exists a real valued function
$\mu(\lambda,\epsilon):=\sup_\gamma\sup_{x\in\gamma}d(x,Z)$, where the
first supremum is taken over $(\lambda,\epsilon)$--quasigeodesic
segments $\gamma$ with both endpoints on $Z$.
A function bounding $\mu$ from above is known as a \emph{Morse gauge}
for $Z$; the given $\mu$ is the \emph{optimal Morse gauge}.
\end{definition}
The content of the definition is whether or not $\mu$ defines a real
valued function; if $Z$ is not Morse then there is some input pair
$(\lambda,\epsilon)$ on which $\mu$ takes value $\infty$.

The next definitions makes sense using the \emph{set
  valued} closest point projection:
\[\pi_Z\from X\to Z\from x\mapsto\{z\in Z\mid
  d(x,z)=d(x,Z)\}\]

When $X$ is proper and $Z$ is closed then point preimages are
nonempty, but a priori they could have arbitrarily large diameters.
If $Z$ is convex and $X$ is CAT(0) then point images of $\pi_Z$ are single points,
and, in fact, $\pi_Z\from X\to Z$ is 1--Lipschitz.
\begin{definition}\label{def:strong_contraction}
  A closed subset $Z$ of a proper geodesic metric space $X$ is \emph{$\chi$--strongly
  contracting} if $\chi\geq\sup_{x,y}\diam(\pi_Z(x)\cup\pi_Z(y))$,
where the supremum is taken over points $x$ and $y$ such that
$d(x,y)\leq d(x,Z)$  and the \emph{diameter} of $Y\subset X$ is $\diam(Y):=\sup_{y,y'\in Y}d(y,y')$.
\end{definition}

\begin{definition}[Divergence]\label{def:divergence}
  For a closed subset $Z$ of a proper geodesic metric space $X$ and parameters
  $0<M\leq 1$ and $N>2$, define the \emph{optimal divergence gauge}
  $\delta(r;M,N):=\inf_{x,y}\bar d_{Mr}(x,y)$, where the infimum is taken
  over all pairs $x$ and $y$ at distance $r$ from $Z$ with $d(x,y)\geq
  Nr$.
The modified distance $\bar d_{Mr}(x,y)$ denotes the length of the shortest path
  in $X$ connecting $x$ to $y$ that stays outside the $Mr$--neighborhood of
  $Z$, or $\infty$ if no such path exists.
  Any lower bound on $\delta(r;M,N)$ is a \emph{divergence gauge} for
  $Z$ with respect to parameters $M$ and $N$. 
\end{definition}

This notion of divergence appears in \cite{Cas19Hab}.
It is the same, up to a suitable equivalence
relation on functions, to other familiar versions of divergence, and
the equivalence class of the function does not depend on the choices
of parameters $M$ and $N$ \cite[Proposition~8.4]{Cas19Hab}.
The equivalence relation distinguishes functions of different
polynomial degrees, so phrases such as ``$Z$ has at least quadratic
divergence'' make sense, and are established by showing there is some
valid choice of $M$ and $N$ such that $Z$ has a divergence gauge with
respect to $M$ and $N$ that is bounded
below by a quadratic function of $r$.

By convention, the infimum of the empty set of real numbers is
$\infty$, so if for some $r$ there are no suitable pairs $(x,y)$ in
\fullref{def:divergence} then $\delta(r;M,N)=\infty$.
In our situation, where $W$ is infinite and not virtually
$\mathbb{Z}$,  $X=\Davis$, and $Z$ is a
  quasigeodesic, suitable pairs $(x,y)$ do exist once the domain of the
  quasigeodesic is long enough with respect to $r$ and $N$ and the quasigeodesic
  constants. 
  This is the expected behavior: every closed bounded set certainly has some Morse /contraction/divergence gauge that can be defined in terms of its
  diameter.
  Sometimes those bounds are essentially best possible. For instance,
  a long geodesic segment in a Euclidean plane does not display any
  hyperbolic-like behavior on scales smaller than its length.
  In contrast, long geodesic segments in the hyperbolic plane have
  Morse/contracting/divergence behavior at small scales controlled by
  the ambient hyperbolicity, not just by their length. 

  \begin{definition}[BGIP]\label{def:BGIP}
    A closed subset $Z$ of a proper geodesic metric space has the
    \emph{Bounded Geodesic Image Property} if there exists $C$ such
    that for every geodesic $\gamma$, if
    $\gamma\cap\nbhd_C(Z)=\emptyset$ then $\diam(\pi_Z(\gamma))\leq C$.
  \end{definition}
  \begin{definition}\label{def:strong_constrict}
    A closed subset $Z$ of a proper geodesic metric space is \emph{strongly
      constricting} if there exists $C$ such that for every geodesic
    segment $\gamma$ with endpoints $x$ and $y$, if
    $\diam(\pi_Z(x)\cup\pi_Z(y))>C$ then $\gamma$ passes within distance
    $C$ of both $\pi_Z(x)$ and $\pi_Z(y)$. 
  \end{definition}

  \begin{proposition}[{\cite{cas20}, \cite[Proposition~3.1]{Tra19},
      \cite[Proposition~8.15, Corollary~3.6]{Cas19Hab}}]\label{morse_divergent_contracting_in_cat_zero}
    When $X$ is CAT(0), Morse, strongly contracting, and at least
    quadratic divergence are equivalent. 
  \end{proposition}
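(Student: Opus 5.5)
This proposition is a citation of known results, so the plan is to assemble the standard implications from the references rather than build anything new. Throughout, $X$ is a proper CAT(0) space and $Z\subset X$ is closed. For a quasigeodesic $Z$ these are the classical equivalences. For a general closed $Z$, closest point projection $\pi_Z$ need not be single valued, so we should either assume $Z$ is convex or work with the set valued projection as in \fullref{def:strong_contraction}. I will follow the route of \cite{Cas19Hab}, proving three implications:
\[
\text{strongly contracting}\implies\text{Morse}\implies\text{at least quadratic divergence}\implies\text{strongly contracting}.
\]

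\emph{Strongly contracting implies Morse.} This holds in any geodesic space and does not use CAT(0). Take a $(\lambda,\epsilon)$--quasigeodesic $\gamma$ with endpoints on $Z$, and let $\gamma'$ be a maximal subpath lying outside $\nbhd_R(Z)$. Cover $\gamma'$ by consecutive balls of radius comparable to the distance to $Z$. Strong contraction makes each ball project to a set of diameter at most $\chi$. Hence the projection of $\gamma'$ has diameter at most roughly $\chi\cdot \mathrm{length}(\gamma')/R$. Compare this with the short path that goes down to $Z$, travels along the projection, and comes back up. For $R$ large in terms of $\lambda$, $\epsilon$ and $\chi$, the quasigeodesic inequality is violated. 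This yields a Morse gauge.

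\emph{Morse implies at least quadratic divergence.} Suppose a path from $x$ to $y$ avoids $\nbhd_{Mr}(Z)$ and has length $\ell$. Subdivide it and build a quasigeodesic whose endpoints lie on $Z$ and which passes through a point at distance about $Mr$ from $Z$. The natural choice is a concatenation of CAT(0) geodesics from $Z$ to the path, then along the path. Its quasigeodesic constants are controlled by $\ell/r^2$. Then use the fact that a Morse gauge is sublinear in the additive constant, applying the CAT(0) version in \cite{cas20,Tra19}. If $\ell$ were $o(r^2)$, the resulting quasigeodesic would reach distance $Mr$ from $Z$ while the Morse gauge forces it to stay within $o(r)$, a contradiction.

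\emph{Quadratic divergence implies strongly contracting.} This is the main obstacle and the genuinely CAT(0) step. I will use convexity of the metric and the 1--Lipschitz property of projections to convex sets (or, for a quasigeodesic, to its nearby convex hull pieces). Argue by contradiction. Suppose there is a ball disjoint from $Z$, of radius $d(x,Z)=r$, whose projection is large. Using the CAT(0) comparison, the geodesics from $x$ and $y$ to their projections spread out linearly. From them one builds, at every scale $r'$ comparable to $r$, a detour of linear length $O(r')$ that avoids the $Mr'$--neighbourhood while joining points at distance $Nr'$. This contradicts superlinear, hence quadratic, divergence.

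As a fallback, and the approach I would actually write, I would simply cite \cite[Proposition~8.15, Corollary~3.6]{Cas19Hab} together with \cite{cas20} for the equivalence of Morse and strongly contracting in CAT(0) spaces, and \cite[Proposition~3.1]{Tra19} for the divergence characterization.
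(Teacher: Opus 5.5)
Your fallback is exactly what the paper does. It offers no argument of its own for this statement and simply cites \cite{cas20}, \cite[Proposition~3.1]{Tra19}, and \cite[Proposition~8.15, Corollary~3.6]{Cas19Hab}, so your proposal is fine as a citation.

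If you ever expand the sketch, your Morse $\implies$ quadratic divergence step does not work as written. The tightened detour has multiplicative quasigeodesic constant of order $\ell/r$, not $\ell/r^2$, and feeding such paths into a Morse gauge only yields superlinear divergence. Obtain the quadratic bound instead via Morse $\implies$ strongly contracting in CAT(0) \cite{cas20}, followed by the projection-counting argument showing that strongly contracting sets have at least quadratic divergence.
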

  
  \begin{proposition}[{\cite[Proposition~2.9]{ArzCasTao15}}]\label{eqivalence_of_strong_contraction_conditions}
    Strong contraction, strong constriction, and the Bounded Geodesic
    Image Property are equivalent. 
  \end{proposition}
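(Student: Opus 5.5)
We would prove the cycle strong contraction $\Rightarrow$ strong constriction $\Rightarrow$ BGIP $\Rightarrow$ strong contraction, with each constant bounded by a function of the previous one alone. Throughout, for $x\in X$ we write $\pi_Z(x)$ for the set-valued closest point projection and use $d(x,\pi_Z(x))=d(x,Z)$. The only general tool needed is the triangle inequality through projections:
\[d(x,y)\leq d(x,Z)+\diam(\pi_Z(x)\cup\pi_Z(y))+d(y,Z).\]

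\emph{Strong constriction $\Rightarrow$ BGIP.} This step is formal. Suppose $\gamma$ is a geodesic with $\gamma\cap\nbhd_C(Z)=\emptyset$, where $C$ is the constriction constant. For any two points $a,b\in\gamma$, the subsegment $[a,b]\subset\gamma$ is again a geodesic that misses $\nbhd_C(Z)$. By strong constriction this forces $\diam(\pi_Z(a)\cup\pi_Z(b))\leq C$. Taking the supremum over $a,b$ gives $\diam(\pi_Z(\gamma))\leq C$.

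\emph{Strong contraction $\Rightarrow$ strong constriction.} Let $\gamma=[x,y]$ and assume $\diam(\pi_Z(x)\cup\pi_Z(y))$ is large compared with $\chi$. We walk along $\gamma$ from $x$ in steps $x=x_0,x_1,\dots$, choosing $d(x_i,x_{i+1})=d(x_i,Z)$. Each step moves the projection by at most $\chi$, and consecutive steps have total length at least the sum of the distances to $Z$.

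Suppose $\gamma$ stayed outside $\nbhd_D(Z)$ for some $D\gg\chi$. Then the number of steps is at most $\ell(\gamma)/D$, so the projection moves by at most $\chi\,\ell(\gamma)/D$. Feeding this into the triangle inequality through projections gives
\[\ell(\gamma)=d(x,y)\leq d(x,Z)+d(y,Z)+\chi\,\ell(\gamma)/D.\]
The goal is to turn this into a contradiction with the projections being far apart. Doing so requires the refined count in which the first and last steps are handled separately: the first step is long and of size comparable to $d(x,Z)$, and similarly at $y$. This yields that $\gamma$ enters $\nbhd_D(Z)$.

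Applying the same argument to the initial subsegment of $\gamma$ shows that the first entry point into $\nbhd_D(Z)$ projects within a bounded distance of $\pi_Z(x)$. Symmetrically, the last entry point projects near $\pi_Z(y)$. This gives strong constriction with a constant depending only on $\chi$.

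\emph{BGIP $\Rightarrow$ strong contraction.} Let $d(x,y)\leq d(x,Z)$ and let $C$ be the BGIP constant. If $[x,y]$ avoids $\nbhd_C(Z)$, then BGIP bounds the diameter of the projection of the whole segment, and in particular $\diam(\pi_Z(x)\cup\pi_Z(y))\leq C$. If $[x,y]$ meets $\nbhd_C(Z)$ at a point $p$, then $d(x,Z)\leq d(x,p)+C$, so $y$ lies within about $C$ of $Z$ relative to its distance to $x$. In that case we would replace $y$ by the point $y'\in[x,y]$ with $d(x,y')=d(x,Z)-C$. The segment $[x,y']$ still avoids $\nbhd_C(Z)$, so BGIP controls $\pi_Z(y')$ relative to $\pi_Z(x)$. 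Separately we control $\pi_Z(y)$ relative to $\pi_Z(y')$, using a geodesic from $y$ to a point of $\pi_Z(y)$ and BGIP on the portion of that geodesic outside $\nbhd_C(Z)$.

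The main obstacle is this last comparison. Because projections are set valued and $Z$ is not assumed convex, points on a geodesic $[y,z]$ with $z\in\pi_Z(y)$ need not project near $z$. We would first try to show this with BGIP applied to $[y,z]$ truncated at distance $C$ from $Z$, arguing by contradiction that a large jump in projection along $[y,z]$ would produce a shortcut to $Z$ from $y$ shorter than $d(y,Z)$.
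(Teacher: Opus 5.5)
The paper does not prove this statement; it only cites it. So this review judges your argument on its own merits. The cycle you chose is sound, and the first two implications work, but the third is left unfinished at exactly the point you flag as the main obstacle.

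\emph{Strong constriction $\Rightarrow$ BGIP.} This is fine, up to enlarging the constant (say to $C+1$). The reason is that ``passes within distance $C$'' and ``misses $\nbhd_C(Z)$'' are both compatible with points at distance exactly $C$ from $Z$.

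\emph{Strong contraction $\Rightarrow$ strong constriction.} The ``refined count'' works, but only if you step from \emph{both} ends; a one-ended walk cannot absorb the $d(y,Z)$ term. Write $\Delta:=\diam(\pi_Z(x)\cup\pi_Z(y))$.
\begin{itemize}
\item The first steps from $x$ and from $y$ consume exactly $d(x,Z)$ and $d(y,Z)$ of $\gamma$. If they overlap, then $\Delta\le 2\chi$.
\item Otherwise the middle piece has some length $\ell'$. The triangle inequality through projections gives $\ell'\le\Delta$.
\item Counting steps of length at least $D$ in the middle gives $\Delta\le\chi(\ell'/D+3)$.
\item With $D\ge 2\chi$ this yields $\ell'\le 6\chi$ and hence $\Delta\le 6\chi$.
\item Your first-entry-point argument then gives constriction with a constant of about $8\chi$.
\end{itemize}

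\emph{BGIP $\Rightarrow$ strong contraction.} This is where the real gap is. After reducing to $y'$, you leave the comparison of $\pi_Z(y)$ with $\pi_Z(y')$ open. You propose a contradiction argument for it that is unnecessary. It also rests on a false worry: if $z\in\pi_Z(y)$, then every point $m\in[y,z]$ satisfies $z\in\pi_Z(m)$.

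The missing observation is that in this case both points are already uniformly close to $Z$.
\begin{itemize}
\item Let $p\in[x,y]$ satisfy $d(p,Z)<C$. Then $d(x,p)>d(x,Z)-C=d(x,y')$, so $p$ lies between $y'$ and $y$.
\item Also $d(y',y)\le d(x,Z)-d(x,y')=C$. Hence $d(y,Z)<2C$ and $d(y',Z)<2C$.
\item For $z\in\pi_Z(y)$ and $z'\in\pi_Z(y')$, the plain triangle inequality gives $d(z,z')\le d(z,y)+d(y,y')+d(y',z')<5C$. Also $\diam\pi_Z(y)$ and $\diam\pi_Z(y')$ are each less than $4C$.
\item Therefore $\diam(\pi_Z(y)\cup\pi_Z(y'))<5C$. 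Combined with the BGIP bound on $[x,y']$, this gives $\diam(\pi_Z(x)\cup\pi_Z(y))\le 6C$.
\end{itemize}
The degenerate case $d(x,Z)<C$, where $y'$ is undefined, is handled directly by the same triangle inequality.

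With this step supplied and the two-ended count in the middle implication written out, your argument is complete.
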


\subsection{Working in non-geodesic spaces}
To work in the setting of hyperbolic spaces that are not necessarily geodesic, 
Blach\`ere, Ha\"issinsky, and Mathieu \cite{BlaHaiMat11} introduced
the following notion:
\begin{definition}
  A map $\gamma\from I\to X$ from a subinterval of $\mathbb{R}$ to a
  metric space $X$ is a \emph{(parameterized) $(\lambda,\epsilon,\tau)$--quasiruler} 
  if it is a $(\lambda,\epsilon)$--quasigeodesic and the triangle
  inequality is almost degenerate, in the sense that for all $r<s<t$ in
  $I$:
  \[d(\gamma(r),\gamma(s))+d(\gamma(s),\gamma(t))-\tau\leq
    d(\gamma(r),\gamma(t))\leq
    d(\gamma(r),\gamma(s))+d(\gamma(s),\gamma(t))\]

  The space $X$ is \emph{quasiruled} if there exist
  $\lambda$, $\epsilon$, and $\tau$ such that every pair of points in
  $X$ is the pair of endpoints of a $(\lambda,\epsilon,\tau)$--quasiruler.
\end{definition}

Abbott and Incerti-Medici \cite{AbbInc22} make the following
refinement:
\begin{definition}
   A map $\gamma\from I\to X$ from a subinterval of $\mathbb{R}$ to a
  metric space $X$ is a \emph{unparameterized $\tau$--quasiruler} if
  it satisfies two conditions:

  \begin{description}
\item[bounded jumps]  $\forall t\in I$, $\limsup_{s\in I, \, |t-s|\to
    0}d(\gamma(s),\gamma(t))\leq \tau$
  \item[almost degeneracy] $\forall r<s<t\in I$, 
  $d(\gamma(r),\gamma(s))+d(\gamma(s),\gamma(t))-\tau\leq
    d(\gamma(r),\gamma(t))$
  \end{description}
\end{definition}

There is another definition in the literature that is related:
\begin{definition}
  A $(1,\epsilon)$--quasigeodesic is also known as an
\emph{$\epsilon$--rough geodesic}, and a space for which there exists
$\epsilon$ such that every pair of points is connected by an
$\epsilon$--rough geodesic is a \emph{rough geodesic space}.
\end{definition}

Abbott and Incerti-Medici prove \cite[Lemma~A.4]{AbbInc22} that given $\tau$ there exist
$\lambda$ and $\epsilon$ such that every unparameterized
$\tau$--quasiruler satisfying an additional mild condition can be
reparameterized to make it a $(\lambda,\epsilon,\tau)$--quasiruler. 
They also prove \cite[Lemma~A.6]{AbbInc22} that quasiruled spaces are
roughly geodesic, although a priori the transitive family of rough
geodesics might not just be reparameterizations of the quasirulers. 
In \fullref{sec:hyperbolic} we introduce a metric on $\Davis$ and $\NR$
such that combinatorial geodesics in $\NR$ can be reparameterized to
be rough geodesic quasirulers, see \fullref{cor:combinatorial_geodesics_rough_and_quasiruler}.

\subsection{Gate projections to parabolic subcomplexes}\label{sec:gate}
\begin{lemma}\label{def:gate_projection}
  Given a parabolic subcomplex $\Upsilon:=w\Davis_T$ and a vertex $x\in
  \Davis^{(0)}=W$ there exists a unique $\dcomb$--closest vertex
  $\gate_\Upsilon(x)$ of
  $\Upsilon$ to $x$.
  Furthermore, for every vertex $y\in \Upsilon^{(0)}$:
    \[\dcomb(x,y)=\dcomb(x,\gate_\Upsilon(x))+\dcomb(\gate_\Upsilon(x),y)\]

The map $\gate_\Upsilon\from
  \Davis^{(0)}\to \Upsilon^{(0)}$ is the \emph{gate projection to
    $\Upsilon$}.
  It is Lipschitz.
\end{lemma}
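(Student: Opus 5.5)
Since $\Upsilon=w\Davis_T$ and $W$ acts on $\Davis$ by label-preserving automorphisms that are $\dcomb$--isometries, we may translate by $w^{-1}$ and assume $\Upsilon=\Davis_T$, whose vertex set is the special subgroup $W_T$. Vertices of $\Davis$ are elements of $W$ and $\dcomb(u,v)=\ell_S(u^{-1}v)$, where $\ell_S$ denotes word length. The plan is to use the standard theory of minimal coset representatives, recalled in \fullref{sec:coxeter} or found in \cite{davisbook}. Every $x\in W$ has a unique factorization $x=ab$ with $a\in W_T$ and $b$ the unique minimal length element of the coset $W_Tx=W_Tb$. This factorization is length additive in the strong form: for every $c\in W_T$ we have $\ell_S(cb)=\ell_S(c)+\ell_S(b)$. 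I will define $\gate_\Upsilon(x):=a$.

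For the distance formula, take $y\in W_T$. Then
\[\dcomb(x,y)=\ell_S(x^{-1}y)=\ell_S(b^{-1}a^{-1}y)=\ell_S(y^{-1}ab).\]
Since $y^{-1}a\in W_T$, length additivity gives
\[\ell_S(y^{-1}ab)=\ell_S(y^{-1}a)+\ell_S(b)=\dcomb(a,y)+\dcomb(x,a).\]
Here $\dcomb(x,a)=\ell_S(x^{-1}a)=\ell_S(b^{-1})=\ell_S(b)$. This is exactly the displayed identity.

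Uniqueness of the closest vertex follows at once. For $y\in\Upsilon^{(0)}$ with $y\neq a$ we have $\dcomb(a,y)\geq1$, so $\dcomb(x,y)>\dcomb(x,a)$, and $a$ is the unique $\dcomb$--closest vertex.

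For the Lipschitz property, it suffices to treat adjacent vertices $x$ and $x'=xs$ with $s\in S$, since $\dcomb$ is a path metric on the Cayley graph. Write $x=ab$ and $x'=a'b'$. Applying the distance formula twice, with $y=a'$ for $x$ and with $y=a$ for $x'$, gives
\[\dcomb(x,a')=\dcomb(x,a)+\dcomb(a,a'),\qquad \dcomb(x',a)=\dcomb(x',a')+\dcomb(a',a).\]
Adding these and using the triangle inequality, $\dcomb(x,a')\leq\dcomb(x',a')+1$ and $\dcomb(x',a)\leq\dcomb(x,a)+1$, yields $2\dcomb(a,a')\leq 2$. Hence $\gate_\Upsilon$ is $1$--Lipschitz.

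The only real input is the existence of the length-additive factorization, which I take from standard Coxeter theory via the exchange or deletion condition; I expect that to be the main obstacle if it has to be proved from scratch. If it is not available, I would instead prove the distance formula by induction on $\dcomb(x,\Upsilon)$ using walls. The walls separating $x$ from a vertex $y\in\Upsilon$ split into two kinds: those separating $x$ from $\Upsilon$ entirely, and those crossing $\Upsilon$. The walls crossing $\Upsilon$ are exactly the walls of reflections in $W_T$, and these do not separate $x$ from $\gate_\Upsilon(x)$. Counting walls then gives the additivity.
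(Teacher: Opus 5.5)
Your proposal is correct and is essentially the paper's proof: the paper also takes the unique minimal-length element of the coset $x^{-1}wW_T$ (via \cite[Lemma~4.3.1]{davisbook}), which after your translation is the inverse of your $b$, and reads the distance formula directly off length additivity. Your explicit two-sided argument for the $1$--Lipschitz bound supplies the computation that the paper only calls a standard, easy computation.
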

\begin{proof}
  By \cite[Lemma~4.3.1]{davisbook}, there is a unique shortest element
  $v$ of $W_\emptyset x^{-1}wW_T$, and there exists $z\in W_T$ such
  that $x^{-1}y=vz$ with $|x^{-1}y|_S=|v|_S+|z|_S$.
  Take $\gate_\Upsilon(x):=xv$.
  Then
  $\dcomb(x,y)=|x^{-1}y|_S=|v|_S+|z|_S=\dcomb(x,\gate_\Upsilon(x))+\dcomb(\gate_\Upsilon(x),y)$.

  The fact that $\gate_\Upsilon$ is Lipschitz is a standard, easy
  computation. 
\end{proof}

\begin{corollary}\label{cor:composition_of_nested_gate_maps}
  If $U\subset T\subset S$ then $\gate_{\Davis_U}=\gate_{\Davis_U}\circ\gate_{\Davis_T}$.
\end{corollary}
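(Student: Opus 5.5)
The plan is to apply the factorization property of \fullref{def:gate_projection} twice, once for each of $\Davis_T$ and $\Davis_U$, and then use uniqueness of the closest vertex. Fix a vertex $x\in W$. Put $p:=\gate_{\Davis_T}(x)$ and $q:=\gate_{\Davis_U}(p)$. The goal is to show $q=\gate_{\Davis_U}(x)$. Since $U\subset T$, we have $\Davis_U\subset\Davis_T$, and so every vertex of $\Davis_U$ is also a vertex of $\Davis_T$.

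First, let $y$ be any vertex of $\Davis_U$. Because $y\in\Davis_T^{(0)}$, the gate property for $\Davis_T$ gives $\dcomb(x,y)=\dcomb(x,p)+\dcomb(p,y)$. The gate property for $\Davis_U$, applied to the vertex $p$, gives $\dcomb(p,y)=\dcomb(p,q)+\dcomb(q,y)$. Substituting the second equation into the first yields
\[\dcomb(x,y)=\dcomb(x,p)+\dcomb(p,q)+\dcomb(q,y).\]

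Next, specialize to $y=q$. The displayed formula becomes $\dcomb(x,q)=\dcomb(x,p)+\dcomb(p,q)$. Substituting this back in, we get $\dcomb(x,y)=\dcomb(x,q)+\dcomb(q,y)\geq\dcomb(x,q)$ for every vertex $y$ of $\Davis_U$. So $q$ is a $\dcomb$--closest vertex of $\Davis_U$ to $x$.

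Finally, \fullref{def:gate_projection} says the closest vertex is unique, so $q=\gate_{\Davis_U}(x)$. This holds for every $x$, which proves $\gate_{\Davis_U}=\gate_{\Davis_U}\circ\gate_{\Davis_T}$.

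There is no real obstacle here. The only thing to check is that $\Davis_U^{(0)}=W_U\subset W_T=\Davis_T^{(0)}$, so that the gate identity for $\Davis_T$ may be applied to vertices $y$ of $\Davis_U$.
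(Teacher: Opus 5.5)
Your proof is correct and follows essentially the same route as the paper: both chain the gate identity for $\Davis_T$ with the gate identity for $\Davis_U$ applied at $\gate_{\Davis_T}(x)$. The only cosmetic difference is in how the argument closes. The paper plugs in $y=\gate_{\Davis_U}(x)$ and uses the triangle inequality plus minimality to force $\dcomb(\gate_{\Davis_U}\circ\gate_{\Davis_T}(x),\gate_{\Davis_U}(x))=0$, whereas you show $q$ is a closest vertex and invoke the uniqueness in \fullref{def:gate_projection}.
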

\begin{proof}
  \begin{align*}
    \dcomb(x,&\gate_{\Davis_U}(x))\\
             &=\dcomb(x,\gate_{\Davis_T}(x))+\dcomb(\gate_{\Davis_T}(x),\gate_{\Davis_U}(x))\\
    &=
      \dcomb(x,\gate_{\Davis_T}(x))+\dcomb(\gate_{\Davis_T}(x),\gate_{\Davis_U}\circ\gate_{\Davis_T}(x))+\dcomb(\gate_{\Davis_U}\circ\gate_{\Davis_T}(x),
      \gate_{\Davis_U}(x))\\
    &\geq \dcomb(x,\gate_{\Davis_U}\circ\gate_{\Davis_T}(x)) + \dcomb(\gate_{\Davis_U}\circ\gate_{\Davis_T}(x),
      \gate_{\Davis_U}(x))\\
    &\geq \dcomb(x,\gate_{\Davis_U}(x)) + \dcomb(\gate_{\Davis_U}\circ\gate_{\Davis_T}(x),
      \gate_{\Davis_U}(x))\\
  \end{align*}
  Thus,  $\dcomb(\gate_{\Davis_U}\circ\gate_{\Davis_T}(x),
      \gate_{\Davis_U}(x))=0$.
\end{proof}

\begin{corollary}\label{cor:wall_gate}
  A wall $M$ separates $x$ from $\gate_\Upsilon(x)$ if and only if $M$
  separates $x$ from $\Upsilon$.
\end{corollary}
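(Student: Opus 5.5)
Since all the vertices here are vertices of $\Davis$, I will work throughout with the standard fact that for vertices $a,b\in\Davis^{(0)}=W$ the combinatorial distance $\dcomb(a,b)$ equals the number of walls separating $a$ from $b$. A wall separates $a$ and $b$ exactly when every geodesic edge path from $a$ to $b$ crosses it exactly once, and no such path crosses any other wall. I also take $\Upsilon$ to be a parabolic subcomplex, which is a union of closed cells and so lies on one side of every wall that does not cross it. Thus ``$M$ separates $x$ from $\Upsilon$'' means that $M$ does not meet $\Upsilon$ and has $x$ and $\Upsilon$ on opposite sides.

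\emph{Reverse direction.} Suppose $M$ separates $x$ from $\Upsilon$. Since $\gate_\Upsilon(x)$ is a vertex of $\Upsilon$, it lies on the $\Upsilon$ side of $M$, while $x$ lies on the other side. Hence $M$ separates $x$ from $\gate_\Upsilon(x)$.

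\emph{Forward direction.} Suppose $M$ separates $x$ from $p:=\gate_\Upsilon(x)$. I first show that $M$ separates $x$ from every vertex $y\in\Upsilon^{(0)}$. Concatenate a combinatorial geodesic from $x$ to $p$ with one from $p$ to $y$. By the additivity formula in \fullref{def:gate_projection}, this concatenation is a combinatorial geodesic from $x$ to $y$. The path crosses $M$ on its first part, and a geodesic edge path crosses each wall at most once. Therefore $M$ separates $x$ from $y$.

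It remains to show that $M$ does not cross $\Upsilon$. Suppose it did. Since $\Upsilon$ is a parabolic subcomplex, it is a union of cells, and its $1$--skeleton is a connected subgraph of the Cayley graph. So $M$, which crosses $\Upsilon$, would cross some edge of $\Upsilon$. The two endpoints $y,y'$ of that edge are vertices of $\Upsilon$ lying on opposite sides of $M$. But we just showed that every vertex of $\Upsilon$ lies on the side of $M$ opposite to $x$, which is a contradiction. Hence all of $\Upsilon$ lies on one side of $M$, and $x$ lies on the other.

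The main obstacle is to pin down precisely why ``$M$ meets $\Upsilon$'' forces an edge of $\Upsilon$ to be cut by $M$. This needs the convexity of $\Upsilon$ together with the fact that walls intersect cells of $\Davis$ only through their interiors, cutting edges at midpoints, so that a wall meeting the cell complex $\Upsilon$ must separate two of its vertices. I would cite the standard cell structure of $\Davis$ for this.
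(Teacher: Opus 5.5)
Your proof is correct and is essentially the paper's argument: the gate additivity $\dcomb(x,y)=\dcomb(x,\gate_\Upsilon(x))+\dcomb(\gate_\Upsilon(x),y)$ makes the concatenated path a combinatorial geodesic, which crosses $M$ at most once, so no vertex of $\Upsilon$ can lie on the same side of $M$ as $x$. The paper phrases this as a contradiction and leaves implicit the step you flag (a wall meeting $\Upsilon$ must cut an edge of $\Upsilon$); that step is exactly the ``no tangential intersections'' remark in the Coxeter background subsection, so you can cite it there.
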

\begin{proof}
  A wall that separates $x$ from all of $\Upsilon$ certainly separates
  $x$ from $\gate_\Upsilon(x)\in\Upsilon$.
For the converse, suppose $M$ is a wall
that separates $x$ from $y:=\gate_\Upsilon(x)$ but not from $\Upsilon$,
so there is a vertex  $z\in \Upsilon$ such that $x$ and $z$ are on one
side of $M$, and $y$ is on the other.
Then the concatenation of the $\dcomb$--geodesic from $x$ to $y$ with the
$\dcomb$--geodesic from $y$ to $z$ crosses $M$ twice, so it is not a
combinatorial geodesic, so $\dcomb(x,z)<\dcomb(x,y)+\dcomb(y,z)$.
This contradicts the gate property $\dcomb(x,z)=\dcomb(x,y)+\dcomb(y,z)$.
\end{proof}

The following interaction between gate maps and the group action
follows directly from the definition.
\begin{corollary}\label{cor:W_action_on_projection}
  For $w\in W$, $\Upsilon$ a parabolic subcomplex, and $x\in
  \Davis^{(0)}$:
  \[\gate_{w\Upsilon}(x)=w\gate_\Upsilon(w^{-1}x)\]
\end{corollary}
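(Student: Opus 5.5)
This is immediate from the definitions; the plan is to check that left multiplication by $w$ transports the defining property of the gate. Since $W$ acts on $\Davis$ by cellular isometries, the restriction to the $1$--skeleton is an isometry for $\dcomb$. Concretely, the $1$--skeleton is the Cayley graph and $\dcomb(u,v)=|u^{-1}v|_S$, which is invariant under left multiplication. The action also permutes parabolic subcomplexes: if $\Upsilon=u\Davis_T$ then $w\Upsilon=wu\Davis_T$ is again a parabolic subcomplex, so $\gate_{w\Upsilon}$ is defined by \fullref{def:gate_projection}.

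Set $p:=\gate_\Upsilon(w^{-1}x)$, so $p$ is the unique $\dcomb$--closest vertex of $\Upsilon$ to $w^{-1}x$. Then $wp$ is a vertex of $w\Upsilon$. Every vertex of $w\Upsilon$ has the form $wy$ with $y\in\Upsilon^{(0)}$, and for such a vertex
\[\dcomb(x,wy)=\dcomb(w^{-1}x,y)\geq\dcomb(w^{-1}x,p)=\dcomb(x,wp).\]
Hence $wp$ is a $\dcomb$--closest vertex of $w\Upsilon$ to $x$. The uniqueness clause of \fullref{def:gate_projection} then forces $\gate_{w\Upsilon}(x)=wp=w\gate_\Upsilon(w^{-1}x)$.

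As an alternative, one can check the formula directly from the construction in the proof of \fullref{def:gate_projection}. There the gate is defined using the shortest element $v$ of $x^{-1}uW_T$. For the pair $(x,w\Upsilon)$ the relevant coset is $x^{-1}wuW_T=(w^{-1}x)^{-1}uW_T$, which is the same coset as for the pair $(w^{-1}x,\Upsilon)$. It therefore has the same minimal element $v$, and so $\gate_{w\Upsilon}(x)=xv=w\bigl((w^{-1}x)v\bigr)=w\gate_\Upsilon(w^{-1}x)$. There is no real obstacle; the only point needing care is that uniqueness of the closest vertex is what pins down the gate, and \fullref{def:gate_projection} already supplies it.
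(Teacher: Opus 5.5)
Your proof is correct and is the direct check the paper has in mind. The paper gives no argument beyond saying the corollary ``follows directly from the definition'', and your verification is exactly that: left-invariance of $\dcomb$ plus uniqueness of the closest vertex, or equivalently the observation that both gate computations use the same coset $x^{-1}wuW_T$.
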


\begin{lemma}\label{lem:perp_dont_change_gate}
  For any $U\subset S$ and $g\in U^\perp$, for all $x\in \Davis^{(0)}$:
  \[\gate_{\Davis_U}(gx)=\gate_{\Davis_U}(x)\]
\end{lemma}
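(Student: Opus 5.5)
Since $g\in U^\perp$ commutes with every element of $U$, left multiplication by $g$ preserves $\Davis_U$ (its vertex set is $W_U$), and it is an isometry of $(\Davis^{(0)},\dcomb)$. By \fullref{cor:W_action_on_projection}, $\gate_{\Davis_U}(gx)=g\gate_{g^{-1}\Davis_U}(x)=g\gate_{\Davis_U}(x)$. So it suffices to show that $g$ fixes the vertex $p:=\gate_{\Davis_U}(x)$, i.e.\ $gp=p$. That is false for arbitrary vertices of $\Davis_U$ (e.g.\ $g\notin W_U$ moves $p\in W_U$ to $gp\notin W_U$). So the statement must be read with the gate being a point of $\Davis_U$. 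Then $gp$ is a point of $\Davis_U$ only if $gp\in W_U$, which forces $g\in W_U$; combined with $g\in U^\perp$ this makes $g$ central in $W_U$. That is not the general situation, so this reading needs rethinking.

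The issue is that $\gate_{\Davis_U}(gx)$ must lie in $W_U$, while $g\gate_{\Davis_U}(x)$ need not. The identity above, $\gate_{g\Davis_U}(gx)=g\gate_{\Davis_U}(x)$, is correct. The mistake was the claim that $g\Davis_U=\Davis_U$: left multiplication by $g$ sends $W_U$ to the coset $gW_U$, which is a different subcomplex unless $g\in W_U$. So I would argue directly with the construction in the proof of \fullref{def:gate_projection}, which takes $\gate_{\Davis_U}(x)=xv$ with $v$ the shortest element of the double coset $x^{-1}W_U$, i.e.\ of the coset $x^{-1}W_U$.

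Equivalently, $\gate_{\Davis_U}(x)$ is the unique element $p\in W_U$ such that $x=pu$ with $|x|_S=|p|_S+|u|_S$, where $u:=p^{-1}x$ is the minimal length representative of $W_Ux$, i.e.\ $u$ is $(U,\emptyset)$--reduced. Now write $gx=(gp)\cdot$ hmm, $gp\notin W_U$ in general, so instead use $g\in U^\perp$ to write $gx=g p u=p(gu)$, since $g$ commutes with $p\in W_U$. The element $p\in W_U$ is unchanged, so it suffices to show that $gu$ is again the minimal length element of its coset $W_U gu$. The step I expect to be the main obstacle is this reducedness: showing $|sgu|_S>|gu|_S$ for all $s\in U$.

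For that step I would use the wall characterization. $gu$ is minimal in $W_U gu$ iff no wall of $\Davis_U$ through $1$ (i.e.\ no reflection in $W_U$) separates $1$ from $gu$, by the dual of \fullref{cor:wall_gate}. Walls separating $1$ from $gu$ are those separating $1$ from $g$, together with $g$ times walls separating $1$ from $u$. First, $g\in W_{U^\perp}$, and reflections in $W_{U^\perp}$ are not in $W_U$ (since $W_U\cap W_{U^\perp}$ is trivial). Second, for a reflection $r$ in $W_U$, $g^{-1}rg=r$ is in $W_U$, and $r$ does not separate $1$ from $u$ by reducedness of $u$. This gives $\gate_{\Davis_U}(gx)=p=\gate_{\Davis_U}(x)$.

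Alternatively, one can argue the key step by lengths: from $|sgu|=|gsu|$ and $|su|=|u|+1$, the deletion condition applied to an $S$--word for $g$ in letters of $U^\perp$ rules out cancellation against $s\in U$.
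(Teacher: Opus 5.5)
Your final argument is correct. It starts at the second paragraph; delete the first paragraph, which rests on the false claim $g\Davis_U=\Davis_U$ that you yourself retract. It is not the paper's route, though.

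The paper never decomposes $x$ against $W_U$ directly. It passes through $T:=U\sqcup U^\perp$. Since $g$ \emph{does} stabilize $\Davis_T$, your abandoned equivariance idea works at that level: \fullref{cor:W_action_on_projection} gives $\gate_{\Davis_T}(gx)=g\gate_{\Davis_T}(x)$. Then \fullref{cor:composition_of_nested_gate_maps} reduces the claim to $\gate_{\Davis_U}(agb)=a=\gate_{\Davis_U}(ab)$, where $\gate_{\Davis_T}(x)=ab$ with $a\in W_U$ and $b\in W_{U^\perp}$. That identity is immediate because lengths add in $W_U\times W_{U^\perp}$.

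You instead write $x=pu$ with $u$ minimal in $W_Ux$, commute $g$ past $p$, and show $gu$ is still minimal in $W_Ugu$ by locating the walls separating $\one$ from $gu$. The paper's version reuses its gate-map toolkit and needs nothing beyond ``no cancellation between $W_U$ and $W_{U^\perp}$''. Yours avoids the auxiliary subcomplex and isolates the two properties of $g$ it actually uses:
\begin{itemize}
\item $g$ centralizes $W_U$;
\item no wall of a reflection in $W_U$ separates $\one$ from $g$.
\end{itemize}
So your argument proves the identity for every such $g$, not just for $g\in W_{U^\perp}$.

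Some points to fix in the write-up:
\begin{itemize}
\item Say ``wall cutting $\Davis_U$'' rather than ``wall of $\Davis_U$ through $1$''.
\item Justify the minimality criterion. Walls cutting $\Davis_U$ are exactly the walls of reflections in $W_U$, as in the proof of \fullref{lem:distance_to_standard}. Then \fullref{cor:wall_gate} plus uniqueness of the gate gives $\gate_{\Davis_U}(z)=\one$ if and only if no such wall separates $\one$ from $z$.
\item The walls separating $\one$ from $gu$ form the symmetric difference of $\mathcal{W}(\one,g)$ and $g\mathcal{W}(\one,u)$, not their union. Containment in the union is all you need.
\item The walls in $\mathcal{W}(\one,g)$ have reflections in $W_{U^\perp}$ because $\one$ and $g$ lie in the convex subcomplex $\Davis_{U^\perp}$; say so.
\item Drop the closing ``lengths'' alternative, which is not justified as stated. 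Since $|gu|_S$ can be smaller than $|g|_S+|u|_S$ (e.g.\ $g=u=t\in U^\perp$), a reduced word for $gu$ is not a concatenation of words for $g$ and $u$. The sketch does not say how the deletion condition would then rule out cancellation. Repairing it amounts to splitting off the $W_{U\sqcup U^\perp}$--part of $u$, which is the paper's argument.
\end{itemize}
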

\begin{proof}
  Let $T:=U\sqcup U^\perp$.
  Write $\gate_{\Davis_T}(x)=ab$ for $a\in W_U$ and
  $b\in W_{U^\perp}$.
Using that $g\Davis_T=\Davis_T$ and
\fullref{cor:W_action_on_projection},
$\gate_{\Davis_T}(gx)=\gate_{g\Davis_T}(gx)=g\gate_{\Davis_T}(g^{-1}gx)=g\gate_{\Davis_T}(x)$.
This plus \fullref{cor:composition_of_nested_gate_maps} gives:
\[\gate_{\Davis_U}(g\gate_{\Davis_T}(x))=\gate_{\Davis_U}\circ\gate_{\Davis_T}(gx)=\gate_{\Davis_U}(gx)\]
Thus, $\gate_{\Davis_U}(gx)=\gate_{\Davis_U}(gab)=\gate_{\Davis_U}(agb)$.
The latter is $agb$ times the shortest element of
$(agb)^{-1}W_U=b^{-1}g^{-1}W_U$. Since $b^{-1}g^{-1}\in W_{U^\perp}$,
there is no cancellation between $b^{-1}g^{-1}$ and words in $W_U$, so
the shortest element of $b^{-1}g^{-1}\in W_{U^\perp}$ is
$b^{-1}g^{-1}$, which gives $\gate_{\Davis_U}(agb)=agbb^{-1}g^{-1}=a$.
The same argument gives $\gate_{\Davis_U}(ab)=a$.
Thus:
\[\gate_{\Davis_U}(x)=\gate_{\Davis_U}\circ\gate_{\Davis_T}(x)=\gate_{\Davis_U}(ab)=a=\gate_{\Davis_U}(gab)=\gate_{\Davis_U}(gx)\qedhere\]
\end{proof}

\begin{lemma}\label{lem:gate_projection_image_is_residue}
 For $i\in\{0,1\}$, let $\Upsilon_i:=w_i\Davis_{T_i}$ be parabolic
 subcomplexes.
 Let $u$ be the unique shortest element of
 $W_{T_0}w_0^{-1}w_1W_{T_1}$.
 Take $v_0\in W_{T_0}$ and $v_1\in W_{T_1}$ such that
  $w_0^{-1}w_1=v_0uv_1^{-1}$.
 Let $U_0:=\{s\in T_0\mid u^{-1}su\in
 T_1\}$ and  $U_1:=\{s\in T_1\mid usu^{-1}\in T_0\}$.
 Then $\gate_{\Upsilon_0}(\Upsilon_1)=w_0v_0W_{U_0}$ is the vertex set of
 a parabolic subcomplex $w_0v_0\Davis_{U_0}$.
 Furthermore, right-multiplication by $u$ defines an
 isomorphism $w_0v_0\Davis_{U_0}\to w_1v_1\Davis_{U_1}$. 
\end{lemma}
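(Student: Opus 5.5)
The plan is to reduce to the special subcomplex $\Davis_{T_0}$ by equivariance, and then read off the gate of each vertex of $u\Davis_{T_1}$ from the standard double coset combinatorics of Coxeter groups.

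\emph{Reduction.} By \fullref{cor:W_action_on_projection},
\[\gate_{\Upsilon_0}(\Upsilon_1)=w_0\,\gate_{\Davis_{T_0}}\bigl(w_0^{-1}w_1W_{T_1}\bigr)=w_0\,\gate_{\Davis_{T_0}}\bigl(v_0uW_{T_1}\bigr),\]
using $w_0^{-1}w_1=v_0uv_1^{-1}$ and $v_1^{-1}W_{T_1}=W_{T_1}$. Since $v_0\in W_{T_0}$ stabilizes $\Davis_{T_0}$, the same corollary gives $\gate_{\Davis_{T_0}}(v_0x)=v_0\gate_{\Davis_{T_0}}(x)$. It therefore suffices to prove
\[\gate_{\Davis_{T_0}}(uW_{T_1})=W_{U_0}.\]
From the proof of \fullref{def:gate_projection}, the gate of a vertex $g$ is computed as follows. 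Write $g=am$ with $a\in W_{T_0}$ and $m$ the unique shortest element of the coset $W_{T_0}g$, so that $|g|_S=|a|_S+|m|_S$. Then $\gate_{\Davis_{T_0}}(g)=a$.

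\emph{Double coset input.} I would invoke the standard facts about a minimal $(T_0,T_1)$ double coset representative $u$ (Kilmoyer's lemma and its refinement, e.g.\ \cite[Section~4.3]{davisbook} or Bourbaki, Ch.~IV, \S1, Ex.~3):
\begin{enumerate}
\item $W_{T_0}\cap uW_{T_1}u^{-1}=W_{U_0}$, and conjugation by $u$ carries $U_1$ bijectively onto $U_0$.
\item Every $y\in W_{T_1}$ factors as $y=cb$ with $c\in W_{U_1}$ and $b$ the shortest element of $W_{U_1}y$. For such $b$, the element $ub$ is the shortest element of $W_{T_0}ub$.
\end{enumerate}
Given these, for $y=cb$ we get $uy=(ucu^{-1})\,ub$ with $ucu^{-1}\in W_{U_0}$ and $ub$ minimal in its $W_{T_0}$-coset. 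Hence $\gate_{\Davis_{T_0}}(uy)=ucu^{-1}\in W_{U_0}$, which gives the inclusion $\subseteq$. For the reverse inclusion, take any $a\in W_{U_0}$ and set $y:=u^{-1}au\in W_{U_1}$; then $b=1$, and $uy=a\cdot u$ with $u$ minimal, so the gate of $uy$ is $a$.

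\emph{Main obstacle.} The hard step is fact (2), that $ub$ is left $T_0$-reduced. If I could not cite it cleanly, I would argue with walls. By \fullref{cor:wall_gate} it suffices to show that every wall separating $1$ from $ub$ also separates $ub$ from $\Davis_{T_0}$, i.e.\ that no wall of $\Davis_{T_0}$ separates $1$ from $ub$. Suppose such a wall $M$ exists. Since $u$ is minimal in the double coset, $M$ does not separate $1$ from $u$, so it must separate $u$ from $ub$. It is then a wall of $u\Davis_{T_1}$ as well, and hence its reflection lies in $W_{T_0}\cap uW_{T_1}u^{-1}=W_{U_0}$. This contradicts the minimality of $b$ in $W_{U_1}b$.

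\emph{Isomorphism.} Right multiplication by $u$ sends $w_0v_0a$ to
\[w_0v_0au=w_0v_0u\,(u^{-1}au)=w_1v_1\,(u^{-1}au),\]
where $w_0v_0u=w_1v_1$. On vertex sets this is the translate of the conjugation isomorphism $W_{U_0}\to W_{U_1}$, $a\mapsto u^{-1}au$, which maps each generator $s\in U_0$ to $u^{-1}su\in U_1$. It therefore preserves labelled edges and sends cosets of spherical special subgroups of $U_0$ to cosets of spherical special subgroups of $U_1$. So it extends cellwise to an isomorphism $w_0v_0\Davis_{U_0}\to w_1v_1\Davis_{U_1}$.
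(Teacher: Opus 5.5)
Your proof is correct, and its skeleton matches the paper's. Both compute gates via minimal coset representatives and both use $W_{T_0}\cap uW_{T_1}u^{-1}=W_{U_0}$. Both get $w_0v_0W_{U_0}\subset\gate_{\Upsilon_0}(\Upsilon_1)$ from minimality of $u$; the paper phrases this as $\dcomb(\Upsilon_0,\Upsilon_1)=|u|_S$. Both obtain the isomorphism by checking that right-multiplication by $u$ preserves adjacency.

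The genuine difference is in the inclusion $\gate_{\Upsilon_0}(\Upsilon_1)\subset w_0v_0W_{U_0}$. The paper never identifies the minimal coset representative. It writes the shortest element $x$ of $y^{-1}u^{-1}W_{T_0}$ as a length-additive product $cu^{-1}b$ with $c\in W_{T_1}$ and $b\in W_{T_0}$, a standard property of minimal double coset representatives. Minimality of $x$ forces $b=1$, so the gate lies in $W_{T_0}\cap uW_{T_1}u^{-1}$. In other words, the paper only locates the gate; it does not compute it.

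You instead prove the finer parametrization of the double coset: $ub$ is left $T_0$-reduced whenever $b$ is minimal in $W_{U_1}b$. Your wall argument for this is sound. The step you leave implicit is that $u^{-1}\refl{M}u\in W_{U_1}$ is a reflection whose wall separates $1$ from $b$, so it shortens $b$ within $W_{U_1}b$.

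This costs a bit more work, but it buys an explicit formula: $\gate_{\Davis_{T_0}}(uy)=ucu^{-1}$ for $y=cb$. Equivalently, on $\Upsilon_1^{(0)}$ the gate map to $\Upsilon_0$ is the gate map to $w_1v_1\Davis_{U_1}$ followed by right-multiplication by $u^{-1}$. That is a more informative, bridge-like description than the paper's set-level conclusion.
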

\begin{proof}
  The element $u$ exists by \cite[Lemma~4.3.1]{davisbook}, and
  $\dcomb(\Upsilon_0,\Upsilon_1)=|u|_S$.
  For every vertex $w_0v_0x\in w_0v_0W_{U_0}\subset\Upsilon_0^{(0)}$ we have
  $\dcomb(w_0v_0x,w_0v_0xu)=|u|_S$ and $w_0v_0xu=w_0v_0uu^{-1}xu\in
  w_0v_0uW_{U_1}=w_1v_1W_{U_1}\subset\Upsilon_1^{(0)}$.
Thus $w_0v_0W_{U_0}\subset\gate_{\Upsilon_0}(\Upsilon_1)$.  

  Conversely, suppose $z\in W_{T_0}$ such that
  $w_0v_0z\in\gate_{\Upsilon_0}(\Upsilon_1)$.
  Then there is some $y\in W_{T_1}$ such that
  $w_0v_0z=\gate_{\Upsilon_0}(w_1v_1y)$.
  By construction, $\gate_{\Upsilon_0}(w_1v_1y)$ is $w_1v_1yx$, where
  $x$ is the unique shortest element of $(w_1v_1y)^{-1}w_0v_0W_{T_0}=y^{-1}u^{-1}W_{T_0}$.
  Let $a:=x^{-1}y^{-1}u^{-1}\in W_{T_0}$.
  We have $w_0v_0z=w_1v_1\cdot yx=w_0v_0u\cdot u^{-1}a^{-1}$, so
  $z=a^{-1}$ and $x=y^{-1}u^{-1}z$.
  Since $u^{-1}$ is the shortest element of $W_{T_1}u^{-1}W_{T_0}\ni
  x$, there are $b\in W_{T_0}$ and $c\in W_{T_1}$ such that
  $x=cu^{-1}b$ and $|x|_S=|c|_S+|u^{-1}|_S+|b|_S$.
  But since $x$ is shortest in $y^{-1}u^{-1}W_{T_0}\subset
  W_{T_1}u^{-1}W_{T_0}$, the element $b$ is trivial.
  Thus, $x=cu^{-1}$, which gives $W_{T_0}\ni z=uycu^{-1}\in uW_{T_1}u^{-1}$.
By the remark following \cite[Lemma~5.3.6]{davisbook},
$uW_{T_1}u^{-1}\cap W_{T_0}=W_{U_0}$, so $z\in W_{U_0}$.
Thus, $w_0v_0W_{U_0}\supset\gate_{\Upsilon_0}(\Upsilon_1)$.

An edge in the full subcomplex spanned by
$\gate_{\Upsilon_0}(\Upsilon_1)$ is of the form $w_0v_0p-w_0v_0pq$ for
some $p\in W_{U_0}$ and $q\in U_0$. 
Right-multiplication by $u$ sends these vertices to
$w_0v_0pu=w_0v_0uu^{-1}pu=w_1v_1u^{-1}pu$ and
$w_0v_0pqu=w_1v_1u^{-1}pu\cdot u^{-1}qu$, respectively. 
These are two elements of $w_1v_1W_{U_1}$ that differ by $u^{-1}qu\in
U_1$, so they are adjacent vertices of $w_1v_1W_{U_1}$.
Thus, right-multiplication by $u$ gives a bijection from
$w_0v_0W_{U_0}$ to $w_1v_1W_{U_1}$ that preserves adjacency, so
extends to an isomorphism of the full subcomplexes spanned by those
vertex sets. 
\end{proof}
\begin{corollary}\label{cor:cross_projections_have_same_stabilizer}
 $\stab(\gate_{\Upsilon_0}(\Upsilon_1))=\stab(\gate_{\Upsilon_1}(\Upsilon_0))$
\end{corollary}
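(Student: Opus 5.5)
The plan is to read off everything from \fullref{lem:gate_projection_image_is_residue}, applied twice with the roles of the two parabolic subcomplexes swapped. Keep the notation of that lemma: $u$ is the shortest element of $W_{T_0}w_0^{-1}w_1W_{T_1}$, we write $w_0^{-1}w_1=v_0uv_1^{-1}$, and $U_0,U_1$ are defined as there. The lemma gives
\[\gate_{\Upsilon_0}(\Upsilon_1)=w_0v_0W_{U_0},\]
the vertex set of the parabolic subcomplex $w_0v_0\Davis_{U_0}$. Its stabilizer is therefore $w_0v_0W_{U_0}(w_0v_0)^{-1}$.

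Next, apply the lemma with the indices interchanged. The shortest element of $W_{T_1}w_1^{-1}w_0W_{T_0}$ is $u^{-1}$, since inversion is a length-preserving bijection between the two double cosets. We have $w_1^{-1}w_0=v_1u^{-1}v_0^{-1}$, so the roles of $v_0,v_1$ swap as well. The resulting set is $\{s\in T_1\mid usu^{-1}\in T_0\}=U_1$. Hence
\[\gate_{\Upsilon_1}(\Upsilon_0)=w_1v_1W_{U_1},\]
with stabilizer $w_1v_1W_{U_1}(w_1v_1)^{-1}$.

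It remains to compare the two stabilizers, and this is the only step with any content. The map $s\mapsto u^{-1}su$ is a bijection $U_0\to U_1$ with inverse $s\mapsto usu^{-1}$. This follows directly from the definitions: if $s\in U_0$ then $t:=u^{-1}su\in T_1$ and $utu^{-1}=s\in T_0$, so $t\in U_1$, and symmetrically. It follows that $u^{-1}W_{U_0}u=W_{U_1}$. Since $w_1v_1=w_0v_0u$, we get
\[w_1v_1W_{U_1}(w_1v_1)^{-1}=w_0v_0u\,u^{-1}W_{U_0}u\,u^{-1}(w_0v_0)^{-1}=w_0v_0W_{U_0}(w_0v_0)^{-1},\]
which is the claim.

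Alternatively, one could note that the isomorphism in \fullref{lem:gate_projection_image_is_residue} is right multiplication by $u$. Right multiplication commutes with the left $W$-action, so it intertwines the stabilizers. Either way there is no real obstacle, apart from checking that the hypotheses of the lemma hold symmetrically.
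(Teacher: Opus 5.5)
Your proof is correct and matches the argument the paper leaves implicit after \fullref{lem:gate_projection_image_is_residue}: you apply that lemma in both directions, with $u^{-1}$ as the shortest element of the reversed double coset, which gives the same $U_1$. You then combine $w_1v_1=w_0v_0u$ with $u^{-1}W_{U_0}u=W_{U_1}$, so right multiplication by $u$ carries $w_0v_0W_{U_0}$ onto $w_1v_1W_{U_1}$ while commuting with the left $W$--action, and the two stabilizers therefore coincide.
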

The next result is a partial converse of
\fullref{cor:cross_projections_have_same_stabilizer}.
\begin{lemma}\label{lem:same_stabilizer_implies_parallel}
  Let $P$ be a nonspherical irreducible parabolic subgroup of $(W,S)$.
  Let $T\subset S$ be the unique subset such that $P$ is conjugate to
  $W_T$.
  Suppose $\Upsilon_0=w_0\Davis_{T_0}$ and $\Upsilon_1=w_1 \Davis_{T_1}$ are parabolic
  subcomplexes with $P=\stab(\Upsilon_0)=\stab(\Upsilon_1)$.
  Then $T=T_0=T_1$ and the unique shortest element $u$ of
  $W_Tw_0^{-1}w_1W_T$ belongs to $W_{T^\perp}$ and satisfies:
  \begin{itemize}
  \item $\gate_{\Upsilon_1}(\Upsilon_0)=\Upsilon_0^{(0)}u=\Upsilon_1^{(0)}$.
    \item Left-multiplication by $w_0uw_0^{-1}\in P^\perp$ and
      right-multiplication by $u\in W_{T}^\perp$ induce the same isomorphism $\Upsilon_0\to\Upsilon_1$.
  \end{itemize}
\end{lemma}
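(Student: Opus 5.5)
The plan is to reduce everything to a statement about minimal double coset representatives normalizing $W_T$, and then read off the two bullet points from \fullref{lem:gate_projection_image_is_residue}.

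\textbf{Step 1: $T_0=T_1=T$.} Since $\stab(w_i\Davis_{T_i})=w_iW_{T_i}w_i^{-1}$, the group $P$ is conjugate to both $W_{T_0}$ and $W_{T_1}$. By the uniqueness of $T$ assumed in the statement, $T_0=T_1=T$. I would justify that uniqueness via \cite[Proposition~4.5.10]{davisbook}: a conjugation carries $T$ bijectively onto the other subset, and for $T$ irreducible and nonspherical the conjugacy class of special subgroups is a single special subgroup. This is the standard Deodhar-type fact that only spherical pieces can be moved by elementary conjugations.

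\textbf{Step 2: $u$ normalizes $W_T$ and lies in $W_{T^\perp}$.} Set $g:=w_0^{-1}w_1$. Because $w_0W_Tw_0^{-1}=P=w_1W_Tw_1^{-1}$, the element $g$ normalizes $W_T$. Write $g=v_0uv_1^{-1}$ with $v_0,v_1\in W_T$ and $u$ the shortest element of $W_TgW_T$; then $u$ also normalizes $W_T$.

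By the remark after \cite[Lemma~5.3.6]{davisbook}, $uW_Tu^{-1}\cap W_T=W_{U_0}$ with $U_0=\{s\in T\mid u^{-1}su\in T\}$. Since the left side is all of $W_T$, we get $U_0=T$ and likewise $U_1=T$. So conjugation by $u$ permutes $T$.

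Now invoke the normalizer structure from the section on parabolic centralizers and normalizers. For $T$ irreducible and nonspherical, $\normalizer(W_T)=W_T\times W_{T^\perp}$. Hence $u=ab$ with $a\in W_T$ and $b\in W_{T^\perp}$. Since $b$ commutes with $W_T$ and there is no cancellation between words in $T$ and words in $T^\perp$ (as in the proof of \fullref{lem:perp_dont_change_gate}), $b$ is the shortest element of $W_TbW_T=W_TuW_T$. Therefore $u=b\in W_{T^\perp}$.

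I expect this to be the main obstacle: it needs the fact that a minimal double coset representative inducing a permutation of an irreducible nonspherical $T$ actually centralizes $W_T$. If the normalizer description is not available in exactly this form, I would instead use the Brink--Howlett/Deodhar result that such diagram automorphisms induced by conjugation are trivial in the irreducible nonspherical case, so that $u$ centralizes $W_T$, and then conclude $u\in W_{T^\perp}$ by \cite[Lemma~5.3.6]{davisbook} applied to the centralizer.

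\textbf{Step 3: the two bullet points.} Apply \fullref{lem:gate_projection_image_is_residue} with $U_0=U_1=T$, reversing the roles of $\Upsilon_0$ and $\Upsilon_1$ for the first bullet. This gives $\gate_{\Upsilon_1}(\Upsilon_0)=w_1v_1W_T=\Upsilon_1^{(0)}$. It also shows that right multiplication by $u$ is an isomorphism from $w_0v_0\Davis_T=\Upsilon_0$ to $w_1v_1\Davis_T=\Upsilon_1$, so $\Upsilon_0^{(0)}u=\Upsilon_1^{(0)}$.

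For the left action, take a vertex $w_0x$ with $x\in W_T$. Since $u$ commutes with $W_T$,
\[
(w_0uw_0^{-1})\,w_0x=w_0ux=w_0xu,
\]
so the two maps agree on vertices and hence on the full subcomplexes. Finally, $w_0uw_0^{-1}\in w_0W_{T^\perp}w_0^{-1}$, and this group centralizes $w_0W_Tw_0^{-1}=P$, so $w_0uw_0^{-1}\in P^\perp$.
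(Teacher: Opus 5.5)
Your proof is correct and follows essentially the same route as the paper. You get $T_0=T_1=T$ from uniqueness and irreducible rigidity, use $\normalizer_W(W_T)=W_T\times W_{T^\perp}$ to place the minimal double coset representative $u$ in $W_{T^\perp}$, read the gate statement off \fullref{lem:gate_projection_image_is_residue} with $U_0=U_1=T$, and compare the two actions via $w_0ux=w_0xu$; you additionally supply the length-additivity argument that the paper leaves implicit. Your detour through the remark after \cite[Lemma~5.3.6]{davisbook} to get $U_0=T$, and the fallback to Brink--Howlett, are unnecessary: once $u\in W_{T^\perp}$, you have $u^{-1}su=s\in T$ for every $s\in T$ directly.
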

\begin{proof}
The set $T$ exists by \fullref{irreducible_parabolics_have_type}, and
\fullref{lem:nonspherical_normalizer}~\eqref{item:irreducible_rigidity}
further implies $T=T_0=T_1$.
\fullref{lem:nonspherical_normalizer}~\eqref{item:normalizer_splits}
gives $w_0^{-1}w_1\in\normalizer_W(W_T)=W_T\times W_{T^\perp}$, so we
can write $w_0^{-1}w_1=vu$ where $v\in W_T$ and $u\in W_{T^\perp}$ is
the shortest element of $W_Tw_0^{-1}w_1W_T$.
The projection formula in
\fullref{lem:gate_projection_image_is_residue} gives $U_0=U_1=T$, and
that right-multiplication by $u$ is the gate map $\gate_{\Upsilon_1}(\Upsilon_0)=\Upsilon_0^{(0)}u=\Upsilon_1^{(0)}$.
Finally, for any $x\in W_T$ we have $w_0uw_0^{-1}\cdot
w_0x=w_0ux=w_0x\cdot u$, so on $\Upsilon_0^{(0)}$ left-multiplication
by $w_0uw_0^{-1}$, right-multiplication by $u$, and
$\gate_{\Upsilon_1}$ all define the same map.
\end{proof}

\begin{lemma}\label{cpp_and_gate_agree}
  If $x\in \Davis^{(0)}$ and $\Upsilon$ is a parabolic subcomplex then
  $\gate_\Upsilon(x)$ is the unique $\dcat$--closest vertex of
  $\Upsilon$ to $x$.
  Furthermore, there exists a closed chamber containing both $\pi_\Upsilon(x)$ and $\gate_\Upsilon(x)$.
\end{lemma}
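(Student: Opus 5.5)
The plan is to prove the second claim first, then deduce the first from it using convexity of $\Upsilon$ together with the gate property of \fullref{def:gate_projection}. Throughout, $\Upsilon=w\Davis_T$ is a convex subcomplex of $(\Davis,\dcat)$. It is the fixed set of the parabolic $wW_Tw^{-1}$, and it is also an intersection of closed half-spaces, namely those bounded by walls not crossing $\Upsilon$. So the CAT(0) projection $p:=\pi_\Upsilon(x)$ is a single point of $\Upsilon$.

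\textbf{Step 1: walls separating $x$ from $p$ or from $g$.} Let $g:=\gate_\Upsilon(x)$. By \fullref{cor:wall_gate}, the walls separating $x$ from $g$ are exactly the walls separating $x$ from $\Upsilon$. I will show that no wall crossing $\Upsilon$ separates $x$ from $p$, and that every wall separating $x$ from $\Upsilon$ also separates $x$ from $p$.

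The second inclusion is immediate, since $p\in\Upsilon$.

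For the first, let $M$ be a wall meeting $\Upsilon$. Since $\Upsilon$ is $\refl M$-invariant, the reflection $\refl M$ commutes with $\pi_\Upsilon$. If $x$ and $p$ were strictly on opposite sides of $M$, consider $\refl M(x)$. The geodesic $[x,p]$ crosses $M$ at some point $m$. Then the broken path $x\to m\to\refl M(p)$ has length $\dcat(x,p)$, and $\refl M(p)\in\Upsilon$. Since $p$ is the unique closest point, this forces $\refl M(p)=p$, so $p\in M$, which contradicts strict separation.

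Hence the walls separating $x$ from $p$ are exactly the walls separating $x$ from $g$, except possibly for walls through $p$, which we treat as weak separation. Consequently no wall strictly separates $p$ from $g$.

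\textbf{Step 2: a common closed chamber.} Two points of $\Davis$ that are not strictly separated by any wall lie in a common closed chamber. Recall that a closed chamber is the intersection of closed half-spaces, or in Davis's language the fundamental domain $vK$. So choose the chamber of $g$ (the vertex $g$ lies in the chamber $gK$) and check that $p$ lies in every closed half-space containing $g$. This gives the ``furthermore'' statement.

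The main obstacle is making this half-space description precise, in particular handling points lying on walls and the statement that $vK$ equals the intersection of the half-spaces bounded by walls adjacent to $vK$ that contain $v$. I would cite \cite{davisbook} for this description of chambers in the Davis complex.

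\textbf{Step 3: $g$ is the unique $\dcat$-closest vertex.} Let $y\ne g$ be a vertex of $\Upsilon$. We need $\dcat(x,y)>\dcat(x,g)$.

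Some wall $M$ separates $g$ from $y$, and every such wall crosses $\Upsilon$, so by Step 1 it does not strictly separate $x$ from $p$ or from $g$. Here I would use the gate property: the walls separating $x$ from $y$ are the disjoint union of the walls separating $x$ from $g$ and those separating $g$ from $y$.

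The key estimate is a chamber-by-chamber comparison. Use the fact that $p$ and $g$ share a closed chamber $C$. By Step 1 and the gate property, the half-spaces bounded by the walls of $C$ containing $C$ give $\angle_p(x,y)\ge\pi/2$ for every $y\in\Upsilon$, since $p$ is the projection. Combined with the fact that $g$ is the unique vertex of $\Upsilon\cap C$ on the $x$-side of every wall of $C$ crossing $\Upsilon$, this shows that $\dcat(x,\cdot)$ restricted to the vertices of $\Upsilon\cap C$ is uniquely minimized at $g$.

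To handle vertices $y$ outside $C$, reflect $y$ across the walls separating it from $g$. Each such reflection fixes the side of $x$ relative to $\Upsilon$'s walls and strictly decreases the distance to $x$, by the standard folding argument: for a wall $M$ with $x$ weakly on one side and $y$ strictly on the other, $\dcat(x,\refl M y)<\dcat(x,y)$ unless $x\in M$, and in that degenerate case we pick a wall not containing $x$. Inducting on the number of walls separating $y$ from $g$ completes the proof.
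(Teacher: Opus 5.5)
Your proof is correct and is essentially the paper's argument: both claims come from reflecting across a wall that cuts $\Upsilon$ (so $\refl{M}$ preserves $\Upsilon$), with \fullref{cor:wall_gate} putting $x$ strictly on the same side as $\gate_\Upsilon(x)$, and your Step~2 makes explicit the passage from ``no wall strictly separates'' to ``common closed chamber'' that the paper leaves implicit. The middle ``chamber-by-chamber'' paragraph of Step~3 is unnecessary, since a closed chamber contains only one vertex of $\Davis$. The induction is cleanest if, as in the paper, you reflect across the wall dual to the first edge of a $\dcomb$--geodesic from $y$ to $\gate_\Upsilon(x)$, so that $\refl{M}(y)$ is just the next vertex on that geodesic; if you reflect across an arbitrary separating wall, you must also cite the standard Coxeter fact that this strictly decreases $\dcomb(\cdot,\gate_\Upsilon(x))$.
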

\begin{proof}
  Let $y\in\Upsilon^{(0)}\setminus\{\gate_\Upsilon(x)\}$.
  Let $z$ be the first vertex after $y$ on a
  $\dcomb$--geodesic from $y$ to $\gate_\Upsilon(x)$.
  Since $\Upsilon$ is $\dcomb$--convex, $z\in\Upsilon$.
  Let $M$ be the wall dual to the edge between $z$ and $y$.
  Then $M$ cuts $\Upsilon$, so by \fullref{cor:wall_gate} it does not
  separate $x$ and $\gate_\Upsilon(x)$.
  Thus, vertices $x$, $\gate_\Upsilon(x)$, and $z$ are in one
  open halfspace $M^{-}$ of $M$ and $y$ is in the other.
  Let $p$ be the point at which the unique $\dcat$--geodesic from $x$ to $y$ crosses $M$.
 The reflection through $M$ is an isometry that fixes $p$ and
 exchanges $y$ and $z$, so:
 \[\dcat(x,y)=\dcat(x,p)+\dcat(p,y)=\dcat(x,p)+\dcat(p,z)\geq\dcat(x,z)\]
In fact, $\dcat(x,p)+\dcat(p,z)>\dcat(x,z)$, because $x$ and $z$ are
contained in the $\dcat$--convex open halfspace $M^{-}$ and $p\in M$
is not, so $p$ does not lie on the unique $\dcat$--geodesic from $x$
to $z$.
Iterating along the $\dcat$--geodesic, conclude
$\dcat(x, \gate_\Upsilon(x))<\dcat(x,y)$.

The proof of the further statement is similar: suppose $M$ is a wall
with $\gate_\Upsilon(x)\in M^{-}$ and $\pi_\Upsilon(x)\in M^+$.
Then $M$ cuts $\Upsilon$, so $M$ does not separate $x$ and
$\gate_\Upsilon(x)$, so $x\in M^{-}$. 
Using the reflect and straighten trick as in the previous argument,
this produces a point $\refl{M}(x)$ of $\Upsilon$ that is strictly
closer to $x$ than $\pi_\Upsilon(x)$ is.
That is a contradiction, so no wall separates $\gate_\Upsilon(x)$ and $\pi_\Upsilon(x)$.
\end{proof}

%% file: Coxeter_background.tex
A \emph{reflection} in $(W,S)$ is a conjugate of an element of $S$.
A \emph{wall} in $\Davis$ is the fixed set of a reflection.
The unique reflection fixing wall $M$ and exchanging the two
components of $\Davis\setminus M$ is denoted $\refl{M}$.
Unlike in the right-angled case, general Coxeter groups can have
conjugate generators, so the reflection $\refl{M}$ is not necessarily
conjugate to a unique element of $S$.

A \emph{reflection subgroup} of $W$ is one that is generated by reflections.

A key fact in this field is that for every pair of distinct walls $M$ and $M'$ in
$\Davis$, the group $\langle\refl{M},\refl{M'}\rangle$ is dihedral,
and it is finite if and only if $M$ and $M'$ intersect transversely,
$M\transverse M'$, and it is isomorphic to the infinite dihedral group
$\dihedral_\infty$ if and only if $M$ and $M'$ are disjoint.

Every wall $M$ is convex and cuts $\Davis$ into two components.
We say $M$ \emph{separates} $A$ and $B$ if $A$ and $B$ are contained
in different components of $\Davis\setminus M$.
The components of $\Davis\setminus M$ are the \emph{open halfspaces}
corresponding to $M$. A \emph{closed halfspace} of $M$ is the union of
$M$ with an open halfspace of $M$. 

By construction, walls miss the vertices of $\Davis$ and intersect
higher dimensional cells in a Euclidean codimension--1 hyperplane.
This gives a natural piecewise Euclidean structure on a wall $M$ from its intersections with
the cells of $\Davis$.
In fact, by the choice $\mathbf{d}\equiv 1/2$, within each cell
$\sigma$ that meets $M$ there is a metric product
$\carrier^\sigma(M\cap\sigma):=\{x\in\sigma\mid
\dcat(x,M\cap\sigma)\leq 1/2\}\cong(M\cap\sigma)\times[-1/2,1/2]$ that is the convex
hull in $\sigma$ of the set of edges of $\sigma$ that are cut by $M$.
This structure persists globally:
$\carrier(M):=\{x\in\Davis\mid \dcat(x,M)\leq 1/2\}$ is
isometric to $M\times[-1/2,1/2]$, is equal to the convex hull of the
set of edges cut by $M$, and
$\carrier(M)\cap\sigma=\carrier^\sigma(M\cap\sigma)$ for each cell $\sigma$.
The set $\carrier(M)$ can be used in a role similar to
that of a wall/hyperplane \emph{carrier} in a cube complex.
The usual definition of a wall carrier as the union of closed cells
meeting $M$ does not, in general, result in a convex set in the Davis
complex of a 
non-right-angled Coxeter group. 

Similarly, there are no tangential intersections between walls $M$
and subcomplexes $\Upsilon$ of $\Davis$: either they are disjoint or
$M$ cuts an edge of $\Upsilon$. In particular, either
$M\cap\Upsilon=\emptyset$ or $\Upsilon$ contains points from both
components of $\Davis\setminus M$. 

The connected components of $\Davis$ minus the walls are the
\emph{open chambers}.
Each open chamber contains a unique vertex of $\Davis$, and every
vertex is contained in a chamber.
By choosing a base vertex  $\one\in\Davis$ we get a bijection between
$W$ and vertices of $\Davis$ by referring to the vertex
$w.\one$ as $w$.
The chamber containing $\one$ is the \emph{fundamental chamber}, whose
closure is a fundamental domain for $W\act\Davis$.
This choice of basepoint identifies the Cayley graph of $W$ with
respect to $S$ with the 1--skeleton of $\Davis$.

The $n$--skeleton of $\Davis$ is denoted $\Davis^{(n)}$. 

It will sometimes be convenient to replace $\Davis$ by a subdivision
in which walls of $\Davis$ are actual subcomplexes.
The barycentric subdivision has this property, but that subdivision is
finer than necessary. Instead we define an intermediate complex:
\begin{definition}\label{def:Hasse}
  Let $\Davis'$ be the polyhedral subdivision of $\Davis$ obtained by
  subdividing each cell of $\Davis$ along its intersections with the
  walls of $\Davis$.
  Equivalently, the cells of $\Davis'$ are the intersections of cells
  of $\Davis$ with closed chambers.

  Thus,  $\Davis'$ is the coarsest subdivision of $\Davis$ in which walls
  and their intersections are subcomplexes.
  Its 1--skeleton is the Hasse diagram of the face
poset of $\Davis$, i.e.\ one vertex for every cell of $\Davis$ and an
  edge connecting a cell-vertex with its codimension-1 face-vertices.
\end{definition}

With respect to our choice of $\one$, the vertices of $\Davis$
corresponding to elements of a special subgroup $W_T$ span a convex subcomplex, denoted $\Davis_T$, isomorphic to the
Davis complex of the Coxeter system $(W_T,T)$.
Similarly, a coset of a special subgroup $wW_T$ corresponds to a
\emph{parabolic subcomplex} $w\Davis_T$, which is the full subcomplex
of $\Davis$ spanned by vertices in the coset $wW_T$.
These subcomplexes are closely related to the \emph{residues} used in
the buildings literature. The parabolic subcomplex $w\Davis_T$
corresponds to the residue that is the union of chambers containing
the vertices of $w\Davis_T$.
The parabolic subcomplex and the residue have the same stabilizer,
which is the parabolic subgroup $wW_Tw^{-1}$.
However, a parabolic subgroup may be the stabilizer of multiple parallel
 parabolic subcomplexes; we return to this point in \fullref{normalizer_parabolics_have_well_defined_subcomplex}.

\begin{lemma}\label{lem:nested_residues_have_nested_stabilizers}
  If $a\Davis_U\subset b\Davis_T$ then $\stab(a\Davis_U)\subset\stab(b\Davis_T)$.
\end{lemma}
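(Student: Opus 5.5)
Since $\stab(b\Davis_T)=bW_Tb^{-1}$ and $\stab(a\Davis_U)=aW_Ua^{-1}$, the statement is the purely combinatorial claim $aW_Ua^{-1}\subset bW_Tb^{-1}$. The plan is to compare vertex sets and then reduce to a statement about subsets of $W$.

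First I would pass to vertices. Both subcomplexes are full subcomplexes spanned by their vertex sets, which are the cosets $aW_U$ and $bW_T$. So the hypothesis $a\Davis_U\subset b\Davis_T$ gives $aW_U\subset bW_T$. In particular $a\in bW_T$, so I would write $a=bc$ with $c\in W_T$. Then $bcW_U\subset bW_T$, and cancelling $b$ gives $cW_U\subset W_T$. Since $c\in W_T$, this yields $W_U\subset c^{-1}W_T=W_T$. (One could also note $U\subset T$, because each generator in $U$ lies in $W_T$ and $W_T\cap S=T$, but this is not needed.)

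Second, I would conjugate:
\[aW_Ua^{-1}=bcW_Uc^{-1}b^{-1}\subset bcW_Tc^{-1}b^{-1}=bW_Tb^{-1}.\]
The last equality holds because $c\in W_T$.

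The only point needing care is the identification of each stabilizer with the corresponding conjugate. The paper asserts that a parabolic subcomplex $w\Davis_T$ has stabilizer $wW_Tw^{-1}$. To check this, note that $g$ stabilizes $w\Davis_T$ exactly when $g$ stabilizes its vertex set $wW_T$; the action on vertices is free, and the subcomplex is full. Moreover, $gwW_T=wW_T$ holds if and only if $w^{-1}gw\in W_T$. Given that, the argument above is routine and I expect no real obstacle.
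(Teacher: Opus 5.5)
Your argument is correct and is essentially the paper's proof: pass to the vertex sets $aW_U\subset bW_T$, write $a=bc$ with $c\in W_T$, and conjugate to get $aW_Ua^{-1}=bcW_Uc^{-1}b^{-1}\le bW_Tb^{-1}$. The only cosmetic difference is where $W_U\le W_T$ comes from. The paper reads off $U\subset T$ from the edge labels, while you extract it from $cW_U\subset W_T$ with $c\in W_T$; both are valid.
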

\begin{proof}
  Since edges of $a\Davis_U$ are labelled by elements of $U$ and edges
  of $b\Davis_T$ are labelled by elements of $T$, we have $U\subset T$.
 Passing to vertex sets, $aW_U\subset bW_T$, so $a=bc$ for some $c\in
 W_T$, so $cW_Uc^{-1}\subset W_T$.
 Thus:
 \[\stab(a\Davis_U)=aW_Ua^{-1}=bcW_Uc^{-1}b^{-1}\leq bW_Tb^{-1}=\stab(b\Davis_T)\qedhere\]
\end{proof}

An \emph{affine} Coxeter group is one that is virtually a Euclidean
reflection group.
The irreducible spherical and irreducible affine Coxeter groups are completely classified in
terms of their Coxeter diagrams. 

\begin{definition}
  A Coxeter system is called \emph{irreducible higher rank affine} if
  it is irreducible and  affine of
rank at least 3, in the sense that the fundamental generating set has
cardinality at least 3. This corresponds to the geometric rank, the
dimension of the Euclidean space on which it acts, being at least 2.
\end{definition}

\begin{definition}
  If $\mathcal{M}$ is a set of walls in $\Davis$, let $W(\mathcal{M}):=\langle
\refl{M}\mid M\in\mathcal{M}\rangle$ be the subgroup generated by
reflections in those walls.
\end{definition}

\begin{definition}
  For any subset of $W$ there is a unique smallest parabolic subgroup
  containing it, known as its \emph{parabolic closure}. In particular,
  when $\mathcal{M}$ is a set of walls of $\Davis$ we denote the
  parabolic closure of the reflections in those walls  $\pc(\mathcal{M}):=\pc(W(\mathcal{M}))$.
\end{definition}

\begin{definition}\label{def:chain} 
  A \emph{chain} $\mathcal{K}=\{K_i\}_{i\in I}$ in $\Davis$,
  indexed by $\emptyset\neq I\subset\mathbb{Z}$,  is a
  collection of disjoint walls $K_i$, such that $i<j<k$ in $I$ implies
  $K_j$ separates $K_i$ and $K_k$.
  The number of walls in the chain is denoted $|\mathcal{K}|$.
\end{definition}
\fullref{def:chain} is an abuse of terminology. The complementary halfspaces of the walls $K\in\mathcal{K}$ can be
labelled $K^+$ and $K^-$
in such a way that for $i<j$ we have $K_i^-\subset K_j^-$ and
$K_i^+\supset K_j^+$.
Then the set of halfspaces $K_i^-$, ordered consistently with the
indices, gives a chain in the partially ordered set of halfspaces with
respect to inclusion.
Thus, it is really the nested sequence of halfspaces that is the
chain.
Since the ordered index set is part of \fullref{def:chain},
$\mathcal{K}$ contains the same information as the nested sequence of
halfspaces $K_i^{-}$, so the terminology is justified.

We say a wall $M$ or path $\gamma$ is \emph{transverse to a chain}
$\mathcal{K}$ if it is
transverse to every wall in the chain, and write $M\transverse
\mathcal{K}$ or $\gamma\transverse\mathcal{K}$, respectively.
We say \emph{two chains $\mathcal{V}$ and $\mathcal{H}$ are transverse}, and write $\mathcal{V}\transverse\mathcal{H}$, if
$V\transverse H$ for all $V\in\mathcal{V}$ and all $H\in\mathcal{H}$.

The \emph{extremes} of a chain are the at most 2 walls of the chain
that do not separate any of the other walls of the chain, i.e., the
first and last walls, if they exist.

\begin{lemma}\label{lem:chains_irreducible}
  Let $\mathcal{V}:=\{V_i\}_{i\in I}$ be a chain.
  Then:
  \begin{itemize}
  \item $\pc(\mathcal{V})$ is
    irreducible.
    \item There exist $i_0,i_1\in I$ such that
      $\pc(\mathcal{V})=\pc(V_{i_0},V_{i_1})$, so $\pc(\mathcal{V})$
      is a finitely generated reflection subgroup.
  \end{itemize}
\end{lemma}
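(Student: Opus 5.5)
We need to prove two things about a chain $\mathcal{V}=\{V_i\}_{i\in I}$: the parabolic closure $\pc(\mathcal{V})$ is irreducible, and it equals the parabolic closure of just two of the walls. The plan is to prove the second item first and then deduce irreducibility from it.

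For the second item, the starting point is that parabolic subgroups satisfy the descending chain condition and, in finite rank, also a bounded-length ascending chain condition. Indeed, a parabolic conjugate to $W_T$ has rank $|T|\le|S|$, and a proper inclusion of parabolic subgroups strictly increases the rank, since a parabolic properly containing a conjugate of $W_T$ with the same rank would have to equal it.
\begin{itemize}
\item For $i<j$ in $I$, consider $P_{i,j}:=\pc(V_i,V_j)$.
\item If $i\le i'<j'\le j$, then $V_{i'}$ and $V_{j'}$ either equal $V_i$ or $V_j$, or separate them. So by \fullref{Caprace_17}, applied to the two-wall chain $\{V_i,V_j\}$, both $\refl{V_{i'}}$ and $\refl{V_{j'}}$ lie in $P_{i,j}$. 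Hence $P_{i',j'}\le P_{i,j}$.
\item The family $\{P_{i,j}\}$ is therefore directed: any two pairs are dominated by the pair formed from the smallest index and the largest index among the four.
\item Ranks are bounded by $|S|$, so pick $(i_0,i_1)$ maximizing the rank of $P_{i_0,i_1}$. Every $P_{i,j}$ sits inside some $P_{i',j'}\supseteq P_{i_0,i_1}$, and the rank of $P_{i',j'}$ is at most the maximal rank, so $P_{i',j'}=P_{i_0,i_1}$.
\item Hence every $\refl{V_k}$ lies in $P_{i_0,i_1}$, which gives $\pc(\mathcal{V})=P_{i_0,i_1}$. This subgroup is finitely generated, and $\pc(\mathcal{V})$ is the parabolic closure of the reflection subgroup $W(V_{i_0},V_{i_1})$.
\end{itemize}
If $|I|=1$, the statement is trivial: take $i_0=i_1$, so $\pc(\mathcal{V})$ is the order-two group $\langle\refl{V}\rangle$, which is irreducible.

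For irreducibility, it suffices to show that $P=\pc(V,V')$ is irreducible for two disjoint walls $V$ and $V'$. Write $P=gW_Tg^{-1}$ and suppose $T=T_0\sqcup T_1$ with $W_T=W_{T_0}\times W_{T_1}$. Since $P$ is the smallest parabolic containing $D:=\langle \refl{V},\refl{V'}\rangle\cong\dihedral_\infty$, the plan is to show that $D$ lies in one factor.
\begin{itemize}
\item A reflection in $W_{T_0}\times W_{T_1}$ lies in one of the factors. Reflections of a product of Coxeter groups are conjugates of elements of $T_0\sqcup T_1$, and each such conjugate stays in its own factor.
\item If $\refl{V}$ lies in one factor and $\refl{V'}$ in the other, then the two reflections commute.
\item But two reflections generating $\dihedral_\infty$ have product of infinite order, so they cannot commute. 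Hence both reflections lie in the same conjugated factor.
\item That factor is a parabolic containing $D$. Minimality of $P$ then forces the other factor to be trivial, contradicting that the decomposition $T=T_0\sqcup T_1$ is nontrivial.
\end{itemize}

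The main subtle step is the rank argument: I need that a proper inclusion of parabolics strictly increases rank. I would cite Davis's book for this: a parabolic contained in $W_T$ is conjugate within $W_T$ to some $W_U$ with $U\subseteq T$, and equality of ranks forces $U=T$.
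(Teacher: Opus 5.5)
Your proposal is correct and follows the paper's proof. For the second item you use the same two ingredients, \fullref{Caprace_17} and stabilization of ranks bounded by $|S|$; your directed-family, maximal-rank packaging is equivalent to the paper's exhaustion of $I$ by finite subintervals. For irreducibility, the paper simply cites \cite[Lemma~2.2~(ii)]{CapMar13}, using that reflections in disjoint walls do not commute; you instead reprove that criterion directly through the reflection-splitting fact of \fullref{lem:reflection_splitting}, which implicitly uses that a reflection of $W$ lying in $W_T$ is a $W_T$--conjugate of an element of $T$, and that is fine.
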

\begin{proof}
  The first item follows from \cite[Lemma~2.2~(ii)]{CapMar13}, since
  walls in a chain are disjoint so the corresponding reflections do
  not commute.

  For the second item, \cite[Lemma~17]{Cap06} implies that if
  $i<j<k$ are distinct indices in $I$ then
  $\refl{V_j}\in\pc(\refl{V_i},\refl{V_k})$, so for any finite
  subinterval $[i_0,i_1]\subset I$ we have $\pc(\{V_i\}_{i_0\leq i\leq
    i_1})=\pc(\{V_{i_0},V_{i_1})$.
  If $I$ is finite we are done. 
  If not, take an exhaustion of $I$ by finite subintervals. This gives a
  nested sequence of parabolic closures of the corresponding convex
  subchains. A strict inclusion of parabolic subgroups requires a
  strict increase in rank, but the ranks of parabolic subgroups are
  bounded above by $|S|$, so there the sequence of parabolic closures
  stabilizes at some finite stage. 
\end{proof}

%% file: parabolicnormalizer.tex
There has been extensive work on describing normalizers and
centralizers of parabolic subgroups of $(W,S)$
\cite{Bri96, MR576184, AleMicNer13,
  BriHow99, Bor98,All12,All13,Nui11}, as we now briefly recall. The
main takeaway for us is that the complications arise in the
reducible or spherical cases.

\begin{lemma}\label{lem:reflection_splitting}
 A reflection subgroup of a direct product of special subgroups splits
as a direct product of reflection subgroups of the factors. 
\end{lemma}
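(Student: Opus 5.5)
Let $W_T=W_{T_0}\times W_{T_1}$ with $T=T_0\sqcup T_1$, and let $R\le W_T$ be generated by a set $\mathcal{R}$ of reflections. The plan is to show that each reflection of $R$ already lies in one of the two factors. Then $R=R_0\times R_1$ with $R_i:=\langle \mathcal{R}\cap W_{T_i}\rangle$. The argument for two factors iterates to any finite number of factors.

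\emph{Step 1: every reflection of $W_T$ lies in a factor.} A reflection $r\in W_T$ is conjugate in $W$ to some $s\in S$. We first want it conjugate \emph{within} $W_T$ to an element of $T$. For this, use the standard fact that the reflections of a special subgroup $W_T$ are exactly the $W_T$--conjugates of $T$ \cite{davisbook}. Geometrically, the wall of $r$ cuts an edge of $\Davis_T$ (the wall meets the convex subcomplex $\Davis_T$, since $r$ stabilizes it), and that edge is labelled by an element of $T$. Writing $r=wtw^{-1}$ with $w=w_0w_1$, $w_i\in W_{T_i}$, and $t\in T_0$ say, the factor $w_1$ commutes with $t$ and with $w_0$. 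Hence $r=w_0tw_0^{-1}\in W_{T_0}$. So each reflection of $W_T$ lies in $W_{T_0}$ or in $W_{T_1}$.

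\emph{Step 2: split the generators.} Partition the generating reflections into $\mathcal{R}_0:=\mathcal{R}\cap W_{T_0}$ and $\mathcal{R}_1:=\mathcal{R}\cap W_{T_1}$, and set $R_i:=\langle\mathcal{R}_i\rangle\le W_{T_i}$. The subgroups $R_0$ and $R_1$ commute because the factors do, and $R_0\cap R_1\subset W_{T_0}\cap W_{T_1}=\{1\}$. Hence $R=R_0R_1\cong R_0\times R_1$, and each $R_i$ is a reflection subgroup of $W_{T_i}$. If $\mathcal{R}$ is taken to be all reflections of $R$, the decomposition is canonical.

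The only nonformal point is Step 1, namely that a reflection of $W$ lying in $W_T$ is a $W_T$--conjugate of an element of $T$. I would cite this from \cite{davisbook}, or prove it via the wall/edge argument: the wall is $r$--invariant and $r$ stabilizes $\Davis_T$, so the wall cuts $\Davis_T$. Some edge of $\Davis_T$ crossing the wall, with label $t\in T$ at vertex $w\in W_T$, then gives $r=wtw^{-1}$.
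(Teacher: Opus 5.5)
Your proof is correct and is essentially the paper's argument. The paper also invokes the fact from Davis's book that a reflection in $W_{T_0}\times W_{T_1}$ has the form $wsw^{-1}$ with $w$ in the product and $s\in T_0\sqcup T_1$, and then cancels the component of $w$ that commutes with $s$. Your Step 2 and your wall--edge justification of the cited fact only spell out details the paper leaves implicit.
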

\begin{proof}
  Suppose $(W,S)$ is a Coxeter system and $r$ is a reflection in
  $W_{T_0}\times W_{T_1}\leq W$ for some $T_0,T_1\subset S$.
  Then $r=wsw^{-1}$ for
some $w\in W_{T_0}\times W_{T_1}$ and $s\in T_0\sqcup T_1$ \cite[Lemma~4.2.3]{davisbook}.
Suppose $w=uv$ for $u\in W_{T_0}$ and $v\in W_{T_1}$. 
If $s\in T_0$ then $r=uvsv^{-1}u^{-1}=usu^{-1}\in W_{T_0}$. 
If $s\in T_1$ then $r=uvsv^{-1}u^{-1}=vsv^{-1}\in W_{T_1}$.
Thus, each reflection is in one factor or the other, and the factors
commute, by hypothesis. 
\end{proof}

\begin{proposition}\label{lem:nonspherical_normalizer}
  Let $(W,S)$ be a Coxeter system. Let $T\subset S$ be irreducible and
  nonspherical.
  Let $T^\perp:=\{s\in S\setminus T\mid \forall t\in T,\,st=ts\}$.
  Then:
  \begin{enumerate}
    \item If $wTw^{-1}\subset S$ for some $w\in W$ then $wTw^{-1}=T$.\label{item:irreducible_rigidity}
    \item  The normalizer of $W_T$ in $W$ is $\normalizer_W(W_T)=W_T\times
      W_{T^\perp}$.\label{item:normalizer_splits}
      \item The centralizer of $W_T$ in $W$ is $\centralizer_W(W_T)=W_{T^\perp}$.
  \item Every reflection in $\normalizer_W(W_T)$ is either in
  $W_T$ or in $W_{T^\perp}$.\label{item:reflections_in_normalizer}
  \end{enumerate} 
\end{proposition}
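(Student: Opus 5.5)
We prove the four items in order, with item (2) as the crux and items (3) and (4) following from it.

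\textbf{Item (1).} Suppose $wTw^{-1}=T'\subset S$. Then $W_T$ and $W_{T'}$ are conjugate special subgroups. By the result of \cite[Proposition~4.5.10]{davisbook} recalled in the preliminaries, the conjugating element can be replaced by one mapping $T$ bijectively onto $T'$. We then use Deodhar's/Kilmoyer's description of such conjugations. The element $w$ factors as a product of elementary moves $\nu(T_i,s_i)=w_0(T_i\cup s_i)\,w_0(T_i)$, where each $T_i\cup\{s_i\}$ must be spherical. For irreducible nonspherical $T$, consider the component of $T\cup\{s\}$ containing $T$. If this component is spherical, then $T$ would be spherical, which is false. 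If $s$ lies in a different component, then $s\in T^\perp$ and the move acts trivially on $T$. Hence every elementary move fixes $T$ pointwise up to a trivial action, and $wTw^{-1}=T$.

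\textbf{Item (2).} This is the main obstacle. We would follow Brink--Howlett / Deodhar. For $w\in N_W(W_T)$, multiply by an element of $W_T$ so that $w$ is $(W_T,W_T)$--reduced. The subgroup $wW_Tw^{-1}=W_T$ is then standard, and the minimal double coset representative satisfies $wTw^{-1}\subset S$ (this is \cite[Lemma~5.3.6]{davisbook} with the remark used in \fullref{lem:gate_projection_image_is_residue}). By item (1) we get $wTw^{-1}=T$. The remaining task is to show that a reduced $w$ with $wTw^{-1}=T$ lies in $W_{T^\perp}$. Write $w$ as a product of elementary moves $\nu(T_i,s_i)$ through the groupoid of conjugations of subsets of $S$; by the argument in item (1), each $T_i$ equals $T$. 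A move with $T\cup\{s_i\}$ spherical is impossible, so each move has $s_i\in T^\perp$, and $\nu(T,s_i)=w_0(T\cup s_i)\,w_0(T)$ is defined only in the spherical case. We must handle this carefully: for nonspherical $T$, the correct statement is that the groupoid decomposition uses the spherical component of $T\cup\{s\}$ containing $s$, giving $\nu=w_0(K_s\cup s)\,w_0(K_s)$ with $K_s\subset T$ the part adjacent to $s$. Since $T$ is irreducible and nonspherical, $K_s$ must be empty, so $\nu=s\in T^\perp$. Hence $w\in W_{T^\perp}$, and $W_T\cap W_{T^\perp}=1$ with commuting factors gives the direct product.

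\textbf{Items (3) and (4).} For (3), $W_{T^\perp}$ clearly centralizes $W_T$. Conversely, an element $ab$ of the centralizer with $a\in W_T$ and $b\in W_{T^\perp}$ forces $a\in Z(W_T)$. The center of an infinite irreducible Coxeter group is trivial (standard; e.g.\ via the Tits representation, or because a central element would preserve every wall of $\Davis_T$). Thus $a=1$. For (4), apply \fullref{lem:reflection_splitting} to the product $W_T\times W_{T^\perp}$ from item (2): each reflection in this product lies in one of the two factors.
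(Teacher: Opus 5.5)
Your proposal is correct and is essentially the argument the paper relies on: the paper cites Deodhar's factorization of a reduced conjugating element into elementary conjugations for items (1)--(3), in the refined form that uses the component of $T\cup\{s\}$ containing $s$, which is the form you state in item (2). It then derives item (4) from Lemma~\ref{lem:reflection_splitting} exactly as you do.

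The one thing to tidy is item (1), in two respects. First, use that same component form of the moves there; your first formulation, which requires $T_i\cup\{s_i\}$ to be spherical, would allow no moves at all. Second, before invoking Deodhar, replace $w$ by the minimal element of $wW_T$. By the remark after \cite[Lemma~5.3.6]{davisbook} this element still conjugates $T$ onto $T'$, so it sends the simple roots of $T$ to those of $T'$, as the factorization theorem requires.
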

\begin{proof}
  These results are well known, see \cite[Lemma~3.3]{CapMin13}, 
 \cite[Lemma~22 and the preceding line]{Cap06}, \cite[Lemma~2.2]{Cap09},
 \cite[Theorem~5.7]{CorLev25}.
 They are usually attributed to Deodhar \cite[Proposition~5.5]{Deo82}, with further explanation in \cite[Section~3.1]{Kra09}.
  Deodhar's result, and the text prior to it, generalizes some root system facts used by Howlett in
  the description of parabolic normalizers for finite Coxeter groups
  \cite{MR576184}, which would later be used by Brink and Howlett
  \cite{BriHow99} to describe parabolic normalizers for infinite
  Coxeter groups.

  The proposition is the irreducible nonspherical special
  case of the parabolic normalizer theorem obtained from Howlett's
  argument, together with Deodhar's refinements of the elementary
  conjugating elements. The final assertion follows
  from \fullref{lem:reflection_splitting}.
\end{proof}
\begin{lemma}[{\cite[Proposition~4.5.10]{davisbook}}]\label{special_conjugates}
  If $T,U\subset S$ and $wW_Tw^{-1}=W_U$ then there is an element
  $u\in W_U$ such that for $v:=u^{-1}w$ we have $vTv^{-1}=U$.
\end{lemma}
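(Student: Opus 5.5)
The plan is to take $v$ to be the unique shortest element of the double coset $W_UwW_T$ (which exists by \cite[Lemma~4.3.1]{davisbook}). I will show that $v$ can be written as $u^{-1}w$ with $u\in W_U$, and that conjugation by $v$ carries $T$ onto $U$. The main input is the standard intersection formula for minimal double coset representatives, already used in the proof of \fullref{lem:gate_projection_image_is_residue}: if $v$ is shortest in $W_UvW_T$, then
\[vW_Tv^{-1}\cap W_U=W_{U\cap vTv^{-1}}\]
(the remark following \cite[Lemma~5.3.6]{davisbook}).

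First, the normalization. Write $w=u\,v\,t$ with $u\in W_U$ and $t\in W_T$. Then
\[vW_Tv^{-1}=u^{-1}wt^{-1}W_Ttw^{-1}u=u^{-1}(wW_Tw^{-1})u=u^{-1}W_Uu=W_U .\]
Hence $vW_T=W_Uv$, so the double coset collapses: $W_UwW_T=W_UvW_T=W_Uv$. In particular $w\in W_Uv$, so we may rechoose $u\in W_U$ with $w=uv$, that is, $v=u^{-1}w$, as required.

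Second, apply the intersection formula to $v$. Since $vW_Tv^{-1}=W_U$, the intersection $vW_Tv^{-1}\cap W_U$ is all of $W_U$, so $W_U=W_{U\cap vTv^{-1}}$. A special subgroup determines its generating subset of $S$: if $U'\subset U$ and $W_{U'}=W_U$, then $U'=U$, because for $s\in U\setminus U'$ no reduced expression of $s$ uses only letters of $U'$. This gives $U\subset vTv^{-1}$. The inverse $v^{-1}$ is the shortest element of $W_Tv^{-1}W_U$, and $v^{-1}W_Uv=W_T$, so the same argument with the roles of $T$ and $U$ exchanged gives $T\subset v^{-1}Uv$. Combining the two inclusions yields $vTv^{-1}=U$.

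The main point requiring care is the intersection formula itself. If I were not allowed to cite it, I would prove $U\subset vTv^{-1}$ directly. For $s\in U$, minimality of $v$ gives $\ell(sv)>\ell(v)$. Writing $v^{-1}sv\in W_T$ as a reflection $t'$, minimality on the right forces $\ell(vt')=\ell(sv)=\ell(v)+1$. The exchange condition then implies that $t'$ has length $1$, i.e.\ $t'\in T$. Everything else in the plan is formal.
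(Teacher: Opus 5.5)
Your proof is correct. The paper does not prove this lemma itself; it only cites \cite[Proposition~4.5.10]{davisbook}. Your argument via the shortest element $v$ of $W_UwW_T$ is therefore a self-contained replacement, and a natural one in this paper. Its only nontrivial input is the intersection formula $W_U\cap vW_Tv^{-1}=W_{U\cap vTv^{-1}}$ for minimal double coset representatives, which the paper already uses in the proof of \fullref{lem:gate_projection_image_is_residue}, together with $W_{U'}\cap S=U'$.

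The normalization step is fine: $vW_Tv^{-1}=W_U$, hence $W_UwW_T=W_Uv$, hence $v=u^{-1}w$ with $u\in W_U$. The symmetric application to $v^{-1}$ is also fine.

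One small correction to your fallback paragraph. The equality $|vt'|_S=|sv|_S$ holds simply because $vt'=sv$; it does not come from minimality on the right. What minimality of $v$ in $vW_T$ actually gives is additivity: $|vx|_S=|v|_S+|x|_S$ for every $x\in W_T$, which is the coset property behind \cite[Lemma~4.3.3]{davisbook}. Combining this with $|sv|_S=|v|_S+1$, which comes from minimality on the left, gives $|t'|_S=1$, so $t'\in W_T\cap S=T$. The reverse inclusion follows by the same argument with the two sides exchanged.
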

\begin{corollary}\label{irreducible_parabolics_have_type}
If $P$ is an irreducible nonspherical parabolic subgroup of $(W,S)$,
then there is a unique $T\subset S$ such that $P$ is
conjugate to $W_T$.
\end{corollary}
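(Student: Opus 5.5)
Write $P=vW_Tv^{-1}$ for some $T\subset S$ and $v\in W$; the conjugacy class of $W_T$ determines $T$ up to the relation described in \fullref{special_conjugates}. It suffices to show that if $P$ is conjugate to both $W_T$ and $W_U$, then $T=U$. Suppose $wW_Tw^{-1}=W_U$. By \fullref{special_conjugates} there is $u\in W_U$ with $v:=u^{-1}w$ satisfying $vTv^{-1}=U$. The goal is to upgrade this equality of sets of generators to $T=U$ itself, using the rigidity statement in \fullref{lem:nonspherical_normalizer}.

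First, irreducibility and nonsphericity pass from $P$ to $T$. Conjugation by $v$ is an isomorphism $W_T\to W_U$ taking $T$ bijectively onto $U$. The discussion in the preliminaries shows that rank and reducibility of $P$ are independent of the chosen representative. So $T$ is an irreducible subset, meaning its Coxeter diagram is connected. Finiteness is a group property, so $W_T$ is nonspherical since $P$ is. Hence $T$ satisfies the hypotheses of \fullref{lem:nonspherical_normalizer}.

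Now $vTv^{-1}=U\subset S$. By \fullref{lem:nonspherical_normalizer}~\eqref{item:irreducible_rigidity}, this forces $vTv^{-1}=T$, so $U=T$. This proves uniqueness; existence holds by the definition of a parabolic subgroup.

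The only real content here is the rigidity item~\eqref{item:irreducible_rigidity}, which is quoted from Deodhar. The main point to check carefully is that \fullref{special_conjugates} applies to an equality of conjugate special subgroups: it produces a conjugator that maps the generating set $T$ exactly onto $U$, not merely $W_T$ onto $W_U$. Once that is confirmed, the argument is formal.
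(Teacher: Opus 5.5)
Your proof is correct and follows essentially the same argument as the paper: apply \fullref{special_conjugates} to get a conjugator with $vTv^{-1}=U$, then apply the rigidity item~\eqref{item:irreducible_rigidity} of \fullref{lem:nonspherical_normalizer} to conclude $U=T$. Your explicit check that $T$ inherits irreducibility and nonsphericity from $P$ is a detail the paper leaves implicit; also note that you use $v$ for two different elements, which you should rename.
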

\begin{proof}
  If $P$ is conjugate to $W_U$ and $W_T$ then
  \fullref{special_conjugates} says $vTv^{-1}=U$.
  Then since $P$ is irreducible and nonspherical,
  \fullref{lem:nonspherical_normalizer}~\eqref{item:irreducible_rigidity}
  says $T=U$.
\end{proof}

The following corollary of \fullref{lem:nonspherical_normalizer}
establishes the definition of the \emph{orthogonal factor} parabolic
$P^\perp$ of an irreducible nonspherical parabolic $P$.

\begin{corollary}\label{cor:P_or_refP}
  If $P$ is an irreducible nonspherical parabolic subgroup of $(W,S)$, then
  $\normalizer_W(P)$ is parabolic and there is a canonical parabolic
  subgroup $P^\perp$ such that $\normalizer_W(P)=P\times
  P^\perp$.
  
Furthermore, if $R\subset\normalizer_W(P)$ is a set of reflections
then  $\langle R\rangle=\langle R\cap P\rangle\times \langle R\cap P^\perp\rangle$.
\end{corollary}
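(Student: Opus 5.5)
The plan is to reduce to the special case treated in \fullref{lem:nonspherical_normalizer}. By \fullref{irreducible_parabolics_have_type} there is a unique $T\subset S$ with $P=wW_Tw^{-1}$ for some $w\in W$, and $T$ is irreducible and nonspherical. Conjugation by $w$ carries normalizers to normalizers, so
\[\normalizer_W(P)=w\,\normalizer_W(W_T)\,w^{-1}=wW_Tw^{-1}\times wW_{T^\perp}w^{-1}=P\times wW_{T^\perp}w^{-1},\]
using \fullref{lem:nonspherical_normalizer}~\eqref{item:normalizer_splits}. The product $W_T\times W_{T^\perp}=W_{T\sqcup T^\perp}$ is a special subgroup, so $\normalizer_W(P)$ is the parabolic $wW_{T\sqcup T^\perp}w^{-1}$. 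We then set $P^\perp:=wW_{T^\perp}w^{-1}$, which is parabolic.

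The main point to verify is that $P^\perp$ is canonical, i.e.\ independent of the choice of $w$. Here we use the third item of \fullref{lem:nonspherical_normalizer}: $\centralizer_W(W_T)=W_{T^\perp}$. Conjugating gives $P^\perp=\centralizer_W(P)$, which is intrinsic to $P$. Alternatively, if $wW_Tw^{-1}=w'W_Tw'^{-1}$, then $w^{-1}w'\in\normalizer_W(W_T)=W_T\times W_{T^\perp}$, and this element normalizes $W_{T^\perp}$ since the two factors commute. Either argument shows that $P^\perp$ does not depend on $w$. The centralizer description is cleaner, so I would use it. (Uniqueness of $T$ alone does not pin down $w$, which is why this step is needed.)

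For the statement about reflections, conjugate by $w^{-1}$. A reflection $r\in\normalizer_W(P)$ becomes a reflection $w^{-1}rw\in\normalizer_W(W_T)$. By \fullref{lem:nonspherical_normalizer}~\eqref{item:reflections_in_normalizer}, it lies in $W_T$ or in $W_{T^\perp}$. Hence every $r\in R$ lies in $P$ or in $P^\perp$, so $R=(R\cap P)\sqcup(R\cap P^\perp)$; the union is disjoint since $P\cap P^\perp$ is trivial in a direct product.

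It follows that $\langle R\rangle$ is generated by $\langle R\cap P\rangle\le P$ together with $\langle R\cap P^\perp\rangle\le P^\perp$. These two subgroups commute elementwise and intersect trivially, because $P$ and $P^\perp$ do. So $\langle R\rangle$ is their internal direct product. No step appears to be a serious obstacle; the only subtlety is the canonicity of $P^\perp$, which is settled by the centralizer identification.
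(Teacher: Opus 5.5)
Your proposal is correct and follows essentially the same route as the paper: conjugate $P$ to $W_T$, apply \fullref{lem:nonspherical_normalizer} to get $\normalizer_W(W_T)=W_T\times W_{T^\perp}$, make $P^\perp$ canonical by identifying it with $\centralizer_W(P)$, and deduce the splitting of $\langle R\rangle$ from the fact that reflections in $\normalizer_W(W_T)$ lie in $W_T$ or $W_{T^\perp}$. You spell out details the paper leaves implicit, namely that $W_{T\sqcup T^\perp}$ is special so $\normalizer_W(P)$ is parabolic, and why commuting, trivially intersecting subgroups give an internal direct product; the underlying argument is the same.
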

\begin{proof}
  By definition, there exist an irreducible, nonspherical $T\subset S$ and $w\in W$
  such that $wPw^{-1}=W_T$.
  \fullref{lem:nonspherical_normalizer} says
  $w\normalizer_W(P)w^{-1}=\normalizer_W(W_T)=W_T\times W_{T^\perp}$
  and $\centralizer_W(W_T)=W_{T^\perp}$. 
  Take $P^\perp:=\centralizer_W(P)=w^{-1}W_{T^\perp}w$.
  Then $\normalizer_W(P)=P\times P^\perp$. 

The direct product splitting of $\langle R\rangle$ follows from the claim of
\fullref{lem:nonspherical_normalizer} that reflections in
$\normalizer_W(W_T)$ are contained in either $W_T$ or $W_{T^\perp}$.
\end{proof}

\begin{remark}
  The construction of \fullref{cor:P_or_refP} is symmetric if
  $T^\perp$ is also irreducible and nonspherical, but that is not
  always the case.
  In general,
  $W_T\times W_{T^\perp}$ can be a proper subgroup of
  $\normalizer_W(W_{T^\perp})$. In the most extreme case, $T^\perp$
  could be empty, with $W_{T^\perp}$ trivial and  $\normalizer_W(W_{T^\perp})=W$.
\end{remark}

\begin{corollary}\label{cor:costabilized_are_parallel}
  If $P$ is an irreducible nonspherical parabolic that stabilizes
  parabolic subcomplexes $\Upsilon_0$ and $\Upsilon_1$ then there is a
  unique $T\subset S$ and there are elements $w_0,w_1\in W$ with
  $w_0^{-1}w_1\in\normalizer_W(W_T)$ such that
  $\Upsilon_0=w_0\Davis_T$ and $\Upsilon_1=w_1\Davis_T$.
  Furthermore, $\Upsilon_0^{(0)}w_0^{-1}w_1=\Upsilon_1^{(0)}$.
\end{corollary}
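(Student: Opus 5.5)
The plan is to reduce to \fullref{lem:same_stabilizer_implies_parallel} after translating by a common element. Write $\Upsilon_0=a_0\Davis_{T_0}$ and $\Upsilon_1=a_1\Davis_{T_1}$ for some $a_0,a_1\in W$ and $T_0,T_1\subset S$. Since $P$ stabilizes each $\Upsilon_i$, \fullref{lem:nested_residues_have_nested_stabilizers} (or simply the definition of stabilizer) gives $P\subset a_iW_{T_i}a_i^{-1}$. The first point to settle is that $P$ actually \emph{equals} the stabilizer in each case. I will read ``$P$ stabilizes $\Upsilon_i$'' in the sense used in \fullref{lem:same_stabilizer_implies_parallel}, namely $P=\stab(\Upsilon_i)$. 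This is the reading under which the conclusion can hold: a mere containment $P\subset\stab(\Upsilon_i)$ would allow $\Upsilon_i$ to be a larger parabolic subcomplex of a different type, and then $T$ could not be common to both.

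With $P=a_iW_{T_i}a_i^{-1}$ for $i=0,1$, $P$ is conjugate to both $W_{T_0}$ and $W_{T_1}$. Because $P$ is irreducible and nonspherical, \fullref{irreducible_parabolics_have_type} gives a unique $T$ with $P$ conjugate to $W_T$. Hence $T_0=T_1=T$; this is the same argument as the first line of \fullref{lem:same_stabilizer_implies_parallel}. Set $w_i:=a_i$, so $\Upsilon_i=w_i\Davis_T$.

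Next, from $w_0W_Tw_0^{-1}=P=w_1W_Tw_1^{-1}$ we get $w_0^{-1}w_1W_T(w_0^{-1}w_1)^{-1}=W_T$, so $w_0^{-1}w_1\in\normalizer_W(W_T)$. \fullref{lem:nonspherical_normalizer}\eqref{item:normalizer_splits} identifies this normalizer as $W_T\times W_{T^\perp}$. Write $w_0^{-1}w_1=vu$ with $v\in W_T$ and $u\in W_{T^\perp}$. Then
\[\Upsilon_0^{(0)}w_0^{-1}w_1=w_0W_Tvu=w_0W_Tu=w_0uW_T.\]
The last equality holds because $u$ commutes with $W_T$. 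It remains to show $w_0uW_T=w_1W_T$, which follows from $w_1=w_0vu=w_0uv$ and $v\in W_T$. This proves the final assertion.

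The only genuinely delicate step is the interpretation in the first paragraph: that ``stabilizes'' must mean $\stab(\Upsilon_i)=P$, so that the type $T$ is forced. Everything after that is a direct application of the rigidity and normalizer statements in \fullref{lem:nonspherical_normalizer}. Uniqueness of $T$ comes from \fullref{irreducible_parabolics_have_type} directly.
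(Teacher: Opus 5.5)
Your proof is correct and follows the same route as the paper. You read ``stabilizes'' as $\stab(\Upsilon_i)=P$, which the paper uses implicitly; you fix the common type $T$ by uniqueness for irreducible nonspherical parabolics, deduce $w_0^{-1}w_1\in\normalizer_W(W_T)$ from the two conjugations, and then verify the coset identity. The only difference is that you invoke the splitting $\normalizer_W(W_T)=W_T\times W_{T^\perp}$ for that last step, whereas the paper uses only $W_Tg=gW_T$ for $g\in\normalizer_W(W_T)$, which already suffices.
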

\begin{proof}
  Since $P$ is irreducible and nonspherical,
  \fullref{lem:nonspherical_normalizer}~\eqref{item:irreducible_rigidity}
  implies there is a unique $T\subset S$ such that $P$ is conjugate to
  $W_T$. Suppose $P=wW_Tw^{-1}$. Then $\Upsilon_0=w_0\Davis_T$ and
  $\Upsilon_1=w_1\Davis_T$ for some $w_0,w_1\in W$.
  Since $P$ stabilizes both of them, it follows that $w^{-1}w_0$ and
  $w^{-1}w_1$ are both in $\normalizer_W(W_T)$, so $w_0^{-1}w_1=(w^{-1}w_0)^{-1}w^{-1}w_1$ is
  too.
  Thus:
  \[\Upsilon_0^{(0)}w_0^{-1}w_1=w_1w_1^{-1}w_0W_Tw_0^{-1}w_1=w_1W_T=\Upsilon_1^{(0)}\qedhere\]
\end{proof}

\begin{corollary}\label{normalizer_parabolics_have_well_defined_subcomplex}
  If $P$ is an irreducible nonspherical parabolic then there is a
  unique parabolic subcomplex whose stabilizer is $\normalizer_W(P)$.
\end{corollary}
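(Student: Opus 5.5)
Write $P=wW_Tw^{-1}$, where $T\subset S$ is the unique irreducible nonspherical subset from \fullref{irreducible_parabolics_have_type}. By \fullref{lem:nonspherical_normalizer}~\eqref{item:normalizer_splits} and \fullref{cor:P_or_refP},
\[\normalizer_W(P)=w(W_T\times W_{T^\perp})w^{-1}=wW_{T\sqcup T^\perp}w^{-1}.\]
This is the stabilizer of the parabolic subcomplex $w\Davis_{T\sqcup T^\perp}$, which settles existence. The choice of $w$ does not matter: if $w'W_Tw'^{-1}=P$ as well, then $w^{-1}w'\in\normalizer_W(W_T)=W_{T\sqcup T^\perp}$, so $w'\Davis_{T\sqcup T^\perp}=w\Davis_{T\sqcup T^\perp}$.

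For uniqueness, suppose $\Upsilon=a\Davis_U$ is a parabolic subcomplex with $\stab(\Upsilon)=aW_Ua^{-1}=\normalizer_W(P)$. Put $T':=T\sqcup T^\perp$. Then $aW_Ua^{-1}=wW_{T'}w^{-1}$, so $W_U=gW_{T'}g^{-1}$ with $g=a^{-1}w$. By \fullref{special_conjugates} there is $u\in W_U$ such that $v:=u^{-1}g$ satisfies $vT'v^{-1}=U$. Since $u\in W_U$ we have $a\Davis_U=au\Davis_U$, so after replacing $a$ by $au$ we may assume $gT'g^{-1}=U$, where now $g=a^{-1}w$.

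Now conjugate by $g$. The subgroup $gW_Tg^{-1}=W_{gTg^{-1}}$ is special, with $gTg^{-1}\subset U\subset S$. Applying \fullref{lem:nonspherical_normalizer}~\eqref{item:irreducible_rigidity} to $g$ gives $gTg^{-1}=T$, and hence $g\in\normalizer_W(W_T)=W_{T'}$. It follows that $U=gT'g^{-1}=T'$, because $g$ normalizes $W_{T'}$ and conjugation preserves the set of fundamental generators up to the correspondence; more simply, $W_U=gW_{T'}g^{-1}=W_{T'}$, and a special subgroup determines its generating set as $S\cap W_U$, so $U=T'$. Finally $a=wg^{-1}$ with $g^{-1}\in W_{T'}$, so
\[a\Davis_U=wg^{-1}\Davis_{T'}=w\Davis_{T'}.\]

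The main obstacle is the step showing that $g$ lies in $W_{T'}$. This needs the rigidity statement \fullref{lem:nonspherical_normalizer}~\eqref{item:irreducible_rigidity}, applied after normalizing via \fullref{special_conjugates} so that $g$ conjugates $T'$ into $S$. The rest is coset bookkeeping.
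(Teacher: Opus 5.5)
Your proof is correct and follows essentially the same route as the paper's. Existence comes from $\normalizer_W(W_T)=W_T\times W_{T^\perp}$, and uniqueness from normalizing with \fullref{special_conjugates}, then using the rigidity item of \fullref{lem:nonspherical_normalizer} to put the conjugator in $W_{T\sqcup T^\perp}$, then coset bookkeeping. The only cosmetic difference is that you absorb $u$ into the coset representative and conclude $U=T\sqcup T^\perp$ via $S\cap W_U=U$ (drop your first, vaguer justification for that step), whereas the paper directly computes $wW_U=xW_{T\sqcup T^\perp}$.
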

\begin{proof}
  Suppose $P=xW_Tx^{-1}$ for some $x\in W$ and $T\subset S$.
   \fullref{lem:nonspherical_normalizer}~\eqref{item:normalizer_splits}
   and \fullref{cor:P_or_refP} imply that 
  $\normalizer_W(P)=x(W_T\times W_{T^\perp})x^{-1}=xW_{T\sqcup
    T^\perp}x^{-1}$.
  The parabolic subcomplex $x\Davis_{T\sqcup T^\perp}$ has stabilizer
  $\normalizer_W(P)$.
  Suppose $w\Davis_U$ for some $w\in W$ and $U\subset S$ is another
  parabolic subcomplex with stabilizer equal to $\normalizer_W(P)$.
  Its stabilizer is $wW_Uw^{-1}$, so $W_U=w^{-1}xW_{T\sqcup T^\perp}x^{-1}w$.
  By \fullref{special_conjugates}, there exists
  $u\in W_U$ such that for $v:=u^{-1}w^{-1}x$ we have $v(T\sqcup T^\perp)v^{-1}=U$.
  Let $U'$ be the irreducible component of $U$ such that $vTv^{-1}=U'$.
  Since $T$ is irreducible and nonspherical,
  \fullref{lem:nonspherical_normalizer}~\eqref{item:irreducible_rigidity}
  implies $T=U'$, so $v\in\normalizer_W(W_T)=W_{T\sqcup T^\perp}$, which implies
  $W_{T\sqcup T^\perp}v^{-1}=W_{T\sqcup T^\perp}$.
  Then: 
  \[wW_U=xv^{-1}u^{-1}W_U=xv^{-1}W_U=xv^{-1}(vW_{T\sqcup T^\perp}v^{-1})=xW_{T\sqcup T^\perp}\]
  Thus, $x \Davis_{T\sqcup T^\perp}$ and $w\Davis_U$ are the same
  parabolic subcomplex.
\end{proof}

%% file: distance.tex
We have several different measures of separation in $\Davis$:
\begin{itemize}
\item $\dcomb(x,y)$ is the length of the shortest edge path between
  vertices $x$ and $y$.
\item $\dcat(x,y)$ is the CAT(0) distance between $x$ and $y$.
  \item $\dwall(x,y)$ is the number of walls separating vertices $x$
    and $y$ (which is exactly $\dcomb(x,y)$).
    \item $\dchain(x,y)$ is the length of the longest chain of walls
      separating $x$ and $y$.
    \end{itemize}
    Of these, only $\dcat$ is a metric on $\Davis$.
    The combinatorial metric $\dcomb$ extends to  a geodesic metric on the 1--skeleton
    of $\Davis$.
    The wall and chain measures $\dwall$ and $\dchain$ are metrics on
    the 0--skeleton.
    There are ways to extend them to metrics on all of $\Davis$ (eg
    \fullref{lem:metric_extension}) but we will not use this fact.
    For us it will be enough to take coarse extensions, in the sense
    that if $x$ and $y$ are arbitrary points of $\Davis$, then for
    $d\in\{\dcomb,\dwall,\dchain\}$ we let $d(x,y)$ be the minimum of
    $d(x',y')$ over vertices $x'$ and $y'$ such that $x'$ is contained
    in a common closed chamber with $x$ and $y'$ is
    contained in a common closed chamber with $y$.
 
Niblo and Reeves \cite{NibRee03}  constructed a finite dimensional,
locally finite CAT(0) cube complex corresponding to the Coxeter
system $(W,S)$. Its wall structure is isomorphic to that of $\Davis$,
and after choosing basepoints the orbit map gives a $W$--equivariant
isometric embedding $(\Davis^{(0)},\dwall)\to(\NR^{(0)},\dwall)$.

\begin{lemma}\label{all_metrics_are_equivalent}
  $\dcat\ceq\dcomb\ceq\dwall\ceq\dchain$ as maps $\Davis^2\to\mathbb{R}$.
    \end{lemma}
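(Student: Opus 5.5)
We prove the four comparisons $\dcat\ceq\dcomb$, $\dcomb=\dwall$, and $\dwall\ceq\dchain$ separately, first on the $0$--skeleton, and then extend to all of $\Davis$ using the coarse extensions.

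For $\dcat\ceq\dcomb$ on vertices, use the Švarc--Milnor lemma. $W$ acts properly and cocompactly by isometries on the geodesic space $(\Davis,\dcat)$, and it also acts so on its Cayley graph $(\Davis^{(1)},\dcomb)$. Hence the orbit map $w\mapsto w.\one$ is a quasiisometry between these two spaces, and this orbit map is the identity on vertices. For arbitrary points we use that closed chambers have uniformly bounded $\dcat$--diameter, since there are finitely many isometry types of cells. The coarse extension therefore changes each quantity by a bounded additive amount.

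The equality $\dcomb=\dwall$ is standard Coxeter theory: a combinatorial geodesic crosses each wall at most once, and it crosses exactly the walls separating its endpoints. This is the same fact used in the proof of \fullref{cor:wall_gate}.

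For $\dwall\ceq\dchain$, one direction is trivial: $\dchain\le\dwall$, because a chain separating $x$ and $y$ consists of distinct separating walls. The reverse inequality $\dwall\cleq\dchain$ is the main step.
\begin{itemize}
\item Pass to the Niblo--Reeves cube complex $\NR$. The orbit map is an isometric embedding $(\Davis^{(0)},\dwall)\to(\NR^{(0)},\dwall)$, and since the wall structures agree, chains correspond. So it suffices to prove $\dwall\le D\,\dchain$ in $\NR$, where $D=\dim\NR<\infty$ by \cite[Lemma~3]{NibRee03}.
\item Consider the finite set $\mathcal{W}$ of walls separating $x$ from $y$, partially ordered by nesting of the halfspaces containing $x$. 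Two walls of $\mathcal{W}$ are incomparable exactly when they cross. Indeed, disjoint walls that both separate $x$ from $y$ must be nested, so every pair is either nested or crossing.
\item In a CAT(0) cube complex, pairwise crossing hyperplanes meet in a common cube (Helly's property), so an antichain has at most $D$ elements.
\item By Dilworth's theorem (or Mirsky's dual), $\mathcal{W}$ contains a totally ordered subset of size at least $|\mathcal{W}|/D$. A totally ordered subset is exactly a chain separating $x$ and $y$, so $\dwall\le D\,\dchain$.
\end{itemize}
Alternatively, one can stay in $\Davis$. Pairwise transverse walls correspond to a finite reflection subgroup, by the parabolic closure of a set of pairwise crossing walls fixing a common cell. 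The size of an antichain is then bounded by the maximal number of reflections in a spherical special subgroup.

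The main obstacle is this antichain bound. In $\Davis$ itself, pairwise intersecting walls need not obviously share a common point without a Helly-type argument, and the cube complex route sidesteps this. Everything else is routine bookkeeping of additive constants, coming from moving to vertices in a common closed chamber.
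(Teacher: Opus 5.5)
Your main argument is correct and is essentially the paper's proof: reduce to vertices, compare $\dcat$ and $\dcomb$ using the geometric $W$--action with $\Davis^{(1)}$ the Cayley graph, get $\dcomb=\dwall$ from the Exchange Condition, and get $\dchain\le\dwall\le\dim(\NR)\,\dchain$ from Dilworth's theorem together with the Helly property for pairwise crossing hyperplanes in $\NR$.

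However, discard your ``stay in $\Davis$'' alternative, because it is false and not merely unproven. The three walls bounding a chamber of the $(3,3,3)$ triangle group pairwise cross, but they share no common point and generate the infinite group $W$. So the antichain bound is the nontrivial content of Niblo--Reeves's finite dimensionality of $\NR$ \cite{NibRee03}; it does not follow from pairwise crossing walls fixing a common cell.
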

    \begin{proof}
      There are only finitely many isometry types of cells of
      $\Davis$, so every point is within uniformly bounded distance of
      a vertex.
      Moreover, any closest vertex to a point $x$ is not separated
      from $x$ by any wall.
      Thus, up to increasing the additive error afterwards, it
      suffices to prove the lemma on $\Davis^{(0)}$. 

      $W$ acts geometrically on $\Davis$, freely and transitively on
      $\Davis^{(0)}$, and $\Davis^{(1)}$ is precisely the Cayley graph
      of $W$ with respect to $S$, so $\dcomb$ and $\dcat$  on
      $\Davis^{(0)}$ are
      quasiisometric to the word metric on $W$ with respect to $S$.
      
On $\Davis^{(0)}$ $\dcomb=\dwall$; in one direction every edge is cut
by a unique wall, so the number of walls separating vertices $x$ and
$y$ is at most the length of a combinatorial geodesic between them. In
the other direction, the Exchange Condition for
    reflection systems \cite[Section~3.2]{davisbook} implies that a
    combinatorial geodesic is cut at most once by any given wall. 

    Since the wall structure of $\Davis$ is isomorphic to that of
    $\NR$, the relation between $\dwall$ and $\dchain$ can be computed
    there. 
      It is immediate from the definitions that $\dchain\leq\dwall$.
      Conversely, for any $w_1$ and $w_2$ in $W$, consider the set of
      walls separating $w_1$ and $w_2$.
      If $H$ is such a wall, let $H^-$ be the halfspace with $w_1\in
      H^-$ and $\bdry H^-=H$, and let $H^+$ be its complementary
      halfspace.
      Consider the set of halfspaces $H^-$ partially ordered by
      inclusion.
      Since they all contain $w_1$, two such halfspaces $H_1^-$ and
      $H_2^-$ are incomparable if and only if $H_1$ and $H_2$ cross.
      Therefore, antichains in the partially ordered set
      correspond to collections of pairwise crossing walls.
      In $\NR$ such a collection has a common point of intersection,
      so the size of such a set is bounded by $\dim(\NR)$.
      Dilworth's Theorem says that in a partially ordered set the
      maximum size of an antichain is equal to the minimum size of a
      partition of the set into chains.
      At least one of these has at least average size, so there is
      some chain separating $w_1$ from $w_2$ containing at least
      $\dwall(w_1,w_2)/\dim(\NR)$--many walls.
      Thus, $\dchain\leq\dwall\leq\dim(\NR)\dchain$.
    \end{proof}

    We can extend \fullref{all_metrics_are_equivalent} to a
    point-parabolic subcomplex distance comparison:
    \begin{lemma}\label{lem:distance_to_standard}
      The following functions on $W=\Davis^{(0)}$ are coarsely
      equivalent, with constants 
      independent of $T\subset S$:
      \[w\mapsto
        \begin{cases}
          \dcomb(w,\Davis_T)\\
          \dcat(w,\Davis_T)\\
          \dwall(w,\Davis_T)\\
          \dchain(w,\Davis_T)
        \end{cases}
\]
\end{lemma}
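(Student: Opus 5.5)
The plan is to reduce everything to comparisons at the gate projection $\gate:=\gate_{\Davis_T}(w)$ from \fullref{def:gate_projection}, and then import \fullref{all_metrics_are_equivalent} together with a wall-counting argument. Write $d_\bullet(w,\Davis_T):=\min_{y\in W_T}d_\bullet(w,y)$ for $\bullet\in\{1,2,w,c\}$, using the coarse extension convention for $\dcat$ at non-vertex points; since $\Davis_T$ is a subcomplex, every point of it is uniformly close to a vertex of it, so restricting to vertices costs only a uniform additive error.

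First, the combinatorial and wall versions. The gate property gives $\dcomb(w,\Davis_T)=\dcomb(w,\gate)$. Since $\dcomb=\dwall$ on vertices, \fullref{cor:wall_gate} shows that the walls separating $w$ from $\gate$ are exactly the walls separating $w$ from all of $\Davis_T$. Any wall separating $w$ from a vertex $y\in W_T$ either separates $w$ from $\gate$ or cuts $\Davis_T$, and the walls separating $w$ from $\gate$ are counted for every $y$. Hence $\dwall(w,\Davis_T)=\dwall(w,\gate)=\dcomb(w,\Davis_T)$ exactly.

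Second, the CAT(0) version. \fullref{cpp_and_gate_agree} says that $\gate$ is the $\dcat$-closest vertex of $\Davis_T$ to $w$, and that $\gate$ and $\pi_{\Davis_T}(w)$ lie in a common closed chamber. Chambers have uniformly bounded diameter, since there are finitely many cell types, so $\dcat(w,\Davis_T)\cequiv\dcat(w,\gate)$. Now \fullref{all_metrics_are_equivalent} gives $\dcat(w,\gate)\ceq\dcomb(w,\gate)$ with constants depending only on $(W,S)$, hence independent of $T$.

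Third, the chain version, which I expect to be the main point. We have $\dchain(w,\Davis_T)\le\dchain(w,\gate)\le\dwall(w,\gate)$, so one inequality is immediate. For the other, take $y\in W_T$ realizing $\dchain(w,\Davis_T)$ and show that $\dchain(w,y)\ge\dchain(w,\gate)$. Apply the Dilworth argument from the proof of \fullref{all_metrics_are_equivalent} to the set of walls separating $w$ from $\Davis_T$, which has size $\dwall(w,\gate)$. Pairwise crossing walls have bounded cardinality $\dim(\NR)$, so this set contains a chain of length at least $\dwall(w,\gate)/\dim(\NR)$. By \fullref{cor:wall_gate}, every wall of this chain separates $w$ from every vertex of $\Davis_T$, in particular from $y$. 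Therefore $\dchain(w,y)\ge\dwall(w,\gate)/\dim(\NR)$, and so $\dchain(w,\Davis_T)\ceq\dwall(w,\Davis_T)$.

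All constants come from cell geometry and $\dim(\NR)$, not from $T$. The only delicate points are the bookkeeping for the coarse extension to non-vertex points, and making sure the chain is chosen among walls separating $w$ from the whole subcomplex rather than from a single vertex.
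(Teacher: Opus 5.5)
Your proof is correct and follows essentially the paper's route: locate the closest vertex of $\Davis_T$ to $w$ (the gate), show that the walls separating $w$ from $\Davis_T$ are exactly those separating $w$ from that vertex, and then apply \fullref{all_metrics_are_equivalent}. The only difference is in how the inputs are obtained. You cite \fullref{cor:wall_gate} and \fullref{cpp_and_gate_agree}, which appear later in the paper but do not depend on this lemma, so there is no circularity. The paper instead reproves the wall statement inline with a reflect-and-shorten argument, and handles $\dcat$ simply by noting that every point of $\Davis_T$ is uniformly close to a vertex.
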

\begin{proof}
  Let $w\in W=\Davis^{(0)}$.
  By \cite[Lemma~4.3.3]{davisbook} there is a unique decomposition 
   $w=vu$ where $v\in W_J$ and $u$ is the unique minimal length
   representative of $W_Jw$.
   Thus, $v$ is the closest vertex of $W_J$ to $w$.
  Take a $\dcomb$--geodesic $\gamma$ from $w$ to $v$.
   A wall $M$ separating
   $w$ from $v$ cuts the interior of $\gamma$.
   Split $\gamma$ as a concatenation $\gamma'+\gamma''$ of nontrivial subpaths
   meeting at $M\cap \gamma$.
   Then $\gamma'+\refl{M}(\gamma'')$ tightens to an edge path from $w$
   to $\refl{M}(v)$ of length at most  $|\gamma|-1$, with $v$ and $\refl{M}(v)$ on
   opposite sides of $M$.
   If $M$ separates $w$ from $v$ but does not separate $w$ from
   $\Davis_T$ then it cuts $\Davis_T$.
   However, the reflections whose walls cut $\Davis_T$ are precisely
   the reflections in $W_J$, which stabilize $\Davis_T$.
   This gives a contradiction, because then  $\refl{M}(v)$ is a vertex
   of $\Davis_T$ that is closer to $w$ than $v$.
Thus, the walls separating $w$ from $\Davis_T$ are precisely the
walls separating $w$ from $v$.
Now apply \fullref{all_metrics_are_equivalent} for distances between
$w$ and $v$.
This accounts for the wall-based distances in the statement.
The CAT(0) distance follows because every point in $\Davis_T$ is
within uniformly bounded distance of a vertex, so $\dcat(w,\Davis_T)\stackrel{+}{\ceq}\dcat(w,\Davis_T^{(0)})$.
\end{proof}

We will not use the following result in this paper, but it is worth
mentioning that in addition to point-point and
point-parabolic subcomplex coarse equivalences $\dcomb\ceq\dcat\ceq\dchain$,
there are also point-wall and wall-wall versions.

\begin{theorem}[{Strong Parallel Walls Theorem, \cite[Theorem~C]{Cap06} and
      \cite[Theorem~2.3]{CapPrz11}}]\label{SPWT}
Let $\Davis=\Davis(W,S)$ be the Davis complex of a
finite rank Coxeter system.
Let $M$ and $M'$ be disjoint walls in $\Davis$, and let $x$ be a
    point in $\Davis$.
    \begin{enumerate}
    \item $\dchain(M,x)\ceq\dcomb(M,x)\ceq\dcat(M,x)$\label{SPWT:wall_vertex}
      \item $\dchain(M,M')\ceq\dcomb(M,M')\ceq\dcat(M,M')$\label{SPWT:wall_wall}
    \end{enumerate}
    The constants of the equivalences are independent
      of $M$, $M'$, and $x$. 
    \end{theorem}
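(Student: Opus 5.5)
We would reduce everything to comparisons between the chain count and the number of separating walls, exactly as in the proof of \fullref{all_metrics_are_equivalent}, and treat the CAT(0) comparison separately by cocompactness. Throughout we use the coarse extension convention for $\dcomb$ and $\dchain$ at non-vertex points, and we may assume $x$ is a vertex.

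For \eqref{SPWT:wall_vertex}, set $\mathcal{W}(x,M)$ to be the set of walls $H$ that separate $x$ from $M$; each such $H$ is disjoint from $M$.

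\textbf{Step 1: chains versus $|\mathcal{W}(x,M)|$.} For $H\in\mathcal{W}(x,M)$ let $H^-$ be the open halfspace of $H$ containing $x$. The complementary halfspaces all contain $M$, so for $H_1,H_2\in\mathcal{W}(x,M)$ the sets $H_1^-$ and $H_2^-$ are comparable under inclusion unless $H_1\transverse H_2$. Antichains are therefore pairwise crossing families. Passing to $\NR$, such a family has a common point, so its size is at most $\dim(\NR)$. Dilworth's Theorem then produces a chain inside $\mathcal{W}(x,M)$ with at least $|\mathcal{W}(x,M)|/\dim(\NR)$ walls. The inequality $\dchain(x,M)\le|\mathcal{W}(x,M)|$ is immediate. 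Hence $\dchain(x,M)\ceq|\mathcal{W}(x,M)|$.

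\textbf{Step 2: $\dcomb$ versus $\dcat$.} Since $\carrier(M)\cong M\times[-1/2,1/2]$ is convex and is the convex hull of the edges cut by $M$, we have $\dcat(x,M)\cequiv\dcat(x,\carrier(M)^{(0)})$. Cocompactness, via \fullref{all_metrics_are_equivalent}, then gives $\dcomb(x,M)\ceq\dcat(x,M)$.

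\textbf{Step 3: the main obstacle.} It remains to show $\dcomb(x,M)\cleq|\mathcal{W}(x,M)|$. Choose a $\dcomb$--closest vertex $y$ to $x$ among the vertices adjacent to $M$. The walls separating $x$ from $y$ split into two kinds: those in $\mathcal{W}(x,M)$, and those that cross $M$. We must bound the number of the second kind by a constant, or at least linearly in $|\mathcal{W}(x,M)|$.

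For parabolic subcomplexes this is the reflect-and-tighten argument of \fullref{lem:distance_to_standard}. For a single wall, however, a reflection $\refl{H}$ with $H\transverse M$ need not preserve $M$, so that argument fails. Here we would follow Caprace and pass to the canonical root system. Among the roots $\alpha_H$ of walls crossing $M$ and separating $x$ from $y$, we want to show they lie in a dominance-bounded set. Brink and Howlett's finiteness of elementary (small) roots then bounds how many walls can cross $M$ while separating $x$ from the nearest point $y$ without some wall of $\mathcal{W}(x,M)$ intervening. Concretely, we would prove that between any two consecutive walls of a chain in $\mathcal{W}(x,M)$ realizing $\dchain$, only boundedly many walls crossing $M$ separate the corresponding vertices along a geodesic. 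Summing these bounds gives the linear estimate. We expect this root-theoretic finiteness to be the real content of the theorem.

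For \eqref{SPWT:wall_wall}, the same scheme applies with $\mathcal{W}(M,M')$, the set of walls separating $M$ from $M'$. Step 1 is unchanged: all the relevant halfspaces contain $M$, their complements contain $M'$, and incomparability means crossing. Step 2 uses a $\dcat$--geodesic between the disjoint convex carriers. Step 3 is handled by choosing vertices $y\in\carrier(M)$ and $y'\in\carrier(M')$ realizing $\dcomb(M,M')$ and applying the point-wall bound at each end. This bounds the walls that separate $y$ from $y'$ but cross $M$ or $M'$.
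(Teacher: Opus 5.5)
Your Steps 1 and 2 are fine: they are the Dilworth and cocompactness arguments of \fullref{all_metrics_are_equivalent} transplanted to this setting. The dichotomy at the start of Step 3 is also correct, since a wall disjoint from $M$ is disjoint from $\carrier(M)$.

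\textbf{Gap 1: the point--wall bound.} The whole theorem sits in Step 3, and you have not proved it. The mechanism you name does not deliver it. Brink--Howlett finiteness of elementary roots bounds only the walls $H$ such that \emph{no} wall at all separates $x$ from $H$. By itself this gives only the classical Parallel Wall Theorem (some wall separates $x$ from $M$ once $\dcomb(x,M)$ exceeds a constant), not linearly many nested walls.

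The walls crossing $M$ and separating $x$ from $y$ are not boundedly many. In the $(3,3,3)$ triangle group with $M$ a line, each strip between consecutive walls parallel to $M$ forces a crossing of a slanted wall. So there are about as many $M$--crossing separating walls as parallel ones. These walls can also be separated from $x$ by other $M$--crossing walls and by walls of $\mathcal{W}(x,M)$ outside your chain. Hence neither Brink--Howlett nor its $W$--translates bounds how many lie between two consecutive walls of a longest chain.

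That local bound is essentially the strong parallel wall theorem itself; Caprace obtains it from a Tits--form analysis of chains. The paper does not reprove it. It imports item \eqref{SPWT:wall_vertex} from \cite{Cap06} and item \eqref{SPWT:wall_wall} from \cite{CapPrz11}, and only remarks that their proofs give linear rather than arbitrary control functions. To complete your argument you must either invoke those results and check linearity, or actually carry out the root-system argument.

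\textbf{Gap 2: the wall--wall reduction.} This step does not work as stated. Take $y,y'$ realizing $\dcomb(M,M')$ and split the walls separating them into four sets:
\begin{itemize}
\item $A$: walls separating $M$ from $M'$;
\item $B$: walls crossing $M$ only;
\item $B'$: walls crossing $M'$ only;
\item $C$: walls crossing both.
\end{itemize}
Here $y$ is a nearest $M$--adjacent vertex to $y'$, and $\mathcal{W}(y',M)=A\cup B'$. So your point--wall bound at $y'$ gives $|B|+|C|\le\lambda(|A|+|B'|)+c$. Symmetrically, at $y$ it gives $|B'|+|C|\le\lambda(|A|+|B|)+c$.

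For $\lambda>1$ these inequalities are compatible with $|A|=0$ and $|B|=|B'|$ arbitrarily large. So they do not force $|A|$, let alone $\dchain(M,M')$, to be comparable to $\dcomb(M,M')$. The walls of $B'$ count as ``separating'' at the $y'$ end, but they are useless for a chain between $M$ and $M'$.

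The $C$ walls can again be linearly many, for example for two parallel walls in the $(3,3,3)$ tiling. So any correct argument must control how the walls crossing $M$ or $M'$ interleave with a chain separating $M$ from $M'$. This is why the wall--wall statement is cited separately rather than derived from the point--wall one.
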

    We remark that the references only state that these distances are
    related by some function, but it is straightforward to check that
    the proof actually produces a linear function. 

In contrast to \fullref{lem:distance_to_standard} and \fullref{SPWT},
distances to arbitrary CAT(0)-convex sets are not approximated
by wall distances.
Here are two explicit examples:
    \begin{example}
      Consider the standard square tiling of
      $\mathbb{R}^2$.
      
      First, let $L$ be a line that is not vertical or horizontal.
      It is CAT(0)--convex, but it crosses every vertical and
      horizontal line, so the intersection of halfspaces containing it
      is an empty intersection, hence is the whole plane.
      There are points of the plane arbitrarily far from $L$, but they
      are not separated from $L$ by any wall. 

      Second, for each $n\in\mathbb{N}$, consider $v:=(n,n)$ and the closed ball
      $B$ of radius $n$ about $(0,0)$.
      The intersection of halfspaces containing $B$ is the
      square bounded by vertical and horizontal lines at distance
      $n+1/2$ from $(0,0)$. This set contains $v$, so no wall
      separates $v$ and $B$, but $\dcat(v,B)=(\sqrt{2}-1)n$.
    \end{example}

%% file: admissible.tex
We work with a family of paths that is a common generalization of
CAT(0) and combinatorial geodesics:
\begin{definition}\label{def:admissible}
  An \emph{admissible path} $\gamma\from I\to(\Davis,\dcat)$ is a
  continuous map from an interval $I\subset \mathbb{R}$ into $\Davis$
  such that for every wall $M$ exactly one of the
  following is true:
  \begin{itemize}
  \item $\gamma\cap M=\emptyset$
  \item $\gamma\subset M$
    \item There is a single time $t\in I$ such that $\gamma(t)\in M$,
      which is either:
      \begin{itemize}
      \item an endpoint of $I$, or
        \item an interior point of $I$ such that for every
          sufficiently small
           neighborhood
          $(t-\epsilon,t+\epsilon)\subset I$ of $t$ the points
          $\gamma(t-\epsilon)$ and $\gamma(t+\epsilon)$ are on
          opposite sides of $M$.
          In this case we say $M$ \emph{cuts} $\gamma$ or $M$ is
          \emph{transverse to} $\gamma$, and write $\gamma\transverse M$.
      \end{itemize}
    \end{itemize}
 An \emph{admissible quasigeodesic} is a quasigeodesic that is an
 admissible path. 
\end{definition}

\begin{lemma}\label{lem:geodesics_are_admissible}
      Combinatorial geodesics and CAT(0) geodesics are admissible and
      uniformly quasigeodesic. 
    \end{lemma}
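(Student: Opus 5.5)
The lemma has two parts: admissibility of the two kinds of geodesics, and uniform quasigeodesicity. I will treat the quasigeodesic part first, since it is quick. A CAT(0) geodesic is a $(1,0)$--quasigeodesic for $\dcat$. A combinatorial geodesic, parameterized by arc length, is a $\dcomb$--geodesic in $\Davis^{(1)}$. Its vertices satisfy $\dcat\ceq\dcomb$ by \fullref{all_metrics_are_equivalent}. Every point of an edge lies within $\dcat$--distance $1$ of a vertex, so the combinatorial geodesic is a $(\lambda,\epsilon)$--quasigeodesic into $(\Davis,\dcat)$, with $\lambda,\epsilon$ depending only on the constants of that lemma, hence only on $(W,S)$.

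For admissibility of a CAT(0) geodesic $\gamma$ and a wall $M$, I use that $M$ is convex and that each of the two open halfspaces is convex. The halfspaces are convex because $\refl{M}$ is an isometry exchanging them and fixing $M$. The reflect-and-straighten trick from the proof of \fullref{cpp_and_gate_agree} then shows that a geodesic between two points in one open halfspace cannot meet $M$.

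Suppose $\gamma\not\subset M$ but $\gamma$ meets $M$. If $\gamma$ met $M$ at two times, convexity of $M$ and uniqueness of CAT(0) geodesics would put the whole subsegment between them in $M$. Choose then a maximal subsegment contained in $M$ with a nonempty adjacent piece outside $M$. Extending slightly past the end of that subsegment gives a geodesic segment with one endpoint in $M$ and points off $M$ arbitrarily close to it. To rule this out, I plan to use local structure: near a point of $M$, the wall is locally a Euclidean hyperplane inside each cell, and $\carrier(M)\cong M\times[-1/2,1/2]$ is an isometric product. The projection to the $[-1/2,1/2]$ factor is an affine function along geodesics in a product. Hence $\gamma$ restricted to $\carrier(M)$ has affine height, and so it either lies in $M$ or crosses it exactly once. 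At a single crossing time the height changes sign, which is exactly the transversality condition. This gives the trichotomy in \fullref{def:admissible}, with the endpoint case allowed.

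For a combinatorial geodesic, walls miss vertices and each edge is cut by exactly one wall, at its midpoint, with the endpoints of the edge on opposite sides. So $\gamma$ never lies inside a wall. By the Exchange Condition, as used in \fullref{all_metrics_are_equivalent}, a combinatorial geodesic crosses each wall at most once. The only subtlety is whether an edge path could meet a wall at a point other than an edge midpoint. This cannot happen, because a wall meets each cell in a codimension-one hyperplane and meets a $1$--cell only if the wall cuts that edge at its midpoint. Thus each intersection is a single transverse crossing, unless it occurs at an endpoint of $I$ lying at an edge midpoint.

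I expect the main obstacle to be the CAT(0) case. Specifically, it is justifying that geodesics restricted to the carrier have affine height, which requires the isometric product structure of $\carrier(M)$ stated in the preliminaries. It also requires that a geodesic meeting $M$ spends a nondegenerate time inside $\carrier(M)$ around the crossing. The latter holds because $\carrier(M)$ is convex and contains a neighborhood of $M$.
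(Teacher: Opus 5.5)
This is correct and is essentially the paper's proof: for CAT(0) geodesics you combine convexity of $M$ with the fact that a geodesic in the product $\carrier(M)\cong M\times[-1/2,1/2]$ has affine height, for combinatorial geodesics you use that walls meet $\Davis^{(1)}$ only at edge midpoints together with the Exchange Condition, and uniform quasigeodesicity comes from \fullref{all_metrics_are_equivalent}. Your opening aside about convexity of open halfspaces is not needed, and citing the reflect-and-straighten trick of \fullref{cpp_and_gate_agree} for it would be circular, since that argument assumes halfspace convexity; your carrier argument does not depend on it.
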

    \begin{proof}
      Let $\gamma$ be a CAT(0) geodesic. It is
      1--biLipschitz.
      Let $M$ be a wall. Walls are convex, so if $\gamma$ is not
      contained in $M$ or disjoint from $M$ then it intersects $M$ in
      a single subsegment.
      Suppose $M$ contains an interior point $\gamma(t)$ of
      $\gamma$. Consider the component of $\gamma\cap\carrier(M)$
      containing $\gamma(t)$.
      It is a geodesic of positive length in a metric product $M\times [-1/2,1/2]$, so it
      is either contained in or transverse to $M\times\{0\}$.

      Now suppose $\gamma$ is a combinatorial geodesic. 
      Transversality to the wall structure is immediate, since walls only meet $\Davis^{(1)}$ in
    the midpoints of edges, and, as in the preceding proof, a wall
    cuts a combinatorial geodesic at most once. 
  \fullref{all_metrics_are_equivalent}  implies that combinatorial
  geodesics are uniformly quasigeodesic.
    \end{proof}

    \begin{lemma}\label{lem:admissible_edge_path_is_combinatorial_geodesic}
      The combinatorial geodesics are precisely the admissible edge
      paths. 
    \end{lemma}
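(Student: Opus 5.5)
One direction is already available: \fullref{lem:geodesics_are_admissible} shows that combinatorial geodesics are admissible. So the work is the converse. Let $\gamma$ be an admissible edge path, meaning a continuous path that is a concatenation of edges of $\Davis^{(1)}$, parameterized along its edges, and transverse to the wall structure in the sense of \fullref{def:admissible}. I will show that $\gamma$ is a combinatorial geodesic by showing that no wall is dual to two distinct edges of $\gamma$. Together with the equality $\dcomb=\dwall$ on $\Davis^{(0)}$ from the proof of \fullref{all_metrics_are_equivalent}, this suffices.

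First I pin down how walls meet an edge path. By construction, walls miss vertices and meet each edge either not at all or transversely at its midpoint, and each edge is cut by exactly one wall. No wall can contain $\gamma$, since $\gamma$ contains vertices (assuming $\gamma$ has at least one edge; the trivial path is immediate). So for each wall $M$, \fullref{def:admissible} leaves two possibilities: either $\gamma\cap M=\emptyset$, or there is a single time $t$ with $\gamma(t)\in M$. Endpoints of $I$ map to vertices, so this $t$ is interior, and $M$ cuts $\gamma$ there.

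Next I account for backtracking. If $\gamma$ traverses the same edge twice, for instance an edge followed immediately by its reverse, then the wall dual to that edge meets $\gamma$ at two distinct times. This contradicts the single-time clause. The same argument applies if any wall is dual to two distinct edges of $\gamma$, since $\gamma$ would then hit $M$ at two midpoint times. It follows that the $n$ edges of $\gamma$ are cut by $n$ pairwise distinct walls, each meeting $\gamma$ exactly once, transversely. Because $\gamma$ crosses each such wall exactly once, its endpoints lie on opposite sides of it. Hence $\dwall(\gamma(a),\gamma(b))\geq n$, where $a$ and $b$ are the endpoints of $I$. Since the edge path itself has length $n$, we get $\dcomb(\gamma(a),\gamma(b))=\dwall(\gamma(a),\gamma(b))=n$, so $\gamma$ is a combinatorial geodesic.

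The only point needing care, and the main (mild) obstacle, is the claim that crossing a wall exactly once puts the endpoints on opposite sides. This follows because $\Davis\setminus M$ has exactly two components. The path starts in one component and switches sides only at times when it meets $M$ transversely, and there is exactly one such time. The same reasoning applies to subpaths, so every subpath of $\gamma$ is also geodesic, consistent with the definition.
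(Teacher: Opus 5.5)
Your proof is correct and follows the paper's argument. Combinatorial geodesics are admissible by \fullref{lem:geodesics_are_admissible}. Conversely, an admissible edge path cannot have two edges dual to the same wall, so each of its edges is dual to a distinct wall separating its endpoints, which forces it to be a combinatorial geodesic. You simply supply details the paper leaves implicit, such as why a single transverse crossing puts the endpoints on opposite sides and why $\dcomb=\dwall$ then gives the length.
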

    \begin{proof}
      Every edge is transverse to a unique wall. An edge path that is
      admissible does not contain distinct edges transverse to the
      same wall, so every edge of the path is transverse to a wall separating the
      ends of the path. Such an edge path is a combinatorial
      geodesic. 
    \end{proof}
    
     \begin{lemma}[Admissible
      paths are close to combinatorial geodesics]\label{lem:admissible_close_to_combinatorial}
      There exists $D$ such that for every admissible path
      $\gamma\from I\to \Davis$ there exists a combinatorial geodesic
      at Hausdorff distance at most $D$ from $\gamma$. 
    \end{lemma}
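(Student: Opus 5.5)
Given an admissible path $\gamma\from I\to\Davis$ with endpoints $x=\gamma(a)$ and $y=\gamma(b)$, I will take a combinatorial geodesic $\beta$ between vertices chosen near the endpoints. Concretely, pick vertices $x'$ and $y'$ in closed chambers containing $x$ and $y$, and let $\beta$ be any combinatorial geodesic from $x'$ to $y'$. Since there are finitely many isometry types of cells, every point is within uniformly bounded $\dcat$--distance $c_0$ of such a vertex. The key structural fact I use is this: a wall separating two vertices of $\beta$ separates $x'$ from $y'$, because $\beta$ crosses each wall at most once (\fullref{lem:admissible_edge_path_is_combinatorial_geodesic}).

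The plan is to compare points via wall counts and then invoke \fullref{all_metrics_are_equivalent}.

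\textbf{Each point of $\gamma$ is near $\beta$.} For $t\in I$, let $z$ be a vertex whose closed chamber contains $\gamma(t)$. A wall $M$ separating $z$ from the ``interval'' between $x'$ and $y'$ (that is, with $x'$, $y'$ on the same side opposite to $z$) must be crossed by $\gamma$ both before and after time $t$, up to the bounded fudge from chamber choices. The fudge involves only walls through the closed chambers of $x$, $y$, and $\gamma(t)$, whose number is uniformly bounded. Admissibility forbids crossing a wall twice unless $\gamma\subset M$, and in that case $\gamma(t)\in M$ lies in the closure of both sides. So the number of walls separating $z$ from both $x'$ and $y'$ on the same side is uniformly bounded.

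Now apply the median property of the wall space. In $\NR$, the walls separating $z$ from its median $m=m(x',y',z)$ are exactly those separating $z$ from $\{x',y'\}$, and $m$ lies in the interval. The difficulty is that $m$ is a vertex of $\NR$, not necessarily of $\Davis$. Instead I will use the gate-like argument of \fullref{lem:distance_to_standard}: take a vertex $v$ on some combinatorial geodesic from $x'$ to $y'$ minimizing $\dcomb(z,v)$. By a reflect-and-shorten argument, a wall separating $z$ from $v$ must not separate $x'$ from $y'$ (else reflecting part of the path gives a closer interval vertex). So $\dcomb(z,v)$ is bounded by the count above.

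\textbf{Passing to the fixed geodesic $\beta$.} Different combinatorial geodesics between $x'$ and $y'$ need not be uniformly close in general (think of Euclidean planes). So I will choose $\beta$ adapted to $\gamma$: order the walls separating $x'$ from $y'$ by the times at which $\gamma$ crosses them, and build $\beta$ by crossing walls in that order. This requires checking that consecutive crossings in the induced order are realized by edges, via the chamber sequence visited by $\gamma$. Alternatively, I can construct $\beta$ directly by following the sequence of chambers $\gamma$ passes through; admissibility makes this a gallery crossing each wall at most once, hence a geodesic gallery. Consecutive chambers meet along a cell, so they are joined by a combinatorial geodesic inside a spherical cell of bounded diameter. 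The concatenation crosses each wall once, so by \fullref{lem:admissible_edge_path_is_combinatorial_geodesic} it is a combinatorial geodesic, and it stays within bounded distance of $\gamma$ and conversely, giving $D$.

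\textbf{Main obstacle.} The hardest step is that $\gamma$ may pass through lower-dimensional cells and walls tangentially, or run inside walls, and may visit infinitely many chambers (at corners). I will handle this by discretizing $\gamma$ at times when it enters new open cells of $\Davis'$, and by treating the case $\gamma\subset M$ via a nearby chamber on a fixed side.
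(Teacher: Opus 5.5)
Your final construction is essentially the paper's proof: follow the chambers $\gamma$ passes through, push off any wall containing $\gamma$, join consecutive chamber vertices by a geodesic in the spherical cell containing the crossing point, and conclude via \fullref{lem:admissible_edge_path_is_combinatorial_geodesic} and bounded chamber diameter. The paper differs only in first perturbing $\gamma$ into general position, so that consecutive chambers are separated by a single wall and joined by one edge.

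Drop the preliminary interval/median step: it is unnecessary, and its reflect-and-shorten claim is false because intervals in $\Davis$ are not gated. For example, in a hexagonal $2$--cell $ABCDEF$ of the $(3,3,3)$ Davis complex, $A$ is a closest point of the interval $\{A,B,C\}$ to $E$. Yet the wall dual to $EF$ separates $E$ from $A$ and also separates $A$ from $C$.
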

    \begin{proof}
      If $\gamma$ is contained in some wall $M$ then it can be moved
      an arbitrarily small distance in the second coordinate of
      $\carrier(M)=M\times[-1/2,1/2]$ to be disjoint from $M$.
      Also, since walls have positive codimension, by an arbitrarily
      small perturbation we may assume $\gamma$ does not meet any wall
      intersections.
      Thus, there is an admissible path $\gamma'$ arbitrarily close to
      $\gamma$ such that there is a sequence of times
      $\dots,t_0,t_1,\dots$ such that $\gamma'$ meets a single wall at
      each time $t_i$ and meets no wall in between.
      The closure of a component of the complement of the walls in
      $\Davis$ is a $W$--translate of the fundamental chamber.
      Each contains a unique vertex.
      Let $v_i$ be the vertex in the same chamber as
      $\gamma'(t_i,t_{i+1})$.
      Since the chambers containing $v_i$ and $v_{i+1}$ are adjacent
      across the wall through $\gamma'(t_{i+1})$, there is a
      corresponding edge $e_i$ of $\Davis^{(1)}$ dual to that wall
      connecting $v_i$ and $v_{i+1}$.
      Let $\alpha$ be the edge path with vertices $v_i$ and edges
      $e_i$.
      The chambers of a fixed finite rank Coxeter system have finite
      diameter, so there is a $D'$ such that $\alpha$ and $\gamma'$
      are $D'$--Hausdorff equivalent. Let $D:=D'+1$ to account for
      passing from $\gamma$ to $\gamma'$.
      Then we have an edge path $\alpha$ at Hausdorff distance at most
      $D$ from $\gamma$.
      Since $\alpha$ crosses the same sequence of walls at $\gamma'$,
      it is also an admissible path.
\fullref{lem:admissible_edge_path_is_combinatorial_geodesic} says
$\alpha$ is a combinatorial geodesic.
    \end{proof}

\fullref{all_metrics_are_equivalent} enables the following sort of
computation, which we use frequently:
\begin{lemma}[4--gon trick]\label{quad_trick}
  Let $\alpha$, $\beta$, $\gamma$, and $\delta$ be admissible paths
  such that the concatenation $\alpha+\beta+\gamma$ is defined and has the same
  endpoints as $\delta$.
  Let $\alpha^{-}$ and $\alpha^+$ denote the endpoints of $\alpha$,
  and similarly for the other paths. 
  Let $\mathcal{V}$ be a chain separating $\delta^{-}$ and $\delta^+$.
  Let $\mathcal{V}_\alpha$ be the walls of $\mathcal{V}$ that meet
  $\alpha$.
  Let $\mathcal{V}_\gamma$ be the walls of $\mathcal{V}$ that meet
  $\gamma$.
  Let
  $\mathcal{V}':=\mathcal{V}\setminus(\mathcal{V}_\alpha\cup\mathcal{V}_\gamma)$.
  Then $\mathcal{V}_\alpha$ is an initial subchain of $\mathcal{V}$,
  $\mathcal{V}_\gamma$ is a terminal subchain of $\mathcal{V}$, and
   $\mathcal{V}'$ is a convex subchain transverse to $\beta$. See \fullref{fig:fourgontrick}.

  If $\mathcal{V}'$ is nonempty then $\mathcal{V}_\alpha$ and
  $\mathcal{V}_\gamma$ are disjoint.
Furthermore:  \[|\mathcal{V}|-|\mathcal{V}'|\leq\dchain(\alpha^{-},\alpha^+)+\dchain(\gamma^{-},\gamma^+)+2\ceq
  \dcat(\alpha^-,\alpha^+)+\dcat(\gamma^-,\gamma^+)\]
In particular, when $\beta$ and $\delta$ are much longer than $\alpha$
and $\gamma$ then $\mathcal{V}'$ is ``most of'' $\mathcal{V}$.
\end{lemma}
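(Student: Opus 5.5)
The plan is to argue with halfspaces and the separation property of walls in $\Davis$, using only that each wall $M$ splits $\Davis$ into two convex open halfspaces. Orient the chain $\mathcal{V}=\{V_i\}$ so that $\delta^-\in V_i^-$ and $\delta^+\in V_i^+$ for all $i$; this is possible because every $V_i$ separates $\delta^-=\alpha^-$ from $\delta^+=\gamma^+$, and the nesting is $V_i^-\subset V_j^-$ for $i<j$.

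\textbf{Initial and terminal subchains.} Suppose $V_j\in\mathcal{V}_\alpha$ and $i<j$. Then $\alpha$ is a connected path that starts at $\alpha^-=\delta^-\in V_i^-$ and meets $V_j\subset V_i^+$. Hence it crosses $V_i$, so $V_i\in\mathcal{V}_\alpha$, and $\mathcal{V}_\alpha$ is an initial subchain. The symmetric argument, using $\gamma^+=\delta^+$, shows $\mathcal{V}_\gamma$ is a terminal subchain. Consequently $\mathcal{V}'$ is a convex subchain.

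\textbf{$\mathcal{V}'$ is transverse to $\beta$.} Let $V\in\mathcal{V}'$. Then $\alpha$ misses $V$, so $\alpha$ lies entirely in $V^-$; in particular $\beta^-=\alpha^+\in V^-$. Likewise $\gamma$ misses $V$, so $\beta^+=\gamma^-\in V^+$. The endpoints of $\beta$ are therefore on opposite sides of $V$, so $\beta$ meets $V$ and is not contained in it. Admissibility of $\beta$ then says it meets $V$ in a single point. Since both endpoints are off $V$, that point is interior, and the definition of admissible path makes it a transverse crossing, i.e.\ $\beta\transverse V$.

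\textbf{Disjointness when $\mathcal{V}'\neq\emptyset$.} Take $V\in\mathcal{V}'$ and suppose some $U$ lies in $\mathcal{V}_\alpha\cap\mathcal{V}_\gamma$. Because $\mathcal{V}_\alpha$ is initial and $V\notin\mathcal{V}_\alpha$, we get $U<V$. Because $\mathcal{V}_\gamma$ is terminal and $V\notin\mathcal{V}_\gamma$, we get $U>V$. This is a contradiction.

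\textbf{Counting.} Every wall of $\mathcal{V}_\alpha$ except possibly its last one separates $\alpha^-$ from $\alpha^+$. Indeed, if $U<U'$ are both in $\mathcal{V}_\alpha$, then $\alpha^+$ lies in the closed halfspace $\overline{U'^-}$... rather, argue as follows. $\alpha$ meets $U'$, so it has a point in $\overline{U'^+}\subset U^+$. By admissibility $\alpha$ meets $U$ only once. If $\alpha^+$ were in $\overline{U^-}$, then $\alpha$ would return across $U$, which is impossible. So $\alpha^+\in U^+$ (or $\alpha^+\in U$, in which case $U$ is the last wall, a boundary case we absorb into the constant).

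It follows that $\mathcal{V}_\alpha$ minus at most one wall is a chain separating $\alpha^-$ from $\alpha^+$, giving $|\mathcal{V}_\alpha|\leq \dchain(\alpha^-,\alpha^+)+1$. Similarly $|\mathcal{V}_\gamma|\leq\dchain(\gamma^-,\gamma^+)+1$. Summing gives the stated bound on $|\mathcal{V}|-|\mathcal{V}'|$.

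The coarse equivalence with $\dcat(\alpha^-,\alpha^+)+\dcat(\gamma^-,\gamma^+)$ comes from \fullref{all_metrics_are_equivalent}. For endpoints that are not vertices, use the coarse extension of $\dchain$: a vertex in a common closed chamber is separated from the point by no wall, which costs only a bounded additive error.

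\textbf{Main obstacle.} The only delicate step is the bookkeeping for endpoints that lie on walls, or paths contained in a wall. Admissibility allows a single meeting at an endpoint, and this is exactly why the $+1$ terms appear. I expect these boundary cases to be absorbed into the additive constants without further difficulty.
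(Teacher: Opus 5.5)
Your proof is correct and follows the paper's argument. You orient $\mathcal{V}$ from $\delta^-$ to $\delta^+$, get the initial/terminal structure of $\mathcal{V}_\alpha$ and $\mathcal{V}_\gamma$ from the nesting of halfspaces, get transversality of $\mathcal{V}'$ to $\beta$ from admissibility, bound $|\mathcal{V}_\alpha|-1$ and $|\mathcal{V}_\gamma|-1$ by $\dchain$ because every wall but the last separates the endpoints, and finish with \fullref{all_metrics_are_equivalent}. Your hedging about boundary cases is unnecessary: for a non-last wall $U$ your own argument already rules out $\alpha^+\in U$, and for non-vertex endpoints $|\mathcal{V}_\alpha|-1\leq\dchain(\alpha^-,\alpha^+)$ holds exactly, since a wall separating two points also separates any vertices sharing closed chambers with them, so nothing needs absorbing into constants.
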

 \begin{figure}[h]
        \centering
        \includegraphics{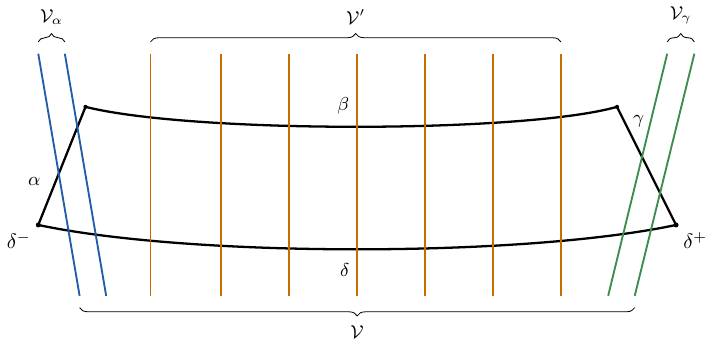}
        \caption{4-gon trick: If $\alpha$ and $\gamma$ are short then
          most of the walls cutting $\delta$ also cut $\beta$.}
        \label{fig:fourgontrick}
      \end{figure}
    \begin{proof}
      Assume $\alpha+\beta+\gamma$, $\delta$, and
      $\mathcal{V}:=\{V_i\}_{i_0\leq i\leq i_1}$ are oriented from $x:=\delta^{-}$
      to $y:=\delta^+$.

      Since every wall $V\in\mathcal{V}$ separates $x$ from $y$ and
      $\delta$ is a path from $x$ to $y$, $V$ cuts $\delta$ an odd
      number of times. Since $\delta$ is admissible, that number is
      1.
      Similarly, every wall $V\in\mathcal{V}$ cuts
      $\alpha+\beta+\gamma$ an odd number of times, but meets each
      subsegment at most once, so if $V$ does not meet $\alpha$ or
      $\gamma$ then it must cut $\beta$.
      
      Suppose $i<j$ and $V_j\transverse \alpha$.
      Then $V_i$ separates $V_j$ from $x$, but $\alpha$ gives a path
      from $x$ to $V_j$, so $V_i$ cuts $\alpha$.
      Conversely, if $i<j$ and $V_i$ does not cut $\alpha$ then every
      point of $\alpha$ is on the same side of $V_i$ as $x$, and $V_j$
      is on the other side of $V_i$, so $V_j$ does not cut $\alpha$.
      This gives the desired monotonicity---$\mathcal{V}$ is composed of a
      (possibly empty) initial subchain consisting of walls that cut
      $\alpha$, a (possibly empty) terminal subchain consisting of
      walls that do not meet $\alpha$, and possibly a single wall in
      between that meets $\alpha$ at its endpoint. 

      Every wall of $\mathcal{V}_\alpha$ separates $\alpha^{-}$ from
      $\alpha^{+}$ except possibly that the last wall contains
      $\alpha^+$. 
      Thus, $|\mathcal{V}_\alpha|-1\leq\dchain(\alpha^{-},\alpha^+)$.
     \fullref{all_metrics_are_equivalent} implies
     $\dchain(\alpha^{-},\alpha^+)\ceq
     \dcat(\alpha^{-},\alpha^+)$.
     The same is true for $\gamma$.
    \end{proof}

%% file: bridge.tex
\begin{lemma}[{CAT(0) bridge lemma (cf \cite[Lemma~2.3]{BesKleSag08})}]\label{lem:bridge}
  Let $X$ be a complete CAT(0) space.
  Let $A$ and $B$ be nonempty closed convex subsets of $X$ such that
  $\dcat(A,B)$ is realized. 
  Define $A_0:=\{a\in A\mid
  \dcat(a,B)=\dcat(A,B)\}$, $B_0:=\{b\in B\mid
  \dcat(b,A)=\dcat(A,B)\}$, and
   \[\bridge(A,B):=\bigcup_{a\in A_0}[a,\pi_B(a)]\]
  \begin{enumerate}
  \item $A_0$ and $B_0$ are closed and convex.
  \item $\pi_B|_{A_0}\from A_0\to B_0$ and $\pi_A|_{B_0}\from B_0\to
    A_0$ are inverse isometries. 
  \item $\bridge(A,B)$ is closed and convex and is isometric to a product $A_0\times
    [0,\dcat(A,B)]$.
    \item\label{item:dependent_constant}
    Suppose there exists $\epsilon>0$ such that:
    \begin{equation}
      \label{flare}
      a\in A,\,\dcat(a,A_0)=1  \implies \dcat(a,B)\geq \dcat(A,B)+\epsilon\tag{$\Yleft$}
    \end{equation}
 
    Then for every $a\in A$ with $\dcat(a,A_0)\geq 1$ we have: 
    \[\dcat(a,B)  \geq\ \dcat(A,B)+\epsilon \dcat(a,A_0)\]
    \item\label{item:subcomplexes_flare} If $X$ is a locally finite complex with a cocompact isometry
      group then for every $D$ there exists $\epsilon$ such that if
      $A$ and $B$ are convex subcomplexes with $\dcat(A,B)\leq D$ then
      $\epsilon$ is a \emph{flaring bound} for $A$ and $B$ satisfying \eqref{flare}. 
   \end{enumerate}
 \end{lemma}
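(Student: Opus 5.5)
Items (1)--(3) are the standard bridge facts from Bestvina--Kleiner--Sageev, and I will follow their argument. First, $A_0$ and $B_0$ are closed and convex. The function $a\mapsto\dcat(a,B)$ is convex and continuous on $A$ (convexity of distance to a convex set in a CAT(0) space). Hence its minimum set $A_0$, which is nonempty because $\dcat(A,B)$ is realized, is a closed convex sublevel set; the same holds for $B_0$.

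For $a\in A_0$, put $b=\pi_B(a)$. Then $\dcat(a,b)=\dcat(A,B)$, so $b\in B_0$. I claim $\pi_A(b)=a$: any point of $A$ strictly closer to $b$ would beat the minimal distance. Thus $\pi_B|_{A_0}$ and $\pi_A|_{B_0}$ are mutually inverse. Both projections are 1--Lipschitz, so both restrictions are isometries.

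For the product structure, take $a,a'\in A_0$ with projections $b,b'$. The quadrilateral $a,b,b',a'$ has right (at least $\pi/2$) Alexandrov angles at all four vertices, since these are projections onto convex sets. The flat quadrilateral theorem (Bridson--Haefliger II.2.11) then says it spans a flat rectangle. The segments $[a,\pi_B(a)]$ therefore fit together as $A_0\times[0,\dcat(A,B)]$ isometrically. Closedness follows from closedness of $A_0$ and continuity of $\pi_B$. Convexity follows because the geodesic between two points of $\bridge(A,B)$ lies in the flat rectangle spanned by the two segments through them.

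For (4), take $a\in A$ with $t:=\dcat(a,A_0)\geq 1$, and let $a_0=\pi_{A_0}(a)$. Let $c(s)$ be the geodesic from $a_0$ to $a$, parametrized by arclength; it lies in $A$ by convexity. Set $f(s):=\dcat(c(s),B)$, a convex function with $f(0)=\dcat(A,B)$. The point $c(1)$ satisfies $\dcat(c(1),A_0)=1$ because $\pi_{A_0}(c(s))=a_0$ along the segment. So (\ref{flare}) gives $f(1)\geq\dcat(A,B)+\epsilon$. Convexity of $f$ then forces $f(t)\geq f(0)+t\,(f(1)-f(0))\geq\dcat(A,B)+\epsilon t$, which is the claim.

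For (5), I will argue by contradiction using cocompactness. Suppose there are convex subcomplexes $A_n,B_n$ with $\dcat(A_n,B_n)\leq D$ and points $a_n\in A_n$ with $\dcat(a_n,A_{n,0})=1$ and $\dcat(a_n,B_n)-\dcat(A_n,B_n)\to 0$. Translate by the isometry group so that $a_n$ lies in a fixed compact set. By local finiteness, only finitely many subcomplexes meet any fixed bounded region, so after passing to a subsequence the intersections of $A_n$ and $B_n$ with a large ball are constant. Since all the relevant points lie within distance $D+2$ of $a_n$, this should reduce to a single pair $(A,B)$ and a single compact configuration, where the infimum is attained and is positive (it vanishes only if $a\in A_0$), a contradiction.

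The main obstacle is making this localization honest. The sets $A_0$ and the distances $\dcat(\cdot,B)$ are global, so I must check that both are determined by $A\cap\bar\nbhd_R(a)$ and $B\cap\bar\nbhd_R(a)$ for suitable $R$. I would handle this with convexity: distance to $B$ from points near $a$ is realized within distance $D+2$, and membership in $A_0$ is a local minimum condition, which is global by convexity. Alternatively, I would use the fact that convex subcomplexes have only finitely many shapes near a point, and conclude by lower semicontinuity and compactness.
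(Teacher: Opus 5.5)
Your argument is correct and is essentially the standard Bestvina--Kleiner--Sageev proof, which is all the paper relies on: it gives no proof of its own beyond citing \cite[Lemma~2.3]{BesKleSag08}, and splits the last item into the convexity propagation (4) and the uniform unit-scale bound (5), exactly as you structure it. The one thing to tidy is (5). Local finiteness does not give finitely many subcomplexes meeting a ball, and it does not reduce you to a single global pair $(A,B)$. What it gives is finitely many possible intersections of $A_n$ and $B_n$ with a ball of radius about $D+3$ around the compact set containing the $a_n$. Your observation that $d(\cdot,B_n)$ near $a_n$ and $A_{n,0}\cap\bar\nbhd_1(a_n)$ (local minima, below level $D$, of a convex function) are determined by that local data is exactly what makes the limiting contradiction go through.
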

 As an example, consider two lines in $\mathbb{E}^2$ intersecting at
 angle $0<\theta<\pi/2$.
 Their bridge is the single point of intersection. Projection of one
 line to the other is surjective, so projections between convex sets need not be coarsely equivalent
 to the bridge (unlike in the cubical case). The optimal flaring bound satisfying
 \eqref{flare} is $\epsilon=\sin(\theta)$, so it depends on the pair of convex
 sets.

\fullref{lem:bridge} is just \cite[Lemma~2.3]{BesKleSag08} stated
slightly differently, with the additional hypothesis that $X$ is
complete, and with the final item of \cite[Lemma~2.3]{BesKleSag08}
split into two separate statements.

 \begin{corollary}\label{cor:walls_are_tame}
For all $D$ there exists $\epsilon$ such
that if $M$ and $M'$ are walls of $\Davis$ with $\dcat(M,M')\leq D$ then
$\epsilon$ is a flaring bound for $M$ and $M'$ satisfying \eqref{flare}.
 \end{corollary}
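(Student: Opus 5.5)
The plan is to deduce \fullref{cor:walls_are_tame} from \fullref{lem:bridge}~\eqref{item:subcomplexes_flare} by passing to the subdivision $\Davis'$ of \fullref{def:Hasse}, in which walls are subcomplexes. Three things need checking: that $\Davis'$ satisfies the hypotheses on $X$, that walls of $\Davis$ are convex subcomplexes of $\Davis'$, and that the wall--wall distance is realized.

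First, the ambient space. $\Davis'$ is a polyhedral subdivision of $\Davis$, so it has the same underlying CAT(0) metric space. That space is complete, since $\Davis$ is a locally finite piecewise Euclidean complex with finitely many shapes. The subdivision is also locally finite, because each cell of $\Davis$ meets only finitely many walls. Moreover $W$ acts cocompactly on $\Davis'$ by cellular isometries: $W$ permutes the walls, hence preserves the subdivision, and it already acts cocompactly on $\Davis$.

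Second, the sets $A$ and $B$. Walls of $\Davis$ are convex subcomplexes of $\Davis'$ by construction, and each is closed in $\Davis$. This puts $A=M$ and $B=M'$ in the setting of \fullref{lem:bridge}.

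Third, realization of $\dcat(M,M')$. I expect this to need the only extra argument. Use cocompactness and local finiteness: there are finitely many $W$--orbits of pairs of cells $(\sigma,\tau)$ of $\Davis'$ with $\sigma\subset M$, $\tau\subset M'$ and $\dcat(\sigma,\tau)\leq D+1$. The distance between $M$ and $M'$ is then an infimum over such pairs of distances between compact cells, and each of these is attained. To pass from the infimum to a minimum, it suffices that only finitely many values occur up to the $W$--action, which would follow from \emph{finiteness of pairs-of-walls up to the action at bounded distance}. The difficulty is that $W$ need not preserve the particular pair $(M,M')$. Instead I would apply the fact that $\stab(M)\cap\stab(M')$ acts cocompactly on $\bridge$-type sets. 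If that is awkward, I will rely on the general fact that two convex subcomplexes of a locally finite complex with cocompact isometry group realize their distance (a standard finiteness argument, e.g.\ via Bridson's theorem on semisimplicity / finitely many shapes). I expect this to be the main technical obstacle, and in the worst case I would add realization as a hypothesis or cite it.

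With these three points in hand, applying \fullref{lem:bridge}~\eqref{item:subcomplexes_flare} with $X=\Davis'$, $A=M$, $B=M'$ gives an $\epsilon$ depending only on $D$ that satisfies \eqref{flare}.
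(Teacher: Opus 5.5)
Your route is the paper's: its proof consists exactly of passing to the subdivision $\Davis'$ of \fullref{def:Hasse}, in which walls are convex subcomplexes, and invoking \fullref{lem:bridge}~\eqref{item:subcomplexes_flare}. The realization hypothesis, which the paper leaves implicit, is already settled by your own first observation. Cell-to-cell distances are $W$--invariant, and there are finitely many $W$--orbits of pairs of cells of $\Davis'$ at distance at most $D+1$, so only finitely many values occur and the infimum defining $\dcat(M,M')$ is a minimum. Hence the supposed difficulty that $W$ need not preserve $(M,M')$ is not a difficulty at all, and the detour through $\stab(M)\cap\stab(M')$ is unnecessary.
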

 \begin{proof}
   There is a subdivision of $\Davis$, as in \fullref{def:Hasse}, in which the walls of $\Davis$
   are subcomplexes. 
 \end{proof}

\begin{lemma}\label{lem:length_wall_crossers_improved}
For all $D$ there exists
  $k>0$  such that for all walls $M$, $M'$ of $\Davis$ with
  $\dcat(M,M')\leq D$, all paths $\gamma$ from $M$ to $M'$, and all $R$ we have:
  \[\gamma\not\subset\nbhd_R(\bridge(M,M'))\implies|\gamma|\geq kR-2\]
\end{lemma}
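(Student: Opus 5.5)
We will combine the product structure of the bridge from \fullref{lem:bridge} with the linear flaring of \fullref{cor:walls_are_tame}. Fix $D$, and let $\epsilon$ be the flaring bound from \fullref{cor:walls_are_tame}. Let $M,M'$ be walls with $d:=\dcat(M,M')\leq D$, and write $B:=\bridge(M,M')\cong M_0\times[0,d]$. If $M\cap M'\neq\emptyset$ then $d=0$ and the bridge is $M\cap M'$; the flaring hypothesis still applies.

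Let $\gamma$ be a path from $p\in M$ to $q\in M'$, and let $z=\gamma(t)$ be a point with $\dcat(z,B)\geq R$. We may assume $R$ is large compared to $D$, say $R\geq 2D+2$; otherwise $kR-2\leq 0$ for suitable small $k$. We will bound $|\gamma|$ from below by $\dcat(p,z)+\dcat(z,q)$, and show that this sum is at least a constant times $R$.

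\textbf{Step 1: distances from the endpoints to the bridge.} Set $a:=\dcat(p,M_0)$. Since $p\in M$ and $\pi_{M'}(p)$ lies at distance $\dcat(p,M')$ from $p$, we have $\dcat(p,B)\geq a-\dcat(p,M')$, or more simply $\dcat(p,B)\leq a$. We need a lower bound relating $\dcat(p,M')$ and $a$. If $a\geq 1$, flaring (\fullref{lem:bridge}\eqref{item:dependent_constant}) gives
\[\dcat(p,M')\geq d+\epsilon a.\]
The same holds for $q$ with $a':=\dcat(q,M'_0)$, giving $\dcat(q,M)\geq d+\epsilon a'$.

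\textbf{Step 2: case analysis on $a$ and $a'$.}

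Case (i): $a$ or $a'$ is at least $cR$ for a small constant $c$ chosen below. Since $\gamma$ runs from $p\in M$ to $M'$, we get $|\gamma|\geq\dcat(p,M')\geq\epsilon cR$. The symmetric argument handles $a'$, since $|\gamma|\geq \dcat(q,M)$.

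Case (ii): both $a,a'<cR$. Then $\dcat(p,B)\leq a<cR$, so
\[\dcat(p,z)\geq \dcat(z,B)-\dcat(p,B)\geq R-cR.\]
Hence $|\gamma|\geq\dcat(p,z)\geq(1-c)R$.

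Taking $c=1/2$ and $k:=\min\{\epsilon/2,1/2\}$ gives $|\gamma|\geq kR$ in every case. The cases $a<1$ or $a'<1$, where flaring does not apply, fall under case (ii) once $R\geq 2$. Small $R$ is absorbed by the additive constant $-2$.

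\textbf{Main obstacle.} The main difficulty is justifying that $\epsilon$ depends only on $D$, uniformly over pairs of walls. This is exactly the content of \fullref{cor:walls_are_tame} applied to the subdivision $\Davis'$ together with \fullref{lem:bridge}\eqref{item:subcomplexes_flare}. The one detail to check is that, in the intersecting case, $A_0=M\cap M'$ is the correct set to measure $a$ against.
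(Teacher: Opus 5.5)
Your argument is correct. It differs from the paper's mainly in \emph{where} the flaring estimate is applied.

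The paper works at the far point $x$. It sets $m:=\pi_M(x)$ and $m':=\pi_{M'}(x)$, and uses $|\gamma|\geq\dcat(x,m)+\dcat(x,m')\geq\dcat(m,M')$. It then splits into three cases:
\begin{itemize}
\item one of $\dcat(x,m)$, $\dcat(x,m')$ is at least $R$;
\item both projections lie within distance $1$ of $M_0$ and $M'_0$ respectively;
\item flaring applies at $m$.
\end{itemize}
Balancing $\max\{D_0+\epsilon(R-\dcat(x,m)),\dcat(x,m)\}$ in the last case gives $k=\epsilon/(1+\epsilon)$.

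You instead apply flaring at the initial point $p=\gamma(0)\in M$, which yields a cleaner dichotomy. Either $\dcat(p,M_0)<R/2$, so $p$ is within $R/2$ of the bridge and reaching $z$ costs at least $R/2$. Or $\dcat(p,M_0)\geq R/2\geq 1$, and flaring forces $\dcat(p,M')\geq\epsilon R/2$.

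This avoids projecting the far point and needs only one endpoint. Your quantity $a'$ is never actually used, since $a<R/2$ already puts you in case (ii). The price is the slightly weaker constant $k=\min\{\epsilon/2,1/2\}$, which is immaterial for the later applications.

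Two cosmetic points:
\begin{itemize}
\item The assumption $R\geq 2D+2$ is never used. Only $R\geq 2$ matters, and for $R<2$ the claim is vacuous since $kR-2<0$.
\item The first inequality written in Step 1 plays no role in the argument.
\end{itemize}
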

\begin{proof}
  Let $D_0:=\dcat(M,M')\leq D$.
   By \fullref{cor:walls_are_tame}, given $D$  there exists a flaring bound
   $\epsilon$ for $M$ and $M'$.
   Define $k:=\frac{\epsilon}{1+\epsilon}$.
  
  Let $M_0:=\{m\in M\mid \dcat(m,M')=\dcat(M,M')\}$ and $M'_0:=\{m\in
  M'\mid \dcat(m,M)=\dcat(M,M')\}$.
  Suppose there exists $x\in\gamma$ at least $R$--far from the bridge.
  Let $m:=\pi_M(x)$ and $m':=\pi_{M'}(x)$.
  \begin{equation}
    \label{eq:6}
    |\gamma|\geq \dcat(x,m)+\dcat(x,m')\geq \dcat(m,m')\geq\dcat(m,M')
  \end{equation}

  If $\dcat(x,m)\geq R$ or $\dcat(x,m')\geq R$ then $|\gamma|\geq R>kR-2$.
  
  If $\dcat(m,M_0)< 1$ and $\dcat(m',M'_0)< 1$ then $|\gamma|> 2(R-1)>kR-2$.

  Up to swapping the roles of $M$ and $M'$, it remains to consider the case that
  $\dcat(m,M_0)\geq 1$ and $\dcat(x,m)<R$.
In this case \fullref{lem:bridge} gives:
\[  \dcat(m,M')\geq D_0+\epsilon\dcat(m,M_0)\geq
    D_0+\epsilon(\dcat(x,M_0)-\dcat(x,m))\]
     
  Since $M_0\subset\bridge(M,M')$, we have $\dcat(x,M_0)\geq\dcat(x,\bridge(M,M'))\geq R$, so:
   \begin{equation}
     \label{eq:7}\dcat(m,M')\geq D_0+\epsilon(R-\dcat(x,m))
       \end{equation}

  Combining equations \eqref{eq:6} and \eqref{eq:7}:
  \begin{align*}
      |\gamma|&\geq\max\{D_0+\epsilon(R-\dcat(x,m)),\dcat(x,m)\}\\
              &\geq \frac{\epsilon}{1+\epsilon}R+\frac{D_0}{1+\epsilon}\\
    &>kR-2\qedhere
  \end{align*}
\end{proof}

\begin{corollary}\label{cor:qgeo_wall_crossers}
  For all $D$ there exists
  $k>0$, as in \fullref{lem:length_wall_crossers_improved}, such that for all walls $M$, $M'$ of $\Davis$ with
  $\dcat(M,M')\leq D$, all $(\lambda,\epsilon)$--quasigeodesics
  $\gamma\from [0,L]\to\Davis$ from $M$ to $M'$, and all $R$ we have:
  \[\gamma\not\subset\nbhd_R(\bridge(M,M'))\implies L\geq  (k/\lambda)R-(2/\lambda)(\epsilon+1)\]
\end{corollary}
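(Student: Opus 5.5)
This follows from \fullref{lem:length_wall_crossers_improved} by converting path length into parameter length. Fix $D$ and let $k>0$ be the constant that lemma provides. Let $\gamma\from[0,L]\to\Davis$ be a $(\lambda,\epsilon)$--quasigeodesic with $\gamma(0)\in M$ and $\gamma(L)\in M'$, and suppose $\gamma\not\subset\nbhd_R(\bridge(M,M'))$. Then some point $x=\gamma(t)$ satisfies $\dcat(x,\bridge(M,M'))\geq R$.

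A quasigeodesic need not be continuous, and even when it is its length need not be bounded by its parameter length. So I would not apply the lemma to $\gamma$ itself. Instead I replace $\gamma$ by the broken CAT(0)--geodesic path
\[\beta:=[\gamma(0),x]\cup[x,\gamma(L)].\]
This $\beta$ is a path from $M$ to $M'$ that leaves $\nbhd_R(\bridge(M,M'))$, since it passes through $x$.

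The quasigeodesic upper bound controls each piece:
\[|\beta|=\dcat(\gamma(0),\gamma(t))+\dcat(\gamma(t),\gamma(L))\leq \lambda t+\epsilon+\lambda(L-t)+\epsilon=\lambda L+2\epsilon.\]
Applying \fullref{lem:length_wall_crossers_improved} to $\beta$ gives $|\beta|\geq kR-2$. Combining the two bounds yields $\lambda L\geq kR-2-2\epsilon$, that is,
\[L\geq (k/\lambda)R-(2/\lambda)(\epsilon+1).\]

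This matches the constant in the statement exactly, so no loss occurs. There is no real obstacle here. The only point needing care is to avoid measuring the length of $\gamma$ directly, which the two-segment detour through the far point handles.
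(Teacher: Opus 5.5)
Your proposal is correct and is essentially the paper's proof. The paper likewise applies \fullref{lem:length_wall_crossers_improved} to the concatenation of the CAT(0) geodesics $[\gamma(0),\gamma(t)]$ and $[\gamma(t),\gamma(L)]$ through a point far from the bridge, and bounds that length by $\lambda t+\epsilon+\lambda(L-t)+\epsilon$ to obtain the stated inequality.
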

  \begin{proof}
    Suppose $t\in [0,L]$ is such that
    $\gamma(t)\notin\nbhd_R(\bridge(M,M'))$.
    By \fullref{lem:length_wall_crossers_improved}, the length of the
    concatenation of the geodesic from $\gamma(0)$ to $\gamma(t)$ with
    the geodesic from $\gamma(t)$ to $\gamma(L)$ has length at least
    $kR-2$, so $\lambda t+\epsilon +
    \lambda(L-t)+\epsilon\geq kR-2$, which implies
    $L\geq (k/\lambda)R-(2/\lambda)(\epsilon+1)$.
  \end{proof}

\begin{lemma}\label{lem:bounded_bridge}
  For all $C$ and $D$ there exists $B$ such that for all walls $M$,
  $M'$ of $\Davis$ such that 
  $\dcat(M,M')\leq D$ and such that the longest chain transverse to
  both $M$ and $M'$ contains at most $C$--many walls, the diameter of 
$\bridge(M,M')$ is at most $B$.
\end{lemma}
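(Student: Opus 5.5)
We argue by contradiction, bounding the bridge's diameter through a long chain of walls transverse to both $M$ and $M'$. By \fullref{lem:bridge}, $\bridge(M,M')\cong M_0\times[0,\dcat(M,M')]$, where $M_0\subset M$ is the set of points realizing $\dcat(M,M')\le D$. So it suffices to bound $\diam(M_0)$ in terms of $C$ and $D$; the interval factor contributes at most $D$.

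Suppose $p,q\in M_0$ with $\dcat(p,q)$ large. Let $p'=\pi_{M'}(p)$ and $q'=\pi_{M'}(q)$. Then $[p,q]\subset M_0$ and $[p',q']\subset M'_0$, and the quadrilateral $p,q,q',p'$ bounds a flat Euclidean rectangle of width at most $D$. Move to nearby vertices $x$ near $p$ and $y$ near $q$; this costs a uniformly bounded error, since every point is uniformly close to a vertex. By \fullref{all_metrics_are_equivalent}, $\dchain(x,y)\ge \dcat(x,y)/\Lambda-\Epsilon$, so there is a chain $\mathcal{K}$ separating $x$ and $y$ of length linear in $\dcat(p,q)$.

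Now use the 4-gon trick, \fullref{quad_trick}, twice. First apply it to the path $[x,p]+[p,q]+[q,y]$ against a geodesic from $x$ to $y$. All but boundedly many walls of $\mathcal{K}$ are then transverse to the segment $[p,q]\subset M$. Do the same for $[p',q']\subset M'$, comparing with the path $[x,p]+[p,p']+[p',q']+[q',q]+[q,y]$; the side segments have length at most $D$ plus a constant. This leaves a subchain $\mathcal{K}'\subset\mathcal{K}$ with $|\mathcal{K}'|\ge \dcat(p,q)/\Lambda - c(D)$ whose walls all cross both $[p,q]$ and $[p',q']$.

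The main obstacle is that crossing a segment contained in $M$ must yield genuine transversality $K\transverse M$, rather than $K=M$ or a tangency. If $K$ crosses $[p,q]\subset M$ at an interior point and $K\neq M$, the crossing point lies in $K\cap M$. Two distinct walls that meet are transverse, since distinct walls either are disjoint or give a finite dihedral group, in which case they intersect transversely. At most one wall of the chain can equal $M$, and likewise at most one can equal $M'$, so discarding those walls costs nothing significant. For a wall $K$ whose crossing point sits at an endpoint, or where $[p,q]$ lies inside $K$, note that admissibility of CAT(0) geodesics (\fullref{lem:geodesics_are_admissible}) forces a genuine crossing; only the extreme walls can misbehave, and we discard them.

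We therefore get a chain transverse to both $M$ and $M'$ of length at least $\dcat(p,q)/\Lambda-c(D)$. This length is at most $C$, which gives $\diam(M_0)\le \Lambda(C+c(D))$. Hence $B:=\Lambda(C+c(D))+D$ bounds the diameter of the bridge.
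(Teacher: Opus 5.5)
Your proposal is correct and follows essentially the paper's argument. In both, a chain separating two far-apart points of $M_0$ is fed into the 4--gon trick on the rectangle $p,q,\pi_{M'}(q),\pi_{M'}(p)$; the walls that still cross the $M'$ side are transverse to both $M$ and $M'$, so there are at most $C$ of them, and the short sides of length at most $D$ account for the remaining walls. The paper applies \fullref{quad_trick} once, taking the base side to be $[p,q]\subset M$ itself so that transversality to $M$ is automatic, whereas you pass to nearby vertices and apply the trick twice. That difference is cosmetic, though your five-segment comparison path is not admissible, so it is cleaner to use the geodesics $[x,p']$ and $[q',y]$ as the short sides.
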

\begin{proof}
  Let $D_0:=\dcat(M,M')$ and $M_0:=\{m\in M\mid \dcat(m,M')=D_0\}$.
  Since $\bridge(M,M')$ is isometric to a product $M_0\times[0,D_0]$
  and $D_0\leq D$, it suffices to bound the diameter of $M_0$.
  For any $x,y\in M_0$, consider
  the geodesic 4--gon whose sides are $[x,y]$, $[y,\pi_{M'}(y)]$,
  $[\pi_{M'}(y),\pi_{M'}(x)]$, and $[\pi_{M'}(x),x]$.
Let $\mathcal{K}$ be a chain realizing
$\dchain(x,y)$ and $\mathcal{K}'$ the subchain of $\mathcal{K}$
consisting of walls separating $\pi_{M'}(x)$ and $\pi_{M'}(y)$. 
By hypothesis
  $|\mathcal{K}'|\leq C$, so
  \fullref{all_metrics_are_equivalent} and \fullref{quad_trick} give:
  \[    \dcat(x,y)\ceq \dchain(x,y) 
              \cequiv \dchain(x,y) -C
              \leq |\mathcal{K}|-|\mathcal{K}'|\cleq 2D_0\qedhere\]
\end{proof}
\begin{remark}
The converse of \fullref{lem:bounded_bridge} is not true: having
a small bridge does not limit the number of common crossers.
For example, the Davis complex of the 3,3,3 triangle groups is the
regular hexagonal tiling of the plane. The walls are three families of
parallel lines.
Any line $M$ from the first family crosses any line $M'$ from the
second family, so their bridge is the single point $M\cap M'$, but the
entire infinite third family consists of lines that cross both $M$ and $M'$.
\end{remark}

%% file: wall_morse.tex
\begin{theorem}\label{thm:wall_morse}
  For all $\lambda$ and $\epsilon$ the following are equivalent for
  all admissible $(\lambda,\epsilon)$--quasigeodesics $\gamma\from
  I\to\Davis$. Moreover, there are explicit bounds between the constraints of each
item that are independent of $\gamma$. 
  \begin{enumerate}[label=(\alph*)]
  \item There exist $K>2$, $r_0$, and a quadratic polynomial $q(r)$
    with positive leading coefficient such that $\gamma$ has
    divergence gauge $\delta(r;1,K)$ satisfying $\delta(r;1,K)\geq
    q(r)$ for all $r\geq r_0$.\label{wall_morse:divergence}
      \item $\gamma$ is $\mu$--Morse.\label{wall_morse:morse}
    \item $\gamma$ is $\chi$--strongly contracting. \label{wall_morse:strong_contraction}
    \item There exist $C$ and $L_0$ such that if $|I|\geq L_0$ then there exists a chain
      $\mathcal{V}\transverse\gamma$  such that the components of
      $I\setminus\gamma^{-1}(\gamma\cap\mathcal{V})$ have length at most $5L_0$ and 
      for every pair $V$, $V'$ of consecutive walls of
      $\mathcal{V}$ there are at most $C$--many walls transverse to
      $\{V,V'\}$.\label{wall_morse:bounded_gap_bounded_walls}
     \item There exist $C$ and $0<\rho\leq\density(\lambda,\epsilon)$ such that there
       exists $L_1$ such that for every subinterval $I'\subset I$
       of length at least $L_1$ there exists a chain
       $\mathcal{V}\transverse\gamma(I')$ with
       $|\mathcal{V}|/|I'|\geq \rho$ such
      that for every pair $V$, $V'$ of consecutive walls of
      $\mathcal{V}$ the longest chain $\mathcal{H}$ transverse to
      $\{V,V'\}$ has $|\mathcal{H}|\leq C$. \label{wall_morse:bounded_gap_bounded_chains}
  \end{enumerate}
\end{theorem}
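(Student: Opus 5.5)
The plan is to prove the cycle \ref{wall_morse:divergence}$\iff$\ref{wall_morse:morse}$\iff$\ref{wall_morse:strong_contraction}$\implies$\ref{wall_morse:bounded_gap_bounded_walls}$\implies$\ref{wall_morse:bounded_gap_bounded_chains}$\implies$\ref{wall_morse:divergence}.

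\emph{The equivalences among \ref{wall_morse:divergence}, \ref{wall_morse:morse} and \ref{wall_morse:strong_contraction}.} These are the CAT(0) statement \fullref{morse_divergent_contracting_in_cat_zero}, applied to the image of $\gamma$. That image is a closed subset of the proper CAT(0) space $\Davis$. The cited proofs are quantitative, so they give the explicit dependence of $\mu$, $\chi$, $K$, $r_0$ and $q$ on one another and on $(\lambda,\epsilon)$. \fullref{eqivalence_of_strong_contraction_conditions} lets us use the BGIP and strong constriction forms of contraction interchangeably later.

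\emph{The implication \ref{wall_morse:bounded_gap_bounded_walls}$\implies$\ref{wall_morse:bounded_gap_bounded_chains}.} This is essentially formal. A chain transverse to $\{V,V'\}$ consists of walls transverse to $\{V,V'\}$, so its length is at most $C$. Given $I'$ of length at least $L_1:=20L_0$, the gaps of $I\setminus\gamma^{-1}(\mathcal{V})$ have length at most $5L_0$. Hence at least $|I'|/(5L_0)-2$ walls of $\mathcal{V}$ cut $\gamma(I')$, and they form a subchain. Consecutive walls of the subchain are consecutive in $\mathcal{V}$, so the bound $C$ persists. This gives density $\rho\geq 1/(10L_0)$, and we shrink $\rho$ below $\density(\lambda,\epsilon)$ if necessary.

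\emph{The implication \ref{wall_morse:bounded_gap_bounded_chains}$\implies$\ref{wall_morse:divergence}.} This is the Charney--Sultan divergence argument via bridges.
\begin{enumerate}
\item Take $x,y$ at distance $r$ from $\gamma$ with $\dcat(x,y)\geq Kr$, and a path $p$ from $x$ to $y$ outside $\nbhd_r(\gamma)$.
\item Let $\gamma(s),\gamma(t)$ be closest points to $x$ and $y$. Then $|t-s|\geq (K-2)r/\lambda-\epsilon/\lambda$.
\item Take a chain $\mathcal{V}\transverse\gamma([s,t])$ of density $\rho$.
\item Apply the 4--gon trick (\fullref{quad_trick}) to $[\gamma(s),x]+p+[y,\gamma(t)]$ versus $\gamma|_{[s,t]}$. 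All but $O(r)$ walls of $\mathcal{V}$ cut $p$.
\item Among the $\approx\rho|t-s|$ consecutive pairs, at least half have parameter gap at most $2/\rho$ along $\gamma$. Thus $\dcat(V,V')\leq D:=2\lambda/\rho+\epsilon$.
\item For such a pair, \fullref{lem:bounded_bridge} bounds $\diam\bridge(V,V')\leq B$. Applying \fullref{cor:qgeo_wall_crossers} to the short piece of $\gamma$ between $V$ and $V'$ shows the bridge lies within a uniform distance $R_1$ of $\gamma$.
\item The subpath of $p$ from $V$ to $V'$ stays outside $\nbhd_{r-R_1-B}(\bridge(V,V'))$. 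So \fullref{lem:length_wall_crossers_improved} gives it length at least $k(r-R_1-B)-2$.
\item These subpaths are disjoint because the walls are nested. Choosing $K$ large compared with $1/\rho$ and the 4--gon constants, there are $\gtrsim r$ of them, so $|p|\gtrsim r^2$ for $r\geq r_0$.
\end{enumerate}

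\emph{The implication \ref{wall_morse:strong_contraction}$\implies$\ref{wall_morse:bounded_gap_bounded_walls}.} I expect this to be the main obstacle.
\begin{enumerate}
\item Replace $\gamma$ by a combinatorial geodesic at Hausdorff distance $D$ (\fullref{lem:admissible_close_to_combinatorial}). It is still uniformly strongly contracting, and crosses the same walls up to bounded error near the ends.
\item At times spaced roughly $L_0$ apart, choose a wall $V_i$ from a longest chain separating $\gamma(t_i-L_0)$ from $\gamma(t_i+L_0)$. Choose it near the middle of that chain, so that $V_i$ crosses $\gamma$ near $t_i$ and the $V_i$ are nested, forming a chain $\mathcal{V}$ with gaps at most $5L_0$.
\item The key claim is that walls $H$ transverse to two consecutive $V_i,V_{i+1}$ are boundedly many. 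Those meeting $V_i$ and $V_{i+1}$ within a fixed distance of $\gamma$ are bounded in number by local finiteness.
\item For $H$ meeting them only far from $\gamma$, take points $h\in H\cap V_i$ and $h'\in H\cap V_{i+1}$ and the geodesic $[h,h']\subset H$. Using strong constriction, I would argue that the projections of $h$ and $h'$ to $\gamma$ lie near the crossing times $t_i$ and $t_{i+1}$ respectively. The reason is that $V_i$ is convex and separates $\gamma(t_i-L_0)$ from $\gamma(t_i+L_0)$, so a geodesic in $V_i$ whose endpoints project far apart must pass near $\gamma$ at two far-apart times, contradicting that $V_i$ meets $\gamma$ only once. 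This last step needs the choice of $L_0$ against $\chi$.
\item Then the projections of $h$ and $h'$ differ by about $L_0>\chi$. By BGIP, $[h,h']\subset H$ passes within $\chi$ of $\gamma$, so $H$ is one of the boundedly many near walls.
\end{enumerate}
Making the projection-of-$V_i$ control in step 4 rigorous, by choosing walls from a maximal chain so they cannot return close to $\gamma$, is where I expect the real work.
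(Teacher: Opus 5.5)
Your cycle of implications is the paper's. Your arguments for (a)$\iff$(b)$\iff$(c), for (d)$\implies$(e) (same $L_1=20L_0$ and $\rho\geq 1/(10L_0)$), and for (e)$\implies$(a) via short interwall segments, \fullref{lem:bounded_bridge} and \fullref{lem:length_wall_crossers_improved} match it. The gap is in (c)$\implies$(d), at the step you flag, and the reason you give there is false. A long projection to $\gamma$ only forces a wall to come \emph{near} $\gamma$ at far-apart times, and that is compatible with crossing $\gamma$ once.

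For example, take $W=\langle a\rangle\times(\mathbb{Z}/2*\mathbb{Z}/2*\mathbb{Z}/2)$. Its Davis complex is $T\times[0,1]$ with $T$ a tree, so all CAT(0) geodesics are uniformly strongly contracting. A long geodesic from $T\times\{0\}$ to $T\times\{1\}$ crosses the $a$--wall $T\times\{1/2\}$ exactly once. Yet it stays within $1/2$ of that wall along its whole length, so the wall's projection has diameter comparable to $|\gamma|$.

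The missing ingredient is \fullref{cor:subsegment_containing_projection}: if $\diam(\pi_\gamma(M))>2\chi'$, then the smallest interval $J_M\supset\gamma^{-1}(\pi_\gamma(M))$ satisfies $\gamma(J_M)\subset\bar\nbhd_{\chi''}(M)$. Its proof has two parts. First, strong constriction from each point of $\pi_\gamma(M)$ to a far partner puts $\pi_\gamma(M)$ near $M$. Second, convexity of $\dcat(\cdot,M)$ plus strong contraction fills in all intermediate times. Closeness at two times, as in your step 4, is not enough.

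With that lemma your middle-of-a-longest-chain choice can work, but the contradiction comes from distance, not from crossing number. $J_{V_i}$ is an interval containing the crossing time of $V_i$. If it contained $t_i-L_0$ or $t_i+L_0$, then $V_i$ would be $\chi''$--close to a point of $\gamma$ from which about half the chain separates it. That is roughly $\density(\lambda,\epsilon)L_0$ walls, which is impossible for large $L_0$. The paper localizes differently, by a pigeonhole: only boundedly many walls meet the $\chi''$--balls about the two ends of a window (\fullref{lem:regular_small_projection}).

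Two later steps also rest on this localization and are unsupported as written.
\begin{enumerate}
\item Disjointness of the $V_i$ comes from disjointness of the intervals $J_{V_i}$, since a point of $V_i\cap V_j$ projects into both. With windows of width $2L_0$ and centers $L_0$ apart these intervals may overlap, so nothing yet prevents consecutive $V_i$ from crossing or even coinciding. The paper keeps only indices in $4\mathbb{Z}$ for this reason.
\item BGIP in step 5 puts $[h,h']$ near $\gamma$ only \emph{somewhere}, which does not let you invoke local finiteness. You need strong constriction, which places it near $\pi_\gamma(h)\in\gamma(J_{V_i})$. Alternatively, as in the paper, use a full parameter interval separating $J_{V_i}$ from $J_{V_k}$, along which the lemma keeps every common crosser $\chi''$--close to $\gamma$.
\end{enumerate}
The detour through a combinatorial geodesic in step 1 is unnecessary, and it obliges you to transfer crossing times back to $\gamma$.
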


The proof occupies this subsection.
The equivalence of \ref{wall_morse:divergence}, \ref{wall_morse:morse}, and
\ref{wall_morse:strong_contraction} is a standard result about subsets
of CAT(0) spaces, as discussed in \fullref{sec:morse_variation}.
\fullref{cor:subsegment_containing_projection} and 
\fullref{lem:regular_small_projection} use strong contraction to
produce walls transverse to $\gamma$ with small projection diameter to
$\gamma$. 
\fullref{prop:contracting_implies_separated_walls} collects these into
a chain to show
\ref{wall_morse:strong_contraction}$\implies$\ref{wall_morse:bounded_gap_bounded_walls}.
The implication
\ref{wall_morse:bounded_gap_bounded_walls}$\implies$\ref{wall_morse:bounded_gap_bounded_chains}
follows by taking $\rho:=\min\{\frac{1}{10L_0},\density(\lambda,\epsilon)\}$ and $L_1:=20L_0$, choosing the chain $\mathcal{V}$ for all of $\gamma$, and
then restricting it to the walls that cut $\gamma'$. 
\fullref{separated_chain_implies_divergent} shows
\ref{wall_morse:bounded_gap_bounded_chains}$\implies$\ref{wall_morse:divergence}.

\begin{lemma}\label{cor:subsegment_containing_projection}
  For all $\chi$ there exists $\chi'$ such that for all $\lambda$ and $\epsilon$ there exists $\chi''$ such that for every
  $\chi$--strongly contracting admissible $(\lambda,\epsilon)$-quasigeodesic
   $\gamma\from I\to\Davis$, and for every wall $M$
  such that $\diam(\pi_\gamma(M))>2\chi'$, there is a subinterval
  $J_M\subset I$ such that:
  \[\gamma^{-1}(\pi_\gamma(M))\subset J_M\text{\quad and\quad}\gamma(J_M)\subset\bar\nbhd_{\chi''}(M)\]
\end{lemma}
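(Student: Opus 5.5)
Throughout, $\pi_\gamma$ denotes closest point projection to the image of $\gamma$, which is closed because $\gamma$ is continuous and quasigeodesic in a proper space.

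The plan is to combine strong contraction of $\gamma$ with convexity of the wall $M$. Set $\chi':=\chi$, or a fixed multiple of it, chosen so that strong constriction holds with this constant; \fullref{eqivalence_of_strong_contraction_conditions} converts $\chi$--strong contraction into strong constriction and the Bounded Geodesic Image Property, with a constant depending only on $\chi$. Take $m,m'\in M$ whose projections $p\in\pi_\gamma(m)$ and $p'\in\pi_\gamma(m')$ satisfy $\dcat(p,p')>2\chi'$. The CAT(0) geodesic $[m,m']$ lies in $M$ by convexity. By the Bounded Geodesic Image Property (or strong constriction), $[m,m']$ must pass within $\chi'$ of $\gamma$. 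In fact it passes within $\chi'$ of both $p$ and $p'$. Hence there are points of $M$ within $\chi'$ of $p$ and of $p'$.

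Next I define $J_M$. Let $J_M$ be the smallest closed subinterval of $I$ containing $\gamma^{-1}(\pi_\gamma(M))$; it may be unbounded if the projection is unbounded. The first containment then holds by construction, and the work is to show $\gamma(J_M)\subset\bar\nbhd_{\chi''}(M)$. Consider times $s<t$ in $\gamma^{-1}(\pi_\gamma(M))$ with $\dcat(\gamma(s),\gamma(t))>2\chi'$. If the projection has small diameter near some point, I can instead pair that point with a far-away projection point, which exists because the diameter exceeds $2\chi'$. By the first step there are points $m_s,m_t\in M$ with $\dcat(m_s,\gamma(s))\le\chi'$ and $\dcat(m_t,\gamma(t))\le\chi'$. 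The concatenation of $[\gamma(s),m_s]$, $[m_s,m_t]\subset M$, and $[m_t,\gamma(t)]$ is a quasigeodesic with constants depending on $\chi'$. Since $\gamma$ is $\mu$--Morse with gauge determined by $\chi$ (\fullref{morse_divergent_contracting_in_cat_zero}), this gives Hausdorff closeness. More directly, apply Morse-ness of the geodesic $[m_s,m_t]$ relative to the quasigeodesic $\gamma|_{[s,t]}$: a strongly contracting geodesic in a CAT(0) space is stable, so the $(\lambda,\epsilon)$--quasigeodesic $\gamma|_{[s,t]}$, whose endpoints are within $\chi'$ of $[m_s,m_t]$, stays within a bound $\chi''(\chi,\lambda,\epsilon)$ of $[m_s,m_t]\subset M$. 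The cleanest route is to use the standard fact that a $(\lambda,\epsilon)$--quasigeodesic with endpoints near a $\chi$--contracting set $\gamma$ fellow travels a subsegment of $\gamma$. Here I reverse the roles: $[m_s,m_t]$ is at Hausdorff distance bounded in terms of $(\chi,\lambda,\epsilon)$ from $\gamma|_{[s,t]}$, because $\gamma|_{[s,t]}$ is itself $\chi$--contracting, being a subpath projection-wise coarsely, and geodesics between points near a contracting quasigeodesic stay near it and conversely cover it.

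The main obstacle is this last converse direction: showing that every point of $\gamma([s,t])$, and not just every point of $[m_s,m_t]$, is near $M$. To get it, I first show $[m_s,m_t]$ lies in a bounded neighborhood of $\gamma$, using strong contraction and a divergence argument. I then project $[m_s,m_t]$ to $\gamma$. This projection is coarsely continuous, with jumps bounded by $\chi$ plus a $\lambda,\epsilon$ term, and it runs from near $\gamma(s)$ to near $\gamma(t)$. So its image coarsely covers $\gamma([s,t])$, and the quasigeodesic constants turn coarse coverage into the bound $\chi''$. The uniformity of $\chi'$, which depends on $\chi$ alone, comes from taking it to be the constriction constant. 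The dependence of $\chi''$ on $\lambda$ and $\epsilon$ enters only through this covering step.
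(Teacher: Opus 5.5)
Your proposal is correct and is essentially the paper's argument. Strong constriction applied to geodesics $[m_0,m_1]\subset M$, pairing each projection point with another one more than $\chi'$ away, gives $\pi_\gamma(M)\subset\bar\nbhd_{\chi+\chi'}(M)$. Note the extra $\chi$: constriction only brings $[m_0,m_1]$ within $\chi'$ of the set $\pi_\gamma(m_0)$, which has diameter at most $\chi$, so you get within $\chi+\chi'$ of $p$, not $\chi'$. Then $J_M$ is the closed hull of $\gamma^{-1}(\pi_\gamma(M))$, and uniform Hausdorff closeness of contracting quasigeodesic subsegments to nearby geodesics finishes the proof.

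The paper's last step is slightly leaner. It applies convexity of $\dcat(\cdot,M)$ to each maximal excursion of $\gamma|_{J_M}$ outside $\bar\nbhd_{\chi+\chi'}(M)$, so only geodesics between points of $\gamma$ are needed. In your version, drop the aside that $[m_s,m_t]$ is itself strongly contracting, since that is circular. Your projection-covering argument already suffices without it, as does CAT(0) convexity comparing $[m_s,m_t]$ with $[\gamma(s),\gamma(t)]$.
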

\begin{proof}
  By \fullref{eqivalence_of_strong_contraction_conditions}, strong
  contraction is equivalent to strong constriction, so there exists $\chi'$
depending only on $\chi$ such that for any geodesic $[x,y]$,  $\diam(\pi_\gamma([x,y]))>\chi'$ implies
$[x,y]$ passes within distance $\chi'$ of $\pi_\gamma(x)$ and $\pi_\gamma(y)$.
Suppose $M$ is a wall such that $\diam(\pi_\gamma(M))>2\chi'$.
For every $x_0\in\pi_\gamma(M)$ there exist $x_1\in\pi_\gamma(M)$ with $\dcat(x_0,x_1)>\chi'$, else $\diam(\pi_\gamma(M))$ would be at most
$2\chi'$.
Choose $m_0,m_1\in M$ 
 such that $x_i\in\pi_\gamma(m_i)$.
Since $M$ is convex, it contains the geodesic $[m_0,m_1]$, which, by
the result above, passes within distance $\chi'$ of $\pi_\gamma(m_0)$
and $\pi_\gamma(m_1)$.
Since $\diam(\pi_\gamma(m_0))\leq\chi$, the geodesic $[m_0,m_1]\subset
M$
passes within distance $\chi+\chi'$ of $x_0$.
Since $x_0$ was an arbitrary point of $\pi_\gamma(M)$, this gives
$\pi_\gamma(M)\subset \bar\nbhd_{\chi+\chi'}(M)$.

Take $J_M$ to be the smallest closed
subinterval of $I$ containing $\gamma^{-1}(\pi_\gamma(M))$.
Let $I''$ be a maximal open subinterval of $J_M$ with 
$\gamma(I'')\subset\bar\nbhd_{\chi+\chi'}^c(M)$.
Convexity of distance to $M$ implies the geodesic between the
endpoints of $\gamma(\bar I'')$ is contained in $\bar\nbhd_{\chi+\chi'}(M)$.
But $\gamma$ is $\chi$--strongly contracting, so $\gamma(\bar I'')$
and the geodesic between its endpoints are at Hausdorff distance at
most $D$ from each other, where $D$ can be computed in terms
of $\chi$, $\lambda$, and $\epsilon$.
Take $\chi'':=\chi+\chi'+D$.
\end{proof}

\begin{lemma}\label{lem:regular_small_projection}
For all $\chi$, $\lambda$, and $\epsilon$ there exist $L_0$ such that
for every $\chi$--strongly contracting admissible $(\lambda,\epsilon)$--quasigeodesic $\gamma\from I\to\Davis$,  for 
every subinterval $I'\subset I$ of length at least $L_0$ there
exists a wall $M$ transverse to $\gamma(I')$ and a subinterval
$J_M\subset I$ such that $\gamma^{-1}(\pi_\gamma(M))\subset J_M$ and
$|J_M|\leq L_0$.
\end{lemma}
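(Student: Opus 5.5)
Let $I'=[a,b]\subset I$ with $b-a\geq L_0$, where $L_0$ will be chosen below. The plan is to take the wall realizing the \emph{middle} of a long chain separating $\gamma(a)$ from $\gamma(b)$. Such a wall is transverse to $\gamma(I')$. We then show that its projection to $\gamma$ is confined to a window of bounded parameter length around the crossing time.

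First, since $|I'|\geq\length(\lambda,\epsilon)$, there is a chain $\mathcal{V}$ separating $\gamma(a)$ and $\gamma(b)$ with $|\mathcal{V}|\geq\density(\lambda,\epsilon)|I'|$. Since $\gamma$ is admissible, each $V\in\mathcal{V}$ cuts $\gamma$ exactly once, and this happens at a time $t_V\in(a,b)$; the times are monotone in the chain order. If some $M\in\mathcal{V}$ has $\diam(\pi_\gamma(M))\leq 2\chi'$, with $\chi'$ as in \fullref{cor:subsegment_containing_projection}, we are essentially done. In that case $\pi_\gamma(M)$ contains $\gamma(t_M)$, because $\gamma(t_M)\in M$. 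Hence $\pi_\gamma(M)\subset\bar\nbhd_{2\chi'}(\gamma(t_M))$, and the quasigeodesic inequality confines $\gamma^{-1}(\pi_\gamma(M))$ to an interval of length at most $\lambda(2\chi'+\epsilon)$, taking the closed hull as $J_M$.

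So the main case is when every wall $V$ of $\mathcal{V}$ has $\diam(\pi_\gamma(V))>2\chi'$. For each such $V$, \fullref{cor:subsegment_containing_projection} gives $J_V\supset\gamma^{-1}(\pi_\gamma(V))$ with $\gamma(J_V)\subset\bar\nbhd_{\chi''}(V)$. The key step is to bound $|J_V|$ uniformly; I expect this to be the main obstacle. The idea is that if $J_V$ is long, then a long stretch of $\gamma$ fellow travels the wall $V$. Disjoint walls of $\mathcal{V}$ near $V$ in the chain order must then be crossed by $\gamma$ inside that stretch, since they separate the endpoints. Each such wall $V'$ therefore comes within roughly $\chi''$ of $V$. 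The argument I have in mind uses the walls $V'\in\mathcal{V}$ crossed during $J_V$ as follows.
\begin{itemize}
\item They are all within distance $D:=2\chi''$ of $V$, because $\gamma(t_{V'})\in V'$ and $\gamma(t_{V'})\in\bar\nbhd_{\chi''}(V)$.
\item Their number grows at least linearly in $|J_V|$, by the density estimate applied to subintervals of $J_V$.
\item Applying \fullref{cor:qgeo_wall_crossers} to the pair $V,V'$ forces $\gamma(J_V\cap[t_V,t_{V'}])$ near $\bridge(V,V')$.
\end{itemize}

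I would try to turn this into a contradiction with strong contraction. Take the farthest such $V'$, and choose points $m\in V$ and $m'\in V'$ projecting near the two ends of $J_V$. The geodesic $[m,m']$ stays within bounded distance of $V$, since the distance function to the convex set $V$ is convex along it. The intermediate walls of the chain separate $m$ from $m'$, so each must be crossed by $[m,m']$ while it also stays near $V$. Strong constriction then forces $[m,m']$ to pass within $\chi'$ of the projections. Comparing the endpoint projections with the crossing points, this should bound $|J_V|$ by a constant $L'$ depending on $\chi,\lambda,\epsilon$.

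If this bound is delicate, the fallback is a softer version. Among the roughly $\density(\lambda,\epsilon)|I'|$ walls of $\mathcal{V}$, their windows $J_V$ each contain $t_V$, and I would bound how many of them can overlap in a fixed point via strong contraction. A pigeonhole argument then produces a wall with $|J_V|\leq L_0$ once $L_0$ is large relative to these constants. Finally, set $L_0:=\max\{\length(\lambda,\epsilon),\lambda(2\chi'+\epsilon),L'\}$, adjusted so that $|I'|\geq L_0$ guarantees $\mathcal{V}$ is nonempty, so that a wall transverse to $\gamma(I')$ exists.
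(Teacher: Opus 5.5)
Your Case~1 is fine, up to a factor of $2$: applying $\diam(\pi_\gamma(M))\le 2\chi'$ directly to pairs of preimage times gives $|J_M|\le\lambda(2\chi'+\epsilon)$, whereas going through $\gamma(t_M)$ only gives twice that. The gap is in the main case.

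The step you single out as key is a uniform bound on $|J_V|$ for each wall $V$ of the chain, or even just its middle wall. That bound is false, so no bridge or strong-constriction argument can establish it. Strong contraction, via \fullref{cor:subsegment_containing_projection}, only tells you that $\gamma$ stays $\chi''$--close to $V$ along $J_V$. It puts no bound on how long that stretch is.

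For instance, when $W$ is hyperbolic all geodesics are uniformly strongly contracting. Yet a geodesic $\gamma$ can cross a wall $M$ at a very shallow angle, e.g.\ traversing $\carrier(M)\cong M\times[-1/2,1/2]$ with tiny slope. Then $\gamma$ stays within $1/2$ of $M$ over an arbitrarily long stretch, so $|J_M|$ is arbitrarily large. Nothing in your setup keeps such an $M$ out of $\mathcal{V}$. You can concatenate $M$ with chains separating $\gamma(a)$ from $M$ and $M$ from $\gamma(b)$, and when $|I'|$ is long compared to the shallow stretch this loses only a small proportion of walls.

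Your supporting claim, that the number of walls of $\mathcal{V}$ crossed during $J_V$ grows linearly in $|J_V|$, is also unjustified. The density statement produces \emph{some} dense chain for each long subinterval. It does not say that your fixed chain $\mathcal{V}$ is evenly spread along $\gamma$.

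What is true is that only boundedly many walls crossing a short piece of $\gamma$ can have long windows, and the mechanism is local finiteness. The paper's proof runs as follows.
\begin{itemize}
\item Let $C$ bound the number of walls meeting a $\chi''$--ball. Take $[x,y]\subset I'$ of length exactly $L_0\ge\lambda(2\chi'+\epsilon)$, with $L_0$ large enough that at least $2C+1$ walls cross $\gamma([x,y])$. This is a $\dwall$ count; no chain is needed.
\item If a transverse wall $M$ had $|J_M|>L_0$, then $\diam(\pi_\gamma(M))>2\chi'$, so $\gamma(J_M)\subset\bar\nbhd_{\chi''}(M)$.
\item Since $J_M$ contains the crossing time, which lies in $[x,y]$, and is longer than $[x,y]$, it contains $x$ or $y$.
\item So $M$ meets the $\chi''$--ball about $\gamma(x)$ or $\gamma(y)$, and at most $2C$ walls do. Hence some transverse wall has $|J_M|\le L_0$.
\end{itemize}

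Your fallback pigeonhole points in this direction, but it attributes the overlap bound to strong contraction and leaves the counting unspecified. Two ingredients are missing. The first is local finiteness of the walls, applied to the containment $\gamma(J_V)\subset\bar\nbhd_{\chi''}(V)$ you already have. The second is the reduction to an interval of length exactly $L_0$, which forces every long window to contain one of two fixed times.
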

\begin{proof}
  Take $\chi'$ and $\chi''$ of
  \fullref{cor:subsegment_containing_projection}.
By local finiteness of the wall structure, there exists $C$ such that
there are at most $C$--many walls intersecting any ball of radius $\chi''$. 
By \fullref{all_metrics_are_equivalent} we can choose $L_0\geq\lambda(2\chi'+\epsilon)$ large
enough that $|[x,y]|\geq L_0$ implies 
  $\dwall(\gamma(x),\gamma(y))\geq 2C+1$.
Choose any subinterval  $I'\subset I$ with $|I'|=L_0$, and
suppose $\bar I'=[x,y]$.
Suppose that for every wall $M$ transverse $\gamma(I')$, the smallest
closed interval $J_M$ such that $\gamma^{-1}(\pi_\gamma(M))\subset J_M$ has
length strictly greater than $L_0$.
Since $J_M$ is smallest, each endpoint of $J_M$ is either in, or is an
accumulation point of, $\gamma^{-1}(\pi_\gamma(M))$, so  $\diam(\pi_\gamma(M))$ is at least the
$\dcat$--distance between the endpoints of $\gamma(J_M)$, which is at
least $|J_M|/\lambda-\epsilon>L_0/\lambda-\epsilon\geq 2\chi'$.
\fullref{cor:subsegment_containing_projection} says
$\gamma(J_M)\subset\bar\nbhd_{\chi''}(M)$.
Note that $[x,y]$ and $J_M$ are intersecting closed intervals, since
both contain the time $t$ such that
$\gamma(t)\in M$, so $|J_M|>L_0=|[x,y]|$ implies $J_M$ contains at least one
of $x$ or $y$.
Thus, $M$
enters $\bar\nbhd_{\chi''}(\{\gamma(x),\gamma(y)\})$.

By the choice of $C$ there are at most $2C$--many walls that enter
$\bar\nbhd_{\chi''}(\{\gamma(x),\gamma(y)\})$,  but by the
choice of $L_0$ there are at least $2C+1$--many walls transverse to
$\gamma(I')$, so there is at least one wall $M$ transverse to $\gamma(I')$
that does not enter $\bar\nbhd_{\chi''}(\{\gamma(x),\gamma(y)\})$.
This wall must have $|J_M|\leq L_0$.
\end{proof}

\begin{proposition}\label{prop:contracting_implies_separated_walls}
 For all $\chi$, $\lambda$, and $\epsilon$ there exist $C$ and $L_0$
 such that for every  $\chi$--strongly contracting admissible $(\lambda,\epsilon)$--quasigeodesic $\gamma\from
 I\to \Davis$ with $I$ an interval of length at least $L_0$,  there exists a chain
  $\mathcal{V}$ transverse to $\gamma$
  such that the components of $I\setminus
  \gamma^{-1}(\gamma\cap\mathcal{V})$ have length at most $5L_0$ and
  for every pair $V$, $V'$ of consecutive walls of $\mathcal{V}$ the
  number of walls transverse to $\{V,V'\}$ is at most $C$.
\end{proposition}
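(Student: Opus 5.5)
The plan is to build $\mathcal{V}$ from walls supplied by \fullref{lem:regular_small_projection}, one in each of a sequence of windows spaced along $I$. Given $\chi,\lambda,\epsilon$, take $\chi'$ and $\chi''$ from \fullref{cor:subsegment_containing_projection} and $L_0'$ from \fullref{lem:regular_small_projection}. Let $\sigma:=\lambda(\chi'+\epsilon)+1$ and enlarge $L_0\geq\max\{L_0',\sigma,1\}$ so that $3L_0+\sigma\leq 4L_0$. Choose windows $I_i:=[a_i,a_i+L_0]\subset I$ with $a_0$ the left endpoint of $I$ and $a_{i+1}:=a_i+3L_0+\sigma$, stopping when the next window would leave $I$. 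In each $I_i$ the lemma gives a wall $M_i\transverse\gamma(I_i)$ and an interval $J_i\supset\gamma^{-1}(\pi_\gamma(M_i))$ with $|J_i|\leq L_0$. Since $J_i$ contains the crossing time $t_i\in I_i$, we get $J_i\subset[a_i-L_0,a_i+2L_0]$. Consequently consecutive $J_i,J_{i+1}$ are separated by a gap of length at least $\sigma$, and non-consecutive ones by more.

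\emph{Chain and gap property.} If $M_i\cap M_j\ni p$ with $i\neq j$, then $\pi_\gamma(p)\in\pi_\gamma(M_i)\cap\pi_\gamma(M_j)$, so $J_i\cap J_j\neq\emptyset$, a contradiction; hence the $M_i$ are pairwise disjoint. For $i<j<k$, the subpath of $\gamma$ from $\gamma(t_i)$ to $\gamma(t_k)$ crosses $M_j$ exactly once by admissibility. So $\gamma(t_i)$ and $\gamma(t_k)$ lie on opposite sides of $M_j$, and by disjointness all of $M_i$ and all of $M_k$ lie on those respective sides. Thus $\mathcal{V}:=\{M_i\}$ is a chain transverse to $\gamma$. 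Consecutive crossing times satisfy $t_{i+1}-t_i\leq 4L_0+\sigma\leq 5L_0$. The initial component of $I\setminus\gamma^{-1}(\gamma\cap\mathcal{V})$ has length at most $L_0$, and the terminal one at most $4L_0+\sigma\leq 5L_0$ by maximality of the window sequence. Every other component lies between consecutive crossing times, so all components have length at most $5L_0$.

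\emph{Bounded common crossers.} This is the step I expect to be the main obstacle. Let $H$ be a wall transverse to both $M_i$ and $M_{i+1}$, and pick $h\in H\cap M_i$ and $h'\in H\cap M_{i+1}$. Then $\pi_\gamma(h)\subset\gamma(J_i)$ and $\pi_\gamma(h')\subset\gamma(J_{i+1})$. Since these intervals are at parameter distance at least $\sigma$, the points are at $\dcat$--distance greater than $\chi'$. The geodesic $[h,h']$ lies in $H$ by convexity. By strong constriction (\fullref{eqivalence_of_strong_contraction_conditions}), it passes within $\chi'$ of $\pi_\gamma(h)$, which lies in $\gamma([a_i-L_0,a_i+2L_0])$. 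Hence $H$ meets the $\chi'$--neighborhood of a subsegment of $\gamma$ whose parameter length is at most $3L_0$, so whose $\dcat$--diameter is at most $3\lambda L_0+\epsilon$. That neighborhood lies in a ball of radius depending only on $\chi,\lambda,\epsilon$. Local finiteness of the wall structure (cocompactness of $W\act\Davis$) then bounds the number of such $H$ by a constant $C$, as required. If the strong-constriction application runs into trouble because $[h,h']$ has only a small projection, the fallback is to choose $h,h'$ using \fullref{cor:subsegment_containing_projection} together with the bridge estimates \fullref{lem:length_wall_crossers_improved}. These still force $H$ to pass within bounded distance of $\gamma$ near $J_i$.
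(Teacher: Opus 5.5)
Your proposal is correct, and its skeleton matches the paper's proof. You take windows of length $L_0$, obtain one wall per window from \fullref{lem:regular_small_projection}, space the windows so the intervals $J_i$ are pairwise disjoint (which makes the walls disjoint), use admissibility for the chain property, and use the spacing to bound the gaps. The paper works with the fixed windows $(iL_0,(i+1)L_0)$, $i\in\mathbb{Z}$, and keeps only the indices in $4\mathbb{Z}$; you instead space your windows $3L_0+\sigma$ apart.

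The genuine difference is in the bounded-crossers step. The paper takes a common crosser $M$ of $V_i$ and $V_k$ and notes that $\gamma^{-1}(\pi_\gamma(M))$ meets both $J_i$ and $J_k$ with a full window between them, so $\diam(\pi_\gamma(M))>2\chi'$. It then invokes \fullref{cor:subsegment_containing_projection} to conclude that $\gamma$ stays $\chi''$--close to $M$ along that whole middle window. All common crossers therefore meet a single $\chi''$--ball, and the same constant $C$ as in \fullref{lem:regular_small_projection} works.

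You apply strong constriction directly to the geodesic $[h,h']\subset H$. This only needs a parameter gap $\sigma$ slightly larger than $\lambda(\chi'+\epsilon)$ between $J_i$ and $J_{i+1}$, and it places every common crosser $H$ inside the bounded set $\bar\nbhd_{\chi'}(\gamma(J_i))$. Your route is more direct, since it never needs the conclusion $\gamma(J_M)\subset\bar\nbhd_{\chi''}(M)$. Your closing ``fallback'' is unnecessary, because you have already checked that the projection diameter exceeds $\chi'$. The cost is a larger constant $C$, coming from a ball of radius on the order of $\lambda L_0$ rather than $\chi''$. The paper's route, in exchange, yields the stronger geometric statement that common crossers fellow-travel $\gamma$ along an entire window.

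One small repair is needed. Starting at ``the left endpoint of $I$'' does not cover rays or lines unbounded to the left, nor intervals with an open left endpoint. Index the windows by $\mathbb{Z}$ from an arbitrary base window inside $I$, as the paper does; the rest of your argument then goes through unchanged.
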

\begin{proof}
Given $\chi$, $\lambda$, and $\epsilon$ take $\chi'$, $\chi''$, $L_0$, and $C$ as in
\fullref{lem:regular_small_projection}.
For $\gamma\from I\to\Davis$ such that $|I|<L_0$ there is nothing to
prove, so we may restrict our attention to cases with $|I|\geq L_0$,
and by shifting the parameterization of $\gamma$, if necessary, we may
assume $(0,L_0)\subset I$.
Let $s_i:=iL_0$ for $i\in\mathbb{Z}$.
If $(s_i,s_{i+1})\subset I$ then by
\fullref{lem:regular_small_projection} there exists a wall
$V_i\transverse \gamma(s_i,s_{i+1})$ and an interval $J_i\subset I$ of length
at most $L_0$ such that $\gamma^{-1}(\pi_\gamma(V_i))\subset J_i$.
Let $t_i$ be the unique time such that $\gamma(t_i)\in V_i$.  
Since $J_i$ has length at most $L_0$ and $t_i\in J_i\cap
(s_i,s_{i+1})$, we have $J_i\subset((i-1)L_0,(i+2)L_0)\cap I$.
In particular, if $j\geq i+3$ and  $(s_i,s_{j+1})\subset I$ then $J_i$ is disjoint from
$J_{j}\subset ((j-1)L_0,(j+2)L_0)$, which implies that $V_i$ and $V_j$
are disjoint, since a point in $V_i\cap V_j$ would project via
$\pi_\gamma$ to a point in $\pi_\gamma(V_i)\cap\pi_\gamma(V_j)$, but
$\gamma^{-1}(\pi_\gamma(V_i)\cap\pi_\gamma(V_j)\subset J_i\cap J_j=\emptyset$.

Suppose for $i+1<j<k-1$ in $\mathbb{Z}$ that $(s_i,s_{k+1})\subset I$ and $M$ is a wall transverse to both $V_i$
and $V_k$.
For $z\in M\cap V_i$, we have
$\pi_\gamma(z)\subset\pi_\gamma(M)\cap\pi_\gamma(V_i)$, so
$\gamma^{-1}(\pi_\gamma(M))$ contains a point in $J_i$.
Similarly, $\gamma^{-1}(\pi_\gamma(M))$ contains a point in $J_k$.
Now, $J_i\subset((i-1)L_0,(i+2)L_0)$ and
$J_k\subset((k-1)L_0,(k+2)L_0)$
so $[jL_0,(j+1)L_0]$ separates $J_i$ and $J_k$ in $I$.
It follows that $\diam(\pi_\gamma(M))> L_0/\lambda-\epsilon>2\chi'$,
so \fullref{cor:subsegment_containing_projection} implies
$\gamma([jL_0,(j+1)L_0])\subset\bar\nbhd_{\chi''}(M)$.

Thus, every point of $\gamma([jL_0,(j+1)L_0])$ is $\chi''$--close to every wall that
is transverse to both $V_i$ and $V_k$.
The number of walls entering a $\chi''$--ball around any point is at
most $C$, so there are at most $C$--many walls transverse to both
$V_i$ and $V_k$ when $k\geq i+4$ and $(s_i,s_{k+1})\subset I$.

Take $\mathcal{V}:=\{{V}_i\}_{i\in 4\mathbb{Z}\land
  (iL_0,(i+1)L_0)\subset  I}$.
It contains at least one wall, $V_0$, since $0\in 4\mathbb{Z}\land (0,L_0)\subset I$.
Since wall indices in $\mathcal{V}$ differ by at least 4, the argument
above gives that they are disjoint and there are at most $C$ walls transverse to any
distinct pair of them.
Furthermore, since they are all transverse to $\gamma$, by
construction, $\mathcal{V}$ contains no facing triples, so it is a
chain. 

Finally, we compute the length of components of
$I\setminus\gamma^{-1}(\gamma\cap\mathcal{V})$.
Let $t_i$ be the time at which $\gamma(t_i)\in V_i$. 
If there is a first wall $V_i$ then $(iL_0,(i+1)L_0)\subset I$ but $(i-4)L_0\notin
I$.
Since $t_i\in (iL_0,(i+1)L_0)$, the initial segment of $I$ ending at
$t_i$ has length at most $5L_0$.
If there is a last wall $V_i$ then similarly $(iL_0,(i+1)L_0)\subset I$ but
$(i+5)L_0\notin I$, so the terminal segment of $I$ starting at $t_i$
has length at most $5L_0$.
If both $V_{4i}$ and $V_{4(i+1)}$ are consecutive walls in
$\mathcal{V}$ then $t_{4i}\in (4iL_0,(4i+1)L_0)$ and $t_{4(i+1)}\in
(4(i+1)L_0,(4(i+1)+1)L_0)$, so $0<t_{4(i+1)}-t_{4i}\leq 5L_0$. 
  \end{proof}

  \begin{proposition}\label{separated_chain_implies_divergent}
    Given $\lambda$, $\epsilon$, $C$, 
    $0<\rho\leq\density(\lambda,\epsilon)$, and $L$, suppose $\gamma\from I\to \Davis$ is an admissible
    $(\lambda,\epsilon)$--quasigeodesic with the property that for every
    subinterval $I'\subset I$ of length at least $L$ there exists a chain
  $\mathcal{V}\transverse\gamma(I')$ with
  $|\mathcal{V}|/|I'|\geq\rho$ such that for every pair of
  consecutive walls $V$, $V'$ of $\mathcal{V}$, every chain $\mathcal{H}\transverse\{V,V'\}$ has $|\mathcal{H}|\leq C$.
  Then there are $K=K(\lambda,\rho)$, $r_0=r_0(\lambda,\epsilon,\rho,C,L)$,
  and a quadratic polynomial $q(r)$ with positive leading coefficient whose coefficients depend on
  $\lambda$, $\epsilon$, $\rho$, and $C$, such that $\gamma$ has
  divergence gauge $\delta(r;1,K)$ satisfying $\delta(r;1,K)\geq q(r)$ for $r\geq r_0$.
\end{proposition}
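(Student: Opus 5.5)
Fix $K$ large in terms of $\lambda$ and $\rho$, and take points $x,y$ with $\dcat(x,\gamma)=\dcat(y,\gamma)=r$ and $\dcat(x,y)\geq Kr$, and a path $\sigma$ from $x$ to $y$ avoiding $\nbhd_r(\gamma)$. The aim is to show $|\sigma|\geq q(r)$. The model is Charney--Sultan: a chain of walls crossing $\gamma$ gives $\sigma$ many "gates" it must pass, and the bounded-bridge property makes each gate long to pass far from $\gamma$.

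First, choose closest points $\gamma(s)\in\pi_\gamma(x)$ and $\gamma(t)\in\pi_\gamma(y)$. Since $\dcat(\gamma(s),\gamma(t))\geq (K-2)r$, the quasigeodesic property gives $|t-s|\geq ((K-2)r-\epsilon)/\lambda$. Apply the hypothesis to the middle subinterval $I'\subset[s,t]$ obtained by trimming a length proportional to $r$ from each end (say $r$ times a constant depending on $\lambda$), with $|I'|\geq cr$ for $K$ large. This requires $r\geq r_0$ so that $|I'|\geq L$, and yields a chain $\mathcal{V}\transverse\gamma(I')$ with $|\mathcal{V}|\geq\rho cr$.

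Next, apply the 4--gon trick (\fullref{quad_trick}) to the geodesics $[\gamma(s),x]$ and $[y,\gamma(t)]$, each of length $r$, together with $\sigma$ and $\gamma|_{[s,t]}$. Passing to a combinatorial geodesic via \fullref{lem:admissible_close_to_combinatorial} if needed, and using that the $\mathcal{V}$ walls lie in the middle of $[s,t]$, all but $O(r)$ walls of $\mathcal{V}$ must separate $x$ from $y$ and hence cut $\sigma$. The $O(r)$ loss has coefficient depending only on $\Lambda$, so choosing the trimming and $K$ appropriately leaves $\gtrsim \rho c r/2$ walls cutting $\sigma$. Pair them into consecutive pairs $V,V'$. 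Consecutive walls of $\mathcal{V}$ meet $\gamma$ at nearby parameters on average, so by density at least half of the consecutive pairs have $\dcat(V,V')\leq D:=2\lambda/\rho+\epsilon$. For such a pair, \fullref{lem:bounded_bridge} gives $\diam\bridge(V,V')\leq B(C,D)$. Because $\gamma$ passes through the bridge region, $\bridge(V,V')$ lies within $B$ plus a constant of $\gamma$; one might first need that $\gamma$ crosses near the bridge, which would follow by applying \fullref{cor:qgeo_wall_crossers} to $\gamma$ between $V$ and $V'$.

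Finally, the subpath of $\sigma$ between its crossings of $V$ and $V'$ runs from $V$ to $V'$ and stays at distance at least $r$ from $\gamma$, so it leaves $\nbhd_{r-B'}(\bridge(V,V'))$. By \fullref{lem:length_wall_crossers_improved} it has length at least $k(r-B')-2$. Distinct alternate pairs give disjoint subpaths of $\sigma$, since the walls are ordered and nested, so summing over $\gtrsim r$ pairs gives $|\sigma|\geq q(r)$ with $q$ quadratic.

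The main obstacle is the bookkeeping that $\sigma$ crosses the walls in order and that the corresponding subpaths are disjoint. Because $\sigma$ may cross a wall several times, I would take last crossings and use the nesting of halfspaces to extract disjoint segments. A second obstacle is ensuring that the bridges of close pairs stay uniformly near $\gamma$.
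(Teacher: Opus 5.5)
Your proposal follows the paper's proof essentially step for step: you take projection points on $\gamma$, use the 4--gon trick to see that all but $O(r)$ walls of the dense chain separate $x$ from $y$, and run a density count producing linearly many consecutive pairs with parameter gap at most $2/\rho$. You then use \fullref{cor:qgeo_wall_crossers} and \fullref{lem:bounded_bridge} to put each such bridge uniformly near $\gamma$, and \fullref{lem:length_wall_crossers_improved} to make each crossing of $\sigma$ linearly long in $r$.

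The trimming and the detour through combinatorial geodesics are unnecessary. The ``at least half'' count should be taken over all gaps of $\mathcal{V}$ before subtracting the $O(r)$ walls not cutting $\sigma$; this is exactly why $K$ must be large relative to $\Lambda\lambda/\rho$. Your last-crossing/slab argument for disjointness of the subpaths of $\sigma$ fills in a point the paper leaves implicit.
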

\begin{proof}
  We make a refinement of the 4--gon trick of \fullref{quad_trick}.
  Suppose $\lambda_0$ and $\epsilon_0$ are constants such that
  \fullref{all_metrics_are_equivalent} implies that for all
  $x,y\in\Davis$:
  \[\dcat(x,y)/\lambda_0-\epsilon_0\leq \dchain(x,y)\leq \lambda_0\dcat(x,y)+\epsilon_0\]
  Take $K:=2+2\lambda(1+2\lambda_0)/\rho$.
  Consider $r\geq r_0:=(\lambda L+\epsilon)/(K-2)$.

  Suppose there exist points $x$ and $y$ such that
  $\dcat(x,\gamma)=\dcat(y,\gamma)=r$ and $\dcat(x,y)\geq Kr$, so that
  $\dcat(\pi_\gamma(x),\pi_\gamma(y))\geq (K-2)r$.
  
  Pick $x'\in\pi_\gamma(x)$ and $y'\in\pi_\gamma(y)$ and let $I'$ be
  the smallest closed interval whose endpoints map to $x'$ and $y'$.
  This implies $|I'|\geq ((K-2)r-\epsilon)/\lambda\geq L$, so there exists a chain
  $\mathcal{V}$ as in the statement. 

Since $\gamma$ is admissible and $\mathcal{V}$ is a chain, for each
$V_i\in\mathcal{V}$ there is a unique $t_i\in I$ such that
$\gamma(t_i)\in V_i$, and $i<j$ implies $t_i<t_j$.

  Suppose there exists a path $\alpha$ from $x$ to $y$ in $N_r^c(\gamma)$.
  Let $\mathcal{V}'$ be the convex subchain of $\mathcal{V}$ consisting of
  the walls that cut $\alpha$.
  At most the first $1+\dchain(x,x')\leq
  \lambda_0\dcat(x,x')+\epsilon_0+1\leq \lambda_0 r+\epsilon_0+1$ 
  walls of $\mathcal{V}$ intersect
  $[x,x']$, and, similarly,  at most the last $\dchain(y,y')\leq \lambda_0 r+\epsilon_0+1$ many intersect $[y,y']$. 
See \fullref{fig:bridge_quad_div}. 
Now we count the number of subsegments of $I'\setminus
\gamma^{-1}(\gamma\cap\mathcal{V})$.
The number of these from the origin of $I'$ to the time corresponding
to the first wall of
$\mathcal{V}'$ is at most $\lambda_0 r+\epsilon_0+2$, and the number
from the time of the last wall
of $\mathcal{V}'$ to the terminus of $I'$ is at most $\lambda_0 r+\epsilon_0+2$.
There are $|\mathcal{V}|+1$--many intervals in $I'$, with average length:
\[\frac{|I'|}{|\mathcal{V}|+1}\leq\frac{|I'|}{\rho|I'|+1}<\frac{1}{\rho}\]
The proportion of them of length greater than $2/\rho$ is less than
$1/2$, so the number of length at most $2/\rho$ is at least:
\[(|\mathcal{V}|+1)/2\geq\rho|I'|/2\geq (\rho/2) ((K-2)r-\epsilon)/\lambda\]
The number of components of $I'\setminus\gamma^{-1}(\mathcal{V})$ that occur
between two walls of $\mathcal{V}$ and have length at most $2/\rho$
is therefore bounded below by:
\begin{equation}
  \label{eq:8}
  \begin{split}
  \frac{\rho(K-2)r-\rho\epsilon}{2\lambda}-2(\lambda_0
  r+\epsilon_0+2)=\left(\frac{\rho(K-2)}{2\lambda}-2\lambda_0\right)r-\frac{\rho\epsilon}{2\lambda}-2(\epsilon_0+2)\\
  =r-\left(\frac{\rho\epsilon}{2\lambda}+2(\epsilon_0+2)\right)
  \end{split}
\end{equation}

\begin{figure}[h]
  \centering
  \includegraphics{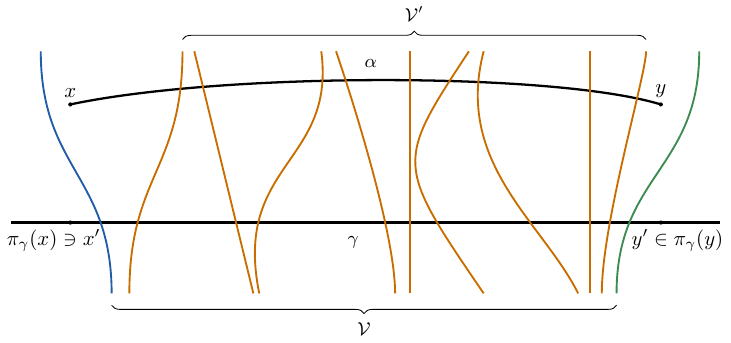}
  \caption{Density control on the chain transverse to $\gamma$ forces
    linearly many of the interwall subsegments of $\gamma$ to be short
  and the corresponding interwall subsegments of $\alpha$ to be long.}
  \label{fig:bridge_quad_div}
\end{figure}

Let $V_i$ and $V_{i+1}$ be a pair of consecutive walls of
$\mathcal{V}'$ such that $0<t_{i+1}-t_i\leq 2/\rho$.
This implies $\dcat(V_i,V_{i+1})\leq 2\lambda/\rho+\epsilon$.

\fullref{cor:qgeo_wall_crossers} says the length of a quasigeodesic
with fixed constants connecting $V_i$ and $V_{i+1}$ not contained in
$\nbhd_R(\bridge(V_i,V_{i+1}))$ has parameter interval length growing
linearly in $R$, with constants depending on the quasigeodesic
constants and $\dcat(V_i,V_{i+1})$ but not
  on $V_i$ and $V_{i+1}$.
  Since $t_{i+1}-t_i\leq 2/\rho$, the corresponding segment of
  $\gamma$ is contained in a uniform
  neighborhood of $\bridge(V_i,V_{i+1})$.
  \fullref{lem:bounded_bridge} says $\diam(\bridge(V_i,V_{i+1}))$ is bounded
  in terms of $2\lambda/\rho+\epsilon$ and $C$.
  Thus, $\bridge(V_i,V_{i+1})$ and $\gamma([t_i,t_{i+1}])$ are coarsely equivalent,
  with constants depending on $\rho$, $\lambda$, $\epsilon$, and $C$.

On the other hand, since $\alpha$ is required to stay far from
$\gamma$, this implies $\alpha$ stays outside $\nbhd_R(\bridge(V_i,V_{i+1}))$
for some $R\stackrel{+}{\ceq}r$, so
\fullref{lem:length_wall_crossers_improved}  says the length of the subsegment of
$\alpha$ from $V_i$ to $V_{i+1}$ is bounded below by a linear function of
$r$, with positive slope, whose constants depend on $\dcat(V_i,V_{i+1})$, which is bounded
above by $2\lambda/\rho+\epsilon$.
The bound from \eqref{eq:8} says the number of such subsegments of
$\alpha$ is linear in $r$ with slope 1, so the total length of $\alpha$ is bounded
below by a quadratic polynomial $q(r)$ with positive leading
coefficient whose coefficients  depend on $\rho$, $\lambda$, $\epsilon$, and $C$.

We have shown that $\delta(r;1,K)\geq q(r)$ when $r\geq r_0$ as
desired.
Notice that if for some $r$ there is no such pair of points $(x,y)$ or
there is no such path $\alpha$, then $\delta(r;1,K)=\infty$, so it is still true that
$\delta(r;1,K)\geq q(r)$.
\end{proof}

%% file: grid.tex
\begin{definition}
  A \emph{grid} is a pair $(\mathcal{V},\mathcal{H})$
  (vertical/horizontal) of transverse chains that each contain at
  least two walls.
\end{definition}

\begin{definition}
  Let $\caprace$ be as in \fullref{constants}.
If $\mathcal{V}$ is a chain, define its \emph{interior}
$\interior{\mathcal{V}}$ to be all the walls $V$ of $\mathcal{V}$ such
that there are at least $\caprace-1$ walls of $\mathcal{V}$ that come
before $V$ in the chain and at least $\caprace-1$ walls that come after $V$.
\end{definition}

The goal of this subsection is to prove\footnote{The proof of the Grid
Lemma presented in \cite{CapMar13} has a gap. In our language, that
proof supposes that there is a grid $(\mathcal{V},\mathcal{H})$ with
$\mathcal{V}=\{V_i\}_{0\leq i\leq k}$, and supposes there is \emph{any}
$0\leq i_0\leq k$ such that there is an
$H_j\in\interior{\mathcal{H}}$ such that $\refl{V_{i_0}}$ and $\refl{H_j}$
do not commute. It then applies 
\cite[Lemma~2.9]{CapMar13}, which is \fullref{Caprace_8}, to
$\{V_i\}_{0\leq i\leq i_0}$. However, \fullref{Caprace_8} requires the
chain to be sufficiently long, which is not true if $i_0$ is small.
\fullref{prop:grid} below argues that given the noncommuting pair of walls
there are four possible configurations to which one might try to apply
\fullref{Caprace_8}, and at least one of them satisfies that theorem's
hypotheses. This fills the gap.}
the following proposition, which is essentially the Grid Lemma attributed to Caprace and
Przytycki by Caprace and Marquis \cite[Lemma~2.8]{CapMar13}.

\begin{proposition}\label{lem:grid_wide}
  Let $(\mathcal{V},\mathcal{H})$ be a grid in $\Davis$ such that
  $|\mathcal{V}|\geq 2\caprace-1$ and $|\mathcal{H}|\geq 2\caprace$.
  Then $\pc(\mathcal{V})\times\pc(\mathcal{V})^\perp $ is a wide
  parabolic containing $\refl{H}$ for all $H\in\interior{\mathcal{H}}$.
\end{proposition}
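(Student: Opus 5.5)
The plan is to set $P:=\pc(\mathcal{V})$, which is irreducible by \fullref{lem:chains_irreducible}, and to show two things. First, that $P$ is nonspherical. Second, that every $H\in\interior{\mathcal{H}}$ has $\refl{H}\in\normalizer_W(P)$. Once we have the second fact, \fullref{cor:P_or_refP} places $\refl{H}$ in $P$ or in $P^\perp$.

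Nonsphericity is immediate. $\mathcal{V}$ contains two disjoint walls, so $W(\mathcal{V})$ contains an infinite dihedral group.

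For the main step, fix $H\in\interior{\mathcal{H}}$ and split into two cases.
\begin{itemize}
\item If $\refl{H}$ commutes with $\refl{V}$ for every $V\in\mathcal{V}$, then $\refl{H}$ centralizes $W(\mathcal{V})$. By the second item of \fullref{lem:chains_irreducible}, $P$ is the parabolic closure of finitely many such reflections. The centralizer of a set is contained in the normalizer of its parabolic closure, since conjugating by $\refl{H}$ carries the parabolic closure to the parabolic closure of the conjugated set. So $\refl{H}\in\normalizer_W(P)$.
\item Otherwise some $V_{i_0}\in\mathcal{V}$ has $\refl{V_{i_0}}$ not commuting with $\refl{H}$. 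Here we aim to show $W(\mathcal{V}\cup\{H\})$, or a suitable sub-configuration, is a Euclidean triangle group lying in a higher rank irreducible affine parabolic, and hence that $\refl{H}\in P$.
\end{itemize}

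The second case is the heart of the argument, and it is where the gap in \cite{CapMar13} noted in the footnote lives. I would use the noncommuting pair $V_{i_0}$, $H$ to build four candidate configurations for \fullref{Caprace_8}. Take $M:=V_{i_0}$ and $M':=H$. They intersect, non-orthogonally, in some codimension-two set. Because $H\in\interior{\mathcal{H}}$, there are at least $\caprace-1$ walls of $\mathcal{H}$ on each side of $H$. Because $|\mathcal{V}|\geq 2\caprace-1$, at least one side of $V_{i_0}$ contains at least $\caprace-1$ walls of $\mathcal{V}$.

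Consider the chain of $\mathcal{H}$-walls on one side of $H$, prefixed by a wall through $M\cap M'$. Each wall of that chain crosses $V_{i_0}$ since the chains are transverse, and crosses $H$'s neighbours. One needs $M\cap M'\subset K_0$ with $M$ and $M'$ transverse to the chain. The natural move is to choose the roles so that the intersection point lies on the correct side. Run over the four choices: the chain comes from $\mathcal{V}$ or from $\mathcal{H}$, and it lies on the left or the right of the intersection. The counts $\caprace\ge N+2$ guarantee that at least one choice yields a chain of length exceeding $N$ with the required position of $M\cap M'$. \fullref{Caprace_8} then gives a Euclidean triangle group inside a higher rank irreducible affine parabolic.

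\fullref{Caprace_17} (third bullet), together with \fullref{Caprace_11} (with $\caprace\geq 8$) to propagate transversality from 8 walls to all of $\mathcal{V}$, then puts $\refl{H}\in\pc(\mathcal{V}')$ for a long subchain $\mathcal{V}'\subset\mathcal{V}$. By the first bullet of \fullref{Caprace_17}, $\pc(\mathcal{V}')=P$. Hence $\refl{H}\in P$ in this case. Moreover, $P$ is then higher rank irreducible affine by \fullref{Caprace_16}, so $P\times P^\perp$ is wide.

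It remains to check wideness in the first case, which holds whenever every interior $H$ commutes with all of $\mathcal{V}$. Then all reflections of $\interior{\mathcal{H}}$ lie in $P^\perp$. Since $|\mathcal{H}|\geq 2\caprace$, $\interior{\mathcal{H}}$ has at least two walls, and these are disjoint, so $P^\perp$ contains $\dihedral_\infty$ and is nonspherical. Thus $P\times P^\perp$ is a product of two nonspherical parabolics, hence wide by \fullref{def:wide}.

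The mixed situation needs a little care: if some interior $H$ falls into the second case, $P$ is already affine higher rank, and the remaining $H$ still lie in $\normalizer_W(P)$ by the first case. I expect the main obstacle to be the geometric bookkeeping of the four configurations, namely verifying $\emptyset\neq M\cap M'\subset K_0$ with a sufficiently long chain transverse to both $M$ and $M'$.
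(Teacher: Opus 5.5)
Your argument has genuine gaps in both cases.

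\textbf{The noncommuting case.} The roles $M:=V_{i_0}$, $M':=H$ cannot feed \fullref{Caprace_8}. Every wall of $\mathcal{V}\setminus\{V_{i_0}\}$ is disjoint from $V_{i_0}$, and every wall of $\mathcal{H}\setminus\{H\}$ is disjoint from $H$. So no chain built from $\mathcal{V}$ or $\mathcal{H}$ is transverse to both $M$ and $M'$, whichever of your four configurations you pick.

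The missing idea is the third wall $\refl{V_{i_0}}(H)$. It is distinct from $V_{i_0}$ and $H$, and it satisfies $\refl{V_{i_0}}(H)\cap H\subset V_{i_0}$ and $\refl{V_{i_0}}(H)\cap V_{i_0}\subset H$. The real geometric input is then \fullref{lem:walls_through_grid}: a wall through $V_{i_0}\cap H$ is transverse to at least two of the half-chains $\mathcal{V}^\pm,\mathcal{H}^\pm$, each of which has $V_{i_0}$ or $H$ as an extreme. Since $H$ is interior and $|\mathcal{V}^+|+|\mathcal{V}^-|=|\mathcal{V}|+1\geq 2\caprace$, one of those two half-chains has at least $\caprace$ walls. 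Then \fullref{Caprace_8} applies with $M=\refl{V_{i_0}}(H)$, with $K_0$ the extreme ($V_{i_0}$ or $H$) of that half-chain, and with $M'$ the other of $V_{i_0}$, $H$.

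Your follow-up also fails as written, for two reasons.
\begin{itemize}
\item The first bullet of \fullref{Caprace_17} only captures walls lying between two walls of $\mathcal{V}'$. So it does not give $\pc(\mathcal{V}')=\pc(\mathcal{V})$ for a half-chain. Outside a grid this equality is false: join a new generator $e$ by $\infty$--edges to an $\tilde{A}_2$ subdiagram $A$; the wall of $e$ followed by parallel walls of $\Davis_A$ is a chain whose parabolic closure contains $e$.
\item When the long half-chain is $\mathcal{H}^\pm$, you learn $\refl{V_{i_0}}\in\pc(\mathcal{H}^\pm)$, not $\refl{H}\in\pc(\mathcal{V}')$.
\end{itemize}
In the $\mathcal{V}^\pm$ case, $\refl{H}\in\pc(\mathcal{V}^\pm)\leq P$ is fine. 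But showing that $P$ itself is higher rank affine requires using the grid again, which is what \fullref{lem:easy_grid} does for a dihedral chain with at least $8$ walls.

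\textbf{The all-commuting case.} You correctly get $\refl{H}\in\normalizer_W(P)$, but you then assert that these reflections lie in $P^\perp$. Centralizing $W(\mathcal{V})$ is not the same as centralizing $P=\pc(\mathcal{V})$. \fullref{cor:P_or_refP} only gives $\refl{H}\in P$ or $\refl{H}\in P^\perp$, and the first really occurs. Take $\mathcal{V}$ and $\mathcal{H}$ to be two perpendicular families of parallel walls in the flat of a $(2,4,4)$ Euclidean triangle parabolic $A$. All these reflections commute, yet $\pc(\mathcal{V})=A$ contains every $\refl{H}$, and $A\cap A^\perp=1$.

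This is the $R_1$ alternative in the proof of \fullref{lem:easy_grid}, and the tool you never invoke is \fullref{prop:affine}. The interior reflections lie all in $P$ or all in $P^\perp$, because disjoint walls have noncommuting reflections. In the latter case $P^\perp$ is nonspherical. In the former case, two disjoint interior walls make $\langle R_2\rangle$ infinite, so \fullref{prop:affine} forces $P$ to be irreducible higher rank affine.

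For comparison, the paper splits on whether $W(\mathcal{V})\cong\dihedral_\infty$. In the dihedral case, \fullref{lem:easy_grid} handles everything. Otherwise, \fullref{prop:grid} (the third wall plus \fullref{lem:walls_through_grid}) shows that a noncommuting interior $H$ would make $\pc(\mathcal{V})$ affine and hence $W(\mathcal{V})$ dihedral. So all interior reflections centralize $W(\mathcal{V})$, and \fullref{prop:affine} puts $\pc(\interior{\mathcal{H}})$ inside $\pc(\mathcal{V})^\perp$.
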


\begin{lemma}[{cf \cite[Lemma~2.3]{Cap09}\footnote{\fullref{prop:affine} is a corrected version of \cite[Lemma~2.3]{Cap09} stating what is
actually proven there.
The original omits the hypothesis that $\langle R_2\rangle$
is infinite in the final sentence, but still concludes $\pc(R_1)=\pc(R_2)$ is
higher rank affine.
This is an error, as demonstrated by \fullref{caprace_counterexample}.
The specific error in the proof of \cite[Lemma~2.3]{Cap09} is the
application of \cite[Lemma~2.2]{Cap09} in the case $R_2\leq \pc(R_1)$
without explicitly requiring $\langle R_2\rangle$ to be nonspherical.}}]\label{prop:affine}
Let $R_1$ and $R_2$ be finite sets
of reflections in $(W,S)$ such that $\langle R_1\rangle$ and $\langle R_2\rangle$
are irreducible, and such that $r_1r_2=r_2r_1$ for all $r_1\in R_1$
and $r_2\in R_2$.
Suppose $\langle R_1\rangle$ is infinite.
Then one of the following is true:
\begin{itemize}
\item $\pc(R_1\cup R_2)=\pc(R_1)\times\pc(R_2)$
  \item $R_2\leq\pc(R_1)$
  \end{itemize}
  If, in addition, $\langle R_2\rangle$ is infinite, then
  $R_2\leq\pc(R_1)$ implies $\pc(R_1)=\pc(R_2)$ is higher rank affine. 
\end{lemma}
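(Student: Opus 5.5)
The plan is to study the normalizer of $P:=\pc(R_1)$, as the earlier results on normalizers allow, and then sort the reflections of $R_2$ into $P$ or its orthogonal factor. First I will show that $P$ is irreducible and nonspherical. It is nonspherical because $\langle R_1\rangle\le P$ is infinite. For irreducibility I will use the standard fact (as in \cite[Lemma~2.2]{Cap09}, or \cite[Lemma~2.2]{CapMar13} as used in \fullref{lem:chains_irreducible}) that the parabolic closure of an irreducible infinite reflection subgroup is irreducible. So \fullref{cor:P_or_refP} applies, giving $\normalizer_W(P)=P\times P^\perp$.

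Next, each $r_2\in R_2$ commutes with every element of $R_1$. Hence $r_2\langle R_1\rangle r_2=\langle R_1\rangle$. Since conjugation carries parabolic closures to parabolic closures, this gives $r_2Pr_2=\pc(r_2R_1r_2)=P$. Thus $R_2\subset\normalizer_W(P)$. By \fullref{cor:P_or_refP}, each reflection of $R_2$ lies in $P$ or in $P^\perp$, and $\langle R_2\rangle=\langle R_2\cap P\rangle\times\langle R_2\cap P^\perp\rangle$. These two pieces intersect trivially, since $P^\perp=\centralizer_W(P)$ and an infinite irreducible Coxeter group has trivial center. So irreducibility of $\langle R_2\rangle$ forces $R_2\subset P$ or $R_2\subset P^\perp$. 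The first alternative is the second bullet of the statement.

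In the second alternative, $\pc(R_2)\le P^\perp$. Conjugating so that $P=W_T$ and $P^\perp=W_{T^\perp}$, a parabolic subgroup of $W_{T^\perp}$ is conjugate within $W_{T^\perp}$ to some $W_{U}$ with $U\subset T^\perp$. Hence $P\times\pc(R_2)$ is conjugate to $W_{T\sqcup U}$, so it is parabolic. It contains $R_1\cup R_2$. Any parabolic containing $R_1\cup R_2$ contains both $P$ and $\pc(R_2)$, so $\pc(R_1\cup R_2)=P\times\pc(R_2)$, which is the first bullet.

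For the final sentence, suppose $\langle R_2\rangle$ is infinite and $R_2\subset P$. Everything above is symmetric in $R_1$ and $R_2$, so with $Q:=\pc(R_2)$ either $R_1\subset Q$ or $\pc(R_1\cup R_2)=Q\times P$. In the latter case $R_2\subset P$ gives $\pc(R_1\cup R_2)=P$, so $P=Q\times P$ with $Q\le P$ infinite. This is impossible, because $Q\le P\cap\centralizer_W(P)$, which is trivial. Hence $R_1\subset Q$, and then $P\le Q\le P$, so $P=Q$.

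It remains to show that $P$ is higher rank affine; this is the main obstacle. The situation is an irreducible nonspherical parabolic $P$ containing two infinite reflection subgroups that commute elementwise, each with parabolic closure equal to $P$. The key input I will use is that if $H$ is a subgroup whose parabolic closure is irreducible, nonspherical and non-affine, then $\centralizer_W(H)$ is finite (Krammer/Paris, as used in \cite{Cap09}). Since $\langle R_2\rangle$ is infinite and centralizes $\langle R_1\rangle$, $P$ must be affine.

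If I want to stay inside the paper's toolbox instead, I would argue as follows. An infinite reflection group contains a pair of disjoint walls, so $W(R_1)$ contains a $\dihedral_\infty$ and hence arbitrarily long chains. The walls of $R_2$ are perpendicular to all of these. This gives long grids, and then \fullref{Caprace_11} and \fullref{Caprace_17} (or \fullref{Caprace_8}) yield a Euclidean triangle subgroup, which via \fullref{Caprace_16} forces $P$ to be higher rank affine.

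Finally, the rank must be at least $3$. Rank-two irreducible affine means $P\cong\dihedral_\infty$, and $\dihedral_\infty$ has no two distinct commuting reflections. This rules out two commuting infinite reflection subgroups inside $P$.
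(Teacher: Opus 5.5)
Your outline matches the argument the paper relies on. The paper gives no proof of its own; it defers to Caprace's proof of \cite[Lemma~2.3]{Cap09}, adding the hypothesis on $\langle R_2\rangle$. That proof has the same shape as yours: Deodhar's splitting $\normalizer_W(P)=P\times P^\perp$ sorts the irreducible set $R_2$ into $P$ or into $P^\perp$, and in the first case a finite-centralizer theorem forces $P$ to be affine. Your dichotomy, your symmetric derivation of $\pc(R_1)=\pc(R_2)$, and your $\dihedral_\infty$ argument for rank at least $3$ are fine.

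The problem is the key input as you state it, which is false in two ways. First, if $g\in\centralizer_W(H)$ then $g\pc(H)g^{-1}=\pc(H)$. So for $P=\pc(H)$ irreducible and nonspherical, $\centralizer_W(H)=\centralizer_P(H)\times P^\perp$, which is infinite whenever $P^\perp$ is, affine or not. Concretely, take $S=\{a,b,c,d,e\}$ with $m_{ab}=m_{bc}=m_{ac}=m_{de}=\infty$ and $\{a,b,c\}$ commuting with $\{d,e\}$. Then $H=\langle a,b,c\rangle$ has irreducible non-affine parabolic closure, yet it centralizes $W_{\{d,e\}}\cong\dihedral_\infty$. This is exactly the first bullet of the lemma with $R_1=\{a,b,c\}$ and $R_2=\{d,e\}$, so your stated input contradicts the dichotomy you have just proved. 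Second, even when $\pc(H)=W$, a cyclic $H$ generated by an essential element (e.g.\ $\langle abc\rangle$ in $W_{\{a,b,c\}}$) centralizes itself.

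What is true, and all you need, is a relative statement that uses that $H=\langle R_1\rangle$ is an infinite irreducible reflection group and hence has trivial center. If $P=\pc(R_1)$ is non-affine, then $\centralizer_P(\langle R_1\rangle)=\centralizer_W(\langle R_1\rangle)\cap P$ is finite. This is the Krammer/Paris input you had in mind, applied inside the Coxeter group $P$ rather than $W$. You have already placed $R_2$ in $P$, so $\langle R_2\rangle\le\centralizer_P(\langle R_1\rangle)$, and with this restatement the step goes through. This is exactly where infiniteness of $\langle R_2\rangle$ is used, which is the content of the paper's correction to \cite{Cap09}.

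Your fallback through the paper's toolbox does not work and should be dropped. Every reflection of $\langle R_2\rangle$ commutes with every reflection of $\langle R_1\rangle$, so every wall of one family is perpendicular to every wall of the other. Consequently \fullref{Caprace_11}, applied to a wall of one family and a $\dihedral_\infty$--chain of the other, always returns the alternative that the reflection centralizes the dihedral group, never a Euclidean triangle group. A triangle group built from walls of $\langle R_1\rangle$ alone would, via \fullref{Caprace_16}, only show that some parabolic subgroup of $P$ is higher rank affine, not $P$ itself. This fully commuting configuration is precisely the one the results in \fullref{caprace} do not reach, which is why \fullref{lem:easy_grid} invokes \fullref{prop:affine} to handle it. The Krammer-type input cannot be avoided this way.
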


\begin{example}\label{caprace_counterexample}
 Consider the Coxeter system $(W,S)$ defined by the Coxeter diagram
 $a\stackrel{4}{-}b\stackrel{4}{-}c\stackrel{4}{-}d$, which is not affine.
 Consider the finite sets of reflections $R_1:=\{ bab,c,d\}$ and
 $R_2:=\{ a\}$.
 Then $aca=c$, $ada=d$ and
 $a(bab)a=(abab)a=(baba)a=bab$, so $R_1$ and $R_2$ commute.
 In fact, from Allcock's description of reflection centralizers
 \cite{All13}, one finds $\langle
 R_1\rangle$ is the entire reflection part of $\centralizer_W(\langle R_2\rangle)$.
The group $\pc(R_2)\cong\mathbb{Z}/2\mathbb{Z}$ is irreducible. 
 The group $\langle R_1\rangle$ is infinite and irreducible; to see this, consider $P:=\langle a,b,c\rangle$, which is a Euclidean
 triangle parabolic subgroup of $W$.
 In its usual  $\Isom(\mathbb{E}^2)$ representation the elements
 $bab,c\in R_1\cap P$ are reflections through parallel lines, so
 $\langle R_1\rangle$
 contains an infinite dihedral subgroup.
Furthermore, $P$ is  2--spherical and rank 3, so every proper parabolic subgroup is
finite, so $\pc(\langle bab,c\rangle)=P$.
But $d\in R_1\leq \pc(R_1)\geq \pc(\langle bab,c\rangle)$, so
$\pc(R_1)=W$.
Thus, $\pc(R_2)\lneq\pc(R_1)$ and $\pc(R_1)$ is not affine. 
This contradicts both conclusions in the $R_2\leq\pc(R_1)$ branch of \cite[Lemma~2.3]{Cap09}.
\end{example}

\begin{lemma}\label{lem:easy_grid}
    Let $(\mathcal{V},\mathcal{H})$ be a grid in $\Davis$.
  Suppose one of the following is true:
  \begin{itemize}
  \item $W(\mathcal{V})$ and $W(\mathcal{H})$ commute.
    \item $W(\mathcal{V})$ is dihedral and $\mathcal{V}$ contains at
      least 8 walls. 
  \end{itemize}
  Then one of the following
  is true:
  \begin{itemize}
  \item $\pc(\mathcal{H})\leq\pc(\mathcal{V})^\perp$
    \item $\pc(\mathcal{H})=\pc(\mathcal{V})$ is an irreducible higher rank affine parabolic.
  \end{itemize}
\end{lemma}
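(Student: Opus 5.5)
Both hypotheses reduce to the commuting situation, after which \fullref{prop:affine} does the work. Set $R_1:=\{\refl{V}\mid V\in\mathcal{V}\}$ and $R_2:=\{\refl{H}\mid H\in\mathcal{H}\}$. By \fullref{lem:chains_irreducible} both $\langle R_1\rangle$ and $\langle R_2\rangle$ have irreducible parabolic closures. We may also replace them by finite subsets $\{\refl{V_{i_0}},\refl{V_{i_1}}\}$ and $\{\refl{H_{j_0}},\refl{H_{j_1}}\}$ with the same parabolic closures. Since each chain has at least two disjoint walls, each contains an infinite dihedral subgroup, so both groups are infinite.

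\emph{Case 1: $W(\mathcal{V})$ and $W(\mathcal{H})$ commute.} Apply \fullref{prop:affine} with these $R_1$, $R_2$; the infinite dihedral subgroups $\langle\refl{V_{i_0}},\refl{V_{i_1}}\rangle$ and $\langle\refl{H_{j_0}},\refl{H_{j_1}}\rangle$ are irreducible, so the hypotheses hold. There are two outcomes.
\begin{itemize}
\item $R_2\le\pc(R_1)$. Since $\langle R_2\rangle$ is infinite, the final clause of \fullref{prop:affine} gives $\pc(\mathcal{H})=\pc(\mathcal{V})$ higher rank affine, and it is irreducible by \fullref{lem:chains_irreducible}.
\item $\pc(R_1\cup R_2)=\pc(R_1)\times\pc(R_2)$. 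Then $\pc(\mathcal{H})$ commutes with $\pc(\mathcal{V})$, so it lies in $\centralizer_W(\pc(\mathcal{V}))$. Since $\pc(\mathcal{V})$ is irreducible and nonspherical, \fullref{cor:P_or_refP} identifies this centralizer with $\pc(\mathcal{V})^\perp$. Hence $\pc(\mathcal{H})\le\pc(\mathcal{V})^\perp$.
\end{itemize}

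\emph{Case 2: $W(\mathcal{V})$ is dihedral and $|\mathcal{V}|\ge 8$.} Here $W(\mathcal{V})\cong\dihedral_\infty$, since its walls are disjoint. Each $H\in\mathcal{H}$ is transverse to all of the at least $8$ walls of $\mathcal{V}$, so \fullref{Caprace_11} gives a dichotomy for each $H$: either $\refl{H}$ centralizes $W(\mathcal{V})$, or $W(\mathcal{V}\cup\{H\})$ is a Euclidean triangle group.
\begin{itemize}
\item If every $\refl{H}$ centralizes $W(\mathcal{V})$, we are in Case 1.
\item Otherwise some $H$ gives a Euclidean triangle group $W(\mathcal{V}\cup\{H\})$. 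By \fullref{Caprace_17}, $\refl{H}\in\pc(\mathcal{V})$, and by \fullref{Caprace_16}, $\pc(\mathcal{V})$ is an irreducible higher rank affine parabolic $A$.
\end{itemize}

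It remains, in the second subcase, to show that all of $\mathcal{H}$ lies in $A$, so that $\pc(\mathcal{H})\le A$. Write $\mathcal{H}_0$ for the walls of $\mathcal{H}$ with $\refl{H}\in A$, and $\mathcal{H}_1$ for those whose reflections centralize $W(\mathcal{V})$ and hence lie in $A^\perp$. By \fullref{lem:reflection_splitting}, applied to $\normalizer_W(A)=A\times A^\perp$, no reflection lies in both, except trivially. Suppose $H\in\mathcal{H}_0$ and $H'\in\mathcal{H}_1$ are adjacent walls of $\mathcal{H}$. Then $\refl{H}$ and $\refl{H'}$ commute, since one lies in $A$ and the other in $A^\perp$. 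But disjoint walls generate $\dihedral_\infty$, not a commuting pair, which is a contradiction. So $\mathcal{H}=\mathcal{H}_0$, giving $\pc(\mathcal{H})\le A$. Since $\pc(\mathcal{H})$ is nonspherical, and an irreducible affine group has only spherical proper parabolics, we get $\pc(\mathcal{H})=A=\pc(\mathcal{V})$.

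The main obstacle is this last step: controlling the mixed situation in which only some walls of $\mathcal{H}$ give triangle groups, and checking carefully that $A\times A^\perp$ separates reflections cleanly.
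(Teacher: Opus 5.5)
Your Case 1 is correct. It takes a slightly different route from the paper: you apply \fullref{prop:affine} once, to the two chains as wholes. The paper instead applies it reflection by reflection, partitioning the reflections of $\mathcal{H}$ into three sets: $R_0$ (not centralizing $W(\mathcal{V})$), $R_1$ (centralizing and in $\pc(\mathcal{V})$), and $R_2$ (in $\pc(\mathcal{V})^\perp$).

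The gap is in Case 2, at the sentence defining $\mathcal{H}_1$. A reflection that centralizes $W(\mathcal{V})$ need not lie in $A^\perp$; it can lie in $A$ itself. For instance, if $\pc(\mathcal{V})$ is a $(2,4,4)$ Euclidean triangle parabolic, take a wall perpendicular to the walls of $\mathcal{V}$. Its reflection commutes with $W(\mathcal{V})$ but lies in $\pc(\mathcal{V})$, hence not in $\pc(\mathcal{V})^\perp$. This is exactly the paper's $R_1$ situation.

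Your appeal to \fullref{lem:reflection_splitting} does not close this. The statement that no reflection lies in both factors is just $A\cap A^\perp=1$. The useful content of that lemma is that each reflection of $A\times A^\perp$ lies in one factor, and to use it you must first know $\refl{H'}\in\normalizer_W(A)$, which you never establish. So you have not shown that every wall of $\mathcal{H}$ has its reflection in $A$ or in $A^\perp$, and the commuting contradiction has nothing to bite on. You correctly sensed this was the delicate step, but the proposal does not supply the argument.

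What is missing is the dichotomy: if $\refl{H}$ commutes with $W(\mathcal{V})$, then $\refl{H}\in\pc(\mathcal{V})$ or $\refl{H}\in\pc(\mathcal{V})^\perp$. There are two ways to get it.
\begin{itemize}
\item The paper's way: apply \fullref{prop:affine} with $R_2=\{\refl{H}\}$; a singleton is irreducible and $W(\mathcal{V})$ is infinite. Either $\pc(\mathcal{V}\cup\{H\})=\pc(\mathcal{V})\times\pc(\refl{H})$, so $\refl{H}\in\centralizer_W(\pc(\mathcal{V}))=\pc(\mathcal{V})^\perp$, or $\refl{H}\in\pc(\mathcal{V})$.
\item A more elementary way: since $\refl{H}$ normalizes $W(\mathcal{V})$, it normalizes the parabolic closure. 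Hence $\refl{H}\in\normalizer_W(A)=A\times A^\perp$ by \fullref{cor:P_or_refP}, and the reflection dichotomy there (or \fullref{lem:reflection_splitting}) puts it in one factor.
\end{itemize}

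With this in hand, every wall of $\mathcal{H}$ has its reflection in $A$ (the triangle-group walls, by \fullref{Caprace_17}) or in $A^\perp$. Distinct walls of $\mathcal{H}$ are disjoint, so their reflections do not commute and cannot be split between the two factors. Since at least one lies in $A$, all do. Your final step, $\pc(\mathcal{H})=A$ because irreducible affine parabolics have only spherical proper parabolics, then goes through and matches the paper.
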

\begin{proof}
  By \fullref{lem:chains_irreducible}, $W(\mathcal{V})$ and
  $W(\mathcal{H})$ are irreducible and nonspherical.
  Furthermore, since it does not change $\pc(\mathcal{V})$ and
  $\pc(\mathcal{H})$, we may assume $\mathcal{V}$ and $\mathcal{H}$
  are finite chains, so that $W(\mathcal{V})$ and $W(\mathcal{H})$ are
  finitely generated reflection groups. 
  
   By \fullref{prop:affine} we can partition the set $R:=\{\refl{H}\mid
     H\in\mathcal{H}\}$ of reflections through
  walls of $\mathcal{H}$ into disjoint subsets:
  \begin{itemize}
  \item $R_0:=\{\refl{H}\mid H\in\mathcal{H},\,\refl{H}\notin
    \centralizer_W(W(\mathcal{V}))\}$
     \item $R_1:=\{\refl{H}\mid H\in\mathcal{H},\,\refl{H}\in
       \centralizer_W(W(\mathcal{V}))\cap\pc(\mathcal{V})\}$
        \item $R_2:=\{\refl{H}\mid H\in\mathcal{H},\,\refl{H}\in \pc(\mathcal{V})^\perp\}$
        \end{itemize}
        Furthermore, if $|R_1|\geq 2$ then $\pc(R_1)=\pc(\mathcal{V})$ is
        higher rank affine.
        
  Reflections in distinct walls of $\mathcal{H}$ do not
commute, so at least one of  $R\cap\pc(\mathcal{V})$ or $R_2$ is empty.

$W(\mathcal{V})$ and $W(\mathcal{H})$ commute if and only if $R_0=\emptyset$, in which case we are done, since then either
$R=R_1$ or $R=R_2$.
The former gives $|R_1|\geq 2$, so
$\pc(\mathcal{H})=\pc(R_1)=\pc(\mathcal{V})$ is higher rank
affine.
The latter gives $\pc(\mathcal{H})=\pc(R_2)\leq\pc(\mathcal{V})^\perp$.

If $R_0\neq\emptyset$ then, by hypothesis,  $W(\mathcal{V})$ is dihedral and $\mathcal{V}$
contains at least 8 walls.
For $\refl{H}\in R_0$, 
   \fullref{Caprace_11} says $W(H,\mathcal{V})$
is a Euclidean triangle group, so \fullref{Caprace_17} gives
$\refl{H}\in\pc(\mathcal{V})$, and \fullref{Caprace_16}  says
$\pc(\mathcal{V})$ is higher rank affine.
Since $\emptyset\neq R_0\subset R\cap\pc(\mathcal{V})$,
$R_2=\emptyset$, so  $R=R_0\cup R_1$, which gives
$\pc(\mathcal{H})\leq\pc(\mathcal{V})$.
Since irreducible affine parabolics do not contain proper nonspherical
parabolic subgroups and $\pc(\mathcal{H})$ is a nonspherical
parabolic, $\pc(\mathcal{H})=\pc(\mathcal{V})$.
\end{proof}

\begin{lemma}\label{lem:walls_through_grid}
  Let $(\mathcal{V},\mathcal{H})$ be a grid in $\Davis$, with
  $\mathcal{V}=\{V_i\}_{i\in I}$ and $\mathcal{H}=\{H_j\}_{j\in J}$.
  Suppose, for some $i_0\in I$ and $j_0\in J$ there is a wall $M$
  that contains $V_{i_0}\cap H_{j_0}$.
  Then $M$ is transverse to at least two of the following chains:
  \begin{align*}
    \mathcal{V}^-&:=\{V_i\}_{\{i\in I\mid i\leq i_0\}}&\qquad
  \mathcal{V}^+&:=\{V_i\}_{\{i\in I\mid i\geq i_0\}}\\
    \mathcal{H}^-&:=\{H_j\}_{\{j\in J\mid j\leq j_0\}}&\qquad
   \mathcal{H}^+&:=\{H_j\}_{\{j\in J\mid j\geq j_0\}}
  \end{align*}
\end{lemma}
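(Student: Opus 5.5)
The plan is to argue by contradiction after disposing of the degenerate cases, using only halfspace combinatorics together with the fact that a wall separating two walls of a chain is crossed by everything that crosses both of them.

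Fix a point $p\in V_{i_0}\cap H_{j_0}\subset M$. If $M=V_{i_0}$, then $M$ is transverse to every wall of $\mathcal{H}$, so it is transverse to both $\mathcal{H}^-$ and $\mathcal{H}^+$. The case $M=H_{j_0}$ is symmetric. So assume $M$ differs from both. Then $M$ meets $V_{i_0}$ and $H_{j_0}$ at $p$, and distinct intersecting walls are transverse, so $M\transverse V_{i_0}$ and $M\transverse H_{j_0}$.

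Suppose the conclusion fails. Then $M$ is transverse to at most one of the four chains. Since $V_{i_0}$ lies in both $\mathcal{V}^\pm$ and $H_{j_0}$ lies in both $\mathcal{H}^\pm$, and $M$ crosses both of these walls, we obtain the following after possibly swapping the roles of $\mathcal{V}$ and $\mathcal{H}$. There are indices $i_1<i_0<i_2$ with $M$ disjoint from $V_{i_1}$ and $V_{i_2}$. There is also at least one index $j_1\neq j_0$ with $M$ disjoint from $H_{j_1}$. In the worst case there are $j_1<j_0<j_2$ with $M$ disjoint from both $H_{j_1}$ and $H_{j_2}$.

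The first step is to show that $V_{i_1}$ and $V_{i_2}$ lie in the same open halfspace of $M$. Each is connected and disjoint from $M$, so each lies in a single open halfspace. If they lay in opposite halfspaces, then $M$ would separate $V_{i_1}$ from $V_{i_2}$. Every $H_j$ is connected and meets both of them, so every $H_j$ would cross $M$, contradicting the disjointness of $M$ and $H_{j_1}$. Hence $V_{i_1},V_{i_2}\subset M^+$ for some halfspace $M^+$. Every $H_j$ disjoint from $M$ meets $V_{i_1}$, so it also lies in $M^+$.

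It follows that $M^-$ is disjoint from $V_{i_1}$ and $V_{i_2}$ and contains $p$, which lies between them. Since $M^-$ is connected, $M^-\subset V_{i_1}^+\cap V_{i_2}^-$, where these are the open halfspaces containing $p$. Likewise $M^-$ lies on the $p$--side of each $H_j$ that misses $M$. So a whole halfspace of $M$ is trapped in the slab between $V_{i_1}$ and $V_{i_2}$, and also on one side of $H_{j_1}$.

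The main obstacle is turning this nesting into a contradiction, since slabs in $\Davis$ need not be bounded. My first attempt would use reflections. The reflection $\refl{M}$ carries $V_{i_1}\subset M^+$ to a wall $\refl{M}(V_{i_1})\subset M^-$. Applying this repeatedly with the infinite dihedral group $\langle \refl{M},\refl{V_{i_1}}\rangle$ (infinite because $M\cap V_{i_1}=\emptyset$) produces infinitely many pairwise disjoint walls $(\refl{V_{i_1}}\refl{M})^n(M)$. These walls are nested, so they separate $V_{i_1}$ from $M$ on one side and run off into $M^-$ on the other. The ones inside $M^-$ stay in the slab and are crossed by $V_{i_0}$ along the half-wall $V_{i_0}\cap M^-$. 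I would try to show that they are all crossed by $H_{j_0}$, or alternatively that one of them separates $V_{i_1}$ from $V_{i_2}$ while missing $H_{j_1}$. Either outcome contradicts the previous step, since then some wall would separate $V_{i_1}$ from $V_{i_2}$ without crossing all the $H_j$.

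If this direct approach stalls, the fallback is \fullref{Caprace_17}. That lemma places $\refl{M}$ in $\pc(\{V_{i_1},V_{i_0},V_{i_2}\})$ whenever $M$ separates or sits dihedrally with the chain. The lemma would then follow from \fullref{lem:easy_grid}-type commuting arguments, using the fact that $M$ passes through the codimension--$2$ point $p$ in the local finite reflection group at $p$.
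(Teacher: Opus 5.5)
\textbf{There is a genuine gap.} Your reduction is fine up to the claim that $V_{i_1}$ and $V_{i_2}$ lie in a common open halfspace $M^+$. (One small slip: $p\in M$ lies in the closure of $M^-$, not in $M^-$.) But the proof stops exactly where the content is. The reflection argument and the fallback are only announced, and as set up they cannot work.

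After choosing $p$, you never use more than $p\in M$, i.e.\ that $M$ crosses $V_{i_0}$ and $H_{j_0}$. That information is not enough for halfspace bookkeeping of this kind. Consider the following configuration of planes in $\mathbb{H}^3$, in the ball model centred at $p$:
\begin{itemize}
\item $V_{i_0}$ is bounded by the equator.
\item $H_{j_0}$ and $M$ are bounded by the meridian great circles through longitude $0^\circ$ and through longitude $90^\circ$, respectively.
\item $V_{i_1}$, $V_{i_2}$, $H_{j_1}$ are bounded by circles of radii $12^\circ$, $12^\circ$, $17^\circ$ centred at $(20^\circ,0^\circ)$, $(-20^\circ,0^\circ)$, $(0^\circ,20^\circ)$.
\end{itemize}
This is a grid of planes in which $M$ misses $V_{i_1}$, $V_{i_2}$ and $H_{j_1}$. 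Every one of your intermediate conclusions holds there, including the slab containment $M^-\subset V_{i_1}^+\cap V_{i_2}^-$. So those conclusions cannot produce a contradiction.

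The fallback is also unavailable. In your situation $M$ crosses $V_{i_0}$ and has $V_{i_1}$ and $V_{i_2}$ on one side, so it separates no two walls of $\{V_{i_1},V_{i_0},V_{i_2}\}$. The group $\langle\refl{M},\refl{V_{i_0}}\rangle$ is finite of order at least $4$, so the reflection group is not $\dihedral_\infty$. Nothing produces a Euclidean triangle group either, so \fullref{Caprace_17} does not apply.

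\textbf{The missing idea.} You need to use the full hypothesis $M\supseteq V_{i_0}\cap H_{j_0}$: the three walls form a pencil. Since $M\neq V_{i_0},H_{j_0}$, this means $M$ meets only two \emph{opposite} quadrants of $\Davis\setminus(V_{i_0}\cup H_{j_0})$ and separates the other two, say $V_{i_0}^-\cap H_{j_0}^-$ from $V_{i_0}^+\cap H_{j_0}^+$. For instance, if all pairwise intersections of the three walls coincide, as for $M=\refl{V_{i_0}}(H_{j_0})$ in \fullref{prop:grid}, then a geodesic in $M$ joining two adjacent quadrants would meet $M\cap H_{j_0}$ or $M\cap V_{i_0}$ outside $V_{i_0}\cap H_{j_0}$.

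With this fact your argument closes at once:
\begin{itemize}
\item $V_{i_1}\subset V_{i_0}^-$ crosses $H_{j_0}$, so it meets $V_{i_0}^-\cap H_{j_0}^-$.
\item Likewise $V_{i_2}$ meets $V_{i_0}^+\cap H_{j_0}^+$.
\item Both are connected and disjoint from $M$, so they lie on opposite sides of $M$. This contradicts your first step. The other quadrant configuration is symmetric.
\end{itemize}

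The paper uses the same quadrant fact, but argues directly rather than by contradiction. Suppose $M$ misses some $H_{j_1}$ with $j_1>j_0$, and fix $i<i_0$. Take the broken geodesic that runs inside $V_i$ from a point of $V_i\cap H_{j_0}^-$ to a point of $V_i\cap H_{j_1}$, and then inside $H_{j_1}$ to a point of $H_{j_1}\cap V_{i_0}^+$. It joins the two separated quadrants, and $M$ misses its $H_{j_1}$ leg, so $M$ must cut its $V_i$ leg. Hence $M\transverse\mathcal{H}^+$ or $M\transverse\mathcal{V}^-$. Symmetrically, $M\transverse\mathcal{H}^-$ or $M\transverse\mathcal{V}^+$, which gives at least two of the four chains.
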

\begin{figure}[h]
  \centering
  \includegraphics{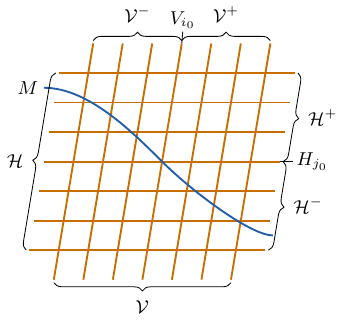}
  \hfill
    \includegraphics{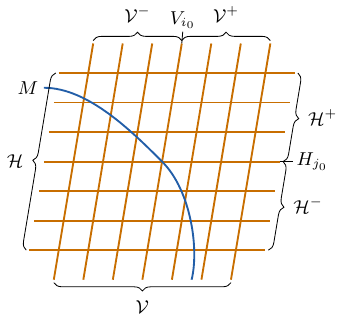}

  \caption{Two possibilities for a wall $M$ containing $V_{i_0}\cap H_{j_0}$, separating
    $V_{i_0}^+\cap H_{j_0}^+$ from $V_{i_0}^{-}\cap H_{j_0}^{-}$ and
    $M\not\transverse\mathcal{H}^+$. We have $M\transverse
    \mathcal{V}^{-}$ and at least one of
    $M\transverse\mathcal{V}^+$ or  $M\transverse\mathcal{H}^{-}$.}
  \label{fig:four_quarter_grids}
\end{figure}
\begin{proof}
  If $M$ is equal to $V_{i_0}$ then it is transverse to all of
  $\mathcal{H}$, so it is transverse to both of $\mathcal{H}^-$ and
  $\mathcal{H} ^+$.
  If $M$ is equal to $H_{j_0}$ then it is transverse to both
  $\mathcal{V}^-$ and $\mathcal{V}^+$.
Now assume $M$ is not one of $V_{i_0}$ or $H_{j_0}$.
Then $V_{i_0}\cup H_{j_0}$ divides  $\Davis$ into four regions, $V_{i_0}^-\cap H_{j_0}^-$,
  $V_{i_0}^+\cap H_{j_0}^-$, $V_{i_0}^+\cap H_{j_0}^+$, and $V_{i_0}^-\cap H_{j_0}^+$.
  The wall $M$ meets two of these, which are opposites, either
  $V_{i_0}^-\cap H_{j_0}^-$ and $V_{i_0}^+\cap H_{j_0}^+$ or $V_{i_0}^-\cap H_{j_0}^+$ and
  $V_{i_0}^+\cap H_{j_0}^-$, and separates the other two.
  Suppose, without loss of generality, that $M$ separates $V_{i_0}^-\cap H_{j_0}^-$ from
  $V_{i_0}^+\cap H_{j_0}^+$.

  Suppose $i_0$ and $j_0$ are not extremes.
  Suppose $M$ is not transverse to $\mathcal{H}^+$.
Then there is some first $j_1>j_0$ in $J$ such that $M$ is disjoint
from $H_{j_1}$. 
  For any $i<i_0$ there is a path from $V_{i_0}^-\cap H_{j_0}^-$ to
  $V_{i_0}^+\cap H_{j_0}^+$ by travelling through $V_i$ and $H_{j_1}$.
  Specifically, choose a point $a\in V_i\subset V_{i_0}^-$ in the interior of
  $H_{j_0}^-$, which exists, since $V_i\transverse H_{j_0}$, a point
  $b\in V_i\cap H_{j_1}$, which exists, since $V_i\transverse
  H_{j_1}$,  and a point $c\in H_{j_1}\subset H_{j_0}^+$ in the interior
  of $V_{i_0}^+$, which exists, since $V_{i_0}\transverse H_{j_1}$.
  By convexity of walls, $[a,b]\subset V_i\subset V_{i_0}^-$ and
  $[b,c]\subset H_{j_1}\subset H_{j_0}^+$.
  Now, since $M$ separates $V_{i_0}^-\cap H_{j_0}^-$ from
  $V_{i_0}^+\cap H_{j_0}^+$, it cuts either $[a,b]$ or $[b,c]$, but if
  $M$ is disjoint from $H_{j_1}\supset [b,c]$ then it must be that $M$
  cuts $[a,b]\subset V_i$, so
  $M\transverse V_i$.
  Since this was true for every $i<i_0$, we conclude $M\not\transverse
  \mathcal{H}^+$ implies $M\transverse\mathcal{V}^-$.
  
  An analogous argument yields that $M$ is transverse to at least one
  of $\mathcal{H}^-$ or $\mathcal{V}^+$.

  When one or both of $i_0$ and $j_0$ are extremes then the
  argument is similar, with the added observation that if, say, $i_0$
  is the least element of $I$, then $\mathcal{V}^-$ is the single wall
  $V_{i_0}$, which is transverse to $M$ by hypothesis. 
\end{proof}

\begin{proposition}\label{prop:grid}
  Let $(\mathcal{V},\mathcal{H})$ be a grid in $\Davis$ such that 
  $\mathcal{V}$ and $\mathcal{H}$ each contain at least $(2\caprace-1)$--many
  walls.
  If
  $W(\interior{\mathcal{H}})\not\subset\centralizer_W(W(\mathcal{V}))$
  or  $W(\interior{\mathcal{V}})\not\subset\centralizer_W(W(\mathcal{H}))$
then 
  $\pc(\mathcal{V})=\pc(\mathcal{H})$ is higher rank affine.
  \end{proposition}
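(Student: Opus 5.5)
By symmetry of the statement, I will assume $W(\interior{\mathcal{H}})\not\subset\centralizer_W(W(\mathcal{V}))$. So there is a wall $H_{j_0}\in\interior{\mathcal{H}}$ and some $V_{i_0}\in\mathcal{V}$ with $\refl{H_{j_0}}$ and $\refl{V_{i_0}}$ not commuting. These two walls are transverse, so $\langle\refl{V_{i_0}},\refl{H_{j_0}}\rangle$ is a finite dihedral group of order at least $6$. Hence there is a wall $M$, the fixed wall of a reflection in this dihedral group, distinct from $V_{i_0}$ and $H_{j_0}$ and with $M\supset V_{i_0}\cap H_{j_0}$; for instance $M$ can be the image of $H_{j_0}$ under $\refl{V_{i_0}}$.

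The plan is to produce the configuration of \fullref{Caprace_8}. We need two walls $M$ and $M'$ meeting nontrivially inside the first wall $K_0$ of a long chain $\mathcal{K}$, with both transverse to $\mathcal{K}$.

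\textbf{Choosing the chain.} By \fullref{lem:walls_through_grid} applied to $M$, the wall $M$ is transverse to at least two of the four half-chains $\mathcal{V}^\pm$ and $\mathcal{H}^\pm$ based at $(i_0,j_0)$. Each half-chain starts at $V_{i_0}$ or $H_{j_0}$, so each contains $V_{i_0}\cap H_{j_0}\supset M\cap(\text{other wall})$ appropriately.
\begin{itemize}
\item If $M$ is transverse to $\mathcal{H}^-$ or $\mathcal{H}^+$, take $\mathcal{K}$ to be that half-chain, with $K_0=H_{j_0}$. Take the pair of walls to be $M$ and $V_{i_0}$. Both are transverse to $\mathcal{K}$, since $V_{i_0}$ crosses all of $\mathcal{H}$. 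Their intersection $\emptyset\neq M\cap V_{i_0}=V_{i_0}\cap H_{j_0}$ lies in $K_0$. Because $H_{j_0}$ is interior, each of $\mathcal{H}^\pm$ has at least $\caprace\geq N+2$ walls, so $n>N$.
\item Otherwise $M$ is transverse to both $\mathcal{V}^-$ and $\mathcal{V}^+$. Take $K_0=V_{i_0}$ and the pair $M,H_{j_0}$. One of $\mathcal{V}^\pm$ has at least $\caprace$ walls because $|\mathcal{V}|\geq 2\caprace-1$.
\end{itemize}
The intersection identity $M\cap V_{i_0}=V_{i_0}\cap H_{j_0}$ needs checking: it should follow because the three walls of a finite dihedral reflection group meet in a common codimension-2 subspace. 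This is the step I expect to be the main obstacle, in particular verifying that Caprace's hypothesis $M\cap M'\subset K_0$ holds as sets. The case analysis above is exactly designed to avoid the gap noted in the footnote, namely a short half-chain.

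Once the hypotheses hold, \fullref{Caprace_8} says $W(\mathcal{K}\cup\{M,M'\})$ is a Euclidean triangle group contained in a higher rank irreducible affine parabolic. Then \fullref{Caprace_17} (triangle case, using a two-wall subchain of $\mathcal{K}$ together with $M'$) and \fullref{Caprace_16} give that $\pc(\mathcal{K})$ is higher rank irreducible affine. By \fullref{lem:chains_irreducible}, and since $\mathcal{K}$ is a subchain of $\mathcal{V}$ or $\mathcal{H}$ whose parabolic closure is nonspherical, we get $\pc(\mathcal{K})=\pc(\mathcal{V})$ (respectively $\pc(\mathcal{H})$). The reason is that an irreducible affine parabolic has no proper nonspherical parabolic subgroups, and $\pc(\mathcal{K})\leq\pc(\mathcal{V})$ is irreducible; I would also invoke \fullref{Caprace_17} to place all $\refl{V_i}$ in $\pc(\mathcal{K})$ when $K$ separates or lies in a triangle group. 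So one of $\pc(\mathcal{V})$, $\pc(\mathcal{H})$ is higher rank affine; say $\pc(\mathcal{V})$.

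Finally, apply \fullref{lem:easy_grid}. Since $\pc(\mathcal{V})$ is affine, $W(\mathcal{V})$ contains infinite dihedral subgroups generated by chain pairs. Restricting to a subchain of $\mathcal{V}$ with $\geq 8$ walls and dihedral reflection group (two walls generate the same parabolic closure by \fullref{lem:chains_irreducible}) lets that lemma apply. It yields either $\pc(\mathcal{H})=\pc(\mathcal{V})$, which is what we want, or $\pc(\mathcal{H})\leq\pc(\mathcal{V})^\perp$. The latter would force all of $W(\mathcal{H})$ to centralize $\pc(\mathcal{V})$, which, applied with the full chain $\mathcal{V}$, contradicts the noncommuting pair $\refl{H_{j_0}},\refl{V_{i_0}}$. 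Hence $\pc(\mathcal{V})=\pc(\mathcal{H})$ is higher rank affine.
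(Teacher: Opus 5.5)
\textbf{Verdict: gap at the transfer step.} Your construction of $M=\refl{V_{i_0}}(H_{j_0})$ follows the paper's argument exactly, as do the use of \fullref{lem:walls_through_grid}, the case split producing a half-chain $\mathcal{K}$ with at least $\caprace$ walls, and the application of \fullref{Caprace_8} with $M'\in\{V_{i_0},H_{j_0}\}$. The intersection identity you flag is recorded in the paper right after choosing $M$ and is not the real difficulty. Note also that $W(\mathcal{K}\cup\{M,M'\})=W(\mathcal{K}\cup\{M'\})$, because $\refl{M}$ is the conjugate of one of $\refl{K_0},\refl{M'}$ by the other. So \fullref{Caprace_17} and \fullref{Caprace_16} apply to all of $\mathcal{K}$, with no need for a two-wall subchain. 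They give that $\pc(\mathcal{K})$ is irreducible higher rank affine.

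The genuine gap is the next step, $\pc(\mathcal{K})=\pc(\mathcal{V})$ (resp.\ $\pc(\mathcal{H})$). Your justification runs in the wrong direction. The fact that an irreducible affine parabolic has no proper nonspherical parabolic subgroups controls parabolics \emph{inside} $\pc(\mathcal{K})$, but here $\pc(\mathcal{K})$ is the smaller group. An irreducible affine parabolic can be a proper subgroup of a larger irreducible parabolic: in \fullref{caprace_counterexample}, $\langle a,b,c\rangle$ is a proper parabolic of the irreducible $W$. \fullref{Caprace_17} does not close this. A wall of the full chain on the far side of $K_0$ (e.g.\ $V_i$ with $i<i_0$ when $\mathcal{K}=\mathcal{V}^+$) separates no two walls of $\mathcal{K}$, and nothing says a priori that it forms a dihedral or triangle group with $\mathcal{K}$. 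Likewise, \fullref{lem:chains_irreducible} gives $\pc(\mathcal{V})$ as the closure of the two extreme walls, not of arbitrary pairs or subchains. So the input your last paragraph needs, that $\pc(\mathcal{V})$ or $\pc(\mathcal{H})$ is affine, is never established.

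\textbf{The repair} is to run your last paragraph first on the half-chain, which is what the paper does. Let $\mathcal{O}$ be the full chain containing $M'$, and $\mathcal{C}\supseteq\mathcal{K}$ the other full chain.
\begin{enumerate}
\item Since $\pc(\mathcal{K})$ is irreducible affine, the pairwise disjoint walls of $\mathcal{K}$ are parallel hyperplanes in the Euclidean model. Hence $W(\mathcal{K})\cong\dihedral_\infty$, and $|\mathcal{K}|\geq\caprace\geq 8$.
\item Apply \fullref{lem:easy_grid} to the grid $(\mathcal{K},\mathcal{O})$. It gives $\pc(\mathcal{O})=\pc(\mathcal{K})$. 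The alternative $\pc(\mathcal{O})\leq\pc(\mathcal{K})^\perp=\centralizer_W(\pc(\mathcal{K}))$ is impossible, because $\refl{M'}\in\pc(\mathcal{O})$ does not commute with $\refl{K_0}\in\pc(\mathcal{K})$. This is the paper's step ``$\pc(\mathcal{H})=\pc(\mathcal{V}^+)$''.
\item Now the affine group is the closure of a full chain, so $W(\mathcal{O})\cong\dihedral_\infty$ with $|\mathcal{O}|\geq 2\caprace-1\geq 8$. No passage to a subchain is needed.
\item Apply \fullref{lem:easy_grid} to $(\mathcal{O},\mathcal{C})$. It gives $\pc(\mathcal{C})=\pc(\mathcal{O})$. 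The alternative $\pc(\mathcal{C})\leq\pc(\mathcal{O})^\perp$ is excluded either because $\pc(\mathcal{C})\geq\pc(\mathcal{K})=\pc(\mathcal{O})$, or by your noncommuting pair.
\end{enumerate}
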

  \begin{proof}
    The bound on the chain size implies $\interior{\mathcal{V}}$ and
    $\interior{\mathcal{H}}$ are nonempty. 
    The cases are symmetric, so suppose
    $W(\interior{\mathcal{H}})\not\subset\centralizer_W(W(\mathcal{V}))$.
    For $\mathcal{V}=\{V_i\}_{i\in I}$ and
    $\mathcal{H}=\{H_j\}_{j\in J}$ there are $i_0\in I$ and $j_0\in J$ such that $\refl{V_{i_0}}$ does not
    commute with $\refl{H_{j_0}}$, and, furthermore, $H_{j_0}\in\interior{\mathcal{H}}$.
The wall $\refl{V_{i_0}}(H_{j_0})$ is distinct from $V_{i_0}$ and
$H_{j_0}$.
The fixed point set of $\refl{V_{i_0}}$ is precisely $V_{i_0}$, so
$V_{i_0}\cap H_{j_0}\subset \refl{V_{i_0}}(H_{j_0})$ and $V_{i_0}\cap
\refl{V_{i_0}}(H_{j_0})\subset H_{j_0}$ and
$H_{j_0}\cap\refl{V_{i_0}}(H_{j_0})\subset V_{i_0}$.
As described by \fullref{lem:walls_through_grid}, there are four
subchains 
$\mathcal{V}^+$, $\mathcal{V}^-$, $\mathcal{H}^+$, and $\mathcal{H}^-$
with respect to $i_0$ and $j_0$, and
$\refl{V_{i_0}}(H_{j_0})$ is transverse to at least two of them.
Since $j_0$ is an interior index, both of $\mathcal{H}^+$
and $\mathcal{H}^-$ contain at least $\caprace$--many walls, and at
least one of $\mathcal{V}^+$ or $\mathcal{V}^-$ does, since
$|\mathcal{V}^+|+|\mathcal{V}^{-}|=|\mathcal{V}|+1\geq 2\caprace$.
Thus, there is at least one of $\mathcal{V}^+$, $\mathcal{V}^-$,
$\mathcal{H}^+$, or $\mathcal{H}^-$ that contains at least
$\caprace$--many walls and is transverse to
$\refl{V_{i_0}}(H_{j_0})$.

Suppose that $\mathcal{V}^+$ is a
chain transverse to $\refl{V_{i_0}}(H_{j_0})$ with at least $\caprace$--many walls. 
Then \fullref{Caprace_8} for $\mu:=\refl{V_{i_0}}(H_{j_0})$ and
$\mu':=H_{j_0}$ and the chain $\mathcal{V}^+$ gives  that
$W(H_{j_0},\refl{V_{i_0}}(H_{j_0}),\mathcal{V}^+)=W(H_{j_0},\mathcal{V}^+)$
is a Euclidean triangle group that is contained in an irreducible
affine parabolic $\mathcal{A}$, which has rank at least 3 since it
contains the triangle group. 
Then  \fullref{Caprace_17} says
$\refl{H_{j_0}}\in\pc(\mathcal{V}^+)\leq\mathcal{A}$.
Actually, the proof of  \fullref{Caprace_17}  uses the fact that a
proper parabolic subgroup of an irreducible affine parabolic is
finite, but $\pc(\mathcal{V}^+)$ is infinite, so
$\pc(\mathcal{V}^+)=\mathcal{A}$ is an irreducible affine parabolic of
rank at least 3 and $W(\mathcal{V}^+)$ is infinite dihedral.

Since $\refl{H_{j_0}}\in\pc(\mathcal{V}^+)$, we do not have
$\pc(\mathcal{H})\leq\pc(\mathcal{V})^\perp$, so 
\fullref{lem:easy_grid} implies
$\pc(\mathcal{H})=\pc(\mathcal{V}^+)$.

The proof in the case that $\mathcal{V}^-$ is the long chain
transverse to $\refl{V_{i_0}}(H_{j_0})$ is exactly the same.
If instead we use one of the chains $\mathcal{H}^+$ or $\mathcal{H}^-$
the only change is to take $\mu':=V_{i_0}$ in the application of
\fullref{Caprace_8}.
\end{proof}

\begin{corollary}\label{cor:nondihedral_grids_commute}
  Let $(\mathcal{V},\mathcal{H})$ be a grid in $\Davis$ such that
  $|\mathcal{V}|\geq 2\caprace-1$ and $|\mathcal{H}|\geq 2\caprace$.
  If $W(\mathcal{V})\not\cong\dihedral_\infty$ then $\pc(\interior{\mathcal{H}})\leq\pc(\mathcal{V})^\perp$.
\end{corollary}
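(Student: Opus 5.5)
The plan is to combine \fullref{prop:grid} with \fullref{lem:easy_grid}, applied to the grid $(\mathcal{V},\interior{\mathcal{H}})$. The one new ingredient is the observation that a chain $\mathcal{V}$ with $W(\mathcal{V})\not\cong\dihedral_\infty$ cannot have $\pc(\mathcal{V})$ irreducible higher rank affine.

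\textbf{Step 1: $\pc(\mathcal{V})$ is not irreducible affine.} Suppose instead that $P:=\pc(\mathcal{V})$ is irreducible affine. Write $P=x W_T x^{-1}$ and let $\Upsilon:=x\Davis_T$, so that $\stab(\Upsilon)=P$. Since $T$ is irreducible affine, $\Upsilon$ is a copy of the Davis complex of an irreducible Euclidean reflection group. It is therefore quasi-isometric to $\mathbb{E}^n$, and its wall structure is the usual arrangement of affine hyperplanes.

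For each $V\in\mathcal{V}$, the reflection $\refl{V}$ lies in $P$, so it stabilizes $\Upsilon$ and acts on it as a reflection. Hence $V$ cuts $\Upsilon$, and $V\cap\Upsilon$ is a wall of $\Upsilon$. The walls of $\mathcal{V}$ are pairwise disjoint in $\Davis$, so their traces in $\Upsilon$ are pairwise disjoint walls of an affine reflection arrangement. Such walls are parallel hyperplanes.

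The reflections in a family of parallel hyperplanes all have the same linear part. They therefore generate a group acting on a single line, namely the orthogonal direction, which is finite or infinite dihedral. Since $W(\mathcal{V})$ is infinite by \fullref{lem:chains_irreducible}, we get $W(\mathcal{V})\cong\dihedral_\infty$, contradicting the hypothesis.

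I expect the main care to be needed here: one must justify that restricting the walls to $\Upsilon$ preserves disjointness, which holds because both $V$ and $\Upsilon$ are convex. One must also justify that the $P$-action on $\Upsilon$ is faithfully the affine reflection action. If that identification is awkward, I would instead argue through the Tits form, using that affine irreducible root subsystems with non-intersecting walls correspond to roots with a common linear part.

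\textbf{Step 2: $W(\interior{\mathcal{H}})$ centralizes $W(\mathcal{V})$.} The hypotheses $|\mathcal{V}|\geq 2\caprace-1$ and $|\mathcal{H}|\geq 2\caprace\geq 2\caprace-1$ put us in the setting of \fullref{prop:grid}. If $W(\interior{\mathcal{H}})\not\subset\centralizer_W(W(\mathcal{V}))$, that proposition gives that $\pc(\mathcal{V})$ is higher rank affine, and it is irreducible by \fullref{lem:chains_irreducible}. This contradicts Step 1. Hence $W(\interior{\mathcal{H}})$ commutes with $W(\mathcal{V})$.

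\textbf{Step 3: conclude via \fullref{lem:easy_grid}.} Since $|\mathcal{H}|\geq 2\caprace$, the interior $\interior{\mathcal{H}}$ contains at least $2$ walls. It is a chain transverse to $\mathcal{V}$, so $(\mathcal{V},\interior{\mathcal{H}})$ is a grid, and by Step 2 it satisfies the commuting hypothesis of \fullref{lem:easy_grid}. That lemma yields one of two alternatives:
\begin{itemize}
\item $\pc(\interior{\mathcal{H}})\leq\pc(\mathcal{V})^\perp$, which is the claim;
\item $\pc(\interior{\mathcal{H}})=\pc(\mathcal{V})$ is irreducible higher rank affine, which Step 1 excludes.
\end{itemize}
This completes the proof.
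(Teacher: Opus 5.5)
Your proposal is correct and follows the paper's proof. The paper does the same three things: it rules out $\pc(\mathcal{V})$ being affine using $W(\mathcal{V})\not\cong\dihedral_\infty$, it uses \fullref{prop:grid} to force $W(\interior{\mathcal{H}})$ to centralize $W(\mathcal{V})$, and it concludes from the commuting case. The differences are cosmetic: the paper asserts the affine-implies-dihedral step in one line where you justify it via parallel walls in the affine Davis complex, and the paper applies \fullref{prop:affine} directly where you go through \fullref{lem:easy_grid}, whose commuting case is that same application.
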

\begin{proof}
 If $\pc(\mathcal{V})$ is affine then $W(\mathcal{V})$ is dihedral,
 contrary to hypothesis, so \fullref{prop:grid} forces
 $W(\interior{\mathcal{H}})$ to centralize $W(\mathcal{V})$.
Apply \fullref{prop:affine} to $W(\mathcal{V})$ and
$W(\interior{\mathcal{H}})$, both of which are nonspherical since
$|\mathcal{V}|\geq 2\caprace-1>2$ and $|\interior{\mathcal{H}}|\geq 2$.
Again, since $\pc(\mathcal{V})$ cannot be
affine, the remaining alternative is $\pc(\interior{\mathcal{H}})\leq\pc(\mathcal{V})^\perp$.
\end{proof}

\begin{proof}[{Proof of \fullref{lem:grid_wide}}]
  The cardinality lower bounds give that $\pc(\mathcal{V})$ and
  $\pc(\interior{\mathcal{H}})$ are irreducible and nonspherical, by \fullref{lem:chains_irreducible}.
  If $W(\mathcal{V})\not\cong\dihedral_\infty$ then 
   \fullref{cor:nondihedral_grids_commute} says $\pc(\interior{\mathcal{H}})\leq\pc(\mathcal{V})^\perp$.
If $W(\mathcal{V})\cong\dihedral_\infty$   then
\fullref{lem:easy_grid} says either $\pc(\mathcal{H})$ is contained in
$\pc(\mathcal{V})^\perp$ or
 $\pc(\mathcal{V})=\pc(\mathcal{H})$ is irreducible higher rank
 affine.
 Thus, either $\pc(\mathcal{V})\times\pc(\mathcal{V})^\perp $ is a
 product of two infinite factors, or the first factor is irreducible
 higher rank affine.
\end{proof}

%% file: flats.tex
Recall we defined constants $\radius$ and $\caprace$ and
functions $\density(\lambda,\epsilon)$ and $\length(\lambda,\epsilon)$ in \fullref{constants}.
\begin{theorem}\label{thm:wide_parallel}
  For all $\lambda$ and $\epsilon$ the following are equivalent for
  all admissible $(\lambda,\epsilon)$--quasigeodesics $\gamma\from
  I\to\Davis$. Moreover, there are explicit bounds between the constraints of each
item that are independent of $\gamma$. 
\begin{enumerate}[label=(\roman*)]
\item For all $L$ there exists a wide parabolic subcomplex $\Upsilon$
  and subinterval $I'\subset I$ of length $L$ such that $\gamma(I')\subset\bar\nbhd_\radius(\Upsilon)$.\label{wide_parallel:segment_in_wide_parabolic_nbhd}
  \item There exists $R$ such that for all $L$ there exists a wide parabolic subcomplex
    $\Upsilon$ and subinterval $I'\subset I$ of length $L$ such that $\gamma(I')\subset\bar\nbhd_R(\Upsilon)$. \label{wide_parallel:variable_radius}
\item For all $0<\rho\leq\density(\lambda,\epsilon)$ there exists $L_0$ such that for all
  $L_1\geq L_0$ there exists a subinterval $I'\subset I$ of length at
  least $L_1$ such that every chain $\mathcal{V}$ transverse
  to $\gamma(I')$ with $|\mathcal{V}|/|I'|\geq\rho$ contains a pair of
consecutive walls mutually transverse to an infinite chain.\label{wide_parallel:consecutive_walls_with_many_crossers}
\item For all $0<\rho\leq\density(\lambda,\epsilon)$ there exists $L_0$ such that for all
  $L_1\geq L_0$ there exists a subinterval $I'\subset I$ of 
  length at least $L_1$ such that every chain $\mathcal{V}$ transverse
  to $\gamma(I')$ with $|\mathcal{V}|/|I'|\geq\rho$ contains a pair of
consecutive walls mutually transverse to a chain of length
$2\caprace$.\label{wide_parallel:consecutive_walls_with_crossing_chain}
\item For all $L'$ there exists a grid $(\mathcal{V},\mathcal{H})$
  with $\mathcal{V}\transverse\gamma$, $|\mathcal{V}|\geq L'$, and $|\mathcal{H}|=2\caprace$. \label{wide_parallel:long_transverse_grids}
\end{enumerate}
\end{theorem}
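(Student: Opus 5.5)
I will prove the cycle of implications (i)$\Rightarrow$(ii)$\Rightarrow$(v)$\Rightarrow$(iii)$\Rightarrow$(iv)$\Rightarrow$(i), keeping track of constants at each step.

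The implications (i)$\Rightarrow$(ii) (take $R:=\radius$) and (iii)$\Rightarrow$(iv) (an infinite chain contains a subchain of length $2\caprace$) are immediate.

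\textbf{(ii)$\Rightarrow$(v).} Suppose $\gamma(I')\subset\bar\nbhd_R(\Upsilon)$ with $\Upsilon=w\Davis_T$ and $\stab(\Upsilon)$ wide. By \fullref{def:wide}, $W_T$ is either $W_B\times W_C$ with both factors nonspherical, or $W_A\times W_K$ with $A$ irreducible higher rank affine.

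First I produce the chains $\mathcal{V}$ and $\mathcal{H}$.
\begin{itemize}
\item Take a long chain $\mathcal{V}'$ separating $\gamma$ at the two ends of $I'$, using the density constant $\density$.
\item Apply the 4--gon trick (\fullref{quad_trick}) with the short sides being $\dcat$--geodesics to the gate projections $\gate_\Upsilon$ of the endpoints. Their length is at most $R$ plus a constant, by \fullref{cpp_and_gate_agree}. So most walls of $\mathcal{V}'$ separate two vertices of $\Upsilon$, hence cut $\Upsilon$, hence are reflections in $\stab(\Upsilon)$.
\item By \fullref{lem:reflection_splitting}, each such reflection lies in one factor. Dilworth/pigeonhole gives a subchain $\mathcal{V}$ of length $\ceq L$ all of whose reflections lie in one factor, say $wW_Bw^{-1}$, and which is transverse to $\gamma$.
\item For $\mathcal{H}$: in the product case, take any chain of $2\caprace$ walls of the other factor $wW_Cw^{-1}$. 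It is nonspherical, so it has arbitrarily long chains (since $\dchain\ceq\dcomb$). Its walls commute with, hence cross, every wall of $\mathcal{V}$.
\item In the affine case, the walls of $\mathcal{V}$ lie in the affine factor (the $K$ factor is finite, so it has no long chains). In the Euclidean tiling of $w\Davis_A$ choose a parallel family of $2\caprace$ walls in a direction not parallel to those of $\mathcal{V}$. Because $\mathcal{V}$ is a chain, its walls are parallel hyperplanes, and any nonparallel hyperplane crosses them all.
\end{itemize}
Letting $L\to\infty$ gives (v).

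\textbf{(v)$\Rightarrow$(iii).} This is the main obstacle. Given a grid $(\mathcal{V},\mathcal{H})$ with $\mathcal{V}\transverse\gamma$ long and $|\mathcal{H}|=2\caprace$, \fullref{lem:grid_wide} gives a wide parabolic $P=\pc(\mathcal{V})\times\pc(\mathcal{V})^\perp$ containing $\refl{H}$ for interior $H$. The plan is to show that the $\gamma$--segment $I'$ between the extreme walls of $\mathcal{V}$ stays near the parabolic subcomplex $\Upsilon$ stabilized by $P$.
\begin{itemize}
\item Choose $\Upsilon$ to be the subcomplex of $P$ (via \fullref{normalizer_parabolics_have_well_defined_subcomplex}) meeting all walls of $\mathcal{V}$ and $\interior{\mathcal{H}}$. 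The idea is that $\interior{\mathcal{H}}$ furnishes transverse infinite chains inside the $\pc(\mathcal{V})^\perp$ or affine direction.
\item Take any chain $\mathcal{V}''$ transverse to $\gamma(I')$ of density $\rho$. Compare it with the $\dchain$--distance to $\Upsilon$ via \fullref{lem:distance_to_standard}. Walls of $\mathcal{V}''$ that cut $\Upsilon$ are reflections in $P$, and by the product structure they have infinite transverse chains coming from the other factor.
\item Consecutive such walls inherit an infinite common transverse chain. In the affine case, use the parallel class of $\mathcal{H}$ in $\Upsilon$.
\end{itemize}
The difficulty is ensuring consecutive walls of an arbitrary dense chain both cut $\Upsilon$. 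I would first prove (v)$\Rightarrow$(i), since that is where the real work is, and then derive (iii) from (i): at density $\rho$ there are few walls in $\gamma(I')$ not cutting $\Upsilon$, because such walls separate $\gamma$ from $\Upsilon$ and $\dchain(\gamma,\Upsilon)$ is bounded by $4\caprace$ through the choice of $\radius$. A pigeonhole argument then gives a consecutive pair both cutting $\Upsilon$ and transverse to the orthogonal factor.

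\textbf{(v)$\Rightarrow$(i) and (iv)$\Rightarrow$(v).} For (v)$\Rightarrow$(i), suppose a point $\gamma(t)$ with $t\in I'$ has $\dcat(\gamma(t),\Upsilon)\geq\radius$. Then $\dchain(\gamma(t),\Upsilon)\geq 4\caprace-1$, so a chain of walls separates $\gamma(t)$ from $\Upsilon$. Each such wall must be crossed by every wall of $\mathcal{V}$ near $t$, together with $\interior{\mathcal{H}}$. This produces a grid transverse to the orthogonal direction, and \fullref{prop:grid}/\fullref{lem:easy_grid} give a contradiction with the product structure. After discarding $\caprace$ walls at each end, the segment lies in $\bar\nbhd_\radius(\Upsilon)$.

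For (iv)$\Rightarrow$(v), concatenate: each long $I'$ contains a dense chain, and the hypothesis provides consecutive pairs $V,V'$ with a transverse chain $\mathcal{H}$ of length $2\caprace$. Applying this repeatedly to subintervals at a density that forces many pairs, then using \fullref{lem:grid_wide} and the finiteness of the number of $P$--types, I collect a long run of walls sharing one $\mathcal{H}$, which gives (v).
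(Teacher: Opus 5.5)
Your steps (i)$\Rightarrow$(ii), (ii)$\Rightarrow$(v), and (iii)$\Rightarrow$(iv) are fine. Your passage from (i) to (iii) is essentially \fullref{prop:near_flat_implies_long_grids}, with one correction: the count of walls missing $\Upsilon$ should come from the 4--gon trick and \fullref{all_metrics_are_equivalent}. The choice of $\radius$ bounds $\dchain$ from below, not from above.

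\textbf{The gap is in (v)$\Rightarrow$(i).} It is not true that, after trimming $\caprace$ walls at each end, the $\gamma$--segment between the extreme walls of $\mathcal{V}$ lies in $\bar\nbhd_\radius(\Upsilon)$ for the $\Upsilon$ stabilized by $\pc(\mathcal{V})\times\pc(\mathcal{V})^\perp$. A far point also gives no contradiction. A chain $\mathcal{M}$ separating a far point $\gamma(t)$ from $\Upsilon$ is only forced to cross the walls of $\mathcal{V}$ on \emph{one} side of $t$; this is exactly \fullref{lem:induction_step}. So \fullref{lem:grid_wide} only places $\refl{M}$ in $\pc(\mathcal{V}^\pm)\times\pc(\mathcal{V}^\pm)^\perp$. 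Since $\pc(\mathcal{V}^\pm)$ can be strictly smaller than $\pc(\mathcal{V})$, its orthogonal factor can be strictly larger, and no contradiction arises.

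\emph{Counterexample.} Take the right-angled Coxeter group on $a_1,a_2,c_1,c_2,e_1,e_2,g_1,g_2$ whose only commuting pairs are $a_ic_j$, $c_ig_j$, $a_ie_j$. Put $\eta:=(e_1e_2)^k$, and let $\gamma$ be the combinatorial geodesic from $\one$ reading $(a_1a_2)^n\eta(g_1g_2)^n$. The $4n$ walls dual to its $a$-- and $g$--edges are pairwise disjoint, since no $a_i$ commutes with any $g_j$. So they form a chain $\mathcal{V}\transverse\gamma$ with $\pc(\mathcal{V})=\eta W_{\{a_1,a_2,g_1,g_2\}}\eta^{-1}$. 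Walls with reflections in $\pc(\mathcal{V})^\perp=\eta W_{\{c_1,c_2\}}\eta^{-1}$ supply $\mathcal{H}$, and $\Upsilon=\eta\Davis_{\{a_1,a_2,c_1,c_2,g_1,g_2\}}$. Yet every vertex $x\in W_{\{a_1,a_2\}}$ on the initial $(a_1a_2)^n$ portion of $\gamma$, which crosses half of $\mathcal{V}$, has $\dcomb(x,\Upsilon)=|\eta|=2k$, unbounded in $k$. That portion lies instead in the \emph{different} wide subcomplex $\Davis_{\{a_1,a_2,c_1,c_2,e_1,e_2\}}$.

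\textbf{What is missing} is the rank descent of \fullref{prop:induction_argument}.
\begin{enumerate}
\item Trisect $\mathcal{V}_0:=\mathcal{V}$ and test only the middle third against $\Upsilon_0$.
\item If some point there is $\radius$--far, \fullref{lem:induction_step} gives a chain of length at least $2\caprace$ that is transverse to a half $\mathcal{V}_1$ containing an outer third. Its reflections lie outside $\pc(\mathcal{V}_0)\times\pc(\mathcal{V}_0)^\perp$.
\item This cannot happen when $W(\mathcal{V}_0)\cong\dihedral_\infty$, by \fullref{Caprace_11} and \fullref{lem:easy_grid}. When $W(\mathcal{V}_1)\not\cong\dihedral_\infty$, \fullref{lem:grid_wide} forces $\pc(\mathcal{V}_1)\lneq\pc(\mathcal{V}_0)$.
\item Iterate with $\Upsilon_1$, and so on. The process terminates after boundedly many steps. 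It yields a convex subchain of length at least $|\mathcal{V}|/3^{|S|-3}$ whose $\gamma$--segment lies in $\bar\nbhd_\radius(\Upsilon_i)$ for some wide $\Upsilon_i$, in general not $\Upsilon_0$.
\item Quasigeodesicity then converts long subchains into long subintervals.
\end{enumerate}

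\textbf{Your (iv)$\Rightarrow$(v) is only a sketch.} ``Finiteness of $P$--types'' does not produce a common $\mathcal{H}$. The clean argument uses that (iv) holds for \emph{every} $\rho$. Apply it with $\rho:=\density(\lambda,\epsilon)/(L'-1)$ to the subchain consisting of every $(L'-1)$--st wall of a $\density(\lambda,\epsilon)$--dense chain. This gives consecutive walls $V,V'$ of the subchain and a chain $\mathcal{H}$ of length $2\caprace$ transverse to both. Then $\mathcal{H}$ is transverse to each of the $L'$ walls of the original chain from $V$ to $V'$, since each of these separates $V$ from $V'$.
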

The remainder of the
subsection proves the theorem in steps.
The implication
\ref{wide_parallel:segment_in_wide_parabolic_nbhd}$\implies$\ref{wide_parallel:variable_radius}
follows by choosing $R=\radius$.
\fullref{prop:near_flat_implies_long_grids} gives \ref{wide_parallel:variable_radius}$\implies$\ref{wide_parallel:consecutive_walls_with_many_crossers}.
The implication
\ref{wide_parallel:consecutive_walls_with_many_crossers}$\implies$\ref{wide_parallel:consecutive_walls_with_crossing_chain}
follows by passing to a subchain.
\fullref{lem:chaintogrid} gives \ref{wide_parallel:consecutive_walls_with_crossing_chain}$\implies$\ref{wide_parallel:long_transverse_grids}.
The proof for
\ref{wide_parallel:long_transverse_grids}$\implies$\ref{wide_parallel:segment_in_wide_parabolic_nbhd}
is broken into three pieces: \fullref{lem:induction_step} is the
inductive step for \fullref{prop:induction_argument}, which is then
applied in \fullref{prop:highly_woven_implies_thick}.

We will see in the proof of \fullref{main_theorem} that
\fullref{thm:wide_parallel} characterizes when $\gamma$ fails to
be Morse.
Notice that the items of \fullref{thm:wide_parallel} are impossible to
satisfy if $I$ is bounded, because each of them asserts the existence
of arbitrarily long subintervals with some property.
This is the expected behavior; bounded quasigeodesic segments
are always Morse.

\begin{lemma}\label{prop:near_flat_implies_long_grids}
  Suppose $\gamma\from I\to \Davis$ is an admissible $(\lambda,\epsilon)$--quasigeodesic such that there
  exists $R$ such that for
  all $L$ there exists a wide parabolic subcomplex $\Upsilon$ and a
  subinterval $I'\subset I$ of length $L$ such that 
  $\gamma(I')\subset\bar\nbhd_R(\Upsilon)$.
  Then for all $0<\rho\leq\density(\lambda,\epsilon)$ there exists $L_0$ such that for all
  $L_1\geq L_0$ there exists a subinterval $I'\subset I$ of
  length at least $L_1$ such that every chain $\mathcal{V}$ transverse
  to $\gamma(I')$ with $|\mathcal{V}|/|I'|\geq\rho$ contains a pair of
consecutive walls mutually transverse to a biinfinite chain.
\end{lemma}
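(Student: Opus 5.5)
Fix $0<\rho\leq\density(\lambda,\epsilon)$. The plan is to choose $L_0$ so large that any subinterval $I'$ of length at least $L_1\geq L_0$ whose image lies in $\bar\nbhd_R(\Upsilon)$ has the required property; the hypothesis supplies such an $I'$ of every length. Write $\Upsilon=w\Davis_T$ with $W_T$ wide. Then either $T=B\sqcup C$ with $W_B$, $W_C$ nonspherical and commuting, or $T=A\sqcup K$ with $W_A$ irreducible higher rank affine and $W_K$ spherical.

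First, replace the part of $\gamma(I')$ near $\Upsilon$ by a combinatorial geodesic inside $\Upsilon$. Apply the gate projection $\gate_\Upsilon$ to the endpoints $x=\gamma(a)$ and $y=\gamma(b)$ of $\gamma(I')$. By \fullref{lem:distance_to_standard}, these lie within uniformly bounded distance of $x,y$, with the bound depending on $R$. Let $\beta$ be a combinatorial geodesic in $\Upsilon$ between them. Now take any chain $\mathcal{V}\transverse\gamma(I')$ with $|\mathcal{V}|\geq\rho|I'|$. Each wall of $\mathcal{V}$ separates $x$ from $y$. The $4$--gon trick (\fullref{quad_trick}), applied to the short connecting segments together with $\beta$, shows that all but a uniformly bounded number $c=c(R)$ of walls of $\mathcal{V}$ cut $\beta$. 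In particular they cut $\Upsilon$, so their reflections lie in $\stab(\Upsilon)=wW_Tw^{-1}$.

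Second, classify these walls. By \fullref{lem:reflection_splitting}, each such reflection lies in the conjugate of exactly one factor: $W_B$ or $W_C$ in the product case, $W_A$ or $W_K$ in the affine case. Walls with reflections in the spherical factor $wW_Kw^{-1}$ all pass through a bounded neighborhood of a single fiber, and walls in a chain are disjoint, so there are at most $|W_K|$ of them in $\mathcal{V}$. Otherwise a $B$-type wall and a $C$-type wall commute. Their reflections are distinct and not equal, so the walls cross; hence consecutive walls of a chain cannot be one of each type except at a boundary between blocks. In fact a chain cannot contain both types at all, since distinct walls of a chain are disjoint. Therefore, after discarding at most $c+|W_K|$ walls, the convex middle subchain of $\mathcal{V}$ lies entirely in one factor. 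Call this factor $F$, where $F=B$ or $C$ in the product case and $F=A$ in the affine case.

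Third, produce the biinfinite transverse chain. In the product case, with $F=B$, the walls of the nonspherical factor $wW_Cw^{-1}$ cross every $B$-wall. Since $W_C$ is infinite, $w\Davis_C$ contains a biinfinite combinatorial geodesic, and one can extract a biinfinite chain from its walls by \fullref{all_metrics_are_equivalent}, applied to ever longer segments combined with local finiteness through a diagonal argument. Every wall of this chain is transverse to every $B$-wall, so any consecutive pair in the middle subchain works. In the affine case, $wW_Aw^{-1}$ acts on a Euclidean space $\mathbb{E}^n$ with $n\geq2$, and the walls are finitely many parallel families of hyperplanes. Two consecutive walls of $\mathcal{V}$ are parallel hyperplanes in one family. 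Choose any other family; since $n\geq2$ and the group is irreducible, not all families are parallel. Its hyperplanes form a biinfinite chain crossing both walls. The only thing needed is that the middle subchain has at least two walls, that is, $\rho L_0>c+|W_K|+2$, which fixes $L_0$.

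The main obstacle is the first step: showing that the walls of $\mathcal{V}$ genuinely cut $\Upsilon$, rather than merely passing near it, with a count independent of $\Upsilon$. I would handle it with the gate and $4$--gon argument above, using that the constants in \fullref{lem:distance_to_standard} are uniform over $T$ and that the CAT(0) and chain distances are comparable.
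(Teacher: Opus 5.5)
Your Steps 1 and 2, and the affine half of Step 3, follow the paper's proof closely. The only difference is that the paper uses the CAT(0) projection $\pi_\Upsilon$ and the CAT(0) geodesic between the projected endpoints, where you use gate projections and a combinatorial geodesic in $\Upsilon$. That changes nothing, except that $\gate_\Upsilon$ is only defined on vertices, so you should first move $\gamma(a)$ and $\gamma(b)$ to nearby vertices.

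The gap is in the product case of Step 3. You claim a biinfinite chain can be extracted from the walls crossing a biinfinite combinatorial geodesic in $w\Davis_C$. This is false in general. $C$ is only required to be nonspherical, so it may be reducible, for instance $W_C\cong\dihedral_\infty\times\dihedral_\infty$, in which case $w\Davis_C$ is the square tiling of the plane. Consider the L-shaped combinatorial geodesic that comes in from $-\infty$ along a horizontal line and leaves to $+\infty$ along a vertical line. It is crossed only by a one-sided family of vertical walls and a one-sided family of horizontal walls. Every vertical wall crosses every horizontal one, so every chain among these walls is infinite in at most one direction.

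Even for a well-chosen geodesic, the diagonal argument does not work as stated. Long chains in ever longer segments can drift toward one end, so their limit may be one-sided or even empty. You would need a centering argument that you have not supplied.

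The fix is short, and it is what the paper does. Since $W_C$ is infinite, there are walls $H$ and $H'$ with $\langle\refl{H},\refl{H'}\rangle\cong\dihedral_\infty\leq wW_Cw^{-1}$. The $\langle\refl{H},\refl{H'}\rangle$--orbit of $H$ and $H'$ is a biinfinite chain. Its reflections lie in $wW_Cw^{-1}$, so they commute with the reflections of the $B$--type walls in your middle subchain, and therefore these walls cross. With this replacement your argument is complete.

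A smaller point in Step 2: you do not need the $|W_K|$ count. Two disjoint walls generate $\dihedral_\infty$, so no two walls of a chain can have reflections in a finite group. Also, a $K$--type wall crosses every $A$--type wall, so a chain with at least two walls cutting $\Upsilon$ is automatically of $A$--type in the affine case.
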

\begin{proof}
Fix $R$ as in the hypothesis and  take any $0<\rho\leq\density(\lambda,\epsilon)$.
By \fullref{all_metrics_are_equivalent} there exists $L_0\geq\length(\lambda,\epsilon)$ such that
for all $x,y\in\Davis$:
\[\dcat(x,y)\leq R\implies \dchain(x,y)\leq \rho L_0/4-2\]

By hypothesis, given $L_1\geq L_0$ there exists a wide parabolic subcomplex
$\Upsilon$ and $I'\subset I$ of length at least $L_1$ such that
$\gamma(I')\subset\bar\nbhd_R(\Upsilon)$.
Suppose that $\mathcal{V}$ is a chain transverse to $\gamma(I')$ with
$|\mathcal{V}|/|I'|\geq\rho$.

Let $\bar I'=[z_0,z_1]$.
Consider the concatenation of $\dcat$--geodesics
  $[\gamma(z_0),\pi_\Upsilon(\gamma(z_0))]+[\pi_\Upsilon(\gamma(z_0)),\pi_\Upsilon(\gamma(z_1))]+[\pi_\Upsilon(\gamma(z_1)),\gamma(z_1)]$
  and apply the 4--gon trick, \fullref{quad_trick}, to find $\mathcal{V}'$ consisting of
  walls of $\mathcal{V}$ that separate $\pi_\Upsilon(z_0)$ and $\pi_\Upsilon(z_1)$:
\[|\mathcal{V}|-|\mathcal{V}'|\leq
  \dchain(\gamma(z_0),\pi_\Upsilon(\gamma(z_0)))+\dchain(\gamma(z_1),\pi_\Upsilon(\gamma(z_1)))+2\leq
  2(\rho L_0/4-2)+2\]
But
  $|\mathcal{V}|\geq \rho L_1$, so $|\mathcal{V}'|\geq |\mathcal{V}|/2+2$.
In particular, there is at least one 
  consecutive pair $V$,$V'$ of 
 walls of $\mathcal{V}$
that cut $[\pi_\Upsilon(z_0),\pi_\Upsilon(z_1)]\subset\Upsilon$.

If $\Upsilon$ is a product of two infinite factors then up to
translation by $W$ we may assume $\Upsilon=\Davis_{T\sqcup T'}$ where
$T$ and $T'$ are subsets of $S$ with no edges connecting them in the
Coxeter diagram.
A wall $M$ cutting $\Upsilon$ has reflection $\refl{M}\in W_T\times
W_{T'}$.
Two walls $M$ and $M'$ cutting $\Upsilon$ give a reflection subgroup
$\langle \refl{M},\refl{M'}\rangle$ that is finite if and only if
$M\transverse M'$.
\fullref{lem:reflection_splitting} says $\langle
\refl{M},\refl{M'}\rangle$ splits as a direct product of reflection
subgroups of the two factors.
If $\refl{M}$ and $\refl{M'}$ live in opposite factors then $\langle
\refl{M},\refl{M'}\rangle\cong(\mathbb{Z}/2\mathbb{Z})^2$.
If they live in the same factor, say $W_T$, then $\langle
\refl{M},\refl{M'}\rangle$ is a subgroup of $W_T$, so $M$ and $M'$
cross in $\Davis$ if and only if they cross in $\Davis_T$.
Thus, walls from $\mathcal{V}$, being disjoint in $\Davis$, either all
have reflections in $W_T$ or all have reflections in $W_{T'}$.
Without loss of generality, assume the former.
By hypothesis $W_{T'}$ is infinite, so there exist walls $H$ and $H'$ of $\Davis$ such that $\langle
\refl{H},\refl{H'}\rangle\cong\dihedral_\infty\leq W_{T'}$.
Then the $\langle\refl{H},\refl{H'}\rangle$ orbits of $H$ and $H'$
give a chain in $\Davis$ consisting of walls with reflections in
$W_{T'}$.
These are transverse to $\{V,V'\}$, since every reflection in $W_T$
commutes with every reflection in $W_{T'}$. 

The other option is that up to $W$--translation
$\Upsilon=\Davis_{T\sqcup T'}$ where $T$ is irreducible higher
rank affine and $T'$ is spherical.
Walls of $\mathcal{V}$ are disjoint, so they have reflections in
$W_T$.
Since $W_T$ is affine type, $\Davis_T$ is a polyhedral tiling of
$\mathbb{E}^n$ for some $n>1$, and walls of $\Davis_T$ are affine
codimension--1 hyperplanes that come in finitely many parallel
families, with every wall from one family crossing every wall from
every other family.
Since the walls of $\mathcal{V}$ are disjoint in $\Davis$, their
intersections with $\Davis_T$ all belong to the same parallel
family. Any one of the other parallel families gives a chain
transverse to $\Davis_T\cap\mathcal{V}$, so taking the
corresponding walls of $\Davis$ gives an infinite chain transverse to $\{V,V'\}$.
\end{proof}

\begin{lemma}\label{lem:chaintogrid}
  Suppose $\gamma\from I\to \Davis$ is an admissible $(\lambda,\epsilon)$--quasigeodesic such that for all $0<\rho\leq\density(\lambda,\epsilon)$ there exists $L_0$ such that for all
  $L_1\geq L_0$ there exists a subinterval $I'\subset I$ of
  length at least $L_1$ such that every chain $\mathcal{V}$ transverse
  to $\gamma(I')$ with $|\mathcal{V}|/|I'|\geq\rho$ contains a pair of
consecutive walls mutually transverse to a chain of length
$2\caprace$.
Then for all $L'$ there exists a grid $(\mathcal{V},\mathcal{H})$
  with $\mathcal{V}\transverse\gamma$, $|\mathcal{V}|\geq L'$, and $|\mathcal{H}|=2\caprace$. 
\end{lemma}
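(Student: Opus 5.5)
The plan is to argue by contraposition. Suppose there is some $L'$ such that every grid $(\mathcal{V},\mathcal{H})$ with $\mathcal{V}\transverse\gamma$ and $|\mathcal{H}|=2\caprace$ has $|\mathcal{V}|<L'$. From this I will build a chain transverse to a long subsegment of $\gamma$ that has positive density and no consecutive pair of walls crossed by a chain of length $2\caprace$. That contradicts the hypothesis for a suitable choice of $\rho$.

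Fix $\rho:=\density(\lambda,\epsilon)/(2L')$ and let $L_0$ be the constant the hypothesis gives for this $\rho$. Choose $L_1\geq\max\{L_0,\length(\lambda,\epsilon)\}$ large enough that $\density(\lambda,\epsilon)L_1\geq 2L'$; the hypothesis then provides a subinterval $I'$ of length at least $L_1$. Write $\bar I'=[x,y]$ and let $\mathcal{K}=\{K_i\}_{1\leq i\leq n}$ be a chain realizing $\dchain(\gamma(x),\gamma(y))$. By the discussion in \fullref{constants}, $n\geq\density(\lambda,\epsilon)|I'|$. Each $K_i$ separates $\gamma(x)$ from $\gamma(y)$, so, as in the proof of \fullref{quad_trick}, admissibility forces $K_i$ to cut $\gamma(I')$. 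Hence $\mathcal{K}\transverse\gamma(I')$.

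Next, pass to the sparse subchain $\mathcal{V}:=\{K_{1+kL'}\}_{k\geq 0,\,1+kL'\leq n}$. Since $n\geq 2L'$, it has $|\mathcal{V}|\geq n/L'-1\geq n/(2L')\geq\rho|I'|$, so it is a chain transverse to $\gamma(I')$ with density at least $\rho$. The hypothesis then yields consecutive walls $V=K_a$ and $V'=K_{a+L'}$ of $\mathcal{V}$ together with a chain $\mathcal{H}$ with $|\mathcal{H}|=2\caprace$ that is transverse to both.

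The key step, and the only place needing care, is showing that every $H\in\mathcal{H}$ is transverse to each intermediate wall $K_c$ with $a<c<a+L'$. Since $H$ is connected and meets both $K_a$ and $K_{a+L'}$, which lie on opposite sides of $K_c$, it meets $K_c$. Moreover $H\neq K_c$, because $H$ intersects $K_a$ while $K_c$ is disjoint from $K_a$. Two distinct walls that meet are transverse, so $H\transverse K_c$. Therefore $(\{K_c\}_{a\leq c\leq a+L'},\mathcal{H})$ is a grid. Its vertical chain has $L'+1$ walls and is transverse to $\gamma$, and its horizontal chain has exactly $2\caprace$ walls, which contradicts the choice of $L'$.

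The bounds are explicit. Given $L'$, the relevant data are $\rho=\density(\lambda,\epsilon)/(2L')$, the corresponding $L_0$, and the resulting $L_1$; these effectively produce the desired grid.
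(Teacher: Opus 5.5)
Your proof is correct and takes essentially the same route as the paper. Both take a chain of density at least $\density(\lambda,\epsilon)$ transverse to $\gamma(I')$ and keep every $L'$-th wall; the paper keeps every $(L'-1)$-th wall with $\rho=\density(\lambda,\epsilon)/(L'-1)$. Both then apply the hypothesis to get two consecutive walls crossed by a chain $\mathcal{H}$ of length $2\caprace$, and fill in the intermediate walls to get the grid. The contrapositive framing is unnecessary, since the argument directly builds the grid. Your connectedness argument that $\mathcal{H}$ crosses every intermediate wall spells out a step the paper only asserts.
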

\begin{proof}
 Assume $L'\geq 2$, and set $\rho:=\density(\lambda,\epsilon)/(L'-1)$.
 Take $L_1:=\max\{\length(\lambda,\epsilon), L_0(\rho)\}$.
The definitions of $\length(\lambda,\epsilon)$ and $\density(\lambda,\epsilon)$ imply that for any 
subinterval $I'\subset I$  of length at least $L_1$, there does exist a
 chain $\mathcal{V}=\{V_i\}_{i_0\leq i\leq i_1}$ transverse
to $\gamma(I')$ with $|\mathcal{V}|/|I'|\geq\density(\lambda,\epsilon)$.
Consider the subchain $\{V_{j_k}\}_{0\leq k\leq (i_1-i_0)/(L'-1)}$ where
$j_k=i_0+(L'-1)k$, which contains at least $\rho|I'|$--many walls, so, by
hypothesis, contains a pair of consecutive walls $V$ and $V'$,
transverse to a chain $\mathcal{H}$ of length $2\caprace$. 
Let $\mathcal{V}'$ be the subchain of $\mathcal{V}$ consisting of all
walls from $V$ to $V'$.
By construction,  $|\mathcal{V}'|=L'$, and the two extremes are
transverse to $\mathcal{H}$, so $\mathcal{V}'\transverse\mathcal{H}$.
\end{proof}

\begin{lemma}\label{lem:induction_step}
  Let $\gamma$ be an admissible path, and let
  $\mathcal{V}$ be a chain such that
  $\mathcal{V}\transverse\gamma$ and $|\mathcal{V}|\geq 2\caprace$.
  Let $\Upsilon$ be the unique parabolic subcomplex with stabilizer
  $\pc(\mathcal{V})\times\pc(\mathcal{V})^\perp$ (recall \fullref{normalizer_parabolics_have_well_defined_subcomplex}).
  Suppose  $\mathcal{V}=\{V_i\}_{i_0\leq i\leq i_2}$.
  For any $i_0<i_1<i_2$, let 
  $\mathcal{V}^{-}:=\{V_i\}_{i_0\leq i\leq i_1}$ and
    $\mathcal{V}^+:=\{V_i\}_{i_1+1\leq i\leq i_2}$.
    Suppose that there is a point $x$ on the closed subsegment of
    $\gamma$ between $V_{i_1}$ and $V_{i_1+1}$
   such that $\dchain(x,\Upsilon)\geq 4\caprace-1$.
    Then at least one of $\mathcal{V}^+$ or $\mathcal{V}^{-}$ is
    transverse to a chain $\mathcal{H}$ with $|\mathcal{H}|\geq
    2\caprace$, $\mathcal{H}\transverse\gamma$, and $\refl{H}\notin
    \pc(\mathcal{V})\times\pc(\mathcal{V})^\perp$ for all $H\in\mathcal{H}$.
  \end{lemma}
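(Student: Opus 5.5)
The idea is to take a long chain of walls separating $x$ from $\Upsilon$ and show that each of its walls must cross all of $\mathcal{V}^-$ or all of $\mathcal{V}^+$, and must cut $\gamma$. A pigeonhole argument then puts at least $2\caprace$ of them on the same side.

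\emph{Step 1: a long separating chain.} By hypothesis $\dchain(x,\Upsilon)\geq 4\caprace-1$. Unwinding the coarse extension of $\dchain$ to non-vertices, there is a vertex $x'$ in a common closed chamber with $x$ and a chain $\mathcal{H}_0=\{H_1,\dots,H_n\}$ with $n\geq 4\caprace-1$ separating $x'$ from $\Upsilon$. For this I use the proof of \fullref{lem:distance_to_standard}: the walls separating $x'$ from $\Upsilon$ are exactly those separating $x'$ from $\gate_\Upsilon(x')$.

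No wall separates points of a common closed chamber, except possibly a wall through $x$ itself. I will handle this boundary case by passing to $x$ itself or perturbing along $\gamma$, at the cost of at most a constant number of walls. I expect this bookkeeping to be the only fiddly point, and I will absorb it into the slack in the constant $4\caprace-1$. So I assume each $H_k$ separates $x$ from $\Upsilon$.

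Each $H_k$ is disjoint from $\Upsilon$. The walls meeting $\Upsilon$ are exactly those whose reflections lie in $\stab(\Upsilon)$, as used in \fullref{lem:distance_to_standard}. Since $\stab(\Upsilon)=\pc(\mathcal{V})\times\pc(\mathcal{V})^\perp$, this gives $\refl{H_k}\notin\pc(\mathcal{V})\times\pc(\mathcal{V})^\perp$. On the other hand every $V_i\in\mathcal{V}$ has $\refl{V_i}\in\pc(\mathcal{V})\leq\stab(\Upsilon)$, so $V_i$ meets $\Upsilon$.

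\emph{Step 2: each $H_k$ cuts $\gamma$ and crosses all of $\mathcal{V}^-$ or all of $\mathcal{V}^+$.} Fix $k$. Let $H_k^{\Upsilon}$ denote the open halfspace of $H_k$ containing $\Upsilon$; then $x$ lies in the opposite open halfspace.

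Suppose $H_k$ fails to cross some $V_j$ with $j\leq i_1$ and some $V_{j'}$ with $j'\geq i_1+1$. Since $V_j$ and $V_{j'}$ meet $\Upsilon$ and are disjoint from $H_k$, both lie in $H_k^{\Upsilon}$.

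Because $\mathcal{V}\transverse\gamma$ and $\mathcal{V}$ is a chain, $\gamma$ meets $V_j$, then $x$, then $V_{j'}$ in that order, as $x$ lies between $V_{i_1}$ and $V_{i_1+1}$. Along this subpath $\gamma$ passes from $H_k^\Upsilon$ to the other side and back. Hence $\gamma$ meets $H_k$ at two distinct times, contradicting admissibility.

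Therefore $H_k$ crosses every wall of $\mathcal{V}^-$ or every wall of $\mathcal{V}^+$. In either case $\gamma$ contains a point of some $V_j$, which lies in $H_k^\Upsilon$, as well as the point $x$ on the other side. So $\gamma$ crosses $H_k$, and by admissibility it does so exactly once and transversally. Thus $H_k\transverse\gamma$.

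\emph{Step 3: pigeonhole.} Among the $n\geq 4\caprace-1$ walls of $\mathcal{H}_0$, at least $2\caprace$ are transverse to the same one of $\mathcal{V}^-$ or $\mathcal{V}^+$. Any subset of a chain, with the induced order, is again a chain. Taking $\mathcal{H}$ to be those walls gives $|\mathcal{H}|\geq 2\caprace$, $\mathcal{H}\transverse\gamma$, and $\refl{H}\notin\pc(\mathcal{V})\times\pc(\mathcal{V})^\perp$ for all $H\in\mathcal{H}$, as required.

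The main obstacle is Step 2's double-crossing argument together with the boundary cases: $x$ lying on a wall, and $x$ lying on $V_{i_1}$ or $V_{i_1+1}$, since the segment is closed. I would handle these by slightly shrinking the hypothesis margin rather than by a separate argument.
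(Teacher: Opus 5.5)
There is a genuine gap in Step 2. Your double-crossing argument correctly shows that each $H_k$ is transverse to all of $\mathcal{V}^-$ or to all of $\mathcal{V}^+$. The next sentence (``in either case $\gamma$ contains a point of some $V_j$, which lies in $H_k^\Upsilon$'') is only justified when $H_k$ misses at least one wall of $\mathcal{V}$. If $H_k$ is transverse to every wall of $\mathcal{V}$, nothing places any point of $\gamma\cap V_j$ in $H_k^\Upsilon$, and $H_k$ can simply miss $\gamma$.

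A warning sign is that you never use $|\mathcal{V}|\geq 2\caprace$, nor that $\stab(\Upsilon)$ is the full normalizer $\pc(\mathcal{V})\times\pc(\mathcal{V})^\perp$. Your reasoning uses only two facts: the $V_i$ meet $\Upsilon$, and walls missing $\Upsilon$ have reflections outside $\stab(\Upsilon)$. That proves too much. Take the square tiling for $W=\langle a,b\rangle\times\langle c,d\rangle\cong\dihedral_\infty\times\dihedral_\infty$, with $\mathcal{V}$ vertical. Replace $\Upsilon$ by the horizontal line $\Davis_{\{a,b\}}$, whose stabilizer is $\pc(\mathcal{V})$, and let $\gamma$ be a parallel horizontal geodesic far above it. Every horizontal wall in between separates $x$ from that line, crosses all of $\mathcal{V}$, and misses $\gamma$.

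The missing ingredients are the Grid Lemma, \fullref{lem:grid_wide}, and a monotonicity observation.

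\emph{Monotonicity.} Split the subsegment of $\gamma$ between the extremes of $\mathcal{V}$ at $x$ as $\gamma^-+\gamma^+$, and order the separating chain $\mathcal{M}$ from $\Upsilon$ toward $x$. Suppose $M\in\mathcal{M}$ cuts $\gamma^+$. Then every wall of $\mathcal{M}$ between $M$ and $x$ cuts the piece of $\gamma^+$ from $x$ to $M$. Also, no wall on the $\Upsilon$ side of $M$ can cut $\gamma^-$: its path along $\gamma^-$ to $x$ would cross $M$, which already meets $\gamma$ in $\gamma^+$. The same holds with $\gamma^\pm$ swapped. So $\mathcal{M}$ consists of an initial subchain $\mathcal{M}'$ of walls missing $\gamma^-\cup\gamma^+$, which are therefore transverse to all of $\mathcal{V}$. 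It is followed by a terminal subchain $\mathcal{M}''$ whose walls either all cut $\gamma^+$ (hence cross $\mathcal{V}^-$) or all cut $\gamma^-$ (hence cross $\mathcal{V}^+$).

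\emph{Grid Lemma.} If $|\mathcal{M}'|\geq 2\caprace$, then $(\mathcal{V},\mathcal{M}')$ is a grid. \fullref{lem:grid_wide} then puts $\refl{M}$ in $\pc(\mathcal{V})\times\pc(\mathcal{V})^\perp$ for interior $M\in\mathcal{M}'$, contradicting $M\cap\Upsilon=\emptyset$. This is exactly where $|\mathcal{V}|\geq 2\caprace$ and the choice of $\Upsilon$ enter. Hence $|\mathcal{M}''|\geq(4\caprace-1)-(2\caprace-1)=2\caprace$, and $\mathcal{H}:=\mathcal{M}''$ works.

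Your pigeonhole step cannot replace the monotonicity. Splitting the $2\caprace$ walls of $\mathcal{M}''$ between two sides would only guarantee $\caprace$ on one side. For the same reason, $4\caprace-1$ has no slack in which to absorb your boundary bookkeeping.
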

  \begin{figure}[h]
    \centering
    \includegraphics{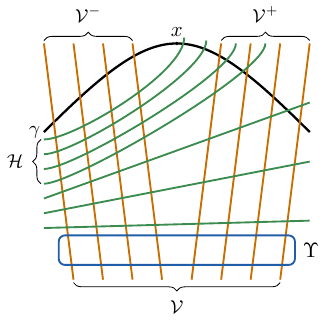}
    \caption{Inductive step: A point $x\in\gamma$ between the extremes
      of $\mathcal{V}$ and far from
      $\Upsilon$ gives a
        splitting of $\mathcal{V}$ into subchains $\mathcal{V}^+$ and
        $\mathcal{V}^{-}$ such that one of them is part of a grid with
        a chain $\mathcal{H}$ consisting of `most'  of the walls separating $x$ from
        $\Upsilon$;
        Only at most $2\caprace-1$ of the walls separating $x$ from
        $\Upsilon$ can be
        transverse to all of $\mathcal{V}$ without cutting $\Upsilon$.}
      \label{fig:inductive_step}
  \end{figure}
  \begin{proof}
    By hypothesis, there is a chain $\mathcal{M}$ with
    $|\mathcal{M}|\geq 4\caprace-1$ separating $x$ from $\Upsilon$.
    Every wall $M\in\mathcal{M}$ is disjoint from
    $\{x\}\cup\Upsilon$.
    Every reflection in $\pc(\mathcal{V})\times\pc(\mathcal{V})^\perp$
    cuts $\Upsilon$, so 
    $\refl{M}\notin\pc(\mathcal{V})\times\pc(\mathcal{V})^\perp$ for
    all $M\in\mathcal{M}$.

    Let the subsegment of $\gamma$ between the extreme walls of
    $\mathcal{V}$ be $\gamma^-+\gamma^+$ with concatenation point at $x$
    and labelled so that  
    $\mathcal{V}^{-}\transverse\gamma^-$, 
    $\mathcal{V}^+\transverse\gamma^+$, and $\{x\}=\gamma^-\cap\gamma^+$.

    For every $i\leq i_1$ there is a path $\delta_i$ from $\Upsilon$
    to $x$ that travels from a point in $V_i\cap\Upsilon$ through
    $V_i$ to $V_i\cap\gamma\in\gamma^-$, and then along $\gamma^-$ to
    $x$.
    The same is true for $i\geq i_1+1$, with $\gamma^+$ replacing
    $\gamma^-$.
    Since $M\in \mathcal{M}$ separates $x$ from $\Upsilon$, it cuts
    every such path.
    If $M$ does not cut $\gamma^-$ then for $i_0\leq i\leq i_1$, $M$
    cuts the $V_i$ segment of $\delta_i$, so
    $M\transverse\mathcal{V}^-$.
    Similarly, if $M$ does not cut $\gamma^+$ then
    $M\transverse\mathcal{V}^+$.
    Since $M$ cuts $\gamma$ at most once, and does not do so at $x$,
    at least one of these is true.
    Moreover, suppose some wall $M\in\mathcal{M}$ cuts $\gamma^+$.
    If there is a wall $M'\in\mathcal{M}$ separating $M$ from $x$ then
    it must cut the subsegment of $\gamma^+$ between $x$ and
    $M\cap\gamma^+$, so $M'$ cuts $\gamma^+$ and does not cut
    $\gamma^-$.
    Suppose there is a wall $M'\in\mathcal{M}$ that is separated from $x$
    by $M$.
    Since $M$ cuts $\gamma^+$ it does not cut $\gamma^-$, so
    $M'$ also does not cut $\gamma^-$, else we would have a path from
    $M'$ to $x$ through $\gamma^-$ avoiding $M$.
    Thus, if $\mathcal{M}$ is ordered from 
    $\Upsilon$ toward $x$, then
    $\mathcal{M}$ splits into an initial subchain $\mathcal{M}'$
    consisting of walls disjoint from $\gamma$ and a terminal subchain
    $\mathcal{M}''$ consisting of walls that either all cut $\gamma^+$
    or all cut $\gamma^-$.
    
    Suppose $|\mathcal{M}'|\geq 2\caprace$.
    Since $\mathcal{M}'$ is
    disjoint from $\gamma$ it is transverse to $\mathcal{V}$.
    Apply \fullref{lem:grid_wide} to $(\mathcal{V},\mathcal{M}')$.
    Then for $M\in\interior{\mathcal{M}}'$ we have $\refl{M}\in
    \pc(\mathcal{V})\times\pc(\mathcal{V})^\perp$, which is a
    contradiction.
Thus, $|\mathcal{M}'|\leq 2\caprace-1$, which implies $|\mathcal{M}''|\geq
2\caprace$.
Take $\mathcal{H}:=\mathcal{M}''$.
If $\mathcal{M}''\transverse\gamma^+$ then
    $\mathcal{H}\transverse\mathcal{V}^-$, and if $\mathcal{M}''\transverse\gamma^-$ then
    $\mathcal{H}\transverse\mathcal{V}^+$.
  \end{proof}

  \begin{proposition}\label{prop:induction_argument}
    Let $C:= 3^{|S|-3}$.
  For every admissible path $\gamma$, 
for every grid $(\mathcal{V},\mathcal{H})$ with
$\mathcal{V}\transverse \gamma$, $|\mathcal{V}|\geq C(2\caprace-1)$,
and $|\mathcal{H}|\geq 2\caprace$, there exists a wide parabolic
subcomplex $\Upsilon$ and a convex subchain $\mathcal{V}'$ of
$\mathcal{V}$ such that $|\mathcal{V}'|\geq |\mathcal{V}|/C$ and the
subsegment of $\gamma$ between the extreme walls of $\mathcal{V}'$ is
contained in $\bar\nbhd_\radius(\Upsilon)$.
\end{proposition}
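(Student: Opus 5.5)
We argue by induction on a complexity measure, using \fullref{lem:induction_step} as the inductive step. The natural measure is the rank of the wide parabolic $P(\mathcal{V}):=\pc(\mathcal{V})\times\pc(\mathcal{V})^\perp$, which is a wide parabolic by \fullref{lem:grid_wide} since $|\mathcal{V}|\geq 2\caprace-1$ and $|\mathcal{H}|\geq 2\caprace$. Let $\Upsilon(\mathcal{V})$ be the unique parabolic subcomplex with this stabilizer, from \fullref{normalizer_parabolics_have_well_defined_subcomplex}. A wide parabolic has rank at least $3$ (two nonspherical factors each need rank $\geq 2$, or an affine factor of rank $\geq 3$), and rank at most $|S|$, so at most $|S|-3$ strict increases are possible. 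This matches the constant $C=3^{|S|-3}$: each step should lose at most a factor of $3$ in chain length.

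\emph{Setup of the step.} Given the grid, look at the subsegment $\gamma_{\mathcal{V}}$ of $\gamma$ between the extreme walls of $\mathcal{V}$.
\begin{enumerate}
\item If $\gamma_{\mathcal{V}}\subset\bar\nbhd_\radius(\Upsilon(\mathcal{V}))$, take $\mathcal{V}'=\mathcal{V}$ and stop.
\item Otherwise there is a point $x$ on $\gamma_{\mathcal{V}}$ with $\dcat(x,\Upsilon)>\radius$, hence $\dchain(x,\Upsilon)\geq 4\caprace-1$ by the choice of $\radius$ in \fullref{constants}.
\item Then $x$ lies between some consecutive $V_{i_1},V_{i_1+1}$. \fullref{lem:induction_step} gives a half $\mathcal{V}^\pm$ and a chain $\mathcal{H}'\transverse\gamma$ with $|\mathcal{H}'|\geq 2\caprace$, $\mathcal{H}'\transverse\mathcal{V}^\pm$, and no reflection of $\mathcal{H}'$ in $P(\mathcal{V})$.
\end{enumerate}
For the counting to work we must not split at an arbitrary point. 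Divide $\mathcal{V}$ into three consecutive thirds, each of length at least $|\mathcal{V}|/3$. If some point of $\gamma$ far from $\Upsilon$ lies in the middle third's range, the half chosen by the lemma contains a full outer third. So we recurse on a subchain $\mathcal{V}_1$ of length at least $|\mathcal{V}|/3$, now part of the grid $(\mathcal{V}_1,\mathcal{H}')$. If instead the middle third's stretch of $\gamma$ stays within $\radius$, we stop there with $\mathcal{V}'$ equal to the middle third.

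\emph{Complexity increase.} In the recursion, $\mathcal{V}_1\subset\mathcal{V}$ gives $\pc(\mathcal{V}_1)\leq\pc(\mathcal{V})$. Since $\mathcal{V}_1$ is still long, it has at least two walls, and \fullref{lem:chains_irreducible} makes the closure irreducible nonspherical. Applying \fullref{lem:grid_wide} to $(\mathcal{V}_1,\mathcal{H}')$ puts the reflections of $\interior{\mathcal{H}}'$ into $P(\mathcal{V}_1)$, while none of them lie in $P(\mathcal{V})$. We want to conclude that the rank strictly increases or that the parabolic changes in a controlled way.

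\emph{Main obstacle.} This monotonicity is where I expect the real difficulty. $\pc(\mathcal{V}_1)$ might be a proper subgroup of $\pc(\mathcal{V})$, and then $\pc(\mathcal{V}_1)^\perp$ could be larger, so $P(\mathcal{V}_1)$ is not obviously comparable to $P(\mathcal{V})$. I would first try to show $\pc(\mathcal{V}_1)=\pc(\mathcal{V})$ whenever $|\mathcal{V}_1|\geq 2$ and both are subchains of one chain, using \fullref{Caprace_17} (a wall between two chain walls has reflection in the closure). If that fails, I would track instead the rank of $\pc(\mathcal{V}_1\cup\mathcal{H}')$, or the pair of ranks of $\pc(\mathcal{V})$ and its perp lexicographically. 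Once $\pc(\mathcal{V}_1)=\pc(\mathcal{V})$, the perp is unchanged too, so $P(\mathcal{V}_1)=P(\mathcal{V})$ yet contains a reflection outside $P(\mathcal{V})$. That contradicts \fullref{lem:grid_wide} unless the step cannot occur. This suggests the recursion actually terminates immediately in the stopping case. In that event the proof would instead need a different grid, say $(\mathcal{H}',\mathcal{V}_1)$ with the roles exchanged, and the parabolic $P(\mathcal{H}')$ would be the one whose rank must grow. Sorting out exactly which chain plays the role of $\mathcal{V}$ at each step, so that the rank strictly increases and the iteration ends within $|S|-3$ steps with length loss at most $3^{|S|-3}$, is the crux.
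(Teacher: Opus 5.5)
Your reduction step is the paper's. You trisect $\mathcal{V}$ and stop with the middle third if the corresponding stretch of $\gamma$ lies in $\bar\nbhd_\radius(\Upsilon)$. Otherwise you apply \fullref{lem:induction_step} at a far point and recurse on the half, containing a full outer third, that forms a grid with the new chain $\mathcal{H}'$.

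There is, however, a genuine gap exactly where you flag it: you never exhibit a quantity that is monotone along the recursion, so neither termination nor the constant $3^{|S|-3}$ is proved. The rank of $P(\mathcal{V}):=\pc(\mathcal{V})\times\pc(\mathcal{V})^\perp$ is the wrong measure. $P(\mathcal{V}_1)$ contains reflections outside $P(\mathcal{V})$, but $P(\mathcal{V})$ also contains reflections in walls of $\mathcal{V}\setminus\mathcal{V}_1$ that need not lie in $P(\mathcal{V}_1)$. So the two are typically incomparable, and nothing forces the rank up.

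Your fallback, showing $\pc(\mathcal{V}_1)=\pc(\mathcal{V})$ for convex subchains, is false in general, since \fullref{Caprace_17} only captures walls lying between two walls of $\mathcal{V}_1$. For example, take the right-angled Coxeter group $(\mathbb{Z}/2*\mathbb{Z}/2*\mathbb{Z}/2)\times(\mathbb{Z}/2*\mathbb{Z}/2)$ with generators $a,b,c$ and $g,h$. The walls crossed by the geodesic labelled $(ab)^nc$ form a chain, transverse to the chain crossed by $(gh)^n$. Its first $2n$ walls have parabolic closure $W_{\{a,b\}}$, while the whole chain has closure $W_{\{a,b,c\}}$.

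The missing idea is already in your last paragraph, read contrapositively: track $\rank\pc(\mathcal{V}_i)$ instead. From $\mathcal{V}_1\subset\mathcal{V}$ you get $\pc(\mathcal{V}_1)\le\pc(\mathcal{V})$. Since $P(\cdot)$ is determined by $\pc(\cdot)$, equality would force $P(\mathcal{V}_1)=P(\mathcal{V})$. That is impossible: \fullref{lem:grid_wide} applied to $(\mathcal{V}_1,\mathcal{H}')$ puts $\refl{H}$ for $H\in\interior{\mathcal{H}}'$ into $P(\mathcal{V}_1)$, while \fullref{lem:induction_step} keeps every reflection of $\mathcal{H}'$ out of $P(\mathcal{V})$. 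Hence $\pc(\mathcal{V}_1)\lneq\pc(\mathcal{V})$ and the rank strictly drops. The contradiction you found is the monotonicity, not evidence that the step never occurs, and no exchange of the roles of $\mathcal{V}_1$ and $\mathcal{H}'$ is needed. (The paper reaches the same conclusion in the form $\pc(\mathcal{V})^\perp\lneq\pc(\mathcal{V}_1)^\perp$.)

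For the exponent you also need $\rank\pc(\mathcal{V})\le|S|-2$ whenever a step occurs:
\begin{itemize}
\item If $W(\mathcal{V})\cong\dihedral_\infty$, no step can occur. Since $\mathcal{V}_1$ has more than $8$ walls, \fullref{Caprace_11} upgrades $\mathcal{H}'\transverse\mathcal{V}_1$ to $\mathcal{H}'\transverse\mathcal{V}$. Then \fullref{lem:easy_grid} puts $\pc(\mathcal{H}')$ inside $\pc(\mathcal{V})$ or $\pc(\mathcal{V})^\perp$, contradicting \fullref{lem:induction_step}.
\item Otherwise, \fullref{cor:nondihedral_grids_commute} makes $\pc(\mathcal{V})^\perp$ nonspherical, which gives the rank bound.
\end{itemize}
As every $\pc(\mathcal{V}_i)$ has rank at least $2$, there are at most $|S|-4$ steps, and the final middle third has at least $|\mathcal{V}|/3^{|S|-3}$ walls. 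The hypothesis $|\mathcal{V}|\ge 3^{|S|-3}(2\caprace-1)$ keeps each intermediate $\mathcal{V}_i$ long enough to reapply \fullref{lem:grid_wide} and \fullref{lem:induction_step}. Your count of up to $|S|-3$ rank increases would only have yielded $3^{|S|-2}$.
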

 \begin{proof}
Suppose there is a grid $(\mathcal{V},\mathcal{H})$ satisfying the hypotheses.
The proof is by induction on rank: we guess a wide product parabolic
subcomplex and subsegment of $\gamma$, and if the lemma is not
satisfied we change the guesses in such a
way that the rank of the first factor of the parabolic decreases. 
 
   Let $\mathcal{V}_0:=\mathcal{V}=\{V_i\}_{i_0\leq i\leq i_1}$ and
   $\mathcal{H}_0:=\mathcal{H}$.
   Let $j_0:=i_0$, $j_1:=i_0+\lfloor\frac{i_1-i_0}{3}\rfloor$,
   $j_2:=j_1+\lfloor\frac{i_1-i_0}{3}\rfloor$, and $j_3:=i_1$.
Let $\mathcal{V}_{01}:=\{V_j\}_{j_0\leq j\leq j_1}$,
$\mathcal{V}_{02}:=\{V_j\}_{j_1\leq j\leq j_2}$, and
$\mathcal{V}_{03}:=\{V_j\}_{j_2\leq j\leq j_3}$; each of these is a
convex subchain of $\mathcal{V}_0$ containing at least
$\frac{|\mathcal{V}_0|}{3}$-many walls.
Define
$\Upsilon_0$ to be the unique parabolic subcomplex with stabilizer $\pc(\mathcal{V}_0)\times\pc(\mathcal{V}_0)^\perp$,
which is wide by \fullref{lem:grid_wide} applied to $(\mathcal{V},\mathcal{H})$.
Consider the subsegment $\gamma'$ of $\gamma$ between the extremes of
$\mathcal{V}_{02}$.
If $\gamma'\subset\bar\nbhd_\radius(\Upsilon_0)$ then we are done, since
$\gamma'$ is transverse to $\mathcal{V}_{02}$ and $|\mathcal{V}_{02}|\geq|\mathcal{V}|/3\geq|\mathcal{V}|/C$. 
Otherwise, let $x\in\gamma'$ be a point maximizing the distance to 
$\Upsilon_0$.
By hypothesis $\dcat(x,\Upsilon_0)>\radius$, so, 
by definition of $\radius$, there is a chain $\mathcal{M}$ separating $x$ from
$\Upsilon_0$ with $|\mathcal{M}|\geq 4\caprace-1$.
By definition of $\gamma'$, there exists $j_1\leq k< j_2$ such that
$x$ occurs on the closed subsegment of $\gamma'$ between $V_k$ and
$V_{k+1}$.
Define $\mathcal{V}_0^-:=\{V_i\}_{j_0\leq i\leq k}$ and
  $\mathcal{V}_0^+:=\{V_i\}_{k+1\leq i\leq j_3}$.
Apply \fullref{lem:induction_step} to produce a chain
$\mathcal{M}'\subset\mathcal{M}$ with $|\mathcal{M}'|\geq 2\caprace$
that is transverse to either $\mathcal{V}_0^-$ or $\mathcal{V}_0^+$.
Define $\mathcal{H}_1':=\mathcal{M}'$, and let $\mathcal{V}_1$ be
whichever of $\mathcal{V}_0^-$ or
$\mathcal{V}_0^+$  is transverse
to $\mathcal{H}_1'$.
Observe that $\mathcal{V}_{01}\subset\mathcal{V}_0^-$ and
$\mathcal{V}_{03}\subset\mathcal{V}_0^+$, so in 
either case
$|\mathcal{V}_1|\geq|\mathcal{V}_0|/3\geq 2\caprace-1$.
Also, \fullref{lem:induction_step} says reflections in walls of
$\mathcal{H}_1'$ are not in
$\pc(\mathcal{V}_0)\times\pc(\mathcal{V}_0)^\perp$.

If $W(\mathcal{V}_0)\cong\dihedral_\infty$ we get a contradiction,
because $\mathcal{V}_1$ contains more than 8 walls of
$\mathcal{V}_0$, so \fullref{Caprace_11} upgrades
$\mathcal{H}_1'\transverse\mathcal{V}_1$ to $\mathcal{H}_1'\transverse\mathcal{V}_0$, but then
\fullref{lem:easy_grid} says $\pc(\mathcal{H}_1')$ is either contained
in $\pc(\mathcal{V}_0)$ or $\pc(\mathcal{V}_0)^\perp$.
Thus, if $W(\mathcal{V}_0)\cong\dihedral_\infty$ then
$\gamma'\subset\bar\nbhd_\radius(\Upsilon_0)$, and we are done. 

Otherwise, \fullref{lem:grid_wide} says
$\pc(\mathcal{V}_1)\times\pc(\mathcal{V}_1)^\perp$ is wide
and contains reflections through all walls of
$\mathcal{H}_1:=\interior{\mathcal{H}}_1'$.
Moreover, if $W(\mathcal{V}_1)\not\cong\dihedral_\infty$ then
$\pc(\mathcal{V}_1)$ is not higher rank irreducible affine, so
$\pc(\mathcal{H}_1)\leq\pc(\mathcal{V}_1)^\perp$. 
Now, $\mathcal{V}_1\subset\mathcal{V}_0$ implies
$\pc(\mathcal{V}_1)\leq\pc(\mathcal{V}_0)$, which implies
$\pc(\mathcal{V}_0)^\perp\leq\pc(\mathcal{V}_1)^\perp$.
In the $W(\mathcal{V}_1)\not\cong\dihedral_\infty$ case these cannot
be equalities, because in that case we know that for all
$H\in\mathcal{H}_1$ we have
$\refl{H}\in\pc(\mathcal{V}_1)^\perp\setminus\pc(\mathcal{V}_0)^\perp$.
Thus $W(\mathcal{V}_1)\not\cong\dihedral_\infty\implies\pc(\mathcal{V}_1)\lneq\pc(\mathcal{V}_0)$.

\begin{figure}[h]
  \centering
  \includegraphics{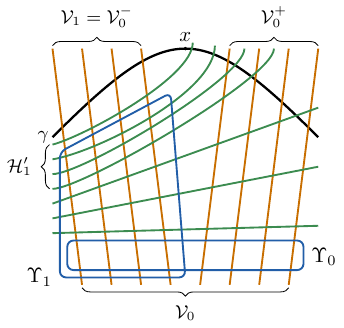}
  \caption{Using the induction step to produce a new candidate wide
    parabolic subcomplex $\Upsilon_1$ corresponding to
    $\mathcal{V}_1\subset\mathcal{V}_0$.}
  \label{fig:induction_argument}
\end{figure}

Define
$\Upsilon_1$ to be the unique parabolic subcomplex with stabilizer  $\pc(\mathcal{V}_1)\times\pc(\mathcal{V}_1)^\perp$,
and repeat the construction. See \fullref{fig:induction_argument}.
Either the induction will stop with the segment of $\gamma$
corresponding to the middle third of $\mathcal{V}_1$ contained in
$\bar\nbhd_\radius(\Upsilon_1)$, which is guaranteed to be the case if
$W(\mathcal{V}_1)\cong\dihedral_\infty$, or
$\pc(\mathcal{V}_1)\lneq\pc(\mathcal{V}_0)$ and we produce a further
$\mathcal{V}_2$, $\mathcal{H}_2$, $\Upsilon_2$, and repeat.

The ranks of parabolics in a strictly decreasing sequence are strictly
decreasing, so this iteration terminates in at most $|S|$ steps.
In fact, it terminates in at most $|S|-4$ steps, since
$\pc(\mathcal{V}_0)^\perp$ has rank at least 2, so
$\pc(\mathcal{V}_0)$ has rank at most $|S|-2$, while each
$\pc(\mathcal{V}_i)$ is nonspherical, so also has rank at least 2. 
Thus, in $0\leq i\leq |S|-4$ steps we produce a wide parabolic subcomplex
$\Upsilon_i$ such that $\bar\nbhd_\radius(\Upsilon_i)$ contains a subsegment of
$\gamma$ transverse to the walls of $\mathcal{V}_{i2}$, where:
\[|\mathcal{V}_{i2}|\geq\frac{|\mathcal{V}_i|}{3}\geq \frac{|\mathcal{V}|}{3^{i+1}}\geq\frac{|\mathcal{V}|}{3^{|S|-3}}=\frac{|\mathcal{V}|}{C}\qedhere\]
\end{proof}

\begin{lemma}\label{prop:highly_woven_implies_thick}
  Suppose $\gamma\from I\to\Davis$ is an admissible $(\lambda,\epsilon)$--quasigeodesic such that
  for all $L'$ there exists a grid $(\mathcal{V},\mathcal{H})$
  with $\mathcal{V}\transverse\gamma$, $|\mathcal{V}|\geq L'$, and
  $|\mathcal{H}|=2\caprace$.
  Then for all $L$ there exists a wide parabolic subcomplex $\Upsilon$
  and subinterval $I'\subset I$ of length at least $L$ such that $\gamma(I')\subset\bar\nbhd_\radius(\Upsilon)$.
  \end{lemma}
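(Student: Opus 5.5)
This is essentially a direct application of \fullref{prop:induction_argument}, together with a conversion from chain length to parameter length via the quasigeodesic inequality. Let $C:=3^{|S|-3}$ be the constant of \fullref{prop:induction_argument}.

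Given $L$, choose $L'$ large in terms of $L$, $\lambda$, $\epsilon$, and the constants $\Lambda$, $\Epsilon$ of \fullref{constants}; the precise requirement is fixed below. The hypothesis then provides a grid $(\mathcal{V},\mathcal{H})$ with $\mathcal{V}\transverse\gamma$, $|\mathcal{V}|\geq L'$, and $|\mathcal{H}|=2\caprace$. We also take $L'\geq C(2\caprace-1)$, so \fullref{prop:induction_argument} applies. It yields a wide parabolic subcomplex $\Upsilon$ and a convex subchain $\mathcal{V}'\subset\mathcal{V}$ with $|\mathcal{V}'|\geq L'/C$. Moreover, the subsegment of $\gamma$ between the extreme walls of $\mathcal{V}'$ lies in $\bar\nbhd_\radius(\Upsilon)$.

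Since $\gamma$ is admissible and each wall of $\mathcal{V}'$ is transverse to $\gamma$, each such wall meets $\gamma$ at a unique time. Let $t_0<t_1$ be the times at which $\gamma$ meets the two extreme walls of $\mathcal{V}'$, and set $I':=[t_0,t_1]$. Then $\gamma(I')\subset\bar\nbhd_\radius(\Upsilon)$.

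It remains to bound $|I'|$ from below. The interior walls of $\mathcal{V}'$ form a chain of at least $|\mathcal{V}'|-2$ walls separating $\gamma(t_0)$ from $\gamma(t_1)$. Hence
\[\dchain(\gamma(t_0),\gamma(t_1))\geq |\mathcal{V}'|-2.\]
For points on walls we use the coarse extension of $\dchain$, which costs at most a bounded additive error. By \fullref{all_metrics_are_equivalent},
\[\dcat(\gamma(t_0),\gamma(t_1))\geq (|\mathcal{V}'|-2-\Epsilon)/\Lambda-O(1),\]
and the quasigeodesic inequality then gives
\[|I'|\geq \dcat(\gamma(t_0),\gamma(t_1))/\lambda-\epsilon/\lambda.\]
We now fix $L'$ large enough that the right-hand side is at least $L$. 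This gives a subinterval of length at least $L$, as required.

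The only point needing care is this last conversion. Endpoints lying on walls, and the coarse extension of $\dchain$ to non-vertex points, cost bounded additive constants, and these are absorbed by the choice of $L'$. There is no genuine obstacle: all of the substance is contained in \fullref{prop:induction_argument}.
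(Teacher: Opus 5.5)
Your proposal is correct and follows the paper's proof. Both apply \fullref{prop:induction_argument} to a grid with $|\mathcal{V}|\geq\max\{C(2\caprace-1),L'\}$, then convert the $\geq|\mathcal{V}|/C$ walls crossed by the resulting subsegment into a lower bound on $|I'|$ via \fullref{all_metrics_are_equivalent} and the quasigeodesic inequality. Your extra bookkeeping, using the $|\mathcal{V}'|-2$ interior walls to handle endpoints lying on the extreme walls, is a harmless refinement of a step the paper leaves implicit.
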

 \begin{proof}
   Fix arbitrary $L$.
   Let $C$ be the constant of \fullref{prop:induction_argument}.
By \fullref{all_metrics_are_equivalent}, we
can choose $L'$ sufficiently large that for all $x,y\in\Davis$:
\[\dchain(x,y)\geq L'/C\implies \dcat(x,y)\geq \lambda L+\epsilon\]
Set $L'':=\max\{C(2\caprace-1),L'\}$.
By hypothesis, there exists a grid
$(\mathcal{V},\mathcal{H})$ with $\mathcal{V}\transverse\gamma$,
$|\mathcal{V}|\geq L''$, and $|\mathcal{H}|=2\caprace$.
Since $|\mathcal{V}|\geq C(2\caprace-1)$, we can apply \fullref{prop:induction_argument} to $(\mathcal{V},\mathcal{H})$ to
get a wide parabolic subcomplex $\Upsilon$ and subinterval $I'\subset I$ such that
$\gamma(I')\subset \bar\nbhd_\radius(\Upsilon)$ is
transverse to $|\mathcal{V}|/C\geq L'/C$-many walls of
$\mathcal{V}$.
Then the choice of $L'$ guarantees $|I'|\geq L$.
\end{proof}

%% file: proof_main_theorem.tex
In this section we prove \fullref{main_theorem} and
\fullref{main_for_combinatorial} from \fullref{sec:intro}.
\subsection{Proof of \texorpdfstring{\fullref{main_theorem}}{the characterization of the Morse property for admissible quasigeodesics}}
Recall that the goal is to characterize the Morse property for
admissible quasigeodesics by items
\ref{main:morse}-\ref{main:long_transverse_grids} of
\fullref{main_theorem}.

\fullref{thm:wall_morse} characterized the Morse property in terms of
equivalent items
\ref{wall_morse:divergence}-\ref{wall_morse:bounded_gap_bounded_chains}.
In particular, \ref{main:morse}=\ref{wall_morse:morse} is the Morse
property, and
\ref{main:bounded_width_chain}=\ref{wall_morse:bounded_gap_bounded_chains}
describes the interaction of the quasigeodesic with dense, transverse
chains. 

\fullref{thm:wide_parallel} gave equivalent items
\ref{wide_parallel:segment_in_wide_parabolic_nbhd}-\ref{wide_parallel:long_transverse_grids}
describing the interaction of the quasigeodesic with wide parabolics,
grids, and dense, transverse chains.
Now we claim that these conditions characterize being non-Morse.
In particular, \ref{main:fine_chain}$=\neg$\ref{wide_parallel:consecutive_walls_with_crossing_chain}, \ref{main:segment_in_wide_parabolic_nbhd}$=\neg$\ref{wide_parallel:segment_in_wide_parabolic_nbhd},
\ref{main:long_transverse_grids}
$=\neg$\ref{wide_parallel:long_transverse_grids}.

To finish the proof of \fullref{main_theorem}  we relate the three
transverse, dense chain statements:
The negation of  \ref{main:bounded_width_chain} is that for
every choice of $C$ and  $\rho$ there exists $L_0$ such that for all
$L_1\geq L_0$ there exists a
subinterval $I'\subset I$ of length at least $L_1$ such that every chain 
 $\mathcal{V}\transverse\gamma(I')$ with density at least $\rho$ has some pair of
consecutive walls that are mutually transverse to a
chain $\mathcal{H}$ with $|\mathcal{H}|>C$.
Item \ref{wide_parallel:consecutive_walls_with_many_crossers} says the same
thing with $\mathcal{H}$ infinite, so 
\ref{wide_parallel:consecutive_walls_with_many_crossers}$\implies$$\neg$\ref{main:bounded_width_chain}.
Item \ref{wide_parallel:consecutive_walls_with_crossing_chain} says the same
thing specifically for $C=2\caprace-1$, so 
$\neg$\ref{main:bounded_width_chain}$\implies$\ref{wide_parallel:consecutive_walls_with_crossing_chain}.
This sandwiches $\neg$\ref{main:bounded_width_chain} between the
equivalent statements
\ref{wide_parallel:consecutive_walls_with_many_crossers} and
\ref{wide_parallel:consecutive_walls_with_crossing_chain}, so $\neg$\ref{main:bounded_width_chain} is equivalent to both of them, hence to all of
\ref{wide_parallel:segment_in_wide_parabolic_nbhd}-\ref{wide_parallel:long_transverse_grids}.

Conversely,
\ref{main:morse}=\ref{wall_morse:morse}=\ref{wall_morse:bounded_gap_bounded_chains}=\ref{main:bounded_width_chain}
are all equivalent to the negation of any of 
\ref{wide_parallel:segment_in_wide_parabolic_nbhd}-\ref{wide_parallel:long_transverse_grids},
so \ref{main:morse} and \ref{main:bounded_width_chain} are equivalent
to
\ref{main:fine_chain}$=\neg$\ref{wide_parallel:consecutive_walls_with_crossing_chain},
\ref{main:segment_in_wide_parabolic_nbhd}$=\neg$\ref{wide_parallel:segment_in_wide_parabolic_nbhd}, and
\ref{main:long_transverse_grids}
$=\neg$\ref{wide_parallel:long_transverse_grids}.
This completes
the proof of \fullref{main_theorem}.

\subsection{Proof of
  \texorpdfstring{\fullref{main_for_combinatorial}}{the
    characterization of the Morse property for combinatorial geodesics
    theorem}}\label{sec:proof_of_main_corollary}
We call a subset of $S$ wide/affine/spherical according to the
corresponding property of the special subgroup it generates.

The implications \ref{comb:morse}$\implies$\ref{comb:parabolic_wide} and \ref{comb:parabolic_wide}$\iff$\ref{comb:special_wide}
are easy.
We will prove \ref{comb:special_wide}$\implies$\ref{comb:morse} by
showing $\neg$\ref{comb:morse}$\implies\neg$\ref{comb:special_wide}.
There are two parts to the proof. The first is to go from a
combinatorial geodesic having a long subsegment contained in the
$\radius$--neighborhood of a wide parabolic subcomplex $\Upsilon=w\Davis_T$ to having a
long subsegment such that every edge $e_i$ is dual to a wall that cuts
$\Upsilon$. 
The second part is to argue that the edge labels of the $e_i$ are all
contained in a common wide subset of $S$. The complication in this
step is that in non-right-angled Coxeter groups the edges dual to a fixed wall
might not all have the same label; the labels might only be conjugate
generators.
This means the subset of $S$
containing the labels of the edges $e_i$ might not be $T$---it might be some
subset of $S$ related to $T$ by a conjugation.

For the first step, we start with a counting argument.
Suppose that $\mathcal{M}$ is the set of walls dual to edges of a
  combinatorial geodesic segment $\alpha$, and suppose $\mathcal{M}'$
  is some subset of those walls.  
Partition $\alpha$ into a concatenation of subpaths that are either
single edges dual to a wall in $\mathcal{M}\setminus\mathcal{M}'$ or a maximal
subpath such that every edge is dual to a wall in $\mathcal{M}'$.
Suppose that the lengths of the latter are bounded above by $N$.
There are at most $|\mathcal{M}|-|\mathcal{M}'|+1$ such subsegments,
so $|\mathcal{M}|=|\alpha|\leq N(|\mathcal{M}|-|\mathcal{M}'|+1)+|\mathcal{M}|-|\mathcal{M}'|$.
This implies:
\begin{equation}
  \label{eq:18}
\frac{|\mathcal{M}'|}{|\mathcal{M}|-|\mathcal{M}'|+1}\leq N\tag{$\clubsuit$}  
\end{equation}

Suppose $\gamma\from I\to \Davis$ is a combinatorial geodesic.
As in \fullref{lem:geodesics_are_admissible}, there are $\lambda$ and
$\epsilon$ such that $\gamma$ is an admissible $(\lambda,\epsilon)$--quasigeodesic.
Furthermore, $\gamma$ is injective and parameterized by arc length,
and the quasigeodesic constants are uniform over all combinatorial
geodesics. 

Suppose $\neg$\ref{comb:morse}, so $\gamma$ is not Morse.
It follows from \fullref{main_theorem} that for every $L$ there exists
a wide parabolic subcomplex $\Upsilon$ and subinterval $I'\subset I$ of length $L$ such
that $\gamma':=\gamma(I')$ is an
edge path contained in $\bar\nbhd_\radius(\Upsilon)$.
Let $P$ be the parabolic subgroup stabilizing $\Upsilon$, so that walls
that cut $\Upsilon$ have reflection in $P$. 
Let $x$ and $y$ be the end vertices of $\gamma'$, and consider the set of
all walls $\mathcal{M}$ separating $x$ and $y$, of which there are
exactly $L$--many, since they are the walls cutting the edges of $\gamma'$. 
Let $\mathcal{M}'$ be the subset of $\mathcal{M}$ consisting of
walls that cut $\Upsilon$.
A $\dwall$--version of the 4--gon trick, \fullref{quad_trick}, gives:
\[|\mathcal{M}|-|\mathcal{M}'|\cleq
  \dwall(x,\pi_\Upsilon(x))+\dwall(y,\pi_\Upsilon(y))\cleq 2\radius\]
Since the right-hand side is a fixed constant, 
this gives $|\mathcal{M}'|\cequiv |\mathcal{M}|= L$, so:
\begin{equation}
  \label{eq:21}
  \frac{|\mathcal{M}'|}{|\mathcal{M}|-|\mathcal{M}'|+1}\cgeq L\tag{$\spadesuit$}
\end{equation}
The coarse comparison constants of \eqref{eq:21} are independent of the choices of $L$ and
$\gamma$, so, given $N$ we can choose $L$ large enough that
\eqref{eq:21} implies $\neg$\eqref{eq:18}, which
guarantees that $\gamma'$ contains a subsegment $\gamma''$ consisting
of at least $N$--many consecutive edges dual to walls of
$\mathcal{M}'$.
Thus, $\gamma$ contains arbitrarily long subsegments $\alpha:=\gamma''$ such that
there exists a wide parabolic subcomplex $\Upsilon$ such that
$\alpha\subset\bar\nbhd_\radius(\Upsilon)$ is an edge path, every edge
of which  is dual to a wall that cuts $\Upsilon$. 

For the second step, suppose $a$ and $b$ are the first and last vertices of $\alpha$,
respectively.
Pick a vertex $c\in\Upsilon^{(0)}$ that is $\dcomb$--closest to $a$.
Replace $\Upsilon$, $\alpha$, $a$, and $b$ by their
$c^{-1}$--translates.
Then $\Upsilon^{(0)}=W_T$ for some wide $T\subset S$.
Suppose the vertices of $\alpha$ are $a=x_0,x_1,\dots,x_n=b$ and $e_i$
is the edge from $x_{i-1}$ to $x_i$.
Suppose the label of $e_i$ is $s_i\in S$.
Let $U:=\{s_1,\dots,s_n\}$, so that the vertices of $\alpha$ are
contained in the parabolic subcomplex $aW_U$.

Since the wall dual to $e_i$ cuts $\Upsilon$, its reflection
$r_i:=x_is_ix_i^{-1}$ is in $W_T$.

Now, $x_1=r_1x_0\in W_Tx_0$.
By induction, $x_j=r_jx_{j-1}=r_j\cdots r_1x_0\in W_Tx_0$.
Plugging this expression for $x_j $ into the definition of $r_j$ gives
$r_j=r_j\cdots r_1x_0s_jx_0^{-1}r_1\cdots r_j$.
Rearranging gives $s_j=x_0^{-1}r_1\cdots r_{j-1}r_jr_{j-1}\cdots
r_1x_0\in x_0^{-1}W_Tx_0=a^{-1}W_Ta$.

The choice of $c$ makes $a$ the minimal length element of $W_Ta$,
so for all $u\in W_T$ we have $|ua|=|u|+|a|$ \cite[Lemma~4.3.3]{davisbook}.
Since $W_T\ni w_j:=as_ja^{-1}$, we have
$w_ja=as_j$.
The left-hand side is an element of $W_Ta$, so the minimal length
words in $S$ representing this element have length $|w_j|+|a|$.
The right-hand side is the same group element, expressed as a word of
length $|a|+1$. Thus $|w_j|=1$, which means $w_j\in T$.
We have shown $U\subset S\cap a^{-1}Ta$.

Our goal is to show that $U$ is contained in a wide subset of $S$,
establishing $\neg$\ref{comb:special_wide}.
This is true, in particular, if $U$ itself is wide, so suppose not. 
Once $\alpha$ is longer than the maximal $\dcomb$--diameter of a spherical
subgroup of $(W,S)$ then $U$ cannot be spherical.
Consider the canonical decomposition of $W_U$ into irreducible
components.
If $U$ is nonspherical and not wide then there is a unique
nonspherical irreducible component $U_0$ of $U$, which, moreover,
is not higher rank affine. Let $U_1:=U\setminus U_0$. This implies
$U_1\subset U_0^\perp$, so $W_U\leq W_{U_0}\times W_{U_0^\perp}$.
Thus, it suffices to show $U_0^\perp$ is nonspherical. 

By \fullref{lem:nonspherical_normalizer}~\eqref{item:irreducible_rigidity},
since $U_0\subset S$ is irreducible and nonspherical,
$aU_0a^{-1}\subset T\subset S$ implies $U_0=aU_0a^{-1}\subset T$.
Let $T_0$ be the irreducible component of $T$ containing $U_0$.
If $T_0$ is higher rank affine then $T_0=U_0$, since irreducible
affine parabolics have no proper nonspherical subparabolics.
This is impossible, since we already said that $U_0$ is not higher rank
affine.
Since $T$ is wide, the alternative is that $T_1:=T\setminus T_0$ is nonspherical.
But then $U_0\subset T_0$ implies $T_1\subset T_0^\perp\subset U_0^\perp$, so
$U_0^\perp$ is nonspherical. 
This completes the proof of \fullref{main_for_combinatorial}.

%% file: hyperbolic_intro.tex
In \fullref{sec:fine_chain_metric} we use fine chains, as in
\fullref{main_theorem}, to define a metric $\dfine$ on $\NR$, which
we show is hyperbolic. Since it is defined in terms of walls, it also
gives a hyperbolic metric on $\Davis$.

A hyperbolic metric on $\Davis$ must somehow eliminate the known sources of non-hyperbolicity, the wide
parabolics.
In fact, in \fullref{sec:collapse} we observe that it collapses wide
parabolics to diameter 1 subsets.
This has the side effect, noted in
\fullref{prop:Davis_vertices_coarsely_dense_in_NR_dfine}, that it
collapses the Niblo-Reeves cube complex to within bounded distance of
the canonical copy of $\Davis^{(0)}$ in $\NR^{(0)}$.

Using \fullref{main_theorem} to relate Morse geodesics with fine chains
along them, we deduce that $(\Davis,\dfine)$ is a \emph{Morse recognizing
space}, in \fullref{sec:morse_recognition}.

A common strategy for producing hyperbolic spaces is to cone off known
non-hyperbolic regions to make them have bounded diameter.
In \fullref{sec:coneoff} we construct such a  \emph{coned-off space}
$\coneoff$  by
coning off the wide parabolic subcomplexes of $\Davis$ and show that
it is $W$--equivariantly quasiisometric to $(\Davis,\dfine)$.
This gives a more concrete construction of a hyperbolic Morse recognizing
space for $W$.

%% file: hyperbolic.tex
Recall, from \fullref{def:fine_chain}, that a chain
$\mathcal{V}:=\{V_i\}_{i\in I}$ is \emph{fine} if for all
$i\neq j$, or, equivalently, for all consecutive $i,j$, every
chain transverse to $\{V_i,V_j\}$ has length less than $2\caprace$.

\begin{definition}
  Define $\dfine\from \Davis^{(0)}\times\Davis^{(0)}\to\mathbb{R}$ by:
  \[\dfine(x,y):=\max\{|\mathcal{V}|\mid \mathcal{V} \text{ is a fine
      chain separating $x$ and $y$}\}\]
  Make the same definition for $\dfine\from
  \NR^{(0)}\times\NR^{(0)}\to\mathbb{R}$.
\end{definition}
Recall that a vertex of $\Davis^{(0)}$ defines a principal ultrafilter
on the halfspaces system that for each wall picks the complementary
halfspace containing that vertex.
This gives a canonical inclusion $\Davis^{(0)}\into\NR^{(0)}$ with the
additional property that a wall separates two vertices of
$\Davis^{(0)}$ if and only if the corresponding wall in $\NR$
separates the $\NR^{(0)}$ images of those vertices.
Thus, for $x,y\in\Davis^{(0)}$ the distance $\dfine(x,y)$ is the same measured
in $\Davis$ as in $\NR$.

In this section we will show that $\dfine$ defines a hyperbolic metric
on $(\NR^{(0)},\dfine)$.
This is a variation of a construction of Genevois.
We further show that combinatorial geodesics are unparameterized
quasirulers that can be reparameterized to be rough geodesic
quasirulers, and that $\dfine$ extends to a hyperbolic metric on
$\Davis$.

  \begin{lemma}
    $\dfine$ is a metric on $\Davis^{(0)}$ and on $\NR^{(0)}$.
  \end{lemma}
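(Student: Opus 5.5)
The plan is to check the metric axioms directly from the definition. The only nontrivial axiom is the triangle inequality, and it reduces to the observation that a fine chain separating $x$ from $z$ splits into an initial and a terminal convex subchain, each of which is again fine. The same argument works verbatim in $\Davis^{(0)}$ and in $\NR^{(0)}$, since it uses only the halfspace structure, and the two wall structures are isomorphic.

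\emph{Finiteness, symmetry and definiteness.} First, $\dfine(x,y)\leq\dwall(x,y)<\infty$. In $\Davis^{(0)}$ this holds because $\dwall=\dcomb$. In $\NR^{(0)}$ it holds because two vertices of the cube complex are ultrafilters differing on only finitely many halfspaces. Symmetry is immediate, because reversing the order of a chain gives a chain with the same walls, and fineness is a condition on unordered consecutive pairs. If $x=y$, no wall separates them, so the only admissible value is the empty maximum, which we interpret as $0$ (chains are required to be nonempty). If $x\neq y$, some wall $V$ separates them. The one-wall chain $\{V\}$ has no consecutive pairs, so it is vacuously fine, and hence $\dfine(x,y)\geq 1>0$.

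\emph{Triangle inequality.} Let $\mathcal{V}=\{V_i\}$ be a fine chain separating $x$ and $z$ with $|\mathcal{V}|=\dfine(x,z)$. Orient it so that the halfspaces $V_i^-$ containing $x$ increase with $i$, and let $y$ be any third vertex. Each $V_i$ separates $y$ from exactly one of $x$ and $z$, since $y$ lies in exactly one of its two open halfspaces. The nesting $V_j^-\subset V_k^-$ for $j<k$ gives a monotonicity: if $y\in V_j^-$ then $y\in V_k^-$ for all $k>j$. Consequently the walls with $y\in V_i^+$ form an initial convex subchain $\mathcal{V}_1$, and these walls separate $x$ from $y$. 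The walls with $y\in V_i^-$ form the complementary terminal convex subchain $\mathcal{V}_2$, and these walls separate $y$ from $z$.

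Next I will check that $\mathcal{V}_1$ and $\mathcal{V}_2$ are fine. Because they are convex, consecutive walls in either subchain are consecutive in $\mathcal{V}$, so the fineness condition is inherited directly. (Alternatively, one can use the stated equivalence with the condition for all pairs $i\neq j$.) Hence $|\mathcal{V}_1|\leq\dfine(x,y)$ and $|\mathcal{V}_2|\leq\dfine(y,z)$, with an empty subchain contributing $0$. Adding these gives
\[\dfine(x,z)=|\mathcal{V}_1|+|\mathcal{V}_2|\leq\dfine(x,y)+\dfine(y,z).\]

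The only point needing care is the convexity of the split: passing to a non-convex subchain could make two walls consecutive that were not consecutive before, and fineness need not survive that. The monotonicity argument above is exactly what rules this out, so I do not expect any step to be a serious obstacle.
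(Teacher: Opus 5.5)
Your proposal is correct and follows essentially the same route as the paper. Split a longest fine chain separating $x$ and $z$ at $y$, which lies on no wall, into an initial and a terminal convex subchain; each stays fine because consecutive pairs are preserved, and adding their lengths gives the triangle inequality. You also spell out finiteness, symmetry and the $x=y$ convention, which the paper leaves implicit.
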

  \begin{proof}
    The proof is the same, so we do it in $\NR$.
    A single wall is a fine chain, so $x\neq y\in\NR^{(0)}$ implies
    $\dfine(x,y)\geq 1$.
    
    Vertices of $\NR$ do not lie on walls, so for all $x$, $y$, $z$
    in $\NR^{(0)}$, if $\mathcal{V}$ is a
    chain realizing $\dfine(x,z)$ then
    $\mathcal{V}=\mathcal{V}'\sqcup\mathcal{V}''$ where $\mathcal{V}'$
    is an initial subchain separating $x$ from $y$ and $\mathcal{V}''$
    is a terminal subchain separating $y$ from $z$.
    Both $\mathcal{V}'$ and $\mathcal{V}''$ are fine, so $\dfine(x,z)=|\mathcal{V}|=|\mathcal{V}'|+|\mathcal{V}''|\leq\dfine(x,y)+\dfine(y,z)$.
  \end{proof}

  \begin{lemma}[{cf
      \cite[Lemma~6.54]{Gen19}}]\label{lem:low_slack_on_NR_geodesics}
Combinatorial geodesics in $\NR$ are unparameterized
$(4\caprace-1)$--quasirulers in $(\NR^{(0)},\dfine)$.
  \end{lemma}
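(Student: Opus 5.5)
The plan is to check the two conditions for an unparameterized $(4\caprace-1)$--quasiruler on a combinatorial geodesic $\gamma$ in $\NR$, parameterized through its vertices.

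\textbf{Bounded jumps.} Adjacent vertices of $\NR$ are separated by exactly one wall, so their $\dfine$--distance is $1$. This gives the bounded jumps condition with any $\tau\geq 1$.

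\textbf{Almost degeneracy: setup.} Let $x,y,z$ be vertices appearing in this order along $\gamma$, and take fine chains $\mathcal{A}=\{A_i\}$ realizing $\dfine(x,y)$ and $\mathcal{B}=\{B_j\}$ realizing $\dfine(y,z)$. Order $\mathcal{A}$ from $x$ toward $y$ and $\mathcal{B}$ from $y$ toward $z$.

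Since $\gamma$ is a combinatorial geodesic, each wall crosses it at most once. Hence the walls separating $x$ from $y$ and those separating $y$ from $z$ are disjoint families, and every wall in either family separates $x$ from $z$. The only obstruction to $\mathcal{A}\cup\mathcal{B}$ being a chain is therefore crossings between walls of $\mathcal{A}$ and walls of $\mathcal{B}$.

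\textbf{Staircase claim.} If $A_i\transverse B_j$, then $A_{i'}\transverse B_{j'}$ for all $i'\geq i$ and $j'\leq j$. To prove it, I would use the halfspace description of crossing in the CAT(0) cube complex $\NR$: two walls cross if and only if all four quadrants are nonempty. The relevant facts are that $A_{i'}$ separates $A_i$ from $y$, that $B_{j'}$ separates $y$ from $B_j$, and that $y$ lies on the common side.

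\textbf{Trimming.} Delete the last $2\caprace-1$ walls of $\mathcal{A}$ and the first $2\caprace$ walls of $\mathcal{B}$, obtaining $\mathcal{A}''$ and $\mathcal{B}''$. Suppose some $A_i\in\mathcal{A}''$ crossed some $B_j\in\mathcal{B}''$. By the staircase claim, the terminal subchain $\mathcal{A}_{\geq i}$ would be transverse to $\mathcal{B}_{\leq j}$. Here $\mathcal{A}_{\geq i}$ has at least $2\caprace\geq 2$ walls, so it contains a consecutive pair of $\mathcal{A}$. Also $\mathcal{B}_{\leq j}$ has at least $2\caprace+1$ walls. That consecutive pair would then be transverse to a chain of length at least $2\caprace$, contradicting fineness of $\mathcal{A}$. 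So every wall of $\mathcal{A}''$ is disjoint from every wall of $\mathcal{B}''$.

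It follows, again using the sides of $y$, that $\mathcal{A}''\cup\mathcal{B}''$ is a chain separating $x$ from $z$. It has at least $\dfine(x,y)+\dfine(y,z)-(4\caprace-1)$ walls, which would give the almost degeneracy inequality, provided this chain is fine.

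\textbf{Main obstacle: fineness at the junction.} Consecutive pairs inside $\mathcal{A}''$, and inside $\mathcal{B}''$, are consecutive in $\mathcal{A}$ or $\mathcal{B}$, or are separated only by more walls of the same chain, so they inherit fineness. The one pair that needs work is the last wall $A$ of $\mathcal{A}''$ and the first wall $B$ of $\mathcal{B}''$. I must rule out a chain $\mathcal{H}$ with $|\mathcal{H}|=2\caprace$ transverse to both $A$ and $B$.

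My first attempt is as follows. Each $H\in\mathcal{H}$ meets both sides of $A$ and of $B$, while the deleted walls of $\mathcal{A}$ and $\mathcal{B}$ lie between $A$ and $B$. I would argue that each $H$ is transverse either to the deleted walls of $\mathcal{A}$ or to $y$'s side, hence to a consecutive pair of $\mathcal{A}$ (or of $\mathcal{B}$). Doing this for the whole chain $\mathcal{H}$ would contradict fineness of $\mathcal{A}$ or $\mathcal{B}$.

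If that fails, the fallback is to choose the cut points of $\mathcal{A}$ and $\mathcal{B}$ more cleverly. Since $\dfine$ is a maximum over fine chains, I would use the extremal choice of $\mathcal{A}$ to absorb the junction, keeping the total loss at $4\caprace-1$.
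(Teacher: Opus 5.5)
Your setup, staircase claim, and asymmetric trimming (dropping the $2\caprace-1$ walls of $\mathcal{A}$ nearest $y$ and the $2\caprace$ walls of $\mathcal{B}$ nearest $y$) are exactly the paper's. However, there is a genuine gap at the step you flagged yourself: you never show that the junction pair $A$, $B$ is fine, and the sketched first attempt does not work as stated. Nothing you have proved places the deleted walls of $\mathcal{A}$ between $A$ and $B$. A deleted wall of $\mathcal{A}$ near $y$ may cross $B$, because fineness does not stop a single wall of one chain from crossing many walls of the other. Moreover, showing that each $H\in\mathcal{H}$ crosses \emph{some} consecutive pair of $\mathcal{A}$ or $\mathcal{B}$ yields no contradiction unless it is one and the same pair for all of $\mathcal{H}$. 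The fallback of re-choosing cut points via extremality is unnecessary.

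The missing observation is a single buffer wall, and it is precisely why one more wall is trimmed from $\mathcal{B}$ than from $\mathcal{A}$. Index $\mathcal{B}=\{B_0,B_1,\dots\}$ starting from $y$, so that $B=B_{2\caprace}$.
\begin{itemize}
\item The last deleted wall $B_{2\caprace-1}$ is disjoint from $A$. Otherwise your staircase claim makes the $2\caprace$ walls of $\mathcal{A}$ from $A$ onward all transverse to the consecutive pair $\{B_0,B_1\}$, contradicting fineness of $\mathcal{B}$.
\item The points $x$, $y$, $z$ occupy three distinct quadrants of $A$ and $B_{2\caprace-1}$. Since the walls are disjoint, the empty quadrant is ($x$--side of $A$)$\cap$($z$--side of $B_{2\caprace-1}$). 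Hence $A$ lies on the $y$--side of $B_{2\caprace-1}$, while $B_{2\caprace}$ lies on its $z$--side.
\item Any wall meeting both $A$ and $B_{2\caprace}$ is connected, so it crosses $B_{2\caprace-1}$. Thus every chain transverse to $\{A,B_{2\caprace}\}$ is transverse to the consecutive pair $\{B_{2\caprace-1},B_{2\caprace}\}$ of $\mathcal{B}$, and so has fewer than $2\caprace$ walls.
\end{itemize}
This closes the gap, and it is the paper's argument. (Symmetrically, the successor of $A$ in $\mathcal{A}$ is disjoint from $B_{2\caprace}$ by fineness of $\mathcal{A}$, so it would serve equally well as the buffer.)
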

  \begin{proof}
    Let $x$, $y$, and $z$ be vertices on a combinatorial geodesic of
    $\NR$ such that $y$ occurs between $x$ and $z$.
     Let $\mathcal{V}:=\{V_0,V_1,\dots,V_m\}$ be a longest fine chain
    separating $x$ and $y$ and let
    $\mathcal{H}:=\{H_0,H_1,\dots,H_n\}$ be a longest fine chain
    separating $y$ and $z$.
    Assume that indices and halfspaces are chosen so that for all $i$
    and $j$,  $y\in
    V_i^-\subset V_{i+1}^-$ and $y\in H_j^-\subset H_{j+1}^-$.
    
    Suppose for some $i_0$ and $j_0$ we have that $V_{i_0}$ is disjoint from $H_{j_0}$.
    Since $y\in V_{i_0}^-\cap H_{j_0}^-$, we have $V_{i_0}^+\cap
    H_{j_0}^+ =\emptyset$.
    Thus, for all $i>i_0$ and
    $j>j_0$ we have $V_i$ and $H_j$ are disjoint, since $V_i\subset
    V_{i_0}^+$ and $H_j\subset H_{j_0}^+$.
    Conversely, if $V_{i_0}\transverse H_{j_0}$ then $V_i\transverse
    H_j$ for all $i\leq i_0$ and all $j\leq j_0$.
    In particular, if $\max\{i_0,j_0\}\geq 2\caprace-1$ this contradicts fineness
    of $\mathcal{V}$ or $\mathcal{H}$, so  
    $V_i$ is disjoint from $H_j$ for all $i,j\geq 2\caprace-1$.
    Then $\mathcal{K}:=\{V_m,V_{m-1},\dots,V_{2\caprace-1},H_{2\caprace},H_{2\caprace+1},\dots,H_n\}$
    is a chain. 
   It is fine because $H_{2\caprace-1}$ separates $V_{2\caprace-1}$ from $H_{2\caprace}$, so any chain transverse to
    $V_{2\caprace-1}$ and $H_{2\caprace}$ is transverse to $H_{2\caprace-1}$ and $H_{2\caprace}$, so has length less than $2\caprace$, by fineness of $\mathcal{H}$.
   
     The hypothesis that $y$ is between $x$ and $z$ in $\NR$ implies
    that any wall separating $y$ from one of $x$ or $z$  in $\NR$ also separates
    $x$ and $z$, so  $\mathcal{K}$ is a fine chain separating $x$ and
    $z$, which gives a bound:
    \[\dfine(x,z)\geq
      |\mathcal{K}|=|\mathcal{V}|+|\mathcal{H}|-4\caprace+1=\dfine(x,y)+\dfine(y,z)-(4\caprace-1)\qedhere\]
  \end{proof}

  The next result says the vertex sequence of a combinatorial geodesic
  can be parameterized as a
  rough geodesic in $(\NR^{(0)},\dfine)$.
  The point is that along a combinatorial geodesic the
  $\dfine$--distance from the initial vertex is nondecreasing, but it can be constant
  over long subintervals. However, since it is discrete valued, we can
  reparameterize so that those constant valued intervals are each traversed in time 1. 
  \begin{lemma}\label{lem:combinatorial_geodesics_reparameterize_to_rough_geodesics}
    Let $\alpha\from [0,n]\to \NR$ be a combinatorial geodesic, let
    $\tau:=4\caprace-1$, and let
    $T:=\dfine(\alpha(0),\alpha(n))$.
    There exist numbers $0=b_0<b_1<\cdots<b_n=T$ such that the map
    $\beta\from [0,T]\to\NR^{(0)}$ defined by sending $t$ to
    $\alpha(i)$ when $b_i\leq t <b_{i+1}$ is a $(\tau+2)$--rough
    geodesic in $(\NR^{(0)},\dfine)$ that traverses the vertices of the
    image of $\alpha$ in the same order as $\alpha$.
  \end{lemma}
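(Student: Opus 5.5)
We define the breakpoints from the $\dfine$--distance to the starting vertex. Let $f(i):=\dfine(\alpha(0),\alpha(i))$ for $0\leq i\leq n$. Since $\alpha$ is a combinatorial geodesic, every wall separating $\alpha(0)$ from $\alpha(i)$ also separates $\alpha(0)$ from $\alpha(j)$ for $j\geq i$. Hence a fine chain separating $\alpha(0)$ and $\alpha(i)$ also separates $\alpha(0)$ and $\alpha(j)$, and $f$ is nondecreasing with $f(0)=0$ and $f(n)=T$. Consecutive vertices differ by one wall, so $f(i+1)-f(i)\in\{0,1\}$. We want to choose $b_i$ close to $f(i)$ but strictly increasing. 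Because $f$ may be constant on long runs, we compress each maximal run of indices on which $f$ is constant into an interval of total length $1$. Concretely, if $f(i)=k$ for $i$ in a run $i_0\leq i\leq i_1$, set $b_i:=k+(i-i_0)/(i_1-i_0+1)$ for interior runs; at the last run we need $b_n=T$, so there we instead place the points of the run in $[T-1,T]$ with $b_n=T$ (and similarly $b_0=0$). In every case $|b_i-f(i)|\leq 1$ and the $b_i$ are strictly increasing, with $0=b_0<\cdots<b_n=T$. The map $\beta$ then visits the vertices of $\alpha$ in order.

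Next we show that $\beta$ is a rough geodesic. For $s\leq t$ in $[0,T]$ with $\beta(s)=\alpha(i)$ and $\beta(t)=\alpha(j)$, $i\leq j$, we need $|\dfine(\alpha(i),\alpha(j))-(t-s)|\leq\tau+2$. Since $b_i\leq s<b_{i+1}$ and $|b_{i+1}-b_i|\leq 1$ (by construction the jump between consecutive points is at most $1$, because $f$ jumps by at most $1$), we get $|s-f(i)|\leq 1$ up to the one-unit compression, and likewise $|t-f(j)|\leq 1$. I expect the bookkeeping to give $|(t-s)-(f(j)-f(i))|\leq 2$. By the quasiruler property, \fullref{lem:low_slack_on_NR_geodesics}, applied to $\alpha(0)$, $\alpha(i)$, $\alpha(j)$, we have
\[f(j)-\tau\leq f(i)+\dfine(\alpha(i),\alpha(j))-\tau\leq f(j),\]
and the upper inequality $f(j)\leq f(i)+\dfine(\alpha(i),\alpha(j))$ is the triangle inequality. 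Together these give $|\dfine(\alpha(i),\alpha(j))-(f(j)-f(i))|\leq\tau$, hence $|\dfine(\beta(s),\beta(t))-|t-s||\leq\tau+2$.

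The main obstacle is making the compression consistent at the endpoints ($b_0=0$, $b_n=T$) while keeping $|b_i-f(i)|\leq 1$ and consecutive gaps at most $1$. If the naive placement overshoots, I would instead define $b_i:=f(i)+\theta_i$ with offsets $\theta_i\in[0,1)$ that increase within each constant run, and treat the final run separately using offsets in $(-1,0]$. Either choice perturbs by at most $1$ at each end, which fits within the allowed $+2$.
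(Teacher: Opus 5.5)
Your argument is the paper's argument: the same function $f(i)=\dfine(\alpha(0),\alpha(i))$ with increments in $\{0,1\}$, the same sandwich $f(j)-f(i)\le\dfine(\alpha(i),\alpha(j))\le f(j)-f(i)+\tau$ from \fullref{lem:low_slack_on_NR_geodesics}, and the same idea of compressing each plateau of $f$ into unit time, with the $+2$ absorbing the compression error.

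The one step that does not work as written is the endpoint bookkeeping you flagged. With runs left-justified in $[k,k+1)$, the run at level $T-1$ occupies part of $[T-1,T)$. Placing the final run in $[T-1,T]$, or using offsets in $(-1,0]$, can then collide with it. For example, if both runs have two vertices, the first is placed at $T-1,\,T-\tfrac12$, and the natural placement of the second ending at $T$ (offsets $-\tfrac12,0$) is $T-\tfrac12,\,T$. So strict monotonicity fails.

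The clean fix, which is what the paper does, is to right-justify every run. If $f=D$ exactly on the indices $[j,j+k]$, put $b_i:=D+\frac{i-j-k}{k+1}\in(D-1,D]$. A single wall is a fine chain and $\alpha$ visits distinct vertices, so $f(i)=0$ only for $i=0$. Hence this gives $b_0=0$ and $b_n=T$ with no special cases, along with $0\le f(i)-b_i<1$ and $0<b_{i+1}-b_i\le 1$. Then $b_i\le s<b_{i+1}$ implies $s\in(f(i)-1,f(i)+1)$, and your final estimate goes through verbatim.
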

  \begin{proof}
For each $i\in [0,n]\cap\mathbb{Z}$, define
$a_i:=\dfine(\alpha(0),\alpha(i))$.
Since $\alpha$ is a combinatorial geodesic, for all $i\leq j$ in
$[0,n]\cap\mathbb{Z}$, every wall separating $\alpha(0)$ and
$\alpha(i)$ also separates $\alpha(0)$ and $\alpha(j)$, and the only
wall separating $\alpha(0)$ and $\alpha(i+1)$ that does not also
separate $\alpha(0)$ and $\alpha(i)$ is the wall transverse to the
edge between $\alpha(i)$ and $\alpha(i+1)$. 
Thus, $0\leq a_{i+1}-a_i\leq 1$.

\fullref{lem:low_slack_on_NR_geodesics} says $\alpha$ is an
unparameterized $\tau$--quasiruler. Rearranging the almost degeneracy
condition gives that for integers $i\leq j$:
\begin{equation}
  \label{eq:26}
  a_j-a_i\leq \dfine(\alpha(i),\alpha(j))\leq a_j-a_i+\tau
\end{equation}

Define $b_0:=0$.
For each integer $D$ in $[1,T]$ consider the indices $\{i\mid a_i=D\}$, which is
an integer interval $[j,j+k]$ for some $j$ and $k$, by 
monotonicity of $i\mapsto a_i$. 
For $i\in [j,j+k]$ define $b_i:=D+\frac{i-j-k}{k+1}$, so that
$b_j=D-1+\frac{1}{k+1}$ and $b_{j+k}=D$, with the other values $b_i$
linearly interpolating between these two values.
By construction, $i\mapsto b_i$ is strictly increasing and for all $i$
we have:
\begin{equation}
  \label{eq:27}
0\leq a_i-b_i<1  
\end{equation}
For integers $i<j$ in $[0,n]$, relations \eqref{eq:26} and \eqref{eq:27} give:
\[b_j-b_i-1\leq \dfine(\beta(b_i),\beta(b_j))\leq b_j-b_i+\tau+1\]
Since $0<b_{i+1}-b_i\leq 1$, if $x\leq y$ with $b_i\leq x< b_{i+1}$ and
$b_j\leq y<b_{j+1}$ then
$\dfine(\beta(x),\beta(y))=\dfine(\beta(b_i),\beta(b_j))$, by
definition of $\beta$, and $y-x\in b_j-b_i\pm 1$, so:
\[y-x-2\leq b_j-b_i-1\leq \dfine(\beta(x),\beta(y))\leq
  b_j-b_i+\tau+1\leq y-x+\tau+2\qedhere\]
  \end{proof}

  \begin{corollary}\label{cor:combinatorial_geodesics_rough_and_quasiruler}
With $\tau:=4\caprace-1$, for every combinatorial geodesic $\alpha$ in $\NR$ there is a parameterized
$(1,\tau+2,\tau)$--quasiruler in $(\NR^{(0)},\dfine)$ that has the
same vertex sequence as $\alpha$.
The same is true for combinatorial geodesics in $\Davis$.

More generally, every admissible path has image that is uniformly
Hausdorff close to the image of a $(1,\tau+2,\tau)$--quasiruler. 
\end{corollary}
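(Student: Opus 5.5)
The first assertion follows directly from the two preceding lemmas. Given a combinatorial geodesic $\alpha$ in $\NR$, \fullref{lem:combinatorial_geodesics_reparameterize_to_rough_geodesics} produces a reparameterization $\beta\from[0,T]\to\NR^{(0)}$ with the same vertex sequence. This $\beta$ is a $(1,\tau+2)$--quasigeodesic in $(\NR^{(0)},\dfine)$.

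It remains to check almost degeneracy for $\beta$. Take $r<s<t$ in $[0,T]$. Then $\beta(r)$, $\beta(s)$, $\beta(t)$ are vertices of $\alpha$ occurring in that order, possibly with repeats. If two of them coincide, the inequality is immediate from the triangle inequality. Otherwise \fullref{lem:low_slack_on_NR_geodesics} gives
\[\dfine(\beta(r),\beta(s))+\dfine(\beta(s),\beta(t))-\tau\leq\dfine(\beta(r),\beta(t)).\]
The upper bound in the definition is just the triangle inequality for the metric $\dfine$. Hence $\beta$ is a $(1,\tau+2,\tau)$--quasiruler.

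For combinatorial geodesics in $\Davis$, use the canonical inclusion $\Davis^{(0)}\into\NR^{(0)}$. It preserves which walls separate which vertices, so it preserves $\dfine$. A combinatorial geodesic in $\Davis$ crosses each wall at most once, and its vertices in $\NR$ are successive principal ultrafilters differing in exactly one wall. Its image is therefore the vertex sequence of a combinatorial geodesic in $\NR$, and the first part applies. Since $\dfine$ is computed identically in $\Davis$ and $\NR$, the same $\beta$ is a quasiruler in $(\Davis^{(0)},\dfine)$.

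For the general statement, let $\gamma$ be an admissible path. By \fullref{lem:admissible_close_to_combinatorial}, $\gamma$ is at Hausdorff distance at most $D$ from a combinatorial geodesic $\alpha'$, in the $\dcat$ metric. Apply the previous paragraph to $\alpha'$ to obtain the quasiruler.

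The main obstacle is deciding in which metric ``Hausdorff close'' is meant. In $\dcat$ the closeness is immediate, since the quasiruler's image is the vertex set of $\alpha'$. To get it in $\dfine$ as well, I would use $\dfine\leq\dchain\cleq\dcat$, from \fullref{all_metrics_are_equivalent}. This requires interpreting $\dfine$ at non-vertex points via the coarse extension through nearby vertices in a common closed chamber, as done for the other wall metrics. With that interpretation, a bounded $\dcat$ Hausdorff distance gives a bounded $\dfine$ Hausdorff distance, uniformly over all admissible paths.
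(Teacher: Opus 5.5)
Your proposal is correct and follows the paper's proof. Both arguments combine \fullref{lem:low_slack_on_NR_geodesics} with \fullref{lem:combinatorial_geodesics_reparameterize_to_rough_geodesics} in $\NR$, transfer the result to $\Davis$ via the canonical wall-preserving inclusion $\Davis^{(0)}\into\NR^{(0)}$, and finish with \fullref{lem:admissible_close_to_combinatorial}; your explicit handling of repeated vertices and of which metric ``Hausdorff close'' refers to is care the paper leaves implicit.
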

\begin{proof}
 In $\NR$, the combination of  
\fullref{lem:low_slack_on_NR_geodesics} and 
\fullref{lem:combinatorial_geodesics_reparameterize_to_rough_geodesics}
says combinatorial geodesics admit reparameterization as
$(1,4\caprace+1,4\caprace-1)$--quasirulers, so the choice of $\tau$
gives the desired result. 

The same is true in $\Davis$ because  the canonical map
$\Davis^{(0)}\to\NR^{(0)}$ takes combinatorial geodesics to
combinatorial geodesics, and $\dfine$ gives the same distances on
$\Davis^{(0)}$ measured in $\Davis$ as in $\NR$.

Finally, \fullref{lem:admissible_close_to_combinatorial} says that
admissible paths are uniformly Hausdorff close to combinatorial
geodesics. 
\end{proof}

The following is essentially Genevois's proof that his $\delta_L$ metric on a
CAT(0) cube complex defines a non-geodesic hyperbolic metric,
\cite[Proposition~6.53]{Gen19}.
The difference is that his metric is defined in terms of two disjoint
walls being $L$--well-separated if every
\emph{set of walls} transverse to both of them and containing no facing
triple has cardinality at most $L$, whereas we are saying two disjoint
walls make a fine chain if every \emph{chain} transverse to both of them contains
at most $2\caprace -1$ walls.
A pair of $(2\caprace -1)$--well-separated walls makes a fine
chain, and a two-wall fine chain gives a
$(2\caprace-1)\cdot\dim\NR$--well-separated pair, so these two notions
are related, but it turns out to be fairly easy to just rerun the
hyperbolicity  
argument using $\dfine$, as opposed to trying to compare $\dfine$ to
$\delta_{2\caprace-1}$ and $\delta_{(2\caprace-1)\cdot\dim\NR}$.
  \begin{proposition}[{cf \cite[Lemma~2.9 and Proposition~6.53]{Gen19}}]\label{lem:four_point_hyperbolicity}
    $(\NR^{(0)},\dfine)$ is $(18\caprace-5)$--hyperbolic. 
  \end{proposition}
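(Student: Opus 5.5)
We will verify the Gromov four-point condition directly, in the Gromov product form $(x|z)_w\geq\min\{(x|y)_w,(y|z)_w\}-\delta$, following Genevois's argument for \cite[Proposition~6.53]{Gen19}. The main tool is a ``thin triangle'' statement for fine chains, with the constant $\delta$ coming from the bounds of order $\caprace$ in the proof of \fullref{lem:low_slack_on_NR_geodesics}. Fix vertices $w,x,y,z\in\NR^{(0)}$. Halfspaces are oriented so that $w$ lies in the negative side.

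\textbf{Step 1: a fine-chain expression for the Gromov product.} Let $m$ be the median of $w,x,y$ in the CAT(0) cube complex $\NR$. Then $m$ lies on combinatorial geodesics $[w,x]$, $[w,y]$, and $[x,y]$. By \fullref{lem:low_slack_on_NR_geodesics}, combinatorial geodesics are unparameterized $(4\caprace-1)$--quasirulers. Applying this along the three geodesics through $m$ gives
\[(x|y)_w \cequiv \dfine(w,m)\]
up to an additive error of order $\caprace$ (at most $3(4\caprace-1)/2$).

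Now take a longest fine chain $\mathcal{K}$ separating $w$ from $m$. Every wall of $\mathcal{K}$ separates $w$ from both $x$ and $y$. So $(x|y)_w$ is coarsely the length of the longest fine chain of walls separating $w$ from $\{x,y\}$. The inequality in one direction is clear. For the other direction, a wall separating $w$ from both $x$ and $y$ also separates $w$ from $m$, by the median property.

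\textbf{Step 2: the four-point condition.} Let $\mathcal{A}$ be a longest fine chain separating $w$ from $\{x,y\}$, and $\mathcal{B}$ a longest fine chain separating $w$ from $\{y,z\}$. Order both starting from $w$, and assume $|\mathcal{A}|\leq|\mathcal{B}|$. We want a fine chain of length roughly $|\mathcal{A}|-O(\caprace)$ separating $w$ from $\{x,z\}$.

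Both chains separate $w$ from $y$. We then repeat the argument of \fullref{lem:low_slack_on_NR_geodesics}. If $A_i$ and $B_j$ are transverse, then all earlier pairs are transverse. By fineness, this forces $A_i$ and $B_j$ to be disjoint once $i,j\geq 2\caprace-1$. Disjoint walls whose negative halfspaces both contain $w$, and whose positive halfspaces both contain $y$, are nested.

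This lets us produce a comparison between the tails of $\mathcal{A}$ and $\mathcal{B}$. Fix $i\geq 2\caprace-1$. If some $B_j$ with $j\geq 2\caprace-1$ lies beyond $A_i$, that is $B_j^+\subset A_i^+$, then $A_i$ separates $w$ from $z$ as well as from $x$. Otherwise $A_i^+\subset B_j^+$ for all such $j$, and in particular $A_i^+\subset B^+_{|\mathcal{B}|-1}$, so $A_i$ again separates $w$ from $z$. In either case, the tail $\{A_i\}_{i\geq 2\caprace-1}$ separates $w$ from $\{x,z\}$.

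A subchain of a fine chain is fine: this uses the equivalent formulation that the condition holds for all pairs $i\neq j$, not just consecutive ones. So the tail gives
\[(x|z)_w\geq|\mathcal{A}|-(2\caprace-1)-O(\caprace).\]
Collecting the additive errors from Step 1, applied three times, together with the $2\caprace$ loss here should give a constant of the form $18\caprace-5$.

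\textbf{Main obstacle.} The delicate point is the claim in Step 1 that the median $m$ coarsely realizes the Gromov product, and in particular the reverse inequality. Along the geodesic $[x,y]$ through $m$, the quasiruler property gives
\[\dfine(x,y)\geq\dfine(x,m)+\dfine(m,y)-\tau.\]
The same holds for $[w,x]$ and $[w,y]$. Combining these three estimates gives the two-sided bound with error at most $3\tau/2$. We must also confirm that the nesting argument in Step 2 needs no fineness of mixed chains, since we only take a tail of $\mathcal{A}$ itself. Once this is checked, the rest is bookkeeping of the constants.
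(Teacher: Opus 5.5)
Your Step 1 is fine. The median $m$ lies in all three intervals, and the walls separating $w$ from $\{x,y\}$ are exactly those separating $w$ from $m$. The quasiruler estimate then gives $\dfine(w,m)-\tau\le(x|y)_w\le\dfine(w,m)+\tau/2$ with $\tau=4\caprace-1$.

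\textbf{The gap is in Step 2.} The monotonicity you borrow from \fullref{lem:low_slack_on_NR_geodesics} does not hold in your configuration.
\begin{itemize}
\item In that lemma, the two chains leave the middle point $y$ of a combinatorial geodesic in \emph{opposite} directions. This forces every disjoint pair to satisfy $V^+\cap H^+=\emptyset$, and that condition propagates outward.
\item Your chains $\mathcal{A}$ and $\mathcal{B}$ both run from $w$ toward $y$. A disjoint pair can be nested either way, or the two walls can coincide, and nothing propagates.
\end{itemize}
Here is a concrete failure. Take the right-angled Coxeter group $(\langle a,a'\rangle\times\langle b,b'\rangle)*\langle c,c'\rangle$, and set $w=1$, $g=(cc')^K$, $x=ga$, $z=gb$, $y=gab$. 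Let $\mathcal{A}$ be the $2K$ walls crossed by $g$ followed by the wall dual to the edge from $g$ to $ga$; define $\mathcal{B}$ the same way with $gb$.
\begin{itemize}
\item These are longest fine chains, since walls dual to $c$-- and $c'$--edges cross no other wall, and $|\mathcal{A}|=|\mathcal{B}|=2K+1$.
\item We have $A_k=B_k$ for $k<2K$, but $A_{2K}\transverse B_{2K}$.
\item $A_{2K}$ does not separate $w$ from $z$.
\end{itemize}
So for $2K\ge 2\caprace-1$, both the claimed disjointness for $i,j\ge 2\caprace-1$ and the claim that the tail of $\mathcal{A}$ separates $w$ from $z$ fail. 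Independently, your ``otherwise'' branch is a non sequitur. The inclusion $A_i^+\subset B^+_{|\mathcal{B}|-1}$ places $A_i$ beyond every wall of $\mathcal{B}$, and $z\in B^+_{|\mathcal{B}|-1}$ says nothing about which side of $A_i$ contains $z$. A symptom of the problem is that your hypothesis $|\mathcal{A}|\le|\mathcal{B}|$ is never actually used.

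\textbf{The missing idea} is the crossing argument the paper applies to a face of the median cube: all walls along one side cross all walls along the adjacent side, so fineness forces one side to have length at most $1$ or the other at most $2\caprace-1$. In your setting it goes as follows.
\begin{itemize}
\item Let $\mathcal{A}'\subset\mathcal{A}$ be the walls not separating $w$ from $z$, and $\mathcal{B}'\subset\mathcal{B}$ the walls not separating $w$ from $x$. Both are terminal subchains.
\item Each $A\in\mathcal{A}'$ separates $\{w,z\}$ from $\{x,y\}$, and each $B\in\mathcal{B}'$ separates $\{w,x\}$ from $\{y,z\}$. So $w,x,y,z$ occupy all four quarterspaces, and $A\transverse B$.
\item Fineness of $\mathcal{A}$ gives $|\mathcal{A}'|\le 1$ or $|\mathcal{B}'|\le 2\caprace-1$.
\item In the first case $\mathcal{A}\setminus\mathcal{A}'$ is a fine chain separating $w$ from $\{x,z\}$. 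In the second case $\mathcal{B}\setminus\mathcal{B}'$ is one, and this is where $|\mathcal{A}|\le|\mathcal{B}|$ is needed. Either way the chain has length at least $|\mathcal{A}|-(2\caprace-1)$.
\end{itemize}
Combined with Step 1, this yields $(x|z)_w\ge\min\{(x|y)_w,(y|z)_w\}-\delta$ with $\delta=\tfrac32(4\caprace-1)+2\caprace-1\le 18\caprace-5$. Once repaired, your proof is a Gromov-product reformulation of the paper's argument. The paper builds the full median cube of the four points and runs this dichotomy on the face $q_0q_1q_2q_3$ to control the four-point sums. You would only need the crossing of the two terminal subchains, and you get a somewhat better constant.
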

  \begin{proof}
        Let $x_0$, $x_1$, $x_2$, and $x_3$ be vertices of $\NR$. 
        Since $\NR$ is a CAT(0) cube complex, its 1--skeleton is a
        median graph. 
    For each $i$, let $y_i$ be the median of the triple
    $(x_{i+1},x_{i+2},x_{i+3})$, and let $z_i$ be the median of the
    triple 
    $(y_{i+1},y_{i+2},y_{i+3})$, with subscripts mod 4.
    \fullref{fig:four_point_median} shows the idealized version when
    all of these points are distinct. 
    \begin{figure}[h]
      \centering
      \includegraphics{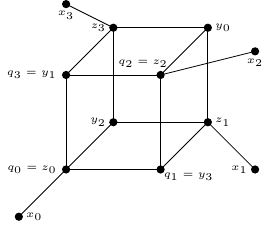}
      \caption{Setup for \fullref{lem:four_point_hyperbolicity}
        pictured as the
      free median algebra on $\{x_0, x_1, x_2, x_3\}$.}
      \label{fig:four_point_median}
    \end{figure}
    A key fact from median algebras is that sides of the
    median cube in \fullref{fig:four_point_median} have a parallelism
    relation such that a wall that separates the endpoints of one edge
    in the cube also separates the endpoints of the 3 edges parallel
    to it.

    The intuition for the hyperbolicity proof is to show that in any
    configuration as in \fullref{fig:four_point_median} at least two
    of the dimensions of the cube are small, so the figure is
    uniformly tree-like.

   Formally, there is a four-point $\delta$--hyperbolicity condition \cite[Definition~III.H.1.20]{BriHae99}:
       \begin{equation}
         \label{eq:four_point_hyperbolicity}
         \begin{split}
           \dfine(x_0,x_2)+&\dfine(x_1,x_3)\leq\\
           &\max\{\dfine(x_0,x_1)+\dfine(x_2,x_3), \dfine(x_0,x_3)+\dfine(x_1,x_2)\}+2\delta
         \end{split}\tag{$Q(\delta)$}
       \end{equation}
We will establish condition $Q(18\caprace-5)$.

It will be enough in the argument to focus on one face of the cube.
We arbitrarily choose the front face in the figure, and define its
corners to be $q_0:=z_0$, $q_1:=y_3$, $q_2:=z_2$, and $q_3:=y_1$.
The parallelism relation implies $a:=\dfine(q_0,q_1)=\dfine(q_2,q_3)$ and $b:=\dfine(q_0,q_3)=\dfine(q_1,q_2)$.
        Furthermore, every wall separating $q_0$ and $q_1$ crosses
        every wall separating $q_0$ and $q_3$.
        In particular, if $\mathcal{V}$ is a fine chain realizing
        $a=\dfine(q_0,q_1)$ and $\mathcal{H}$ is a fine chain
        realizing $b=\dfine(q_0,q_3)$ then
        $\mathcal{V}\transverse\mathcal{H}$, so fineness demands:
        \begin{equation}
          \label{eq:12}
         \min\{a,b\}=\min\{|\mathcal{V}|,|\mathcal{H}|\}\leq
         1\quad\text{or}\quad\max\{a,b\}=\max\{|\mathcal{V}|,|\mathcal{H}|\}\leq
         2\caprace-1
       \end{equation}
    
Let $[p_0,p_1]$ denote the \emph{interval} between $p_0$ and $p_1$, which is
all vertices that lie on combinatorial geodesics between $p_0$ and $p_1$.
From the median point definitions we have, for each $i$:
    \[[x_i,q_i]\cup[q_i,q_{i+1}]\cup[q_{i+1},x_{i+1}]\subset
      [x_i,x_{i+1}]\]
    Applying \fullref{lem:low_slack_on_NR_geodesics} twice:
    \begin{align*}
      \dfine(x_3,x_0)&\geq \dfine(x_3,q_3)+\dfine(q_3,x_0)-4\caprace+1\\
      &\geq
        \dfine(x_3,q_3)+\dfine(q_3,q_0)+\dfine(q_0,x_0)-8\caprace+2\\
      &=\dfine(x_0,q_0)+\dfine(x_3,q_3)+b-8\caprace+2
    \end{align*}
    A similar estimate holds for $\dfine(x_1,x_2)$. Combining them gives:
    \begin{equation}
      \label{eq:1}
      \dfine(x_3,x_0)+\dfine(x_1,x_2)\geq
      2b-16\caprace +4 +\sum_{i=0}^3\dfine(x_i,q_i)
    \end{equation}
    A similar argument gives:
    \begin{equation}
      \label{eq:9}
       \dfine(x_0,x_1)+\dfine(x_2,x_3)\geq
      2a-16\caprace +4+\sum_{i=0}^3\dfine(x_i,q_i)
    \end{equation}
    On the other hand, the triangle inequality gives:
    \begin{equation}
      \label{eq:4}
        \dfine(x_0,x_1)+\dfine(x_2,x_3)\leq 2a+\sum_{i=0}^3\dfine(x_i,q_i)
    \end{equation}
    Suppose, without loss of generality, that:
    \begin{equation}
      \label{eq:15}
      \dfine(x_0,x_3)+\dfine(x_1,x_2)\leq\dfine(x_0,x_1)+\dfine(x_2,x_3)
    \end{equation}
        Then
        \eqref{eq:1}, \eqref{eq:4}, and \eqref{eq:15} combine to give:
        \begin{equation}
          \label{eq:5}
          b-a\leq   8\caprace-2
        \end{equation}

        Now, from the triangle inequality:
        \[\dfine(x_0,x_2)+\dfine(x_1,x_3)\leq 2a+2b+\sum_{i=0}^3\dfine(x_i,q_i)\]
        Combined with \eqref{eq:1}:
        \begin{equation}
          \label{eq:10}
          \dfine(x_0,x_2)+\dfine(x_1,x_3)\leq \dfine(x_0,x_3)+\dfine(x_1,x_2)+16\caprace-4+2a
        \end{equation}
        Similarly, using \eqref{eq:9}:
        \begin{equation}
          \label{eq:11}
           \dfine(x_0,x_2)+\dfine(x_1,x_3)\leq \dfine(x_0,x_1)+\dfine(x_2,x_3)+16\caprace-4+2b
        \end{equation}

Recall from \eqref{eq:12} that if  $b\geq 2$ then $a\leq 2\caprace-1$, so
\eqref{eq:5} says $b\leq 10\caprace-3$, whereas if $b<2$ then 
$b\leq 10\caprace-3$ is still true, so \eqref{eq:11} gives:
\begin{equation}
  \label{eq:13}
  \dfine(x_0,x_2)+\dfine(x_1,x_3)\leq \dfine(x_0,x_1)+\dfine(x_2,x_3)+36\caprace-10
\end{equation}

The assumption \eqref{eq:15} says that \eqref{eq:13} realizes the
maximum appearing in \ref{eq:four_point_hyperbolicity}, so \eqref{eq:13}  establishes
\ref{eq:four_point_hyperbolicity} with $\delta=18\caprace-5$.
  \end{proof}

 We can extend $\dfine$ to a metric on all of $\Davis$ by adding some
  padding for non-vertices.
  \begin{definition}\label{def:dfine_extension}
    Extend $\dfine$ to $\dfine\from\Davis\times\Davis\to\mathbb{R}$
    symmetrically as
    follows.
    Let $a\odot b$ be 0 if $a=b$ and 1 if $a\neq b$.

    \[\dfine(x,y):=
      \begin{cases}
        0&\text{ if }x=y\\
        \max_{x',y'}\dfine(x',y')+\frac{x\odot x' + y\odot y'}{2}&\text{ else}
      \end{cases}
    \]
    The maximum is taken over vertices $x'$ and $y'$ such that $x$ and
    $x'$ are contained in a common closed cell and no wall separates
    $x$ and $x'$, and similarly for $y$ and $y'$. 
  \end{definition}

  \begin{lemma}\label{lem:metric_extension}
    \fullref{def:dfine_extension} defines a metric on $\Davis$
    extending $\dfine$ on $\Davis^{(0)}$.
    
    For $x,y\in\Davis$ we have:
    \[\dfine(x,y)-3\leq\max\{|\mathcal{V}|\mid \mathcal{V}\text{ is a fine
    chain separating $x$ and $y$}\}\leq\dfine(x,y)\]
  \end{lemma}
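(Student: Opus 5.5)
To prove \fullref{lem:metric_extension}, I would first pin down the local structure. For each point $x\in\Davis$, let $A(x)$ be the set of vertices $x'$ that share a closed cell with $x$ and are not separated from $x$ by any wall. This set is nonempty: a closest vertex of the closed chamber containing $x$ works, and when $x$ lies on walls we can take the vertex of any adjacent chamber whose closure contains $x$. If $x$ is a vertex then $A(x)=\{x\}$, because every other vertex of a cell containing $x$ is separated from $x$ by a wall. So on $\Davis^{(0)}$ the definition reduces to the original $\dfine$, which gives the extension claim.

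Symmetry is immediate from the definition, and $\dfine(x,y)=0$ exactly when $x=y$. If $x\neq y$, then either some padding term is $1/2$ or $x,y$ are distinct vertices with $\dfine\geq 1$.

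For the triangle inequality $\dfine(x,z)\leq\dfine(x,y)+\dfine(y,z)$, I may assume $x,y,z$ are distinct, since the degenerate cases are easy. Choose maximizers $x'\in A(x)$ and $z'\in A(z)$ for $\dfine(x,z)$, and pick any $y'\in A(y)$. The vertex triangle inequality gives $\dfine(x',z')\leq\dfine(x',y')+\dfine(y',z')$.

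I then need to absorb the padding. If $y$ is a vertex, then $y'=y$, and $\dfine(x,y)\geq\dfine(x',y)+(x\odot x')/2$, with the analogous bound for $(y,z)$, so the inequality follows. If $y$ is not a vertex, then $y\odot y'=1$, so each of $\dfine(x,y)$ and $\dfine(y,z)$ picks up an extra $1/2$. The right-hand side is therefore at least $\dfine(x',z')+(x\odot x'+z\odot z')/2+1$, which is more than enough.

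For the comparison with fine chains, let $\mathcal{V}$ be a fine chain separating $x$ and $y$. No wall separates $x$ from $x'$, but walls through $x$ itself cause trouble: a wall of $\mathcal{V}$ cannot pass through $x$, since it separates $x$ from $y$ and so misses $x$. Hence every wall of $\mathcal{V}$ has $x$ in an open halfspace, and therefore $x'$ lies in the same open halfspace. Thus $\mathcal{V}$ separates $x'$ from $y'$, giving $|\mathcal{V}|\leq\dfine(x',y')\leq\dfine(x,y)$.

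Conversely, take maximizers $x',y'$ and a fine chain $\mathcal{V}$ realizing $\dfine(x',y')$. I would discard the walls of $\mathcal{V}$ that fail to separate $x$ from $y$. Such a wall must contain $x$ or $y$, since otherwise $x$ lies on the same side as $x'$ and $y$ on the same side as $y'$. The walls of a chain are disjoint, so at most one wall contains $x$ and at most one contains $y$. Removing these (an extremal wall, or a middle one) still leaves a chain. The remaining chain is fine: for two walls consecutive after a removal, any chain transverse to both is transverse to the intermediate walls by separation, as in \fullref{lem:low_slack_on_NR_geodesics}. So the remaining chain has length at least $\dfine(x',y')-2\geq\dfine(x,y)-3$, since the padding is at most $1$.

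The main obstacle I expect is justifying that $A(x)$ is nonempty and that no wall in a chain separating $x$ from $y$ passes through $x$. Both are handled via closed chambers, as in \fullref{cpp_and_gate_agree}.
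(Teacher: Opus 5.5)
Your proposal is correct and follows essentially the same route as the paper: you compare through the admissible vertices $x',y'$, absorb the padding terms to get the triangle inequality, and lose at most the two walls passing through $x$ or $y$ in the fine-chain comparison. The differences are cosmetic. The paper allows two possibly distinct vertices $y',y''$ at $y$ and bounds $\dfine(y',y'')\le 1$, whereas you split on whether $y$ is a vertex. The paper also notes that the discarded walls can only be the extremes $V_0,V_n$, so fineness of the remaining convex subchain is immediate and your separation argument for a removed middle wall is not needed.
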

  \begin{proof}
  It is reflexive, symmetric, and extends $\dfine$, by
  construction.
  Suppose $x,y,z\in\Davis$. We check the triangle inequality.
  If $x=z$ there is nothing to prove, so assume $x\neq z$.
  Let $x'$ and $z'$ be as in \fullref{def:dfine_extension} such that
  $\dfine(x,z)=\dfine(x',z')+\frac{x\odot x'+z\odot z'}{2}$.
Let $y'$ and $y''$ be any two vertices contained in the same smallest
closed cell with $y$ and not separated from $y$ by any wall.  
  Then estimate:
  \begin{align*}
    \dfine(x,z)&=\dfine(x',z')+\frac{x\odot x'+z\odot z'}{2}\\
    &\leq \dfine(x',y')+\dfine(y',y'')+\dfine(y'',z')+\frac{x\odot
      x'+z\odot z'}{2}\\
    &= \dfine(x',y')+\frac{x\odot x'+y\odot
      y'}{2}+\dfine(y'',z')+\frac{y\odot y''+z\odot z'}{2}\\
               &\qquad+\dfine(y',y'')-\frac{y\odot y'+y\odot y''}{2}\\
    &\leq \dfine(x,y)+\dfine(y,z)+\dfine(y',y'')-\frac{y\odot y'+y\odot y''}{2}\\
  \end{align*}
  The vertices $y'$ and $y''$ were chosen in the same smallest closed
  cell containing $y$, and not separated from $y$ by any wall.
  Every wall separating $y'$ and $y''$ contains $y$, so they all
  intersect, so maximal chains separating $y'$ and $y''$ have length 1. 
Thus, $\dfine(y',y'')\in\{0,1\}$. 
Furthermore, for $\dfine(y',y'')=1$ to occur it must have been that
$y$ was not already a vertex, so $y$, $y'$, and $y''$ are distinct.
It follows that in any case $\dfine(y',y'')-\frac{y\odot y'+y\odot
  y''}{2}$ is nonpositive, so
$\dfine(x,z)\leq\dfine(x,y)+\dfine(y,z)$.
This shows that $\dfine$ is a metric on $\Davis$.

When $x$ and $y$ are distinct we have, by definition, that
$\dfine(x,y)=\dfine(x',y')+\frac{x\odot x'+y\odot y'}{2}$, and 
\[\dfine(x',y')=\max\{|\mathcal{V}|\mid \mathcal{V}\text{ is a fine
    chain separating $x'$ and $y'$}\}\]
No walls separate $x$ and $x'$ or $y$ and $y'$, so a fine chain
separating $x$ and $y$ also separates $x'$ and $y'$.
Consider a fine chain separating $x'$ and $y'$ with $x'\in
V_0^-\subset V_1^-\subset\cdots\subset  V_n^-$ and $y'\in V_n^+$.
No wall separates $x'$ from $x$, so $x\in V_0^-$, but $x$ might sit on
$V_0$. Similarly, $y\in V_n^+$, but possibly $y\in V_n$.
However, $V_1,\dots,V_{n-1}$ is a fine chain separating $x$ and
$y$. Now let $E:=\frac{x\odot x' + y\odot y'}{2}$ and estimate:
\begin{align*}
  \dfine(x,y)&=\dfine(x',y')+E\\
&=\max\{|\mathcal{V}|\mid \mathcal{V}\text{ is a fine chain separating
                       $x'$ and $y'$}\}+E\\
  &\geq\max\{|\mathcal{V}|\mid \mathcal{V}\text{ is a fine chain separating
    $x$ and $y$}\}+E\\
&\geq  \max\{|\mathcal{V}|\mid \mathcal{V}\text{ is a fine chain separating
                       $x'$ and $y'$}\}-2 +E\\
  &= \dfine(x',y')-2 +E\\
  &= \dfine(x,y)-2
\end{align*}
Conclude that:
\[\dfine(x,y)-2-E\leq\max\{|\mathcal{V}|\mid \mathcal{V}\text{ is a fine chain separating
    $x$ and $y$}\}\leq\dfine(x,y)\]
Since $E\leq 1$, this concludes the proof.
\end{proof}

  \begin{corollary}\label{cor:dfine_is_hyperbolic}
    $(\Davis,\dfine)$ is hyperbolic.
  \end{corollary}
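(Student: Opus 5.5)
The plan is to deduce hyperbolicity of $(\Davis,\dfine)$ from \fullref{lem:four_point_hyperbolicity} by showing that $(\Davis,\dfine)$ is within uniformly bounded Hausdorff distance of its vertex set $(\Davis^{(0)},\dfine)$. The four-point condition $Q(\delta)$ is stable under such bounded perturbations, up to an additive change in $\delta$.

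First, $(\Davis^{(0)},\dfine)$ is isometric to a subset of $(\NR^{(0)},\dfine)$. This holds because the canonical inclusion $\Davis^{(0)}\into\NR^{(0)}$ preserves which walls separate which vertices, as observed at the start of this subsection. The four-point condition $Q(\delta)$ is inherited by subsets, since it only quantifies over quadruples of points. Hence $(\Davis^{(0)},\dfine)$ satisfies $Q(18\caprace-5)$.

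Second, I will show that every point of $\Davis$ is at $\dfine$--distance at most $1$ from a vertex. Given $x\in\Davis$ that is not a vertex, choose $x'$ as in \fullref{def:dfine_extension}: a vertex in a common closed cell with $x$ that is not separated from $x$ by any wall, which exists since $x$ lies in some closed chamber. By the definition, $\dfine(x,x')=\max_{x'',y''}\dfine(x'',y'')+\frac{x\odot x''+x'\odot y''}{2}$. Here $y''=x'$ is forced, and $x''$ ranges over vertices sharing a cell with $x$ and not separated from it by any wall. Every wall separating two such vertices contains $x$, so all such walls pairwise intersect, and any chain between them has length at most $1$ (as in the proof of \fullref{lem:metric_extension}). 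Therefore $\dfine(x,x')\leq 1+\tfrac12$, which gives a uniform bound $\leq 2$.

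Finally, I will check that $Q(\delta)$ perturbs correctly. If each $x_i$ is replaced by a vertex $x_i'$ with $\dfine(x_i,x_i')\leq 2$, then each of the six pairwise distances changes by at most $4$. Each side of $Q(\delta)$ is a sum of two distances, so each side changes by at most $8$. Thus $(\Davis,\dfine)$ satisfies $Q(18\caprace-5+8)$, and it is hyperbolic in the four-point sense, which is the appropriate notion since the space is not geodesic.

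I expect no serious obstacle here. The only point needing care is the second step: confirming from \fullref{def:dfine_extension} that the nearest-vertex padding is uniformly bounded, in particular that vertices sharing a cell with $x$ and unseparated from it are at $\dfine$--distance at most $1$ from each other.
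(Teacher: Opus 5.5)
Your proposal is correct and follows the same route as the paper: transfer the four-point condition from $(\NR^{(0)},\dfine)$ to $\Davis^{(0)}$, use coarse density of $\Davis^{(0)}$ in $(\Davis,\dfine)$, and absorb the density constant into $\delta$. The only difference is that you make explicit what the paper's one-line proof leaves implicit, namely the bound $\dfine(x,x')\leq 3/2$ coming from \fullref{def:dfine_extension} and the resulting constant $18\caprace-5+8$.
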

  \begin{proof}
    This follows from 
     \fullref{lem:four_point_hyperbolicity} since $\Davis^{(0)}$ is coarsely dense
    in $\Davis$, so the four-point hyperbolicity condition
    \ref{eq:four_point_hyperbolicity} is satisfied, after increasing
    $\delta$ to account for the density constant. 
  \end{proof}

%% file: collapse.tex
  \begin{lemma}\label{lem:wide_parabolics_have_diameter_one}
    If $\Upsilon$ is a wide parabolic subcomplex of $\Davis$ then
    $\Upsilon^{(0)}$ has diameter 1 in $(\Davis^{(0)},\dfine)$.
  \end{lemma}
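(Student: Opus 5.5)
We need to show: for $x\neq y$ in $\Upsilon^{(0)}$, every fine chain separating $x$ and $y$ has length at most 1. Equivalently, no two disjoint walls $V$, $V'$ that both separate $x$ from $y$ can form a fine chain. Both walls separate two vertices of $\Upsilon$, so both cut $\Upsilon$, and their reflections lie in $P:=\stab(\Upsilon)$. It therefore suffices to show that any two disjoint walls $V,V'$ cutting $\Upsilon$ are mutually transverse to a chain of length $2\caprace$, indeed to an infinite chain. This is essentially the argument already given in the second half of the proof of \fullref{prop:near_flat_implies_long_grids}. There, the pair $V,V'$ came from a chain cutting a segment inside $\Upsilon$, but only the facts that $V$ and $V'$ are disjoint and cut $\Upsilon$ were used.

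After translating by $W$, assume $\Upsilon=\Davis_{T\sqcup T'}$ with $T$ and $T'$ commuting. The argument splits into the two cases of \fullref{def:wide}.

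\emph{Product case.} Here both $W_T$ and $W_{T'}$ are nonspherical. By \fullref{lem:reflection_splitting}, each reflection of $P$ lies in $W_T$ or in $W_{T'}$. Reflections from opposite factors commute, so their walls cross. Since $V$ and $V'$ are disjoint, $\langle\refl{V},\refl{V'}\rangle$ is infinite dihedral, so both reflections lie in the same factor, say $W_T$. Because $W_{T'}$ is infinite, it contains two reflections generating $\dihedral_\infty$. Their orbit walls under this dihedral group form a biinfinite chain whose reflections all lie in $W_{T'}$. These commute with $\refl{V}$ and $\refl{V'}$, so every wall of this chain is transverse to both $V$ and $V'$.

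\emph{Affine case.} Here $T$ is irreducible higher rank affine and $T'$ is spherical. All reflections in $W_{T'}$ have walls crossing everything in $W_T$, and $W_{T'}$ is finite, so $V$ and $V'$ both have reflections in $W_T$. Thus $V\cap\Davis_T$ and $V'\cap\Davis_T$ are disjoint affine hyperplanes in the Euclidean tiling $\Davis_T\cong\mathbb{E}^n$ with $n\geq 2$, so they lie in the same parallel family. Any other parallel family gives walls of $\Davis_T$ crossing both of them inside $\Davis_T$. The corresponding walls of $\Davis$ form an infinite chain transverse to $\{V,V'\}$; they are pairwise disjoint in $\Davis$ because their reflections generate infinite dihedral subgroups.

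In either case $\{V,V'\}$ is not fine, so $\dfine(x,y)\leq 1$. Since $\Upsilon$ is nonspherical it has distinct vertices, and a single separating wall is a fine chain, so the diameter is exactly 1.

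The main point needing care is the affine-case claim that crossing in $\Davis_T$ and disjointness in $\Davis_T$ agree with crossing and disjointness in $\Davis$. This follows because transversality is detected by whether the dihedral group $\langle\refl{M},\refl{M'}\rangle$ is finite, and that group is the same whether computed in $W_T$ or in $W$.
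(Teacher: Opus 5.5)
Your proof is correct and follows the paper's approach: the paper also appeals to the analysis in the proof of \fullref{prop:near_flat_implies_long_grids}, where any two disjoint walls cutting a wide parabolic subcomplex are mutually transverse to an infinite chain, so fine chains cutting $\Upsilon$ are single walls. You simply spell out that argument in both the product and affine cases. You also usefully check that transversality and disjointness in $\Davis_T$ agree with those in $\Davis$.
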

  \begin{proof}
    Recall from the analysis in the proof of
    \fullref{prop:near_flat_implies_long_grids} that fine chains
     in a wide Coxeter group are single walls, since every pair of
     disjoint walls is mutually transverse to an infinite set of
     walls. 
    It follows that fine chains that cut a wide parabolic subcomplex
    of $\Davis$ consist of single walls. 
  \end{proof}

  Recall that when $W$ contains higher rank affine parabolics the $W$--action on the Niblo-Reeves cube complex $\NR$ is not cocompact.
  It turns out that since the $\dfine$ metric collapses such
  parabolics, it collapses the difference between $\Davis^{(0)}$ and
  $\NR^{(0)}$:
  \begin{proposition}\label{prop:Davis_vertices_coarsely_dense_in_NR_dfine}
   The vertices of $\Davis^{(0)}$ are coarsely dense in  $(\NR^{(0)},\dfine)$.
\end{proposition}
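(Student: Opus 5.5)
Given a vertex $p\in\NR^{(0)}$, the plan is to find a vertex $x\in\Davis^{(0)}$ with $\dfine(x,p)$ bounded by a constant independent of $p$. We use the standard description of $\NR^{(0)}$ as consistent ultrafilters on the halfspace system, whose walls are identified with those of $\Davis$. Choose $x\in\Davis^{(0)}$ minimizing $\dwall(x,p)$, i.e.\ the number of walls separating $x$ and $p$ in $\NR$. Let $\mathcal{M}$ be the set of walls separating $x$ from $p$.

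\textbf{Key claim.} Every fine chain in $\mathcal{M}$ has length bounded by a constant. It suffices to show that any chain $\mathcal{V}=\{V_0,\dots,V_n\}\subset\mathcal{M}$ with $n$ large contains two walls mutually transverse to a chain of length $2\caprace$.

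\textbf{First case: bounded wall crossings.} Use minimality of $x$. If $V_0$ is the wall of $\mathcal{V}$ closest to $x$ and $V_0$ is adjacent to $x$ (dual to an edge at $x$), then crossing that edge gives a vertex closer to $p$, contradicting minimality. More generally, every wall separating $x$ from $p$ that bounds the chamber of $x$ contradicts minimality. So each wall of $\mathcal{V}$, in particular $V_0$, is separated from $x$ by walls not in $\mathcal{M}$. Each such wall separates $x$ from $V_0$ but not $x$ from $p$, so it must cross $V_0$ and all of the nested walls $V_i$. This produces many walls transverse to $\mathcal{V}$, which is the raw material for the grid.

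\textbf{Second case: building the grid.} Alternatively, use that the failure of cocompactness comes exactly from affine parabolics: vertices of $\NR$ far from $\Davis^{(0)}$ lie in cubes coming from irreducible higher rank affine parabolic cosets. Concretely, we need $\dchain$-many walls separating $x$ from $V_0$. Their number is comparable to $\dcat(x,V_0)$, by \fullref{SPWT} or by combinatorics, and it grows if $\mathcal{V}$ is long with minimality. These walls form a long chain $\mathcal{H}$ transverse to $\mathcal{V}$. Applying \fullref{lem:grid_wide} then puts $\mathcal{V}$ inside a wide parabolic, where fine chains are single walls by \fullref{lem:wide_parabolics_have_diameter_one}. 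Combining the fine-chain bound with \fullref{lem:metric_extension}-type estimates gives the constant bound on $\dfine(x,p)$.

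\textbf{Main obstacle.} The hard step is showing that a long chain separating $x$ from $p$ forces a long transverse chain. I would first try to pick $\mathcal{H}$ among walls separating $x$ from $V_{k}$ for the middle wall $V_k$. I would then argue via Dilworth, as in \fullref{all_metrics_are_equivalent}, that if all such crossing sets were small, $\refl{V_k}$-type moves would reduce $\dwall(x,p)$. Failing that, I would exploit the finite-dimensional pairwise-crossing bound $\dim\NR$ to extract $2\caprace$ nested walls from the crossers.
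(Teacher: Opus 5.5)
Your setup matches the paper's: choose $x\in\Davis^{(0)}$ minimizing $\dwall(x,p)$ and observe that no wall of $\mathcal{M}$ bounds the chamber of $x$. The reduction to showing that long chains in $\mathcal{M}$ cannot be fine is also fine. The gap is the step that is supposed to produce the transverse chain $\mathcal{H}$, and both mechanisms you propose fail.

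\begin{itemize}
\item Suppose a wall $H$ separates $x$ from $V_0$. Then, writing $^+$ for the side away from $x$, we get $V_0^+\subset H^+$, so $p\in H^+$ and $H$ also separates $x$ from $p$. Hence such walls lie in $\mathcal{M}$, contrary to your ``First case''.
\item Such walls are also disjoint from $V_0$, so they can never be transverse to $\mathcal{V}$, contrary to your ``Second case''.
\item The walls that genuinely appear are those crossed by a minimal gallery from $x$ to a chamber adjacent to $V_0$ and not lying in $\mathcal{M}$. These do cross $V_0$. But nothing forces them to cross $V_1,\dots,V_n$ (one may have $V_1$ entirely on one side), nor to be pairwise nested.
\end{itemize}

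Your last paragraph lists things to try rather than giving an argument. The grid machinery (\fullref{lem:grid_wide}, \fullref{prop:induction_argument}) starts from a grid, so it cannot manufacture one here. The central claim, that a long chain of walls separating $x$ from $p$ has two walls crossed by a chain of length $2\caprace$, is therefore unproved.

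The missing ingredient is a genuinely Coxeter-theoretic statement about cubical chambers, which the paper imports from Caprace: \cite[Proposition~24]{Cap06}. It says there is $K=K(W,S)$ with the following property. If $p$ lies in the cubical chamber of $x$, which is exactly what your minimality observation gives, then any set of more than $K$ pairwise disjoint walls separating $x$ from $p$ generates $\dihedral_\infty$ and has irreducible higher rank affine parabolic closure.

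With this, let $\mathcal{F}$ be a longest fine chain separating $x$ from $p$. If $|\mathcal{F}|>K$, its walls cut an affine parabolic subcomplex. There, any two disjoint walls belong to one parallel family and are crossed by an infinite chain from another family, which forces $|\mathcal{F}|=1$, a contradiction. Hence $\dfine(x,p)\le K$.

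Without this result, or an independent proof of it (which is where the real difficulty lies), your argument has a gap at its hardest step.
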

\begin{proof}
  We identify $\Davis^{(0)}$ with its canonical image in $\NR^{(0)}$.
For $x\in\NR^{(0)}$, choose $v_0\in \Davis^{(0)}$ minimizing the
combinatorial distance in $\NR^{(1)}$ from $x$ to $\Davis^{(0)}$.

We first recall that $x$ lies in the cubical chamber containing $v_0$,
as in \cite[Section~7.5]{Cap06}:  let $\Psi(v_0)$ be the set of
halfspaces of $\NR$ that contain $v_0$ but not one of its neighbors in $\Davis^{(0)}$.
If $x$ were not contained in some $\psi\in\Psi(v_0)$, then a combinatorial geodesic in $\NR$ from $x$ to $v_0$ would cross the wall $\partial\psi$ in its last edge.
Let $v_1$ be the neighbor of $v_0$ in $X_0$ not contained in $\psi$.
Then the same geodesic, with its last edge replaced by the edge to
$v_1$, would show that $v_1$ is strictly closer to $x$ than $v_0$ is,
contradicting the choice of $v_0$. Thus, $x$ belongs to the cubical
chamber of $v_0$.

Let $\mathcal{F}$ be a longest fine chain separating $v_0$ from $x$.
The walls of $\mathcal{F}$ are pairwise disjoint and contained in the set of walls $\mathcal{M}(v_0,x)$ separating $v_0$ from $x$.

By \cite[Proposition~24]{Cap06} there is a constant $K=K(W,S)$ such
that for any set of pairwise disjoint walls $\mathcal{M}$ contained in
$\mathcal{M}(v_0,x)$ with $|\mathcal{M}|>K$, we have
$W(\mathcal{M})\cong\dihedral_\infty$ and $\pc(\mathcal{M})$ is
irreducible higher rank affine.

Thus, $|\mathcal{F}|>K$ implies $\pc(\mathcal{F})$ is irreducible
higher rank affine, which implies $|\mathcal{F}|=1$, which is a
contradiction, since $K$ is clearly at least 1.

Conclude that $\dfine(x,v_0)=|\mathcal{F}|\leq K$. 
\end{proof}

%% file: morse_recognition.tex

In \fullref{sec:fine_chain_morse_recognizing} we show that
quasigeodesics in $(\Davis,\dcat)$ are Morse if and only if they are
(parameterized) quasigeodesics in $(\Davis,\dfine)$.
In \fullref{sec:recognition_equivalence} we discuss related notions in
the literature.
In \fullref{sec:morse_parabolics} we characterize Morse and stable parabolic subgroups. 

\subsubsection{The fine chain metric is Morse recognizing}\label{sec:fine_chain_morse_recognizing}
 \begin{theorem}\label{thm:Morse_recognizing_general}
    A quasigeodesic $\gamma$ is Morse in $(\Davis,\dcat)$ if and
    only if it is quasigeodesic in $(\Davis,\dfine)$.
    More precisely:
    \begin{enumerate}
    \item For all $\lambda$, $\epsilon$, $\chi$ there exist $\lambda'$
      and $\epsilon'$ such that for every $\chi$--strongly contracting
      $(\lambda,\epsilon)$--quasigeodesic path $\gamma\from
      I\to (\Davis,\dcat)$ the path $\gamma\from I\to
      (\Davis,\dfine)$ is a $(\lambda',\epsilon')$--quasigeodesic.\label{item:morse_implies_fine_qgeod}
      \item  For all $\lambda$, $\epsilon$, $\lambda'$, $\epsilon'$
        there exists $\chi$ such that for every 
        $(\lambda,\epsilon)$--quasigeodesic path
        $\gamma\from I\to (\Davis,\dcat)$ such that $\gamma\from I\to
        (\Davis,\dfine)$ is $(\lambda',\epsilon')$--quasigeodesic,
        $\gamma$ is $\chi$--strongly contracting in $(\Davis,\dcat)$.\label{item:fine_qfeod_imlies_morse}
    \end{enumerate}
  \end{theorem}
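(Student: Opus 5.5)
Both directions reduce to admissible quasigeodesics. By \fullref{lem:admissible_close_to_combinatorial} and \fullref{lem:geodesics_are_admissible}, any $(\lambda,\epsilon)$--quasigeodesic in $(\Davis,\dcat)$ is uniformly Hausdorff close to a CAT(0) geodesic path or a combinatorial path built from it. I would first tame $\gamma$: replace it by a piecewise geodesic path through $\gamma(n)$, $n\in\mathbb{Z}\cap I$, and then by a nearby admissible quasigeodesic. This changes neither strong contraction (up to effective constants) nor the $\dfine$--quasigeodesic property. The latter holds because $\dfine\leq\dchain\cleq\dcat$ on $\Davis$ (by \fullref{all_metrics_are_equivalent} and \fullref{lem:metric_extension}), so bounded $\dcat$--perturbations are bounded $\dfine$--perturbations. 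Throughout I use the upper bound $\dfine(\gamma(s),\gamma(t))\leq\Lambda\dcat+\Epsilon+3\cleq|s-t|$, which comes for free. So in each direction only the lower bound $\dfine(\gamma(s),\gamma(t))\cgeq|s-t|$ is at stake.

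For \eqref{item:morse_implies_fine_qgeod}: strong contraction gives Morse by \fullref{morse_divergent_contracting_in_cat_zero}. Then \fullref{main_theorem}~\ref{main:fine_chain} (equivalently \fullref{thm:wall_morse} combined with \fullref{thm:wide_parallel}) gives $\rho$ and $L$, depending only on $\lambda,\epsilon,\chi$, such that every subinterval $I'=[s,t]$ of length at least $L$ carries a fine chain $\mathcal{V}\transverse\gamma(I')$ with $|\mathcal{V}|\geq\rho|I'|$. The endpoints of $\gamma(I')$ might lie on walls. Dropping the two extreme walls leaves a fine chain separating $\gamma(s)$ from $\gamma(t)$, since each wall cuts $\gamma$ exactly once at an interior time. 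By \fullref{lem:metric_extension}, $\dfine(\gamma(s),\gamma(t))\geq\rho|t-s|-2$. For $|t-s|<L$ the lower bound is trivial after adjusting $\epsilon'$. This direction is essentially bookkeeping.

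For \eqref{item:fine_qfeod_imlies_morse}: suppose $\gamma$ is $(\lambda',\epsilon')$--quasigeodesic in $\dfine$ but, for contradiction in a quantitative form, fails item \ref{main:segment_in_wide_parabolic_nbhd} of \fullref{main_theorem} with a long subinterval. That is, there is $I'=[s,t]$ with $|I'|$ huge and a wide parabolic subcomplex $\Upsilon$ with $\gamma(I')\subset\bar\nbhd_\radius(\Upsilon)$. Pick vertices $x',y'\in\Upsilon$ within $\dcat$--distance $\radius+c$ of $\gamma(s),\gamma(t)$. By \fullref{lem:wide_parabolics_have_diameter_one}, $\dfine(x',y')\leq1$, and the comparison $\dfine\cleq\dcat$ gives $\dfine(\gamma(s),\gamma(t))\leq 1+2(\Lambda(\radius+c)+\Epsilon+3)$, a constant. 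The $\dfine$--quasigeodesic lower bound then forces $|I'|\leq\lambda'(\text{const}+\epsilon')$. So the lengths in \fullref{main_theorem}~\ref{main:segment_in_wide_parabolic_nbhd} are bounded in terms of $\lambda,\epsilon,\lambda',\epsilon'$. The quantitative equivalence of \fullref{main_theorem}, together with \fullref{thm:wall_morse} for passing from Morse to $\chi$--strongly contracting, yields $\chi$.

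The main obstacle I expect is the uniformity claim in the last step. \fullref{main_theorem} is stated as ``parameters of any item effectively bound the others''. I need that a bound on item \ref{main:segment_in_wide_parabolic_nbhd} produces a strong contraction constant depending only on that bound and on $(\lambda,\epsilon)$, not on $\gamma$. Tracing the proofs, this runs through \ref{wide_parallel:long_transverse_grids}$\Rightarrow$\ref{wide_parallel:segment_in_wide_parabolic_nbhd} (via \fullref{prop:highly_woven_implies_thick} with explicit $L'$), then to item \ref{main:bounded_width_chain}, then to \fullref{separated_chain_implies_divergent}, and finally to contraction via \fullref{morse_divergent_contracting_in_cat_zero}, each with explicit constants. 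I would check this chain carefully, using the quantitative contrapositive of each implication of \fullref{thm:wide_parallel}.
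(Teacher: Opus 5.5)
There is a genuine gap in Item~\eqref{item:fine_qfeod_imlies_morse}. Your Item~\eqref{item:morse_implies_fine_qgeod} is essentially the paper's argument, modulo the routine reparameterization when you pass to the CAT(0) geodesic. Strong contraction puts $\gamma$ near the CAT(0) geodesic between its endpoints, which is admissible, and then \fullref{main_theorem}~\ref{main:fine_chain} together with \fullref{lem:metric_extension} gives the $\dfine$ lower bound.

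The problem is that in Item~\eqref{item:fine_qfeod_imlies_morse}, $\gamma$ is an \emph{arbitrary} $(\lambda,\epsilon)$--quasigeodesic path. \fullref{main_theorem}, like \fullref{thm:wall_morse} and \fullref{thm:wide_parallel}, is only established for admissible quasigeodesics, and your reduction to that case is not available.
\begin{itemize}
\item Admissibility requires each wall to meet $\gamma$ at most once. A piecewise geodesic through the points $\gamma(n)$ can cross the same wall many times, and a small perturbation does not change that.
\item Nor is a general quasigeodesic uniformly Hausdorff close to some admissible path. \fullref{lem:admissible_close_to_combinatorial} goes from admissible paths to combinatorial geodesics, not conversely.
\item For example, take a quasigeodesic in a Euclidean flat that doubles back across longer and longer stretches of a parallel family of walls while making net progress in a transverse direction. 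It is at unbounded Hausdorff distance from every path crossing each wall at most once.
\item In Item~\eqref{item:morse_implies_fine_qgeod}, strong contraction is the input that puts $\gamma$ near a CAT(0) geodesic. In Item~\eqref{item:fine_qfeod_imlies_morse} you have no such input until you have proved the conclusion.
\item Applying your argument instead to the CAT(0) geodesic between $\gamma(s)$ and $\gamma(t)$ does not help, since that geodesic is not known to be a $\dfine$--quasigeodesic.
\end{itemize}
So your argument via \fullref{lem:wide_parabolics_have_diameter_one} and \fullref{main_theorem}~\ref{main:segment_in_wide_parabolic_nbhd} proves Item~\eqref{item:fine_qfeod_imlies_morse} only for admissible $\gamma$. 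For that case it is correct and rather neat.

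The paper avoids admissibility, and the grid machinery, entirely.
\begin{itemize}
\item For a long $J=[a,b]$, a longest fine chain $\{V_i\}_{1\leq i\leq n}$ separating $\gamma(a)$ and $\gamma(b)$ has density at least $\rho=1/(2\lambda')$. This follows from \fullref{lem:metric_extension} and the $\dfine$ lower bound.
\item Using only continuity, let $t_i$ be the first time $\gamma$ enters $\bar V_i^+$. Then $\gamma(t_i)\in V_i$ and $t_1<\cdots<t_n$, even if $\gamma$ recrosses walls.
\item At least $(n-2)/2$ of the gaps $[t_i,t_{i+1}]$ have length at most $2/\rho$, so the corresponding $\dcat(V_i,V_{i+1})$ are bounded.
\item Fineness together with \fullref{lem:bounded_bridge} and \fullref{lem:length_wall_crossers_improved} then runs exactly as in \fullref{separated_chain_implies_divergent}. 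This gives quadratic divergence, hence strong contraction by \fullref{morse_divergent_contracting_in_cat_zero}.
\end{itemize}
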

  \begin{proof}
    It suffices to assume $I$ is compact, because if compact
    subintervals of a path are uniformly contracting or uniformly
    quasigeodesic then so is the whole path.

    First we prove Item~\eqref{item:morse_implies_fine_qgeod}.
    It is an exercise to argue that if $\gamma\from I\to (\Davis,\dcat)$ is a $\chi$--strongly contracting 
$(\lambda,\epsilon)$--quasigeodesic path then there is $D$ depending
on $\lambda$, $\epsilon$, and $\chi$ such that the Hausdorff distance
between $\gamma$ and the geodesic $\alpha$ between its endpoints is at
most $D$.
It also follows that $\alpha$ is $\chi'$--strongly contracting for
$\chi'$ depending on $\chi$ and $D$.

Suppose we can show that $\alpha$ is
$(\lambda'',\epsilon'')$--quasigeodesic in $(\Davis,\dfine)$.
Let $\phi$ be a map from the domain of $\gamma$ to the domain of
$\alpha$ such that $\dcat(\gamma(t),\alpha(\phi(t)))\leq D$ for all
$t\in I$.
Since $\dfine\leq\dcat$, the same is true for $\dfine$.

Since $\gamma$ is a $\dcat$--quasigeodesic, for $s<t$ in $I$:
\[\dfine(\gamma(s),\gamma(t))\leq\dcat(\gamma(s),\gamma(t))\leq
  \lambda(t-s)+\epsilon\]
Conversely, we need the $\dfine$--quasigeodesic lower bound for
$\gamma$.
We have the $\dcat$--quasigeodesic lower bound:
\[
  \frac{1}{\lambda}(t-s)-\epsilon\leq\dcat(\gamma(s),\gamma(t))\leq
  \dcat(\alpha(\phi(s)),\alpha(\phi(t)))+2D=|\phi(t)-\phi(s)|+2D
\]
Thus:
\begin{equation}
  \label{eq:29}
  |\phi(t)-\phi(s)|\geq \frac{1}{\lambda}(t-s)-\epsilon-2D
\end{equation}
Since we know $\alpha$ is a $\dfine$ quasigeodesic:
\begin{equation}
  \label{eq:2}
  \dfine(\alpha(\phi(s)),\alpha(\phi(t)))\geq \frac{1}{\lambda''}|\phi(t)-\phi(s)|-\epsilon''
\end{equation}
Combining \eqref{eq:29} and \eqref{eq:2} gives:
\begin{align*}
  \dfine(\gamma(s),\gamma(t))&\geq\dfine(\alpha(\phi(s)),\alpha(\phi(t)))-2D\\
                             &\geq \frac{1}{\lambda''}|\phi(t)-\phi(s)|-\epsilon''-2D\\
  &\geq \frac{1}{\lambda\lambda''}(t-s)-\frac{2D+\epsilon}{\lambda''}-(2D+\epsilon'')  
\end{align*}
This shows that $\gamma$ is a $\dfine$--quasigeodesic with constants
depending only on $\lambda$, $\epsilon$, and $\chi$.
Thus, it suffices to prove Item~\eqref{item:morse_implies_fine_qgeod}
in the case that $\gamma$ is a $\dcat$--geodesic.
Since $\dcat$--geodesics are admissible,
\fullref{main_theorem}~\ref{main:fine_chain} says there exists 
    a $0<\rho\leq\density(1,0)$ and $L$ such that for every
    subinterval $I'\subset I$
     of length at least $L$ there is a fine
     chain $\mathcal{V}$ transverse to $\gamma(I')$ with
     $|\mathcal{V}|/|I'|\geq\rho$.
     Thus, for every $t_0,t_1\in I$, if $|t_1-t_0|\geq L$ then 
    \fullref{lem:metric_extension} gives 
$\dfine(\gamma(t_0),\gamma(t_1))\geq \rho|t_1-t_0|$.
    Thus, for arbitrary $t_0$ and $t_1$ in $I$ we have (since
    $\rho\leq 1$) that 
    $\dfine(\gamma(t_0),\gamma(t_1))\geq\rho|t_1-t_0|-L$.
    In the other direction,
    $\dfine(\gamma(t_0),\gamma(t_1))\leq\dcat(\gamma(t_0),\gamma(t_1))\leq
     |t_1-t_0|$. 
    Thus, when $\gamma$ is a $\dcat$--geodesic it is a
    $(1/\rho,L)$--quasigeodesic in $(\Davis,\dfine)$.

    Now we prove Item~\eqref{item:fine_qfeod_imlies_morse}.
The proof is to notice that $\gamma$ being $\dfine$--quasigeodesic
essentially gives the hypotheses of
\fullref{separated_chain_implies_divergent}, and then follow that
proof. 
Suppose  $\gamma$ is a $(\lambda,\epsilon)$--quasigeodesic path $I\to
(\Davis,\dcat)$ that is $(\lambda',\epsilon')$--quasigeodesic in
$(\Davis,\dfine)$.
Let $J:=[a,b]\subset I$.
Let $\mathcal{V}=\{V_i\}_{1\leq i\leq n}$ be a longest fine chain
separating $\gamma(a)$ and $\gamma(b)$, oriented from $\gamma(a)$ to
$\gamma(b)$. 
By \fullref{lem:metric_extension}, $|\mathcal{V}|\geq
\dfine(\gamma(a),\gamma(b))-3$.
Combined with the hypothesis that $\gamma$ is $\dfine$--quasigeodesic:
\[|\mathcal{V}|/|J|\geq\frac{\dfine(\gamma(a),\gamma(b))-3}{|J|}\geq\frac{|J|/\lambda'-\epsilon'-3}{|J|}=\frac{1}{2\lambda'}+\frac{1}{2\lambda'}-\frac{\epsilon'+3}{|J|}\]
Thus, there exists a fine
chain separating $\gamma(a)$ and $\gamma(b)$ of density at least
$\rho:=\frac{1}{2\lambda'}$, provided:
\begin{equation}
  \label{eq:16}
 |J|\geq 2\lambda'(\epsilon'+3)
\end{equation}
Since $\gamma$ is continuous it crosses every wall of $\mathcal{V}$.

Supposing we have chosen a long enough subinterval $J$ as above,
define $t_i$ to be the first time in $J$ such that $\gamma(t_i)\in
\bar V_i^+$.
Since $\mathcal{V}$ is a chain separating $\gamma(a)$ and $\gamma(b)$,
these times satisfy $a<t_1<t_2<\cdots<t_n<b$.
We have $\sum_{i=1}^{n-1}t_{i+1}-t_i<b-a=|J|\leq |\mathcal{V}|/\rho=n/\rho$.
Thus, there are at least $(n-2)/2$ intervals $[t_i,t_{i+1}]$ of
length at most $2/\rho$.
For any such `short' interval $[t_i,t_{i+1}]$, we have:
\[D:=2\lambda/\rho+\epsilon\geq
  \dcat(\gamma(t_i),\gamma(t_{i+1}))\geq     \dcat(V_i,V_{i+1}) \]

Now for any large $K>2$ and large $r$, suppose there exist points $x$ and $y$ satisfying
$\dcat(\gamma,x)=\dcat(\gamma,y)=r$ and $\dcat(x,y)\geq Kr$.
Let $x'=\gamma(a)$ be some closest point of $\gamma$ to $x$, let
$y'=\gamma(b)$ be some closest point of $\gamma$ to $y$, and let
$J:=[a,b]$.
Then $\dcat(x',y')\geq (K-2)r$, so:
\begin{equation}
  \label{eq:17}
  |J|\geq((K-2)r-\epsilon)/\lambda
\end{equation}
In particular, \eqref{eq:16} is satisfied for all sufficiently large
$r$.

Suppose there exists a path $\alpha$ from $x$ to $y$ that stays outside
$\nbhd_r(\gamma)$.
Let $\mathcal{V}'$ be the convex subchain of $\mathcal{V}$ consisting
of walls that separate $x$ from $y$.
By the usual 4--gon trick, $|\mathcal{V}|-|\mathcal{V}'|\leq
\dchain(x,x')+\dchain(y,y')+2$, which is bounded above by a linear
function of $r$, with constants from
\fullref{all_metrics_are_equivalent}.
By choosing $K$ large enough with respect to these constants we can
arrange that the number of short intervals between consecutive walls
of $\mathcal{V}'$ is bounded below by a linear function of $r$.

Proceed as in the proof of
\fullref{separated_chain_implies_divergent}.
Since $\mathcal{V}'$ is a fine chain, if $[t_i,t_{i+1}]$ is a short
interval then \fullref{lem:bounded_bridge} says the diameter of
$\bridge(V_i,V_{i+1})$ is bounded in terms of  $D$
and $2\caprace$.
Since $\gamma$ is quasigeodesic and the segment
$\gamma|_{[t_i,t_{i+1}]}$ is short, it is contained in a bounded
neighborhood of $\bridge(V_i,V_{i+1})$, by
\fullref{lem:length_wall_crossers_improved}.
Conversely, the subsegment of $\alpha$ between $V_i$ and $V_{i+1}$
stays at least $r$-far from $\gamma$, so it stays linearly far, in
terms of $r$, from $\bridge(V_i,V_{i+1})$, so
\fullref{lem:length_wall_crossers_improved} says its length is bounded
below by a linear function of $r$.
Since $\alpha$ contains a linear in $r$ number of disjoint subsegments
whose lengths are uniformly bounded below by a linear in $r$ function,
the total length of $\alpha$ is quadratic in $r$.
This gives a quadratic lower bound on the divergence gauge
$\delta(r;1,K)$ of $\gamma$, so $\gamma$ is
strongly contracting.

Recall that if no suitable pair $(x,y)$ exists, or if no suitable path
$\alpha$ exists, then $\delta(r;1,K)=\infty$, so these are not
problematic cases---$\delta(r;1,K)$ is still greater than the
computed quadratic lower bound at $r$.
\end{proof}

\subsubsection{Morse recognizing is equivalent to stability
  recognizing}\label{sec:recognition_equivalence}
The purpose of this subsection is to give definitions of ``Morse
recognizing'' and ``stability recognizing'', compare them to
related notions that appear in the literature, and show that these two
recognition properties are equivalent.

\begin{definition}\label{def:morse_recognizing}
  $\phi\from X\to Y$ is \emph{Morse recognizing} if:
  \begin{itemize}
  \item For all $\lambda$, $\epsilon$, $\mu$ there exist $\lambda'$
    and $\epsilon'$ such that if $\gamma\from I\to X$ is a
    $\mu$--Morse $(\lambda,\epsilon)$--quasigeodesic then
    $\phi\circ\gamma\from I\to Y$ is a
    $(\lambda',\epsilon')$--quasigeodesic.
    \item For all $\lambda$, $\epsilon$, $\lambda'$, $\epsilon'$ there
      exists $\mu$ such that if $\gamma\from I\to X$ is a
      $(\lambda,\epsilon)$--quasigeodesic and $\phi\circ\gamma\from
      I\to Y$ is a $(\lambda',\epsilon')$--quasigeodesic then
      $\gamma\from I\to X$ is $\mu$--Morse. 
  \end{itemize}
\end{definition}

\begin{definition}
  A subset $Z\subset X$ is \emph{$(\lambda,\epsilon)$--quasigeodesically connected} if for
  every pair of points $z_0$ and $z_1$ in $Z$ there exists a
  $(\lambda,\epsilon)$--quasigeodesic $\gamma\from I\to X$ whose image
  is in $Z$ and whose
  endpoints are $z_0$ and $z_1$.
\end{definition}

\begin{definition}\label{def:stable_subspace}({cf
    \cite{DurTay15,CorHum17,AbbBehDur21}\footnote{We are treating
      stability as a property of a subspace $Z$ of $X$ with the
      subspace metric. It is not hard to show this is equivalent to
      the standard definition in which $Z$ is considered as a separate
    metric space and stability is a property of a quasiisometric
    embedding of $Z$ into $X$.}})
  A subset $Z\subset X$ is \emph{$(\lambda,\epsilon,\mu)$--stable} if for
  every pair of points $z_0$ and $z_1$ in $Z$ there exists a
  $\mu$--Morse $(\lambda,\epsilon)$--quasigeodesic $\gamma\from I\to
  X$ whose image is in $Z$ and has
  endpoints $z_0$ and $z_1$.
\end{definition}

\begin{definition}\label{def:stable_subspace_recognizing}
  $\phi\from X\to Y$ is \emph{stability recognizing} if:
  \begin{itemize}
  \item For all $\lambda$, $\epsilon$, $\mu$ there exist $\lambda'$
    and $\epsilon'$ such that for every
    $(\lambda,\epsilon,\mu)$--stable $Z\subset X$ the map $\phi|_Z$ is
    a $(\lambda',\epsilon')$--quasiisometric embedding of
    $(Z,d_X|_Z)$ into $Y$.
    \item For all $\lambda$, $\epsilon$, $\lambda'$, $\epsilon'$ there
      exists $\mu$ such that for every
      $(\lambda,\epsilon)$--quasigeodesically connected set $Z\subset
      X$ if $\phi|_Z$ is a $(\lambda',\epsilon')$--quasiisometric
      embedding of $(Z,d_X|_Z)$ into $Y$ then $Z$ is
      $(\lambda,\epsilon,\mu)$--stable in $X$.
  \end{itemize}
\end{definition}

\begin{proposition}\label{prop:recognizing_is_recognizing}
$\phi\from X\to Y$ is Morse recognizing if and only if it is stability recognizing.  
\end{proposition}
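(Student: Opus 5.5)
We prove the two implications separately. The main tool is that a quasigeodesic is a stable subset of itself, and conversely a stable subset is covered by uniformly Morse quasigeodesics. Throughout we assume $\phi$ is coarsely Lipschitz (as it is for $\mathrm{Id}\from(\Davis,\dcat)\to(\Davis,\dfine)$ or $\coneoff$). If the general statement does not include this, we would add it as a standing hypothesis, noting that both recognizing properties force it on quasigeodesically connected sets.

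\textbf{Stability recognizing $\implies$ Morse recognizing.} First let $\gamma\from I\to X$ be a $\mu$--Morse $(\lambda,\epsilon)$--quasigeodesic, and set $Z:=\gamma(I)$. For $z_0=\gamma(s)$ and $z_1=\gamma(t)$, the restriction $\gamma|_{[s,t]}$ is a $\mu$--Morse $(\lambda,\epsilon)$--quasigeodesic in $Z$; a subpath of a $\mu$--Morse quasigeodesic is $\mu'$--Morse with $\mu'$ depending only on $\mu,\lambda,\epsilon$. So $Z$ is $(\lambda,\epsilon,\mu')$--stable. The first clause of stability recognizing then makes $\phi|_Z$ a $(\lambda',\epsilon')$--quasiisometric embedding of $(Z,d_X)$. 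Composing with the quasiisometric embedding $\gamma\from I\to Z$ shows $\phi\circ\gamma$ is a quasigeodesic with controlled constants.

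Conversely, suppose $\gamma$ is a $(\lambda,\epsilon)$--quasigeodesic and $\phi\circ\gamma$ is a $(\lambda',\epsilon')$--quasigeodesic. Then $Z:=\gamma(I)$ is $(\lambda,\epsilon)$--quasigeodesically connected via subpaths of $\gamma$. Moreover, $\phi|_Z$ is a quasiisometric embedding, because $d_X(\gamma(s),\gamma(t))\ceq|s-t|\ceq d_Y(\phi\gamma(s),\phi\gamma(t))$. The second clause makes $Z$ $(\lambda,\epsilon,\mu)$--stable. Applying this to the endpoints $z_0=\gamma(s)$, $z_1=\gamma(t)$ gives a $\mu$--Morse quasigeodesic $\beta$ from $z_0$ to $z_1$ inside $Z$. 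We then need to show $\gamma|_{[s,t]}$ itself is Morse. Since $\beta\subset\gamma(I)$ is a quasigeodesic between the endpoints of the quasigeodesic $\gamma|_{[s,t]}$, a standard argument shows $\beta$ is close to $\gamma|_{[s,t]}$. We would apply the Morse property of $\beta$ to the quasigeodesic $\gamma|_{[s,t]}$: this puts $\gamma|_{[s,t]}$ in the $\mu(\lambda,\epsilon)$--neighborhood of $\beta$, so the two paths are uniformly Hausdorff close. Being Morse is preserved under bounded Hausdorff distance with controlled gauge. So every segment of $\gamma$ is uniformly Morse, hence $\gamma$ is.

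\textbf{Morse recognizing $\implies$ stability recognizing.} Let $Z$ be $(\lambda,\epsilon,\mu)$--stable and $z_0,z_1\in Z$. Take a $\mu$--Morse $(\lambda,\epsilon)$--quasigeodesic $\gamma\subset Z$ joining them. Morse recognition makes $\phi\circ\gamma$ a $(\lambda',\epsilon')$--quasigeodesic, so its endpoints satisfy $d_Y(\phi z_0,\phi z_1)\geq |I|/\lambda'-\epsilon'\geq d_X(z_0,z_1)/(\lambda\lambda')-\text{const}$. The upper bound on $d_Y(\phi z_0,\phi z_1)$ comes either from coarse Lipschitzness of $\phi$, or from the quasigeodesic upper bound $d_Y\leq\lambda'|I|+\epsilon'$ combined with $|I|\leq\lambda(d_X+\epsilon)$. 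Hence $\phi|_Z$ is a quasiisometric embedding.

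Conversely, let $Z$ be $(\lambda,\epsilon)$--quasigeodesically connected with $\phi|_Z$ a $(\lambda',\epsilon')$--quasiisometric embedding. Each connecting quasigeodesic $\gamma\subset Z$ then satisfies: $\phi\circ\gamma$ is a quasigeodesic, because distances along $\gamma$ are measured in $Z$ with $d_X$, and $\phi|_Z$ distorts them boundedly. Morse recognition now gives that $\gamma$ is $\mu$--Morse, so $Z$ is stable.

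\textbf{Main obstacle.} The delicate step is the second half of the first implication. There, stability only provides \emph{some} Morse quasigeodesic inside $\gamma(I)$, not $\gamma$ itself. The key is the observation that a Morse quasigeodesic between two points traps every other quasigeodesic with the same endpoints nearby, and this transfers the Morse property to $\gamma$ with effective constants.
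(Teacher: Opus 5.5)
Your proposal is correct and follows the paper's proof essentially step for step: the same four implications via $Z:=\gamma(I)$ or a connecting quasigeodesic, and the same Hausdorff-closeness transfer of the Morse property in the delicate direction. The coarse-Lipschitz standing hypothesis is never actually needed, and the paper does not assume it: your alternative bound $d_Y\leq\lambda'|I|+\epsilon'\leq\lambda'\lambda(d_X+\epsilon)+\epsilon'$ already suffices. Your remark that subpaths of a $\mu$--Morse quasigeodesic are only $\mu'$--Morse is a small point of care that the paper glosses over.
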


Usually one also requires that the space $Y$ is hyperbolic.\footnote{A
  notable early example of a recognition-type theorem where the target
  space is not hyperbolic is the characterization of convex cocompact subgroups of
  the mapping class group in terms of having quasiconvex orbits in
  Teichm\"uller space \cite{FarMos02}.}
In that case the Morse property, stability, and quasiconvexity are
equivalent, so the recognition properties can be restated.
When $Y$ is hyperbolic, `$\phi\from X\to Y$ is Morse recognizing' is
equivalent to `a quasigeodesic $\gamma$ in $X$ is Morse if and only if
$\phi\circ\gamma$ is a Morse quasigeodesic in $Y$, with quantitative
control on the quasigeodesic constants and Morse gauges in both
directions.'
Similarly, `$\phi\from X\to Y$ is stability recognizing' is equivalent
to `a quasigeodesically
connected subspace $Z$ is stable in $X$ if and
only if $\phi|_Z$ is a quasiisometric embedding with
quasiconvex image, with quantitative control between the stability
parameters and the quasiisometric embedding/quasiconvexity
parameters, in both directions.' 
We do not need $Y$ to be hyperbolic for \fullref{prop:recognizing_is_recognizing}.

Russell, Spriano, and Tran \cite[Definition~4.17]{RusSprTra22} defined
a \emph{Morse detectable space} $X$ to be one such that there exists a
Morse recognizing map $\phi\from X \to Y$ as above, with the
additional conditions that $Y$ is hyperbolic and $\phi$ is coarse
Lipschitz.
This gave a name to the property characterizing contracting
quasigeodesics in hierarchically hyperbolic spaces described by
Abbott, Behrstock, and Durham \cite{AbbBehDur21}.
This terminology has been picked up in subsequent work related to the
Morse-local-global property \cite{AbbZbi25,Per26}.

Durham and Taylor \cite{DurTay15} defined a notion of \emph{stable
  subgroup} abstracting the properties of what were called convex
cocompact subgroups of mapping class groups of hyperbolic surfaces \cite{FarMos02}.
A subgroup $H$ of a finitely generated
group $G$ is stable\footnote{Quasigeodesic connectivity of $H$ already
forces it to be finitely generated and undistorted in $G$.} if it is stable as a subspace of a Cayley graph of
$G$. 
Kent and Leininger \cite{KenLei08} (see also \cite{Ham05}) proved what we would now call a
stable subgroup recognition theorem,  showing that convex cocompact
subgroups of the mapping class group are those for which the orbit map
into the curve graph is a quasiisometric embedding of the subgroup.
Analogous stable subgroup recognition theorems exist for groups
hyperbolic relative to peripherals with linear divergence
\cite{AouDurTay17}, right-angle Artin groups \cite{KobManTay17},
hierarchically hyperbolic groups \cite{AbbBehDur21}, etc
\cite{PetSprZal24,PetZal24,Zbi24,BalCheKer25}.

\fullref{def:stable_subspace_recognizing} for stable \emph{subspace} recognition is
formally stronger than stable \emph{subgroup} recognition, since if
$\phi\from G\to Y$ recognizes stable subgroups of $G$ it is
conceivable that there are stable subspaces of $G$ that are not close
to any finitely generated subgroup and for which $\phi$ does not give
a quasiisometric embedding. We do not know any examples.

\begin{proof}[Proof of \fullref{prop:recognizing_is_recognizing}]
  Suppose $\phi\from X\to Y$ is Morse recognizing.

  Suppose $Z\subset X$ is $(\lambda,\epsilon,\mu)$--stable.
  For all $z_0,z_1\in Z$ there exists a $\mu$--Morse
  $(\lambda,\epsilon)$ quasigeodesic $\gamma\from [t_0,t_1]\to X$
  with image in $Z$ and $\gamma(t_i)=z_i$ for $i\in \{0,1\}$. 
  Since $\phi\from X\to Y$ is Morse recognizing, $\phi\circ \gamma\from
  I\to Y$ is $(\lambda',\epsilon')$--quasigeodesic, so:
\[
  d_Y(\phi(z_0),\phi(z_1))=d_Y(\phi(\gamma(t_0)),\phi(\gamma(t_1)))\stackrel{\lambda'\!,\epsilon'}{\ceq}|t_1-t_0|\stackrel{\lambda,\epsilon}{\ceq}
  d_X(z_0,z_1)\]
Thus, $\phi|_Z$ is a quasiisometric embedding with constants
determined by $\lambda$, $\epsilon$, $\lambda'$, $\epsilon'$, and
$\lambda'$ and $\epsilon'$ were determined by $\lambda$, $\epsilon$,
and $\mu$.

Conversely, suppose $Z\subset X$ is
$(\lambda,\epsilon)$--quasigeodesically connected and $\phi|_Z$ is a
$(\lambda',\epsilon')$--quasiisometric embedding of $Z$ into $Y$.
For all $z_0,z_1\in Z$ there exists a 
$(\lambda,\epsilon)$--quasigeodesic $\gamma\from [t_0,t_1]\to X$ with
image in $Z$ and 
$\gamma(t_i)=z_i$ for $i\in \{0,1\}$.
Since $\phi|_Z$ is a $(\lambda',\epsilon')$--quasiisometric embedding,
$\phi\circ \gamma$ is a quasigeodesic with constants determined by
$\lambda$, $\epsilon$, $\lambda'$, and $\epsilon'$.
Since $\phi\from X\to Y$ is Morse recognizing, there exists $\mu$ such
that $\gamma$ is $\mu$--Morse, for $\mu$ depending on $\lambda$,
$\epsilon$ and the quasigeodesic constants of $\phi\circ\gamma$, hence
on $\lambda$, $\epsilon$, $\lambda'$, and $\epsilon'$.
Thus, the transitive family of uniform quasigeodesics verifying
quasigeodesic connectivity of $Z$ is also uniformly Morse, so $Z$ is
stable.

We have shown that $\phi\from X\to Y$ is Morse recognizing implies that
it is stability recognizing.
Now suppose $\phi\from X\to Y$ is stability recognizing.

Suppose $\gamma\from I\to X$ is a $\mu$--Morse
$(\lambda,\epsilon)$--quasigeodesic.
Then $Z:=\gamma(I)$ is a $(\lambda,\epsilon,\mu)$--stable subset of
$X$. Since $\phi\from X\to Y$ is stability recognizing, $\phi|_Z$ is a
$(\lambda',\epsilon')$--quasiisometric embedding.
Thus, $\phi\circ\gamma$ is a composition of quasiisometric embeddings,
so $\phi\circ\gamma\from I\to Y$ is a quasigeodesic with constants
determined by $\lambda$, $\epsilon$, $\lambda'$, and $\epsilon'$,
hence by $\lambda$, $\epsilon$, and $\mu$.

Conversely, suppose $\gamma\from I\to X$ is
$(\lambda,\epsilon)$--quasigeodesic and $\phi\circ\gamma\from I\to Y$
is $(\lambda',\epsilon')$--quasigeodesic.
Then $Z:=\gamma(I)$ is $(\lambda,\epsilon)$--quasigeodesically
connected in $X$.
Furthermore, for any $z_0,z_1\in Z$ there exist $t_0,t_1\in I$ such
that $\gamma(t_i)=z_i$, and:
\[d_X(z_0,z_1)=d_Z(\gamma(t_0),\gamma(t_1))\stackrel{\lambda,\epsilon}{\ceq}|t_1-t_0|\stackrel{\lambda'\!,\epsilon'}{\ceq}d_Y(\phi(\gamma(t_0)),\phi(\gamma(t_1)))=d_Y(\phi(z_0),\phi(z_1))\]
This says $\phi|_Z$ is a quasiisometric embedding of $Z$ into $Y$,
with constants depending on $\lambda$, $\epsilon$, $\lambda'$, and
$\epsilon'$.
Since $\phi\from X\to Y$ is stability recognizing, there exists $\mu$
depending on $\lambda$, $\epsilon$, $\lambda'$, and
$\epsilon'$ such that $Z$ is $(\lambda,\epsilon,\mu)$--stable.
Consider an interval $[t_0,t_1]\subset I$.
Since $Z$ is stable, there exists a $\mu$--Morse
$(\lambda,\epsilon)$--quasigeodesic $\delta\from J\to X$ with image in
$Z$ and endpoints at $\gamma(t_0)$ and $\gamma(t_1)$.
So $\gamma([t_0,t_1])$ and $\delta$ are
$(\lambda,\epsilon)$--quasigeodesics with the same endpoints such that
$\delta$ is $\mu$--Morse.
It follows that the Hausdorff distance between $\delta$ and
$\gamma([t_0,t_1])$ can be bounded in terms of $\lambda$, $\epsilon$,
and $\mu$.
Then it follows that there is $\mu'$ depending on $\lambda$, $\epsilon$,
and $\mu$ such that $\gamma([t_0,t_1])$ is $\mu'$--Morse.
Since this is true for every subsegment of $\gamma$, conclude that
$\gamma$ is $\mu'$--Morse.
\end{proof}

\subsubsection{Recognizing Morse and stable parabolics}\label{sec:morse_parabolics}
The combination of \fullref{thm:Morse_recognizing_general} and
\fullref{prop:recognizing_is_recognizing} gives:
\begin{proposition}\label{prop:stable_subgroup_qi_embeds}
  A subgroup $H\leq W$ is stable if and only if it is finitely
  generated and the orbit map is a quasiisometric embedding of $H$
  with respect to a word metric into $(\Davis,\dfine)$.
\end{proposition}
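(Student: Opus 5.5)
The plan is to deduce this directly from \fullref{thm:Morse_recognizing_general} and \fullref{prop:recognizing_is_recognizing}, applied to $X=(\Davis,\dcat)$, $Y=(\Davis,\dfine)$ and $\phi=\mathrm{Id}_\Davis$. By \fullref{thm:Morse_recognizing_general}, together with \fullref{morse_divergent_contracting_in_cat_zero} to pass between Morse gauges and strong contraction constants, this identity map is Morse recognizing. Hence, by \fullref{prop:recognizing_is_recognizing}, it is stability recognizing.

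First, translate the statement about $H$ into one about a subspace. Fix a basepoint, the vertex $\one$, and set $Z:=H\cdot\one\subset\Davis^{(0)}$.

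For the forward direction, assume $H$ is stable, meaning it is stable as a subspace of the Cayley graph $\Davis^{(1)}$. By the footnote, $H$ is then finitely generated and undistorted. Since $\dcomb\ceq\dcat$ (\fullref{all_metrics_are_equivalent}), and quasigeodesics and the Morse property are quasiisometry invariant, $Z$ is $(\lambda,\epsilon,\mu)$--stable in $(\Davis,\dcat)$ for some parameters. One technical point: stability in the Cayley graph gives quasigeodesics with image in $H$, which is a discrete set. To get connected quasigeodesics in $Z$ I would either use coarse versions (image within a bounded neighborhood of $Z$) or enlarge $Z$ to a bounded neighborhood of the orbit. Either change affects the conclusion only by bounded errors. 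Stability recognition then says $\mathrm{Id}|_Z$ is a quasiisometric embedding of $(Z,\dcat|_Z)$ into $(\Davis,\dfine)$. Composing with the quasiisometry from $H$ (with its word metric) to $(Z,\dcat|_Z)$, which holds because $H$ is undistorted, shows the orbit map is a quasiisometric embedding into $(\Davis,\dfine)$.

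For the converse, assume $H$ is finitely generated and its orbit map into $(\Davis,\dfine)$ is a quasiisometric embedding. Since $\dfine\le\dcat$ (or at least $\dfine\cleq\dcat$) and the orbit map into $(\Davis,\dcat)$ is Lipschitz, we get
\[|h|_H\cleq \dfine(\one,h\one)\cleq\dcat(\one,h\one)\cleq|h|_H .\]
So $H$ is undistorted in $(\Davis,\dcat)$. Then the images of word geodesics in $H$, connected through edges of a Cayley graph of $H$ mapped by geodesics in $\Davis$, give uniform quasigeodesics. This shows $Z$, or a bounded neighborhood of it, is quasigeodesically connected. Moreover, $\mathrm{Id}|_Z$ is a quasiisometric embedding into $\dfine$. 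Stability recognition now gives that $Z$ is stable in $(\Davis,\dcat)$, so $H$ is stable in $W$.

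The main obstacle is the bookkeeping described above, not any new geometry. It consists of two things: justifying that the Morse recognizing statement of \fullref{thm:Morse_recognizing_general}, which is phrased for quasigeodesic paths, applies to discrete orbits up to bounded perturbation; and transferring stability between the Cayley graph, $(\Davis,\dcat)$, and the word metric on $H$. I would handle this by replacing each discrete quasigeodesic by a piecewise CAT(0)-geodesic interpolation. This changes Hausdorff distance, Morse gauges and quasigeodesic constants only by controlled amounts.
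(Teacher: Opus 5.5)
Your proposal is correct and is essentially the paper's argument: the paper obtains this proposition in one line by combining \fullref{thm:Morse_recognizing_general} with \fullref{prop:recognizing_is_recognizing}, applied to the identity $(\Davis,\dcat)\to(\Davis,\dfine)$ with the orbit $H\cdot\one$ as the subspace. Your extra bookkeeping fills in details the paper leaves implicit, and it is sound. It consists of undistortedness of $H$ via $\dfine\cleq\dcat$, converting Morse gauges to strong contraction constants via \fullref{morse_divergent_contracting_in_cat_zero}, and interpolating orbit quasigeodesics by continuous paths, which is needed because item~(2) of the theorem uses continuity to force $\gamma$ to cross every wall of the chain.
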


It is not immediately clear from
\fullref{thm:Morse_recognizing_general} how to recognize Morse
subgroups of $W$.
However, for parabolic subgroups there are nice criteria for being
stable and Morse. Given a finite rank Coxeter system $(W,S)$ and
$T\subset S$, let $\Gamma_T$ denote the subdiagram of the Coxeter
diagram of $(W,S)$ spanned by the vertices $T$. Call the subdiagram
`spherical' if it defines a spherical Coxeter group. 
\begin{theorem}\label{thm:characterize_morse_stable_for_parabolics}
Let $(W,S)$ be a finite rank Coxeter system, and let $T\subset S$.

  $W_T$ is stable if and only if $\Gamma_{T\cap U}$ is spherical for every
  wide $U\subset S$.
  
  $W_T$ is Morse if and only if for every wide $U\subset S$, one of the following is true:
  \begin{enumerate}
  \item The intersection of every connected component of $\Gamma_U$ with
    $\Gamma_T$ is spherical.\label{item:condition_spherical_intersection}
    \item Every nonspherical connected component of $\Gamma_U$ is
      contained in $\Gamma_T$.\label{item:condition_engulfing}
  \end{enumerate}
\end{theorem}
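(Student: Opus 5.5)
We prove the two statements separately, reducing both to statements about combinatorial geodesics and wide special subgroups. Throughout, $W_T$ is identified with the convex subcomplex $\Davis_T$. Every combinatorial geodesic of $\Davis_T$ is a combinatorial geodesic of $\Davis$ whose labels lie in $T$, and conversely such edge paths starting in $\Davis_T$ stay in $\Davis_T$. So $\Davis_T$ is quasigeodesically connected by combinatorial geodesics of $\Davis$.

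\emph{Stability.} Suppose $\Gamma_{T\cap U}$ is spherical for every wide $U$. Let $\gamma$ be a combinatorial geodesic labelled in $T$, and let $\gamma'$ be a subsegment all of whose labels lie in a wide $U$. Then the labels of $\gamma'$ lie in $T\cap U$, so $\gamma'$ is a geodesic in the finite group $W_{T\cap U}$. Its length is therefore bounded by the maximal length of a spherical special subgroup. Condition \ref{comb:special_wide} of \fullref{main_for_combinatorial} thus holds with a uniform bound, so these geodesics are uniformly $\mu$--Morse, and $\Davis_T$ is stable.

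Conversely, suppose $\Gamma_{T\cap U}$ is nonspherical for some wide $U$. Then $W_{T\cap U}$ is infinite and contains arbitrarily long geodesic words in $T\cap U$. These are combinatorial geodesics in $\Davis_T$ labelled in the wide set $U$, so by \fullref{main_for_combinatorial} they are not uniformly Morse. Stability would give uniformly Morse quasigeodesics with the same endpoints. Since quasigeodesics with the same endpoints as a Morse quasigeodesic are uniformly Hausdorff close to it and hence uniformly Morse, this is a contradiction.

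\emph{Morse, necessity.} Suppose some wide $U$ violates both conditions. Then there is a component $C$ of $\Gamma_U$ with $C\cap T$ nonspherical, and a nonspherical component $C'\not\subset T$. We build a family of quasigeodesic ``rectangles'' with endpoints on $\Davis_T$ that escape $\Davis_T$.
\begin{itemize}
\item If $C\neq C'$, take $g\in W_{C\cap T}$ long and $h\in W_{C'}$, and use the path $1\to h\to hg=gh\to g$. This is a uniform quasigeodesic in the product $W_C\times W_{C'}$.
\item If $C=C'$, then $C$ cannot be affine, since a nonspherical proper parabolic of an irreducible affine group does not exist. So wideness gives another nonspherical component $D$. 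If $D\not\subset T$, use the pair $(C,D)$ with the roles as above. If $D\subset T$, use $g\in W_D$ and $h\in W_C$.
\end{itemize}
In each case we need $h$ to be arbitrarily far from $\Davis_T$. For $h$ in an irreducible nonspherical $W_V$ with $V\not\subset T$, the standard fact $W_Th\cap W_V=W_{T\cap V}h$ (via minimal coset representatives, as in \fullref{lem:distance_to_standard}) reduces $\dcomb(h,\Davis_T)$ to the distance from $h$ to $W_{T\cap V}$ inside $W_V$. This distance is unbounded because a proper parabolic of an infinite irreducible Coxeter group has infinite index. Taking $|g|\gg|h|$ makes the path a uniform quasigeodesic with endpoints in $\Davis_T$ that passes through $h$, far from $\Davis_T$. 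Hence $W_T$ is not Morse.

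\emph{Morse, sufficiency; the main obstacle.} Suppose $W_T$ is not Morse. Take $(\lambda,\epsilon)$--quasigeodesics $\gamma_n$ with endpoints on $\Davis_T$ and points $x_n\in\gamma_n$ with $\dcat(x_n,\Davis_T)\to\infty$. Restrict to the maximal subpath $\gamma'_n$ outside $\bar\nbhd_R(\Davis_T)$ around $x_n$. Compare $\gamma'_n$, via the $4$--gon trick \fullref{quad_trick} and gate projections (\fullref{cor:wall_gate}), with a combinatorial geodesic $\beta_n$ in $\Davis_T$ between the gates of its endpoints.

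The walls separating $x_n$ from $\Davis_T$ give long chains $\mathcal{H}_n$ disjoint from $\Davis_T$. Because $\gamma_n$ is quasigeodesic, the excursion is long compared with $\beta_n$ only if many walls crossing $\beta_n$ are transverse to $\mathcal{H}_n$. This should yield grids $(\mathcal{V}_n,\mathcal{H}_n)$ in which $\mathcal{V}_n$ cuts $\Davis_T$ and $\mathcal{H}_n$ avoids it. Arguing as in \fullref{lem:induction_step}, one then gets both chains long.

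\fullref{lem:grid_wide} then produces a wide parabolic $P=\pc(\mathcal{V}_n)\times\pc(\mathcal{V}_n)^\perp$. It contains a nonspherical reflection subgroup of $\stab(\Davis_T)$ (from $\mathcal{V}_n$) and reflections not in $W_T$ (from $\interior{\mathcal{H}}_n$). Conjugating to a special wide $U$ should give a component meeting $T$ nonspherically and a nonspherical component not contained in $T$, violating both conditions.

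The delicate points are these:
\begin{itemize}
\item extracting genuinely long transverse chains from the metric excursion, which uses quasigeodesicity to rule out the excursion being short;
\item handling the affine case, in which $\mathcal{V}_n$ and $\mathcal{H}_n$ may lie in the same affine factor. That factor then meets $T$ nonspherically without being contained in $T$, which should itself contradict the absence of nonspherical proper subparabolics of an irreducible affine parabolic.
\end{itemize}
I expect this grid-extraction step to be the hardest part of the proof.
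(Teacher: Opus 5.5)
Your stability argument and your proof that failure of both conditions prevents $W_T$ from being Morse are correct, and they are essentially the paper's. The paper uses the same reduction to \fullref{main_for_combinatorial}, the same case analysis, and the same rectangles built from a long element of a nonspherical $W_{T\cap U_j}$ and an element of another factor far from $\Davis_T$.

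\textbf{The gap is in sufficiency.} It is more than the grid extraction being left as ``should yield'': the route you sketch cannot work as stated. The tools you plan to use, \fullref{quad_trick} and \fullref{lem:induction_step}, apply only to admissible paths, which cross each wall at most once. But an admissible path $\alpha$ whose endpoints $e^\pm$ lie within $R$ of $\Davis_T$ stays uniformly close to $\Davis_T$.
\begin{itemize}
\item Let $z\in\alpha$. A wall separating $z$ from $\Davis_T$ meets both paths from $z$ to $\Davis_T$ obtained by following $\alpha$ to $e^-$ or to $e^+$ and then a geodesic of length at most $R$.
\item Since the wall meets $\alpha$ at most once, it must meet one of the two short connectors.
\item Hence $\dchain(z,\Davis_T)$ is bounded in terms of $R$, and so is $\dcat(z,\Davis_T)$ by \fullref{lem:distance_to_standard}.
\end{itemize}
So your witnesses $\gamma_n$, and the excursions $\gamma_n'$, are necessarily non-admissible: the walls separating $x_n$ from $\Davis_T$ are crossed twice. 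The initial/terminal subchain bookkeeping behind both tools is therefore unavailable. Your heuristic that ``the excursion is long compared with $\beta_n$'' also fails. By quasigeodesicity, $|\gamma_n'|$ and $|\beta_n|$ are comparable with constants depending only on $\lambda$, $\epsilon$ and $R$.

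\textbf{The missing idea is to leave the Morse definition.} Since $\Davis$ is CAT(0), \fullref{morse_divergent_contracting_in_cat_zero} and \fullref{eqivalence_of_strong_contraction_conditions} convert non-Morseness of $\Davis_T$ into failure of the Bounded Geodesic Image Property. This gives CAT(0) geodesics $[x_n,y_n]$, which are admissible, that stay $n$--far from $\Davis_T$ while their gates $x_n'$ and $y_n'$ are roughly $n$--far apart. The grid is then built as follows.
\begin{itemize}
\item Take a chain $\mathcal{V}^-\sqcup\mathcal{V}^+$ separating $x_n'$ from $y_n'$, each half of length at least $2\caprace$. Its walls cut $\Davis_T$, and by \fullref{cor:wall_gate} they cross $[x_n,y_n]$.
\item A point $p$ of $[x_n,y_n]$ between the two halves is far from $\Davis_T$, so a chain of length $4\caprace$ separates $p$ from $\Davis_T$. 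Each of its walls crosses $[x_n,x_n']$ or $[y_n,y_n']$.
\item A wall crossing $[x_n,x_n']$ has $x_n$ and $p$ on the same side. By convexity of halfspaces it is then transverse to every wall of $\mathcal{V}^-$, and symmetrically on the other side.
\end{itemize}
This yields a grid with both chains of length at least $2\caprace$.

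In your final step, conjugate only by elements of $W_T$, so that $\Davis_T$ is preserved, to arrange $\pc(\mathcal{V})=W_{U_0}$ with $U_0\subset T$. Then use the dichotomy from the proof of \fullref{lem:grid_wide}.
\begin{itemize}
\item The affine alternative is impossible, since it would put $\pc(\mathcal{H})=\pc(\mathcal{V})$ inside $W_T$, although the walls of $\mathcal{H}$ do not cut $\Davis_T$.
\item In the commuting alternative, the component violating condition (2) is the nonspherical component of $\Gamma_{U_0^\perp}$ containing the irreducible nonspherical group $\pc(\interior{\mathcal{H}})$.
\end{itemize}
Merely having reflections outside $W_T$ would not suffice, since these could lie in a spherical component.
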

\begin{proof}
  The stable case follows immediately from
  \fullref{main_for_combinatorial}: if $U$ is wide and $T\cap U$ is
  nonspherical then there is a biinfinite $\dcomb$--geodesic
  $\gamma$ contained in
  $\Davis_{T\cap U}=\Davis_T\cap\Davis_U$, so the entire $\gamma$ has
  edges with labels in the wide subset $U$, so $\gamma$ is not Morse.
  On the other hand, if $T\cap U$ is spherical for every wide $U$,
  then, since there are only finitely many spherical subsets of $S$,
  there is a uniform diameter bound on $\Davis_{T\cap U}$, so for
  every $\dcomb$--geodesic $\gamma$ in $\Davis_T$ there is a uniform
  bound on the lengths of its subsegments having label in some wide
  subset of $S$. 
\fullref{main_for_combinatorial} then says that geodesics in
  $\Davis_T$ are uniformly Morse. 

  Now consider Morseness of $\Davis_T$.
  Condition \eqref{item:condition_spherical_intersection} is
  equivalent to: $W_{T\cap U}=W_T\cap W_U$ is spherical. 
  Since nonspherical irreducible Coxeter groups do not have finite index proper
  parabolic subgroups, 
  condition \eqref{item:condition_engulfing} is equivalent to saying
  that the index of $W_{T\cap U}=W_T\cap W_U$ in $W_U$ is finite.
  The two conditions together say that every wide special subgroup is
  either virtually contained in $W_T$ or virtually has trivial
  intersection with $W_T$.

Suppose $U\subset S$ is wide and neither condition
\eqref{item:condition_spherical_intersection} nor
\eqref{item:condition_engulfing} is true.
Let $U=U_1\sqcup\cdots\sqcup U_k\sqcup K$ be the decomposition of $U$ into
irreducible nonspherical components $U_i$ and spherical $K$.
Since $U$ is wide, if $k=1$ then $U_1$ is irreducible higher rank
affine, so it has no proper nonspherical subparabolics.
Thus $T\cap U_1$ is either spherical or all of $U_1$.
This contradicts the hypothesis, so we must have $k\geq 2$, and there
is some $i$ such that $W_{T\cap U_i}$
has infinite index in $W_{U_i}$.
If $T\cap U_i$ is spherical then there is some $j\neq i$ such that
$T\cap U_j$ is nonspherical.
If $T\cap U_i$ is nonspherical then consider any $j\neq i$.
If $T\cap U_j$ is spherical then $W_{T\cap U_j}$ has infinite index in
$W_{U_j}$; in this case swap $i$ and $j$.
Thus, we may assume we have indices $i\neq j$ such that $W_{T\cap
  U_i}$ has infinite index in $W_{U_i}$ and $T\cap U_j$ is
nonspherical.
For $n\in\mathbb{N}$, choose $b_n\in W_{U_i}$ such that $\dcomb(b_n,\Davis_{T\cap
  U_i})\xrightarrow{n\to\infty}\infty$, which is possible since
$W_{T\cap U_i}$ has infinite index in $W_{U_i}$.
Then \fullref{lem:gate_projection_image_is_residue} implies that
$\dcomb(b_n,\Davis_T)\to\infty$.
Pick $a_n\in W_{T\cap U_j}$ with $|a_n|_S\geq |b_n|_S$. 
Since $W_{U_i}$ and $W_{U_j}$ commute, the geodesics
$[\one,a_n]\subset\Davis_T$ have endpoints that are connected by uniform
quasigeodesics $[\one,b_n]+[b_n,a_nb_n]+[a_nb_n,a_n]$ that contain
points $b_n$ arbitrarily far from $\Davis_T$.
Thus, $\Davis_T$ is not Morse. 

Now suppose that for every wide $U\subset S$ either \eqref{item:condition_spherical_intersection} or
\eqref{item:condition_engulfing} is true.
We derive a contradiction by supposing that $\Davis_T$ is not Morse.
Let $D$ be the diameter of a chamber of $\Davis$.
Since $(\Davis,\dcat)$ is CAT(0),
\fullref{morse_divergent_contracting_in_cat_zero} and
\fullref{eqivalence_of_strong_contraction_conditions} combine to say
that if $\Davis_T$ is not Morse then it does not have the Bounded
Geodesic Image Property.
Thus, for every $n\in\mathbb{N}$ there exists a $\dcat$--geodesic $[a_n,b_n]$
that does not come $n$--close to $\Davis_T$, but
$\diam(\pi_{\Davis_T }([a_n,b_n]))>n$.
By passing to a subsegment, we may assume that the diameter of
$\pi_{\Davis_T }([a_n,b_n])$ is realized by
$\dcat(\pi_{\Davis_T}(a_n),\pi_{\Davis_T}(b_n))$. 
Let $x_n$ be a vertex closest to $a_n$, and let $y_n$ be a vertex
closest to $b_n$.
Then $\dcat(a_n,x_n)\leq D$, $\dcat(b_n,y_n)\leq D$, and the
Hausdorff distance between $[a_n,b_n]$ and $[x_n,y_n]$ is at most $D$,
so $\dcat(\pi_{\Davis_T}(x_n),\pi_{\Davis_T}(y_n))\geq\dcat(\pi_{\Davis_T}(a_n),\pi_{\Davis_T}(b_n))-2D\geq n-2D$ and
$\dcat(\Davis_T,[x_n,y_n])\geq n-D$.
Recalling the definition of the gate map from \fullref{sec:gate},
set $x_n':=\gate_{\Davis_T}(x_n)$ and $y_n':=\gate_{\Davis_T}(y_n)$. 
\fullref{cpp_and_gate_agree} implies
$\dcat(\pi_{\Davis_T}(x_n),x_n')\leq D$ and
$\dcat(\pi_{\Davis_T}(y_n),y_n')\leq D$, so $\dcat(x_n',y_n')\geq n-4D$.

Fix $n$ large enough that $\dcat$--distance at least $n-4D$ implies
$\dchain$ distance at least $4\caprace$ in
\fullref{all_metrics_are_equivalent} and \fullref{lem:distance_to_standard}.
The former implies $\dchain(x_n',y_n')\geq
4\caprace$, so there is a chain $\mathcal{V}^{-}\sqcup\mathcal{V}^+$
oriented from $x_n'$ to $y_n'$ such that the walls of
$\mathcal{V}^{-}$ come first, the walls of $\mathcal{V}^+$ come
second, and each of the two subchains contains at least $2\caprace$--many walls. 

Every wall of $\mathcal{V}^{-}\sqcup\mathcal{V}^+$ cuts $\Davis_T$, so $\pc(\mathcal{V}^{-})$
and $\pc(\mathcal{V}^+)$ are both irreducible nonspherical subgroups
of $W_T$.
By \fullref{cor:wall_gate}, no wall of $\mathcal{V}^{-}\sqcup\mathcal{V}^+$
separates $x_n$ from $x_n'$ or separates $y_n$ from $y_n'$, so they
all cut $\gamma:=[x_n,y_n]$.
Let $p$ be a point on $\gamma$ that occurs between the last wall of
$\mathcal{V}^{-}$ and the first wall of $\mathcal{V}^{+}$ and is not
contained in any wall.
Let $\gamma^{-}$ be the subsegment of $\gamma$ from $x_n$ to $p$, and
let $\gamma^{+}$ be the subsegment of $\gamma$ from $p$ to $y_n$.
Let $p'$ be a vertex closest to $p$.
Then $\dcat(p,p')\leq D$, no wall separates $p$ and $p'$, and
$\dcat(p',\Davis_T)\geq n-2D$.
Then the choice of $n$ with respect to
\fullref{lem:distance_to_standard} gives a chain $\mathcal{H}_0$ of
length at least $4\caprace$ separating $p'$ and $\Davis_T$.
Since no walls separate $p$ and $p'$, the chain $\mathcal{H}_0$ also
separates $p$ and $\Davis_T$.
The walls of $\mathcal{H}_0$ do not cut
$\Davis_T$, but all of them must cut the paths between
$\Davis_T$ and $p$ given by $[x_n',x_n]+\gamma^{-}$ and
$\gamma^++[y_n,y_n']$.
Since any wall cuts $\gamma$ at most once, each wall of
$\mathcal{H}_0$ cuts at
least one of $[x_n',x_n]$ and $[y_n,y_n']$. 
Let $\mathcal{H}^{-}$ be the subchain of $\mathcal{H}_0$ consisting of
walls that cut $[x_n,x_n']$, and let $\mathcal{H}^+:=\mathcal{H}_0\setminus\mathcal{H}^{-}$.
Without loss of generality, assume $|\mathcal{H}^{-}|\geq 2\caprace$,
and set $\mathcal{V}:=\mathcal{V}^{-}$ and
$\mathcal{H}:=\mathcal{H}^{-}$.
This is possible because otherwise  $|\mathcal{H}^{+}|\geq 2\caprace$,
so then we would take $\mathcal{V}:=\mathcal{V}^{+}$ and
$\mathcal{H}:=\mathcal{H}^{+}$ and swap $x_n$ and $x_n'$ with $y_n$ and $y_n'$ in what follows. 

Each wall $H\in\mathcal{H}$ cuts $[x_n,x_n']$ and separates $p$ and
$\Davis_T$.
Since $H$ separates $p$ from $\Davis_T$ and separates $x_n$ from
$x_n'\in\Davis_T$, we have that $x_n$ and $p$ are on the same side of
$H$.
Since the open halfspaces of $H$ are convex, $\gamma^{-}$ lies in the same open halfspace of $H$ as $x_n$ and $p$. 
For each $V\in\mathcal{V}$ there is a path from $p$ to $\Davis_T$
that follows $\gamma^{-}$ from $p$ to $V\cap\gamma$ and then follows $V$
to $\Davis_T$.
Since $H$ cuts this path but does not cut $\gamma^{-}$, we must have $H\transverse V$.

Since this is true for all $H\in\mathcal{H}$ and $V\in\mathcal{V}$, we
have a grid $(\mathcal{V},\mathcal{H})$ with $|\mathcal{V}|\geq
2\caprace$ and $|\mathcal{H}|\geq 2\caprace$.
\fullref{lem:grid_wide} says that
$\normalizer_W(\pc(\mathcal{V}))=\pc(\mathcal{V})\times\pc(\mathcal{V})^\perp$
is a wide parabolic, and the proof shows that either $\pc(\interior{\mathcal{H}})\leq
\pc(\mathcal{V})^\perp$ or $\pc(\mathcal{V})=\pc(\mathcal{H})$
is higher rank affine.

Since the walls of $\mathcal{H}$ do not cut $\Davis_T$, their
reflections are not in $W_T$.

The affine case is impossible because it yields $W_T\geq\pc(\mathcal{V})=\pc(\mathcal{H})\not\leq W_T$.

The commuting case is also impossible.
Up to the $W_T$--action, we may assume $\pc(\mathcal{V})=W_U$ for some
$U\subset T$, and it follows that
$\normalizer_W(\pc(\mathcal{V}))=W_U\times W_{U^\perp}$.
Now, $U$ is a nonspherical irreducible component of the wide set
$U\sqcup U^\perp$ that is contained in $T$, so
condition~\eqref{item:condition_spherical_intersection} is not
satisfied for $U\sqcup U^\perp$.
By hypothesis, 
condition~\eqref{item:condition_engulfing} must be satisfied,
meaning that $T$ contains every nonspherical irreducible component of
$U\sqcup U^\perp$, hence of $U^\perp$.
However, this is not true, because $\pc(\interior{\mathcal{H}})$ is a
nonspherical, irreducible reflection subgroup of $W_{U^\perp}$, so by
\fullref{lem:reflection_splitting} it is contained in
$W_{\mathcal{C}}$ for some connected component of $\Gamma_{U^\perp}$.
The component $\mathcal{C}$ is necessarily nonspherical, since
$\pc(\interior{\mathcal{H}})$ is.
We get the contradiction $W_T\not\geq\pc(\interior{\mathcal{H}})\leq
W_{\mathcal{C}}\leq W_T$.

Since both cases lead to contradictions, $\Davis_T$ is Morse.
\end{proof}

%% file: coneoff.tex
\begin{definition}
  The \emph{coned-off space} $(\coneoff,\dcone)$ is the metric graph $\coneoff$ and geodesic metric $\dcone$ obtained
  from $\Davis^{(1)}$ by adding a new vertex $v_\Upsilon$ for each
  wide parabolic subcomplex $\Upsilon$ of $\Davis$, and connecting
  $v_\Upsilon$ to each vertex of $\Upsilon$ by an edge of length
  $1/2$.
\end{definition}

In the case that $(W,S)$ is a right-angled Coxeter group, $\coneoff$
is $W$--equivariantly quasiisometric to the universal acylindrical
hyperbolic space for $W$ constructed by Abbott, Behrstock, and Durham
\cite{AbbBehDur21}. In the 2-dimensional right-angled case this also
coincides, again up to $W$--equivariant quasiisometry, with the
intersection complex of Oh \cite{Oh22}, see \cite[Section~6.2]{CasDanEdl}.
We also note that Genevois \cite{Gen21} has results producing
hyperbolic spaces by coning off certain convex subcomplexes of CAT(0)
cube complexes, but $\Davis$ is not a cube complex outside of the
right-angled case.

\begin{theorem}\label{thm:coneoff_qi_to_dfine}
   The identity map on $\Davis^{(0)}$ is a quasiisometry between
  $(\Davis^{(0)},\dfine)$ and $(\coneoff,\dcone)$.
\end{theorem}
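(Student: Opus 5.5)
We will show that on $\Davis^{(0)}$ the two metrics satisfy $\dcone\cleq\dfine$ and $\dfine\cleq\dcone$. The identity map is surjective onto $\Davis^{(0)}$, and every point of $\coneoff$ lies within distance $1/2$ of $\Davis^{(0)}$. So these two inequalities give the quasiisometry.

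\emph{Upper bound on $\dfine$.} Take vertices $x,y$ and a $\dcone$--geodesic between them. It is a concatenation of edges of $\Davis^{(1)}$ and of pairs of cone edges through some $v_\Upsilon$; each such pair has $\dcone$--length $1$ and joins two vertices of a wide parabolic subcomplex $\Upsilon$. An edge of $\Davis^{(1)}$ is crossed by a single wall, so its endpoints are at $\dfine$--distance $1$. For a pair of cone edges, \fullref{lem:wide_parabolics_have_diameter_one} says the endpoints are at $\dfine$--distance at most $1$. Summing along the path with the triangle inequality for $\dfine$ gives $\dfine(x,y)\leq\dcone(x,y)$.

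\emph{Upper bound on $\dcone$.} Fix vertices $x,y$ and a combinatorial geodesic $\alpha$ from $x$ to $y$. By \fullref{cor:combinatorial_geodesics_rough_and_quasiruler}, $\alpha$ reparameterizes to a $(1,\tau+2,\tau)$--quasiruler in $(\Davis^{(0)},\dfine)$. Cut $\alpha$ at vertices $x=p_0,p_1,\dots,p_m=y$ so that $\dfine(p_{k},p_{k+1})$ is about a fixed constant $c$. Then $m\cleq\dfine(x,y)$, and it suffices to show that any two vertices $p,q$ on a combinatorial geodesic with $\dfine(p,q)\leq c$ satisfy $\dcone(p,q)\leq B$ for a uniform $B$. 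Equivalently: if $\dcone(p,q)$ is huge, we want a fine chain of length at least $c+1$ separating $p$ and $q$.

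This local estimate is the main obstacle. I would argue by contradiction, using a compactness-style version of \fullref{main_theorem}. Suppose there are combinatorial geodesics $\alpha_n$ with $\dfine$--diameter at most $c$ and $\dcone$--diameter tending to infinity. For any long subsegment of $\alpha_n$, every transverse chain of density $\rho$ has length $\cgeq\rho|I'|$ but contains no fine subchain of length more than $c$. Hence most consecutive pairs of walls in such a chain are mutually transverse to a chain of length $2\caprace$, which is the situation of item \ref{wide_parallel:consecutive_walls_with_crossing_chain} of \fullref{thm:wide_parallel}. The proofs of \fullref{lem:chaintogrid}, \fullref{prop:induction_argument} and \fullref{prop:highly_woven_implies_thick} are quantitative. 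Applied to the segment, they should show that $\alpha_n$ decomposes into boundedly many pieces, the number depending only on $c$, such that each piece is either of bounded $\dcomb$--length or lies in $\bar\nbhd_\radius(\Upsilon)$ for some wide parabolic subcomplex $\Upsilon$. A piece of the second kind has $\dcone$--diameter at most $2\radius'+1$, where $\radius'$ bounds $\dcomb$ in terms of $\dcat$ via \fullref{lem:distance_to_standard}. Summing over the pieces then bounds $\dcone(p,q)$.

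To make the piece count uniform, I would refine the greedy argument as follows. Cover $\alpha$ by maximal subsegments lying within $\radius$ of some wide parabolic subcomplex. Each gap between consecutive such subsegments either has bounded length or, by the contrapositive of \fullref{prop:induction_argument}, carries a fine chain of length comparable to its length. Fine chains coming from distinct gaps should concatenate, up to additive losses of about $2\caprace$ per junction as in \fullref{lem:low_slack_on_NR_geodesics}, into a fine chain separating $p$ and $q$. This forces the number of long gaps to be at most about $c$. I expect the delicate point to be exactly this junction step: a wide parabolic neighborhood sitting between two gaps may allow walls on either side to be jointly crossed by a long chain, which would break fineness of the concatenated chain.
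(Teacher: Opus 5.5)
Your bound $\dfine\leq\dcone$ is exactly \fullref{lem:dfine_leq_dcone}. Your reduction of the reverse inequality to a local estimate is also sound: cut $\alpha$ wherever $\dfine(x,\cdot)$ has grown by $c$, then use the quasiruler property. It plays the role of \fullref{lem:step_up} and \fullref{lem:bounded_dfine_gaps}.

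The junction step you flag is not the problem. The argument of \fullref{lem:low_slack_on_NR_geodesics} uses only that the junction vertex lies between the two gaps on a combinatorial geodesic. So fine chains from successive gaps always concatenate with loss at most $4\caprace-1$, whatever sits in between.

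The genuine gap is the local estimate itself, which is the entire content of the theorem. Your scheme extracts fine chains only from stretches far from wide parabolics, so at best it bounds the number of long gaps, not the number of near-wide pieces. As soon as $(W,S)$ has a wide special subgroup $W_T$, every vertex $w$ lies in the wide parabolic subcomplex $w\Davis_T$. Then your maximal near-wide subsegments cover all of $\alpha$ and there are no gaps at all. The whole problem becomes bounding how many such pieces are needed, and a priori $\alpha$ could run through unboundedly many wide parabolic subcomplexes with short transitions.

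Applying \fullref{prop:induction_argument} to chains chosen by $\dcomb$--density does not settle this. It yields a wide piece crossing a $1/C$ fraction of some long subchain, but the number of pieces obtained this way scales with $\dcomb$--length rather than with $c$. Nothing ties consecutive walls of your chain to distinct $\dcone$--steps. The sentence claiming that $\alpha_n$ ``decomposes into boundedly many pieces'' is precisely the unproved step.

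The missing idea is to select the walls using the cone geometry. The paper measures $\alpha$ in its shortcut graph $\hat\alpha$, which gives an upper bound for $\dcone$. It takes the greedy path, which is a $\hat\alpha$--geodesic by \fullref{lem:greedy}. The walls dual to the first edge after each greedy vertex are distinct, and Dilworth's theorem gives a chain of them of length at least $d_{\hat\alpha}(x,y)/\dim(\NR)$.

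Subsample $4\caprace$ evenly spaced walls from this chain. The hypothesis $\dfine\leq 4\caprace-1$ forces some consecutive pair to be transverse to a chain of length $2\caprace$, so the entire convex subchain between them is part of a long grid. \fullref{prop:induction_argument} then produces a single wide parabolic subcomplex near a subsegment of $\alpha$ containing many greedy vertices. The counting argument of \fullref{sec:proof_of_main_corollary} shows that this subsegment has uniformly bounded $\hat\alpha$--diameter, contradicting geodesicity of the greedy path; this is \fullref{small_dfine}.

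Working with $d_{\hat\alpha}$ rather than $\dcone$ is what makes this greedy-vertex contradiction available. Without some device of this kind, your compactness argument does not close.
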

\begin{corollary}\label{cor:coneoff_is_morse_recognizing}
  $(\coneoff,\dcone)$ is a geodesic hyperbolic space and
  $(\Davis^{(1)},\dcomb)\xrightarrow{\mathrm{Id_{\Davis^{(1)}}}}(\coneoff,\dcone)$
    is Morse recognizing.
\end{corollary}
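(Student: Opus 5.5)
The plan is to deduce everything from \fullref{thm:coneoff_qi_to_dfine}, together with \fullref{lem:four_point_hyperbolicity}, \fullref{cor:combinatorial_geodesics_rough_and_quasiruler} and \fullref{thm:Morse_recognizing_general}, by transporting properties across the quasiisometry $\mathrm{Id}\from(\Davis^{(0)},\dfine)\to(\coneoff,\dcone)$.

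\emph{Geodesicity.} This is immediate. $\coneoff$ is a connected metric graph with edge lengths bounded below (by $1/2$) and the path metric, so it is a geodesic space.

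\emph{Hyperbolicity.} Hyperbolicity in the four-point sense is not a quasiisometry invariant of arbitrary metric spaces, so the non-geodesic nature of $\dfine$ must be handled. I would use that $(\Davis^{(0)},\dfine)$ is a rough geodesic space: by \fullref{cor:combinatorial_geodesics_rough_and_quasiruler}, any two vertices are joined by a $(1,\tau+2)$--rough geodesic, namely a reparameterized combinatorial geodesic. The standard fact that hyperbolicity is invariant under quasiisometries between rough geodesic (indeed quasiruled) spaces then transfers the $(18\caprace-5)$--hyperbolicity of \fullref{lem:four_point_hyperbolicity} to $\coneoff$.

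To be self-contained I would argue as follows. A quasiisometry sends these rough geodesics to quasigeodesics in the geodesic space $\coneoff$. Geodesic triangles in $\coneoff$ are then uniformly Hausdorff close to images of rough-geodesic triangles in $(\Davis^{(0)},\dfine)$. Those triangles are uniformly thin: in a $\delta$--hyperbolic rough geodesic space, rough geodesic triangles are thin, which follows from the four-point condition via Gromov products. Thinness pushes forward under the quasiisometry, so triangles in $\coneoff$ are uniformly thin.

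This is the step I expect to be the main obstacle, since stability of quasigeodesics is not available a priori in $\coneoff$. To avoid that issue, I would first try the cleaner route of proving thin rough-geodesic triangles directly in $(\Davis^{(0)},\dfine)$. I would then invoke the quasiisometry invariance of hyperbolicity among quasi-geodesic spaces, citing e.g.\ Bonk--Schramm or \cite{BlaHaiMat11}.

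\emph{Morse recognition.} Let $\gamma\from I\to(\Davis^{(1)},\dcomb)$ be a quasigeodesic. Since $(\Davis^{(1)},\dcomb)\into(\Davis,\dcat)$ is a quasiisometry, $\gamma$ is a quasigeodesic in $(\Davis,\dcat)$ with controlled constants, and it is Morse there if and only if it is Morse in $\Davis^{(1)}$, with controlled gauges. For each $t$ choose a vertex $v(t)$ at $\dcomb$--distance at most $1/2$ from $\gamma(t)$. Then $\dcone(\gamma(t),v(t))\leq 1/2$. By \fullref{def:dfine_extension} and \fullref{lem:metric_extension}, $\dfine(\gamma(t),v(t))$ is bounded by a uniform constant, since no wall separates $v(t)$ from a suitable vertex of a common cell. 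By \fullref{thm:coneoff_qi_to_dfine},
\[\dcone(\gamma(s),\gamma(t))\ceq\dcone(v(s),v(t))\ceq\dfine(v(s),v(t))\ceq\dfine(\gamma(s),\gamma(t))\]
with uniform constants. Hence $\gamma$ is quasigeodesic into $(\coneoff,\dcone)$ if and only if it is quasigeodesic into $(\Davis,\dfine)$, with constants controlling each other.

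Finally, \fullref{thm:Morse_recognizing_general}, combined with \fullref{morse_divergent_contracting_in_cat_zero} to pass between strong contraction and the Morse property with quantitative control, gives both bullets of \fullref{def:morse_recognizing}.
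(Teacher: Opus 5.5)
Your proposal is correct and follows the paper's proof. Geodesicity is by construction. Hyperbolicity is transferred from $(\Davis^{(0)},\dfine)$ to $\coneoff$ through \fullref{thm:coneoff_qi_to_dfine}, using quasiisometry invariance of hyperbolicity among quasiruled (rough geodesic) spaces \cite{BlaHaiMat11} together with \fullref{cor:combinatorial_geodesics_rough_and_quasiruler}. Morse recognition is then carried across the same quasiisometry from \fullref{thm:Morse_recognizing_general}, which you spell out more carefully than the paper does (nearby vertices, the $\dcomb$/$\dcat$ comparison, and Morse versus strong contraction).

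The self-contained detour for hyperbolicity is unnecessary. The invariance you cite at the end of that detour should be stated for rough geodesic or quasiruled spaces, as in your first formulation, because among general quasi-geodesic spaces hyperbolicity is not a quasiisometry invariant.
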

\begin{proof}
 Hyperbolicity is preserved by quasiisometries between quasiruled
 spaces \cite[Section~2.1]{BlaHaiMat11}.
We have that $(\Davis^{(1)},\dfine)$ is quasiruled,
\fullref{cor:combinatorial_geodesics_rough_and_quasiruler},
  hyperbolic, \fullref{cor:dfine_is_hyperbolic}, and quasiisometric to
  the geodesic space $(\coneoff,\dcone)$, \fullref{thm:coneoff_qi_to_dfine}, so $(\coneoff,\dcone)$ is hyperbolic.
  The quasiisometry transfers the Morse recognizing property of
  $(\Davis^{(1)},\dfine)$, \fullref{thm:Morse_recognizing_general}, to $(\coneoff,\dcone)$.
\end{proof}

Since every vertex of $\coneoff$ is within distance $1/2$ of a vertex of
$\Davis^{(0)}$, it suffices to show that $\dcone$ and $\dfine$ are
comparable on $\Davis^{(0)}$. 
One direction is easy: \fullref{lem:dfine_leq_dcone} says
$\dfine\leq\dcone$, because $\dfine$ gives wide parabolic subcomplexes
diameter 1, so it collapses as much as $\dcone$.
In the other direction we need to argue that $\dfine$ does not
collapse too much more than $\dcone$. That is
\fullref{dcone_cleq_dfine}.
The rough idea of that proof is that we know that combinatorial
geodesics in $\Davis$ are unparameterized $\dfine$ quasirulers, so
they are useful test paths for relating $\dfine$ and $\dcone$.
We define the \emph{shortcut graph} $\hat\gamma$ of a combinatorial geodesic
$\gamma$ to model its length in $\coneoff$, and show that the $\dfine$
length of $\gamma$ cannot be too much shorter than the length of
$\hat\gamma$.

\bigskip

By construction $(\coneoff,\dcone)$ is a
geodesic metric space.
Let $\gamma$ be a $\dcone$--geodesic.
There are two kinds of edges in $\coneoff$, those coming from
$\Davis^{(1)}$ and those that connect a vertex $a\in\Upsilon^{(0)}$,
for some wide parabolic subcomplex $\Upsilon$, to the cone vertex
$v_\Upsilon$.
Furthermore, $v_\Upsilon$ is only connected by an edge to vertices
from $\Upsilon^{(0)}$, so a $\dcone$--geodesic $\gamma$ between $x$
and $y$ in $\Davis^{(0)}$ can be decomposed as a concatenation
$\beta_0+c_1+\beta_1+\cdots+c_m+\beta_m$ where the $\beta_i$ are disjoint
(possibly singleton)
$\dcomb$--geodesics and the $c_i$ are cone paths of the form
$a_i-v_{\Upsilon_i}-b_i$ for some $a_i\neq b_i\in\Upsilon_i^{(0)}$.
Moreover, two vertices of any $\beta_i$ are contained in a common wide
parabolic subcomplex only if they are adjacent in $\beta_i$, since
otherwise $\beta_i$ would not be $\dcone$--geodesic. 

Compare with the `electric paths' and `electric path length' in the
relatively hyperbolic case \cite{Far98}.

\begin{lemma}\label{lem:dfine_leq_dcone}
  On $\Davis^{(0)}\!$, $\dfine\leq \dcone$.
\end{lemma}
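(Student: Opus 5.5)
To prove $\dfine\leq\dcone$ on $\Davis^{(0)}$, fix vertices $x,y\in\Davis^{(0)}$ and a $\dcone$--geodesic between them. Using the decomposition described just before the statement, write it as $\beta_0+c_1+\beta_1+\cdots+c_m+\beta_m$. Here each $\beta_i$ is a $\dcomb$--geodesic in $\Davis^{(1)}$, and each $c_i$ is a cone path $a_i-v_{\Upsilon_i}-b_i$ with $a_i,b_i\in\Upsilon_i^{(0)}$ and $\Upsilon_i$ wide. Then
\[\dcone(x,y)=\sum_i|\beta_i|+m.\]
The successive endpoints give a sequence of vertices of $\Davis^{(0)}$, and since $\dfine$ is a metric on $\Davis^{(0)}$ the triangle inequality reduces the claim to bounding each piece separately.

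For a $\Davis^{(1)}$ piece, it suffices to treat one edge. The endpoints of an edge are separated by exactly one wall, so every chain separating them has at most one wall, and $\dfine$ of the two endpoints is at most $1$. Summing over edges gives $\dfine\leq|\beta_i|$ along each $\beta_i$. (Equivalently, $\dfine\leq\dchain\leq\dwall=\dcomb$ on vertices.)

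For a cone piece $c_i$, the vertices $a_i$ and $b_i$ lie in the same wide parabolic subcomplex $\Upsilon_i$. By \fullref{lem:wide_parabolics_have_diameter_one}, $\dfine(a_i,b_i)\leq 1$, which is exactly the $\dcone$--length $1/2+1/2$ of $c_i$. If one wants to avoid relying on that lemma, the same bound follows directly: any two walls of a fine chain separating $a_i$ from $b_i$ would both cut $\Upsilon_i$ and be disjoint. The argument in \fullref{prop:near_flat_implies_long_grids} then produces an infinite chain transverse to both walls, contradicting fineness, so such a fine chain has at most one wall.

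Summing the two kinds of bounds gives $\dfine(x,y)\leq\sum_i|\beta_i|+m=\dcone(x,y)$. The only nontrivial step is the cone-piece bound, and it is already supplied by \fullref{lem:wide_parabolics_have_diameter_one}; everything else is bookkeeping.
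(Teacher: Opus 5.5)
Your proof is correct and follows the paper's argument: decompose a $\dcone$--geodesic into combinatorial pieces and cone paths, bound the combinatorial pieces by $\dfine\leq\dwall=\dcomb$ and the cone paths by \fullref{lem:wide_parabolics_have_diameter_one}, then sum using the triangle inequality for $\dfine$. Bounding the combinatorial pieces edge by edge, rather than via the paper's $|\beta_i|=\dwall(\beta_i^-,\beta_i^+)$, is a harmless variant that does not even need the $\beta_i$ to be $\dcomb$--geodesics.
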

\begin{proof}
  Let $x,y\in\Davis^{(0)}\!$, and let $\gamma$ be a $\dcone$--geodesic
  connecting them, decomposed as above.
  Since $\gamma$ is $\dcone$--geodesic, so are its subpaths.
Thus, if $\beta_i$ is a $\dcomb$--geodesic subsegment from
$\beta_i^{-}$ to $\beta_i^+$ then
$|\beta_i|=\dcone(\beta_i^{-},\beta_i^+)$.
Since it is a combinatorial geodesic, we also have
$|\beta_i|=\dwall(\beta_i^{-},\beta_i^+)\geq\dfine(\beta_i^{-},\beta_i^+)$.
For cone paths $c_i=a_i-v_{\Upsilon_i}-b_i$
  \fullref{lem:wide_parabolics_have_diameter_one} says $\dfine(a_i,b_i)=1$.
  Thus: \[\dfine(x,y)\leq \sum_{i=0}^m\dfine(\beta_i^{-},\beta_i^+) +\sum_{i=1}^m\dfine(a_i,b_i) \leq \sum_{i=0}^m\dcone(\beta_i^{-},\beta_i^+) +\sum_{i=1}^m\dcone(a_i,b_i)=\dcone(x,y)\qedhere\]
\end{proof}

Given a combinatorial geodesic $\gamma$, define its \emph{shortcut
  graph} $\hat\gamma$ to be the graph whose vertices are the vertices
of $\gamma$, with an edge between two vertices if they are either
adjacent in $\Davis^{(1)}$ or contained in a common wide parabolic
subcomplex.

Observe that the distance between the endpoints of $\gamma$ in
$\hat\gamma$ is an upper bound on their $\dcone$ distance, because
edges in $\hat\gamma$ correspond to steps of length one in $\coneoff$,
but there could be some shorter path in $\coneoff$ with the same endpoints.

If $\gamma$ is a combinatorial geodesic segment whose vertex sequence
is $z_0,z_1,\dots,z_n$ define the \emph{greedy path} in $\hat\gamma$
to be the path with vertices $p_i=z_{\phi(i)}$ where $\phi$ is defined
inductively by $\phi(0):=0$ and:
\[\phi(i+1):=\max\{m\mid \text{$z_{\phi(i)}$ and $z_m$ are adjacent in
    $\hat\gamma$}\}\]

\begin{lemma}\label{lem:greedy}
  The greedy path is a shortest path from $z_0$ to $z_n$ in $\hat\gamma$.
\end{lemma}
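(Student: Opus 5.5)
The plan is to reduce the lemma to an \emph{interval-closure} property of adjacency in $\hat\gamma$, and then run the standard ``greedy stays ahead'' induction.

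\textbf{Interval closure.} The key claim is: if $i<m$ and $z_i$, $z_m$ are adjacent in $\hat\gamma$, then $z_j$ and $z_k$ are adjacent in $\hat\gamma$ for all $i\leq j<k\leq m$.

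If $m=i+1$ there is nothing to prove. Otherwise $z_i$ and $z_m$ lie in a common wide parabolic subcomplex $\Upsilon$. It then suffices to show that every vertex $z_j$ with $i\leq j\leq m$ lies in $\Upsilon$, since then $z_j$ and $z_k$ share $\Upsilon$ and are adjacent.

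This is $\dcomb$--convexity of parabolic subcomplexes, which I would derive from the gate property in \fullref{def:gate_projection}. Set $g:=\gate_\Upsilon(z_j)$. Because $z_j$ lies on a combinatorial geodesic from $z_i$ to $z_m$, we have $\dcomb(z_i,z_j)+\dcomb(z_j,z_m)=\dcomb(z_i,z_m)$. The gate property gives
\[\dcomb(z_i,z_j)=\dcomb(z_i,g)+\dcomb(g,z_j),\qquad \dcomb(z_j,z_m)=\dcomb(z_j,g)+\dcomb(g,z_m).\]
Summing these and comparing with $\dcomb(z_i,z_m)\leq \dcomb(z_i,g)+\dcomb(g,z_m)$ forces $\dcomb(z_j,g)=0$. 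Hence $z_j=g\in\Upsilon$. Alternatively, one can cite convexity directly, as is done in the proof of \fullref{cpp_and_gate_agree}.

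\textbf{Greedy stays ahead.} First, the greedy path is well defined and strictly increasing: $z_{\phi(i)}$ is adjacent to $z_{\phi(i)+1}$, so $\phi(i+1)\geq\phi(i)+1$. Consequently it reaches $z_n$ in finitely many steps.

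Now take any path $q_0=z_0,q_1,\dots,q_k=z_n$ in $\hat\gamma$ and write $q_l=z_{\psi(l)}$. The path may move backward in index. I would prove by induction on $l$ that
\[\phi(l)\geq\max_{l'\leq l}\psi(l')\quad\text{whenever }\phi(l-1)<n,\]
where once the greedy path reaches $n$ it simply stops.

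For the inductive step, suppose the claim holds at $l$. If $\psi(l+1)\leq\phi(l)$, the claim at $l+1$ follows from monotonicity of $\phi$. Otherwise
\[\psi(l)\leq\phi(l)<\psi(l+1),\]
and $z_{\psi(l)}$ is adjacent to $z_{\psi(l+1)}$. Interval closure then gives that $z_{\phi(l)}$ is adjacent to $z_{\psi(l+1)}$. By maximality in the definition of $\phi$, this yields $\phi(l+1)\geq\psi(l+1)$.

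Taking $l=k$, the greedy path reaches index $n$ in at most $k$ steps, so it is a shortest path.

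\textbf{Main obstacle.} The only nonformal step is interval closure, and specifically the convexity of parabolic subcomplexes along combinatorial geodesics. This follows from the gate property as sketched, so I expect no serious obstacle.
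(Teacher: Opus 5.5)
Your proof is correct and follows essentially the paper's argument. Your interval-closure property is exactly the $\dcomb$--convexity step the paper uses to show that its auxiliary ``frontier'' function $\phi'$ coincides with the greedy index $\phi$, after which both arguments compare an arbitrary path against the greedy one step by step. The differences are cosmetic: you derive convexity from the gate property instead of citing it, and you allow competitor paths that are not injective.
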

\begin{proof}
Let $\phi$ be the indexing function of the greedy path, and
inductively define a function $\phi'$ with $\phi'(0):=0$ and:
\[\phi'(i+1):=\max\{m\mid \exists k\leq\phi'(i),\,\text{$z_k$ and
    $z_m$ are adjacent in $\hat\gamma$}\}\]
We claim that $\phi(i)=\phi'(i)$ for all $i$.
By definition, $\phi(0)=\phi'(0)=0$.
Suppose inductively that $\phi=\phi'$ on $[0,i]$, and consider $i+1$.
By definition, the maximum defining
$\phi'(i+1)$ is taken over a superset of the maximum defining $\phi(i+1)$, so
$\phi'(i+1)\geq\phi(i+1)$.
To derive the opposite bound, 
suppose  $j=\phi'(i+1)$.
Then there is some $k\leq\phi'(i)$ such that $z_k$ and $z_j$ are
adjacent in $\hat\gamma$.
If they are adjacent via a $\Davis^{(1)}$ edge then $j=k+1$, so
$\phi'(i+1)-1=k\leq\phi'(i)=\phi(i)<\phi(i+1)$ implies
$\phi'(i+1)\leq\phi(i+1)$.
If they are adjacent via a shortcut through a wide parabolic
subcomplex $\Upsilon$ then since parabolic subcomplexes are
$\dcomb$--convex and
$k\leq\phi'(i)=\phi(i)<\phi(i+1)\leq\phi'(i+1)=j$, the vertex
$z_{\phi(i)}$ is also in $\Upsilon$.
Since there is an edge from $z_{\phi(i)}$ to $z_j$ in $\hat\gamma$, it
follows that $\phi(i+1)\geq j=\phi'(i+1)$. Thus $\phi\equiv\phi'$.

Now take any injective edge path from $z_0$ to $z_n$ in $\hat\gamma$,
and let $q_0,q_1,\dots,q_m$ be its vertex sequence, and define $\psi$ by
$q_i=z_{\psi(i)}$.
It follows immediately that $\psi(i)\leq\phi'(i)$ for all $i$, so by
our previous argument $\psi(i)\leq\phi(i)$.
Thus, at every step the greedy path is at least as far along $\gamma$
as the $q$ path. 
In particular, since $\psi(m)=n$, the greedy path takes at most $m$
steps.
Applying this argument to a $\hat\gamma$ geodesic implies the greedy
path is a $\hat\gamma$ geodesic. 
\end{proof}

For $N\in\mathbb{N}$, define:
\[K(N):=\sup\{d_{\hat\gamma}(x,y)\mid \dfine(x,y)\leq N\text{ and
    $\gamma$ is a $\dcomb$--geodesic from $x$ to $y$}\}.\]

The key estimate, in \fullref{small_dfine}, is that
$K(4\caprace-1)$ is finite.
The proof uses our previous method of
producing wide parabolic subcomplexes from grids to show that if
$\dfine$ collapses the length of a combinatorial geodesic enough then
there must be a long subsegment contained in a wide parabolic
subcomplex. Recall that $4\caprace-1$ is the quasiruler constant of
\fullref{lem:low_slack_on_NR_geodesics}. \fullref{lem:step_up} uses
this fact to promote $K(4\caprace-1)<\infty$ to $K(N)<\infty$ for all $N$.
\begin{lemma}\label{small_dfine}
  $K(4\caprace-1)<\infty$
\end{lemma}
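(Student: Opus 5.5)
We argue by contradiction. Suppose there are combinatorial geodesics $\gamma_n$ from $x_n$ to $y_n$ with $\dfine(x_n,y_n)\leq 4\caprace-1$ but $d_{\hat\gamma_n}(x_n,y_n)\to\infty$. Since $\dcomb(x_n,y_n)\geq d_{\hat\gamma_n}(x_n,y_n)$, the number of walls crossed by $\gamma_n$ tends to infinity. By \fullref{all_metrics_are_equivalent}, there is then a chain $\mathcal{V}_n$ separating $x_n$ from $y_n$, all of whose walls are transverse to $\gamma_n$, with $|\mathcal{V}_n|\to\infty$. The aim is to show that most of $\gamma_n$ lies in a bounded number of wide parabolic subcomplexes, which bounds $d_{\hat\gamma_n}$.

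The first step extracts grids. The subchain of $\mathcal{V}_n$ consisting of every $k$-th wall is not fine once $k\cdot(4\caprace-1)<|\mathcal{V}_n|$, by the definition of $\dfine$. More precisely, split $\mathcal{V}_n$ into convex blocks of length $L'$. If $\mathcal{V}_n$ contains at least $4\caprace$ such blocks, choose one wall from each block. This gives a chain of length at least $4\caprace$ separating $x_n$ and $y_n$, and since $\dfine(x_n,y_n)\leq 4\caprace-1$ it cannot be fine. Hence some pair of consecutive chosen walls is mutually transverse to a chain $\mathcal{H}$ of length $2\caprace$. The convex subchain of $\mathcal{V}_n$ between these two walls has length at least $L'$ and is transverse to $\mathcal{H}$, exactly as in \fullref{lem:chaintogrid}. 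In fact, applying this repeatedly with different choices of representatives shows that all but at most $4\caprace-1$ of the blocks are covered by such long grids.

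Next, apply \fullref{prop:induction_argument} to each grid $(\mathcal{V}',\mathcal{H})$ with $\mathcal{V}'\transverse\gamma_n$ and $|\mathcal{V}'|\geq L'$. This yields a wide parabolic subcomplex $\Upsilon$ and a convex subchain of proportion at least $1/C$ whose subsegment of $\gamma_n$ lies in $\bar\nbhd_\radius(\Upsilon)$. We then upgrade to subsegments lying inside $\Upsilon$ itself, or at least to vertices connected in $\hat\gamma_n$. For this we use the gate argument from the proof of \fullref{main_for_combinatorial}. Walls crossed by the subsegment that do not cut $\Upsilon$ are at most about $2\radius$ in number, by the $\dwall$ 4-gon trick. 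So the subsegment breaks into a bounded number of runs, each crossing only walls that cut $\Upsilon$, plus a bounded number of single edges. A run whose dual walls all cut $\Upsilon$ stays in a parallel translate: its labels lie in $a^{-1}Ta$ and are contained in a wide subset. Hence the endpoints of each run lie in a common wide parabolic subcomplex, namely $aW_U$ extended by the perp factor as in that proof. Each run therefore costs one $\hat\gamma$ step.

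The main obstacle is making the count uniform. The greedy path of \fullref{lem:greedy} only needs a covering of $\gamma_n$ by boundedly many shortcut-able pieces. However, the argument above only handles a $1/C$ fraction of each grid, so a naive iteration leaves leftover pieces whose number grows with $|\mathcal{V}_n|$. I would address this by applying the block argument recursively to the leftover segments between successive wide pieces. Each leftover segment $\sigma$ is itself a combinatorial geodesic, with $\dfine$ between its endpoints at most $4\caprace-1$ by \fullref{lem:low_slack_on_NR_geodesics} and monotonicity. Taking a maximal wide piece each time, in the greedy fashion, should keep the recursion bounded. If it does not, the fallback is to argue directly that the uncovered walls of $\mathcal{V}_n$ form at most $4\caprace-1$ blocks, using the fineness dichotomy, and then choose $L'$ depending only on $(W,S)$.
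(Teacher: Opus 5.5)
Your local step is sound and is the paper's: a subsegment of $\gamma_n$ lying in $\bar\nbhd_\radius(\Upsilon)$ crosses only boundedly many walls that miss $\Upsilon$, so it has uniformly bounded $\hat\gamma_n$--diameter. The gap is the global step, which you identify but do not close. Your $\mathcal{V}_n$ is an arbitrary long chain separating $x_n$ and $y_n$, so finding one wide piece says nothing about $d_{\hat\gamma_n}(x_n,y_n)$. That forces you to cover all of $\gamma_n$ by boundedly many wide pieces, and neither repair achieves this.

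Each application of \fullref{prop:induction_argument} captures only a convex subchain of proportion about $1/(4C\caprace)$ and leaves two leftover segments. A recursion of that shape produces a number of pieces that grows with $|\mathcal{V}_n|$, already polynomially when the splits are balanced. ``Taking a maximal wide piece each time'' adds no quantitative control, since a uniform bound on the number of maximal pieces is essentially the statement being proved. The fallback is also unsupported. With overlapping grids it is unclear what ``covered'' means. Even granting at most $4\caprace-1$ uncovered blocks, each covered block is only partly (a $1/C$ proportion) near a wide parabolic, so its cost in $\hat\gamma_n$ remains uncontrolled.

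The missing idea is to tie the chain to the greedy path, so that no covering is needed. Let $p_0,\dots,p_q$ be the vertices of the greedy path of \fullref{lem:greedy}, so $q=d_{\hat\gamma_n}(x_n,y_n)$. Apply Dilworth's theorem to the walls dual to the first edge of $\gamma_n$ after each $p_i$. This gives a chain $\mathcal{V}$ of these ``greedy walls'' with $|\mathcal{V}|\geq q/\dim(\NR)$. Now run your block/grid argument once on $\mathcal{V}$. It yields a convex subchain $\mathcal{V}'''$ with $|\mathcal{V}'''|\geq q/(4C\caprace\dim(\NR))$ whose subsegment $\beta$ of $\gamma_n$ lies in $\bar\nbhd_\radius(\Upsilon)$.

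Every wall of $\mathcal{V}'''$ except the first is immediately preceded along $\gamma_n$ by a greedy vertex lying on $\beta$. The greedy path is a $\hat\gamma_n$--geodesic, so $\beta$ has $\hat\gamma_n$--diameter at least $|\mathcal{V}'''|-2$. For large $q$ this contradicts the uniform bound from your local step.
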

\begin{proof}
Suppose not. Then for all $n\in\mathbb{N}$ there exists a combinatorial
geodesic $\gamma_n=[x_n,y_n]$ such that $\dfine(x_n,y_n)\leq
4\caprace-1$ and $d_{\hat\gamma_n}(x_n,y_n)\geq n$.
 Fix $n\gg 0$, and let $p_0,p_1,\ldots,p_{q}$ be the ``greedy
 vertices'', that is, the vertices of the
 greedy path in $\hat\gamma_n$.
 By \fullref{lem:greedy}, the greedy path is a $\hat\gamma_n$-geodesic, so
$q=d_{\hat\gamma_n}(x_n,y_n)\geq n$.

Since $\gamma_n$ is a combinatorial geodesic, the walls dual to the
edges of $\gamma_n$ are
distinct and separate $x_n$ from $y_n$.
In particular, consider the set of ``greedy walls'' dual to the first
edge of $\gamma_n$ after vertex $p_i$, for each $0\leq i<q$.
Dilworth's Theorem implies there is a subset of these
walls forming a chain $\mathcal{V}=\{V_i\}_{0\leq i\leq I}$ such that
$I+1=|\mathcal{V}|\geq q/\dim(\NR)\geq n/\dim(\NR)$, where the indexing
and orientation are as
usual:  $x_n\in V_i^{-}\subset
V_{i+1}^{-}$ for all $i<I$.

Consider the subchain $\mathcal{V}':=\{V_0,V_E,V_{2E},\dots,
V_{(4\caprace-1)E}\}$, for 
$E:=\lfloor\frac{|\mathcal{V}|-1}{4\caprace-1}\rfloor$, assuming
$n>4\caprace\dim(\NR)$ so that $E\geq 1$, making the walls of
$\mathcal{V}'$ distinct. 
Then $|\mathcal{V}'|= 4\caprace$, so the  hypothesis
$\dfine(x_n,y_n)\leq 4\caprace -1$ implies $\mathcal{V}'$ is not fine.
Thus, there is some consecutive pair of walls $V_{Ej}$, $V_{E(j+1)}$
of $\mathcal{V}'$ that are transverse to a chain $\mathcal{H}$ with
$|\mathcal{H}|=2\caprace$.
But then $\mathcal{H}$ is transverse to the convex subchain
$\mathcal{V}''$ of $\mathcal{V}$ with extreme walls $V_{Ej}$ and
$V_{E(j+1)}$, which contains $E+1\geq |\mathcal{V}|/4\caprace$ many
walls. 

By \fullref{prop:induction_argument} there exists a constant $C$,
(depending only on the size of the fundamental generating set) such
that $|\mathcal{V}''|\geq C(2\caprace-1)$ implies there exists a wide
parabolic subcomplex $\Upsilon$ and a convex subchain $\mathcal{V}'''$
of $\mathcal{V}''$ such that $|\mathcal{V}'''|\geq |\mathcal{V}''|/C$
and the subsegment $\beta$ of $\gamma_n$ between the extreme walls of
$\mathcal{V}'''$ is contained in $\bar\nbhd_\radius(\Upsilon)$.
We have $|\mathcal{V}''|\geq n/4\caprace\dim(\NR)$, so when $n\geq 8C\caprace^2\dim(\NR)$  this result
applies and gives $|\mathcal{V}'''|\geq n/4C\caprace\dim(\NR)$.

Now proceed as in \fullref{sec:proof_of_main_corollary}: Since $\beta\subset\bar\nbhd_{\radius}(\Upsilon)$ and $\Upsilon$ is
convex, the $\dwall$ analogue of the 4-gon trick (\fullref{quad_trick})
implies that, for $\beta^{-}$ and $\beta^{+}$ the endpoints of
$\beta$, if $\mathcal{M}$ is the set of all walls transverse to
$\beta$ and $\mathcal{M}'$ is the subset of those that cut $\Upsilon$,
we have:
\[D:=|\mathcal{M}|-|\mathcal{M}'|\stackrel{+}{\cleq}
  \dwall(\beta^{-},\Upsilon)+\dwall(\beta^{+},\Upsilon)\ceq
  \dcat(\beta^{-},\Upsilon)+\dcat(\beta^{+},\Upsilon)\leq 2\radius\]
The first coarse inequality accounts for the potentially uniformly finitely many
walls of $\mathcal{M}$ that go through points
$\pi_\Upsilon(\beta^+)$ or $\pi_{\Upsilon}(\beta^{-})$ but do not cut
$\Upsilon$, and the second coarse inequality is the relation between
$\dwall$ and $\dcat$. Both of these are uniform, independent of $n$
and choices of $\gamma_n$ or $\beta$, so there exists $D_0$,
depending only on $(W,S)$, such that $D\leq D_0$ for all $n$.

The count $D=|\mathcal{M}|-|\mathcal{M}'|$ gives us a decomposition of
$\beta$ as $D$--many single edges whose dual walls do
not cut $\Upsilon$ separating  $(D+1)$--many (possibly
trivial) subpaths consisting of edges all of whose dual walls cut
$\Upsilon$. By the same argument as in
\fullref{sec:proof_of_main_corollary}, each of the latter subpaths is
contained in a wide parabolic subcomplex.
Thus, the diameter of $\beta$ in
$\hat\gamma_n$ is at most $2D+1\leq 2D_0+1$.

For all but the first wall of $\mathcal{V}'''$, the edge path $\beta$
contains the vertex of $\gamma_n$ immediately prior to that wall.
Since the walls of $\mathcal{V}'''$ are greedy walls, this means that
$\beta$ contains $|\mathcal{V}'''|-1$ many greedy vertices.
Since the greedy path is a $\hat\gamma_n$ geodesic, this implies that:
\[\frac{n}{4C\caprace\dim(\NR)}-2\leq|\mathcal{V}'''|-2\leq\diam_{\hat\gamma_n}(\beta)\leq
  2D_0+1\]
This is a contradiction for $n\geq 8C\caprace\dim(\NR)(D_0+2)$, so we
conclude:
\[K(4\caprace-1)\leq 8C\caprace\dim(\NR)\cdot\max\{\caprace, D_0+2\}\qedhere\]
\end{proof}

\begin{lemma}\label{lem:step_up}
  $\forall N\in\mathbb{N},\,K(N)\leq NK(4\caprace-1)+N<\infty$
\end{lemma}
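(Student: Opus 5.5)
The plan is to cut a combinatorial geodesic $\gamma$ from $x$ to $y$ with $\dfine(x,y)\leq N$ into at most $N$ consecutive pieces, each of which has small $\dfine$--length between its endpoints, and then to use the triangle inequality in $\hat\gamma$. Let the vertex sequence of $\gamma$ be $z_0=x,z_1,\dots,z_n=y$, and set $a_i:=\dfine(x,z_i)$. As in the proof of \fullref{lem:combinatorial_geodesics_reparameterize_to_rough_geodesics}, $a_i$ is nondecreasing, takes integer values, and increases by at most $1$ at each step, with $a_0=0$ and $a_n=\dfine(x,y)\leq N$.

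Cut $\gamma$ at the first indices where $a_i$ reaches each value $D=1,\dots,a_n$. This produces at most $a_n+1\leq N+1$ blocks on which $a_i$ is constant. The blocks are separated by single edges, and each such edge contributes $1$ to the $\hat\gamma$--distance.

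\emph{Main step.} Bound the $\dfine$--distance between the endpoints of each block. For indices $i\leq j$ in the same block we have $a_j-a_i=0$. The estimate \eqref{eq:26}, which comes from \fullref{lem:low_slack_on_NR_geodesics}, then gives
\[\dfine(z_i,z_j)\leq a_j-a_i+(4\caprace-1)=4\caprace-1.\]
The subpath of $\gamma$ between $z_i$ and $z_j$ is itself a combinatorial geodesic. Its shortcut graph is a subgraph of $\hat\gamma$: adjacency there is adjacency in $\Davis^{(1)}$ or membership in a common wide parabolic subcomplex, and both are intrinsic to the pair of vertices. Hence
\[d_{\hat\gamma}(z_i,z_j)\leq K(4\caprace-1).\]

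Summing over the pieces: there are at most $N+1$ blocks, each of $\hat\gamma$--diameter at most $K(4\caprace-1)$, joined by at most $N$ connecting edges. This gives $d_{\hat\gamma}(x,y)\leq (N+1)K(4\caprace-1)+N$.

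To sharpen this to the stated bound $NK(4\caprace-1)+N$, instead choose cut points $i_0=0<i_1<\dots$ so that $a$ increases by exactly $1$ across each segment $[i_k,i_{k+1}]$, giving at most $N$ segments. On such a segment \eqref{eq:26} gives $\dfine\leq 1+(4\caprace-1)$, which is slightly too big. So split each segment as a constant block plus one edge: a block of $\hat\gamma$--diameter at most $K(4\caprace-1)$ followed by the edge where $a$ jumps, costing $1$. The initial block, where $a=0$, needs separate handling. By the first part of \fullref{lem:low_slack_on_NR_geodesics}'s proof, vertices with $a_i=0$ are separated from $x$ by no wall, so they equal $x$. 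Hence $a_1=1$, and the initial block is trivial. With this accounting the total is $N(K(4\caprace-1)+1)$.

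The only delicate point is this bookkeeping, namely confirming that the first constant block is degenerate. The finiteness of $K(4\caprace-1)$ is supplied by \fullref{small_dfine}.
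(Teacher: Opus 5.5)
Your proof is correct and is essentially the paper's argument: the paper also splits by the plateaus of $i\mapsto\dfine(x,z_i)$, bounds each plateau by $K(4\caprace-1)$ via \fullref{lem:low_slack_on_NR_geodesics}, counts at most $N$ step-ups, and uses that the height-$0$ plateau is a singleton. The only difference is that the paper decomposes the greedy $\hat\gamma$--geodesic into plateau runs and step-up edges, while you apply the triangle inequality in $\hat\gamma$ along the plateau blocks of $\gamma$ itself, which avoids needing \fullref{lem:greedy}; note also that $a_1=1$ follows directly from $\dfine$ being a metric (a single wall is a fine chain), not from the proof of \fullref{lem:low_slack_on_NR_geodesics}.
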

\begin{proof}
Let $\gamma$ be a combinatorial geodesic segment between vertices $x$
and $y$ with vertex sequence $x=z_0,\dots,z_m=y$, such that
$\dfine(x,y)\leq N$.
Define $f(i):=\dfine(x,z_i)$.
As in the proof of
\fullref{lem:combinatorial_geodesics_reparameterize_to_rough_geodesics},
$0\leq f(i+1)-f(i)\leq 1$.

Let $p_i=z_{\phi(i)}$ be the vertex sequence of the greedy path from
$z_0$ to $z_m$ in $\hat\gamma$.
Then $0\leq f(\phi(i+1))-f(\phi(i))\leq 1$, since $f$ is nondecreasing
and $z_{\phi(i)}$ and $z_{\phi(i+1)}$ are either adjacent in
$\Davis^{(1)}$ or contained in a common wide parabolic subcomplex, so
in either case $\dfine(z_{\phi(i)},z_{\phi(i+1)})=1$.

The graph of the function $f$ has horizontal `plateaus', along which
$f$ is constant, separated by
`step-ups' with $f(i+1)-f(i)=1$. 
Let $P_q:=\{i\in [0,m]\cap\mathbb{Z}\mid f(i)=q\}$ be the plateau at
height $q$.
The greedy path decomposes into plateau runs, maximal subpaths
contained in a single plateau, and single edge plateau step-ups moving
from some $P_q$ to $P_{q+1}$.
There are at most $N$ step-up edges.
The first edge is a step-up edge, since $P_0=\{0\}$.
Thus, there are at most $N$ non-singleton plateau runs.
Suppose $z_a,z_b\in P_q$.
By \fullref{lem:low_slack_on_NR_geodesics}:
\[\dfine(z_0,z_a)+\dfine(z_a,z_b)-(4\caprace-1)\leq\dfine(z_0,z_b)\]
Since $q=\dfine(z_0,z_a)=\dfine(z_0,z_b)$, this gives $\dfine(z_a,z_b)\leq 4\caprace-1$, which implies
$d_{\hat\gamma}(z_a,z_b)\leq K(4\caprace-1)$, which is finite, by \fullref{small_dfine}. 
Conclude that the plateau runs of the greedy path each contribute at
most $K(4\caprace-1)$ to its $\hat\gamma$ length, so
$d_{\hat\gamma}(x,y)\leq N+NK(4\caprace-1)$, which gives $K(N)\leq NK(4\caprace-1)+N$.
\end{proof}

\begin{lemma}\label{lem:bounded_dfine_gaps}
  Let $\gamma$ be a combinatorial geodesic segment in $\Davis$.
  Let $\mathcal{V}$ be a longest fine chain separating the endpoints
  of $\gamma$.
  The $\dfine$-distance between the end vertices of maximal edge-subpaths
  of $\gamma\setminus\mathcal{V}$ is at most $8\caprace-2$.
\end{lemma}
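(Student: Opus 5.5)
Let $\gamma$ have vertex sequence $x=z_0,\dots,z_m=y$, and let $\mathcal{V}=\{V_1,\dots,V_n\}$ be a longest fine chain separating $x$ and $y$, oriented from $x$ to $y$. Since $\gamma$ is a combinatorial geodesic, each $V_i$ is dual to exactly one edge of $\gamma$, and these edges occur in order. A maximal edge-subpath $\beta$ of $\gamma\setminus\mathcal{V}$ therefore runs from the endpoint of the edge dual to $V_i$ to the start of the edge dual to $V_{i+1}$; at the ends of $\gamma$ we take $i=0$ or $i=n$. Call the end vertices $a$ and $b$. The plan is to show that any fine chain separating $a$ and $b$ can be spliced into $\mathcal{V}$ with bounded loss, so that maximality of $|\mathcal{V}|$ forces $\dfine(a,b)$ to be small.

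Let $\mathcal{H}=\{H_1,\dots,H_k\}$ be a longest fine chain separating $a$ and $b$, so $k=\dfine(a,b)$. Because $a$ and $b$ lie on a geodesic, every wall of $\mathcal{H}$ separates $x$ from $y$ and cuts $\beta$. So $\mathcal{H}$ lies strictly between $V_i$ and $V_{i+1}$ along $\gamma$, and no $H_j$ is a wall of $\mathcal{V}$. I will reuse the splicing argument of \fullref{lem:low_slack_on_NR_geodesics}, applied at both ends of $\mathcal{H}$.

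First consider the left end. Orient the walls away from $a$: the walls $V_i,V_{i-1},\dots$ separate $a$ from $x$, and the $H_j$ separate $a$ from $b$. The monotonicity in that lemma applies verbatim: if $V_{i-s}$ crosses $H_t$, then every $V_{i-s'}$ with $s'\le s$ crosses every $H_{t'}$ with $t'\le t$. Fineness of the two chains then forces $V_{i-s}$ and $H_t$ to be disjoint whenever $\max\{s,t\}\ge 2\caprace-1$, so discarding the first $2\caprace-1$ walls of $\mathcal{H}$ on this side removes all crossings with $V_i,\dots,V_1$. The same argument at the right end, against $V_{i+1},\dots,V_n$, discards $2\caprace-1$ more walls of $\mathcal{H}$. (Alternatively, one could discard a few walls of $\mathcal{V}$ instead; I would choose whichever bookkeeping comes out cleanly.) This leaves a subchain $\mathcal{H}'$ with $|\mathcal{H}'|\ge k-(4\caprace-2)$, all of whose walls are disjoint from all of $\mathcal{V}$ and sit between $V_i$ and $V_{i+1}$.

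Now consider $\mathcal{K}:=\{V_1,\dots,V_i\}\cup\mathcal{H}'\cup\{V_{i+1},\dots,V_n\}$. It is a chain separating $x$ and $y$, since all its walls are pairwise disjoint and ordered along $\gamma$. The main obstacle is showing that $\mathcal{K}$ is fine at the two junctions, the pair $(V_i,\text{first wall of }\mathcal{H}')$ and the pair $(\text{last wall of }\mathcal{H}',V_{i+1})$. At an internal junction the trick of \fullref{lem:low_slack_on_NR_geodesics} works: a discarded wall of $\mathcal{H}$ separates $V_i$ from the first kept wall, so any chain transverse to the junction pair is transverse to a consecutive pair of $\mathcal{H}$, and fineness of $\mathcal{H}$ bounds it. Hence $|\mathcal{K}|\le|\mathcal{V}|$, which gives $|\mathcal{H}'|=0$ and so $k\le 4\caprace-2$. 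This is within the claimed $8\caprace-2$. If the junction argument instead forces discarding some walls of $\mathcal{V}$, then each side can cost $4\caprace-1$, as in the $\tau$ of that lemma. The comparison $|\mathcal{K}|\le|\mathcal{V}|$ then yields $k\le 2(4\caprace-1)=8\caprace-2$, which matches the stated constant and suggests this is the intended accounting.
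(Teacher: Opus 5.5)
Your splicing strategy is the right one, but the step that lets you keep all of $\mathcal{V}$ is not justified. Index $\mathcal H=\{H_1,\dots,H_k\}$ from $a$. Monotonicity plus fineness does rule out $V_{i-s}\transverse H_t$ when $s\ge 1$ and $t\ge 2\caprace$. In that case both $V_i$ and $V_{i-1}$ are crossed by the chain $H_1,\dots,H_t$, which contradicts fineness of $\mathcal V$ at the consecutive pair $\{V_{i-1},V_i\}$.

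It says nothing when $s=0$. A single wall $V_i$ crossing every wall of $\mathcal H$ violates neither fineness condition:
\begin{itemize}
\item fineness of $\mathcal V$ only constrains chains crossing a \emph{pair} of consecutive walls of $\mathcal V$;
\item fineness of $\mathcal H$ is not violated, because the crossing chain $\{V_i\}$ has length $1$.
\end{itemize}
For example, $\beta$ might travel alongside $V_i$, crossing a chain of walls that all cross $V_i$. So the claim that discarding $2\caprace-1$ walls of $\mathcal H$ removes all crossings with $V_i$ (and symmetrically with $V_{i+1}$) is unsupported. Consequently $\mathcal K$ need not be a chain, the junction step ``a discarded wall of $\mathcal H$ separates $V_i$ from the first kept wall'' fails, and $k\le 4\caprace-2$ is not established. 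Even at $V_{i-1}$ the junction trick needs $2\caprace$ discarded walls of $\mathcal H$, not $2\caprace-1$, since a chain of $2\caprace-1$ walls crossing $\{V_{i-1},V_i\}$ is allowed by fineness.

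The repair is the alternative you mention only conditionally, and it is exactly the paper's proof. Apply the argument of \fullref{lem:low_slack_on_NR_geodesics} verbatim at each end of $\beta$, with common point $a$, resp.\ $b$. On each side discard $2\caprace-1$ walls of $\mathcal V$ and $2\caprace$ walls of $\mathcal H$. This gives the fine chain $\{V_1,\dots,V_{i-2\caprace+1}\}\cup\{H_{2\caprace+1},\dots,H_{k-2\caprace}\}\cup\{V_{i+2\caprace},\dots,V_n\}$ of length $n+k-(8\caprace-2)$, and maximality of $\mathcal V$ yields $k\le 8\caprace-2$.

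You also need the boundary cases, which the paper treats separately:
\begin{itemize}
\item If fewer than $2\caprace-1$ walls of $\mathcal V$ lie on one side of $\beta$, drop all of them and keep $\mathcal H$ out to its end on that side.
\item If both sides are short, then $n<4\caprace-2$, so once $k>8\caprace-2$ the chain $\mathcal H$ by itself is a longer fine chain separating the endpoints than $\mathcal V$.
\end{itemize}
Since your argument is valid for $s\ge 1$, there is also a cheaper fix: drop only $V_i$ and $V_{i+1}$, together with $2\caprace$ walls of $\mathcal H$ at each end. This works as well and even gives $k\le 4\caprace+2$.
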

\begin{proof}
  Let $\gamma\from [0,T]\to\Davis^{(1)}$ be the combinatorial geodesic
  parameterized by arc length. 
  Let $\mathcal{V}=\{V_i\}_{1\leq i\leq n}$ with $\gamma(0)\subset
  V_i^{-}\subset V_{i+1}^{-}$.
  Let $t_i$ be the times such that $\gamma(t_i)\in V_i$.
  These times are half-integral.
  Let $t_0:=-1/2$ and $t_{n+1}:=T+1/2$.
  Suppose for some $0\leq k\leq n$ that $\mathcal{H}=\{H_j\}_{0\leq
    j\leq N}$ is a fine chain separating vertices $\gamma(t_k+1/2)$
  and $\gamma(t_{k+1}-1/2)$, with $N+1>8\caprace-2$.
  
If $2\caprace-1\leq k\leq n-(2\caprace-1)$, then by  the argument of
  \fullref{lem:low_slack_on_NR_geodesics}, there is a fine chain:
  \[\{V_i\}_{1\leq i\leq k-(2\caprace-1)}+\{H_j\}_{2\caprace\leq j\leq
  N-2\caprace}+\{V_i\}_{k+1+2\caprace-1\leq i\leq n}\]
That chain has length: \[\left(k-2\caprace+1\right)+\left(N-4\caprace+1\right)+\left(n-(k+2\caprace)+1\right)=n+N-8\caprace+3\]
  That is a contradiction, because since $N>8\caprace-3$, the new chain
  is a fine chain separating $\gamma(0)$ and $\gamma(T)$ that is
  longer than $\mathcal{V}$.

  If  $k<2\caprace-1$ and $k\leq n-(2\caprace-1)$ then we get a
  similar contradiction from the chain:
  \[\{H_j\}_{0\leq j\leq N-2\caprace}+\{V_i\}_{k+1+2\caprace-1\leq
      i\leq n}\]
  This is already longer than $\mathcal{V}$ when $N\geq 6\caprace-3$.
  A similar argument holds when $k\geq2\caprace-1$ and $k>n-(2\caprace-1)$.

  If both $k<2\caprace-1$ and $k>n-(2\caprace-1)$, then $n< 4\caprace-2$,
  so we get a contradiction since $|\mathcal{H}|>|\mathcal{V}|$.
\end{proof}

\begin{proposition}\label{dcone_cleq_dfine}
  There exists $C$ such that for every combinatorial geodesic
  segment $\gamma$ in $\Davis^{(1)}$ with endpoints
  $x,y\in\Davis^{(0)}$:
  \[d_{\hat\gamma}(x,y)\leq C\dfine(x,y)+C\]
  In particular:
  \[\dcone(x,y)\leq C\dfine(x,y)+C\]
\end{proposition}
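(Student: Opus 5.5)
We will cut $\gamma$ at the walls of a longest fine chain and bound the $\hat\gamma$--length of each piece, using \fullref{lem:bounded_dfine_gaps} together with \fullref{lem:step_up}.

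Let $\gamma$ run from $x$ to $y$, and let $\mathcal{V}=\{V_i\}_{1\leq i\leq n}$ be a longest fine chain separating $x$ and $y$, so that $n=\dfine(x,y)$. Each wall $V_i$ cuts exactly one edge $e_i$ of $\gamma$. Removing these edges decomposes $\gamma$ as
\[\gamma=\sigma_0+e_1+\sigma_1+\cdots+e_n+\sigma_n,\]
where each $\sigma_k$ is a maximal, possibly trivial, edge-subpath of $\gamma\setminus\mathcal{V}$. Each $\sigma_k$ is a combinatorial geodesic, being a subpath of one. By \fullref{lem:bounded_dfine_gaps}, the endpoints of $\sigma_k$ are at $\dfine$--distance at most $8\caprace-2$.

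Next we compare $\hat\sigma_k$ with $\hat\gamma$. The shortcut graph $\hat\sigma_k$ is the full subgraph of $\hat\gamma$ spanned by the vertices of $\sigma_k$, so $d_{\hat\gamma}\leq d_{\hat\sigma_k}$ on those vertices. The definition of $K$ and \fullref{lem:step_up} then give
\[d_{\hat\gamma}(\sigma_k^-,\sigma_k^+)\leq K(8\caprace-2)\leq (8\caprace-2)(K(4\caprace-1)+1),\]
which is a constant depending only on $(W,S)$. Each edge $e_i$ contributes $1$. Summing over the pieces,
\[d_{\hat\gamma}(x,y)\leq (n+1)K(8\caprace-2)+n\leq C\dfine(x,y)+C,\]
with $C:=K(8\caprace-2)+1$.

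For the second inequality, the remark before \fullref{lem:greedy} says $\dcone(x,y)\leq d_{\hat\gamma}(x,y)$, since each edge of $\hat\gamma$ corresponds to a path of length $1$ in $\coneoff$. Taking any combinatorial geodesic $\gamma$ from $x$ to $y$ gives $\dcone(x,y)\leq C\dfine(x,y)+C$.

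Almost all of the work is already done by the earlier lemmas. The only points to check are the restriction of shortcut graphs to subpaths and the treatment of the endpoints $t_0=-1/2$ and $t_{n+1}=T+1/2$ in \fullref{lem:bounded_dfine_gaps}, which make the first and last pieces $\sigma_0$ and $\sigma_n$ start at $x$ and end at $y$. The real difficulty was \fullref{small_dfine}, and here it is used only as a black box.
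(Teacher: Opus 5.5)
Your proposal is correct and is essentially the paper's proof. Both cut $\gamma$ at the edges dual to a longest fine chain and bound each complementary piece in $\hat\gamma$ by $K(8\caprace-2)$ using \fullref{lem:bounded_dfine_gaps} and \fullref{lem:step_up}. Summing gives $(n+1)K(8\caprace-2)+n$, with the same constant $C=K(8\caprace-2)+1$.
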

\begin{proof}
  Let $M:=8\caprace-2$.   By \fullref{lem:step_up}:  
\[C:=K(M)+1\leq MK(4\caprace-1)+M+1<\infty\]

  Let $\gamma\from [0,T]\to\Davis^{(1)}$ be a combinatorial geodesic
  segment parameterized by arc length with endpoints
  $x:=\gamma(0)$ and $y:=\gamma(T)$ in $\Davis^{(0)}$.
  Let $\mathcal{V}:=\{V_i\}_{1\leq i\leq n}$ be a longest fine chain
  separating $x$ and $y$, with $x\in V_i^{-}\subset V_{i+1}^{-}$ for
  all $i$.
  Let $t_i$ be the half-integral time such that $\gamma(t_i)\in V_i$,
  and set $t_0:=-1/2$ and $t_{n+1}:=T+1/2$.

  For each $0\leq i\leq n$, let $\gamma_i$ be the maximal edge-subpath
  of $\gamma\setminus\mathcal{V}$ with endpoints
  $a_i:=\gamma(t_i+1/2)$ and $b_i:=\gamma(t_{i+1}-1/2)$.
  By \fullref{lem:bounded_dfine_gaps}, $\dfine(a_i,b_i)\leq M$, so 
$d_{\hat\gamma_i}(a_i,b_i)\leq
  K(M)$.

  Since $\hat\gamma_i$ is a subgraph of $\hat\gamma$, this implies
  $d_{\hat\gamma}(a_i,b_i)\leq K(M)$ for every $i$.

  Also, for each $1\leq i\leq n$ the vertices $b_{i-1}$ and $a_i$ are
  the endpoints of the edge of $\gamma$ crossing $V_i$, so they are adjacent in
  $\Davis^{(1)}$, so they are adjacent in $\hat\gamma$.

  Concatenating these edges with shortest paths in each
  $\hat\gamma_i$, we obtain a path in $\hat\gamma$ from $x$ to $y$ of
  length at most $(n+1)K(M)+n$.
  Thus:
  \[\dcone(x,y)\leq d_{\hat\gamma}(x,y)\leq (n+1)K(M)+n \leq Cn+C=C\dfine(x,y)+C\qedhere\]
\end{proof}

%% file: acylindricity.tex
Bowditch \cite{Bow08} defines a group action $G\act X$ on a metric
space to be \emph{acylindrical} if for all $\epsilon\geq 0$ there
exist $R_\epsilon$ and $N_\epsilon$ such that for all points $x$ and
$y$ with $d(x,y)\geq R_\epsilon$, the set $\{g\in G\mid d(x,gx)\leq
\epsilon\text{ and }d(y,gy)\leq \epsilon\}$ has cardinality at most
$N_\epsilon$.

\begin{theorem}[Acylindricity] \label{thm:acylindricity}
    The action $W\act (\coneoff,\dcone)$ is acylindrical. That is,
    for all $\epsilon\geq 0$ there are constants $R_\epsilon$ and
    $N_\epsilon$ such that $\dcone(x,y) \geq R_\epsilon$ implies
    $|A_\epsilon(x,y)|\leq N_\epsilon$, where:
    \[A_\epsilon(x,y) := \{ w\in W \mid
    \dcone(x,wx)\leq \epsilon \text{ and } \dcone(y,wy)\leq \epsilon
    \}\]
\end{theorem}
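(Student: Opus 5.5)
Following the intro's outline, which mirrors \cite[Theorem~14.3]{BehHagSis17}, I will prove acylindricity by induction on complexity. The complexity will be the rank of the parabolic subgroup supporting the elements that coarsely fix a pair of far-apart points. Since every point of $\coneoff$ is within $1/2$ of $\Davis^{(0)}$, I may take $x,y\in\Davis^{(0)}$ and enlarge $\epsilon$ by $1$. By \fullref{thm:coneoff_qi_to_dfine} I can also move freely between $\dcone$ and $\dfine$.

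\textbf{Step 1: reduce to a Morse segment.} Fix a combinatorial geodesic $\gamma=[x,y]$. By \fullref{prop:Davis_vertices_coarsely_dense_in_NR_dfine} and \fullref{dcone_cleq_dfine}, if $\dcone(x,y)\geq R_\epsilon$ then a longest fine chain $\mathcal{V}$ separating $x$ and $y$ has length linear in $R_\epsilon$. Take a middle subchain of $\mathcal{V}$, of length $\gg\epsilon$, whose walls are far in $\dfine$ from both endpoints. Every $w\in A_\epsilon(x,y)$ moves $x$ and $y$ by at most $\epsilon$ in $\dfine$. Using the quasiruler property (\fullref{cor:combinatorial_geodesics_rough_and_quasiruler}) and hyperbolicity (\fullref{lem:four_point_hyperbolicity}), $w\gamma$ then fellow-travels $\gamma$ in $\dfine$ along this middle portion. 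I want to upgrade this to a statement about walls: $w$ maps the middle fine walls to walls that interleave, up to bounded shifts, with the fine chain along $\gamma$. This is the analogue of the bounded geodesic image theorem. It should follow from the argument of \fullref{lem:bounded_dfine_gaps}: a fine wall that is not shadowed would let us splice in a longer fine chain, contradicting maximality.

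\textbf{Step 2: bounded number of group elements.} Consecutive walls $V,V'$ of a fine chain are crossed by no chain of length $2\caprace$. By \fullref{lem:bounded_bridge}, $\bridge(V,V')$ has bounded diameter once $\dcat(V,V')$ is bounded. Choose a subsegment of $\gamma$ between two consecutive fine walls that are close in $\dcat$; such a subsegment exists along the middle portion, by the density argument of \fullref{separated_chain_implies_divergent}. Each $w\in A_\epsilon$ sends this bridge to the bridge of some pair among boundedly many candidate consecutive fine walls near the original pair, by Step 1. All of these bridges lie in a bounded $\dcat$--ball. Since $W$ acts properly on $(\Davis,\dcat)$, this bounds $|A_\epsilon|$ uniformly.

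\textbf{Main obstacle.} The hard case is when $\gamma$ has no short-gap pair of fine walls in the middle. This happens when long stretches of $\gamma$ run near wide parabolic subcomplexes, so that $\dfine$ is small compared with $\dcat$-length. There the elements of $A_\epsilon$ may translate inside the wide parabolics. To handle it I would use the gate projections $\gate_\Upsilon$ of \fullref{sec:gate} as subsurface projections, together with the one-sided distance formula suggested by \fullref{small_dfine}: if $\dcone(x,y)$ is large, the fine walls and the wide-parabolic shortcuts together account for the length. Elements that stabilize up to $\epsilon$ every wide parabolic stretched along $\gamma$ lie in the intersection of the corresponding parabolic normalizers, $\pc(\mathcal{V})\times\pc(\mathcal{V})^\perp$ by \fullref{normalizer_parabolics_have_well_defined_subcomplex}. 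I would then induct on the rank of this parabolic, applying the statement to the lower-complexity Coxeter system, in the style of the large link lemma. Making this induction close is where I expect most of the work. In particular, I must ensure that a single fine wall transverse to $\gamma$ and at definite $\dfine$--distance from both ends is forced, with uniform constants, since it pins down the elements up to finitely many choices.
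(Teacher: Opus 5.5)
There is a genuine gap, and it is not confined to the case you label the main obstacle.

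Your Steps 1--2 fail even when a $\dcat$-short gap $V_i,V_{i+1}$ exists. Fellow-travelling of $\gamma$ and $w\gamma$ in $\dfine$ (equivalently $\dcone$) gives no $\dcat$ information, because $\dfine$ collapses every wide parabolic subcomplex to diameter $1$. Grant that $wV_i$ lies within a bounded shift of $V_i$ in the chain order, or within bounded $\dfine$-distance along $\gamma$. (Note that $wV_i$ need not be a wall of $\mathcal V$ at all, since maximal fine chains are not unique.) Even so, this is compatible with $wV_i$ crossing $\gamma$ anywhere along a long stretch of $\gamma$ near a wide parabolic subcomplex. Such a stretch can lie within $\dfine$-distance $1$ of $V_i$ even if the gap at $V_i$ itself is short. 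So this information bounds neither the number of candidate walls nor the $\dcat$-position of $\bridge(wV_i,wV_{i+1})$, and you cannot invoke properness of $W\act(\Davis,\dcat)$.

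Also, the density argument of \fullref{separated_chain_implies_divergent} needs chain density with respect to the $\dcat$-parameterization of $\gamma$. Here $\dfine(x,y)/\dcomb(x,y)$ can be arbitrarily small, so a short gap in the middle is not guaranteed.

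The right dichotomy at a middle vertex $m$ is, roughly, whether some pre-wide parabolic $P$ with $\rho(P)$ near $m$ has large $\delta_P(x,y)$. If none does, the argument runs as follows. First, \fullref{prop:witness} and \fullref{prop:small_projection} bound $d_\Upsilon(x,y)$ for every wide $\Upsilon$ with $v_\Upsilon$ near $m$. Next, bounded geodesic image (\fullref{thm:BGI}) bounds $d_\Upsilon(p,gp)$ for all wide $\Upsilon$, where $p$ is a vertex of a combinatorial geodesic near $m$. Finally, the one-sided distance formula (\fullref{lem:no_large_projection}) converts this into a bound on $\dcomb(p,gp)$. Only then does properness finish the argument (\fullref{prop:no_large_links}).

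The complementary case is left open, and the induction you sketch would not close it. Elements of $A_\epsilon(x,y)$ need not stabilize the wide parabolic subcomplexes along $\gamma$. They can move them to other ones with $\dcone$-nearby cone vertices, so they do not lie in a common normalizer. In any case a single normalizer $\pc(\mathcal V)\times\pc(\mathcal V)^\perp$ is infinite. That normalizer is also itself wide, so its own coned-off space is bounded and acylindricity for it says nothing. The useful lower-complexity objects are the graphs $C(P)$, which need not be hyperbolic.

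The missing ideas are these:
\begin{enumerate}
\item At each trisection vertex $m_\pm$, the set $L_2(m_\pm)$ of relevant pre-wide parabolics is uniformly finite. This comes from bounded geodesic image, the large link property \fullref{prop:LargeLink}, and a count through nested carriers (\fullref{prop:L2finite}).
\item Every $g\in A_\epsilon(x,y)$ conjugates a fixed $P_\pm\in L_1(m_\pm)$ into $L_2(m_\pm)$ (\fullref{prop:stability}). Hence $A_\epsilon(x,y)$ is covered by at most $|L_2(m_-)|\cdot|L_2(m_+)|$ cosets of $\normalizer_W(P_-)\cap\normalizer_W(P_+)$.
\item This intersection is spherical because $\rho(P_-)$ and $\rho(P_+)$ are far apart in $\coneoff$ (contrapositive of \fullref{cor:close-cone}).
\end{enumerate}
What pins the elements down to finitely many is this use of two parabolics at well-separated points, not descent in the rank of a single normalizer.
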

Most of this section is dedicated to proving
\fullref{thm:acylindricity}, but before diving in we draw some
further conclusions, assuming that \fullref{thm:acylindricity} holds.  

Osin \cite{Osi16} formalized the notion of an \emph{acylindrically
  hyperbolic group} as a group $G$ that admits an acylindrical,
non-elementary action on a hyperbolic space. 
In that work, he also defined a \emph{generalized loxodromic} as an
element $g\in G$ such that there exists some acylindrical action of $G$ on a hyperbolic space $X_g$ for which the
element $g$ acts by a loxodromic isometry.
A \emph{universal acylindrical action} $G\act X$ is an acylindrical action of $G$ on a hyperbolic space $X$ such that 
every generalized loxodromic element $g\in G$ acts loxodromically on
$X$. See also \cite{AbbHum20}.

Generalized loxodromic elements $g\in G$ are Morse  \cite{Sis16}; that
is, $\mathbb{Z}\to G : z\mapsto g^z$ is a Morse quasigeodesic with
respect to any word metric on $G$.
The converse is not true in general; Morse elements need not be
generalized loxodromic \cite{AbbHum20}.

\begin{proposition}\label{prop:morse_elements_are_genlox}
  The following are equivalent, for each $w\in W$:
  \begin{itemize}
  \item $w$ is a Morse element of $W$.
    \item $w$ is loxodromic in the action $W\act(\coneoff,\dcone)$.
      \item $w$ is a generalized loxodromic element of $W$.
  \end{itemize}
\end{proposition}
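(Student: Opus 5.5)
The plan is to prove the cycle of implications Morse $\Rightarrow$ loxodromic in $\coneoff$ $\Rightarrow$ generalized loxodromic $\Rightarrow$ Morse, taking \fullref{thm:acylindricity} as given.

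\textbf{Step 1: Morse implies loxodromic in $\coneoff$.} Suppose $w$ is a Morse element of $W$, so $n\mapsto w^n$ is a Morse quasigeodesic in the word metric. Since $W$ acts geometrically on $\Davis$, the orbit $n\mapsto w^n\one$ is a Morse quasigeodesic in $(\Davis^{(1)},\dcomb)$ after interpolating by combinatorial geodesics. This uses that being Morse and being quasigeodesic are quasiisometry invariant. By \fullref{cor:coneoff_is_morse_recognizing}, the identity $(\Davis^{(1)},\dcomb)\to(\coneoff,\dcone)$ is Morse recognizing. Hence $n\mapsto w^n\one$ is a quasigeodesic in $(\coneoff,\dcone)$. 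An isometry whose orbit map on $\mathbb{Z}$ is a quasiisometric embedding has positive stable translation length, so $w$ is loxodromic.

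\textbf{Step 2: loxodromic in $\coneoff$ implies generalized loxodromic.} This is immediate from the definition. By \fullref{thm:acylindricity}, $W\act\coneoff$ is acylindrical. By \fullref{cor:coneoff_is_morse_recognizing}, $\coneoff$ is hyperbolic. So $\coneoff$ serves as the space $X_w$ in Osin's definition.

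\textbf{Step 3: generalized loxodromic implies Morse.} This is Sisto's theorem \cite{Sis16}, which says that generalized loxodromic elements are Morse.

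\textbf{Expected obstacle.} The only substantive step is Step 1, and specifically the passage between group-level and space-level statements. I will make explicit that the orbit map $W\to\Davis^{(1)}$ is the identity on vertices, since $\Davis^{(1)}$ is the Cayley graph. It follows that $w$ being Morse as an element means $\langle w\rangle$ gives a Morse quasigeodesic in $\Davis^{(1)}$, with the gauge transferred by the quasiisometry between $\dcomb$ and $\dcat$ from \fullref{all_metrics_are_equivalent}.

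It is also worth noting that Morse elements automatically have infinite order, so the quasigeodesic is biinfinite. In the degenerate cases where $W$ is virtually cyclic or wide, the statement is trivial: in the wide case no element is Morse and $\coneoff$ is bounded. The paper has already excluded these cases.
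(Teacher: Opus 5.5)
Your proposal is correct and follows the paper's proof. Morse recognizing (\fullref{cor:coneoff_is_morse_recognizing}) gives Morse $\Rightarrow$ loxodromic, acylindricity plus hyperbolicity of $\coneoff$ gives loxodromic $\Rightarrow$ generalized loxodromic, and \cite{Sis16} closes the loop. The only cosmetic difference is that the paper also invokes the converse direction of Morse recognizing to get Morse $\Leftrightarrow$ loxodromic directly, which your cycle shows is unnecessary once Sisto's theorem is used.
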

\begin{proof}
  An element $w\in W$ is Morse if and only if $n\mapsto w^n$ is a
  Morse quasigeodesic in $(\Davis^{(0)},\dcomb)$.
   By \fullref{cor:coneoff_is_morse_recognizing}, this is true if and
   only if $n\mapsto w^n$ is a quasigeodesic in
   $(\coneoff,\dcone)$.
   This is equivalent to $w$ being loxodromic for
   $W\act (\coneoff,\dcone)$.
   Since this particular action is an acylindrical action on a
   hyperbolic space, every element that acts loxodromically here is a
   generalized loxodromic element of $W$.
   All generalized loxodromic elements of $W$ are Morse elements of
   $W$ \cite{Sis16}. 
\end{proof}
\begin{proposition}\label{prop:nonelementary_action}
  The following are equivalent:
  \begin{itemize}
  \item $W\act (\coneoff,\dcone)$ is non-elementary.
  \item $(W,S)$ is nonspherical, not wide, and not virtually
      infinite cyclic.
  \item The canonical decomposition $P_1\times\cdots \times P_n\times F$ of
    $(W,S)$ into a product of  irreducible nonspherical factors $P_i$
    and a spherical factor $F$ has $n=1$ and $P_1$ is not affine.
  \end{itemize}
\end{proposition}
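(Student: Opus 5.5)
We prove the cycle: the third item implies the second, the second implies the first, and the first implies the third. The equivalence of the last two items is pure Coxeter combinatorics. Given the canonical decomposition $W=P_1\times\cdots\times P_n\times F$, \fullref{wide_characterization} says that $W$ is wide if and only if $n\geq 2$, or $n=1$ and $P_1$ is higher rank affine. The group $W$ is spherical if and only if $n=0$. If $n=1$ and $P_1$ is affine of rank $2$, i.e.\ $P_1\cong\dihedral_\infty$, then $W$ is virtually infinite cyclic. Conversely, if $n=1$ and $P_1$ is irreducible, nonspherical, and not affine, then $W$ is not virtually $\mathbb{Z}$: the only irreducible Coxeter groups that are virtually abelian are the spherical and affine ones.

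For the implication from the second item to the first, we use Osin's characterization: an acylindrical action on a hyperbolic space is non-elementary exactly when it has unbounded orbits and the group is not virtually cyclic. Equivalently, it is non-elementary when there are two independent loxodromics. Assume $W$ is nonspherical, not wide, and not virtually $\mathbb{Z}$. Then Caprace--Fujiwara \cite[Proposition~4.5]{CapFuj10}, as invoked in the proof of \fullref{wide_characterization}, gives a rank-one, hence Morse, element $w$. By \fullref{prop:morse_elements_are_genlox}, $w$ acts loxodromically on $\coneoff$, so orbits are unbounded. Together with acylindricity, \fullref{thm:acylindricity}, and the fact that $W$ is not virtually cyclic, Osin's trichotomy forces the action to be non-elementary. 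If one prefers to avoid the trichotomy, one can instead conjugate $w$ by a suitable element so that the fixed points at infinity differ; infiniteness of the limit set follows since $W$ is not virtually cyclic.

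For the implication from the first item to the second, we argue by contrapositive. If $W$ is spherical, then $\coneoff$ is bounded. If $W$ is wide, then $\Davis$ itself is a wide parabolic subcomplex (its stabilizer $W$ is wide), so $\coneoff$ has diameter at most $1$ plus the bounded distance from $\Davis^{(1)}$ to the cone vertex. In either case all orbits are bounded, so the action is elementary. If $W$ is virtually $\mathbb{Z}$, then any action is elementary, since the limit set has at most two points.

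The main obstacle is the step from the second item to the first: one must justify that a single loxodromic in an acylindrical action, in a group that is not virtually cyclic, yields non-elementarity. I would cite Osin's theorem \cite{Osi16} that every acylindrical action of a group on a hyperbolic space is elliptic, or is lineal with virtually cyclic group, or is non-elementary.
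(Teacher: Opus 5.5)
Your proof is correct and is essentially the paper's argument: Osin's trichotomy for the acylindrical action, boundedness of $\coneoff$ when $W$ is spherical or wide (for wide $W$ because $\Davis$ itself is then coned off), a Caprace--Fujiwara Morse element made loxodromic by \fullref{prop:morse_elements_are_genlox}, and the same diagram bookkeeping identifying the case $n=1$, $P_1\cong\dihedral_\infty$ as exactly the virtually cyclic case. The differences are cosmetic: you dispatch the virtually cyclic case by noting the limit set has at most two points rather than reading it off the trichotomy. Your optional ``conjugate $w$'' alternative would additionally need that the elementary closure of a loxodromic is virtually cyclic, but your main route does not use it.
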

\begin{proof}
  The action is acylindrical, by \fullref{thm:acylindricity}, so by \cite[Theorem~1.1]{Osi16}, there are three possibilities:
 \begin{itemize}
 \item The action is non-elementary.
 \item The action has bounded orbits.
   \item $W$ is virtually infinite cyclic and contains a loxodromic element.
   \end{itemize}
   If $(W,S)$ is spherical or wide then $\coneoff$ is bounded.
   Otherwise, $W$ contains a Morse element
   \cite{CapFuj10}, and such an element acts loxodromically on
   $\coneoff$, by \fullref{prop:morse_elements_are_genlox}.
   In particular, $\coneoff$ is not bounded.
   Therefore, outside the spherical and wide cases, the action
   contains a loxodromic element and the action does not have bounded
   orbits, so the only possibilities are $W$ is virtually infinite
   cyclic or the action is non-elementary. 

    To see the equivalence with the product decomposition description,
   observe that $(W,S)$ is spherical if and only if $n=0$. It is wide
   if and only if either $n>1$ or
   $n=1$ and $P_1$ is higher rank affine. The remaining case to rule
   out is that $n=1$ and $P_1$ is infinite dihedral, which is
   precisely the case
   that $W$ is virtually infinite cyclic. 
\end{proof}
\begin{corollary}\label{cor:universal}
    The action $W\act (\coneoff,\dcone)$ is a universal
  acylindrical action.
  Moreover, if $(W,S)$ is nonspherical, not wide, and not virtually
  infinite cyclic, then the action is non-elementary.
\end{corollary}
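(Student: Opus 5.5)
The plan is to derive \fullref{cor:universal} directly from \fullref{thm:acylindricity}, \fullref{prop:morse_elements_are_genlox}, and \fullref{prop:nonelementary_action}, taking acylindricity as given. Note first that $(\coneoff,\dcone)$ is a hyperbolic geodesic space by \fullref{cor:coneoff_is_morse_recognizing}, and that $W$ acts on it by isometries. This holds because the construction is $W$--equivariant: $W$ permutes the wide parabolic subcomplexes, since stabilizers transform by conjugation and conjugates of wide parabolics are wide. Combined with \fullref{thm:acylindricity}, this shows $W\act(\coneoff,\dcone)$ is an acylindrical action on a hyperbolic space.

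For universality, we must show that every generalized loxodromic element $w\in W$ acts loxodromically on $\coneoff$. This is exactly one implication of \fullref{prop:morse_elements_are_genlox}. By Sisto's result \cite{Sis16}, $w$ generalized loxodromic implies $w$ is Morse. By \fullref{cor:coneoff_is_morse_recognizing}, the orbit $n\mapsto w^n$ is then a quasigeodesic in $\coneoff$, so $w$ is loxodromic there. Hence the action is universal.

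The non-elementary clause follows immediately from \fullref{prop:nonelementary_action}, whose second item is precisely the stated hypothesis. There is no real obstacle in the corollary itself. All the substance lies in \fullref{thm:acylindricity}, which is proved later in the section, and in the Morse recognition already established. The only point needing care is verifying that the action on $\coneoff$ is isometric, that is, that the set of wide parabolic subcomplexes is $W$--invariant. This follows since $\stab(w\Upsilon)=w\stab(\Upsilon)w^{-1}$ and wideness is a property of the conjugacy class of a parabolic, by \fullref{def:wide}.
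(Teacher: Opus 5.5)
Your proposal is correct and follows the paper's proof: acylindricity comes from \fullref{thm:acylindricity}, universality from the generalized-loxodromic $\Rightarrow$ Morse $\Rightarrow$ loxodromic direction of \fullref{prop:morse_elements_are_genlox}, and non-elementarity from \fullref{prop:nonelementary_action}. Your extra check that $W$ permutes the wide parabolic subcomplexes, and so acts isometrically on $\coneoff$, is a correct detail that the paper leaves implicit.
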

\begin{proof}
  It is an acylindrical action, by \fullref{thm:acylindricity}.
  The action is universal, by
  \fullref{prop:morse_elements_are_genlox}.
  With the additional conditions,  the action is non-elementary, by
  \fullref{prop:nonelementary_action}.
\end{proof}

Abbott, Balasubramanya, and Osin \cite{AbbBalOsi19} define a poset of
hyperbolic structures on a group: if $X$ and $Y$ are (possibly
infinite) generating sets of a group $G$ then $Y$ \emph{dominates} $X$
if the identity map on $G$ induces a Lipschitz map from the Cayley
graph $\Gamma(G,Y)$ of $G$ with respect to $Y$ to $\Gamma(G,X)$.
Two generating sets are equivalent if they dominate
each other. A \emph{hyperbolic structure} is an equivalence class of
generating sets of $G$ for which the corresponding Cayley graphs are
hyperbolic. An \emph{acylindrically hyperbolic structure} is a
hyperbolic structure with the additional property that the actions of
$G$ on the corresponding Cayley graphs are acylindrical.
The \emph{largest} acylindrically hyperbolic structure, if it exists,
is the one that dominates all the others.

\begin{theorem}\label{thm:largest}
  $\coneoff$ defines the largest acylindrically hyperbolic structure
  for $W$. 
\end{theorem}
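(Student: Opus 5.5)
We need to show that the structure $[\mathcal{Y}]$ given by $\coneoff$ dominates every acylindrically hyperbolic structure $[X]$ on $W$. First, $\coneoff$ itself defines a structure. The vertex set $W$ is coarsely dense and $W$ acts cocompactly, so $\coneoff$ is $W$--equivariantly quasiisometric to a Cayley graph $\Gamma(W,\mathcal{Y})$. Here $\mathcal{Y}:=S\cup\bigcup_P P$, where $P$ ranges over the stabilizers of the wide parabolic subcomplexes through $\one$. By the gate projection, every wide parabolic subcomplex is a translate of one containing $\one$ up to bounded error. More simply, one takes $\mathcal{Y}$ to be $S$ together with all elements of the finitely many wide special subgroups $W_T$. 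By \fullref{cor:coneoff_is_morse_recognizing} and \fullref{thm:acylindricity}, $[\mathcal{Y}]$ is an acylindrically hyperbolic structure.

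Now let $X$ be any generating set with $\Gamma(W,X)$ hyperbolic and the action acylindrical. By the definition of domination we must show that $\mathrm{id}\colon\Gamma(W,\mathcal{Y})\to\Gamma(W,X)$ is Lipschitz. It suffices to show $\sup_{y\in\mathcal{Y}}|y|_X<\infty$. Since $S$ is finite, this reduces to showing that each wide special subgroup $W_T$ has bounded orbits, equivalently bounded diameter, in $\Gamma(W,X)$. This is the Abbott--Balasubramanya--Osin criterion: a structure dominates all acylindrical ones if it is obtained by adding the subgroups that must be elliptic. For a product $W_T=B\times C$ with $B$ and $C$ infinite, the plan is as follows. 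An acylindrical action of a group with a product subgroup of two infinite commuting factors cannot contain a loxodromic in that subgroup, because by \cite{Osi16} the centralizer of a loxodromic is virtually cyclic. An infinite subgroup $C$ commuting with a loxodromic $b\in B$ would therefore lie in the virtually cyclic $E(b)$, and symmetrically. I would first show that $B$ and $C$ contain no loxodromics, so each of them is elliptic; this uses Osin's classification, since a subgroup with no loxodromics in an acylindrical action has bounded orbits. Then $W_T=B\times C$ has bounded orbits, because a product of commuting elliptic subgroups is elliptic: $B$-orbits of a $C$-quasifixed point stay bounded. In the affine case $W_T=A\times K$, $A$ is virtually $\mathbb{Z}^n$ with $n\ge 2$. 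A loxodromic $a$ would have virtually cyclic centralizer, but the centralizer of $a$ contains a finite-index $\mathbb{Z}^n$. So $A$ has no loxodromics and is elliptic, and $K$ is finite. To upgrade "elliptic" to a uniform bound on $|y|_X$, I use that there are finitely many $W_T$ and each has a bounded orbit of $\one$, so $\diam_X(W_T)<\infty$.

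The main obstacle is the step "no loxodromic implies bounded orbits" for subgroups. Osin's trichotomy applies to the restricted action of a subgroup $H$, which is still acylindrical. This gives three options: $H$ is elliptic, $H$ is virtually cyclic with a loxodromic, or $H$ contains a loxodromic. Since the last two are excluded, $H$ is elliptic. I need to double-check that a virtually $\mathbb{Z}^n$ group, or a group like $B\times C$, cannot instead be non-elementary. That is impossible because non-elementary actions contain loxodromics. Finally, I would remark that the choice of $\mathcal{Y}$ matches $\coneoff$ only up to the finitely many cosets through $\one$, which is harmless since domination is only required up to Lipschitz equivalence.
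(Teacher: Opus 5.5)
Your overall plan matches the paper's. You identify $\coneoff$ on vertices with the word metric for $S\cup\bigcup_{T\text{ wide}}W_T$, then show each wide $W_T$ has no loxodromics in an arbitrary acylindrical action $W\act\Gamma(W,X)$, and conclude by Osin's trichotomy. The identification is exact, not up to bounded error. The wide parabolic subcomplexes are precisely the translates $w\Davis_T$ with $T$ wide, and the ones containing $\one$ are the $\Davis_T$ themselves, so your hedging about gate projections and ``cosets through $\one$'' is unnecessary.

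The genuine problem is in the affine case. It is false that the centralizer of an arbitrary infinite-order $a\in A$ contains a finite-index $\mathbb{Z}^n$. For instance, in the $\tilde{A}_2$ group generated by $s,t,u$, the Coxeter element $stu$ is a glide reflection. Anything commuting with it preserves its axis, so $\centralizer_W(stu)$ is virtually cyclic, and your argument yields no contradiction if $stu$ were loxodromic. The missing step is to pass to a power. Some $a^k$ lies in the normal translation lattice $\mathbb{Z}^n$, so $\centralizer_W(a^k)\supseteq\mathbb{Z}^n$ with $n\geq 2$ is not virtually cyclic. Since $a^k$ is loxodromic whenever $a$ is, $a$ cannot be loxodromic. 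This is exactly the paper's argument.

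In the product case your factor-by-factor route is valid and differs mildly from the paper's. The paper treats elements $w=(a,b)$ directly, passing to $w^n=(a^n,1)$ when $b$ has finite order $n$. You instead show $B$ and $C$ are separately elliptic and then use $|bc|_X\leq|b|_X+|c|_X$, which avoids mixed elements and powers. However, you should state the contradiction explicitly, because an infinite subgroup of a virtually cyclic group is not contradictory by itself. The point is that $\langle b\rangle$ has finite index in $E(b)$ and $C\cap\langle b\rangle\subseteq C\cap B=\{1\}$. Hence $C$ injects into the finite coset space $E(b)/\langle b\rangle$ and is finite, contrary to $C$ being nonspherical.
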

It is noted in \cite{AbbBalOsi19} that if a largest acylindrically
hyperbolic structure exists then it is universal, so
\fullref{thm:largest} also implies \fullref{cor:universal}.
\begin{proof}
  The 1--skeleton of the Davis complex is the Cayley graph
  $\Gamma(W,S)$ of $W$ with
  respect to $S$. The coned-off space $\coneoff$ contains the graph
  $\Davis^{(1)}$ and adds additional length $1/2$ edges and cone
  vertices whose effect is to make any two
  distinct  vertices contained in a wide parabolic subcomplex
  have distance exactly 1. Restricted to $\Davis^{(0)}$, which is coarsely dense
  in $\coneoff$,  this is exactly the same metric as the word metric
  defined by the generating set $S':=S\cup\bigcup_{\text{wide }T\subset
    S}W_T$.
  Therefore, $\Gamma(W,S')$ is equivariantly quasiisometric to $\coneoff$, so it is hyperbolic and
  the $W$--action is acylindrical, by
  \fullref{cor:coneoff_is_morse_recognizing} and \fullref{thm:acylindricity}.
  
  The remaining claim is that $S'$ dominates every other
  acylindrically hyperbolic structure on $W$.
  This part of the argument is the same as in \cite[Theorem~7.9]{AbbBalOsi19}.
Suppose that $X$ is some other acylindrically hyperbolic structure
on $W$.
The restriction of $W\act\Gamma(W,X)$ to $W_T$ is also acylindrical,
for any $T\subset S$.

We claim that when $T$ is wide every infinite order
element  $w\in W_T$ has a power with non-virtually cyclic centralizer.
If $W_T$ is virtually irreducible higher rank affine then it has a finite index
translation subgroup isomorphic to $\mathbb{Z}^n$ for $n\geq 2$, so
some power of $w$ is contained in that subgroup, hence has centralizer
containing $\mathbb{Z}^2$.
If $W_T=A\times B$ with $A$ and $B$ infinite then write $w=(a,b)$. If
both $a$ and $b$ have infinite order then $\langle
(a,1),(1,b)\rangle\cong\mathbb{Z}^2\leq\centralizer_W(w)$. If one of
them, say $b$, has finite order $n$ then $w^n=(a^n,1)$ has
$\langle w^n\rangle\times B\leq\centralizer(w^n)$. 

Powers of loxodromic elements are loxodromic, and centralizers of loxodromic elements in acylindrical actions are
virtually cyclic \cite[Corollary~6.9]{Osi16}, so this shows that $W_T\act\Gamma(W,X)$ contains no
loxodromic elements, and then Osin's trichotomy implies it
has bounded orbits.
Since there are only finitely many wide subsets of $S$, we can take
such a bound uniformly.
Since $S$ is also finite, $\sup_{s\in S'}|s|_X=C<\infty$, which implies that the identity
map on $W$ from $\Gamma(W,S')$ to $\Gamma(W,X)$ is $C$--Lipschitz.
Hence, $S'$ dominates $X$.
\end{proof}

\bigskip

The remainder of this section builds towards the proof of acylindricity.
The proof is strongly inspired by that of acylindricity
for the action of a group on a hierarchically hyperbolic space
\cite[Theorem~14.3]{BehHagSis17}.
While we do not know if Coxeter groups are hierarchically hyperbolic,
we can prove analogues of some of the HHS tools: one-sided distance
formula, large link condition,
bounded geodesic image, descent by induction on a notion of
`complexity', which for us is rank of a parabolic subgroup.
We do not have analogues for hierarchy paths and the two-sided distance formula,
but it turns out that for acylindricity we do not need the full power
of those tools, and what we do need we can recover with arguments
specific to walls and parabolics in Coxeter groups. 

Note that acylindricity is trivial if $\coneoff$ is bounded---just
take all $R_\epsilon$ bigger then the diameter of $\coneoff$---so we
may assume throughout that $(W,S)$ is not wide and nonspherical. 

The setup for \fullref{thm:acylindricity} is written assuming $x$ and
$y$ are vertices of the Davis complex. Since every point of $\coneoff$
is distance at most $1/2$ from $\Davis^{(0)}$, the extension to
arbitrary $x,y\in\coneoff$ is an easy step at the very end of the
proof. 

\subsection{Projections}\label{sec:projections}
 Recall from \fullref{sec:gate} that given a parabolic subcomplex
$\Upsilon$ there is a gate projection
$\gate_\Upsilon\from\Davis^{(0)}\to\Upsilon^{(0)}$, and it agrees with
$\dcomb$--closest point projection.

We will now abuse the gate map notation to define a map $\gate_P$,
where $P$ is a nonspherical irreducible parabolic subgroup.
Given such a $P$, recall from \fullref{lem:nonspherical_normalizer}
and \fullref{cor:P_or_refP} that there exists a parabolic subgroup
$P^\perp$ such that $\centralizer_W(P)=P^\perp$ and
$\normalizer_W(P)=P\times P^\perp$.
Let $\mathcal{R}(P)$ be the set of parabolic subcomplexes with
stabilizer equal to $P$. 
By \fullref{lem:same_stabilizer_implies_parallel}, for any
two $\Xi_0$ and $\Xi_1$ in $\mathcal{R}(P)$ there is an element $g\in
P^\perp$ such that left-multiplication by $g$ induces an isomorphism $\Xi_0\to\Xi_1$.
On the other hand, suppose $P=wW_Tw^{-1}$,
$\Xi=w\Davis_T\in\mathcal{R}(P)$, and suppose there are
$x,y\in\Davis_T^{(0)}$ such that there is an $h\in P^\perp$ with
$hwx=wy$.
Then $h':=w^{-1}hw\in W_T^\perp$, so $wy=hwx=wh'w^{-1}wx=wxh'$, which
implies $W_{T^\perp}\ni h'=x^{-1}y\in W_T$.
Since $W_T\cap W_{T^\perp}$ is trivial, so are $h$ and $h'$.
Thus, $P^\perp$--orbits in $\coprod_{\Xi\in\mathcal{R}(P)}\Xi^{(0)}$
meet each $\Xi\in\mathcal{R}(P)$ in exactly one vertex.
Notice that $\coprod_{\Xi\in\mathcal{R}(P)}\Xi^{(0)}=\Upsilon^{(0)}$,
where $\Upsilon$ is the unique parabolic subcomplex whose
stabilizer is $\normalizer_W(P)$, as in
\fullref{normalizer_parabolics_have_well_defined_subcomplex}.
We define $\gate_P$ by fixing any $\Xi\in\mathcal{R}(P)$ and setting:
\[\gate_P\from \Davis^{(0)}\to P^\perp\backslash \Upsilon^{(0)} :
  x\mapsto P^\perp\gate_\Xi(x)\]
By \fullref{cor:W_action_on_projection} and \fullref{lem:perp_dont_change_gate}, if $g\in P^\perp$ and
$\Xi\in\mathcal{R}(P)$ then $\gate_{g\Xi}=g\gate_\Xi$.
Thus, for all $x\in \Davis^{(0)}$ and $\Xi'\in\mathcal{R}(P)$, we have
$\gate_P(x)=P^\perp\gate_\Xi(x)=P^\perp\gate_{\Xi'}(x)$, so
$\gate_P(x)$ is a well defined $P^\perp$--orbit of vertices in
$\Upsilon^{(0)}$, independent of the choice of
$\Xi\in\mathcal{R}(P)$. 

\begin{definition}\label{def:projection_pseudometric}
 Given a parabolic subcomplex $\Upsilon$, define its projection
 pseudometric on $\Davis^{(0)}$:
 \[d_\Upsilon(x,y) := \dcomb(\gate_\Upsilon(x),\gate_\Upsilon(y))\]

Given a nonspherical irreducible parabolic subgroup $P$, define its
projection pseudometric on $\Davis^{(0)}$:
\[d_P(x,y):=\dcomb(\gate_P(x),\gate_P(y))\]
\end{definition}
The definition $d_P(x,y):=\dcomb(\gate_P(x),\gate_P(y))$ is in terms
of the infimal $\dcomb$--distance between the $P^\perp$--orbits $\gate_P(x)$ and
$\gate_P(y)$. Since the parabolic subcomplex with stabilizer
$\normalizer_W(P)$ is a product and the $P^\perp$--action induces 
isomorphisms between $P$--fibers, an equivalent formulation would be to
choose $\Xi\in\mathcal{R}(P)$, set $d_P=d_\Xi$, and observe that the
resulting pseudometric is independent of the choice of $\Xi$.

\begin{lemma} \label{lem:gate-wall}
For any parabolic subcomplex $\Upsilon$ and $x,y\in\Davis^{(0)}$: \[
d_\Upsilon(x,y)
=
\#\bigl\{
\text{walls }M \mid
M \text{ separates }x\text{ and }y
\text{ and cuts }\Upsilon
\bigr\}
\]
\end{lemma}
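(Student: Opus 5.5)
Write $x':=\gate_\Upsilon(x)$ and $y':=\gate_\Upsilon(y)$. Since $x',y'\in\Upsilon^{(0)}$ and $\Upsilon$ is $\dcomb$--convex, we have $d_\Upsilon(x,y)=\dcomb(x',y')=\dwall(x',y')$, by \fullref{all_metrics_are_equivalent}. This is the number of walls separating $x'$ and $y'$. Any such wall meets a combinatorial geodesic from $x'$ to $y'$ inside $\Upsilon$, so it cuts $\Upsilon$. The proof therefore reduces to showing: a wall $M$ that cuts $\Upsilon$ separates $x$ and $y$ if and only if it separates $x'$ and $y'$.

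To prove this equivalence I would fix a wall $M$ that cuts $\Upsilon$ and apply \fullref{cor:wall_gate}. That result says $M$ separates $x$ from $x'$ only if $M$ separates $x$ from all of $\Upsilon$. This is impossible, because $M$ cuts $\Upsilon$ and hence $\Upsilon$ has vertices on both sides of $M$. So $x$ and $x'$ lie in the same open halfspace of $M$, and likewise $y$ and $y'$ lie in the same open halfspace. It follows that $M$ separates $x$ from $y$ exactly when it separates $x'$ from $y'$.

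Combining the two observations, the walls separating $x'$ and $y'$ are precisely the walls that cut $\Upsilon$ and separate $x$ and $y$, and counting them gives the formula. I expect no real obstacle. The only point needing care is the claim that a wall cutting $\Upsilon$ has vertices of $\Upsilon$ strictly on both sides; this is the no-tangential-intersection remark for walls and subcomplexes in \fullref{sec:coxeter}, and it is what lets \fullref{cor:wall_gate} apply.
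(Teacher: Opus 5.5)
Your proof is correct and follows the paper's argument. In both, $d_\Upsilon(x,y)$ counts the walls separating $\gate_\Upsilon(x)$ and $\gate_\Upsilon(y)$, each of which cuts $\Upsilon$, and \fullref{cor:wall_gate} shows that a wall cutting $\Upsilon$ separates $x$ from $y$ exactly when it separates their gates.
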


\begin{proof}
The number of walls separating $\gate_\Upsilon(x)$ and
$\gate_\Upsilon(y)$ is equal to
$\dcomb(\gate_\Upsilon(x),\gate_\Upsilon(y))=d_\Upsilon(x,y)$.
It follows from \fullref{cor:wall_gate} and convexity of $\Upsilon$ that $M$ separates $x$ from
$y$ and cuts $\Upsilon$ if and only if $M$ separates
$\gate_\Upsilon(x)$ and $\gate_\Upsilon(y)$.
\end{proof}

\subsection{Coned-off graphs of parabolic subgroups}

\begin{definition}[Parabolic coned-off graph]  \label{def:CP}
 Given a parabolic subcomplex $\Upsilon$, let $C(\Upsilon)$ be the graph obtained
 from $\Upsilon^{(1)}$ by coning off every proper parabolic subcomplex $\Xi
 \subsetneq \Upsilon$ by adding a vertex $v_\Xi$ and an edge of length
 $1/2$ from $v_\Xi$ to every vertex of $\Xi^{(0)}$.

 Given a nonspherical irreducible parabolic subgroup $P$, define:
 \[\widetilde{C(P)}:=\coprod_{\Upsilon\in\mathcal{R}(P)}C(\Upsilon)\]
Then $P^\perp$ acts on $\widetilde{C(P)}$ by left-multiplication, with $z\in P^\perp$ taking $v\in C(\Upsilon)$
to $zv\in zC(\Upsilon)=C(z\Upsilon)$.
Define $C(P)$ to be the quotient of $\widetilde{C(P)}$ by the left $P^\perp$--action.
\end{definition}

\begin{remark}
  In contrast to the curve complex/HHS setting, the graphs
  $C(\Upsilon)$ and $C(P)$ need not be hyperbolic. Indeed, suppose
  that $\Upsilon=\Davis_T$ for $T\subset S$ irreducible higher rank
  affine. Then $W_T$ has no proper nonspherical parabolics. Since
  special spherical subgroups have uniformly bounded diameter,
  $\Upsilon^{(1)}\into C(\Upsilon)$ is a quasiisometry. Thus,
  $C(\Upsilon)$ is quasiisometric to $\mathbb{Z}^{|T|-1}$.
\end{remark}

\begin{lemma}\label{lem:CP_well_defined}
  If $P$ is a nonspherical irreducible parabolic subgroup and
  $\Upsilon\in\mathcal{R}(P)$, then
  the composition $C(\Upsilon)\into \widetilde{C(P)}\to C(P)$ is a
  graph isomorphism.
  Moreover, for any two such parabolic subcomplexes $\Upsilon$ and
  $\Upsilon'$, the resulting isomorphism $C(\Upsilon)\to C(P)\to
  C(\Upsilon')$ is left-multiplication by the unique $z\in P^\perp$
  such that $z\Upsilon=\Upsilon'$.
  
There is a canonical assignment of a parabolic subgroup of $P$ to each
cone vertex of $C(P)$ by taking the common stabilizer of corresponding
$P^\perp$--orbit of cone vertices of the $C(\Upsilon)$.
\end{lemma}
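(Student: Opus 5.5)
The plan is to reduce everything to two facts already established: the $P^\perp$--action on $\coprod_{\Xi\in\mathcal{R}(P)}\Xi^{(0)}$ is free, with each orbit meeting each $\Xi\in\mathcal{R}(P)$ in exactly one vertex; and, by \fullref{lem:same_stabilizer_implies_parallel}, for $\Upsilon,\Upsilon'\in\mathcal{R}(P)$ there is an element of $P^\perp$ carrying $\Upsilon$ isomorphically onto $\Upsilon'$.

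\textbf{Step 1: $\widetilde{C(P)}\to C(P)$ restricted to $C(\Upsilon)$ is bijective.} Left multiplication by $z\in P^\perp$ is an automorphism of $\Davis$. It sends parabolic subcomplexes to parabolic subcomplexes and preserves proper inclusion, so it maps $C(\Upsilon)$ isomorphically onto $C(z\Upsilon)$. This makes the action on $\widetilde{C(P)}$ well defined.

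Next I would show that the stabilizer of $\Upsilon$ in $P^\perp$ is trivial and that $P^\perp$ acts transitively on $\mathcal{R}(P)$. Transitivity is exactly \fullref{lem:same_stabilizer_implies_parallel}. For the stabilizer: if $z\Upsilon=\Upsilon$ with $z\in P^\perp$, then $z$ maps some vertex of $\Upsilon$ to a vertex of $\Upsilon$ in the same $P^\perp$--orbit. Freeness and the one-vertex-per-orbit property then force $z=1$. Alternatively, $z\in\stab(\Upsilon)=P$ and $P\cap P^\perp=1$.

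Hence the $P^\perp$--translates of $C(\Upsilon)$ are pairwise distinct components of $\widetilde{C(P)}$, and each component is the image of $C(\Upsilon)$ under exactly one group element. The quotient map restricted to $C(\Upsilon)$ is therefore a bijection on vertices and on edges. Since the action is by graph isomorphisms, adjacency in the quotient is the image of adjacency in a fundamental component, so the map is a graph isomorphism, with edge lengths preserved.

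\textbf{Step 2: the composite $C(\Upsilon)\to C(P)\to C(\Upsilon')$.} Let $z\in P^\perp$ be the unique element with $z\Upsilon=\Upsilon'$; uniqueness comes from the trivial stabilizer in Step 1. A point $v\in C(\Upsilon)$ and the point $zv\in C(\Upsilon')$ have the same image in the quotient. Inverting the isomorphism for $\Upsilon'$ therefore sends the image of $v$ to $zv$, so the composite is left multiplication by $z$.

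\textbf{Step 3: the canonical parabolic attached to a cone vertex.} A cone vertex of $C(P)$ is a $P^\perp$--orbit $\{zv_\Xi\}$, where $\Xi\subsetneq\Upsilon$. The stabilizer of $z\Xi$ is $z\stab(\Xi)z^{-1}$. Since $\Xi\subset\Upsilon$, \fullref{lem:nested_residues_have_nested_stabilizers} gives $\stab(\Xi)\le\stab(\Upsilon)=P$. Because $P^\perp$ centralizes $P$, we get $z\stab(\Xi)z^{-1}=\stab(\Xi)$. So every cone vertex in the orbit has the same stabilizer, a parabolic subgroup of $P$, and this is the canonical assignment.

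The one point that needs care is the freeness used in Step 1: one must check that $z\Upsilon=\Upsilon$ really implies $z\in P\cap P^\perp$. This holds because $\stab(\Upsilon)=P$ by definition of $\mathcal{R}(P)$, and $P\cap\centralizer_W(P)$ is trivial, since it lies in the center of the irreducible nonspherical group $P$, which is trivial.
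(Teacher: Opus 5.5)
Your proof is correct and follows the same route as the paper. The paper identifies vertices of $C(P)$ with $P^\perp$--orbits of pairs $(\Upsilon,v)$ or $(\Upsilon,\Xi)$, using that $P^\perp$ acts simply transitively on $\mathcal{R}(P)$, and then notes $\stab(z\Xi)=z\stab(\Xi)z^{-1}=\stab(\Xi)$; you make explicit what it leaves implicit, namely the triviality of the stabilizer of $\Upsilon$ in $P^\perp$ and the containment $\stab(\Xi)\le P$ (via \fullref{lem:nested_residues_have_nested_stabilizers}) that justifies this conjugation invariance.
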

\begin{proof}
This is essentially the same argument as in the construction of
$\gate_P$ and $d_P$ in the previous section. 
 The `ordinary' vertices of $C(P)$ can be identified
  with $P^\perp$--orbits of pairs $(\Upsilon,v)$, where
  $\Upsilon\in\mathcal{R}(P)$ and $v\in\Upsilon^{(0)}$.
  The `cone' vertices of $C(P)$, which are distinguished by the fact that all of their
  incident edges have length $1/2$, can be identified with $P^\perp$--orbits of pairs $(\Upsilon,\Xi)$, where $\Upsilon\in\mathcal{R}(P)$ and
  $\Xi$ is a sub-(parabolic subcomplex) of $\Upsilon$.

Note that if $z\in P^\perp$ and $(\Upsilon,\Xi)$ is a parabolic
subcomplex pair with $\Upsilon\in\mathcal{R}(P)$ then
$\stab(z\Xi)=z\stab(\Xi)z^{-1}=\stab(\Xi)$, so we can associate the subgroup
$\stab(\Xi)$ to the cone vertex of $C(P)$ corresponding to the
$P^\perp$--orbit of $(\Upsilon,\Xi)$, independent of the choice of
representative pair.
\end{proof}

Just as we defined projection pseudometrics $d_\Upsilon$ and $d_P$ in
\fullref{def:projection_pseudometric}, we now define their
$C(\Upsilon)$ and $C(P)$ analogues:
\begin{definition}\label{def:delta_pseudometric}
  If $\Upsilon$ is a parabolic subcomplex, let $d_{C(\Upsilon)}$ be
  the combinatorial distance in the graph $C(\Upsilon)$.
  Define a projection pseudometric on $\Davis^{(0)}$:
  \[\delta_\Upsilon(x,y):=d_{C(\Upsilon)}(\gate_\Upsilon(x),\gate_\Upsilon(y))\]
Here we implicitly compose the gate map $\gate_\Upsilon\from
\Davis^{(0)}\to\Upsilon^{(0)}$ with the inclusion $\Upsilon^{(0)}\to
C(\Upsilon)^{(0)}$.

Similarly, if $P$ is a nonspherical irreducible parabolic subgroup, define:
  \[\delta_P(x,y):=d_{C(P)}(\gate_P(x),\gate_P(y))\]
\end{definition}
An equivalent formulation of $\delta_P$ is to choose
$\Upsilon\in\mathcal{R}(P)$, take $\delta_P:=\delta_\Upsilon$, and
observe that the resulting pseudometric is independent of the choice of $\Upsilon$.

\begin{definition}
    An irreducible nonspherical parabolic subgroup $P$ is \emph{pre-wide} if $N_W(P)$ is wide.    
\end{definition}

\begin{lemma} \label{lem:pre-wide}
  If $\Upsilon$ is a wide parabolic subcomplex, then every irreducible
  nonspherical parabolic $P \le \stab(\Upsilon)$ is pre-wide.
\end{lemma}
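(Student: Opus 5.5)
The plan is to conjugate the situation into standard position and then read off wideness from the Coxeter diagram, using \fullref{lem:nonspherical_normalizer} and \fullref{wide_characterization}.

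\textbf{Step 1: normalize.} Translating by $W$, assume $\Upsilon=\Davis_T$ with $W_T$ wide. Write $P=w W_U w^{-1}$, so $P=\stab(w\Davis_U)$.

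\textbf{Step 2: move $P$ to a special subgroup of $W_T$.} I expect this step to be the only real obstacle. I would apply \fullref{lem:gate_projection_image_is_residue} with $\Upsilon_0:=\Davis_T$ and $\Upsilon_1:=w\Davis_U$. This gives $\gate_{\Davis_T}(w\Davis_U)=v_0\Davis_{U_0}$ for some $v_0\in W_T$ and $U_0\subset T$. Right multiplication by $u$ identifies $v_0\Davis_{U_0}$ with $wv_1\Davis_{U_1}\subset w\Davis_U$.

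Since $P\le W_T$, the group $P$ stabilizes both $\Davis_T$ and $w\Davis_U$. By equivariance of gate maps (\fullref{cor:W_action_on_projection}), $P$ therefore stabilizes the gate image. This gives $P\le v_0W_{U_0}v_0^{-1}$.

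For the reverse inclusion, \fullref{cor:cross_projections_have_same_stabilizer} identifies $v_0W_{U_0}v_0^{-1}$ with the stabilizer of $wv_1\Davis_{U_1}$. By \fullref{lem:nested_residues_have_nested_stabilizers}, that stabilizer is contained in $\stab(w\Davis_U)=P$. Hence $P=v_0W_{U_0}v_0^{-1}$ with $v_0\in W_T$.

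Conjugating by $v_0^{-1}$ preserves $W_T$, and it carries $\normalizer_W(P)$ to $\normalizer_W(W_{U_0})$, so wideness is unaffected. We may therefore assume $P=W_U$ with $U\subset T$ irreducible and nonspherical. Then \fullref{lem:nonspherical_normalizer} gives $\normalizer_W(P)=W_U\times W_{U^\perp}$.

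\textbf{Step 3: case analysis on the canonical decomposition.} Write $T=T_1\sqcup\cdots\sqcup T_k\sqcup K$ as in \fullref{wide_characterization}. Since $U$ is irreducible and nonspherical, $U\subset T_i$ for some $i$.

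If $k\ge 2$, pick $j\neq i$. Every generator of $T_j$ commutes with every generator of $T_i\supset U$, so $T_j\subset U^\perp$. Thus $W_U\times W_{U^\perp}$ has two nonspherical factors and is wide.

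If $k=1$, then wideness of $T$ forces $T_1$ to be irreducible higher rank affine. Irreducible affine groups have no proper nonspherical parabolic subgroups, so $U=T_1$. Then $W_U\times W_{U^\perp}$ is either a product of two infinite parabolics or (irreducible higher rank affine)$\times$(spherical). In both cases it is wide by \fullref{def:wide}.

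Hence $\normalizer_W(P)$ is wide, and $P$ is pre-wide.
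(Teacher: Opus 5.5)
Your proof is correct. The final two-case analysis is the same as the paper's: either there are at least two nonspherical factors, or there is a single irreducible higher rank affine factor.

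The difference is in how you place $P$ inside a single irreducible factor. The paper stays at the group level. It takes the canonical decomposition $Q_1\times\cdots\times Q_n\times K$ of $\stab(\Upsilon)$ and uses \fullref{lem:reflection_splitting} with irreducibility of $P$ to get $P\le Q_1$. It then argues either that $Q_2\le P^\perp$ or that $P=Q_1$.

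You instead first conjugate $P$, by an element of $W_T$, to a special subgroup $W_U$ with $U\subset T$. You do this with \fullref{lem:gate_projection_image_is_residue}, gate equivariance, and \fullref{lem:nested_residues_have_nested_stabilizers}. After that, three facts are read off from the Coxeter diagram: containment of $U$ in one component, $T_j\subset U^\perp$, and $U=T_1$ in the affine case.

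Your route is longer, but it makes explicit a point the paper uses without comment. In the affine case, concluding $P=Q_1$ from ``proper parabolic subgroups of an irreducible affine Coxeter group are spherical'' requires knowing that a parabolic subgroup of $W$ contained in $Q_1$ is a parabolic subgroup of the Coxeter system of $Q_1$. Your Step~2 proves exactly this from the paper's own gate-projection lemmas. The paper's route is shorter because \fullref{lem:reflection_splitting} gives the factor containment directly, without first normalizing $P$.
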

\begin{proof}
  Let $Q_1\times\cdots \times Q_n\times K$ be the canonical decomposition of
  $\stab(\Upsilon)$ into irreducible nonspherical factors $Q_i$ and a
  spherical factor $K$. Since the parabolic subgroup $P$ is a
  reflection subgroup, \fullref{lem:reflection_splitting} says it
  splits as a direct product of its intersections with the factors of
  $\stab(\Upsilon)$.
  Since it is irreducible, this implies that it is contained in one
  $Q_i$, which we may assume is $Q_1$.
  Since $\stab(\Upsilon)$ is wide, either $n>1$ or $n=1$ and $Q_1$ is higher
  rank affine.
  In the former case, $Q_2\leq Q_1^\perp\leq P^\perp$, so $P^\perp$ is
  nonspherical, so $\normalizer_W(P)=P\times P^\perp$ is wide. 
  In the latter case, since proper parabolic subgroups of an
  irreducible affine Coxeter group are spherical, $P=Q_1$, so $P$ is
  irreducible higher rank affine, which also implies $\normalizer_W(P)$ is wide. 
\end{proof}

\begin{lemma} \label{lem:projection}
  Let $P$ be a pre-wide subgroup and let $\Xi$ be any parabolic
  subcomplex. Then either $P \le \stab(\Xi)$ or the $\delta_P$--diameter
  of $\Xi$ is at most 1.
\end{lemma}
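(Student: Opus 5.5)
Fix $\Upsilon\in\mathcal{R}(P)$, so $\delta_P=\delta_\Upsilon$. The plan is to identify the gate image $\gate_\Upsilon(\Xi^{(0)})$ and show that, when $P\not\le\stab(\Xi)$, it lies in a single proper parabolic subcomplex of $\Upsilon$. In $C(\Upsilon)$ that subcomplex is coned off, so any two of its vertices are at distance at most $1$, and the $\delta_P$--diameter of $\Xi$ is at most $1$.

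\textbf{Computing the gate image.} Write $\Upsilon=w_0\Davis_{T_0}$ and $\Xi=w_1\Davis_{T_1}$. By \fullref{lem:gate_projection_image_is_residue},
\[\gate_\Upsilon(\Xi)=w_0v_0W_{U_0},\]
which is the vertex set of the parabolic subcomplex $\Xi':=w_0v_0\Davis_{U_0}\subset\Upsilon$. By \fullref{cor:cross_projections_have_same_stabilizer},
\[\stab(\Xi')=\stab(\gate_\Xi(\Upsilon))\le\stab(\Xi),\]
where the last inclusion is \fullref{lem:nested_residues_have_nested_stabilizers}. There are now two cases. If $\Xi'\subsetneq\Upsilon$, then $\Xi'$ is coned off in $C(\Upsilon)$, and we are done. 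Otherwise $\Xi'=\Upsilon$, so
\[P=\stab(\Upsilon)=\stab(\Xi')\le\stab(\Xi),\]
which is the first alternative of the lemma.

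\textbf{Main obstacle.} I expect the hard part to be the bookkeeping: checking that the stabilizer of the cross-projection $\gate_\Xi(\Upsilon)$ really is contained in $\stab(\Xi)$. Here I use that $\gate_\Xi(\Upsilon)$ is the vertex set of a parabolic subcomplex of $\Xi$, given by the symmetric version of \fullref{lem:gate_projection_image_is_residue}. I also need that equality of vertex sets $w_0v_0W_{U_0}=\Upsilon^{(0)}$ forces equality of the subcomplexes; this holds because parabolic subcomplexes are full subcomplexes spanned by their vertex sets.

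The pre-wide hypothesis does not seem to be needed for this argument. Its role is presumably only to guarantee that $P$ is irreducible and nonspherical, so that $\mathcal{R}(P)$ and $\delta_P$ are defined. Finally, the choice of $\Upsilon\in\mathcal{R}(P)$ does not matter, by \fullref{lem:CP_well_defined}.
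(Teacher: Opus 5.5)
Your proof is correct and follows the paper's argument. You use \fullref{lem:gate_projection_image_is_residue} to identify $\gate_\Upsilon(\Xi)$ as the vertex set of a parabolic subcomplex of $\Upsilon$ and invoke the coning in $C(\Upsilon)$ when it is proper; otherwise you chain \fullref{cor:cross_projections_have_same_stabilizer} with \fullref{lem:nested_residues_have_nested_stabilizers} to get $P\le\stab(\Xi)$. You are also right that the pre-wide hypothesis is used only to ensure $\delta_P$ is defined.
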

\begin{proof}
  Choose any parabolic subcomplex $\Upsilon$ with $\stab(\Upsilon)=P$.
  The projection $\gate_\Upsilon(\Xi)$ is the vertex set of a parabolic
  subcomplex of $\Upsilon$.
  If $\gate_\Upsilon(\Xi) \subsetneq \Upsilon$, then $\gate_\Upsilon(\Xi)$ is coned off in $C(\Upsilon)$, so its image has
  diameter at most 1.
  If $\gate_\Upsilon(\Xi)=\Upsilon$, then $\Upsilon$ is parallel to $\gate_\Xi(\Upsilon)  \subset \Xi$, and \fullref{cor:cross_projections_have_same_stabilizer}
  and \fullref{lem:nested_residues_have_nested_stabilizers}
  give:
  \[ P=\stab(\Upsilon) = \stab(\gate_\Upsilon(\Xi)) =\stab(\gate_\Xi(\Upsilon))
    \leq\stab(\Xi)\qedhere\]
\end{proof}

\subsection{Intersection large links and bounded geodesic image} 

\begin{lemma}\label{lem:far_wide_parabolics_have_finite_intersection}
  There exists a constant $B_{-1}$ depending only on $(W,S)$ such that for
  any wide special subgroups $P$ and $Q$ and element $w \in W$, if
  $|P\cap wQw^{-1}|=\infty$, then $\dcone(\one,w) \le B_{-1}$.
\end{lemma}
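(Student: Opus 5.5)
The plan is to reduce to a short path in $\coneoff$ passing through a bounded number of wide parabolic subcomplexes, using the gate/parallelism machinery of \fullref{sec:gate}. Write $P=W_T$ and $Q=W_U$ with $T,U\subset S$ wide. Since $P\cap wQw^{-1}$ is an intersection of parabolics, it is a parabolic subgroup. Being infinite, it is nonspherical. So it contains an irreducible nonspherical parabolic $R\le W_T\cap wW_Uw^{-1}$.

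Next, compare the subcomplexes $\Davis_T$ and $w\Davis_U$. Let $u$ be the shortest element of $W_T w W_U$, and write $w=v_0uv_1^{-1}$ with $v_0\in W_T$ and $v_1\in W_U$. By \fullref{lem:gate_projection_image_is_residue}, $\gate_{\Davis_T}(w\Davis_U)=v_0W_{U_0}$ with $U_0=T\cap uUu^{-1}$, and $\stab$ of this residue is $v_0W_{U_0}v_0^{-1}=W_T\cap wW_Uw^{-1}$ (using $uW_Uu^{-1}\cap W_T=W_{U_0}$, conjugated by $v_0$). Hence $W_{U_0}$ is infinite and $U_0$ is nonspherical.

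The key step is to connect $\one$ to $w$ in $\coneoff$ cheaply. Travel from $\one$ to $v_0$ inside $\Davis_T$, which costs at most $1$ via the cone vertex $v_{\Davis_T}$. Then cross from $v_0$ to $v_0u$. Finally, $v_0u=wv_1\in w\Davis_U$, and $v_0u$ to $w$ costs at most $1$ via $v_{w\Davis_U}$. Everything therefore reduces to bounding $\dcone(\one,u)$ for the minimal double coset representative $u$ with $uU_0'u^{-1}=U_0\subset T$, where $U_0'=u^{-1}U_0u\subset U$ is nonspherical.

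The natural route is to show that $u$ itself lies in a wide parabolic subcomplex through $\one$, or in a bounded chain of them. Let $R_0$ be an irreducible nonspherical component of $U_0$, so $uR_0'u^{-1}=R_0$ with $R_0'\subset U$. By \fullref{lem:nonspherical_normalizer}~\eqref{item:irreducible_rigidity}, $R_0'=R_0$, so $u\in\normalizer_W(W_{R_0})=W_{R_0}\times W_{R_0^\perp}$. Minimality of $u$ in the double coset, together with $R_0\subset T$, forces $u\in W_{R_0^\perp}$. I would then argue that $u$ lies in $W_{R_0\sqcup R_0^\perp}$, whose parabolic subcomplex $\Davis_{R_0\sqcup R_0^\perp}$ contains $\one$ and $u$. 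If $R_0^\perp$ is nonspherical, or $R_0$ is higher rank affine, this subcomplex is wide and $\dcone(\one,u)\le1$, giving $B_{-1}=3$.

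The main obstacle is the remaining case, where $R_0^\perp$ is spherical and $R_0$ is not affine, so that $W_{R_0\sqcup R_0^\perp}$ is not wide. In that case $u\in W_{R_0^\perp}$, which is finite, and so $|u|_S$ is bounded by the maximal diameter of a spherical special subgroup. This gives a uniform bound $\dcone(\one,u)\le|u|_S$. Taking $B_{-1}$ to be $2$ plus the maximum of $1$ and that spherical diameter covers both cases. The delicate point to verify is precisely the use of irreducible rigidity: that $u$ conjugates the component $R_0'$ of $U$ onto a subset of $S$, which needs $U_0'=u^{-1}U_0u$ to be a subset of $S$. This is exactly the description of $U_0$ and $U_1$ in \fullref{lem:gate_projection_image_is_residue}.
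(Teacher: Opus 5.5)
Your proof is correct and is essentially the paper's argument. Both proofs write $w$ as an element of $P$, times an element of $N_W(W_{R_0})=W_{R_0}\times W_{R_0^\perp}$ for an irreducible nonspherical $R_0$ (obtained via irreducible rigidity), times an element of $Q$, and then bound the middle factor by cases on whether $W_{R_0^\perp}$ is spherical.

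The only difference is bookkeeping. Your use of the minimal double coset representative $u$ and \fullref{lem:gate_projection_image_is_residue} makes explicit the paper's choice $p=v_0$, $q=v_1$, so that the paper's $h=p^{-1}wq$ is exactly your $u$. You then use minimality to put $u$ in $W_{R_0^\perp}$, whereas the paper simply absorbs the $W_{R_0}$--component into $P$.
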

\begin{proof}
  We will show it suffices to take $B_{-1}:=B_{-2}+2$, where $B_{-2}$ is the maximal
  diameter of a spherical special subgroup in $(\coneoff,\dcone)$.
  Single generators give spherical special subgroups isomorphic to $\mathbb{Z}/2\mathbb{Z}$, so $B_{-2}\geq 1$.
  Since there are only finitely many spherical special subgroups,
  $B_{-2}$ is finite.
  
Suppose $P\cap wQw^{-1}$ is infinite.
Intersections of parabolic subgroups are parabolic, and an infinite
parabolic has at least one nonspherical factor in its canonical
decomposition as a product of irreducible factors. 
  Thus there are nonspherical irreducible subsets $T,U\subset S$ and elements  $p\in P$ and $q\in Q$ such
  that $W_T\leq P$, $W_U\leq Q$, and $pW_Tp^{-1}=wqW_Uq^{-1}w^{-1}$.
By \fullref{irreducible_parabolics_have_type}, $T=U$, so $h:=p^{-1}wq\in\normalizer_W(W_T)$.
By \fullref{lem:nonspherical_normalizer}~\eqref{item:normalizer_splits},
$\normalizer_W(W_T)=W_T\times W_{T^\perp}$, so $h=ab$ for some $a\in
W_T$ and $b\in W_{T^\perp}$, and then $w=pabq^{-1}$.

  We now bound the $\dcone$--length of these factors uniformly. Since
  $W_T\subseteq P$ and $P$ is wide, every vertex of $\Davis_{W_T}$ has
  $\dcone$--distance at most $1$ from $\one$, so $\dcone(\one,pa)\leq
  1$.
  Similarly, $\dcone(\one,q)\leq 1$.
  If $W_{T^\perp}$ is finite then $\dcone(\one,b)\leq B_{-2}$, so
  $\dcone(\one,pabq^{-1})\leq B_{-2}+2=B_{-1}$.
  If $W_{T^\perp}$ is not finite then $W_T\times W_{T^\perp}$ is wide,
  so $\dcone(\one,ab)\leq 1$ and $\dcone(\one,pabq^{-1})\leq 3\leq B_{-1}$.
\end{proof}

\begin{corollary} \label{cor:close-cone}
  If $\Upsilon$ and $\Xi$ are wide parabolic subcomplexes of $\Davis$ with
  corresponding cone vertices $v_\Upsilon$ and $v_\Xi$ in $\coneoff$,
  and $|\stab(\Upsilon) \cap \stab(\Xi)| = \infty$, then $\dcone(v_\Upsilon,v_\Xi) \le B_{-1}+1$, where  $B_{-1}$ is the constant of \fullref{lem:far_wide_parabolics_have_finite_intersection}.
\end{corollary}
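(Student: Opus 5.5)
The plan is to reduce to \fullref{lem:far_wide_parabolics_have_finite_intersection} by translating by $W$. Write $\Upsilon=a\Davis_{T}$ and $\Xi=b\Davis_{U}$ for wide subsets $T,U\subset S$ and $a,b\in W$. Then $\stab(\Upsilon)=aW_Ta^{-1}$ and $\stab(\Xi)=bW_Ub^{-1}$. Since $W$ acts on $\coneoff$ by isometries permuting cone vertices, with $gv_\Upsilon=v_{g\Upsilon}$, we may replace both subcomplexes by their $a^{-1}$--translates. This lets us assume $\Upsilon=\Davis_T$ and $\Xi=w\Davis_U$ with $w:=a^{-1}b$. The intersection of stabilizers is conjugated to $W_T\cap wW_Uw^{-1}$, and conjugation preserves infinitude, so this intersection is infinite.

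Next, apply \fullref{lem:far_wide_parabolics_have_finite_intersection} with $P:=W_T$ and $Q:=W_U$, both wide special subgroups. It gives $\dcone(\one,w)\le B_{-1}$.

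It remains to estimate the distance between cone vertices. Since $\one\in\Davis_T^{(0)}$, the cone vertex $v_{\Davis_T}$ is joined to $\one$ by an edge of length $1/2$, so $\dcone(v_{\Davis_T},\one)=1/2$. Likewise $w\in w\Davis_U^{(0)}$, so $\dcone(w,v_{w\Davis_U})=1/2$. The triangle inequality then gives
\[\dcone(v_\Upsilon,v_\Xi)\le \tfrac12+B_{-1}+\tfrac12=B_{-1}+1.\]

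There is no real obstacle here. The only care needed is bookkeeping: checking that the translation is compatible with cone vertices and stabilizers, and that the lemma's hypothesis is stated for exactly the conjugate $wQw^{-1}$ that arises after translating.
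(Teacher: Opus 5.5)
Your proposal is correct and is essentially the paper's proof: both write $\Upsilon=a\Davis_T$, $\Xi=b\Davis_U$, reduce the hypothesis to $|W_T\cap a^{-1}bW_Ub^{-1}a|=\infty$, and apply \fullref{lem:far_wide_parabolics_have_finite_intersection} to bound $\dcone(\one,a^{-1}b)$. The cone-vertex bound then follows from the triangle inequality using the two length-$\tfrac12$ cone edges. The only cosmetic difference is that you translate the whole configuration by $a^{-1}$, whereas the paper leaves it in place and uses $\dcone(a,b)=\dcone(\one,a^{-1}b)$ directly.
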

\begin{proof}
  Write $\Upsilon= a \Davis_T$ and $\Xi = b\Davis_U$, where $T$ and
  $U$ are wide subsets of $S$.
  We have $\stab(\Upsilon) = aW_Ta^{-1}$ and $\stab(\Xi) =
  bW_Ub^{-1}$.
  Thus: \[ |\stab(\Upsilon) \cap \stab(\Xi)|=\infty \Longrightarrow
    |W_T \cap a^{-1} b W_U b^{-1} a|  = \infty\]
  By \fullref{lem:far_wide_parabolics_have_finite_intersection}
  $\dcone(a,b) = \dcone(\one,a^{-1}b) \le B_{-1}$.
  Then, by the triangle inequality:
  \[ \dcone(v_\Upsilon,v_\Xi) \le \dcone(v_\Upsilon,a) + \dcone(a,b) + \dcone(b,v_\Xi) \le B_{-1}+1 \qedhere\]  
\end{proof}

\begin{lemma} \label{lem:gridcompletion}
  Let $\Upsilon$ be a wide parabolic subcomplex, and let $\mathcal{V}$ be a
  chain of walls cutting $\Upsilon$ with $|\mathcal{V}|\geq 2$. There exists
  a biinfinite chain $\mathcal{H}$  cutting $\Upsilon$ such that $(\mathcal{V},\mathcal{H})$ is a grid. 
\end{lemma}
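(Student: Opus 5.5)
The plan is to follow the second half of the proof of \fullref{prop:near_flat_implies_long_grids}, which already constructs a biinfinite chain transverse to a pair of consecutive walls of a chain inside a wide parabolic subcomplex. First I reduce to the standard case. Translate by $W$ so that $\Upsilon=\Davis_{T}$ with $W_T$ wide. By \fullref{def:wide}, either $T=T_0\sqcup T_1$ with $W_{T_0}$, $W_{T_1}$ nonspherical and $T_0\subset T_1^\perp$, or $T=A\sqcup K$ with $A$ irreducible higher rank affine and $K$ spherical. Every wall cutting $\Upsilon$ has its reflection in $W_T$, since reflections in walls cutting $\Davis_T$ are exactly the reflections of $W_T$. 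This is the observation used in the proof of \fullref{lem:distance_to_standard}.

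In the product case, \fullref{lem:reflection_splitting} puts each reflection $\refl{V}$, for $V\in\mathcal{V}$, in $W_{T_0}$ or in $W_{T_1}$. Two walls whose reflections lie in different factors generate $(\mathbb{Z}/2\mathbb{Z})^2$, so they cross. Walls of $\mathcal{V}$ are pairwise disjoint, so all their reflections lie in one factor; say $W_{T_0}$. Since $W_{T_1}$ is infinite, it contains an infinite dihedral reflection subgroup $\langle\refl{H},\refl{H'}\rangle$ with $H,H'$ disjoint walls of $\Davis_{T_1}$, hence of $\Davis$. I take $\mathcal{H}$ to be the union of the orbits of $H$ and $H'$ under this group. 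These walls are pairwise disjoint and linearly ordered, and each one separates its neighbours, so they form a biinfinite chain; this is the standard picture of an infinite dihedral group acting on the line. Their reflections lie in $W_{T_1}$, so the walls cut $\Upsilon$, and they commute with every $\refl{V}$. Commuting distinct reflections generate a finite group, so $V\transverse H$ for all $V\in\mathcal{V}$ and all $H\in\mathcal{H}$.

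In the affine case, $W_{K}$ is finite. Two reflections in $K$-conjugates generate a finite group, so no chain can contain two walls with reflections in $W_K$; hence every wall of $\mathcal{V}$ has its reflection in $W_A$. Then $\Davis_A\subset\Upsilon$ is a Euclidean tiling of dimension at least 2 whose walls fall into finitely many parallel classes, with walls from different classes crossing. Since the walls of $\mathcal{V}$ are pairwise disjoint, they all lie in a single parallel class. Because the rank is at least 2, there is another parallel class, and I take $\mathcal{H}$ to be all of it, which is a biinfinite chain. Each of its walls crosses each wall of $\mathcal{V}$ inside $\Davis_A$, hence in $\Davis$. Translating back by $W$ gives the statement.

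The main obstacle is justifying that the walls of $\Davis$ cutting $\Davis_A$ restrict to the affine wall arrangement of $\Davis_A$. In particular one must check that disjointness in $\Davis$ agrees with parallelism in $\Davis_A$. This follows because two walls are disjoint exactly when the dihedral group generated by their reflections is infinite, and that is a group-theoretic condition. In the product case the point needing care is that the orbit walls really form a chain, and I would cite the dihedral structure for that.
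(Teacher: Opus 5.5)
Your proposal is correct and is essentially the paper's proof, which consists only of the instruction to rerun the product/affine case analysis from the second half of the proof of \fullref{prop:near_flat_implies_long_grids}, as you do (you add brief justifications that the $\dihedral_\infty$--orbits of $H$ and $H'$ form a chain and that disjointness of walls is detected by the dihedral group their reflections generate). One small tightening: in the affine case, the observation that two walls with reflections in $W_K$ would cross does not by itself rule out a single such wall in $\mathcal{V}$; you also need the cross-factor commuting argument from your product case together with $|\mathcal{V}|\geq 2$.
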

\begin{proof}
See the proof of \fullref{prop:near_flat_implies_long_grids}.
\end{proof}

\begin{lemma} \label{lem:LargeLink}
Let $x,y\in \Davis^{(0)}$, and let $\gamma$ be a geodesic in
$\coneoff$ from $x$ to $y$. Denote by
$\mathcal P(\gamma)$
the collection of wide parabolic subcomplexes whose cone vertices occur
on $\gamma$.
For any wide parabolic subcomplex $\Upsilon$ satisfying
$d_\Upsilon(x,y)>\dim(\NR)\cdot\dcone(x,y)$, 
there exists $\Xi\in\mathcal P(\gamma)$ such that
$|\stab(\Upsilon)\cap \stab(\Xi)|=\infty$.
\end{lemma}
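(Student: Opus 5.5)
Decompose $\gamma=\beta_0+c_1+\beta_1+\cdots+c_m+\beta_m$ as in the discussion before \fullref{lem:dfine_leq_dcone}. Each $\beta_i$ is a $\dcomb$--geodesic and each $c_i=a_i-v_{\Upsilon_i}-b_i$ is a cone path with $\Upsilon_i\in\mathcal P(\gamma)$. Since $\dcone(x,y)=\sum_i|\beta_i|+m$, we have $|\gamma|=\dcone(x,y)$. The walls $\mathcal{M}$ that separate $x$ from $y$ and cut $\Upsilon$ number $d_\Upsilon(x,y)$, by \fullref{lem:gate-wall}.

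Replace each $c_i$ by a $\dcomb$--geodesic from $a_i$ to $b_i$ inside $\Upsilon_i$, which exists by convexity of parabolic subcomplexes. This gives an edge path from $x$ to $y$, so every wall of $\mathcal{M}$ crosses it. Each wall is therefore dual to an edge of some $\beta_i$ or separates $a_i$ from $b_i$ and so cuts $\Upsilon_i$. The first kind contributes at most $\sum|\beta_i|\le\dcone(x,y)$ walls. Suppose, for contradiction, that no $\Xi\in\mathcal{P}(\gamma)$ has $|\stab(\Upsilon)\cap\stab(\Xi)|=\infty$. The aim is to show that, for each $i$, the walls of $\mathcal{M}$ cutting $\Upsilon_i$ and separating $a_i,b_i$ pairwise cross. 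Granted that, they form a pairwise crossing family, which in $\NR$ has a common point, so there are at most $\dim(\NR)$ of them. Then
\[ d_\Upsilon(x,y)\le\sum_i|\beta_i|+m\dim(\NR)\le\dim(\NR)\dcone(x,y), \]
contradicting the hypothesis. So some $\Xi$ with infinite intersection exists.

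The key step is the pairwise crossing claim. Suppose $M,M'$ are disjoint walls that both cut $\Upsilon$ and both cut $\Upsilon_i$. Their reflections generate $\dihedral_\infty$, which lies in $\stab(\Upsilon)\cap\stab(\Upsilon_i)$, since reflections of walls cutting a parabolic subcomplex lie in its stabilizer. That intersection is therefore infinite, contradicting the assumption.

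The main obstacle I expect is justifying that a wall cutting a parabolic subcomplex has its reflection in the stabilizer. This is the fact already used in \fullref{lem:distance_to_standard}, transported by $W$--translation. The other point needing care is that a wall of $\mathcal{M}$ separating $a_i$ from $b_i$ genuinely cuts $\Upsilon_i$. This holds because $a_i,b_i\in\Upsilon_i$ lie on opposite sides of the wall.
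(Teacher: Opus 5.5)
Your proof is correct and is essentially the paper's argument. Both charge every wall of $\mathcal{W}_\Upsilon(x,y)$ to a step of $\gamma$, let ordinary edges absorb one wall each, and observe that a cone step whose endpoints are separated by two disjoint such walls gives $\dihedral_\infty\le\stab(\Upsilon)\cap\stab(\Xi)$. The difference is only bookkeeping. The paper first extracts, via Dilworth, a chain of length $>\dcone(x,y)$ and pigeonholes it onto the steps. You instead bound each cone step's share directly by the $\dim(\NR)$ bound on pairwise-crossing families, which avoids Dilworth. Your detour through $\dcomb$--geodesics inside $\Upsilon_i$ is unnecessary but harmless.
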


\begin{proof}
  Let $x=z_0,z_1,\ldots,z_m=y$ be the sequence of vertices of
  $\Davis^{(0)}$ occurring in $\gamma$, so that at each 
step $z_{i-1}-z_{i}$ either $z_{i-1}$ and $z_{i}$ are adjacent in
$\gamma$ and in $\Davis^{(1)}$, or $\gamma$ contains a cone
shortcut $z_{i-1}-v_\Xi-z_i$, where $\Xi$ is a wide parabolic subcomplex
containing $z_{i-1}$ and $z_{i}$.

Since ordinary edges have length $1$ and cone edges have length $1/2$, we have:
\[
\bigl|\mathcal P(\gamma)\bigr|
\le \dcone(x,y) = m
\]

Now let:
\[
\mathcal{W}_\Upsilon(x,y)
:=
\bigl\{\text{walls }
M \mid
M \text{ separates }x\text{ and }y
\text{ and cuts } \Upsilon
\bigr\}
\]
By Lemma \ref{lem:gate-wall}, $
|\mathcal{W}_\Upsilon(x,y)|=d_\Upsilon(x,y)$.
Since the dimension of the Niblo-Reeves cube complex is the size of
the largest collection of pairwise crossing walls, and since the fact
that walls of $\mathcal{W}_\Upsilon(x,y)$ separate $x$ and $y$ implies
the collection contains no facing triples,  Dilworth's theorem implies $\mathcal{W}_\Upsilon(x,y)$ contains a chain $\mathcal V$ with:
\[
|\mathcal V|
\ge \frac{d_\Upsilon(x,y)}{\dim(\NR)}
\]
Thus, supposing $d_\Upsilon(x,y)>\dim(\NR)\cdot \dcone(x,y)$, we have $|\mathcal V|>m.$

Every wall $M\in\mathcal V$ separates $x$ from $y$, so some step of $\gamma$ has endpoints in opposite halfspaces of $M$. Assign $M$ to one such step.
An ordinary Davis edge crosses exactly one wall, so the `excess'
$(|\mathcal{V}|-m)$--many walls come from cone shortcut steps. In
particular, there is some cone shortcut step $z_{i-1}-v_\Xi-z_i$ such
that $z_{i-1}$ and $z_i$ are  separated by distinct walls $M,M'\in\mathcal V$.
Because $z_{i-1},z_i\in \Xi$, both walls cut $\Xi$.
By construction, both walls also cut $\Upsilon$.
Furthermore, they are disjoint, since they came from the chain
$\mathcal{V}$.
Thus:
\[\dihedral_\infty\cong\langle\refl{M},\refl{M'}\rangle\leq \stab(\Upsilon)\cap\stab(\Xi)\qedhere\]
\end{proof}

\begin{proposition}[Intersection large links] \label{prop:LargeLink}
  There are constants $\lambda$, $D_0$, and $K_0$ such that for every
  pair of vertices $x,y\in \Davis^{(0)}$ there exists a collection of wide parabolic subcomplexes $\mathcal{P}(x,y)$ satisfying the following conditions:
  \begin{itemize}
    \item $|\mathcal{P}(x,y)| \le \lambda\dcone(x,y)+\lambda^2$.
    \item For all $\Xi \in \mathcal{P}(x,y)$ and any geodesic $\gamma$ in $\coneoff$ from $x$ to $y$, $\dcone(v_\Xi,\gamma) \le D_0$.
    \item If $\Upsilon$ is a wide parabolic subcomplex and
      $d_\Upsilon(x,y) > K_0$, then there exists $\Xi \in
      \mathcal{P}(x,y)$ such that $|\stab(\Upsilon) \cap \stab(\Xi)| =
      \infty$. 
  \end{itemize}
\end{proposition}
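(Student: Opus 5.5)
We build $\mathcal{P}(x,y)$ from a single $\dcone$--geodesic and push the conclusions to all geodesics using hyperbolicity. Fix $x,y\in\Davis^{(0)}$ and choose one $\dcone$--geodesic $\gamma_0$ from $x$ to $y$. Let $\mathcal{P}_0:=\mathcal{P}(\gamma_0)$ be the wide parabolic subcomplexes whose cone vertices lie on $\gamma_0$; the proof of \fullref{lem:LargeLink} gives $|\mathcal{P}_0|\le\dcone(x,y)$. We then enlarge $\mathcal{P}_0$ to $\mathcal{P}(x,y)$ so that the third item holds with a fixed threshold $K_0$, rather than with the threshold $\dim(\NR)\cdot\dcone(x,y)$ of \fullref{lem:LargeLink}, which grows with $x,y$.

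For the third item, let $\Upsilon$ be wide with $d_\Upsilon(x,y)>K_0$. Consider the gates $x':=\gate_\Upsilon(x)$ and $y':=\gate_\Upsilon(y)$. By \fullref{lem:gate-wall}, $d_\Upsilon(x,y)=d_\Upsilon(x',y')=\dcomb(x',y')$, but $\dcone(x',y')\le 1$. We split $\gamma_0$ by the closest-point behaviour of $\Upsilon$ in $\coneoff$. Since $\Upsilon$ has diameter $1$ in $\coneoff$, it behaves like a point there. Taking a $\dcone$--geodesic triangle built from $x$, $y$ and $v_\Upsilon$ and using $\delta$--hyperbolicity (\fullref{cor:coneoff_is_morse_recognizing}), we find vertices $x'',y''$ on $\gamma_0$ within uniform distance $D$ of $v_\Upsilon$, with $\dcone(x'',y'')\le 2D+2\delta$.

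Next we show that the subsegment $\gamma_1\subset\gamma_0$ between $x''$ and $y''$ carries almost all walls of $\mathcal{W}_\Upsilon(x,y)$. The key point is that walls cutting $\Upsilon$ and separating $x$ from $y$ must separate $x''$ from $y''$, apart from boundedly many. To get this, connect $x$ to $x''$ by the subsegment of $\gamma_0$ and argue that the walls cutting $\Upsilon$ crossed there are separated-from-$\Upsilon$-side consistently. I expect this step to be the main obstacle, since it needs a bounded-geodesic-image type statement: a $\dcone$--geodesic staying far from $v_\Upsilon$ has bounded $d_\Upsilon$--projection.

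I would prove that statement by contradiction using \fullref{lem:LargeLink} on short subsegments together with \fullref{cor:close-cone}. A far subsegment with large $d_\Upsilon$ contains a short sub-subsegment (of $\dcone$ length $\ell$) with $d_\Upsilon>\dim(\NR)\ell$. This follows by pigeonhole: subdivide into pieces of length $\ell$, note that $d_\Upsilon$ is subadditive, and compare the total projection to the length. \fullref{lem:LargeLink} then yields a $\Xi$ on that piece with infinite $\stab(\Upsilon)\cap\stab(\Xi)$, so $\dcone(v_\Xi,v_\Upsilon)\le B_{-1}+1$ by \fullref{cor:close-cone}. This contradicts the piece being far from $v_\Upsilon$.

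Granting the bounded geodesic image statement, the pieces of $\gamma_0$ outside $\bar\nbhd_{D'}(v_\Upsilon)$ contribute at most a bounded amount $K'$ to $d_\Upsilon$. So if $K_0>2K'+\dim(\NR)(2D+2\delta)$, then $\gamma_1$ has $d_\Upsilon(x'',y'')>\dim(\NR)\dcone(x'',y'')$. \fullref{lem:LargeLink} applied to $\gamma_1$ then gives $\Xi\in\mathcal{P}_0$, and we can take $\mathcal{P}(x,y):=\mathcal{P}_0$.

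For the second item, let $\gamma$ be any other $\dcone$--geodesic from $x$ to $y$. By thinness of bigons, $\gamma$ is $\delta$--Hausdorff close to $\gamma_0$, and $v_\Xi\in\gamma_0$, so $D_0:=\delta+1$ works. The first item holds with $\lambda=1$ from the cardinality bound on $\mathcal{P}_0$. The slack $\lambda^2$ in the statement absorbs constants in case the bounded geodesic image step instead forces adding cone vertices from a bounded neighbourhood.
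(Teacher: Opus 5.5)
Items one and two are fine for $\mathcal{P}_0$, and your reduction of item three is sound: \emph{if} you had a uniform bounded geodesic image statement for $\dcone$--geodesics, the middle segment $\gamma_1$ would be $\dcone$--short with large $d_\Upsilon$, and \fullref{lem:LargeLink} would finish.

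The gap is that bounded geodesic image statement. It is the real content here: in the paper, \fullref{thm:BGI} is \emph{deduced} from this proposition. Your argument for it does not work.

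\begin{itemize}
\item The pigeonhole produces a piece of length $\ell$ with $d_\Upsilon>\dim(\NR)\ell$ only if the whole far segment already has $d_\Upsilon$ exceeding $\dim(\NR)$ times its $\dcone$--length.
\item That segment can be as long as $\dcone(x,y)$, whereas you only know $d_\Upsilon>K'$.
\item What \fullref{lem:LargeLink} and \fullref{cor:close-cone} actually give is that each unit step of a geodesic staying far from $v_\Upsilon$ has $d_\Upsilon\le\dim(\NR)$. Summing gives only the linear bound $d_\Upsilon\le\dim(\NR)\cdot\mathrm{length}$, which is precisely the non-uniform threshold you set out to remove.
\end{itemize}

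Nothing in your argument rules out the walls of $\mathcal{W}_\Upsilon(x,y)$ being spread one per step along a long geodesic that stays far from $v_\Upsilon$. Similarly, hyperbolicity alone does not place $x'',y''$ on $\gamma_0$ near $v_\Upsilon$; that step presupposes the same bounded geodesic image statement.

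The missing ingredient is a concentration mechanism. It must force many walls cutting $\Upsilon$ to be crossed within a $\dcone$--short stretch. This comes from the grid machinery, which needs an admissible path in $\Davis$, not a $\dcone$--geodesic through cone vertices.

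The paper's route is as follows.
\begin{itemize}
\item It takes a $\dcomb$--geodesic $\alpha$ and its greedy path $\beta$ in $\hat\alpha$, which is a uniform $(\lambda,\lambda)$--quasigeodesic in $\coneoff$.
\item It lets $\mathcal{P}(x,y)$ consist of one chosen wide parabolic per shortcut edge of $\beta$. This is why the count is $\lambda\dcone(x,y)+\lambda^2$, and why $D_0$ comes from the Morse property rather than from thin bigons.
\item If $d_\Upsilon(x,y)>K_0$, Dilworth gives a long chain $\mathcal{V}\subset\mathcal{W}_\Upsilon(x,y)$, and \fullref{lem:gridcompletion} completes it to a grid.
\item \fullref{prop:induction_argument} then gives a wide $\Upsilon'$ and a subchain $\mathcal{V}'$ with $|\mathcal{V}'|\geq|\mathcal{V}|/C$, whose stretch of $\alpha$ lies in $\bar\nbhd_\radius(\Upsilon')$. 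That stretch therefore has $\dcone$--diameter at most $2K+1$.
\item So at most $D$ steps of $\beta$ cross more than $D\dim(\NR)$ walls of $\mathcal{V}'$. Now the pigeonhole legitimately yields a single shortcut step with $d_\Upsilon>\dim(\NR)$, and \fullref{lem:LargeLink} applies to that one step.
\end{itemize}

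If you want $\mathcal{P}(x,y)$ to come from a $\dcone$--geodesic, you would first need this argument, or the bounded geodesic image statement derived from it, and then transfer the conclusion.
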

Compare \cite[Proposition~9.4 ``large link lemma'']{BehHagSis17},
which is similar but concludes that $\Upsilon$ is parallel into
$\Xi$, whereas we only show that the intersection of their stabilizers
is nonspherical.
\begin{proof}
  Let $\alpha$ be a $\dcomb$--geodesic in $\Davis^{(1)}$ from $x$ to $y$,
  and let $\beta$ be the greedy path in the shortcut graph
  $\hat{\alpha}$ of $\alpha$.
  Then $\beta$ is an edge path that starts at $x=\beta(0)$, ends at $y=\beta(|\beta|)$, and
  all of whose vertices are vertices of
  $\alpha^{(0)}\subset\Davis^{(0)}$.
  Recall for consecutive vertices $\beta(i)$ and $\beta(i+1)$ of
  $\beta$, either $\beta(i)$ and $\beta(i+1)$ are adjacent in
  $\Davis^{(1)}$ or they belong to a common wide parabolic subcomplex.
  The latter type is a `shortcut edge'.
  In both cases, $\dcone(\beta(i),\beta(i+1))=1$.
  We also note that it follows from the construction in the proof of
  \fullref{lem:greedy} that the order of vertices in $\beta$ is the same as their
  order in $\alpha$. 

 For each shortcut edge $\beta(i)-\beta(i+1)$ that appears along
 $\beta$, choose a wide parabolic subcomplex containing $\beta(i)$ and
 $\beta(i+1)$. 
 Let $\mathcal{P}(x,y) = \mathcal{P}(\beta)$ be the collection of wide
 parabolic subcomplexes so chosen. 
 By \fullref{cor:combinatorial_geodesics_rough_and_quasiruler} and \fullref{thm:coneoff_qi_to_dfine},
 there is a $\lambda$, which we may assume is greater than $4\caprace -1$, such that $\beta$ is a uniform
 $(\lambda,\lambda)$--quasigeodesic from $x$ to $y$ in $\coneoff$, so:
  \[ |\mathcal{P}(x,y)| \le |\beta| \le \lambda \dcone(x,y)+\lambda^2\]

  Since $(\coneoff, \dcone)$ is $\delta$--hyperbolic, the
  quasigeodesic $\beta$ is contained in a fixed radius $D_0$ neighborhood of
  any $\dcone$--geodesic $\gamma$ from $x$ to $y$ in $\coneoff$, where
  the radius $D_0$ depends only on $\lambda$ and $\delta$.

   Let $C$ and $\radius$ be the constants of
   \fullref{prop:induction_argument}. Let $K=K(\radius)$ be such
   that whenever $\dcat(u,v) \le \radius+1/2+\diam(\text{chamber})$,
   then $\dcomb(u,v)\le K$. Since $\dcone\leq\dcomb$, this also gives $\dcone(u,v)\le K$.
   We further set $D:=\lambda(2K+3+\lambda)$.
   Finally, let: \[ K_0 := CD\dim(\NR)^2\]
  
  Now let $\Upsilon$ be a wide parabolic subcomplex such that $d_\Upsilon(x,y) > K_0$. By \fullref{lem:gate-wall}:
  \[ d_\Upsilon(x,y) = |\mathcal{W}_\Upsilon(x,y)| =|\bigl\{ \text{walls
    }M \mid 
  M \text{ separates }x\text{ and }y 
  \text{ and cuts } \Upsilon \bigr\}| \]
By Dilworth's theorem, $\mathcal{W}_\Upsilon(x,y)$ contains a chain $\mathcal V$ with:
\[
|\mathcal V|
\ge \frac{d_\Upsilon(x,y)}{\dim(\NR)} > \frac{K_0}{\dim(\NR)} =
CD\dim(\NR)\]
By \fullref{lem:gridcompletion} there is a chain $\mathcal{H}$ of $2\caprace$--many walls cutting $\Upsilon$ such that $(\mathcal{V}, \mathcal{H})$ is a grid. 
By \fullref{prop:induction_argument}, there exists a wide parabolic
subcomplex $\Upsilon'$ and a convex subchain $\mathcal{V}' \subset
\mathcal{V}$ with $|\mathcal{V}'| \ge \frac{|\mathcal{V}|}{C}$, and a
subpath $\alpha'$ of $\alpha$ between the extremal walls of
$\mathcal{V}'$ lying within $\bar\nbhd_\radius(\Upsilon')$. Let $a$ be
the last vertex of $\alpha$ before $\alpha'$, and let $b$ be the first
vertex of $\alpha$ after $\alpha'$, so that $\dcat(a,\Upsilon')\leq
\radius+1/2$, $\dcat(b,\Upsilon')\leq \radius+1/2$, and every wall of
$\mathcal{V}'$ separates $a$ and $b$.
Let $a'$ and $b'$ be vertices of
$\Upsilon'$ contained in a common chamber with $\pi_{\Upsilon'}(a)$
and $\pi_{\Upsilon'}(b)$, respectively. Then:
\[ \dcone(a,b) \le \dcone(a,a') + \dcone(a',v_{\Upsilon'}) + \dcone(v_{\Upsilon'},b') + \dcone(b',b) \le 2K+1\]

Let $\beta(l)$ be the vertex of $\beta$ closest to
$a$ in $\hat\alpha$ lying on the left side of $a$ in
$\alpha$. Similarly, let $ \beta(r)$ be the vertex of
$\beta$ closest to $b$ in $\hat\alpha$ lying to the right of $b$ along
$\alpha$. We have $\dcone(a,\beta(l)) \le 1$ and $\dcone(b,\beta(r)) \le 1$, so
$\dcone(\beta(l),\beta(r)) \le 2K+3$. In particular, the quasigeodesic
condition gives:
\[ r-l \le \lambda(2K+3) + \lambda^2 = D\]
Thus, the segment of $\beta$ from $\beta(l)$ to $\beta(r)$ has $\dcone$--length at most $D$.

Since every wall of $\mathcal{V}'$ separates $a$ and $b$, our choices
of $\beta(l)$ and $\beta(r)$ imply that every wall of $\mathcal{V}'$ separates
$\beta(l)$ and $\beta(r)$.
Since $|\mathcal{V}'|> D\dim(\NR)$ and the $\hat\alpha$--distance
from $\beta(l)$ to $\beta(r)$ is $r-l\leq D$, there is some $s \in \{ l,\ldots
r-1\}$ such that there are  more than $\dim(\NR)$--many walls of
$\mathcal{V}'$ separating $\beta(s)$ and $\beta(s+1)$.
Since $\mathcal{V}'\subset\mathcal{V}$, those walls cut $\Upsilon$, so
\fullref{lem:gate-wall} yields $d_\Upsilon(\beta(s),\beta(s+1))>\dim(\NR)$.

Note that since  $\beta(s)$ and $\beta(s+1)$ are separated by multiple walls,
they are not adjacent in $\Davis^{(1)}$, so their adjacency in
$\hat\alpha$ is due to a shortcut edge corresponding one of our chosen
wide parabolic subcomplexes $\Xi \in \mathcal{P}(x,y)$.
Thus:
\[ d_\Upsilon(\beta(s),\beta(s+1)) > \dim(\NR) = \dim(\NR)\cdot 1=\dim(\NR)\cdot\dcone(\beta(s),\beta(s+1))\]
By \fullref{lem:LargeLink}, $\stab(\Upsilon) \cap \stab(\Xi)$ is infinite. 
\end{proof}

\begin{theorem}[Bounded Geodesic Image] \label{thm:BGI}
  Let $D_0$ and $K_0$ be the constants of \fullref{prop:LargeLink}.
  Let $B_0:=D_0+B_{-1}+1$, where $B_{-1}$ is the constant of
  \fullref{lem:far_wide_parabolics_have_finite_intersection}.
  Let $x,y \in \Davis^{(0)}$, and let $\gamma$ be a $\dcone$--geodesic in $\coneoff$  from $x$ to $y$. For any wide parabolic subcomplex $\Upsilon$, if $\dcone(v_\Upsilon,\gamma) > B_0$, then $d_\Upsilon(x,y) \le K_0$.
\end{theorem}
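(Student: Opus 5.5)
The plan is to argue by contraposition, using \fullref{prop:LargeLink} together with \fullref{cor:close-cone}. Suppose $\Upsilon$ is a wide parabolic subcomplex with $d_\Upsilon(x,y)>K_0$. I will show that this forces $\dcone(v_\Upsilon,\gamma)\leq B_0$.

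\textbf{Step 1 (large link).} Apply \fullref{prop:LargeLink} to the pair $x,y$. Since $d_\Upsilon(x,y)>K_0$, the third item produces a wide parabolic subcomplex $\Xi\in\mathcal{P}(x,y)$ with $|\stab(\Upsilon)\cap\stab(\Xi)|=\infty$. The second item applies to every $\dcone$--geodesic from $x$ to $y$, in particular to the given $\gamma$, so $\dcone(v_\Xi,\gamma)\leq D_0$.

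\textbf{Step 2 (nearby cone points).} Since the stabilizers of the wide parabolic subcomplexes $\Upsilon$ and $\Xi$ have infinite intersection, \fullref{cor:close-cone} gives $\dcone(v_\Upsilon,v_\Xi)\leq B_{-1}+1$.

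\textbf{Step 3 (triangle inequality).} Combining the two bounds,
\[\dcone(v_\Upsilon,\gamma)\leq \dcone(v_\Upsilon,v_\Xi)+\dcone(v_\Xi,\gamma)\leq B_{-1}+1+D_0=B_0.\]
This contradicts the hypothesis $\dcone(v_\Upsilon,\gamma)>B_0$, so $d_\Upsilon(x,y)\leq K_0$.

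All the substantive work (the grid/induction argument locating a shortcut edge through some $\Xi$, and the hyperbolicity bound placing $v_\Xi$ near every geodesic) has already been carried out in \fullref{prop:LargeLink}. The only point that needs care is confirming that $D_0$ in that proposition is uniform over all $\dcone$--geodesics from $x$ to $y$, rather than depending on a particular one. It is: $D_0$ comes from the Morse lemma applied to the uniform quasigeodesic $\beta$ in the hyperbolic space $\coneoff$, so it depends only on $\lambda$ and the hyperbolicity constant $\delta$. With that checked, the argument is a direct chaining of the cited results, and I expect no real obstacle.
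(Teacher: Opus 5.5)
Your proposal is correct and is essentially the paper's own proof: assume $d_\Upsilon(x,y)>K_0$, use \fullref{prop:LargeLink} to obtain $\Xi\in\mathcal{P}(x,y)$ with $|\stab(\Upsilon)\cap\stab(\Xi)|=\infty$ and $\dcone(v_\Xi,\gamma)\leq D_0$, then use \fullref{cor:close-cone} and the triangle inequality to get $\dcone(v_\Upsilon,\gamma)\leq B_{-1}+1+D_0=B_0$. Your remark that $D_0$ must hold for every $\dcone$--geodesic from $x$ to $y$, and not just one, is the right check, and the second item of \fullref{prop:LargeLink} states exactly that.
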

\begin{proof}
  By \fullref{prop:LargeLink}, if $d_\Upsilon(x,y) > K_0$,
  then there exist $\Xi \in \mathcal{P}(x,y)$ with $|\stab(\Upsilon)
  \cap \stab(\Xi)| = \infty$.
  \fullref{cor:close-cone} together with the same proposition yields:
  \[ \dcone(v_\Upsilon,\gamma) \le \dcone(v_\Upsilon, v_\Xi) + \dcone(v_\Xi,\gamma) \le B_{-1}+1+D_0=B_0\qedhere\]
\end{proof}

\begin{lemma}[One-sided distance formula]\label{lem:no_large_projection}
  There is a constant $C$ such that for all $x,y\in\Davis^{(0)}$:
  \[\dcomb(x,y)\leq C(1+\dcone(x,y))\cdot\max\{1,\sup_\Upsilon
    d_\Upsilon(x,y)\}\]
  The supremum is taken over wide parabolic subcomplexes $\Upsilon$.
\end{lemma}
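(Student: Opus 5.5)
We will bound $\dcomb(x,y)$ by cutting a $\dcomb$--geodesic $\alpha$ from $x$ to $y$ along the greedy path $\beta$ in its shortcut graph $\hat\alpha$, as in the proof of \fullref{prop:LargeLink}. Let $K:=\max\{1,\sup_\Upsilon d_\Upsilon(x,y)\}$.

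\textbf{Step 1: length of $\beta$.} By \fullref{dcone_cleq_dfine} and \fullref{lem:dfine_leq_dcone}, the number of edges of $\beta$ satisfies
\[|\beta|=d_{\hat\alpha}(x,y)\leq C'\dfine(x,y)+C'\leq C'(1+\dcone(x,y)).\]

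\textbf{Step 2: length of each step.} The vertices of $\beta$ occur along $\alpha$ in order, so $\dcomb(x,y)=|\alpha|$ is the sum over consecutive vertices $\beta(i),\beta(i+1)$ of $\dcomb(\beta(i),\beta(i+1))$. It therefore suffices to bound each such distance by a constant multiple of $K$.

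\begin{itemize}
\item \emph{Ordinary edge.} If $\beta(i)$ and $\beta(i+1)$ are adjacent in $\Davis^{(1)}$, the contribution is $1\leq K$.
\item \emph{Shortcut edge.} Otherwise both vertices lie in a common wide parabolic subcomplex $\Xi$. Every wall separating them cuts $\Xi$, by convexity of $\Xi$, and also separates $x$ from $y$, because $\alpha$ is a combinatorial geodesic. Since $\gate_\Xi$ fixes vertices of $\Xi$, \fullref{lem:gate-wall} gives
\[\dcomb(\beta(i),\beta(i+1))=d_\Xi(\beta(i),\beta(i+1))\leq d_\Xi(x,y)\leq K.\]
\end{itemize}

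\textbf{Conclusion.} Summing over the steps gives
\[\dcomb(x,y)\leq |\beta|\cdot K\leq C'(1+\dcone(x,y))\cdot\max\{1,\sup_\Upsilon d_\Upsilon(x,y)\},\]
so the lemma holds with $C:=C'$.

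\textbf{Main point to check.} The key claim is the monotonicity used for shortcut edges: the set of walls separating two vertices of $\alpha$ is contained in the set of walls separating $x$ from $y$. This holds because a combinatorial geodesic crosses each wall at most once, so any wall separating two vertices of $\alpha$ is crossed exactly once by $\alpha$ and hence separates its endpoints. Everything else is already supplied by \fullref{dcone_cleq_dfine} and \fullref{lem:greedy}.
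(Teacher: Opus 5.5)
Your proof is correct and follows the paper's argument. You bound the length of the greedy path $\beta$ in $\hat\alpha$ by $C(1+\dcone(x,y))$ via \fullref{dcone_cleq_dfine} and \fullref{lem:dfine_leq_dcone}, then bound each shortcut step by $d_\Xi(x,y)$. The only cosmetic difference is how you justify that per-step bound: you count walls using \fullref{lem:gate-wall}, whereas the paper derives it from the gate (closest-point) property applied to $\dcomb(x,z_i)+|\alpha_i|+\dcomb(z_{i+1},y)=\dcomb(x,y)$.
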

\begin{proof}
  It suffices to take $C$ to be the constant of
  \fullref{dcone_cleq_dfine}.
  
    Let $\alpha$ be a $\dcomb$--geodesic in $\Davis^{(1)}$ from $x$ to
    $y$, and let $\beta$ be the greedy path in the shortcut graph $\hat
    \alpha$.
  Let $x=z_0,\dots,z_{|\beta|}=y$ be the vertex sequence of
  $\beta$.
  By \fullref{dcone_cleq_dfine} and \fullref{lem:dfine_leq_dcone},
  $|\beta|\leq C\dcone(x,y)+C$.

    If the edge from $z_i$ to $z_{i+1}$ in $\beta$ is an edge of
    $\Davis$ then $\dcomb(z_i,z_{i+1})=1$.
    Otherwise it is a shortcut edge, so $z_i$ and $z_{i+1}$ are
    contained in a common wide parabolic subcomplex $\Upsilon$.
    By $\dcomb$--convexity, the subsegment $\alpha_i$ of $\alpha$ between $z_i$
    and $z_{i+1}$ is a $\dcomb$--geodesic in $\Upsilon$.
   By definition, $d_\Upsilon(x,y)$ is the $\dcomb$--distance between the
   gate projections $\gate_\Upsilon(x)$ and $\gate_\Upsilon(y)$ of $x$ and $y$ to
   $\Upsilon$.
   Now:
   \begin{align*}
     \dcomb(x,z_i)+&|\alpha_i|+\dcomb(z_{i+1},y)=|\alpha|=\dcomb(x,y)\\
     &\leq \dcomb(x,\gate_\Upsilon(x))+\dcomb(\gate_\Upsilon(x),\gate_\Upsilon(y))+\dcomb(\gate_\Upsilon(y),y)
   \end{align*}
     Since the gate map agrees with $\dcomb$--closest point
     projection, this gives:
   \[0\leq\dcomb(x,z_i)-\dcomb(x,\gate_\Upsilon(x))+\dcomb(z_{i+1},y)-\dcomb(\gate_\Upsilon(y),y)\leq \dcomb(\gate_\Upsilon(x),\gate_\Upsilon(y))-|\alpha_i|\]
   Thus, $|\alpha_i|\leq \dcomb(\gate_\Upsilon(x),\gate_\Upsilon(y)) =d_\Upsilon(x,y)$.
The claim follows. 
\end{proof}

\subsection{Factoring paths in the coned-off space}
\begin{lemma} \label{lem:factorization}
  Let $P=W_T$ be a nonspherical irreducible special subgroup that is
  the stabilizer of a parabolic subcomplex
  $\Upsilon$. Given $u,v \in \Upsilon^{(0)}$, let $m:=\delta_P(u,v)$.
  The element $w = u^{-1}v$ admits a factorization $w=h_1\cdots h_m$,
  where $h_i \in W_{J_i}$ for $J_i \subsetneq T$. Moreover, this
  factorization can be chosen so that each $h_i$ is a reduced word in
  the corresponding $W_{J_i}$ and $h_1\cdots h_m$ is a reduced word in
  $W$.
\end{lemma}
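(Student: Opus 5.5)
Since $\delta_P=\delta_\Upsilon$ does not depend on the choice of $\Upsilon\in\mathcal{R}(P)$, I will work directly in $C(\Upsilon)$. Write $\Upsilon=g\Davis_T$; then $u,v\in gW_T$, so $w=u^{-1}v\in W_T$. The plan is to read off the factorization from a $d_{C(\Upsilon)}$--geodesic between $u$ and $v$, and then to straighten it so that it becomes reduced.

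\textbf{Step 1: a geodesic in $C(\Upsilon)$ gives a factorization of length $m$.} A geodesic from $u$ to $v$ in $C(\Upsilon)$ passes through ordinary vertices $u=z_0,z_1,\dots,z_m=v$, all in $\Upsilon^{(0)}$. Each step $z_{i-1}\to z_i$ is one of two kinds.
\begin{itemize}
\item It is an ordinary edge, labelled by some $s\in T$. Then $h_i:=z_{i-1}^{-1}z_i=s\in W_{\{s\}}$, and $\{s\}\subsetneq T$ because $T$ is nonspherical, so $|T|\geq 2$.
\item It is a cone path through $v_\Xi$, where $\Xi\subsetneq\Upsilon$ is a proper parabolic subcomplex containing $z_{i-1}$ and $z_i$. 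Write $\Xi=z_{i-1}\Davis_{J_i}$ with $J_i\subsetneq T$; properness gives $J_i\neq T$. Then $h_i:=z_{i-1}^{-1}z_i\in W_{J_i}$.
\end{itemize}
The product telescopes, giving $w=h_1\cdots h_m$ with exactly $m=\delta_P(u,v)$ factors.

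\textbf{Step 2: make the product reduced.} This is the main obstacle, since the geodesic chosen in Step 1 need not be reduced. I will not fix an arbitrary geodesic. Instead I will replace the sequence $z_i$ by vertices taken in order along a $\dcomb$--geodesic $\alpha$ from $u$ to $v$ inside $\Upsilon$, using the greedy/shortcut-graph technique of \fullref{lem:greedy}. The shortcut graph $\hat\alpha$ is defined here with respect to proper parabolic subcomplexes of $\Upsilon$.

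First, the greedy path in $\hat\alpha$ is a $\hat\alpha$--geodesic. The proof of \fullref{lem:greedy} uses only $\dcomb$--convexity of parabolic subcomplexes, so it carries over. Since its vertices lie in order along $\alpha$, the consecutive pieces $h_i$ are subwords of a reduced word for $w$. Each $h_i$ is therefore reduced, the concatenation is reduced, and each $h_i$ lies in $W_{J_i}$ because parabolic subcomplexes are convex. This yields a reduced factorization with $d_{\hat\alpha}(u,v)$ factors.

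\textbf{Step 3: show $d_{\hat\alpha}(u,v)\le m$.} The remaining point is that some $\dcomb$--geodesic $\alpha$ achieves this. I will argue this by projecting the $C(\Upsilon)$--geodesic $z_0,\dots,z_m$ onto $\alpha$ with gate maps and induction, or alternatively by showing directly that a shortest factorization can be straightened factor by factor.

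For the straightening, take a factorization $w=h_1\cdots h_m$ with $h_i\in W_{J_i}$ and with total word length $\sum|h_i|$ minimal among all factorizations with $m$ factors. Suppose the concatenation is not reduced. Then by the deletion condition two letters cancel, at positions in factors $h_i$ and $h_j$ with $i\le j$. I expect deleting them to shorten some $h_i$ or $h_j$ while keeping each factor in its $W_{J}$. If $i=j$, this contradicts $h_i$ being reduced. If $i<j$, I will try to use the standard exchange: the deletion pair yields $h_i\cdots h_j=h_i'h_{i+1}'\cdots h_j'$, where the middle factors are conjugated by a reflection. The care needed is to ensure these conjugated factors stay in special subgroups.

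If this fails, I will fall back on the convexity route, which I expect to be the reliable one: a minimal $m$ is realized by a greedy path along some combinatorial geodesic.
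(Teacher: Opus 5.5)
Your Step~1 is the same as the paper's. Step~2 is correct, but it only produces a reduced factorization with $d_{\hat\alpha}(u,v)$ factors, and a priori that may exceed $m$. Everything therefore rests on Step~3, and that step is not proved. The gate-map projection is only named, and the straightening argument is abandoned at the cross-block case. The ``convexity route'' you fall back on simply restates what must be shown. Note also that once Step~3 is proved by straightening, the reduced factorization it yields is already the conclusion of the lemma, so the greedy-path detour of Step~2 is unnecessary.

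The straightening is exactly the paper's proof, and the obstacle you anticipate in the case $i<j$ does not exist. Fix a length-$m$ path in $C(\Upsilon)$ from $u$ to $v$ minimizing $\sum_i|h_i|$, and fix reduced words for the $h_i$. If the concatenation is not reduced, the deletion condition lets you erase two letters without changing the element $w$. Erasing letters moves and conjugates nothing. The blocks strictly between the two erased letters are literally unchanged. Each block that lost a letter is still a word in the letters of its own $J_i$, since reduced words for elements of $W_{J_i}$ use only letters from $J_i$. So it still represents an element $h_i'\in W_{J_i}$. The vertices $uh_1'\cdots h_i'$ lie in $\Upsilon^{(0)}$, and consecutive ones lie in the proper parabolic subcomplex $uh_1'\cdots h_{i-1}'\Davis_{J_i}$. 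They therefore give a path of length at most $m$ in $C(\Upsilon)$ with $\sum_i|h_i'|<\sum_i|h_i|$. If some $h_i'$ is trivial, this path has length at most $m-1$, contradicting $m=\delta_P(u,v)$. Otherwise it contradicts minimality of $\sum_i|h_i|$. Your case $i=j$ is subsumed, and no exchange with conjugated middle factors is needed.
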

\begin{proof}
  Let $u=z_0, z_1, \ldots, z_m =v$ be a length-$m$ path in $C(\Upsilon)$,
  where each $z_i$ is a vertex in $\Upsilon^{(0)}$. Each unit step joins two
  vertices lying in a common proper parabolic subcomplex of $\Upsilon$, say
  $z_{i-1}, z_i \in  a W_{J_i}$ for some  $J_i \subsetneq T$. Thus $h_i = z_{i-1}^{-1} z_i\in W_{J_i}$. Therefore, a length-$m$ path in $C(\Upsilon)$ gives a factorization of $w$. 
  
  Among such paths choose one that minimizes $\sum_{i=1}^m
  |h_i|$. Choose a reduced word for each $h_i$.
  If their concatenation was not reduced, then the Coxeter deletion
  condition gives two letters whose deletion leaves a word
  representing the same element $w$. Recall that a reduced word
  representing an element of $W_J$ only uses letters in $J$;
  therefore, the remaining letters in the $i$-th block still belong to
  $J_i$, and hence represent an element of $W_{J_i}$.
  Thus, after further reducing each remaining $W_{J_i}$--word, either
  some block becomes trivial, contradicting the definition of $m$, or
  all $m$ factors remain nontrivial, but the total word length
  decreases, contradicting minimality of the sums of the word lengths.
\end{proof}

\begin{proposition}[Witness]\label{prop:witness}
  Let $c_2,\ldots,c_{|S|}$ be the positive integers defined in \fullref{sec:roundup}. 
  There exists a constant $M_\epsilon=M_\epsilon(c_2,\ldots,c_{|S|})$ such that for all $x,y \in \Davis^{(0)}$ and
  all wide parabolic subcomplexes $\Upsilon$, if $d_\Upsilon(x,y) > M_\epsilon$, then there exists a pre-wide $P \le \stab(\Upsilon)$ such that $\delta_P(x,y) > c_{\rank(P)}$.
\end{proposition}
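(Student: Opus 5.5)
We argue by induction on rank, descending through the irreducible nonspherical parabolics of $\stab(\Upsilon)$ and using \fullref{lem:factorization} to convert large projection distance into many proper-subparabolic pieces. First reduce to the irreducible case. Write $\stab(\Upsilon)=Q_1\times\cdots\times Q_n\times K$ with the $Q_i$ irreducible nonspherical and $K$ spherical. Up to translation $\Upsilon=\Davis_{T_1\sqcup\cdots\sqcup T_n\sqcup T_K}$. By \fullref{lem:gate-wall}, $d_\Upsilon(x,y)$ counts walls separating $x,y$ and cutting $\Upsilon$. By \fullref{lem:reflection_splitting} each such wall has reflection in a single factor. Since $K$ contributes boundedly many walls, some factor $Q_i$ receives at least $(d_\Upsilon(x,y)-|K|_{\max})/n$ of these walls. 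Using the product structure of gates ($\gate$ to a product is the product of gates, as in the proof of \fullref{lem:perp_dont_change_gate}), this count is $d_{Q_i}(x,y)$. By \fullref{lem:pre-wide}, $Q_i$ is pre-wide.

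The core is the following claim, proved by induction on $r$. For each $r$ there is $F_r$ such that if $P$ is an irreducible nonspherical parabolic contained in a wide parabolic, and $d_P(x,y)>F_r$, then there is a pre-wide $P'\le P$ with $\delta_{P'}(x,y)>c_{\rank(P')}$. In the base case of minimal rank, every proper parabolic of $P$ is spherical, so $C(\Xi)\to\Xi^{(1)}$ is a quasiisometry with constants depending only on $(W,S)$, as in the remark after \fullref{def:CP}. Hence $d_P$ large forces $\delta_P>c_{\rank P}$, and we take $P'=P$.

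For the inductive step, if $\delta_P(x,y)>c_{\rank P}$ we are done. Otherwise choose $\Xi\in\mathcal{R}(P)$ and set $u=\gate_\Xi(x)$, $v=\gate_\Xi(y)$. \fullref{lem:factorization} writes $u^{-1}v=h_1\cdots h_m$ with $m\le c_{\rank P}$, a reduced product, and $h_i\in W_{J_i}$ for $J_i\subsetneq T$. Reducedness gives $\sum|h_i|=d_P(x,y)$, so some $|h_i|\ge d_P(x,y)/c_{\rank P}$. Let $u_i=u h_1\cdots h_{i-1}$. The combinatorial geodesic from $u$ to $v$ passes through $u_i$ and $u_ih_i$. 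Since $|h_i|$ is large, $J_i$ contains an irreducible nonspherical component $J'$ with $|h_i|_{J'}\ge|h_i|/|S|$; the spherical pieces have bounded length. Set $P'':=u_iW_{J'}u_i^{-1}$, which has rank less than $\rank P$ and lies in the same wide parabolic, so it is pre-wide by \fullref{lem:pre-wide}.

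The main obstacle is showing $d_{P''}(x,y)\gtrsim |h_i|/|S|$, that is, that the gates of $x$ and $y$ onto $u_i\Davis_{J'}$ are far apart. We first try a wall count. The walls crossed by the $J'$-letters of $h_i$ cut $u_i\Davis_{J'}$. They also separate $u$ from $v$, since the whole product is reduced. Their reflections lie in $P$, so they cut $\Xi$, and by \fullref{cor:wall_gate} they then separate $x$ from $y$. \fullref{lem:gate-wall} then converts this count into $d_{P''}(x,y)$. We need the walls from the $J'$-letters to be distinct from those of the other factors; this follows from reducedness, because each wall is crossed once. Setting $F_r:=c_r\,|S|\,(F_{r-1}+\text{const})$ completes the induction, and we take $M_\epsilon:=n\,F_{|S|}+\text{const}$.
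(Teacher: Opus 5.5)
Your proposal is correct and follows essentially the same route as the paper. You first reduce to an irreducible nonspherical factor of $\stab(\Upsilon)$ by a wall count, and then induct on rank: \fullref{lem:factorization} produces a long factor $h_i$, you extract a long irreducible nonspherical component of $J_i$, and the walls it crosses (which separate $x$ and $y$ by \fullref{cor:wall_gate}) give a lower bound on $d_Q(x,y)$ for a lower-rank pre-wide $Q$.

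The differences are only cosmetic. The paper handles rank $2$ inside the uniform inductive step, since there every $J_i$ is spherical and so a long $h_i$ is impossible. It also uses the sharper divisor $\lfloor\frac{r-1}{2}\rfloor$ in place of $|S|$ when defining its thresholds $M'_r$.
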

\begin{proof}
  Let $\Delta\geq 1$ be the maximal $\dcomb$--diameter of a spherical special
  subgroup of $W$. We will choose $M'_2 < M'_3 < \ldots < M'_{|S|}$
  recursively. Set $M'_1:=0$. Then, for $2\leq r\leq |S|$, choose:
  \begin{equation} \label{eqn:Mr}
 L_r:=\Delta + \big\lfloor\frac{r-1}{2}\big\rfloor M'_{r-1} \qquad  M'_r :=c_r L_r
  \end{equation}

  We first prove an irreducible statement by induction on rank. 
  
  \begin{claim}
    Suppose that $P=\stab(\Upsilon)$ is irreducible and nonspherical with $d_P(x,y) > M'_{\rank(P)}$. Then either $\delta_P(x,y) > c_{\rank(P)}$ or there exists an irreducible nonspherical $Q \lneq P$  such that $d_Q(x,y) > M'_{\rank(Q)}$.
  \end{claim}
  \begin{claimproof}
    Up to the $W$--action, we may assume $P=W_T$ and $\Upsilon=\Davis_T$ for
    some $T\subset S$ with $|T|=r:=\rank(P)$.
    If $\delta_P(x,y) > c_{r}$ there is nothing to prove, so
    suppose $m:=\delta_P(x,y) \leq c_{r}$. 
Let $u:=\gate_\Upsilon(x)$ and $v:=\gate_\Upsilon(y)$.
    By  \fullref{lem:factorization}, the element $w=u^{-1}v$ admits a
  factorization $w=h_1 \cdots h_m$ satisfying $h_i \in W_{J_i}$ for
  $J_i \subsetneq T$ and: 
  \[ d_P(x,y) = d_P(u,v)=\sum_{i=1}^m |h_i|\]
  For at least one $i$ the word $h_i$  has at least average length:
  \begin{equation} \label{eqn:hi}
      |h_i|\geq \frac{d_P(x,y)}{m} > \frac{M'_r}{m}=\frac{c_rL_r}{m}\geq\frac{mL_r}{m}=L_r
  \end{equation}
  For that $i$, first observe that if $J_i$ is spherical then $\Delta\geq |h_i|>L_r\geq L_2=\Delta$, which is a
  contradiction. 
 Therefore, the canonical decomposition of $W_{J_i}$ as a product of irreducible nonspherical factors
 $W_{J_{i,j}}$ and a spherical factor $F_i$ contains at least one
 nonspherical factor:
 \[ W_{J_i} = W_{J_{i,1}} \times W_{J_{i,2}} \times \cdots \times
   W_{J_{i,k}} \times F_i\]
Since  $2\leq |J_i| \lneq
 |T|=r$, and since the irreducible nonspherical components of $J_i$
 are disjoint sets containing at least two vertices each, the number
 of non-spherical components is $0<k \le \lfloor\frac{
   r-1}{2}\rfloor$. 
 
 Since different factors commute, we can further factorize:
 \[ h_i = h_{i,1} h_{i,2} \cdots h_{i,k} \cdot f_i \]
 Where:\[h_{i,j} \in W_{J_{i,j}}\quad\text{and} \quad f_i \in
   F_i\quad\text{and}\quad |h_i| = \sum_{j=1}^k |h_{i,j}| +
   |f_i|\]

 Thus, by \eqref{eqn:hi}, there is some $1\leq j\leq k$ such that:
 \[|h_{i,j}|\geq \frac{|h_i|-|f_i|}{k}>\frac{L_r-\Delta}{\lfloor\frac{r-1}{2}\rfloor}=M'_{r-1}\]

  Because the full concatenated expression for $w$ is reduced, we may
  commute the component words within $h_i$ and obtain a reduced word
  for $w$ in which a reduced expression for $h_{i,j}$ occurs as a
  contiguous subword. The corresponding subpath of the combinatorial
  geodesic lies in a parabolic subcomplex $\Xi$ whose stabilizer
  $Q$ is conjugate to $W_{J_{i,j}}$.
  Moreover, the prefix in $w$ preceding this subword is a word in $T$,
  so $\Xi\subsetneq\Upsilon$ and $Q\lneq P$.
  Every wall crossed by this subpath separates the gates of $x$ and $y$ in $\Upsilon$,
  hence separates $x$ and $y$ and cuts $\Xi$.
  Thus:
  \[ d_Q(x,y) \ge |h_{i,j}| > M'_{r-1}\geq M'_{\rank(Q)}\]
  This completes the proof of the claim.
  \end{claimproof}
 
  Now let $\Upsilon$ be wide. Take the canonical decomposition of $\stab(\Upsilon)$ as a product of
  nonspherical irreducible factors and a spherical factor:
  \[ \stab(\Upsilon)=P_1 \times \cdots \times P_k \times F \]
  The gate distance splits as the sum of the component distances,
  while the spherical contribution is at most $\Delta$. Therefore,
  setting $M_\epsilon=\Delta+\lfloor\frac{|S|}{2}\rfloor M'_{|S|}$ guarantees that whenever $d_\Upsilon(x,y) > M_\epsilon$,
  then at least one of the $k\leq\lfloor\frac{|S|}{2}\rfloor$--many $P_i$ will have $d_{P_i}(x,y) >
  M'_{\rank(P_i)}$.
  Applying the claim by inducting on $\rank(P_i)$, we see that there
  is an irreducible nonspherical $P \le P_i$ such that $\delta_P(x,y)
  > c_{\rank(P)}$.
  Finally, $P$ is pre-wide by \fullref{lem:pre-wide}.
\end{proof}

\subsection{Large links and stability}

\begin{lemma}\label{lem:rho_map_from_prewide_to_cone_vertices}
  There is a map $\rho$ from pre-wide parabolics to cone vertices of
  $\coneoff$ defined by sending a pre-wide parabolic $P$ to the cone
  vertex corresponding to the unique parabolic subcomplex
  $\Upsilon_{N(P)}$ whose
  stabilizer is $\normalizer_W(P)$, as in
  \fullref{normalizer_parabolics_have_well_defined_subcomplex}. 
  The map has the following properties:
  \begin{enumerate}
      \item $g\rho(P)=\rho(gPg^{-1})$\label{item:rho_map_group_action}
    \item If $P\leq Q$ are pre-wide parabolic subgroups, then
      $\dcone(\rho(P),\rho(Q)) \le 1$.\label{item:rho_map_nested}
      \item For all $x,y\in\Davis^{(0)}$, $\delta_P(x,y)\leq d_{\Upsilon_{N(P)}}(x,y)$.\label{item:rho_map_delta}
      \item If $P\leq Q$ are pre-wide parabolic subgroups, then for all
        $x,y\in \Davis^{(0)}$, $\delta_P(x,y)=\delta_P(\gate_Q(x),\gate_Q(y))$.\label{item:rho_map_extra}
      \end{enumerate}
\end{lemma}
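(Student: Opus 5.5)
Four properties need checking, and all follow by translating between the parabolic $P$, its normalizer $\normalizer_W(P)=P\times P^\perp$, and the unique parabolic subcomplex $\Upsilon_{N(P)}$ provided by \fullref{normalizer_parabolics_have_well_defined_subcomplex}. The map $\rho$ is well defined because $P$ is pre-wide: $\normalizer_W(P)$ is then a wide parabolic, so $\Upsilon_{N(P)}$ is a wide parabolic subcomplex and carries a cone vertex in $\coneoff$.

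For \eqref{item:rho_map_group_action}, observe that $g\Upsilon_{N(P)}$ is a parabolic subcomplex with stabilizer $g\normalizer_W(P)g^{-1}=\normalizer_W(gPg^{-1})$. By uniqueness it equals $\Upsilon_{N(gPg^{-1})}$, and the $W$--action on $\coneoff$ sends $v_\Upsilon$ to $v_{g\Upsilon}$.

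For \eqref{item:rho_map_nested}, the aim is to find a vertex common to $\Upsilon_{N(P)}$ and $\Upsilon_{N(Q)}$; the two cone edges of length $1/2$ through that vertex then give $\dcone\le 1$. Conjugate so that $Q=W_U$ with $U$ irreducible nonspherical, so $\Upsilon_{N(Q)}=\Davis_{U\sqcup U^\perp}\ni\one$. Write $P=aW_Ta^{-1}\le W_U$. Since parabolic subgroups of $W_U$ are $W_U$--conjugates of special subgroups of $U$ (\fullref{special_conjugates}, applied inside $W_U$), we may choose $a\in W_U$. Then $\Upsilon_{N(P)}=a\Davis_{T\sqcup T^\perp}$ contains $a\in W_U\subset\Upsilon_{N(Q)}^{(0)}$. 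The main risk in the whole lemma is justifying that $a$ can be taken in $W_U$ rather than some other conjugator; if this fails, I would instead compare $\gate$ images using \fullref{lem:gate_projection_image_is_residue}.

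For \eqref{item:rho_map_delta}, choose $\Xi\in\mathcal R(P)$ with $\Xi\subset\Upsilon_{N(P)}$. By \fullref{cor:composition_of_nested_gate_maps}, $\gate_\Xi=\gate_\Xi\circ\gate_{\Upsilon_{N(P)}}$. Since $\Xi$ is a product fiber of $\Upsilon_{N(P)}$ (\fullref{lem:perp_dont_change_gate}), $\gate_\Xi$ restricted to $\Upsilon_{N(P)}$ is a coordinate projection and does not increase $\dcomb$. Moreover, $d_{C(\Xi)}\le\dcomb$ on $\Xi^{(0)}$ because $C(\Xi)$ contains $\Xi^{(1)}$. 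Chaining these gives $\delta_P\le d_P\le d_{\Upsilon_{N(P)}}$.

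For \eqref{item:rho_map_extra}, take $\Xi_Q\in\mathcal R(Q)$ and $\Xi_P\in\mathcal R(P)$ with $\Xi_P\subset\Xi_Q$. This is possible by the same conjugation as in the previous case, translated by an element of $Q$. Then \fullref{cor:composition_of_nested_gate_maps} gives $\gate_{\Xi_P}=\gate_{\Xi_P}\circ\gate_{\Xi_Q}$. Hence $\gate_P(x)=\gate_P(\gate_{\Xi_Q}(x))$, and since $\gate_Q(x)$ is the $Q^\perp$--orbit of $\gate_{\Xi_Q}(x)$, the definition of $\delta_P$ yields the equality. The one point to check is independence of the representative in the orbit $\gate_Q(x)$. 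An element $z\in Q^\perp$ centralizes $P\le Q$, so $z\in P^\perp$, and \fullref{lem:perp_dont_change_gate} shows the $P$--gate orbit is unchanged.
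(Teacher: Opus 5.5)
Your proposal is correct and is essentially the paper's proof. Items (2) and (4) both reduce to choosing nested subcomplexes $\Xi_P\subset\Xi_Q$ (your choice $a\in W_U$). Item (3) is $d_{C(\Xi)}\le\dcomb$ followed by $1$--Lipschitzness of the gate map: you verify this from the product structure of $\Upsilon_{N(P)}$, where the paper just cites the Lipschitz property. Your check that $\gate_P$ is constant on $Q^\perp$--orbits fills in a step the paper leaves implicit.

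The step you flag is true, but it is not what \fullref{special_conjugates} gives. Instead, write the conjugator as $udv$ with $u\in W_U$, $v\in W_T$, and $d$ minimal in its $(W_U,W_T)$ double coset. Then $dW_Td^{-1}\le W_U$ forces $dW_Td^{-1}=W_U\cap dW_Td^{-1}=W_{U\cap dTd^{-1}}$ by the remark after \cite[Lemma~5.3.6]{davisbook}, as used in \fullref{lem:gate_projection_image_is_residue}. This gives $P=uW_{U\cap dTd^{-1}}u^{-1}$ with $u\in W_U$. The paper itself simply asserts this nesting.
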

\begin{proof}
  Item~\eqref{item:rho_map_group_action} is clear.

For Item~\eqref{item:rho_map_nested}, let $\Upsilon_P$,
  $\Upsilon_Q$, $\Upsilon_{N(P)}$, and $\Upsilon_{N(Q)}$ be parabolic
  subcomplexes with stabilizers $P$, $Q$, $\normalizer_W(P)$, and
  $\normalizer_W(Q)$, respectively. The parabolic subcomplexes $\Upsilon_{N(P)}$ and
  $\Upsilon_{N(Q)}$ are unique, by
  \fullref{normalizer_parabolics_have_well_defined_subcomplex}.
  For $\Upsilon_P$ and $\Upsilon_Q$ there are, potentially, choices to
  make; choose them in such a way that $\Upsilon_P\subset\Upsilon_Q$.
  Then $\Upsilon_P\subset \Upsilon_{N(P)}\cap\Upsilon_{N(Q)}$.
  In particular, every vertex of $\Upsilon_P$ is a $\Davis^{(0)}$
  vertex at distance $1/2$
  from both the cone vertex for $\Upsilon_{N(P)}$, which is $\rho(P)$, and
  the cone vertex for $\Upsilon_{N(Q)}$, which is $\rho(Q)$.
  Thus, $\dcone(\rho(P),\rho(Q))\leq 1$.

Similarly, for Item~\eqref{item:rho_map_extra}, choose
$\Upsilon_P\subset\Upsilon_Q$ and apply
\fullref{cor:composition_of_nested_gate_maps}:
\begin{align*}
  \delta_P(x,y)&=d_{C(\Upsilon_P)}(\gate_{\Upsilon_P}(x),\gate_{\Upsilon_P}(y))\\
               &=d_{C(\Upsilon_P)}(\gate_{\Upsilon_P}\circ\gate_{\Upsilon_Q}(x),\gate_{\Upsilon_P}\circ\gate_{\Upsilon_Q}(y))\\
               &=\delta_P(\gate_{\Upsilon_Q}(x),\gate_{\Upsilon_Q}(y))\\
  &=\delta_P(\gate_{Q}(x),\gate_{Q}(y))
\end{align*}

Finally, for Item~\eqref{item:rho_map_delta}, it follows from 
  \fullref{cor:composition_of_nested_gate_maps} and
  the Lipschitz property of gate maps that:
  \begin{align*}
    \delta_P(x,y)&=d_{C(\Upsilon_P)}(\gate_{\Upsilon_P}(x),\gate_{\Upsilon_P}(y))\\
                 &\leq \dcomb(\gate_{\Upsilon_P}(x),\gate_{\Upsilon_P}(y))\\
                 &=\dcomb(\gate_{\Upsilon_P}\circ\gate_{\Upsilon_{N(P)}}(x),\gate_{\Upsilon_P}\circ\gate_{\Upsilon_{N(P)}}(y))\\
    &\leq
      \dcomb(\gate_{\Upsilon_{N(P)}}(x),\gate_{\Upsilon_{N(P)}}(y))\\
    &=d_{\Upsilon_{N(P)}}(x,y)\qedhere
  \end{align*}
\end{proof}

\begin{corollary}\label{cor:BGIII}
  If $P$ is a pre-wide parabolic and $\gamma$ is a $\dcone$--geodesic
  segment in $\coneoff$ between $x,y\in\Davis^{(0)}$ then
  $\dcone(\gamma,\rho(P))>B_0$ implies $\delta_P(x,y)\leq K_0$,
  where $B_0$ and $K_0$ are as in \fullref{thm:BGI}.
\end{corollary}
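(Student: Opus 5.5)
The plan is to reduce \fullref{cor:BGIII} to \fullref{thm:BGI} applied to the wide parabolic subcomplex $\Upsilon_{N(P)}$, which is the subcomplex whose cone vertex is $\rho(P)$. Since $P$ is pre-wide, $\normalizer_W(P)$ is wide. By \fullref{normalizer_parabolics_have_well_defined_subcomplex} there is a unique parabolic subcomplex $\Upsilon_{N(P)}$ with stabilizer $\normalizer_W(P)$, so $\Upsilon_{N(P)}$ is a wide parabolic subcomplex. By definition of the map $\rho$ in \fullref{lem:rho_map_from_prewide_to_cone_vertices}, its cone vertex is $v_{\Upsilon_{N(P)}}=\rho(P)$.

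The argument then runs in two steps.

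\begin{enumerate}
\item Assume $\dcone(\gamma,\rho(P))>B_0$, where $\gamma$ is a $\dcone$--geodesic between the vertices $x,y\in\Davis^{(0)}$. This is exactly the hypothesis $\dcone(v_{\Upsilon_{N(P)}},\gamma)>B_0$ of \fullref{thm:BGI} for the wide parabolic subcomplex $\Upsilon_{N(P)}$. That theorem therefore gives $d_{\Upsilon_{N(P)}}(x,y)\leq K_0$.
\item Item~\eqref{item:rho_map_delta} of \fullref{lem:rho_map_from_prewide_to_cone_vertices} gives $\delta_P(x,y)\leq d_{\Upsilon_{N(P)}}(x,y)$, and combining this with the first step yields $\delta_P(x,y)\leq K_0$.
\end{enumerate}

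There is essentially no obstacle here. The only point to check is that $\Upsilon_{N(P)}$ really is wide in the sense of \fullref{def:wide}, which is what allows \fullref{thm:BGI} to be applied. This holds because its stabilizer $\normalizer_W(P)=P\times P^\perp$ is a wide parabolic subgroup, which is precisely the definition of pre-wide. I would state that verification explicitly and then chain the two inequalities above.
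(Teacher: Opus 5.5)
Your proposal is correct and is essentially the paper's own proof: both chain $\delta_P(x,y)\leq d_{\Upsilon_{N(P)}}(x,y)$, which is item~\eqref{item:rho_map_delta} of \fullref{lem:rho_map_from_prewide_to_cone_vertices}, with \fullref{thm:BGI} applied to $\Upsilon_{N(P)}$, whose cone vertex is $\rho(P)$. You also explicitly check that $\Upsilon_{N(P)}$ is wide because $P$ is pre-wide, which the paper leaves implicit.
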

\begin{proof}
  Since $\rho(P)$ is the cone vertex corresponding to the unique
  parabolic subcomplex $\Upsilon_{N(P)}$ with stabilizer
  $\normalizer_W(P)$, 
  we have $\delta_P(x,y)\leq d_{\Upsilon_{N(P)}}(x,y)\leq K_0$, where
  the first inequality is
  \fullref{lem:rho_map_from_prewide_to_cone_vertices}~\eqref{item:rho_map_delta}
  and the second is from \fullref{thm:BGI}.
\end{proof}
\subsection{Some parallel geodesic estimates and constant roundup}\label{sec:roundup}
\begin{lemma} \label{lem:fellow-travel}
  Let $X$ be a $\delta$--hyperbolic metric space. 
Given $\epsilon$
there exists $H_\epsilon$ such that for any geodesic segments 
$\alpha\from [0,L]\to X$ and $\beta\from [0,L]\to X$, if $d(\alpha(0),\beta(0))\leq \epsilon$ and
$d(\alpha(L),\beta(L))\leq \epsilon$, then 
$d(\alpha(t),\beta(t))\leq H_\epsilon$ for every $t\in [0,L]$. 
\end{lemma}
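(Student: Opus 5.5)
The plan is to use the standard thin-triangle argument twice, once for each pair of corresponding endpoints, and then convert the resulting closeness of images into closeness at equal parameters. Fix $t\in[0,L]$. Consider the geodesic quadrilateral with sides $\alpha$, $\beta$, a geodesic $\sigma_0$ from $\alpha(0)$ to $\beta(0)$, and a geodesic $\sigma_L$ from $\alpha(L)$ to $\beta(L)$. Geodesic quadrilaterals in a $\delta$--hyperbolic space are $2\delta$--thin, by splitting along a diagonal into two triangles. So $\alpha(t)$ lies within $2\delta$ of $\beta\cup\sigma_0\cup\sigma_L$.

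The argument then splits into cases according to which side is close to $\alpha(t)$.

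If $\alpha(t)$ is within $2\delta$ of a point of $\sigma_0$, then $d(\alpha(t),\alpha(0))\le 2\delta+\epsilon$. Hence $t\le 2\delta+\epsilon$, and
\[d(\alpha(t),\beta(t))\le d(\alpha(t),\alpha(0))+d(\alpha(0),\beta(0))+d(\beta(0),\beta(t))\le 2t+\epsilon\le 4\delta+3\epsilon.\]
The case where $\alpha(t)$ is close to $\sigma_L$ is symmetric.

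The main case is when $\alpha(t)$ is within $2\delta$ of some $\beta(s)$. Here we must compare $s$ with $t$. By the triangle inequality,
\[|s-0|=d(\beta(s),\beta(0))\quad\text{lies within}\quad 2\delta+\epsilon\quad\text{of}\quad d(\alpha(t),\alpha(0))=t,\]
so $|s-t|\le 2\delta+\epsilon$. Therefore
\[d(\alpha(t),\beta(t))\le 2\delta+|s-t|\le 4\delta+\epsilon.\]
Combining the cases, $H_\epsilon:=4\delta+3\epsilon$ works.

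The only mild obstacle is this reparametrization step, and it is handled by the fact that both paths are geodesics parametrized on the same interval $[0,L]$. Nothing specific to Coxeter groups is needed. For a non-geodesic hyperbolic space one would instead use quasigeodesic stability, but the lemma assumes geodesic segments, so the argument above suffices.
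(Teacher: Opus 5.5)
Your thin-quadrilateral case analysis is correct and gives exactly the constant $H_\epsilon=4\delta+3\epsilon$ that the paper claims; the paper only calls this an elementary $\delta$--hyperbolicity calculation, and your argument is the standard way to carry it out. Note that your use of the geodesics $\sigma_0,\sigma_L$ and of $\delta$--slim triangles implicitly assumes $X$ is geodesic, which holds for the space $\coneoff$ where the lemma is applied.
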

\begin{proof}
  This is an elementary $\delta$--hyperbolicity calculation. We claim
  it suffices to take $H_\epsilon:=4\delta+3\epsilon$.
\end{proof}

\begin{lemma}\label{lem:cone_geod_close_to_comb_geod}
  There exists $J$ such that if $\alpha$ is a $\dcone$--geodesic between
  $x,y\in\Davis^{(0)}\subset\coneoff$ and $m\in\Davis^{(0)}$ is a vertex of $\alpha$ then for
  every $\dcomb$--geodesic $\gamma$ from $x$ to $y$ there exists a
  vertex $p\in\gamma$ such that $\dcone(m,p)\leq J$.
\end{lemma}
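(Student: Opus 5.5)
The plan is to use that combinatorial geodesics are uniform quasigeodesics in $(\coneoff,\dcone)$, and then apply Morse stability in the hyperbolic space $\coneoff$.

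\textbf{Step 1: combinatorial geodesics are uniform $\dcone$--quasigeodesics.} Let $\gamma$ be a $\dcomb$--geodesic from $x$ to $y$. By \fullref{cor:combinatorial_geodesics_rough_and_quasiruler}, its vertex sequence can be reparameterized as a $(1,\tau+2,\tau)$--quasiruler in $(\Davis^{(0)},\dfine)$. The identity on $\Davis^{(0)}$ is a quasiisometry $(\Davis^{(0)},\dfine)\to(\coneoff,\dcone)$ with uniform constants, by \fullref{thm:coneoff_qi_to_dfine}. Hence this reparameterized vertex sequence is a $(\lambda,\lambda)$--quasigeodesic in $\coneoff$, with $\lambda$ depending only on $(W,S)$. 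This is the same $\lambda$ used in the proof of \fullref{prop:LargeLink}, so I would simply quote that step.

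The reparameterization is a jump map that is constant on plateaus. Consecutive vertices of $\gamma$ are at $\dcone$--distance $1$, so joining them by edges of $\Davis^{(1)}$ yields a connected path. Each point of this path lies within $\dcone$--distance $1/2$ of a vertex of the reparameterized quasigeodesic.

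\textbf{Step 2: Morse stability in $\coneoff$.} By \fullref{cor:coneoff_is_morse_recognizing}, $(\coneoff,\dcone)$ is a geodesic $\delta$--hyperbolic space. The Morse lemma then gives a constant $H=H(\delta,\lambda)$ such that the $\dcone$--geodesic $\alpha$ and the quasigeodesic from Step~1 have Hausdorff distance at most $H$. In particular, for the given vertex $m$ of $\alpha$ there is a vertex $p$ of $\gamma$ with $\dcone(m,p)\leq H+1/2$, after accounting for the padding between the quasigeodesic and the vertex sequence. Take $J:=H+1$.

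\textbf{Main obstacle.} The only delicate point is that the Morse lemma is usually stated for continuous or geodesic-like quasigeodesics, while our object is a discrete, jumpy quasiruler. I would handle this with the standard tame-ification: connect consecutive vertices of $\gamma$ by $\dcone$--geodesics of length at most $1$. The result is a continuous $(\lambda,\lambda+2)$--quasigeodesic that stays within Hausdorff distance $1$ of the vertex set, and the Morse lemma applies to it. All constants depend only on $(W,S)$, which gives the uniform $J$.
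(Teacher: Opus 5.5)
Your proof is correct and follows essentially the same route as the paper. Both arguments take a uniform $\dcone$--quasigeodesic whose vertices lie on $\gamma$, obtained from \fullref{cor:combinatorial_geodesics_rough_and_quasiruler} and \fullref{thm:coneoff_qi_to_dfine}, apply Morse stability in the hyperbolic space $\coneoff$, and pay $1/2$ to land on a vertex of $\gamma$. The only difference is that the paper applies this to the greedy path in the shortcut graph $\hat\gamma$, with edges replaced by cone shortcuts; that path is also where the $\lambda$ of \fullref{prop:LargeLink} comes from, so your remark about quoting that same $\lambda$ is slightly off. You instead use the full reparameterized vertex sequence of $\gamma$, which works equally well because your composition argument gives its quasigeodesic constants directly.
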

\begin{proof}
  Let $\beta$ be the greedy geodesic in the shortcut graph
  $\hat\gamma$ of $\gamma$.
 The vertices of $\beta$ are vertices of $\gamma$, and the edges of
 $\beta$ are either edges of $\gamma$ or they correspond to cone shortcuts in $\coneoff$ between pairs of vertices of
 $\gamma$.
 There is a $\delta$ such that  $\coneoff$ is $\delta$--hyperbolic, by
 \fullref{cor:coneoff_is_morse_recognizing}.
 As in the proof of \fullref{prop:LargeLink}, there is a path
 $\bar\beta$ in
 $\coneoff$ with the same $\Davis^{(0)}$ vertex sequence as $\beta$,
 such that edges of $\beta$ are replaced by cone shortcuts, and $\bar\beta$ is uniformly
 quasigeodesic in $\coneoff$.
 Since $\coneoff$ is hyperbolic, quasigeodesics are Morse, so $\bar\beta$ is $\mu$--Morse for a Morse gauge $\mu$ depending on
 $\delta$ and the quasigeodesic constants, but not on $\bar\beta$.
 Since $\alpha$ is a geodesic with the same endpoints as $\bar\beta$, every point of
 $\alpha$ is within $\dcone$--distance $\mu(1,0)$ of some point of $\bar\beta$.
 Since every point of $\bar\beta$ is within distance $1/2$ of a
 $\Davis^{(0)}$ vertex, which is then a vertex of $\gamma$, it
 suffices to take $J:=\mu(1,0)+1/2$.
\end{proof}

We now collect the constants that have appeared so far and define some more.
\begin{itemize} 
\item $\delta$ is the hyperbolicity constant of $\coneoff$.
\item $\epsilon$ is the acylindricity displacement parameter.
  \item $H_\epsilon$ is the fellow-traveler constant from
    \fullref{lem:fellow-travel}.
\item $J$ is the $\dcone$--quasiconvexity constant of \fullref{lem:cone_geod_close_to_comb_geod}.
\item $B_{-1}$ is the constant of
  \fullref{lem:far_wide_parabolics_have_finite_intersection}
\item $D_0$ and $K_0$ are the constants of \fullref{prop:LargeLink}.
\item $B_0=D_0+B_{-1}+1$, as in \fullref{thm:BGI}.
\end{itemize}
Note that $B_{-1}$, $D_0$ can be taken to be integers, so $B_0$ is
also an integer.
For our later convenience we replace $H_\epsilon$, $J$, and $K_0$ by their
integer ceilings. 
Then we make the following proleptic choices of additional constants,
which are also integers.
\begin{itemize}
  \item $C_0 := H_\epsilon+B_0+3J$
  \item $C_1 := C_0 +2$
  \item $C_2 := C_1+H_\epsilon$
  \item $C_3:=C_2+B_0+1$
\end{itemize}

Also for each $2 \le r \le |S|$, we choose positive integers $a_r<c_r<b_r$ such that:
  \begin{itemize}
  \item $a_r>2K_0+2C_3$
  \item $c_r > a_r+2K_0$
  \item $b_r > c_r+2K_0$
  \item $a_s > b_r+4K_0$ when $s<r$
  \end{itemize}
  Such choices exist, for example by choosing $a_{|S|}>2K_0+2C_3$
  first, then choosing $c_{|S|}$, $b_{|S|}$, and $a_{|S|-1}$
  satisfying the required conditions, then continuing inductively. 

\subsection{Parabolic carriers}
Let $u,v \in \Davis^{(0)}$.
For $i=1,\ldots,|S|-1$, we inductively define three collections
$\mathcal{P}_i$, $\mathcal{D}_i$, and $\mathcal{F}_i$ of parabolic
subgroups.
Elements of $\mathcal{P}_i$ are called the \emph{parent carriers} at level $i$; those of $\mathcal{D}_i$ are the \emph{descendent
  carriers} at level $i$; and those of $\mathcal{F}_i$ are the
\emph{factor carriers} at level $i$.

Set $\mathcal{P}_1 := \{W\}$.

Choose a $\dcone$--geodesic $\gamma$ from $u$ to $v$ in $\coneoff$.

Let $\mathcal{D}_1$ be the set of stabilizers of wide parabolic
subcomplexes associated to the cone vertices that occur on $\gamma$. 

Define $\mathcal{F}_1$ to be the set of irreducible nonspherical
factors occurring in the canonical product decomposition of some $Q\in \mathcal{D}_1$.

Now assume for $i\ge 1$, the sets $\mathcal{P}_i$, $\mathcal{D}_i$, and
$\mathcal{F}_i$ have been defined.
Let: \[ \mathcal{P}_{i+1} := \{ P \in \mathcal{F}_i \mid \delta_P(u,v)\le
  b_{\rank(P)}+2K_0\}\]
For each $P \in \mathcal{P}_{i+1}$, choose a geodesic $\hat{\gamma}_P$
in $C(P)$ from $\gate_P(u)$ to $\gate_P(v)$, where $C(P)$ and
$\gate_P$ are as defined in
\fullref{sec:projections} and \fullref{def:CP}.

Let $\mathcal{D}_{i+1}$
be the set of parabolic subgroups associated to the cone vertices on
$\hat{\gamma}_P$ for each $P \in \mathcal{P}_{i+1}$.

Let $\mathcal{F}_{i+1}$ be the set of irreducible nonspherical factors
occurring in the canonical product decomposition of some $Q \in \mathcal{D}_{i+1}$.

We set $\rank(\mathcal{P}_i) := \sup \{\rank(P)\mid P \in
\mathcal{P}_i\}$, which is a maximum unless $\mathcal{P}_i=\emptyset$,
and define $\rank(\mathcal{D}_i)$ and
$\rank(\mathcal{F}_i)$ similarly.
By construction, if $\mathcal{P}_i\neq\emptyset$ then: \[ \rank(\mathcal{P}_i) > \rank(\mathcal{D}_i) \ge \rank(\mathcal{F}_i) \ge \rank(\mathcal{P}_{i+1})\]
In particular, $\rank(\mathcal{P}_{i})\leq |S|+1-i$, so $\rank(\mathcal{P}_{|S|-1})\leq 2$, which implies
$\mathcal{P}_{|S|}\subset \mathcal{F}_{|S|-1}=\emptyset$.

\begin{lemma}\label{lem:elements_of_Fi_are_pre-wide}
  For all $i$, each $F\in \mathcal{F}_i$ is pre-wide.
\end{lemma}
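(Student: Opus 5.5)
Every $F\in\mathcal{F}_i$ is, by construction, an irreducible nonspherical factor in the canonical product decomposition of some $Q\in\mathcal{D}_i$. The plan is therefore to show that every such $Q$ is contained in the stabilizer of a wide parabolic subcomplex, and then invoke \fullref{lem:pre-wide}. That lemma says every irreducible nonspherical parabolic contained in the stabilizer of a wide parabolic subcomplex is pre-wide. An irreducible nonspherical factor of a parabolic $Q$ is itself an irreducible nonspherical parabolic subgroup, since factors of the canonical decomposition of a conjugate of $W_J$ are conjugates of $W_{J'}$ for components $J'\subset J$. So it suffices to verify that $F\leq\stab(\Upsilon)$ for some wide $\Upsilon$.

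We argue by induction on $i$.

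\emph{Base case $i=1$.} Each $Q\in\mathcal{D}_1$ is by definition the stabilizer of a wide parabolic subcomplex $\Upsilon$ whose cone vertex lies on $\gamma$. Hence every factor $F$ of $Q$ satisfies $F\leq Q=\stab(\Upsilon)$, and \fullref{lem:pre-wide} gives that $F$ is pre-wide.

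\emph{Inductive step.} Suppose every element of $\mathcal{F}_i$ is pre-wide, and let $F\in\mathcal{F}_{i+1}$. Then $F$ is a factor of some $Q\in\mathcal{D}_{i+1}$. By \fullref{lem:CP_well_defined}, $Q$ is the parabolic subgroup canonically associated to a cone vertex of $\hat\gamma_P$ for some $P\in\mathcal{P}_{i+1}\subset\mathcal{F}_i$. That cone vertex comes from a proper parabolic subcomplex $\Xi\subsetneq\Upsilon_P$, where $\Upsilon_P\in\mathcal{R}(P)$, and $Q=\stab(\Xi)$. By \fullref{lem:nested_residues_have_nested_stabilizers}, $Q\leq\stab(\Upsilon_P)=P$, and hence $F\leq P$.

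By the inductive hypothesis $P$ is pre-wide, which means $\normalizer_W(P)=P\times P^\perp$ is wide. By \fullref{normalizer_parabolics_have_well_defined_subcomplex}, $\normalizer_W(P)$ is the stabilizer of the parabolic subcomplex $\Upsilon_{N(P)}$, which is therefore a wide parabolic subcomplex. We now have
\[F\leq Q\leq P\leq\normalizer_W(P)=\stab(\Upsilon_{N(P)}),\]
with $F$ irreducible, nonspherical and parabolic, so \fullref{lem:pre-wide} shows that $F$ is pre-wide. This completes the induction.

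The only step needing care is confirming that the subgroup associated to a cone vertex of $C(P)$ really lies in $P$. This holds because cone vertices correspond to sub-(parabolic subcomplexes) of representatives $\Upsilon\in\mathcal{R}(P)$, and the associated stabilizer is independent of the chosen representative by \fullref{lem:CP_well_defined}.
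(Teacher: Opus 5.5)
Your proof is correct and follows the paper's argument: induction on $i$, with the base case from \fullref{lem:pre-wide} applied to the wide subcomplexes whose cone vertices lie on $\gamma$, and the inductive step via the chain $F\leq Q\leq P\leq\normalizer_W(P)$ followed by \fullref{lem:pre-wide} again. Your extra justifications fill in details the paper leaves implicit: that $Q\leq P$ by \fullref{lem:nested_residues_have_nested_stabilizers}, and that $\normalizer_W(P)$ is the stabilizer of the wide subcomplex $\Upsilon_{N(P)}$, so \fullref{lem:pre-wide} applies as stated.
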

\begin{proof}
  Each $F\in \mathcal{F}_1$ is a nonspherical irreducible parabolic
  subgroup of a wide parabolic subgroup, by definition of
  $\mathcal{F}_1$ and $\mathcal{D}_1$, so $F$ is pre-wide, by 
\fullref{lem:pre-wide}.
  
  Suppose the lemma is true for all $F\in \mathcal{F}_i$ for all $i\leq i_0$.
  Consider $F\in \mathcal{F}_{i_0+1}$.
  By construction, $F$ is a nonspherical irreducible factor in the
  canonical decomposition of some $Q\in \mathcal{D}_{i_0+1}$.
  In turn, $Q$ is the parabolic subgroup associated to some cone
  vertex of $C(P)$ for some $P\in
  \mathcal{P}_{i_0+1}\subset\mathcal{F}_{i_0}$.
  By the induction hypothesis, $P$ is pre-wide, so $\normalizer_W(P)$
  is a wide parabolic subgroup.
  Thus, $F\leq Q\leq P\leq \normalizer_W(P)$.
  Since $F$ is a nonspherical irreducible parabolic contained in
  the wide parabolic $\normalizer_W(P)$, it is pre-wide, again by \fullref{lem:pre-wide}.
\end{proof}

\begin{lemma} \label{lem:count_domains}
  There is a function $F\from \mathbb{N}\to\mathbb{N}$ defined by:
  \[F(L):=\sup_{\dcone(u,v)=2L}\sup \sum_{i=1}^{|S|-2}|\mathcal{F}_i|\]
  The first supremum is taken over pairs $u,v\in\Davis^{(0)}$ such
  that $\dcone(u,v)=2L$. The second supremum is taken over all possible
  choices of carrier systems for the given $u$ and $v$.
\end{lemma}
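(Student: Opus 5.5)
Reading the statement, I take ``there is a function $F\from\mathbb{N}\to\mathbb{N}$'' to mean that this double supremum is finite for every $L$. The plan is to bound $|\mathcal{F}_i|$ by induction on the level $i$, with a bound depending only on $L$, $i$ and the fixed constants of \fullref{sec:roundup}. The bound should not depend on $u$, $v$, or the choices of geodesics $\gamma$ and $\hat\gamma_P$.

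\textbf{Level 1.} The geodesic $\gamma$ has $\dcone$--length $2L$. Cone vertices on $\gamma$ are separated by at least two half-edges, so $\gamma$ passes through at most $2L$ cone vertices. Hence $|\mathcal{D}_1|\leq 2L$. Each $Q\in\mathcal{D}_1$ is a parabolic of rank at most $|S|$. Its canonical decomposition has at most $\lfloor |S|/2\rfloor$ irreducible nonspherical factors. So $|\mathcal{F}_1|\leq \lfloor|S|/2\rfloor\cdot 2L$.

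\textbf{Inductive step.} Suppose $|\mathcal{F}_i|\leq N_i(L)$. By definition $\mathcal{P}_{i+1}\subset\mathcal{F}_i$, so $|\mathcal{P}_{i+1}|\leq N_i(L)$. Each $P\in\mathcal{P}_{i+1}$ satisfies $\delta_P(u,v)\leq b_{\rank(P)}+2K_0$. Therefore the chosen geodesic $\hat\gamma_P$ in $C(P)$ has length at most $b_{\rank(P)}+2K_0$. Cone vertices of $C(P)$ are again joined only by half-edges to ordinary vertices. So the number of cone vertices on $\hat\gamma_P$ is at most $\max_r b_r+2K_0+1$, which is a constant $B$ independent of everything. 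Each such cone vertex gives one element of $\mathcal{D}_{i+1}$, well defined by \fullref{lem:CP_well_defined}. This gives $|\mathcal{D}_{i+1}|\leq B\,N_i(L)$ and $|\mathcal{F}_{i+1}|\leq\lfloor|S|/2\rfloor B\,N_i(L)$. Summing over $1\leq i\leq |S|-2$ yields a finite bound, roughly $(\lfloor|S|/2\rfloor B)^{|S|}\cdot 2L$, which is even linear in $L$.

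\textbf{Main point to check.} The only subtlety is that the carrier sets could a priori be infinite collections. This is ruled out because each one is indexed by vertices of a finite-length geodesic, and because the definition of $\mathcal{P}_{i+1}$ caps $\delta_P(u,v)$ uniformly. I therefore expect the step requiring the most care to be verifying that the number of cone vertices on a geodesic of length $\ell$ is at most $\ell+1$. Two facts give this: the geodesic alternates between ordinary vertices and cone vertices, since cone vertices are pairwise non-adjacent; and it cannot revisit a vertex. Once that is checked, the rest is bookkeeping.
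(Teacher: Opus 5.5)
Your proposal is correct and follows the paper's proof essentially step for step. You bound $|\mathcal{D}_1|\leq 2L$ by counting cone vertices on $\gamma$, use $\mathcal{P}_{i+1}\subset\mathcal{F}_i$ together with the cap $\delta_P(u,v)\leq b_{\rank(P)}+2K_0\leq b_2+2K_0$ to bound $|\mathcal{D}_{i+1}|$, and multiply by at most $\lfloor|S|/2\rfloor$ irreducible nonspherical factors per element, which gives a bound linear in $L$. One small wording fix: a geodesic need not alternate between ordinary and cone vertices; all you need is that cone vertices are pairwise non-adjacent, so each one on the path costs a full unit of length.
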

\begin{proof}
  Take any $u,v\in\Davis^{(0)}$ such that $\dcone(u,v)=2L$.
  Construct parent, descendant, and factor carriers for each
  level, including the choice of $\dcone$--geodesic $\gamma$ from $u$
  to $v$ in $\coneoff$.
  We must show $\sum_{i=1}^{|S|-2}|\mathcal{F}_i|$ is bounded above,
  depending only on $L$.

  By construction, elements of $\mathcal{F}_i$ are irreducible nonspherical factors
  of the canonical product decomposition of elements of
  $\mathcal{D}_i$.
  If $\mathcal{D}_i=\emptyset$ then $|\mathcal{F}_i|=0$.
  Otherwise, since such a factor has rank at least 2, the number
  of such factors of $Q\in \mathcal{D}_i$ is at most $\left\lfloor\frac{\rank(Q)}{2}\right\rfloor$.
  Therefore:
  \[ |\mathcal{F}_i| \le \left\lfloor\frac{\rank({\mathcal{D}_i})}{2}\right\rfloor \cdot |\mathcal{D}_i| <\frac{ |S| \cdot |\mathcal{D}_i|}{2}\]
  Since $\dcone(u,v)=2L$, there can be at most $2L$ cone vertices on $\gamma$, so
  $|\mathcal{D}_1|\leq 2L$ and $|\mathcal{F}_1| < L|S|$.
  For $i \ge 1$, recall the collection of parent carriers:
  \[  \mathcal{P}_{i+1} := \{ P \in \mathcal{F}_i \mid
    \delta_P(u,v)\le b_{\rank(P)}+2K_0\}\]
  Each element $P \in \mathcal{P}_{i+1}$ has a corresponding
  path $\hat\gamma_P$ in $C(P)$ of length at most $b_{\rank(P)}+2K_0$, therefore
  contributing at most that number of cone vertex parabolics to the
  descent carriers $\mathcal{D}_{i+1}$.
  Since $b_2 > b_3 > \cdots $, we have for $i \ge 2$:
  \[ |\mathcal{D}_i| \le (b_2+2K_0)\, |\mathcal{P}_{i}| \le
    (b_2+2K_0)\, |\mathcal{F}_{i-1}|\]
  Therefore, $|\mathcal{F}_{i}| \le \frac{|S|}{2} (b_2+2K_0)\,
  |\mathcal{F}_{i-1}|$.
  Recursively, this yields:
  \[|\mathcal{F}_i| \le L|S|\left(\frac{|S| (b_2+2K_0)}{2}\right)^{i-1}\]
  This gives a uniform upper bound on $\sum_{i=1}^{|S|-2}
  |\mathcal{F}_i|$, depending only on $L$ and constants:
\[\sum_{i=1}^{|S|-2} |\mathcal{F}_i|\leq  L|S|\sum_{j=0}^{|S|-3}\left(\frac{|S| (b_2+2K_0)}{2}\right)^{j}\qedhere\]
\end{proof}

\subsection{The strong and weak sets for trisection vertices}
Let $x,y\in\Davis^{(0)}$, and let  $\gamma$ be a $\dcone$--geodesic
from $x$ to $y$ in $\coneoff$.
Choose two vertices $m_-$ and $m_+$ in $\Davis^{(0)}$ on $\gamma$ that
are nearest to the points $1/3$ and $2/3$ of the way along ${\gamma}$.
For all $g \in A_\epsilon(x,y)$ and $m\in \{m_\pm\}$, we have $\dcone(m,g m)\leq
H_\epsilon$, by definition of $H_\epsilon$.

\begin{definition}[The strong set $L_1$ and weak set $L_2$]\label{def:strong_weak}
For fixed $x$, $y$, and $\gamma$ as above, for each  $m\in \{m_\pm\}$, let $L_1(m)$ be the collection of pre-wide parabolics $P$ such that: 
\begin{itemize}
    \item $\delta_P(x,y)>c_{\rank(P)}$
    \item $\dcone(m, \rho(P))\leq C_1$ 
    \item There is no pre-wide parabolic $Q \gneq P$ with $\delta_Q(x,y)>c_{\rank (Q)}$.
\end{itemize}
    
Similarly, let $L_2(m)$ be the collection of pre-wide parabolics $P$ such that:
\begin{itemize}
    \item $\delta_P(x,y)>a_{\rank(P)}$ 
    \item $\dcone(m,\rho(P))\leq C_2$ 
    \item There is no pre-wide parabolic $Q \gneq P$ with $\delta_Q(x,y)>b_{\rank(Q)}$.
\end{itemize}
\end{definition}

In this section we will show that $|L_2(m)|$ is uniformly finite
(\fullref{prop:L2finite}), that $A_\epsilon(x,y)$ conjugates $L_1(m)$ into
$L_2(m)$ (\fullref{prop:stability}), and that if $L_1(m)=\emptyset$
then there is a vertex of $\Davis^{(0)}$ uniformly $\dcone$--close to $m$ that
has bounded $\dcomb$--displacement under the $A_\epsilon(x,y)$--action (\fullref{prop:no_large_links}).

\begin{proposition}[Uniform finiteness of $L_2(m)$]
\label{prop:L2finite}
    Let $C_3$ be as in
    \fullref{sec:roundup}.
    Let $F$ be the function of \fullref{lem:count_domains}.
    Let $x,y\in\Davis^{(0)}$ with $\dcone(x,y) \ge R_\epsilon>3(C_3+1)$,
    Let $\gamma$ be any $\dcone$--geodesic from $x$ to
    $y$, and let $m_\pm$ be the trisection vertices of $\gamma$.
    For $m\in \{m_\pm\}$ we
    have $|L_2(m)| \le F(C_3)$.
\end{proposition}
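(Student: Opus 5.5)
We plan to exhibit $L_2(m)$ as a subcollection of the factor carriers $\bigcup_{i}\mathcal{F}_i$ of a carrier system built for a suitable pair of vertices $u,v$ with $\dcone(u,v)=2C_3$. Then \fullref{lem:count_domains} gives $|L_2(m)|\leq F(C_3)$ directly. The hypothesis $\dcone(x,y)>3(C_3+1)$ is what lets us choose, on $\gamma$, vertices $u$ before $m$ and $v$ after $m$ with $\dcone(u,m)=\dcone(m,v)=C_3$, both at distance more than $C_3$ from the endpoints $x,y$. (Parity and half-integer issues come from the length-$1/2$ cone edges, so $u,v$ should be taken in $\Davis^{(0)}$ at distance $C_3$ up to a bounded adjustment.) For the carrier system of $(u,v)$ we take the subsegment $\gamma|_{[u,v]}$ as the chosen $\dcone$--geodesic.

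Fix $P\in L_2(m)$. First we compare $\delta_P(u,v)$ with $\delta_P(x,y)$. The subsegments $[x,u]$ and $[v,y]$ of $\gamma$ avoid the ball of radius $C_3-C_2=B_0+1$ about $\rho(P)$. Indeed, $\dcone(m,\rho(P))\leq C_2$, while every point of $[x,u]\cup[v,y]$ is at distance at least $C_3$ from $m$. So \fullref{cor:BGIII} gives $\delta_P(x,u),\delta_P(v,y)\leq K_0$, hence $\delta_P(u,v)\geq\delta_P(x,y)-2K_0>a_{\rank P}-2K_0>2C_3$. The same argument applies to any pre-wide $Q$ with $\rho(Q)$ near $m$. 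More generally, for a pre-wide $Q$ with $\dcone(\rho(Q),m)\leq C_2+1$ we get $|\delta_Q(u,v)-\delta_Q(x,y)|\leq 2K_0$.

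Next we descend through the levels. We must show that some $P_1\in\mathcal{F}_1$ contains $P$, and inductively that the chain continues until it reaches $P$ itself. At level 1, $P$ is pre-wide and $\delta_P(u,v)$ is large, so $\rho(P)$ lies within $B_0$ of $\gamma|_{[u,v]}$ by \fullref{cor:BGIII}. Large $\delta_P$ also gives large $d_{\Upsilon_{N(P)}}(u,v)$ by \fullref{lem:rho_map_from_prewide_to_cone_vertices}. The proof of \fullref{prop:LargeLink} and \fullref{lem:LargeLink}, applied to the geodesic between $u$ and $v$, then produce a cone vertex $v_\Xi$ on $\gamma|_{[u,v]}$ whose stabilizer meets $\normalizer_W(P)$ in an infinite group. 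We want to upgrade this to $P\leq\stab(\Xi)$, using \fullref{lem:projection}: if $P\not\leq\stab(\Xi)$, then $\Xi$ has $\delta_P$--diameter at most $1$, and the cone step through $\Xi$ contributes little to $\delta_P$. Summing over the at most $2C_3$ steps of $\gamma|_{[u,v]}$ gives $\delta_P(u,v)\leq 2C_3+O(1)$, which contradicts $\delta_P(u,v)>2C_3$ (this is where $a_r>2K_0+2C_3$ is used). So some $\stab(\Xi)\in\mathcal{D}_1$ contains $P$. Since $P$ is irreducible, it lies in an irreducible factor $P_1\in\mathcal{F}_1$.

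If $P_1=P$ we are done. Otherwise $P_1\gneq P$ is pre-wide (\fullref{lem:elements_of_Fi_are_pre-wide}), and by \fullref{lem:rho_map_from_prewide_to_cone_vertices}\eqref{item:rho_map_nested} $\rho(P_1)$ is within $1$ of $\rho(P)$. The third clause of $L_2(m)$ gives $\delta_{P_1}(x,y)\leq b_{\rank P_1}$, so $\delta_{P_1}(u,v)\leq b+2K_0$ and $P_1\in\mathcal{P}_2$. At the next level we repeat the argument inside $C(P_1)$ along $\hat\gamma_{P_1}$. Here \fullref{lem:rho_map_from_prewide_to_cone_vertices}\eqref{item:rho_map_extra} lets us compute $\delta_P$ through $\gate_{P_1}$, and \fullref{lem:projection} plays the same role, now counting cone steps of $\hat\gamma_{P_1}$ of length at most $b_{\rank P_1}+2K_0$. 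For the contradiction we need $\delta_P$ to exceed that length. This holds because $\rank P<\rank P_1$ and the ordering condition $a_s>b_r+4K_0$ for $s<r$.

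The main obstacle is making this descent step rigorous. Ranks strictly decrease, so the descent terminates at $P$, and the map $P\mapsto$ carrier is injective since $P$ itself appears as an element of some $\mathcal{F}_i$. Two details need care. First, a cone vertex of $C(P_1)$ whose associated subgroup does not contain $P$ must contribute at most $1$ to $\delta_P$; this has to be checked via gates in the chosen $\Upsilon\in\mathcal{R}(P_1)$. Second, we must confirm that the additive $K_0$ losses stay within the gaps $c_r-a_r$ and $b_r-c_r$ fixed in \fullref{sec:roundup}.
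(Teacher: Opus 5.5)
Your proposal is correct and follows the paper's proof. You restrict to the radius-$C_3$ subsegment $\gamma'$ of $\gamma$ about $m$ and use \fullref{cor:BGIII} to get $\delta_P(u,v)>a_{\rank(P)}-2K_0$. You then descend through the carrier levels: at each level \fullref{lem:projection} forces $P$ into the stabilizer of some cone vertex (otherwise the bounds $a_r>2K_0+2C_3$ or $a_s>b_r+4K_0$ are contradicted), and the third clause of $L_2(m)$ puts the containing factor into $\mathcal{P}_{i+1}$.

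Two small remarks. The appeal to \fullref{lem:LargeLink} at level 1 is unnecessary, and its threshold $\dim(\NR)\cdot\dcone(u,v)$ is not guaranteed here; the contradiction argument, run simultaneously over all cone vertices of $\gamma'$, already does the job. Also, since $C_3$ is an integer, $u$ and $v$ are vertices exactly $\dcone$-distance $C_3$ from $m$, so you get the exact bound $\delta_P(u,v)\leq 2C_3$ with no parity adjustment.
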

\begin{proof}
  The lower bound on $R_\epsilon$ guarantees that the subsegment $\gamma'$ of $\gamma$ centered about $m$ of
  radius $C_3$ exists and does not contain $x$ or $y$.
 Let $u$ and $v$ be the end vertices of $\gamma'$.
 Let $\mathcal{P}_i$, $\mathcal{D}_i$, and $\mathcal{F}_i$ be parent,
 descendent, and factor carriers for $u$ and $v$ at
 levels $i=1,\ldots,|S|-1$, with the assumption that $\gamma'$ is the
 geodesic used to define $\mathcal{D}_1$.
 Our goal is to show that  $L_2(m) \subset
 \bigcup \mathcal{F}_i$.
 By \fullref{lem:count_domains}, this would imply the desired bound $|L_2(m)| \le F(C_3)$. 
  
 Let $P \in L_2(m)$. Since $\dcone(m,\rho(P))\le C_2$ and
 $\dcone(m,u)=\dcone(m,v)=C_3=C_2+B_0+1$, 
 both segments $[x,u]$ and $[v,y]$ are more than $B_0$ away from
 $\rho(P)$.
 Therefore, by \fullref{cor:BGIII}:
 \[ \delta_P(x,u),\, \delta_P(v,y) \le K_0\]
 It follows that:
  \begin{equation} \label{eqn:biggerthan2L}
      \delta_P(u,v) \geq \delta_P(x,y) - 2K_0 > a_{\rank(P)}-2K_0 
  \end{equation}
  We now make the following claim.

  \begin{claim}
     Let $P \in L_2(m)$. If $P \lneq Q$ for some $Q \in \mathcal{P}_i$, then either $P \in \mathcal{F}_i$ or 
     $P \lneq Q'$ for some $Q' \in \mathcal{P}_{i+1}$.
  \end{claim}
  \begin{claimproof}
    We deal with the case of $i=1$ and $i>1$ separately. They are
    similar, but the definition of $\mathcal{D}_1$ is slightly
    different than $\mathcal{D}_i$ for $i>1$. 

    For $i=1$, recall that $\mathcal{D}_1$ is the collection of
  wide parabolic subgroups  $\stab(\Upsilon)$, where $v_\Upsilon$ is a cone vertex on $\gamma'$.
    We  show that $P \le \stab(\Upsilon)$ for some $\stab(\Upsilon) \in
    \mathcal{D}_1$.
    If not, then by \fullref{lem:projection}, the $\delta_P$--diameter of
    $\Upsilon$ is at most 1, for every such $\Upsilon$.
    Thus, every cone shortcut step in $\gamma'$ has $\delta_P$--diameter at most 1.
    Every $\Davis^{(1)}$ step in $\gamma'$ also has $\delta_P$--diameter
    at most 1, so $\delta_P(u,v)\le 2C_3$.
    Since $a_{\rank(P)}-2K_0  > 2C_3$, this contradicts
    \eqref{eqn:biggerthan2L}.
    Therefore, there exists a cone vertex $v_\Upsilon$ on $\gamma'$ such that
    $P\leq\stab(\Upsilon)$.
Since $P$ is irreducible and nonspherical, $P\leq Q$ for one of the
canonical nonspherical irreducible factors $Q$ of $\stab(\Upsilon)$.
By construction, such a $Q$ belongs to $\mathcal{F}_1$, so if $P=Q$ we
are done.
Otherwise, if $P\lneq Q$ then the definition of $L_2(m)$ requires that
$\delta_Q(x,y) \le b_{\rank(Q)}$.
Since $P \le Q$,
\fullref{lem:rho_map_from_prewide_to_cone_vertices}~\eqref{item:rho_map_nested} gives:
\[\dcone(m, \rho(Q)) \leq 1+\dcone(m,\rho(P))\le 1+C_2\]
    Thus, the subsegments of $\gamma$ from $x$ to $u$ and from $v$ to
    $y$ are both more than $\dcone$--distance $B_0$ away
    from $\rho(Q)$, so \fullref{cor:BGIII} gives
    $\delta_Q(x,u),\delta_Q(y,v)\leq K_0$.
    Then the  triangle inequality yields:
    \[\delta_Q(u,v) \le \delta_Q(x,y) + 2K_0 \le b_{\rank(Q)}+2K_0\]
    This shows $Q \in \mathcal{P}_2$.

    For $i>1$, suppose $P\lneq Q \in \mathcal{P}_i$.
    By definition of $\mathcal{P}_i$, this implies $\delta_Q(u,v)\leq b_{\rank(Q)}+2K_0$.
    Let $\hat{\gamma}_Q$ be the geodesic in $C(Q)$ connecting
    $\gate_Q(u)$ to $\gate_Q(v)$ used to define $\mathcal{D}_{i}$.
    It has length at most $b_{\rank(Q)}+2K_0$.
    The same argument as in the $i=1$ case gives that  if
    $P\not\leq\stab(\Upsilon)$ for every cone vertex $v_\Upsilon$ of
    $\hat{\gamma}_Q$, then $\delta_P(\gate_Q(u),\gate_Q(v))\leq b_{\rank(Q)}+2K_0$. 
    \fullref{lem:rho_map_from_prewide_to_cone_vertices}~\eqref{item:rho_map_extra}
    says $\delta_P(u,v)=\delta_P(\gate_Q(u),\gate_Q(v))$.
    However, since $\rank(P) <\rank(Q)$ and $a_{\rank(P)}>
    b_{\rank(Q)}+4K_0$, this contradicts \eqref{eqn:biggerthan2L}.
    Thus, there is some cone vertex $v_\Upsilon$ on $\hat{\gamma}_Q$ with $P
    \le \stab(\Upsilon)$, and, by construction, $\stab(\Upsilon)\in \mathcal{D}_i$.
    Let $Q'$ be the irreducible nonspherical factor of
    $\stab(\Upsilon)$ containing $P$.
    Then $Q' \in \mathcal{F}_i$.
    A similar argument to the $i=1$ case gives that either $P=Q'\in
    \mathcal{F}_i$, or $Q'\in\mathcal{P}_{i+1}$.
  \end{claimproof}

  Now we will apply the claim to see that eventually
  $P\in\mathcal{F}_i$.
  At $i=1$ we have 
   $\mathcal{P}_1=\{W\}$, so $P\leq Q_1:=W\in\mathcal{P}_1$, but $P$ is certainly not
   all of $W$ since that would mean $W$ is wide and $\coneoff$ is bounded.
   Thus, $P\lneq Q_1\in\mathcal{P}_1$, and the assumption of the claim holds at
   level $1$, so the claim says that either $P\in\mathcal{F}_1$, in which case we are done, or
   $P\lneq Q_2$ for some $Q_2\in\mathcal{P}_2$. 
Continue inductively. Since $\rank(\mathcal{P}_{j})\leq |S|+1-j$,
there is some first $j_0\leq |S|$ such that 
$\mathcal{P}_{j_0}=\emptyset$.
Thus, $P\in\mathcal{F}_i$ for some $i<j_0\leq |S|$.
\end{proof}

\begin{proposition}[Stability] \label{prop:stability}
  Let $x,y\in\Davis^{(0)}$ with $\dcone(x,y)\geq
  R_\epsilon>3(C_3+\epsilon+1)$.
  For any $\dcone$--geodesic from $x$ to $y$, let $m_\pm$ be its
  trisection vertices.
  For all $m\in \{m_\pm\}$, $g\in A_\epsilon(x,y)$, and $P\in L_1(m)$, we have $gPg^{-1}\in L_2(m)$.
\end{proposition}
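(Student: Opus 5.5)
Fix $m\in\{m_\pm\}$, $g\in A_\epsilon(x,y)$, and $P\in L_1(m)$, and put $P':=gPg^{-1}$. We check the three conditions of $L_2(m)$ for $P'$ in turn. Pre-wideness is conjugation invariant, since $\normalizer_W(P')=g\normalizer_W(P)g^{-1}$ and $\rank(P')=\rank(P)$. Throughout, the idea is to transport information from the pair $(x,y)$ to the pair $(gx,gy)$, which $\dcone$--fellow-travels $(x,y)$. Then we use the bounded geodesic image statement \fullref{cor:BGIII} to show that the projections to $P$ or $P'$ barely change.

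\textbf{Distance condition.} By \fullref{lem:rho_map_from_prewide_to_cone_vertices}~\eqref{item:rho_map_group_action}, $\rho(P')=g\rho(P)$. Then
\[\dcone(m,\rho(P'))\leq \dcone(m,gm)+\dcone(gm,g\rho(P))\leq H_\epsilon+C_1=C_2.\]

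\textbf{First condition.} By equivariance, $\delta_{P'}(gx,gy)=\delta_P(x,y)>c_{\rank(P)}$, and we must pass to $\delta_{P'}(x,y)$. Choose a $\dcone$--geodesic $[x,gx]$, which has length at most $\epsilon$. We claim it stays farther than $B_0$ from $\rho(P')$. Since $\dcone(x,m)\geq R_\epsilon/3-1>C_3+\epsilon$ and $\dcone(m,\rho(P'))\leq C_2$, every point of $[x,gx]$ is at distance at least $\dcone(x,m)-\epsilon-C_2>B_0$ from $\rho(P')$. This is exactly where the hypothesis on $R_\epsilon$ is used. \fullref{cor:BGIII} then gives $\delta_{P'}(x,gx)\leq K_0$, and likewise $\delta_{P'}(y,gy)\leq K_0$. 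The case of a wide $\dcone$--geodesic with cone points is fine, since \fullref{cor:BGIII} is stated for geodesics between vertices. Therefore
\[\delta_{P'}(x,y)\geq c_{\rank(P)}-2K_0>a_{\rank(P')}.\]

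\textbf{Maximality condition.} This is the expected obstacle. Suppose a pre-wide $Q'\gneq P'$ has $\delta_{Q'}(x,y)>b_{\rank(Q')}$, and set $Q:=g^{-1}Q'g\gneq P$. We want $\delta_Q(x,y)>c_{\rank(Q)}$, which contradicts $P\in L_1(m)$. By equivariance, $\delta_Q(g^{-1}x,g^{-1}y)=\delta_{Q'}(x,y)>b_{\rank(Q)}$. Note that $g^{-1}\in A_\epsilon(x,y)$ as well, so we can run the same BGI transfer with $g^{-1}$ in place of $g$, provided $\rho(Q)$ is far from $[x,g^{-1}x]$ and $[y,g^{-1}y]$. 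For this we use \fullref{lem:rho_map_from_prewide_to_cone_vertices}~\eqref{item:rho_map_nested}: since $P\leq Q$, $\dcone(\rho(Q),\rho(P))\leq 1$. Hence $\dcone(m,\rho(Q))\leq C_1+1\leq C_2$, and the same distance estimate as before shows both short geodesics avoid the $B_0$--neighborhood of $\rho(Q)$. Then
\[\delta_Q(x,y)\geq b_{\rank(Q)}-2K_0>c_{\rank(Q)},\]
using $b_r>c_r+2K_0$. This contradiction completes the plan. The delicate point is confirming that the constants chain ($C_1+1\leq C_2$, $R_\epsilon>3(C_3+\epsilon+1)$, $C_3=C_2+B_0+1$) really keep $\rho(P')$ and $\rho(Q)$ farther than $B_0$ from the short displacement geodesics near $x$ and $y$.
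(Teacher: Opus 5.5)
Your proof is correct and follows the paper's argument. You verify the distance condition and the first condition exactly as the paper does, using $\rho(P')=g\rho(P)$, $\dcone(m,gm)\le H_\epsilon$, and \fullref{cor:BGIII} on the short geodesics $[x,gx]$ and $[y,gy]$. The only difference is in the maximality step: you pull $Q'$ back to $Q=g^{-1}Q'g\gneq P$, with $\rho(Q)$ within $C_1+1$ of $m$, and transfer along $g^{-1}\in A_\epsilon(x,y)$; the paper instead keeps $Q'$ within $C_2+1$ of $m$ (using the $+1$ slack in $C_3=C_2+B_0+1$), transfers along $g$, and conjugates at the end. The two are equivalent, and your route does not even need $C_1+1\le C_2$, since the estimate directly gives distance greater than $C_3-C_1-1=H_\epsilon+B_0\ge B_0$.
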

\begin{proof}
Let  $P':=gPg^{-1}$.
  The definition of $L_1(m)$ gives $\dcone(m,\rho(P))\leq C_1$.
  Since $g\rho(P)=\rho(P')$ and $\dcone(m,gm)\le H_\epsilon$, we have
  $\dcone(m,\rho(P')) \le H_\epsilon+C_1 =C_2$. This shows the second
  condition of $P'\in L_2(m)$.
  
  By our assumption on $R_\epsilon$, the $\dcone$--distance from
  $\rho(P')$ to any geodesic from $x$ to $gx$ is at least:
  \begin{equation}
    \label{eq:3}
    \dcone(m,x)-\dcone(m,\rho(P'))-\dcone(x,gx)\geq R_\epsilon/3-1-C_2-\epsilon>C_3-C_2=B_0+1
  \end{equation}
Therefore, by
  \fullref{cor:BGIII}, 
  $\delta_{P'}(x,gx) \le K_0$.
  Similarly, $\delta_{P'}(y,gy) \le K_0$. Hence:
  \begin{align*}
    \delta_{P'}(x,y) 
    &\ge \delta_{P'}(gx,gy)-\delta_{P'}(gx,x) - \delta_{P'}(y, gy) \\
    & \ge \delta_P(x,y) - 2K_0 \\
    & > c_{\rank(P)}-2K_0 \\
    &>a_{\rank(P)}\\
    &=a_{\rank(P')} 
  \end{align*}
  This shows the first condition of $P'\in L_2(m)$.
   
  Lastly, suppose that $P' \lneq Q$, for some pre-wide $Q$.
  Suppose that $\delta_Q(x,y)>b_{\rank(Q)}$.
  Then, by
  \fullref{lem:rho_map_from_prewide_to_cone_vertices}~\eqref{item:rho_map_nested},
  $\dcone(\rho(P'),\rho(Q))\leq 1$.
  The extra $+1$ slack in \eqref{eq:3} accommodates an extra $\dcone(\rho(P'),\rho(Q))$ term in a similar computation, so again  \fullref{cor:BGIII} implies:
  \[ \delta_Q(x,gx), \delta_Q(y,gy) \le K_0\]
  By the triangle inequality, we have:
  \begin{align*}
    \delta_{g^{-1}Qg}(x,y) 
    &= \delta_Q(gx,gy)\\
    &\ge \delta_Q(x,y) - \delta_Q(x,gx) - \delta_Q(y,gy) \\ 
    &> b_{\rank(Q)}-2K_0 > c_{\rank(Q)}
  \end{align*}
  This contradicts $P \in L_1(m)$, so we must have had
  $\delta_Q(x,y)\leq b_{\rank(Q)}$, which gives the final condition
  for $P'\in L_2(m)$.
\end{proof}

\begin{lemma} \label{prop:small_projection}
  Fix $x,y\in\Davis^{(0)}$ and a $\dcone$--geodesic $\gamma$ from $x$
  to $y$ in $\coneoff$. Let $m_\pm$ be the trisection vertices of
  $\gamma$. 
   If $L_1(m)=\emptyset$ for some
    $m \in \{m_\pm\}$ and $\Upsilon$ is a wide parabolic subcomplex
    with $\dcone(m,v_\Upsilon)\le C_0$, then $d_\Upsilon(x,y) \le M_\epsilon$,
    where $M_\epsilon$ is the constant of \fullref{prop:witness}.
\end{lemma}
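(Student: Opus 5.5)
We argue by contraposition. Suppose $\Upsilon$ is a wide parabolic subcomplex with $\dcone(m,v_\Upsilon)\le C_0$ and $d_\Upsilon(x,y)>M_\epsilon$. By \fullref{prop:witness} there is a pre-wide $P\le\stab(\Upsilon)$ with $\delta_P(x,y)>c_{\rank(P)}$. We will replace $P$ by a maximal pre-wide $P^*\ge P$ with $\delta_{P^*}(x,y)>c_{\rank(P^*)}$. Such a maximal element exists because ranks are bounded by $|S|$ and any strictly increasing chain of parabolics strictly increases rank. Then $P^*$ satisfies the first and third conditions of \fullref{def:strong_weak}. It remains to show $\dcone(m,\rho(P^*))\le C_1$, which gives $P^*\in L_1(m)$ and contradicts $L_1(m)=\emptyset$.

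The main obstacle is controlling the distance from $m$ to $\rho(P^*)$, since $P^*$ need not lie in $\stab(\Upsilon)$. We first handle $P$ itself. Since $P\le\stab(\Upsilon)$, which is wide, $P\le\normalizer_W(P)$. Both $\Upsilon$ and $\Upsilon_{N(P)}$ contain parabolic subcomplexes with stabilizer $P$, which are parallel to one another. To compare cone vertices, we choose $\Upsilon_P\subset\Upsilon$. Then $\Upsilon_P\subset\Upsilon_{N(P)}$ as well: the unique complex with stabilizer $N(P)$ contains every complex with stabilizer $P$, since the union of the $\mathcal{R}(P)$ gives $\Upsilon_{N(P)}^{(0)}$. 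A vertex of $\Upsilon_P$ therefore lies at distance $1/2$ from both $v_\Upsilon$ and $\rho(P)$, so $\dcone(v_\Upsilon,\rho(P))\le1$. Next, if $P\le P^*$, then \fullref{lem:rho_map_from_prewide_to_cone_vertices}~\eqref{item:rho_map_nested} gives $\dcone(\rho(P),\rho(P^*))\le1$. Together these yield
\[\dcone(m,\rho(P^*))\le C_0+2=C_1.\]
This matches the definition $C_1=C_0+2$ exactly, which strongly suggests this is the intended route. The key point is that maximality is achieved by a single step up from $P$, with no iterated nesting, so \eqref{item:rho_map_nested} is applied only once. Iteration is avoided because we take $P^*$ directly as a maximal element above $P$.

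We should double check that \eqref{item:rho_map_nested} applies to an arbitrary pair $P\le P^*$, not only consecutive ones. Its proof places $\Upsilon_P\subset\Upsilon_{P^*}$, and this inclusion holds for any nested pair, so no consecutiveness is needed. The stated bounds $C_0$ and $M_\epsilon$ then play exactly the roles described above. This completes the plan.
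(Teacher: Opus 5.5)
Your proposal is correct and follows the paper's proof essentially step for step. You take the witness $P\le\stab(\Upsilon)$ from \fullref{prop:witness}, place a subcomplex with stabilizer $P$ inside $\Upsilon$ (hence inside $\Upsilon_{N(P)}$) to get $\dcone(v_\Upsilon,\rho(P))\le 1$, choose a maximal pre-wide $Q\ge P$ with $\delta_Q(x,y)>c_{\rank(Q)}$, and apply \fullref{lem:rho_map_from_prewide_to_cone_vertices}~\eqref{item:rho_map_nested} once to reach $C_0+2=C_1$. The only point the paper spells out that you merely assert is the existence of that subcomplex: writing $\Upsilon=w\Davis_T$, one has $P=wvW_Uv^{-1}w^{-1}$ with $v\in W_T$ and $U\subset T$, so $wv\Davis_U\subset\Upsilon$ works.
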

\begin{proof}
  Suppose $d_\Upsilon(x,y) > M_\epsilon$.
  Then \fullref{prop:witness}  says there exists some pre-wide witness
  $P \le
  \stab(\Upsilon)$ with $\delta_P(x,y) > c_{\rank(P)}$.
If $\Upsilon=w\Davis_T$ then $P\leq\stab(\Upsilon)$ implies
$P=wvW_Uv^{-1}w^{-1}$ for some $v\in W_T$ and $U\subset T$.
Then $\Xi:=wv\Davis_U$ has $\stab(\Xi)=P$ and $\Xi\subset\Upsilon$.
On the other hand, every parabolic subcomplex with stabilizer $P$ is a
subcomplex of the unique parabolic subcomplex with stabilizer
$\normalizer_W(P)$. Hence, the cone vertices $\rho(P)$ and
$v_\Upsilon$ have every vertex of $\Xi^{(0)}$ as mutual neighbors, so 
 $\dcone(v_\Upsilon,\rho(P))\le 1$.

Take a maximal $Q$ among the set of pre-wide parabolics that contain
$P$ and satisfy $\delta_Q(x,y) > c_{\rank(Q)}$. The set is not empty,
since it contains $P$, and chains are uniformly bounded in length by $|S|-2$,
since $\rank(P)\geq 2$ and strict containment of parabolic subgroups
forces strict increase in rank. 
Thus, a maximal element exists. 

Since $P$ and $Q$ are nested,
\fullref{lem:rho_map_from_prewide_to_cone_vertices}~\eqref{item:rho_map_nested}
says $\dcone(\rho(P),\rho(Q)) \leq 1$.
Therefore:
\[\dcone(m,\rho(Q))\leq\dcone(m,v_\Upsilon)+\dcone(v_\Upsilon,\rho(P))+\dcone(\rho(P),\rho(Q))\leq C_0+2=C_1\]
We have confirmed the three membership conditions for $Q \in L_1(m)$,
but $L_1(m)=\emptyset$, so this is a contradiction and we must have
had $d_\Upsilon(x,y)\leq  M_\epsilon$.
\end{proof}

\begin{proposition} \label{prop:no_large_links}
  Suppose $\dcone(x,y) \ge R_\epsilon>3(B_0+C_0+\epsilon+1)$.
  Fix a $\dcone$--geodesic $\gamma$ from $x$
    to $y$, and let $m_\pm$ be its trisection vertices.
    There exists $D_\epsilon$ such that if $L_1(m)=\emptyset$ for some
    $m\in \{m_\pm\}$, then for any choice of a $\dcomb$--geodesic
    from $x$ to $y$ and a vertex $p$ on that geodesic
    $\dcone$--closest to $m$, we have $\dcomb(p,gp) \le D_\epsilon$ for all $g \in A_\epsilon(x,y)$.
\end{proposition}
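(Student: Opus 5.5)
The plan is to control the $\dcomb$--displacement of $p$ by the one-sided distance formula, \fullref{lem:no_large_projection}, applied to the pair $(p,gp)$. That lemma gives
\[\dcomb(p,gp)\leq C(1+\dcone(p,gp))\cdot\max\{1,\sup_\Upsilon d_\Upsilon(p,gp)\},\]
so it suffices to bound $\dcone(p,gp)$ and $d_\Upsilon(p,gp)$ uniformly over wide $\Upsilon$, in terms of $\epsilon$ and the constants of \fullref{sec:roundup}.

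The $\dcone$ bound is routine. By \fullref{lem:cone_geod_close_to_comb_geod}, the vertex $p$ satisfies $\dcone(m,p)\leq J$. Since $\dcone(m,gm)\leq H_\epsilon$, we get
\[\dcone(p,gp)\leq \dcone(p,m)+\dcone(m,gm)+\dcone(gm,gp)\leq 2J+H_\epsilon.\]

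For the projection bound, fix a wide $\Upsilon$ and split according to the distance from $v_\Upsilon$ to $p$. Take a $\dcone$--geodesic $\sigma$ from $p$ to $gp$; it has length at most $2J+H_\epsilon$ and stays within $J+H_\epsilon$ of $m$.

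\emph{Far case.} If $\dcone(v_\Upsilon,\sigma)>B_0$, then \fullref{thm:BGI} gives $d_\Upsilon(p,gp)\leq K_0$ directly.

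\emph{Near case.} Otherwise $\dcone(m,v_\Upsilon)\leq B_0+J+H_\epsilon+J\leq C_0$, since $C_0=H_\epsilon+B_0+3J$. Here I would compare projections of $p$ with those of $x,y$.
\begin{itemize}
\item By \fullref{prop:small_projection}, using $L_1(m)=\emptyset$, we have $d_\Upsilon(x,y)\leq M_\epsilon$.
\item The point $p$ lies on a $\dcomb$--geodesic from $x$ to $y$. So every wall separating $p$ from $x$ and cutting $\Upsilon$ also separates $x$ from $y$. By \fullref{lem:gate-wall}, $d_\Upsilon(x,p)\leq d_\Upsilon(x,y)\leq M_\epsilon$.
\item The same argument applied to $gx,gy,gp$ and the translated geodesic gives $d_\Upsilon(gx,gp)=d_{g^{-1}\Upsilon}(x,p)\leq d_{g^{-1}\Upsilon}(x,y)$.
\end{itemize}
To bound $d_{g^{-1}\Upsilon}(x,y)$, note that $v_{g^{-1}\Upsilon}=g^{-1}v_\Upsilon$ is within $C_0+H_\epsilon$ of $m$. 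This may exceed $C_0$, so I would rather bound $d_\Upsilon(gx,gy)$ via $d_\Upsilon(x,gx)$ and $d_\Upsilon(y,gy)$. The point $v_\Upsilon$ is within $C_0$ of $m$, while $\dcone(m,x)\geq R_\epsilon/3-1$. Hence any geodesic from $x$ to $gx$ stays at distance more than $R_\epsilon/3-1-C_0-\epsilon>B_0$ from $v_\Upsilon$, and \fullref{thm:BGI} gives $d_\Upsilon(x,gx)\leq K_0$; likewise $d_\Upsilon(y,gy)\leq K_0$. Then $d_\Upsilon(gx,gy)\leq M_\epsilon+2K_0$, so $d_\Upsilon(gx,gp)\leq M_\epsilon+2K_0$. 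Combining,
\[d_\Upsilon(p,gp)\leq d_\Upsilon(p,x)+d_\Upsilon(x,gx)+d_\Upsilon(gx,gp)\leq 2M_\epsilon+3K_0.\]

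Plugging both bounds into \fullref{lem:no_large_projection} yields a bound $D_\epsilon$ depending only on $\epsilon$ and the constants. The main obstacle is the near case: it needs the monotonicity of gate projections along a combinatorial geodesic, which I would justify with \fullref{lem:gate-wall}, and the distance bookkeeping that places $v_\Upsilon$ far from $[x,gx]$. That bookkeeping is exactly what the hypothesis $R_\epsilon>3(B_0+C_0+\epsilon+1)$ is designed to guarantee.
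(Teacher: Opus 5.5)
Your proof is correct and follows the paper's argument: the one-sided distance formula, the bound $\dcone(p,gp)\le H_\epsilon+2J$, and, for each wide $\Upsilon$, either \fullref{thm:BGI} on a geodesic $[p,gp]$, or, when $v_\Upsilon$ is within $C_0$ of $m$, \fullref{prop:small_projection} together with gate-wall monotonicity and \fullref{thm:BGI} on $[x,gx]$. The only real difference is in bounding $d_\Upsilon(gx,gp)$:
\begin{itemize}
\item The paper reruns the near-case estimate from $gp$ and $gm$. This gives $\dcone(m,v_{g^{-1}\Upsilon})=\dcone(gm,v_\Upsilon)\le (B_0+H_\epsilon+2J)+J=C_0$ directly, so your worry about $C_0+H_\epsilon$ is unnecessary. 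It then applies \fullref{prop:small_projection} to $g^{-1}\Upsilon$.
\item Your detour through $d_\Upsilon(gx,gy)$, with a second application of \fullref{thm:BGI} on $[y,gy]$, is also valid. It only costs an extra $2K_0$ in $D_\epsilon$.
\end{itemize}
One small slip: $\sigma$ lies within $2J+H_\epsilon$ of $m$, not $J+H_\epsilon$. This is harmless, since your displayed bound $B_0+2J+H_\epsilon$ still holds and is at most $C_0$.
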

\begin{proof}
We show it suffices to take $D_\epsilon:=C(1+
H_\epsilon+2J)(2M_\epsilon +K_0)$, where $C$ is
the constant of \fullref{lem:no_large_projection}, 
  $M_\epsilon$ is as in
  \fullref{prop:small_projection}, and $H_\epsilon$, $J$, and $K_0$ are as in
  \fullref{sec:roundup}. 

  Suppose $L_1(m)=\emptyset$.
  For any choice of $\dcomb$--geodesic from $x$ to $y$, take a vertex
  $p$ that is $\dcone$--closest to
  $m$.
  Then $\dcone(m,p)\leq J$, by \fullref{lem:cone_geod_close_to_comb_geod}.

  For all $g \in A_\epsilon(x,y)$, we have $\dcone(m,gm) \le
  H_\epsilon$, by \fullref{lem:fellow-travel}, so $\dcone(p,gp)\leq H_\epsilon+2J$.

  We claim  $d_\Upsilon(p,gp) \le 2M_\epsilon+K_0$, for all wide parabolic
  subcomplexes $\Upsilon$.
  If $d_\Upsilon(p,gp) \leq K_0$ we are happy, so suppose not.
  Then \fullref{thm:BGI}  implies that the $\dcone$--distance from
  $v_\Upsilon$ to any $\dcone$--geodesic from $p$ to $gp$ is at most $B_0$.
  Since the length of any such geodesic is at most $H_\epsilon+2J$,
  it follows that:
  \[\dcone(v_\Upsilon,m)\leq\dcone(v_\Upsilon,p)+\dcone(p,m)\leq  B_0+H_\epsilon+3J=C_0\]
  The same estimate holds using $gp$ and $gm$, so:
  \[\dcone(m,v_{g^{-1}\Upsilon})=\dcone(gm,v_\Upsilon)\leq
    \dcone(v_\Upsilon,gp)+\dcone(gp,gm)\leq C_0\]
  \fullref{prop:small_projection} gives $d_\Upsilon(x,y)\leq M_\epsilon$ and $d_{g^{-1}\Upsilon}(x,y)\leq M_\epsilon$.

By \fullref{lem:gate-wall}, $d_\Upsilon(x,y)$ is equal to the number of walls
that separate $x$ from $y$ and cut $\Upsilon$. Since $p$ is a vertex on a
$\dcomb$--geodesic between $x$ and $y$:
\[d_\Upsilon(x,p)\leq d_\Upsilon(x,p)+d_\Upsilon(p,y)=d_\Upsilon(x,y)\leq M_\epsilon\]
Similarly:
\[d_\Upsilon(gx,gp)=d_{g^{-1}\Upsilon}(x,p)\leq d_{g^{-1}\Upsilon}(x,y)\leq M_\epsilon\]

Any $\dcone$--geodesic
from $x$ to $gx$ has $\dcone$--distance from $v_\Upsilon$ at least:
\[\dcone(m,x)-\dcone(m,v_\Upsilon)-\dcone(x,gx)\geq R_\epsilon/3-1 -C_0-\epsilon>B_0\]
Then \fullref{thm:BGI} implies $d_\Upsilon(x,gx) \le K_0$.

Putting the last three bounds together:
\begin{align*}
  d_\Upsilon(p,gp)&\leq d_\Upsilon(p,x)+d_\Upsilon(x,gx)+d_\Upsilon(gx,gp)\\
  &\leq M_\epsilon+K_0+M_\epsilon
\end{align*}
This proves the claim.

Now apply \fullref{lem:no_large_projection}:
\begin{align*}
  \dcomb(p,gp)&\leq C(1+\dcone(p,gp))\cdot\max\{1,\sup_{\text{wide }\Upsilon} d_\Upsilon(p,gp)\}\\
  &\leq C(1+ H_\epsilon+2J)(2M_\epsilon+K_0)\qedhere
\end{align*} 
\end{proof}

\subsection{Proof of acylindricity}

Choose $R_\epsilon$ large enough so that:
\begin{equation}
  \label{eq:R_conditions}
  R_\epsilon/3-1>\max\{C_3+\epsilon,\, B_{-1}+2C_1+2\}
\end{equation}
The first condition guarantees $R_\epsilon$ is
sufficiently large to apply \fullref{prop:L2finite}, 
\fullref{prop:stability}, and \fullref{prop:no_large_links}.

\begin{proof}[Proof of \fullref{thm:acylindricity}]
  Let $R_\epsilon$ satisfy \eqref{eq:R_conditions}.
Let $x,y\in\Davis^{(0)}$, with $\dcone(x,y)\geq R_\epsilon$,  fix a $\dcone$--geodesic from $x$ to $y$,
and let $m_\pm$ be its trisection vertices. 
  We have two cases:

  \paragraph{\bf Case A} $L_1(m)=\emptyset$ for some $m \in \{m_\pm\}$.
  
  Then, by Proposition \ref{prop:no_large_links},  for any choice of a
  $\dcomb$--geodesic from $x$ to $y$ and a vertex $p$ on that geodesic
    $\dcone$--closest to $m$, we have $\dcomb(p,gp) \le D_\epsilon$ for
    all $g \in A_\epsilon(x,y)$.
    Since $W$ acts freely on $\Davis^{(0)}$, which is locally finite,
    this implies:
    \[|A_\epsilon(x,y)|\leq |\{w\in W\mid \dcomb(p,wp)\leq
      D_\epsilon\}|=|\bar\nbhd_{D_\epsilon}(1)|<\infty\]

\paragraph{\bf Case B} $L_1(m) \ne \emptyset$ for both $m\in \{m_\pm\}$.
  
  Fix a pair $P_{-}\in L_1(m_{-})$ and $P_+\in L_1(m_+)$. By
  \fullref{prop:stability}, for any $g\in A_\epsilon(x,y)$:
  \[ gP_{-} g^{-1}\in L_2(m_{-})\quad\text{and}\quad gP_{+} g^{-1}\in L_2(m_{+})\]
  
  Define the map:
  \[ \Phi\from A_\epsilon(x,y) \rightarrow L_2(m_-) \times L_2(m_+) : g \mapsto (gP_-g^{-1},gP_+ g^{-1})
\]

 By \fullref{prop:L2finite}, $|L_1(m)| \le |L_2(m)| \le F(C_3)$,
 so the cardinality of the image of $\Phi$ is bounded by
 $F(C_3)^2$. We now bound the cardinality of each fiber.
 Suppose $g, h \in A_\epsilon(x,y)$ are in the same fiber, so:
\[gP_{-} g^{-1}=hP_{-} h^{-1}\quad\text{and}\quad gP_+ g^{-1}=hP_+ h^{-1}\]

 Equivalently, $h^{-1}g \in \normalizer_W(P_-)
 \cap \normalizer_W(P_+)$.
 By \fullref{def:strong_weak}, $P_{-}\in L_1(m_{-})$ implies
 $\dcone(m_{-},\rho(P_{-})) \le C_1$ and, similarly, $\dcone(m_{+},\rho(P_{+})) \le C_1$.
 Now:
 \[\dcone(\rho(P_{-}),\rho(P_+))\geq \dcone (m_-,m_+)-2C_1 \geq \dcone(x,y)/3-2-2C_1\geq R_\epsilon/3-2-2C_1 \stackrel{\eqref{eq:R_conditions}}{>} B_{-1}+1\]
 Then the contrapositive of \fullref{cor:close-cone} implies
 $\normalizer_W(P_-) \cap \normalizer_W(P_+)$ is a spherical parabolic.  
It follows that $|A_\epsilon(x,y)|$ is bounded by $F(C_3)^2$ times the
maximum cardinality of a spherical parabolic subgroup of $(W,S)$. 
  
 Acylindricity for $x,y\in\Davis^{(0)}$ follows by taking:
\[N_\epsilon:=\max\{|\bar\nbhd_{D_\epsilon}(1)|,
 F(C_3)^2\cdot\max\{|W_T|\mid \text{spherical } T\subset
 S\}\}\]

For the general case, use the above $R_\epsilon$ and $N_\epsilon$ to define $R'_\epsilon:=R_{\epsilon+1}+1$ and $N'_\epsilon:=N_{\epsilon+1}$.
Suppose $x',y'\in\coneoff$ with $\dcone(x',y')\geq R'_\epsilon$.
Choose closest $\Davis^{(0)}$ vertices $x$ to $x'$ and $y$ to $y'$. 
Then $\dcone(x,x'),\dcone(y,y')\leq 1/2$, so $\dcone(x,y)\geq
R_{\epsilon+1}$ and $A_\epsilon(x',y')\subset A_{\epsilon+1}(x,y)$.
Thus: \[\dcone(x',y')\geq R'_\epsilon\implies|A_\epsilon(x',y')|\leq
|A_{\epsilon+1}(x,y)|\leq N_{\epsilon+1}=N'_\epsilon\qedhere\]
\end{proof}